\setlist[itemize]{leftmargin=0.5cm}
\tikzstyle{map} = [->, font=\scriptsize]
\tikzstyle{linj} = [left hook->, font=\scriptsize]
\tikzstyle{rinj} = [right hook->, font=\scriptsize]
\tikzstyle{sur} = [->>, font=\scriptsize]
\tikzstyle{cell} = [double,double equal sign distance,-implies, shorten >= 3pt, shorten <= 3pt, font=\scriptsize]
\tikzstyle{cell125} = [cell, shorten >= 1pt, shorten <= 1pt]
\tikzstyle{eq} = [double,double equal sign distance]
\tikzstyle{iso} = [above, sloped, inner sep=1.5pt]
\tikzstyle{nat} = [above, sloped, inner sep=2pt]
\tikzstyle{desc} = [fill=white, inner sep=2pt]
\tikzstyle{textbaseline} = [baseline={([yshift=-2.8pt]current bounding box.center)}]
\tikzstyle{barred} = [decoration={markings, mark=at position 0.5 with {\draw[-] (0,-1.5pt) -- (0,1.5pt);}}, postaction ={decorate}]
\tikzstyle{math} = [matrix of math nodes, row sep=1em, column sep=1em, text height=1.5ex, text depth=0.25ex, nodes in empty cells]
\tikzstyle{math125em} = [math, row sep=1.25em, column sep=1.25em]
\tikzstyle{math175em} = [math, row sep=1.75em, column sep=1.75em]
\tikzstyle{math2em} = [math, row sep=2em, column sep=2em]
\providecommand{\nc}{\pgfmatrixnextcell}
\providecommand{\wip}{\tikzset{external/force remake}}
\DeclareFontFamily{U}  {MnSymbolB}{}
\DeclareFontShape{U}{MnSymbolB}{m}{n}{
    <-6>  MnSymbolB5
   <6-7>  MnSymbolB6
   <7-8>  MnSymbolB7
   <8-9>  MnSymbolB8
   <9-10> MnSymbolB9
  <10-12> MnSymbolB10
  <12->   MnSymbolB12}{}
\DeclareFontShape{U}{MnSymbolB}{b}{n}{
    <-6>  MnSymbolB-Bold5
   <6-7>  MnSymbolB-Bold6
   <7-8>  MnSymbolB-Bold7
   <8-9>  MnSymbolB-Bold8
   <9-10> MnSymbolB-Bold9
  <10-12> MnSymbolB-Bold10
  <12->   MnSymbolB-Bold12}{}
\DeclareSymbolFont{MnSyB}         {U}  {MnSymbolB}{m}{n}
\DeclareMathSymbol{\nrcirclearrowleft}{\mathrel}{MnSyB}{250}
\newcommand*{\@old@slash}{}\let\@old@slash\slash
\def\slash{\relax\ifmmode\delimiter"502F30E\mathopen{}\else\@old@slash\fi}
\def\slashedarrowfill@#1#2#3#4#5{%
  $\m@th\thickmuskip0mu\medmuskip\thickmuskip\thinmuskip\thickmuskip
   \relax#5#1\mkern-7mu%
   \cleaders\hbox{$#5\mkern-2mu#2\mkern-2mu$}\hfill
   \mathclap{#3}\mathclap{#2}%
   \cleaders\hbox{$#5\mkern-2mu#2\mkern-2mu$}\hfill
   \mkern-7mu#4$%
}
\def\rightslashedarrowfill@{%
  \slashedarrowfill@\relbar\relbar\mapstochar\rightarrow}
\newcommand\xslashedrightarrow[2][]{%
  \ext@arrow 0055{\rightslashedarrowfill@}{#1}{#2}}
\def\slashedrightarrow{\xslashedrightarrow{}}
\newtheorem{theorem}{Theorem}[chapter]
\newtheorem{lemma}[theorem]{Lemma}
\newtheorem{corollary}[theorem]{Corollary}
\newtheorem*{ncorollary}{Corollary}
\newtheorem{proposition}[theorem]{Proposition}
\newtheorem*{nproposition}{Proposition}
\theoremstyle{definition}
\newtheorem{definition}[theorem]{Definition}
\newtheorem*{ndefinition}{Definition}
\theoremstyle{remark}
\newtheorem{remark}[theorem]{Remark}
\newtheorem*{nremark}{Remark}
\newtheorem{example}[theorem]{Example}
\newtheorem*{convention}{Convention}
\providecommand{\corref}[1]{Corollary~\ref{#1}}
\providecommand{\defref}[1]{Definition~\ref{#1}}
\providecommand{\exref}[1]{Example~\ref{#1}}
\providecommand{\figref}[1]{Figure~\ref{#1}}
\providecommand{\lemref}[1]{Lemma~\ref{#1}}
\providecommand{\propref}[1]{Proposition~\ref{#1}}
\providecommand{\remref}[1]{Remark~\ref{#1}}
\providecommand{\thmref}[1]{Theorem~\ref{#1}}
\providecommand{\tableref}[1]{Table~\ref{#1}}
\providecommand{\chapref}[1]{Chapter~\ref{#1}}
\providecommand{\secref}[1]{Section~\ref{#1}}
\providecommand{\appref}[1]{Appendix~\ref{#1}}
\theoremstyle{theorem}
\newtheorem*{internalcommaobjects}{\propref{internal double comma objects}}
\newtheorem*{liftingcommaobjects}{\propref{lifting double comma objects}}
\newtheorem*{lefthomlaxstructurecell}{\lemref{lemma:left hom lax structure cell}}
\newtheorem*{maintheorem}{\thmref{right colax forgetful functor lifts all weighted colimits}}
\providecommand{\ndash}{\nobreakdash-}
\providecommand{\of}{\circ}
\providecommand{\iso}{\cong}
\providecommand{\xrar}[1]{\xrightarrow{#1}}
\providecommand{\xlar}[1]{\xleftarrow{#1}}
\providecommand{\xsrar}[1]{\xslashedrightarrow{#1}}
\providecommand{\Rar}{\Rightarrow}
\providecommand{\xRar}[1]{\xRightarrow{#1}}
\providecommand{\into}{\hookrightarrow}
\providecommand{\eps}{\varepsilon}
\DeclareMathOperator{\dash}{--}
\providecommand{\ndash}{\nobreakdash-}
\providecommand{\tens}{\otimes}
\providecommand{\ul}[1]{\underline{#1}{}}
\providecommand{\ull}[1]{\ul{\ul{#1}}{}}
\providecommand{\ulll}[1]{\ul{\ul{\ul{#1}}}{}}
\providecommand{\Sum}{\Sigma}
\providecommand{\brcs}[1]{\lbrace #1 \rbrace}
\providecommand{\brks}[1]{\lbrack #1 \rbrack}
\providecommand{\bigbrks}[1]{\bigl\lbrack #1 \bigr\rbrack}
\providecommand{\Bigbrks}[1]{\Bigl\lbrack #1 \Bigr\rbrack}
\providecommand{\pars}[1]{\left(#1\right)}
\providecommand{\bigpars}[1]{\bigl(#1\bigr)}
\providecommand{\Bigpars}[1]{\Bigl(#1\Bigr)}
\providecommand{\lns}[1]{\lvert#1\rvert}
\providecommand{\ls}[2]{{}_{#1} #2} 
\providecommand{\set}[1]{\brcs{#1}}
\providecommand{\isect}{\cap}
\providecommand{\sym}{\Sigma}
\providecommand{\act}[2]{#1 \cdot #2}
\providecommand{\card}[1]{\lns{#1}}
\providecommand{\natarrow}{\stackrel{.}{\rightarrow}}
\providecommand{\map}[3]{#1\colon#2\to#3}
\providecommand{\emb}[3]{#1\colon#2\into#3}
\providecommand{\nat}[3]{#1\colon#2\natarrow#3}
\providecommand{\cell}[3]{#1\colon#2\Rightarrow#3}
\providecommand{\hmap}[3]{#1\colon#2\slashedrightarrow#3}
\providecommand{\mmap}[3]{#1\colon#2\rightrightarrows#3}
\providecommand{\inv}[1]{{#1}^{-1}}
\DeclareMathOperator{\id}{id}
\DeclareMathOperator{\ob}{ob}
\providecommand{\ladj}{\dashv}
\providecommand{\catvar}[1]{\mathcal{#1}}
\providecommand{\fun}[2]{\brks{#1, #2}}
\providecommand{\psfun}[2]{\fun{#1}{#2}_{\textup{ps}}}
\providecommand{\2}{\mathsf 2}
\providecommand{\C}{\catvar C}
\providecommand{\D}{\catvar D}
\providecommand{\E}{\catvar E}
\renewcommand{\L}{\catvar L}
\providecommand{\K}{\catvar K}
\providecommand{\M}{\catvar M}
\providecommand{\V}{\catvar V}
\renewcommand{\SS}{\mathbb S}
\providecommand{\GG}{\mathbb G}
\providecommand{\NN}{\mathbb N}
\providecommand{\Set}{\mathsf{Set}}
\providecommand{\cat}{\mathsf{Cat}} 
\providecommand{\Cat}{\underline{\mathsf{Cat}}} 
\providecommand{\twoCat}{2\text{-}\cat}
\providecommand{\enCat}[1]{#1\text-\cat}
\providecommand{\inCat}[1]{\cat(#1)}
\providecommand{\psCat}[1]{\inCat{\ps{#1}}}
\providecommand{\op}[1]{#1^\textup{op}}
\providecommand{\co}[1]{#1^\textup{co}}
\providecommand{\ps}[1]{\widehat{#1}}
\providecommand{\globe}{\mathbb G}
\providecommand{\colim}{\coliml\nolimits}
\DeclareMathOperator*{\coliml}{colim}
\providecommand{\ucat}[1]{\lns{#1}}
\providecommand{\fren}[2]{#1\brks{#2}}
\providecommand{\alg}[1]{#1\text-\mathsf{Alg}}
\providecommand{\cAlg}[1]{\alg{#1}_\textup c}
\providecommand{\psAlg}[1]{\alg{#1}_\textup{ps}}
\providecommand{\Prof}{\mathsf{Prof}}
\providecommand{\Mat}[1]{#1\text-\mathsf{Mat}}
\providecommand{\enProf}[1]{#1\text-\Prof}
\providecommand{\Span}[1]{\mathsf{Span}(#1)}
\providecommand{\inProf}[1]{\Prof(#1)}
\providecommand{\psProf}[1]{\inProf{\ps{#1}}}
\providecommand{\Mod}[1]{\mathsf{Mod}(#1)}
\providecommand{\hc}{\odot}
\providecommand{\Hc}{\bigodot}
\providecommand{\lhom}{\triangleleft}
\providecommand{\rhom}{\triangleright}
\DeclareMathOperator{\coev}{coev}
\DeclareMathOperator{\ev}{ev}
\providecommand{\lDbl}{\mathsf{Dbl}_\textup l}
\providecommand{\wnDbl}{\mathsf{Dbl}_\textup{wn}}
\providecommand{\nDbl}{\mathsf{Dbl}_\textup n}
\providecommand{\psDbl}{\mathsf{Dbl}_\textup{ps}}
\providecommand{\lEquip}{\mathsf{Equip}_\textup l}
\providecommand{\nEquip}{\mathsf{Equip}_\textup n}
\providecommand{\psEquip}{\mathsf{Equip}_\textup{ps}}
\providecommand{\qlEquip}{\lEquip^\textup q}
\providecommand{\qnEquip}{\nEquip^\textup q}
\providecommand{\cdiag}[5]{#1 \xlar{#2} #3 \xslashedrightarrow{#4} #5}
\providecommand{\ldiag}[5]{#1 \xslashedrightarrow{#2} #3 \xrar{#4} #5}
\DeclareMathOperator{\lan}{lan}
\providecommand{\ol}[1]{\overline{#1}}
\providecommand{\prom}[1]{#1\textup-\mathsf{Prom}}
\providecommand{\rcProm}[1]{\prom{#1}_\textup{rc}}
\providecommand{\rcMonProf}[1]{#1\textup-\mathsf{MonProf}_{\textup{rc}}}
\providecommand{\lMonProf}[1]{#1\textup-\mathsf{MonProf}_{\textup l}}
\providecommand{\rcsMonProf}[1]{#1\textup-\mathsf{sMonProf}_{\textup{rc}}}
\providecommand{\rcDblProf}{\mathsf{DblProf}_{\textup{rc}}}
\providecommand{\Tens}{\bigotimes}
\providecommand{\inhom}[1]{\brks{#1}}
\providecommand{\fmc}{M}
\providecommand{\fsmc}{S}
\begin{document}
\begin{titlepage}
\vspace*{\stretch{1}}
\begin{center}\bf
{\LARGE Algebraic weighted colimits}\\
\vspace{2cm}
{\Large by}\\
\vspace{15mm}
{\LARGE Seerp Roald Koudenburg.}\\
\vspace{5cm} {\Large A thesis submitted for the degree of\\ Doctor of Philosophy.}\\
\vspace{4cm}
{\large Department of Pure Mathematics\\ School of Mathematics and Statistics\\ The University of Sheffield}\\
\bigskip
{\large November 2012.} \vspace{1cm}
\end{center}
\end{titlepage}

\pagenumbering{roman}

\chapter*{Abstract}
\addcontentsline{toc}{chapter}{Abstract}
	In this thesis weighted colimits in $2$-categories equipped with promorphisms are studied. Such colimits include most universal constructions with counits, like ordinary colimits in categories, weighted colimits in enriched categories, and left Kan extensions.
	
	In the first chapter we recall the notion of $2$-categories equipped with promorphisms (also simply called equipments), that provide a coherent way of adding bimodule-like morphisms to a $2$-category. In the second chapter we recall two ways of defining weighted colimits in equipments. Most important to us is their original definition, introduced by Wood, in equipments that are endowed with a closed structure. The second notion, of what we call pointwise weighted colimits, was introduced by Grandis and Par\'e. It requires no extra structure and generalises Street's notion of pointwise left Kan extensions in $2$-categories. The main result of the second chapter gives a condition, on closed equipments, under which these two notions coincide.
	
	In the third chapter we consider monads on equipments. The main idea of this thesis, given in the fourth chapter, generalises the notions of lax and colax morphisms, of algebras over a $2$-monad, to notions of lax and colax promorphisms, of algebras over a monad on an equipment. One of these, that of right colax promorphisms, is well suited to the construction of weighted colimits. In particular, given a monad $T$ on an equipment $\K$, we will show that $T$-algebras, colax $T$-morphisms and right colax $T$-promorphisms form a double category $\rcProm T$. Although weaker than equipments, double categories still allow definition of weighted colimits, and our main result states that the forgetful functor $\rcProm T \to \K$ lifts all weighted colimits whenever $\K$ is closed, under some mild conditions on $T$.

\chapter*{Acknowledgements}
\addcontentsline{toc}{chapter}{Acknowledgements}

	First of all I would like to thank Simon Willerton. He has been a very good supervisor to me, on one hand letting me free to choose what I wanted to study, while on the other hand urging me (sometimes many times, for I did not always listen!) to read papers that turned out crucial in solving the goals I had chosen. Most of all I would like to thank him for his encouragement and patience, especially during the long time before I finally obtained some `decent' results.
	
	Many other people have contributed to this thesis; I would like to thank David Barnes, Emily Burgunder, Thomas Cottrell, Nick Gurski, Richard Hepworth and Bruno Vallette for useful conversations. Also I would like to thank Thomas Athorne, Eugenia Cheng, Thomas, Jonathan Elliott, Nick and Simon, who formed the audience of the category seminars in which I could present a (still primitive) version of the work in this thesis. Especially I would like to thank Ieke Moerdijk for always being interested in my work, and the discussions we had every time he visited Sheffield, often leading me to new ideas.
	
	Finally I thank the University of Sheffield, for their financial support of my PhD.
	
	My time in Sheffield has been great! I have met so many nice people here, and from so many different parts of the world. Let me start by thanking my housemates Arjun, Harry, Leigh and Tim, with whom I enjoyed playing card games while listening to Zee Radio! Even before that, in my first year, Ramesh and Manoj: moving far away from home is so much easier when meeting such good friends so quickly. That I have enjoyed my work in the Hicks building so much is because of Laura, Ian, Tony, Constanze, Bruce, Zacky, Raj, Panos, Tom, James, Phil, Lakshmi, Shazia, Kharia, Nadia, Muhammad, Martin, Julio, Yimin, Susan, Greg, Vikki, Mark, Thomas, Tom, Jonathan, Serena, Ayesha, Mabruka, Boom, Mike and Samuel. And especially the jokers, whose friendship is very dear to me: Suli, Victor and Wahiba.
	
	Throughout my stay in Sheffield, how nice has it been to go home! To go back home for a long weekend every other month, to see my friends and family, to stroke the cat and to share dinner with my family, to eat the meal my mother has prepared! Suddenly everything is again like it used to be, when visiting Marie-Louise, Sjouke, Johnson, Wouter and Bart. I am very happy that now, after moving back home, I am able to see my parents and sister Myrthe every day!

	\tableofcontents

	\mainmatter
	\chapter*{Introduction}\addcontentsline{toc}{chapter}{Introduction}\markboth{INTRODUCTION}{INTRODUCTION}\label{chapter:introduction}
	The main subject of this thesis is a result of Getzler, that was given in \cite{Getzler09}, which gives conditions ensuring that the left Kan extension of a symmetric monoidal functor, between symmetric monoidal categories, is again symmetric monoidal. Briefly speaking, the aim is to generalise this result to similar situations, like that of double functors between double categories, that of continuous order preserving maps between ordered compact Hausdorff spaces, or that of monoidal globular functors between monoidal globular categories.
	
	Getzler's result concerns the well known notion of left Kan extension, which we recall first. Given a pair of functors $\map dAM$ and $\map jAB$, between small categories and with common source, loosely speaking this notion describes the `best approximation' to an extension $B \to M$ of $d$ along $j$. Formally, such approximations are considered to be pairs $(e, \zeta)$, where $\map eBM$ is a functor and $\nat\zeta d{e \of j}$ is a natural transformation, as on the left below. The \emph{left Kan extension} of $d$ along $j$ is the approximation $(l, \eta)$ that is the `best' in the sense that the transformation $\zeta$ of any other approximation $(e, \zeta)$ factors uniquely through $\eta$ as $\nat{\zeta'}le$:
\begin{equation} \label{equation:left Kan extension universality}
	\begin{split}
	\begin{tikzpicture}[textbaseline]
		\matrix(m)[math, column sep=2.5em, row sep=4em]{A \nc \nc B \\ \nc M \nc \\};
		\path[map]	(m-1-1) edge node[above] {$j$} (m-1-3)
												edge node[below left] {$d$} coordinate (p) (m-2-2)
								(m-1-3) edge node[below right] {$e$} (m-2-2)
								(p) edge[shorten >= 18pt, shorten <= 18pt] node[nat, below] {.} node[above, inner sep=2pt, xshift=-1pt] {$\zeta$} (m-1-3);
	\end{tikzpicture}
	\quad = \quad \begin{tikzpicture}[textbaseline]
		\matrix(m)[math, column sep=2.5em, row sep=4em]{A \nc \nc B \\ \nc M \nc \\};
		\path[map]	(m-1-1) edge node[above] {$j$} (m-1-3)
												edge node[below left] {$d$} coordinate (p) (m-2-2)
								(m-1-3) edge[bend right=20] node[above left, inner sep=1pt] {$l$} coordinate (q) (m-2-2)
												edge[bend left=20]  node[below right] {$e$} coordinate (r)(m-2-2)
								(q) edge[shorten >= 3.5pt, shorten <= 3.5pt] node[nat, below] {.} node[above right, inner sep=1pt] {$\zeta'$} (r);
		\path[map, transform canvas={xshift=-5pt, yshift=2pt}]	(p) edge[shorten >= 18pt, shorten <= 18pt] node[nat, below] {.} node[above, inner sep=2pt, xshift=-1pt] {$\eta$} (m-1-3);
		\draw (m-2-2) node[yshift=1pt] {$\phantom M$.};
	\end{tikzpicture}
	\end{split}
\end{equation}
	The pair $(l, \eta)$ is called \emph{universal} among the pairs $(e, \zeta)$, while the transformation $\eta$ is called the \emph{unit}; one also says that $\eta$ \emph{exhibits} $l$ as the left Kan extension of $d$ along $j$. Any two universal pairs are isomorphic.
	
	If the target category $M$ is cocomplete then the left Kan extension $l$ always exists, and can be constructed using `coends' in $M$:
\begin{equation} \label{equation:left Kan extension as coends}
	l(z) = \int^{y \in A} dy \tens B(jy, z),
\end{equation}
	where $dy \tens B(jy, z)$ denotes the copower $\coprod_{f \in B(jy, z)} dy$ in $M$. A coend is a certain colimit, see for example Section IX.6 of \cite{MacLane98}.	Choosing a left Kan extension $\map{l = \lan_j d}BM$ for every $\map dAM$ gives the left adjoint in the adjunction
\begin{equation} \label{equation:left Kan adjunction}
	\begin{split}
	\begin{tikzpicture}
		\matrix(m)[math, column sep=1.5em]{\lan_j \colon \fun AM \nc \fun BM \colon j^*, \\ };
		\path[transform canvas={yshift=3.5pt}, map] (m-1-1) edge (m-1-2);
		\path[transform canvas={yshift=-3.5pt}, map]	(m-1-2) edge (m-1-1);
		\path[color=white] (m-1-1) edge node[color=black, font=\scriptsize] {$\bot$} (m-1-2);
	\end{tikzpicture}
	\end{split}
\end{equation}
	where $\fun AM$ denotes the category of functors $A \to M$ and natural transformations, and where $j^*$ is given by precomposition with $j$. The units $\eta$ in \eqref{equation:left Kan extension universality} form the components of the unit transformation of this adjunction.

	More generally, the notion of left Kan extension above makes sense not only in the $2$-category $\Cat$ of categories, functors and natural transformations, but in any $2$-category. In particular we can consider Kan extensions in the $2$-category $\Cat_\tens$ of symmetric monoidal categories, symmetric monoidal functors and monoidal transformations. To be precise, here we mean `pseudomonoidal' categories and functors, whose coherence maps are invertible. Examples of such categories include the category of vector spaces and their tensor products, or the cartesian monoidal structure on any category with finite products. We thus obtain the notion of `symmetric monoidal left Kan extension' of a symmetric monoidal functor $\map dAM$ along a symmetric monoidal functor $\map jAB$: it is a universal pair $(l_\tens, \eta_\tens)$ among all pairs $(e_\tens, \zeta_\tens)$ that consist of a symmetric monoidal functor $\map{e_\tens}BM$ and a monoidal natural transformation $\nat{\zeta_\tens}d{e_\tens \of j}$.
	
	Writing $\map U{\Cat_\tens}\Cat$ for the forgetful $2$-functor, Getzer's result gives conditions on $j$ and $M$ under which any ordinary left Kan extension $l$ of $Ud$ along $Uj$, in $\Cat$, can be lifted to a symmetric monoidal left Kan extension $l_\tens$ of $d$ along $j$ in $\Cat_\tens$, such that $Ul_\tens = l$. Thus in that case symmetric monoidal left Kan extensions can be computed simply as ordinary left Kan extensions. The following is a reformulation of his result, which we have also restricted to the unenriched setting. The original statement will be recalled later, as \propref{Getzler's proposition}.
	\begin{nproposition}[Getzler]
		Let $\map jAB$ be a symmetric monoidal functor, and suppose that $M$ is cocomplete. If
		\begin{enumerate}[label=(\alph*)]
			\item the binary tensor product $\map{\tens}{M \times M}M$ preserves colimits in both variables;
			\item for each $x$ in $A$ and $z_1, \dotsc, z_n$ in $B$, the canonical map
				\begin{equation} \label{equation:Getzler's canonical map}
					\int^{(y_1, \dotsc, y_n) \in A^{\times n}} A(x, y_1 \tens \dotsb \tens y_n) \times \prod_{i = 1}^n B(jy_i, z_i) \to B(jx, z_1 \tens \dotsb \tens z_n),
				\end{equation}
				that is induced by applying $j$ to the map in $A$, tensoring the maps in $B$, and composing using the coherence map $j(y_1 \tens \dotsb \tens y_n) \xrar\iso jy_1 \tens \dotsb \tens jy_n$, is an isomorphism, 
		\end{enumerate}
		then, for any symmetric monoidal functor $\map dAM$, the ordinary left Kan extension of $Ud$ along $Uj$ can be lifted to a symmetric monoidal left Kan extension of $d$ along $j$. In particular, in this case the adjunction \eqref{equation:left Kan adjunction} lifts to an adjunction
		\begin{displaymath}
			\begin{tikzpicture}
				\matrix(m)[math, column sep=1.5em]{\lan_j \colon \psfun AM & \psfun BM \colon j^*, \\ };
				\path[transform canvas={yshift=3.5pt}, map] (m-1-1) edge (m-1-2);
				\path[transform canvas={yshift=-3.5pt}, map]	(m-1-2) edge (m-1-1);
				\path[color=white] (m-1-1) edge node[color=black, font=\scriptsize] {$\bot$} (m-1-2);
			\end{tikzpicture}
		\end{displaymath}
		where $\psfun AM$ denotes the category of symmetric monoidal functors $A \to M$ and monoidal transformations.
	\end{nproposition}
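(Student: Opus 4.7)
The plan is to construct the symmetric monoidal lift $l_\tens$ of $l$ directly from the coend formula \eqref{equation:left Kan extension as coends}, and then to verify its universality among monoidal approximations. For the coherence maps of $l_\tens$, I would compute $l(z_1) \tens \dotsb \tens l(z_n)$ and $l(z_1 \tens \dotsb \tens z_n)$ in two ways that meet at a common intermediate coend. On the one hand, hypothesis (a) lets one slide each tensor past its defining coend, so that, using the coherence isomorphism of the symmetric monoidal $d$,
\begin{displaymath}
	l(z_1) \tens \dotsb \tens l(z_n) \iso \int^{(y_1, \dotsc, y_n) \in A^{\times n}} d(y_1 \tens \dotsb \tens y_n) \tens \prod_{i = 1}^n B(jy_i, z_i).
\end{displaymath}
On the other hand, substituting the canonical isomorphism of hypothesis (b) into $l(z_1 \tens \dotsb \tens z_n) = \int^x dx \tens B(jx, z_1 \tens \dotsb \tens z_n)$ and then applying the co\ndash Yoneda lemma in $x$ yields the same coend. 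Composing gives a candidate coherence isomorphism $l(z_1) \tens \dotsb \tens l(z_n) \xrar\iso l(z_1 \tens \dotsb \tens z_n)$ for each $n \ges 0$, with the case $n = 0$ providing the unit coherence.

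Next I would check the symmetric monoidal axioms for $l_\tens$. By the universal property of coends, the associativity, unit and symmetry coherences all reduce to the corresponding diagrams for $d$, $j$ and $B$, combined with the naturality of the isomorphism in (b). One then verifies that the unit $\eta \colon d \Rar l \of j$ is a monoidal transformation: after restriction along the coend coprojection at $(y_1, \dotsc, y_n) \in A^{\times n}$ and specialisation to $z_i = jy_i$, the required square collapses to the definition of the coprojection, which respects the coherence isomorphism of $d$ by construction.

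For the universal property in the monoidal setting, let $(e_\tens, \zeta_\tens)$ be any monoidal approximation of $d$ along $j$. The ordinary universal property of $(l, \eta)$ yields a unique natural $\nat{\zeta'}le$ whose whiskering with $j$, followed by $\eta$, recovers $\zeta_\tens$. It remains to show that $\zeta'$ is itself monoidal: I would check this by comparing both sides of its monoidality square after restriction along the universal coprojection into the coend representation of $l(z_1 \tens \dotsb \tens z_n)$, where both sides reduce to the monoidality square for $\zeta_\tens$ at $(y_1, \dotsc, y_n)$; uniqueness of factorisations through the coend then forces the two to agree. The main obstacle will be organising the web of coherence and naturality data so that the candidate isomorphisms really are natural in $z_1, \dotsc, z_n$ and satisfy the higher symmetric monoidal axioms. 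The final adjunction statement then follows formally, since the bijection of monoidal transformations $d \Rar e_\tens \of j \leftrightarrow l_\tens \Rar e_\tens$ is natural in both variables.
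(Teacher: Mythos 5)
Your proposal is essentially correct, but it takes a genuinely different route from the one in the thesis. You prove the statement directly: you build the coherence isomorphisms of $l_\tens$ by computing $l(z_1) \tens \dotsb \tens l(z_n)$ and $l(z_1 \tens \dotsb \tens z_n)$ down to the common coend $\int^{y_1, \dotsc, y_n} d(y_1 \tens \dotsb \tens y_n) \tens \prod_i B(jy_i, z_i)$ using (a), (b) and co\ndash Yoneda, and then verify coherence and the universal property by restricting along coend coprojections. The thesis never argues this way; it obtains the proposition as the specialisation of the main lifting theorem (\thmref{right colax forgetful functor lifts all weighted colimits}) to the free symmetric strict monoidal category monad $\fsmc$ on $\Prof$, after recasting condition (b) as the statement that the companion $B(j, \id)$ is a right pseudopromorphism and condition (a) as invertibility of the canonical cell $\colim_{TJ}(m \of Td) \Rar m \of T(\colim_J d)$; the concrete unwinding of the abstract structure map is exactly your five-step composite (compare \eqref{equation:colax monoidal functors application}), and your verification that $\zeta'$ is monoidal corresponds to the final step of the proof of the main theorem showing that the factorisation through the unit is a $T$-cell. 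What your approach buys is self-containedness and elementarity; what the abstract approach buys is that all the coherence bookkeeping is done once, in \lemref{lemma:left hom lax structure cell}, and the argument then applies verbatim to double categories, globular categories, etc.

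One place where your sketch hides real work: the associativity coherence of $l_\tens$ requires not just naturality of the isomorphism in (b) but its compatibility with iterated tensoring, i.e.\ that splitting a map $jx \to (z_{11} \tens \dotsb) \tens \dotsb \tens (\dotsb \tens z_{nm_n})$ in one step agrees, up to the coend identifications, with splitting it in two stages. In the thesis this is the associativity axiom for the lax promorphism structure on $B(j, \id)$, and it is proved to hold automatically because that structure is induced from the colax monoidal structure of $j$ (\propref{colax morphisms correspond to lax companions}); in your direct argument you must check it by hand from the monoidal functor axioms for $j$ and functoriality of composition in $B$. It does hold, so this is a gap in exposition rather than a flaw, but it is the step where "the web of coherence and naturality data" you mention actually bites.
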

	Condition (b) means that any map $\map f{jx}{z_1 \tens \dotsb \tens z_n}$ in $B$ can be decomposed as
	\begin{equation} \label{equation:decomposition}
		f = \bigbrks{jx \xrar{jg} j(y_1 \tens \dotsb \tens y_n) \xrar\iso jy_1 \tens \dotsb \tens jy_n \xrar{h_1 \tens \dotsb \tens h_n} z_1 \tens \dotsb \tens z_n},
	\end{equation}
	where $\map gx{y_1 \tens \dotsb \tens y_n}$ lies in $A$ and the maps $\map{h_i}{jy_i}{z_i}$ lie in $B$. This decomposition should be unique in the sense that if a second tuple $(g', h'_1, \dotsc, h'_n)$ also decomposes $f$ in the same way, then it is identified with $(g, h_1, \dotsc, h_n)$ in the coend above. To get a feeling for this condition we shall consider a detailed example, in which we show how Getzler's proposition can be used to prove that the forgetful functor from `bicommutative Hopf algebras' to cocommutative coalgebras has a left adjoint.
	
\section*{Example: Hopf monoids}
	An important source of symmetric monoidal functors are `algebras over PROPs', as follows. Informally, a `PROP' is an algebraic structure that defines a type of algebra that lives in a symmetric monoidal category, and whose operations may have multiple inputs and multiple outputs. A typical example of such an algebra is a `Hopf monoid':
\begin{ndefinition}
	Let $M = (M, \tens, 1)$ be a symmetric monoidal category. A \emph{Hopf monoid} $H$ is a bimonoid
	$(H, \mu, \eta, \Delta, \eps)$ in $M$, with multiplication $\map\mu{H \tens H}H$ and unit $\map\eta 1H$, and comultiplication $\map\Delta H{H \tens H}$ and counit $\map\eps H1$, that is equipped with a morphism $\map SHH$ satisfying the identity
	\begin{displaymath}
		\mu \of (S \tens \id) \of \Delta = \eta \of \eps = \mu \of (\id \tens S) \of \Delta,
	\end{displaymath}
	of maps $H \to H$. An Hopf monoid $H$ is called \emph{bicommutative} if it is both commutative and cocommutative as a bimonoid.
\end{ndefinition}
	
	A \emph{PROP} is a symmetric strict monoidal category $\mathsf P$ that has the set of natural numbers $\NN = \set{0, 1, 2, \dotsc}$ as objects, on which its tensor product acts as addition: $n_1 \tens \dotsb \tens n_m = n_1 + \dotsb + n_m$. A \emph{$\mathsf P$-algebra} $A$, in a symmetric monoidal category $M$, is a symmetric pseudomonoidal functor $\map A{\mathsf P}M$, while morphisms of $\mathsf P$\ndash algebras are monoidal transformations. 
	
	As an example, Wadsley shows in \cite{Wadsley08} that the PROP $\mathsf H$, whose algebras are bicommutative Hopf monoids, has as hom-sets $\mathsf H(m, n)$ the sets of $n \times m$-matrices with integer coefficients. Composition in $\mathsf H$ is the usual matrix multiplication and, if $\map f{m_1}{n_1}$ and $\map g{m_2}{n_2}$ are two matrices, their tensor product $\map{f \tens g}{m_1 + m_2}{n_1 + n_2}$ is the block matrix $\bigpars{\begin{smallmatrix} f & 0 \\ 0 & g \\ \end{smallmatrix}}$. Note that $0$ is both the initial and terminal object of $\mathsf H$, because there is precisely one matrix with $n$ rows and no columns, and one matrix with no rows and $n$ columns. Any $\mathsf H$-algebra $\map A{\mathsf H}M$ induces the structure of a bicommutative Hopf monoid on the image $A = A(1)$ by taking the composites
\begin{align*}
	\mu &= \bigbrks{A \tens A \iso A(2) \xrar{A(11)} A}& \eta &= \bigbrks{1 \iso A(0) \xrar{A(0)} A} \\
	\Delta &= \bigbrks{A \xrar{A\bigpars{\begin{smallmatrix} 1 \\ 1 \\ \end{smallmatrix}}} A(2) \iso A \tens A} &	\eps &= \bigbrks{A \xrar{A(0)} A(0) \iso 1} \\
	S &= \bigbrks{A \xrar{A(-1)} A}, & &
\end{align*}
	where the isomorphisms are the coherence maps of $A$. That these composites satisfy the bimonoid axioms and the Hopf monoid axiom follows directly from various matrix equations. For example, the identity $\mu \of (S \tens \id) \of \Delta = \eta \of \eps$ is the $A$-image of the matrix equation
\begin{displaymath}
	\begin{pmatrix}1 & 1 \\\end{pmatrix} \begin{pmatrix}-1 & 0 \\ 0 & 1 \\\end{pmatrix} \begin{pmatrix}1 \\ 1 \\\end{pmatrix} = \begin{pmatrix}0 \\\end{pmatrix}.
\end{displaymath}
	Thus every strict $\mathsf H$-algebra induces a bicommutative Hopf monoid in $M$; that this extends to an isomorphism between $\psfun{\mathsf H}M$ and the category of bicommutative Hopf monoids in $M$ is shown in \cite{Wadsley08}.
	
	Commutative monoids are modelled by a simpler PROP $\mathsf F$, that is defined as follows. Writing $\brks n$ for the set $\set{1, \dotsc, n}$ if $n \geq 1$, and $\brks 0 = \emptyset$, the morphisms $m \to n$ of $\mathsf F$ are functions $\map f{\brks m}{\brks n}$. Composition is the usual composition of functions, while the tensor product $\map{f \tens g}{m_1 + m_2}{n_1 + n_2}$ is given by
	\begin{displaymath}
		(f \tens g)(i) = \begin{cases}
			f(i) & \text{if $i \leq m_1$;} \\
			g(i - m_1) + n_1 & \text{if $i > m_1$.}
		\end{cases}
	\end{displaymath}
	It is readily seen that $\mathsf F$-algebras in $M$ are precisely the commutative monoids in $M$. Dually algebras for the opposite category $\op{\mathsf F}$ are the cocommutative comonoids in $M$.
	
	Notice that the PROP $\op{\mathsf F}$ can be embedded into $\mathsf H$ by mapping each morphism $m \to n$, corresponding to a function $\map f{\brks n}{\brks m}$, to the $n \times m$-matrix $(f_{ij})$ that is given by $f_{ij} = 1$ if $fi = j$ and $0$ otherwise. Thus the image of $\op{\mathsf F}$ in $\mathsf H$ is the subPROP consisting of all matrices that contain precisely one non-zero entry in each row, whose coefficient is $1$. This gives a symmetric strict monoidal functor $\emb j{\op{\mathsf F}}{\mathsf H}$ whose induced action
	\begin{displaymath}
		\map{j^*}{\psfun{\mathsf H}M}{\psfun{\op{\mathsf F}}M}
	\end{displaymath}
	on the categories of algebras is simply the forgetful functor, that maps each bicommutative Hopf monoid to its underlying cocommutative comonoid.
	
	We claim that the embedding $j$ satisfies condition (b) of Getzler's proposition. To see this we have to show that every $(n_1 + \dotsb + n_s) \times m$-matrix $f$ in $\mathsf H$ decomposes as $f = (h_1 \tens \dotsb \tens h_s) \of jg$, where the $h_i$ are $n_i \times l_i$-matrices in $\mathsf H$ and $jg$ is a $(l_1 + \dotsb + l_s) \times m$-matrix in $j(\op{\mathsf F})$, as in \eqref{equation:decomposition} above. It is easy to find matrices $h_i$, $i = 1, \dotsc, s$, and $jg$ that satisfy this equation: for example we can take $l_i = m$ and let $h_i$ be the blocks of $f$ as shown below. As $jg$ we can then simply take the block matrix of identity matrices $I_m$ of dimension $m$, as on the right. 
	\begin{displaymath}
		f = \begin{pmatrix} h_1 \\ h_2 \\ \vdots \\ h_s \end{pmatrix} =	\begin{pmatrix}
				 h_1 & 0 & \hdots & 0 \\
				 0 & h_2 & \ddots & \vdots \\
				 \vdots & \ddots & \ddots & 0 \\
				 0 & \hdots & 0 & h_s \\
			\end{pmatrix} \begin{pmatrix} I_m \\ I_m \\ \vdots \\ I_m \end{pmatrix}
	\end{displaymath}
	It is now not hard to check that the assignment $f \mapsto (g, h_1, \dotsc, h_s)$ does in fact give an inverse to the canonical map \eqref{equation:Getzler's canonical map}, so that condition (b) is satisfied.

	We conclude that, for every cocomplete symmetric monoidal category $M$ satisfying condition (a), there exists an adjuction of categories of PROP-algebras
\begin{displaymath}
	\begin{tikzpicture}
		\matrix(m)[math, column sep=1.5em]{\lan_j \colon \psfun{\op{\mathsf F}}M & \psfun{\mathsf H}M \colon j^*. \\ };
		\path[transform canvas={yshift=3.5pt}, map] (m-1-1) edge (m-1-2);
		\path[transform canvas={yshift=-3.5pt}, map]	(m-1-2) edge (m-1-1);
		\path[color=white] (m-1-1) edge node[color=black, font=\scriptsize] {$\bot$} (m-1-2);
	\end{tikzpicture}
\end{displaymath}
	Hence we obtain a formal proof for the following corollary.
\begin{ncorollary}
	Let $M$ be a cocomplete symmetric monoidal category, whose binary tensor product $\map{\tens}{M \times M}M$ preserves colimits in both variables. The forgetful functor $\map{j^*}{\psfun{\mathsf H}M}{\psfun{\op{\mathsf F}}M}$, from bicommutative Hopf monoids in $M$ to cocommutative comonoids in $M$, has a left adjoint.
\end{ncorollary}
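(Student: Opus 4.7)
The plan is to obtain the corollary as a direct application of Getzler's proposition to the symmetric strict monoidal embedding $\emb j{\op{\mathsf F}}{\mathsf H}$ described in the preceding discussion. Under this identification, $\psfun{\op{\mathsf F}}M$ is the category of cocommutative comonoids in $M$, $\psfun{\mathsf H}M$ is the category of bicommutative Hopf monoids in $M$ by Wadsley's theorem, and the precomposition functor $j^*$ is the forgetful functor in question. Thus if Getzler's proposition applies, the asserted left adjoint is $\lan_j$.

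Condition (a) of Getzler's proposition is built into the hypotheses on $M$, so the only work is to verify condition (b): namely that for every $m$ and every tuple $(n_1, \dotsc, n_s)$ of objects of $\mathsf H$, the canonical map
\begin{displaymath}
	\int^{(l_1, \dotsc, l_s) \in (\op{\mathsf F})^{\times s}} \op{\mathsf F}(m, l_1 \tens \dotsb \tens l_s) \times \prod_{i = 1}^s \mathsf H(jl_i, n_i) \to \mathsf H(jm, n_1 \tens \dotsb \tens n_s)
\end{displaymath}
is a bijection of sets. First I would write out the block decomposition $f = (h_1 \tens \dotsb \tens h_s) \of jg$ already sketched in the excerpt, with $l_i = m$, each $h_i$ the $n_i \times m$-block of $f$, and $jg$ the stacked identity matrix; this produces a set-theoretic section of the canonical map. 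The substantive step is showing that any two decompositions of the same matrix $f$ become identified under the coend relation, which amounts to checking that the zigzag generated by morphisms $\map{g}{l_1 \tens \dotsb \tens l_s}{l'_1 \tens \dotsb \tens l'_s}$ in $\op{\mathsf F}$ connects any alternative decomposition to the canonical stacked-identity one; this is where the restriction to matrices in $j(\op{\mathsf F})$, namely those with a single entry $1$ per row, is essential.

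The main obstacle is therefore the uniqueness part of (b). To handle it cleanly I would formulate an arbitrary decomposition $f = (h'_1 \tens \dotsb \tens h'_s) \of jg'$ with $g'$ a function $\brks{l'_1 + \dotsb + l'_s} \to \brks m$ in $\op{\mathsf F}$, and construct an explicit morphism in $\op{\mathsf F}$ comparing it to the canonical decomposition. Concretely, the function $g'$ selects, for each row of each $h'_i$, a column index of $f$ into which that row is embedded; the columns not hit correspond to zero rows of $h'_i$, which by the Hopf monoid calculus in $\mathsf H$ can be reabsorbed via a morphism in $\op{\mathsf F}$. Checking that the resulting zigzag lies in the coend diagram then identifies the two decompositions in the coend, giving injectivity to complement the surjectivity already exhibited. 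Once this bijection is verified, Getzler's proposition yields the adjunction $\lan_j \ladj j^*$, and the corollary follows by composition with the identifications of PROP-algebras above.
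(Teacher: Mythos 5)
Your proposal follows the paper's argument exactly: apply Getzler's proposition to the embedding $\emb j{\op{\mathsf F}}{\mathsf H}$, note that condition (a) is the hypothesis on $M$, and verify condition (b) via the block decomposition $f = (h_1 \tens \dotsb \tens h_s) \of jg$ with $l_i = m$ and $jg$ the stacked identity matrices. The only difference is that you spell out the coend-identification step for uniqueness in somewhat more detail than the paper, which simply asserts that checking the inverse is ``not hard''.
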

	Of course, for an explicit description of the free bicommutative Hopf monoid on a cocommutative comonoid $C = (C, \Delta, \eps)$ in $M$, we have to unpack the coend \eqref{equation:left Kan extension as coends}, which may not be easy. In the noncommutative case, with $M$ the category of vector spaces, a description of free Hopf algebras generated by coalgebras has been given by Takeuchi \cite{Takeuchi71}.
	
	On the other hand we may consider the subPROP $\Sigma \subset \mathsf H$ that is generated by the symmetries of $\mathsf H$, and write $\emb i\Sigma{\mathsf H}$ for the embedding. In other words $\Sigma$ is the free symmetric strict monoidal category on the single object $1 \in \mathsf H$, and the composite
\begin{displaymath}
	\psfun{\mathsf H}M \xrar{i^*} \psfun\Sigma M \iso M
\end{displaymath} 
	is the forgetful functor from bicommutative Hopf monoids in $M$ to $M$. That this forgetful functor does not admit a left adjoint is considered `folklore', e.g.\ it is not possible to construct a `free Hopf algebra' on a vector space. In regard to Getzler's proposition, this manifests as the fact that the embedding $\emb i\Sigma{\mathsf H}$ does not satisfy condition (b). Indeed $\Sigma$ contains only isomorphisms, so that it is impossible to decompose the matrix $\map{\bigpars{\begin{smallmatrix} 1 \\ 1 \\ \end{smallmatrix}}}12$, that models the comultiplication, as a composition $1 \xrar g 2 \xrar{h_1 \tens h_2} 2$, with $g \in \Sigma$, as is required in \eqref{equation:decomposition}.

\section*{Similar settings}
	The question that Getzler's proposition answers can be asked in many similar settings, as follows. In abstract terms, we might consider any $2$-monad $T$ on any $2$-category $\C$, and let $\alg T$ denote the $2$-category of $T$-algebras, $T$-morphisms and $T$-cells (see \secref{section:monad examples}). Like in any $2$\ndash category, the notion of left Kan extension can be defined in both $\C$ and $\alg T$. Thus, given $T$-algebras $A$, $B$ and $M$, and $T$-morphisms $\map jAB$ and $\map dAM$, assuming their left Kan extension $l$ exists in $\C$ we may ask the question
	\begin{quote}
		``Under what conditions on $j$ and $M$ does $l$ lift to a left Kan extension in $\alg T$?''
	\end{quote}
	Getzler's result, in the way stated above, answers this question in the specific case of the `free symmetric strict monoidal category'-monad on the $2$-category $\C = \Cat$ of categories, functors and transformations. To convince the reader that this situation is common three interesting examples are listed below.

	As the first example we consider the ordered compact Hausdorff spaces as studied by Tholen \cite[Example 2]{Tholen09}. Such spaces are preordered sets equipped with a compact Hausdorff topology, which is compatible with the ordering. Since preordered sets can be thought of as categories enriched over the category $\2 = \pars{\bot \to \top}$ of truth values, they can be considered as the objects of the $2$-category $\C = \enCat\2$, where left Kan extensions of order preserving maps are defined. Precisely, the left Kan extension of a pair of order preserving maps $\map dAM$ and $\map jAB$ is given by
	\begin{displaymath}
		(\lan_j)d(z) = \sup\set{dx : jx \leq z}
	\end{displaymath}
	provided these suprema exist in $M$, see the discussion following \exref{example:weighted colimits in V-Prof}. Moreover there is a $2$-monad on $\enCat\2$, called the `ultrafilter'-monad, whose algebras are precisely the ordered compact Hausdorff spaces. In this case the question becomes: ``Given continuous order preserving maps $j$ and $d$, when is $\lan_j d$ above a continuous left Kan extension?''

	The second example is that of double categories, which we will use throughout the thesis. Informally, a double category $\K$ consists of
	\begin{itemize}
		\item[-] objects $A, B, C, \dotsc,$
		\item[-] vertical morphisms denoted $\map fAB$,
		\item[-] horizontal morphisms denoted $\hmap JAB$, with a barred arrow,
		\item[-] cells $\phi$ that are shaped as squares, with vertical and horizontal morphisms as edges, as shown below:
		\begin{displaymath}
			\begin{tikzpicture}
				\matrix(m)[math175em]{A & B \\ C & D. \\};
				\path[map]  (m-1-1) edge[barred] node[above] {$J$} (m-1-2)
														edge node[left] {$f$} (m-2-1)
										(m-1-2) edge node[right] {$g$} (m-2-2)
										(m-2-1) edge[barred] node[below] {$K$} (m-2-2);
				\path[transform canvas={shift={($(m-1-2)!(0,0)!(m-2-2)$)}}] (m-1-1) edge[cell] node[right] {$\phi$} (m-2-1);
			\end{tikzpicture}
		\end{displaymath}
	\end{itemize}
	Pairs of vertical morphisms can be composed, as can pairs of horizontal morphisms, while pairs of cells can be composed both vertically, along a common horizontal edge, and horizontally, along a common vertical edge. In `strict' double categories both compositions are strictly associative. Ones in which horizontal composition is only associative up to coherent invertible cells are called `pseudo' double categories; these we will use often. Underlying any double category $\K$ is a diagram
	\begin{displaymath}
		\begin{tikzpicture}
			\matrix(m)[math175em]{\K_1 & \K_0 \\};
    \path[map]  (m-1-1) edge[transform canvas={yshift=2pt}] node[above] {$L$} (m-1-2)
                        edge[transform canvas={yshift=-2pt}] node[below] {$R$} (m-1-2);
		\end{tikzpicture}
	\end{displaymath}
	of categories and functors, as follows. The category $\K_0$ consists of the objects and vertical morphisms of $\K$, while $\K_1$ has horizontal morphisms as objects; its maps $J \to K$ are cells $\phi$ with horizontal source $J$ and target $K$. The functors $L$ and $R$ map $\hmap JAB$ to its source $A$ and target $B$ respectively, while the cell $\phi$ above is mapped to $L\phi = f$ and $R\phi = g$. Thus the assignment $\K \mapsto (\K_1 \rightrightarrows \K_0)$ forgets the horizontal composition of $\K$. The diagram $\K_1 \rightrightarrows \K_0$ above can be considered as an internal category in the category $\ps{\globe_1}$ of presheaves on $\globe_1 = \pars{0 \rightrightarrows 1}$ and thus as an object of the $2$-category $\inCat{\ps{\globe_1}}$ of internal categories, internal functors and internal natural transformations in $\ps{\globe_1}$, where we have a notion of Kan extension. Moreover there is a $2$-monad on $\inCat{\ps{\globe_1}}$, that is induced by the `free category'-monad on graphs, whose algebras are the double categories as introduced above. Thus, taking double functors $\map S\K\L$ and $\map F\K\M$, we can forget the structure of horizontal compositions and regard them as functors $S'$ and $F'$ in $\inCat{\ps{\globe_1}}$. Supposing that the left Kan extension $L'$ of $F'$ along $S'$ exists in $\inCat{\ps{\globe_1}}$, this leads to the same question: ``When can $L'$ be lifted to a left Kan extension of double functors?''
	
	Generalising the previous example, the globular categories considered by Batanin in his paper \cite{Batanin98} are contravariant functors $\op{\globe} \to \cat$, where $\globe$ is the `globe category'. Batanin considers a  monoidal structure on such categories and uses the resulting `monoidal globular categories' to define weak $n$-categories. This situation too is like the ones described above: there is a $2$-monad on $\inCat{\ps\globe}$, induced by the `free $\omega$-category'-monad on globular sets, whose algebras are the monoidal globular categories.

	The aim of the thesis is to answer the question asked above in a general, conceptual way that, among others, can be applied to each of the three examples given above.

\section*{The idea}
	In each of the three examples given above, the $2$-category $\C$ and the $2$-monad $T$ that we considered are part of a bigger structure, in the following sense.
	\begin{quote}
		There exists a `closed' pseudo double category $\K$ that `equips $\C$ with promorphisms'; moreover $T$ can be extended to form a monad $T'$ on $\K$.
	\end{quote}
	To understand what this means, we first remark that each double category $\K$ induces a $2$-category $V(\K)$ consisting of its objects, vertical morphisms and vertical cells (cells whose horizontal source and target are identity morphisms). That $\K$ `equips $\C$ with promorphisms' means that $V(\K) \iso \C$ and that each morphism $\map fAB$ in $\C$ has both a `companion' $\hmap{B(f, \id)}AB$ and a `conjoint' $\hmap{B(\id, f)}BA$ in $\K$ (for the details see the comments following \defref{definition:equipment}). One can think of the companion $B(f, \id)$ as being the horizontal morphism `isomorphic' to $f$, and the conjoint $B(\id, f)$ being the horizontal morphism `adjoint' to $f$. In this case the horizontal morphisms of $\K$ are called the `promorphisms of $\C$', while $\K$ is called an \emph{equipment}.
	
	The typical example of a $2$-category that can be equipped with promorphisms is the $2$-category $\Cat$ of categories, functors and natural transformations. Here the promorphisms are the profunctors: a profunctor \mbox{$\hmap JAB$} is simply a functor $\map J{\op A \times B}\Set$. That is, the pseudo double category $\K = \mathsf{Prof}$ consists of categories, functors, profunctors and natural transformations. The composition of profunctors is similar to the composition of relations (see \exref{example:unenriched profunctors}), while the companion $\hmap{B(f, \id)}AB$ and conjoint $\hmap{B(\id, f)}BA$, of a functor $\map fAB$, are given by the representable profunctors $(x, y) \mapsto B(fx, y)$ and $(y,x) \mapsto B(y, fx)$.
	
	Furthermore, a double category $\K$ is `closed' whenever, for each promorphism $\hmap JAB$, the functors given by pre- and postcomposition with $J$ both have right adjoints, which generalises the notion of a closed monoidal category. The equipment $\mathsf{Prof}$ is closed: the right adjoint of precomposition with a profunctor $\hmap JAB$ maps any profunctor $\hmap KAC$ to the `left hom' profunctor $\hmap{J \lhom K}BC$ that is given by
	\begin{displaymath}
		\pars{J \lhom K}(y, z) = \set{{}\text{natural transformations } J(\dash, y) \natarrow K(\dash, z)}.
	\end{displaymath}
	These left homs allow us to define many kinds of `well behaved' weighted colimits in $\Cat$, as follows. Given a profunctor $\hmap JAB$ (the weight) as well as a functor $\map dAM$ (the diagram), the $J$-weighted colimit of $d$, if it exists, is a functor $\map{\colim_J d}BM$ that `represents' the left hom $J \lhom M(d, \id)$ in the following sense: there is a natural isomorphism of profunctors
	\begin{displaymath}
		M(\colim_J d, \id) \iso J \lhom M(d, \id).
	\end{displaymath}
	One can verify that, taking various kinds of weights $J$, this generalises the usual notions of colimits in $\Cat$, as described in \tableref{table:weighted colimits in Cat}. For us, the most important of these is that colimits weighted by companions are left Kan extensions, that is there is a natural isomorphism
	\begin{equation}\label{equation:left Kan extension and colimits weighted by companions}
		\map{\lan_j d \iso \colim_{B(j, \id)} d}BM,
	\end{equation}
	for any pair of functors $\map dAB$ and $\map jAB$.
	In fact the functor $\colim_{B(j, \id)} d$ satisfies a stronger property, that defines it as the `pointwise' left Kan extension of $d$ along $j$, see \cite[Corollary X.5.4]{MacLane98}. We shall discuss the notion of pointwise left Kan extensions at the end of this introduction.
	\begin{table}
	\begin{center}
	\begin{tabular}{@{}ll@{}}
		choice of weight $\hmap JAB$ & resulting weighted colimit \\
		\hline
		$B = *$ (terminal category) & ordinary weighted colimit \\
		$J = B(j, \id)$ where $\map jAB$ & `pointwise' left Kan extension \\
		$J = *(!, \id)$ where $\map !A*$ & ordinary (conical) colimit
	\end{tabular}
	\caption{Various types of weighted colimits in $\Cat$.} \label{table:weighted colimits in Cat}
	\end{center}
	\end{table}
	
	For well behaved $\V$ the $2$-category $\enCat\V$ of $\V$-enriched categories can be equipped with `$\V$-enriched profunctors', generalising the profunctors of $\Cat$, and the resulting double category $\enProf\V$ will be closed. The same is true for the $2$-category $\inCat\E$ of internal categories in a well behaved category $\E$ with finite limits (e.g.\ $\E$ is a presheaf category). In each of these cases the left homs can be used to define weighted colimits, just like we did in the case of $\Cat$ above.
	
	We remark that weighted colimits can also be defined in pseudo double categories, that are neither equipments nor closed: Grandis and Par\'e consider in \cite{Grandis-Pare08} the notion of, what they call, `Kan extensions' in ordinary double categories. We shall see in \propref{weighted colimits equivalent to top absolute left Kan extensions} that, in a closed pseudo double category, every weighted colimit, in our sense, is a Kan extension in their sense.
	
	Thus, given a $2$-monad $T$ on $\C$, we will assume there exists a closed double category $\K$ that equips $\C$ with promorphisms, together with a monad $T'$ on $\K$ that restricts to $T$ on $\C$. In each of the examples given in the previous section such $\K$ and $T'$ exist. Remember that, in general, a `colax morphism' $A \to B$ between two $T$-algebras $A$ and $B$ is a morphism $\map fAB$ in $\C$ that is equipped with a `structure cell' $\cell{\bar f}{f \of a}{b \of Tf}$, as shown on the left below, where $\map a{TA}A$ and $\map b{TB}B$ define the $T$-algebra structures on $A$ and $B$. This structure cell is required to satisfy an associativity and unit axiom. There is a dual notion of `lax morphism' $\map gAB$, in which the direction of $\bar g$ is reversed, as shown on the right below. Colax and lax morphisms whose structure cells are invertible are called `pseudomorphisms'.
	\begin{displaymath}
		\begin{tikzpicture}[baseline]
			\matrix(m)[math175em]{ TA & A \\ TB & B \\};
			\path[map]	(m-1-1) edge node[above] {$a$} (m-1-2)
													edge node[left] {$Tf$} (m-2-1)
									(m-1-2) edge node[right] {$f$} (m-2-2)
									(m-2-1) edge node[below] {$b$} (m-2-2);
			\path	(m-1-2) edge[cell, shorten >=7pt, shorten <= 7pt] node[below right] {$\bar f$} (m-2-1);
		\end{tikzpicture}
		\qquad\qquad\qquad\qquad\begin{tikzpicture}[baseline]
			\matrix(m)[math175em]{ TA & TB \\ A & B \\};
			\path[map]	(m-1-1) edge node[above] {$Tg$} (m-1-2)
													edge node[left] {$a$} (m-2-1)
									(m-1-2) edge node[right] {$b$} (m-2-2)
									(m-2-1) edge node[below] {$g$} (m-2-2);
			\path	(m-1-2) edge[cell, shorten >=8pt, shorten <=8pt] node[below right] {$\bar g$} (m-2-1);
		\end{tikzpicture}
	\end{displaymath}
	In the case of the `free symmetric strict monoidal category'-monad on $\Cat$, a colax morphism $\map fAB$ is a symmetric colax monoidal functor, that comes equipped with structure maps $f(x_1 \tens \dotsb \tens x_n) \to fx_1 \tens \dotsb \tens fx_n$. Getzler's proposition involves `pseudomonoidal functors', for which these structure maps are invertible.
	
	The main idea of this thesis is to generalise the above notions, of morphisms between $T$-algebras in $\C$, to notions of promorphisms between $T'$-algebras in $\K$. For example, we shall define a `lax promorphism' $A \slashedrightarrow B$ to be a promorphism $\hmap JAB$ that comes equipped with a structure cell
	\begin{displaymath}
		\begin{tikzpicture}
			\matrix(m)[math175em]{ T'A & T'B \\ A & B; \\};
			\path[map]	(m-1-1) edge[barred] node[above] {$T'J$} (m-1-2)
													edge node[left] {$a$} (m-2-1)
									(m-1-2) edge node[right] {$b$} (m-2-2)
									(m-2-1) edge[barred] node[below] {$J$} (m-2-2);
			\path[transform canvas={shift=($(m-1-2)!0.5!(m-2-2)$)}]	(m-1-1) edge[cell] node[right] {$\bar J$} (m-2-1);
		\end{tikzpicture}
	\end{displaymath}
	compare the structure cell $\bar g$ of a lax morphism above. We shall see that colax structures on a morphism $\map fAB$ correspond precisely to lax structures on its companion $\hmap{B(f, \id)}AB$.
	
	Considering lax promorphisms $\hmap JAB$ whose structure cells $\bar J$ are invertible in $\K_1$ (the category of horizontal morphisms and cells) does not give the right notion of pseudopromorphism, because $\bar J$ being invertible in $\K_1$ implies that the structure maps $\map a{T'A}A$ and $\map b{T'B}B$ are invertible. Instead, we shall consider the two horizontal cells (i.e.\ cells with identities as vertical source and target)
	\begin{displaymath}
		\begin{tikzpicture}[textbaseline]
			\matrix(m)[math175em]{T'A & T'B & B \\ T'A & A & B \\};
			\path[map]	(m-1-1) edge[barred] node[above] {$T'J$} (m-1-2)
									(m-1-2) edge[barred] node[above] {$B(b, \id)$} (m-1-3)
									(m-2-1) edge[barred] node[below] {$A(a, \id)$} (m-2-2)
									(m-2-2) edge[barred] node[below] {$J'$} (m-2-3);
			\path				(m-1-1) edge[eq] (m-2-1)
									(m-1-2) edge[cell] node[right] {$\lambda\bar J$} (m-2-2)
									(m-1-3) edge[eq] (m-2-3);
		\end{tikzpicture}
		\qquad\qquad\text{and}\qquad\qquad\begin{tikzpicture}[textbaseline]
			\matrix(m)[math175em]{A & T'A & T'B \\ A & B & T'B \\};
			\path[map]	(m-1-1) edge[barred] node[above] {$A(\id, a)$} (m-1-2)
									(m-1-2) edge[barred] node[above] {$T'J$} (m-1-3)
									(m-2-1) edge[barred] node[below] {$J$} (m-2-2)
									(m-2-2) edge[barred] node[below] {$B(\id, b)$} (m-2-3);
			\path				(m-1-1) edge[eq] (m-2-1)
									(m-1-2) edge[cell] node[right] {$\rho\bar J$} (m-2-2)
									(m-1-3) edge[eq] (m-2-3);
		\end{tikzpicture}
	\end{displaymath}
	that correspond to $\bar J$ under the `orthogonal flipping' operation that was introduced by Grandis and Par\'e in \cite[Section 1.6]{Grandis-Pare04} (see also \propref{left and right cells}), and call the lax promorphism $J$ a `left pseudopromorphism' or `right pseudopromorphism' when respectively $\lambda\bar J$ or $\rho\bar J$ is invertible. For example, condition (b) of Getzler's proposition is equivalent to asking that the lax structure on the companion $B(j, \id)$ of the monoidal functor $\map jAB$ is right pseudo.
	
	Under a mild condition on the monad $T'$, the two notions of pseudopromorphism lead to corresponding notions of colax promorphism: a `left' and a `right' variant. In \chapref{chapter:algebraic promorphisms} we will see that the second variant, that of `right colax promorphisms', is well suited to constructing algebraic weighted colimits. More precisely, we will show in \secref{section:algebraic promorphisms} that $T'$-algebras, colax morphisms, right colax promorphisms form a pseudo double category $\rcProm T$, so that the main result of the thesis can be stated as follows.

\begin{maintheorem}
	Let $T'$ be a `right suitable normal' monad on a closed equipment $\K$. The forgetful functor $\map{U^{T'}}{\rcProm {T'}}\K$ `lifts' all weighted colimits. Moreover its lift of a weighted colimit $\map{\colim_J d}BM$, where $\map dAM$ is a pseudomorphism and $\hmap JAB$ is a right pseudopromorphism, is a pseudomorphism whenever the canonical vertical cell
	\begin{equation} \label{equation:canonical cell}
		\colim_{T'J} (m \of T'd) \Rar m \of T'(\colim_J d)
	\end{equation}
	is invertible, where $\map m{T'M}M$ is the structure map of $M$.
\end{maintheorem}
	As before, that $U^{T'}$ lifts all weighted colimits means that, if the weighted colimit $k = \colim_{U^{T'}J} U^{T'} d$ exists in $\K$ then the weighted colimit $l = \colim_J d$ exists in $\rcProm{T'}$, and can be chosen such that $U^{T'} l = k$. The assumption that $T'$ is a `right suitable normal' monad is the mild condition on $T'$ that was mentioned above, which is necessary to be able to define right colax promorphisms. 
	
	If we take for $T'$ the extension of the `free symmetric strict monoidal category'-monad on $\Cat$, to the pseudo double category $\Prof$ of profunctors, then the second assertion of the theorem above reduces to Getzler's proposition. More precisely, for monoidal functors $\map dAM$ and $\map jAB$, condition (b) of the latter means that the companion $B(j, \id)$ of $j$ is a right pseudopromorphism while condition (a) is equivalent to the invertibility of \eqref{equation:canonical cell}.

\section*{Pointwise weighted colimits}
	In his paper \cite{Street74} Street gave the following refinement of the notion of left Kan extension, in any $2$-category $\C$ that has `comma objects' (a certain $2$-limit; see \defref{definition:comma object}), like the $2$-category $\inCat\E$ of categories, functors and transformations internal to a category $\E$ with finite limits. Street says that the cell $\cell\eta d{l \of j}$ in the diagram below exhibits $l$ as the \emph{pointwise} left Kan extension of $d$ along $j$ if, for each $\map fCB$, the composition of cells below exhibits $l \of f$ as the left Kan extension of $d \of \pi_A$ along $\pi_C$, where $\pi$ defines $j \slash f$ as the comma object of $j$ and $f$.
	\begin{displaymath}
		\begin{tikzpicture}
			\matrix(m)[math2em, row sep=0.75em, column sep=2.7em]{j \slash f \nc A \nc[-0.5em] \\ \nc \nc M \\ C \nc B \nc \\};
			\path[map]	(m-1-1) edge node[above] {$\pi_A$} (m-1-2)
													edge node[left] {$\pi_C$} (m-3-1)
									(m-1-2) edge node[above right] {$d$} (m-2-3)
													edge node[left] {$j$} (m-3-2)
									(m-3-1) edge node[below] {$f$} (m-3-2)
									(m-3-2) edge node[below right] {$l$} (m-2-3);
			\path				(m-1-2) edge[cell, shorten >=14.5pt, shorten <=14.5pt] node[above left] {$\pi$} (m-3-1)
									($(m-1-2)!0.5!(m-2-3)$) edge[cell, shorten >=9pt, shorten <=9pt] node[below right, yshift=3pt] {$\eta$} (m-3-2);
		\end{tikzpicture}
	\end{displaymath}
	For example the left Kan extensions in $\Cat$, that are given as colimits weighted by companions, as in \eqref{equation:left Kan extension and colimits weighted by companions}, are pointwise.
	
	As we remarked before, Grandis and Par\'e have defined left Kan extensions in any pseudo double category in \cite{Grandis-Pare08}; in the same paper they also define the stronger pointwise variants, by generalising Street's notion above and using `double comma objects'. Modifying their definition somewhat, so that it fits in better with our notion of weighted colimits, we likewise consider \emph{pointwise} weighted colimits in \secref{section:pointwise weighted colimits}. The main result of \chapref{chapter:weighted colimits} (\thmref{weighted colimits are pointwise if double commas are strong}) gives a condition on equipments under which pointwise weighted colimits coincide with the ordinary ones. For example, the pseudo double category $\inProf\E$ of internal profunctors in a category $\E$ with finite limits, that equips $\inCat\E$ with promorphisms, satisfies this condition.
	
	Finally, in \thmref{algebraic weighted colimits are pointwise if double commas are strong} we shall prove that, roughly, given a monad $T$ on an equipment $\K$, the equipment  $\rcProm T$ of right colax $T$-profunctors satisfies the condition of \thmref{weighted colimits are pointwise if double commas are strong} whenever $\K$ does, so that in that case weighted colimits in $\rcProm T$ are also pointwise.

  \chapter{$2$-Categories equipped with promorphisms}
	This chapter contains the preliminary definitions that will be used in the coming chapters. We start by recalling the definition of pseudo double categories and, using this, we recall the definition of $2$-categories equipped with promorphisms. These, also simply called `equipments', will form the main setting of the theory presented here. Throughout, the main examples that we consider are the equipments $\enProf\V$ of $\V$\ndash enriched categories and $\V$-profunctors, where $\V$ is some `well behaved' symmetric monoidal category, and the equipments $\inProf\E$ of categories internal to $\E$ and internal profunctors, where $\E$ is some `well behaved' category with finite limits. 
	
	For most parts of this chapter we follow the papers \cite{Shulman08} by Shulman and \cite{Cruttwell-Shulman10} by Cruttwell and Shulman, borrowing much of their notation as well. In particular we name double categories after their horizontal morphisms, like they do. However, we shall have no need to consider the more general `virtual double categories' that are the main subject of \cite{Cruttwell-Shulman10}. The notion of $2$-categories equipped with promorphisms was originally introduced by Wood \cite{Wood82}.

\section{Pseudo double categories}
	In this section we recall the formal definition of the double categories, that we described in the introduction. We will consider the `pseudo' variants, whose horizontal compositions satisfy the associativity and unit axioms only up to coherent invertible cells.
	
	In the introduction double categories were described as generalisations of $2$\ndash cat\-egories where, in contrast to the ordinary case, there are two types of morphisms, one drawn vertically and the other drawn horizontally. Moreover the cells of such double categories are shaped like squares, with both a pair of vertical morphisms as source and target, as well as a pair of horizontal morphisms. In the introduction we briefly mentioned the example of the double category $\Prof$, with categories as objects and functors and profunctors as vertical and horizontal morphisms. Before recalling the formal definition of double categories we shall describe, in more detail, another typical example.
	
	Let us consider rings $A, B, \dotsc$: besides the usual ring homomorphisms $\map fAB$ between rings there are other objects that we may consider as morphisms $A \to B$, namely $(A, B)$-bimodules. Remember that an $(A, B)$-bimodule $J$ is an abelian group $J$ equipped with a left action $\map l{A \tens J}J\colon (a, x) \mapsto \act a x$ and a right action $\map r{J \tens B}J$ that are associative and unital, and that commute (the tensor products here are tensor products of abelian groups). To distinguish it from ring homomorphisms we shall write $\hmap JAB$. Of course, bimodules only deserve to be considered as morphisms if they allow composition of `composable bimodules', as well as `unit bimodules' that act as units for this composition. But they do: given two bimodules $\hmap JAB$ and $\hmap HBC$ we can take the tensor product $J \tens_B H$ as the composite $\hmap{J \hc H}AC$ and, since we have $J \tens_B B \iso J$ and $A \tens_A J \iso J$, we can take the unit $(A,A)$\ndash bimodule $A \slashedrightarrow A$ to be the ring $A$ itself, considered as an $(A, A)$-bimodule. Thus, to define composition for bimodules we have to pick, for every composable pair $J$ and $H$, a tensor product $J \tens_B H$, that is, formally, a coequaliser of the parallel pair
	\begin{equation} \label{tensor product}
		\begin{split}
		\begin{tikzpicture}
			\matrix(m)[math]{ J \tens B \tens H \nc[1.25em] J \tens H \nc J \tens_B H \\ };
			\path[map]	(m-1-1) edge[transform canvas={yshift=2pt}] node[above] {$r \tens \id$} (m-1-2)
													edge[transform canvas={yshift=-2pt}] node[below] {$\id \tens l$} (m-1-2)
									(m-1-2) edge (m-1-3);
		\end{tikzpicture}
		\end{split}
	\end{equation}
	in the category of abelian groups. Notice that, if we just pick any tensor product $J \tens_B H$ for each pair of composable bimodules $J$ and $H$ (and we do), then the isomorphism $A \tens_A J \iso J$ need not be an identity. Thus the composition of bimodules is not as strict as we are used to: instead of the composite of $A \tens_A J$ being equal to $J$ it is isomorphic to $J$. The same is true for associativity: the universal property of the coequalisers above implies that $(J \tens_B H) \tens_C K \iso J \tens_B (H \tens_C K)$, but this will in general not be an identity. However, since all these isomorphisms are `canonical', in the sense that they are obtained by universal properties, the family of all such isomorphisms is `coherent': an example of this coherence is the commuting of the famous pentagon diagram of Mac Lane \cite[Section VII.1]{MacLane98}, which states that the two ways of using the isomorphisms, that compare the tensor products of three factors, to obtain an isomorphism $\bigpars{(J \tens_B H) \tens_C K} \tens_D L \iso J \tens_B \bigpars{H \tens_C (K \tens_D L)}$, that compares the tensor product of four, coincide.
	
	Thus, to summarise, $(A, B)$-bimodules can be considered as morphisms $A \slashedrightarrow B$ which can be composed, but the resulting composition will only be associative and unital `up to coherent isomorphisms'. Now, if we think about the mathematical object containing rings, ring homomorphisms and bimodules, it is natural to wonder how morphisms of bimodules fit in. The most general of these are morphisms $J \to K$ from an $(A, B)$-bimodule $J$ to a $(C, D)$-bimodule $K$, as follows. Given two ring homomorphisms $\map fAC$ and $\map gBD$, an \emph{$(f, g)$-bilinear} map $\map\phi JK$ is a homomorphism $J \to K$ of abelian groups such that $\phi(\act a x) = \act{f(a)}{\phi(x)}$ and $\phi(\act x b) = \act{\phi (x)}{g(b)}$. Drawing ring homomorphisms vertically and bimodules horizontally, we can depict the bilinear map $\phi$ as a square shaped cell
\begin{displaymath}
  \begin{tikzpicture}
    \matrix(m)[math175em]{A & B \\ C & D. \\};
    \path[map]  (m-1-1) edge[barred] node[above] {$J$} (m-1-2)
                        edge node[left] {$f$} (m-2-1)
                (m-1-2) edge node[right] {$g$} (m-2-2)
                (m-2-1) edge[barred] node[below] {$K$} (m-2-2);
    \path[transform canvas={shift={($(m-1-2)!(0,0)!(m-2-2)$)}}] (m-1-1) edge[cell] node[right] {$\phi$} (m-2-1);
  \end{tikzpicture}
\end{displaymath}
	This is a useful representation of such maps, for it allows us to represent the composite $\cell{\psi \of \phi}JL$, of $\phi$ and a second bilinear map $\cell\psi KL$, as the two squares on the left below. We call $\psi \of \phi$ the \emph{vertical composite} of $\phi$ and $\psi$; as the diagram below suggests it is an $(h \of f, k \of g)$-bilinear map $J \to L$. 
\begin{displaymath}
	\begin{tikzpicture}[baseline]
		\matrix(m)[math175em]{A \nc B \\ C \nc D \\ E \nc F \\};
		\path[map]	(m-1-1) edge[barred] node[above] {$J$} (m-1-2)
												edge node[left] {$f$} (m-2-1)
								(m-1-2) edge node[right] {$g$} (m-2-2)
								(m-2-1) edge[barred] node[below] {$K$} (m-2-2)
												edge node[left] {$h$} (m-3-1)
								(m-2-2) edge node[right] {$k$} (m-3-2)
								(m-3-1) edge[barred] node[below] {$L$} (m-3-2);
		\path[transform canvas={shift=(m-2-1)}]
								(m-1-2) edge[cell] node[right] {$\phi$} (m-2-2)
								(m-2-2) edge[cell, transform canvas={yshift=-3pt}] node[right] {$\psi$} (m-3-2);
	\end{tikzpicture}
	\quad\qquad\qquad\qquad\qquad\begin{tikzpicture}[baseline]
		\matrix(m)[math175em]{A \nc B \nc G \\ C \nc D \nc H \\};
		\path[map]	(m-1-1) edge[barred] node[above] {$J$} (m-1-2)
												edge node[left] {$f$} (m-2-1)
								(m-1-2) edge[barred] node[above] {$M$} (m-1-3)
												edge node[right] {$g$} (m-2-2)
								(m-1-3) edge node[right] {$l$} (m-2-3)
								(m-2-1) edge[barred] node[below] {$K$} (m-2-2)
								(m-2-2) edge[barred] node[below] {$N$} (m-2-3);
		\path[transform canvas={shift=($(m-1-1)!0.5!(m-2-2)$)}]
								(m-1-2) edge[cell] node[right] {$\phi$} (m-2-2)
								(m-1-3) edge[cell] node[right] {$\chi$} (m-2-3);
	\end{tikzpicture}
\end{displaymath}
	Likewise given a third $(g, l)$-bilinear map $\cell\chi MN$ as on the right above, we can construct the \emph{horizontal composite} $\cell{\phi \tens_g \chi}{J \tens_B M}{K \tens_D N}$, by taking the unique factorisation of the composite
\begin{displaymath}
	J \tens M \xrar{\phi \tens \chi} K \tens N \to K \tens_D N
\end{displaymath}
	through the coequaliser $J \tens M \to J \tens_B M$. Again this composition is represented nicely by drawing the cells of $\phi$ and $\chi$ side-by-side, as we have done above.
	
	We conclude that to capture rings, ring homomorphisms, bimodules and bilinear maps between bimodules in a `double category' $\mathsf{Mod}$, such a double category should have rings as objects, and two types of morphisms: the vertical ones being ring homomorphisms and the horizontal ones being bimodules. Moreover it should have $2$-dimensional structure that consists of square shaped cells representing the bilinear maps between bimodules. Any two morphisms of the same type can be composed in $\mathsf{Mod}$, while any pair of `adjacent' cells can be composed, either horizontally or vertically. Finally we have seen that the composition of horizontal morphisms need not be strictly associative and unital, but only up to isomorphisms of bimodules (that is cells). This is exactly the description of a `pseudo double category', whose formal definition we shall now recall.
	
	Consider a diagram of an internal category
\begin{equation} \label{diagram:internal category}
  \begin{split}
  \begin{tikzpicture}
    \matrix(m)[math, column sep=2em]{\K_1 \times_{\K_0} \K_1 \nc \K_1 \nc \K_0, \\};
    \path[map]  (m-1-1) edge node[above] {$\hc$} (m-1-2)
                (m-1-2) edge[transform canvas={yshift=5pt}] node[above] {$L$} (m-1-3)
                        edge[transform canvas={yshift=-5pt}] node[below] {$R$} (m-1-3)
                (m-1-3) edge[transform canvas={yshift=-2.5pt}] node[above=-2pt] {$U$} (m-1-2);
  \end{tikzpicture}
  \end{split}
\end{equation}
  denoted $\K$, in the $2$-category $\Cat$ of categories, functors and natural transformations, where $\K_1 \times_{\K_0} \K_1$ is the pullback
\begin{displaymath}
  \begin{tikzpicture}
    \matrix(m)[math2em]
    { \K_1 \times_{\K_0} \K_1 & \K_1 \\
      \K_1 & \K_0 \\ };
    \path[map]  (m-1-1) edge node[above] {$\pi_2$} (m-1-2)
                        edge node[left] {$\pi_1$} (m-2-1)
                (m-1-2) edge node[right] {$L$} (m-2-2)
                (m-2-1) edge node[below] {$R$} (m-2-2);
  \end{tikzpicture}
\end{displaymath}
  and such that the following equalities hold.
\begin{equation}  \label{internal category conditions}
  \begin{split}
    \map{L \of \hc &= L \of \pi_1}{\K_1 \times_{\K_0} \K_1}{\K_0} \\
    \map{R \of \hc &= R \of \pi_2}{\K_1 \times_{\K_0} \K_1}{\K_0} \\
    \map{R \of U &= \id = L \of U}{\K_0}{\K_0}
  \end{split}
\end{equation}
  The objects of $\K_0$ are called \emph{objects} of $\K$ while the morphisms $\map fAC$ of $\K_0$ are called \emph{vertical morphisms} of $\K$. An object $J$ of $\K_1$ such that $LJ = A$ and $RJ = B$ is denoted by the barred arrow
\begin{displaymath}
  \hmap JAB
\end{displaymath}
  and called a \emph{horizontal morphism} of $\K$. A morphism $\map\phi JK$ in $\K_1$, such that $L \phi = \map fAC$ and $R \phi = \map gBD$, is depicted
\begin{displaymath}
  \begin{tikzpicture}
    \matrix(m)[math175em]{A & B \\ C & D \\};
    \path[map]  (m-1-1) edge[barred] node[above] {$J$} (m-1-2)
                        edge node[left] {$f$} (m-2-1)
                (m-1-2) edge node[right] {$g$} (m-2-2)
                (m-2-1) edge[barred] node[below] {$K$} (m-2-2);
    \path[transform canvas={shift={($(m-1-2)!(0,0)!(m-2-2)$)}}] (m-1-1) edge[cell] node[right] {$\phi$} (m-2-1);
  \end{tikzpicture}
\end{displaymath}
  and called a \emph{cell} of $\K$. We will call $J$ and $K$ the \emph{horizontal source} and \emph{target} of $\phi$, while we call $f$ and $g$ its \emph{vertical source} and \emph{target}. A cell $\cell\phi JK$ whose vertical source and target are identities is called \emph{horizontal}.
  
  The fact that $\K_0$ is a category means that we can compose two vertical morphisms $\map fAC$ and $\map hCE$ to form $h \of f$, which we will draw as $A \xrar f C \xrar h E$, as is customary. Likewise two horizontal morphisms $\hmap JAB$ and $\hmap HBF$ can be composed using the functor $\hc$ of \eqref{diagram:internal category}; we will denote the result $J \hc H$ and draw it as $A \xslashedrightarrow J B \xslashedrightarrow H F$. If the horizontal source of a cell $\psi$ coincides with the horizontal target of $\phi$ then their \emph{vertical composite} $\psi \of \phi$ can be formed in $\K_1$; likewise for a cell $\chi$ whose vertical source is equal to the vertical target of $\phi$, the \emph{horizontal composite} $\phi \hc \chi$ is given by the image of $(\phi, \chi)$ under the functor $\hc$. Notice that the vertical composition of cells is strictly associative, and that each horizontal morphism $\hmap JAB$ comes with an identity
\begin{displaymath}
  \begin{tikzpicture}
    \matrix(m)[math175em]{A & B \\ A & B \\};
    \path[map]  (m-1-1) edge[barred] node[above] {$J$} (m-1-2)
                (m-2-1) edge[barred] node[below] {$J$} (m-2-2);
    \path				(m-1-1) edge[eq] (m-2-1)
                (m-1-2) edge[eq] (m-2-2);
    \path[transform canvas={xshift=1.15em}] (m-1-1) edge[cell] node[right] {$\id_J$} (m-2-1);
  \end{tikzpicture}
\end{displaymath}
	which is a unit for vertical composition. The edges of the vertical composite $\phi \of \psi$ can be read off from the diagram obtained by drawing $\phi$ on top of $\psi$, sharing the common horizontal edge, while the diagram of $\phi$ and $\chi$ side-by-side, sharing their common vertical edge, represents the horizontal composite $\phi \hc \chi$. That the edges of these diagrams indeed coincide with the edges of $\psi \of \phi$ and $\phi \hc \chi$ follows from the fact that $L$, $R$ and $\hc$ preserve composition, as well as the identities \eqref{internal category conditions}.
	
	Thus compositions of cells can be naturally represented by joining them into a grid --- we will do so many times. Grids that consist of one or two columns always define a unique composite because $\hc$ is a functor. For example, suppose we are given four composable cells as in
\begin{displaymath}
	\begin{tikzpicture}
		\matrix(m)[math175em]{\phantom{\text x} & \phantom{\text x} & \phantom{\text x} \\ \phantom{\text x} & \phantom{\text x} & \phantom{\text x} \\ \phantom{\text x} & \phantom{\text x} & \phantom{\text x}, \\};
		\path[map]	(m-1-1) edge (m-1-2)
												edge (m-2-1)
								(m-1-2) edge (m-1-3)
												edge (m-2-2)
								(m-1-3) edge (m-2-3)
								(m-2-1) edge (m-2-2)
												edge (m-3-1)
								(m-2-2) edge (m-2-3)
												edge (m-3-2)
								(m-2-3) edge (m-3-3)
								(m-3-1) edge (m-3-2)
								(m-3-2) edge (m-3-3);
		\path[transform canvas={shift={($(m-2-1)!0.5!(m-2-2)$)}}] (m-1-2) edge[cell] node[right] {$\phi$} (m-2-2)
								(m-1-3) edge[cell] node[right] {$\chi$} (m-2-3)
								(m-2-2) edge[cell] node[right] {$\psi$} (m-3-2)
								(m-2-3) edge[cell] node[right] {$\xi$} (m-3-3);
	\end{tikzpicture}
\end{displaymath}
	then $(\psi \of \phi) \hc (\xi \of \chi) = (\psi \hc \xi) \of (\phi \hc \chi)$; this is called the \emph{interchange law}.
  
  The horizontal morphism $U_A = UA$ that is the image of $A$ under $U$ of \eqref{diagram:internal category}, is called the \emph{horizontal identity} on $A$; in diagrams we will depict it simply by $\begin{tikzpicture}[textbaseline] \path (0.2,0) edge[eq] (0.7,0); \node at (0,0) {$A$}; \node at (0.9,0) {$A$};\end{tikzpicture}$. Likewise, given a vertical morphism $\map fAC$, the \emph{horizontal identity cell} $U_f = Uf$ will be depicted as
\begin{displaymath}
  \begin{tikzpicture}
    \matrix(m)[math175em]{A & A \\ C & C. \\};
    \path[map]  (m-1-1) edge node[left] {$f$} (m-2-1)
                (m-1-2) edge node[right] {$f$} (m-2-2);
    \path (m-1-1) edge[eq] (m-1-2)
          (m-2-1) edge[eq] (m-2-2);
    \path[transform canvas={shift={($(m-1-2)!(0,0)!(m-2-2)$)}}] (m-1-1) edge[cell] node[right] {$U_f$} (m-2-1);
  \end{tikzpicture}
\end{displaymath}
  Cells whose horizontal source and target are horizontal identities, like $U_f$ above, are called \emph{vertical}.

\begin{definition} \label{definition:pseudo double category}
  A \emph{pseudo double category} $\K$ is a pseudo category internal to $\Cat$. That is, $\K$ is given by a diagram of functors \eqref{diagram:internal category} satisfying the conditions \eqref{internal category conditions}, together with natural isomorphisms
  \begin{align*}
    \map{\nat{\mathfrak a&}{\hc \of \pars{\hc \times \id}}{\hc \of \pars{\id \times \hc}}}{\K_1 \times_{\K_0} \K_1 \times_{\K_0} \K_1}{\K_1;} \\
    \map{\nat{\mathfrak l&}{\hc \of \pars{U \times \id}}{\pi_2}}{\K_0 \times_{\K_0} \K_1}{\K_1;} \\
    \map{\nat{\mathfrak r&}{\hc \of \pars{\id \times U}}{\pi_1}}{\K_1 \times_{\K_0} \K_0}{\K_1,}
  \end{align*}
  whose components are horizontal cells in $\K$, and such that the usual coherence axioms for a monoidal category or bicategory are satisfied (see e.g.\ \cite[Section VII.1]{MacLane98}).
\end{definition}
	Thus, in contrast to composition of vertical morphisms, composition of horizontal morphisms is only associative up to invertible horizontal cells
\begin{displaymath}
	\cell{\mathfrak a}{(J \hc H) \hc K}{J \hc (H \hc K)},
\end{displaymath}
	for any $A \xslashedrightarrow J B \xslashedrightarrow H C \xslashedrightarrow K D$, called the \emph{associators} of $\K$, and unital up to invertible horizontal cells
\begin{displaymath}
	\cell{\mathfrak l}{U_A \hc J}J \qquad \text{and} \qquad \cell{\mathfrak r}{J \hc U_B}J,
\end{displaymath}
	for any $\hmap JAB$, called the \emph{unitors} of $\K$.

	Given composable horizontal morphisms $\map{J_i}{A_{i-1}}{A_i}$, for $i = 1, \dotsc, n$, we will abbreviate by $J_1 \hc \dotsb \hc J_n$ their composition with right to left bracketing $J_1 \hc \bigpars{\dotsb \hc (J_{n-1} \hc J_n) \dotsb }$, and likewise for horizontal composites of cells. Moreover in writing down compositions of cells we will often leave out the associators and unitors. For example the composite
\begin{displaymath}
	J \hc H \xRar{\inv{\mathfrak l}} U_A \hc (J \hc H) \xRar{\phi \hc \psi} K \hc (L \hc M) \xRar{\mathfrak a} (K \hc L) \hc M \xRar{\chi \hc \xi} (P \hc Q) \hc U_X \xRar{\mathfrak r} P \hc Q,
\end{displaymath}
	will be abbreviated to simply
\begin{displaymath}
	J \hc H \xRar{\phi \hc \psi} K \hc L \hc M \xRar{\chi \hc \xi} P \hc Q.
\end{displaymath}
	That this does not introduce ambiguity follows from the coherence axioms: like for monoidal categories and bicategories, they imply that any two composites of $\hc$-products of (inverses of) associators, (inverses of) unitors and identity cells, that have the same source and target, are equal. Thus, any two ways of completing an abbreviation like above, by adding associators and unitors, will give the same result. In fact, by a result of Grandis and Par\'e \cite[Theorem 7.5]{Grandis-Pare99} every pseudo double category is equivalent to a strict double category, whose associators and unitors are identities.
	
	Likewise when drawing grids to represent compositions we shall always leave out associators and unitors. For example by the grid
\begin{displaymath}
	\begin{tikzpicture}
	  \matrix(m)[math175em]{A & A & B & D \\ A & C & D & D, \\};
	  \path[map]	(m-1-2) edge[barred] node[above] {$J$} (m-1-3)
												edge node[right] {$f$} (m-2-2)
								(m-1-3) edge[barred] node[above] {$D(g, \id)$} (m-1-4)
												edge node[right] {$g$} (m-2-3)
								(m-2-1) edge[barred] node[below] {$C(f, \id)$} (m-2-2)
								(m-2-2) edge[barred] node[below] {$K$} (m-2-3);
		\path				(m-1-1) edge[eq] (m-1-2)
												edge[eq] (m-2-1)
								(m-1-4) edge[eq] (m-2-4)
								(m-2-3) edge[eq] (m-2-4);
		\path[transform canvas={shift=($(m-1-2)!0.5!(m-2-2)$)}]
								(m-1-2) edge[cell] node[right] {$\ls f\eta$} (m-2-2)
								(m-1-3) edge[cell] node[right] {$\phi$} (m-2-3)
								(m-1-4) edge[cell] node[right] {$\ls g\eps$} (m-2-4);
	\end{tikzpicture}
\end{displaymath}
	that is taken from \propref{left and right cells} below, we mean the composite
\begin{displaymath}
	J \hc D(g, \id) \xRar{\inv{\mathfrak l}} U_A \hc \bigpars{J \hc D(g, \id)} \xRar{\ls f\eta \hc (\phi \hc \ls g\eps)} C(f, \id) \hc (K \hc U_D) \xRar{\id \hc \mathfrak r} C(f, \id) \hc K
\end{displaymath}
	or, in its abbreviated version, $J \hc D(g, \id) \xRar{\ls f\eta \hc \phi \hc \ls g\eps} C(f, \id) \hc K$.

	The following are examples of pseudo double categories. In the next section we shall see that each of those are `promorphism equipments'.
\begin{example} \label{example:bimodules}
	As will be clear rings, ring homomorphisms, bimodules and bilinear maps form a pseudo double category $\mathsf{Mod}$. More precisely $\mathsf{Mod}_0 = \mathsf{Rng}$, the category of rings and their homomorphisms, while $\mathsf{Mod}_1$ has bimodules as objects and bilinear maps as morphisms. The functors $L$ and $\map R{\mathsf{Mod}_1}{\mathsf{Rng}}$ map $(A, B)$-bimodules $J$ to $LJ = A$ and $RJ = B$, and each $(f, g)$-bilinear map $\cell\phi JK$ to $L\phi = f$ and $R\phi = g$. As discussed the components of the associator $\mathfrak a$ and unitors $\mathfrak l$ and $\mathfrak r$ are obtained from the universality of tensor products.
\end{example}
	
\begin{example} \label{example:unenriched profunctors}
	In the introduction we briefly mentioned the pseudo double category $\Prof$ of profunctors; in detail it is given as follows. Its objects and vertical morphisms are small categories and functors, i.e.\ $\Prof_0 = \cat$. Its horizontal morphisms \mbox{$\hmap JAB$} are \emph{profunctors}, that is functors $\map J{\op A \times B}\Set$, where $\op A$ is the dual of $A$, obtained by reversing the arrows in $A$. We think of the elements of $J(a, b)$ as morphisms, and denote them $\map jab$. Likewise we think of the actions of $A$ and $B$ on $J$ as compositions; hence, for morphisms $\map s{a'}a$ and $\map tb{b'}$, we will write $t \of j \of s = J(s, t)(j)$. To describe the cells of $\Prof$, first consider a profunctor $\hmap KCD$ and functors $\map fAC$ and $\map gBD$; we shall write $K(f, g)$ for the composite $\hmap{K \of (\op f \times g)}AB$. A cell
	\begin{displaymath}
		\begin{tikzpicture}
			\matrix(m)[math175em]{A & B \\ C & D \\};
			\path[map]  (m-1-1) edge[barred] node[above] {$J$} (m-1-2)
													edge node[left] {$f$} (m-2-1)
									(m-1-2) edge node[right] {$g$} (m-2-2)
									(m-2-1) edge[barred] node[below] {$K$} (m-2-2);
			\path[transform canvas={shift={($(m-1-2)!(0,0)!(m-2-2)$)}}] (m-1-1) edge[cell] node[right] {$\phi$} (m-2-1);
		\end{tikzpicture}
	\end{displaymath}
	of $\Prof$ is simply a natural transformation $\nat\phi J{K(f, g)}$. Such transformations clearly vertically compose so that they form a category $\Prof_1$ with profunctors as objects.
	
	The horizontal composition $J \hc_B H$ of composable profunctors $\hmap JAB$ and $\hmap HBC$ is given by the reflexive coequalisers
	\begin{equation} \label{equation:composition of profunctors}
    \coprod\limits_{b_1, b_2 \in B} J(a, b_1) \times B(b_1, b_2) \times H(b_2, c) \rightrightarrows \coprod\limits_{b \in B} J(a, b) \times H(b, c) \to (J \hc_B H)(a,c)
  \end{equation}
  of sets, for $a \in A$ and $c \in C$, where the pair of maps let $B(b_1, b_2)$ act on $J(a, b_1)$ and $H(b_2, c)$ respectively. This can be thought of as a multi-object (and cartesian) variant of the tensor product of bimodules \eqref{tensor product}. Often it is denoted shortly using the coend notation
  \begin{displaymath}
    (J \hc_B H)(a, c) = \int^{b \in B} J(a, b) \times H(b, c).
  \end{displaymath}
  The associator $\mathfrak a\colon (J \hc_B H) \hc_C K \iso J \hc_B (H \hc_C K)$ is induced by the fact that taking binary products preserves colimits in both factors. The unit profunctor $U_A$ on the category $A$ is given by its hom-objects $U_A(a_1, a_2) = A(a_1, a_2)$, with actions given by composition, while the horizontal unit $U_f$ for a functor $\map fAC$ is simply the restriction \mbox{$A(a_1, a_2) \to B(fa_1, fa_2)$} of $f$ to the set of maps $A(a_1, a_2)$. The unitor $\mathfrak l\colon U_A \hc_A J \iso J$ is induced by the actions $A(a_1, a_2) \times J(a_2, b) \to J(a_1, b)$: it is not hard to check that its inverse has as components the composites
	\begin{displaymath}
	  J(a, b) \to A(a, a) \times J(a,b) \to (U_A \hc_A J)(a,b)
	\end{displaymath}
	whose first maps are given by $j \mapsto (\id_a, j)$ while the second are insertions. The isomorphisms $U_A \hc_A J \iso J$ and $J \hc_B U_B \iso J$ are called the \emph{Yoneda isomorphisms}.
  
  Finally the horizontal composition $\phi \hc_g \psi$ of the natural transformations \begin{displaymath}
    \begin{tikzpicture}
      \matrix(m)[math175em]{A \nc B \nc E \\ C \nc D \nc F \\};
      \path[map]  (m-1-1) edge[barred] node[above] {$J$} (m-1-2)
                          edge node[left] {$f$} (m-2-1)
                  (m-1-2) edge[barred] node[above] {$H$} (m-1-3)
                          edge node[right] {$g$} (m-2-2)
                  (m-1-3) edge node[right] {$h$} (m-2-3)
                  (m-2-1) edge[barred] node[below] {$K$} (m-2-2)
                  (m-2-2) edge[barred] node[below] {$L$} (m-2-3);
      \path[transform canvas={shift={($(m-1-1)!0.5!(m-2-2)$)}}]
                  (m-1-2) edge[cell] node[right] {$\phi$} (m-2-2)
                  (m-1-3) edge[cell] node[right] {$\psi$} (m-2-3);
    \end{tikzpicture}
  \end{displaymath}
  is given by the following composition of transformations
  \begin{align} \label{equation:composition of natural transformations}
    (J \hc_B H)(a, c) =& \int^{b \in B} J(a, b) \times H(b, c) \natarrow \int^{b \in B} K(fa, gb) \times L(gb, hc) \notag \\
      \natarrow& \int^{d \in D} K(fa, d) \times L(d, hc) = \pars{K \hc_D L}\pars{f, h}(a, c),
  \end{align}
  where the first is induced by the products $\phi_{a, b} \times \psi_{b,c}$ and the second exists by the universality of the second coend.
\end{example}
\begin{example} \label{example:enriched profunctor equipments}
	There is a $\V$-enriched variant of $\Prof$ as follows, for any symmetric pseudomonoidal category $\V$ (with invertible coherence maps) that is cocomplete, such that its binary tensor product $\dash \tens \dash$ preserves colimits on both sides. Examples include the category of vector spaces over a field and their tensor products, the cartesian monoidal category $\cat$ of categories and functors or the category $\2 = (\bot \to \top)$ of truth values. The monoidal stucture taken on the latter is cartesian, that is it is given by conjunction of truth values, while its coproducts are given by disjunction.
	
	The pseudo double category $\enProf\V$ of $\V$-profunctors has small $\V$-categories as objects and $\V$-functors as vertical morphisms. Generalising the unenriched case, the horizontal morphisms $\hmap JAB$ of $\enProf\V$ are \emph{$\V$-profunctors}, that is $\V$-functors of the form \mbox{$\map J{\op A \tens B}\V$}. The dual $\op A$ of $A$ and tensor product $\op A \tens B$ here are constructed using the symmetry on $\V$; see \cite[Section 1.4]{Kelly82} for details. We shall again write $K(f, g) = K \of (\op f \tens g)$ where $\hmap KCD$, $\map fAC$ and $\map gBD$; a cell
	\begin{displaymath}
		\begin{tikzpicture}
			\matrix(m)[math175em]{A & B \\ C & D \\};
			\path[map]  (m-1-1) edge[barred] node[above] {$J$} (m-1-2)
													edge node[left] {$f$} (m-2-1)
									(m-1-2) edge node[right] {$g$} (m-2-2)
									(m-2-1) edge[barred] node[below] {$K$} (m-2-2);
			\path[transform canvas={shift={($(m-1-2)!(0,0)!(m-2-2)$)}}] (m-1-1) edge[cell] node[right] {$\phi$} (m-2-1);
		\end{tikzpicture}
	\end{displaymath}
	of $\enProf\V$ is a $\V$-transformation $\nat\phi J{K(f, g)}$.
	
	Finally horizontal composites and units are both enriched variants of those of $\enProf\V$: the horizontal composition of composable $\V$-profunctors $\hmap JAB$ and $\hmap HBC$ is given by the coends
  \begin{displaymath}
    (J \hc_B H)(a, c) = \int^{b \in B} J(a, b) \tens H(b, c),
  \end{displaymath}
  which are again computed as the coequalisers \eqref{equation:composition of profunctors}, but with the cartesian products of $\Set$ replaced by the tensor products of $\V$. The unit $\V$-profunctor $U_A$ on the $\V$\ndash category $A$ is given by its hom-objects $U_A(a_1, a_2) = A(a_1, a_2)$, with actions given by composition; in particular there are enriched versions of the Yoneda isomorphisms $U_A \hc_A J \iso J$ and $J \hc_B U_B \iso J$, see for example \cite[Formula 3.71]{Kelly82}. The horizontal composition $\phi \hc_g \psi$ of $\V$-natural transformations $\phi$ and $\psi$ is likewise given as the enriched variant of \eqref{equation:composition of natural transformations}.
  
  In the simple case $\V = \2$, a $\2$-enriched category $A$ is the same as a preordered set, that is a set $A'$ with an ordering $\leq$ that is reflexive and transitive, by taking $A' = \ob A$ and $x \leq y$ if and only if $A(x, y) = \top$, for any pair of objects $x$ and $y$. Similarly, a $\2$-profunctor $\hmap JAB$ can be thought of as a relation $\sim_J$ between the preordered sets $A$ and $B$, such that if $x_1 \leq x_2$ in $A$, $x_2 \sim_J y_1$ and $y_1 \leq y_2$ in $B$, then $x_1 \sim_J y_2$ as well. The horizontal composite of such relations $\hmap JAB$ and $\hmap HBC$ is given by the usual composite of relations: $x \sim_{J \hc H} z$ if and only if there exists a $y$ in $B$ such that $x \sim_J y$ and $y \sim_H z$. A $\2$-functor $\map fAB$ is simply an order preserving map while a cell $\phi$, as above, exists (and is unique) if and only if $x \sim_J y$ implies $fx \sim_K gy$ for all $x$ in $A$ and $y$ in $B$.
\end{example}
	
	Shulman shows in \cite[Section 11]{Shulman08} that $\V$-categories and $\V$\ndash profunctors can be considered as `monoids' and `bimodules' in the simpler double category $\Mat\V$ of `$\V$-matrices'. This will turn out useful for us, as many of our questions about $\enProf\V$ are easier to answer by considering the `underlying questions' in $\Mat\V$. This is why we shall recall this construction later, in \defref{definition:monoids and bimodules}. In the same way, for every category $\E$ with finite limits, there is a relatively simple pseudo double category $\Span\E$ of `spans in $\E$', and monoids and bimodules in $\Span\E$ are respectively categories and profunctors `internal in $\E$'. The pseudo double categories $\Mat\V$ and $\Span\E$ which, as we shall see in the next section, are promorphism equipments as well, are introduced below.
\begin{example} \label{example:matrices}
	Let $\V$ be a monoidal category that has all coproducts which are preserved by its tensor product. The pseudo double category $\Mat\V$ of \emph{$\V$-matrices} is given as follows. Its objects are sets and its vertical morphisms are maps of sets, that is $\Mat\V_0 = \Set$, while a horizontal morphism $\hmap JAB$, that is an object of $\Mat\V_1$, consists of a `matrix' $\bigpars{J(a, b)}_{a \in A, b \in B}$ of $\V$-objects. A cell
\begin{displaymath}
  \begin{tikzpicture}
    \matrix(m)[math175em]{A & B \\ C & D \\};
    \path[map]  (m-1-1) edge[barred] node[above] {$J$} (m-1-2)
                        edge node[left] {$f$} (m-2-1)
                (m-1-2) edge node[right] {$g$} (m-2-2)
                (m-2-1) edge[barred] node[below] {$K$} (m-2-2);
    \path[transform canvas={shift={($(m-1-2)!(0,0)!(m-2-2)$)}}] (m-1-1) edge[cell] node[right] {$\phi$} (m-2-1);
  \end{tikzpicture}
\end{displaymath}
	is given by a family of maps $\map{\phi_{a, b}}{J(a, b)}{K(fa, gb)}$ in $\V$. Such cells clearly compose vertically, thus forming a category $\Mat\V_1$. Horizontal composition of $\V$-matrices $\hmap JAB$ and $\hmap HBE$ is given by `matrix multiplication':
	\begin{displaymath}
		(J \hc H)(a, e) = \coprod_{b \in B} J(a, b) \tens H(b, e).
	\end{displaymath}
	The existence of the associators follows from the assumption that the tensor product preserves coproducts, while the unit $U_A$ on a set $A$ is given by
	\begin{displaymath}
		U_A(a, a') = \begin{cases}
			1	& \text{if $a' = a$;} \\
			\emptyset & \text{otherwise,}
		\end{cases}			
	\end{displaymath}
	where $1$ denotes the monoidal unit of $\V$, and $\emptyset$ its initial object. Finally, the composition of composable cells
  \begin{displaymath}
    \begin{tikzpicture}
      \matrix(m)[math175em]{A & B & E \\ C & D & F \\};
      \path[map]  (m-1-1) edge[barred] node[above] {$J$} (m-1-2)
                          edge node[left] {$f$} (m-2-1)
                  (m-1-2) edge[barred] node[above] {$H$} (m-1-3)
                          edge node[right] {$g$} (m-2-2)
                  (m-1-3) edge node[right] {$h$} (m-2-3)
                  (m-2-1) edge[barred] node[below] {$K$} (m-2-2)
                  (m-2-2) edge[barred] node[below] {$L$} (m-2-3);
      \path[transform canvas={shift={($(m-1-1)!0.5!(m-2-2)$)}}]
                  (m-1-2) edge[cell] node[right] {$\phi$} (m-2-2)
                  (m-1-3) edge[cell] node[right] {$\psi$} (m-2-3);
    \end{tikzpicture}
  \end{displaymath}
  is given by the compositions
  \begin{displaymath}
	  (J \hc H)(a, e) \to \coprod_{b \in B} K(fa, gb) \tens L(gb, he) \to (K \hc L)(fa, he),
	\end{displaymath}
	where the first map is the coproduct of the tensor products $\phi_{a, b} \tens \psi_{b, e}$, and the second map exists by the universal property of coproducts.
	
	In case $\V = \2$, a $\2$-matrix $\hmap JAB$ is a relation on $A$ and $B$. Writing $a \sim_J b$ whenever $a$ and $b$ are related in $J$, a cell $\phi$ as above exists (and is unique) if and only if $a \sim_J b$ implies $fa \sim_K gb$ for all $a \in A$ and $b \in B$. Horizontal composition of relations is given by the usual composition of relations.
\end{example}
\begin{example} \label{example:spans}
	For a category $\E$ with finite limits, the pseudo double category $\Span\E$ of \emph{spans in $\E$} is defined as follows. The objects and vertical morphisms of $\Span\E$ are those of $\E$, while a horizontal morphism $\hmap JAB$ is a span $A \xlar{j_A} J \xrar{j_B} B$ in $\E$. A cell $\phi$ as on the left below is a map $\map\phi JK$ in $\E$ such that the diagram on the right commutes.
\begin{displaymath}
  \begin{tikzpicture}[baseline]
    \matrix(m)[math175em]{A & B \\ C & D \\};
    \path[map]  (m-1-1) edge[barred] node[above] {$J$} (m-1-2)
                        edge node[left] {$f$} (m-2-1)
                (m-1-2) edge node[right] {$g$} (m-2-2)
                (m-2-1) edge[barred] node[below] {$K$} (m-2-2);
    \path[transform canvas={shift={($(m-1-2)!(0,0)!(m-2-2)$)}}] (m-1-1) edge[cell] node[right] {$\phi$} (m-2-1);
  \end{tikzpicture}
	\qquad\qquad\qquad\begin{tikzpicture}[baseline]
		\matrix(m)[math175em, column sep=1.25em, row sep=0.3em]{& J & \\ A & & B \\ & K & \\ C & & D \\};
		\path[map]	(m-1-2)	edge node[above left] {$j_A$} (m-2-1)
												edge node[right] {$\phi$} (m-3-2)
												edge node[above right] {$j_B$} (m-2-3)
								(m-2-1) edge node[left] {$f$} (m-4-1)
								(m-2-3) edge node[right] {$g$} (m-4-3)
								(m-3-2) edge node[below=4pt, right=-2pt] {$k_C$} (m-4-1)
												edge node[below=4pt, left=-5pt] {$k_D$} (m-4-3);
	\end{tikzpicture}
\end{displaymath}
	Given spans $\hmap JAB$ and $\hmap HBC$, their composition $J \hc H$ is given by the usual composition of spans: after choosing a pullback $J \times_B H$ of $j_B$ and $h_B$ below, it is taken to consist of the two sides of the diagram below.
	\begin{displaymath}
		\begin{tikzpicture}
			\matrix(m)[math175em, row sep=1.2em, column sep=1.2em]
			{	& & \phantom B & & \\
				& J & & H & \\
				A & & B & & C \\ };
			\path[map]	(m-1-3) edge (m-2-2)
													edge (m-2-4)
									(m-2-2) edge node[above left] {$j_A$} (m-3-1)
													edge node[below left] {$j_B$} (m-3-3)
									(m-2-4) edge node[below right] {$h_B$} (m-3-3)
													edge node[above right] {$h_C$} (m-3-5);
			\draw (m-1-3) node {$J \times_B H$};
		\end{tikzpicture}
	\end{displaymath}
	That this composition is associative and unital up to coherent isomorphisms, with spans of the form $A \xlar{\id} A \xrar{\id} A$ as units, follows from the universality of pullbacks; horizontal composition of cells is also given by using this universality.
\end{example}	
	Since we assume $\E$ to have all finite limits we might equivalently regard a span $J$ as an object $\map jJ{A \times B}$ in the slice category $\E \slash A \times B$. In this case we will still write $j = (j_A, j_B)$. A cell $\phi$ as above thus corresponds to a commutative square in $\E$ as follows. 
	\begin{equation} \label{diagram:cell of spans}
		\begin{split}
		\begin{tikzpicture}
			\matrix(m)[math175em, column sep=1.5em]{J \nc K \\ A \times B \nc C \times D \\};
			\path[map]	(m-1-1) edge node[above] {$\phi$} (m-1-2)
													edge node[left] {$j$} (m-2-1)
									(m-1-2) edge node[right] {$k$} (m-2-2)
									(m-2-1) edge node[below] {$f \times g$} (m-2-2);
		\end{tikzpicture}
		\end{split}
	\end{equation}
	Regarding spans in $\E$ as objects in slice categories, their horizontal composition can be given as follows. Given a morphism $\map fXY$ notice that there is an adjunction
\begin{displaymath}
	\begin{tikzpicture}
		\matrix(m)[math175em, column sep=0.5em]{f_* \colon \E \slash X & & \E \slash Y \colon f^* \\};
		\path[map]	(m-1-1)	edge[transform canvas={yshift=3pt}] (m-1-3)
								(m-1-3)	edge[transform canvas={yshift=-3pt}] (m-1-1);
		\draw[font = \tiny]	(m-1-2) node {$\bot$};
	\end{tikzpicture}
\end{displaymath}
	whose left adjoint $f_*$ is given by postcomposition with $f$ and whose right adjoint $f^*$ is given by pullback along $f$.
\begin{lemma} \label{horizontal composition of spans}
	Let $\E$ be a category with finite limits. Given a span $\map jJ{A \times B}$ in $\E$, horizontal composition $\map{J \hc \dash}{\E \slash B \times C}{\E \slash A \times C}$ can be given as
	\begin{displaymath}
		J \hc \dash = \pars{j_A \times \id_C}_* \of \pars{j_B \times \id_C}^*,
	\end{displaymath}
	where $j = (j_A, j_B)$.
\end{lemma}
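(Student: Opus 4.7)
The plan is to unfold both sides on an object $h = (h_B, h_C)\colon H \to B \times C$ of $\E \slash B \times C$ and match them with the usual pullback definition of $J \hc H$.

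First I would compute $(j_B \times \id_C)^* h$. By definition this is the pullback
\begin{displaymath}
  \begin{tikzpicture}
    \matrix(m)[math175em]{P & H \\ J \times C & B \times C \\};
    \path[map]  (m-1-1) edge (m-1-2)
                        edge (m-2-1)
                (m-1-2) edge node[right] {$(h_B, h_C)$} (m-2-2)
                (m-2-1) edge node[below] {$j_B \times \id_C$} (m-2-2);
  \end{tikzpicture}
\end{displaymath}
in $\E$. Since $B \times C$ is a product and the right leg is $(h_B, h_C)$, while the bottom leg is $j_B \times \id_C$, this pullback is equivalently computed by pulling back $h_B$ and $j_B$ over $B$ (the $C$-component imposes only the tautological identification $h_C = \pi_C$). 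Thus $P$ can be identified with the pullback $J \times_B H$ that appears in \exref{example:spans}, with structure map to $J \times C$ given by $(\pi_J, h_C \of \pi_H)$.

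Next I would apply $(j_A \times \id_C)_*$, which is simply postcomposition. The resulting object of $\E \slash A \times C$ is $J \times_B H$ equipped with the map $(j_A \of \pi_J,\, h_C \of \pi_H)\colon J \times_B H \to A \times C$. But this is exactly the span $A \xleftarrow{j_A \of \pi_J} J \times_B H \xrightarrow{h_C \of \pi_H} C$ that, by definition, is the horizontal composite $J \hc H$ in $\Span\E$. So the two functors agree on objects.

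To finish, I would check agreement on morphisms of $\E \slash B \times C$, that is on cells of $\Span\E$ with identity left and right vertical edges. Given such a cell $\cell\psi HK$ over $B \times C$, both $(j_A \times \id_C)_*(j_B \times \id_C)^*\psi$ and $J \hc \psi$ are obtained as the unique factorisation of the composite $J \times_B H \to H \xrar\psi K$ through the pullback $J \times_B K$, so they coincide by the universal property. No step here is a real obstacle; the only thing to be careful about is recognising that the pullback along the product map $j_B \times \id_C$ reduces to the pullback along $j_B$ alone, which is what makes the two constructions match up strictly.
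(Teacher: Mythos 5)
Your proof is correct and follows essentially the same route as the paper's: both identify the pullback of $(h_B,h_C)$ along $j_B \times \id_C$ with the pullback $J \times_B H$ of $j_B$ and $h_B$ (your observation that the $C$-component is tautological is exactly the ``easy calculation'' the paper alludes to), and then note that postcomposition with $j_A \times \id_C$ yields the defining span of $J \hc H$. The treatment of morphisms via the universal property of the pullback likewise matches the paper's closing remark.
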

\begin{proof}
	Let $\map hH{B \times C}$ be a second span in $\E$. By definition $J \hc H$ is given by $\map{(j_A \of p, h_C \of q)}{J \times_B H}{A \times C}$, where $p$ and $q$ are the projections of the pullback $W = J \times_B H$, onto $J$ and $H$ respectively. An easy calculation will show that $(W, p, q)$ is the pullback of $j_B$ and $h_B$ if and only if
	\begin{displaymath}
		\begin{tikzpicture}
			\matrix(m)[math2em]{W & H \\ J \times C & B \times C \\};
			\path[map]	(m-1-1)	edge node[above] {$q$} (m-1-2)
													edge node[left] {$(p, h_C \of q)$} (m-2-1)
									(m-1-2) edge node[right] {$h$} (m-2-2)
									(m-2-1) edge node[below] {$j_B \times \id_C$} (m-2-2);
		\end{tikzpicture}
	\end{displaymath}
	is a pullback square. Here the map on the left is, by definition, $(j_B \times \id_C)^* h$; composing it with $j_A \times \id_C$ does indeed give $\map{J \hc H}W{A \times C}$ as asserted. It is easily seen that this extends to the right action on morphisms of spans as well.
\end{proof}

	Having introduced the main examples, we close this section by recalling that every pseudo double category $\K$ contains an underlying \emph{vertical $2$-category $V(\K)$}, of vertical morphisms and cells, as well as a \emph{horizontal bicategory $H(\K)$}, of horizontal morphisms and cells, as follows (a bicategory is a category `weakly enriched' in $\cat$, see \cite[Definition 1.5.1]{Leinster04}). The objects of both $V(\K)$ and $H(\K)$ are those of $\K$. Given objects $A$ and $B$, the category $H(\K)(A, B)$ is the subcategory of $\K_1$ consisting of all horizontal morphisms $A \slashedrightarrow B$ and horizontal cells, that is cells with identities as vertical maps, between them. The composition functors
\begin{displaymath}
  H(\K)(A, B) \times H(\K)(B,C) \to H(\K)(A, C)
\end{displaymath}
  are restrictions of $\map\hc{\K_1 \times_{\K_0} \K_1}{\K_1}$, and we will denote composition in $H(\K)$ again by $\hc$.

  On the other hand, the category $V(\K)(A, B)$ consists of vertical maps $A \to B$ in $\K_0$, while the morphisms $\cell\phi fg$ are the vertical cells of $\K$, that is the horizontal source and target of $\phi$ are horizontal identities. The vertical (in terms of the $2$\ndash category $V(\K)$) composition $\psi \of \phi$ of $\phi$ with a second vertical cell $\cell\psi gh$ is given by the composition of the diagram
\begin{displaymath}
  \begin{tikzpicture}
    \matrix(m)[math175em]
      { A & \phantom A & A \\
        A & A & A \\
        B & B & B \\
        B & \phantom B & B \\ };
    \path[map]  (m-2-1) edge node[left] {$f$} (m-3-1)
                (m-2-2) edge node[right] {$g$} (m-3-2)
                (m-2-3) edge node[right] {$h$} (m-3-3);
    \path       (m-1-1) edge[eq] (m-1-3)
                        edge[eq] (m-2-1)
                (m-1-2) edge[cell] (m-2-2)
                (m-1-3) edge[eq] (m-2-3)
                (m-2-1) edge[eq] (m-2-2)
                (m-2-2) edge[eq] (m-2-3)
                (m-3-1) edge[eq] (m-3-2)
                        edge[eq] (m-4-1)
                (m-3-2) edge[eq] (m-3-3)
                        edge[cell] (m-4-2)
                (m-3-3) edge[eq] (m-4-3)
                (m-4-1) edge[eq] (m-4-3);
    \path[transform canvas={shift={($(m-2-1)!0.5!(m-3-2)$)}}]
                (m-2-2) edge[cell] node[right] {$\phi$} (m-3-2)
                (m-2-3) edge[cell] node[right] {$\psi$} (m-3-3);
  \end{tikzpicture}
\end{displaymath}
  where the top and bottom cells are given by (inverses of) unitors of $\K$. That this composition is strictly associative follows from the fact that the unitors satisfy the identity coherence axiom (see \cite[Diagram (7) of Section VII.1]{MacLane98}). The horizontal composition functors
\begin{displaymath}
  \map\hc{V(\K)(A, B) \times V(\K)(B,C)}{V(\K)(A, C)}
\end{displaymath}
  are given by vertical composition in $\K$, which is strictly associative.
	
\begin{example} \label{example:vertical cells in Prof}
	Remember that a vertical cell $\cell\phi fg$, of functors $f$ and $\map gAB$, in the pseudo double category $\Prof$ of (unenriched) profunctors, is a natural transformation of profunctors $\nat\phi{U_A}{U_B(f, g)}$, where $U_B(f, g)(a_1, a_2) = B(fa_1, ga_2)$. By naturality $\phi$ is completely determined by its images $\map{\phi_a = \phi(\id_a)}{fa}{ga}$ in $B$; indeed we have $\phi(u) = \phi_{a_2} \of f(u) = g(u) \of \phi_{a_1}$ for any $\map u{a_1}{a_2}$ in $A$. From this identity it follows that the maps $\phi_a$ form a natural transformation of functors $\nat\phi fg$, in the usual sense; in fact this gives a bijective correspondence between the vertical cells $f \Rightarrow g$ of $\Prof$ and the natural transformations $f \natarrow g$.
	
	It follows from the descriptions of the unitors of $\Prof$, in \exref{example:unenriched profunctors}, that the vertical composition of two vertical cells $\cell\phi fg$ and $\cell\psi gh$ corresponds to the usual vertical composition $\psi \of \phi$ of natural transformations of functors, given by $(\psi \of \phi)_a = \psi_a \of \phi_a$. Moreover, the horizontal composition of two vertical cells $\cell\phi fg$ and $\cell\psi hk$, where $h$ and $k$ are functors $B \to C$, is determined by the images
	\begin{displaymath}
		\map{(\phi \hc \psi)_a = \psi(\phi_a) = \psi_{ga} \of h(\phi_a) = k(\phi_a) \of \psi_{fa}}{(h \of f)(a)}{(k \of g)(a)},
	\end{displaymath}
	that is it corresponds to the usual horizontal composite $\nat{\phi \hc \psi}{h \of f}{k \of g}$. We conclude that $V(\Prof) \iso \Cat$, the $2$-category of small categories, functors and natural transformations. On the other hand clearly the horizontal bicategory $H(\Prof)$ of $\Prof$ is the bicategory of categories and profunctors.
	
	Likewise in the enriched case we get $V(\enProf\V) \iso \enCat\V$, the $2$-category of small $\V$-categories, $\V$-functors and $\V$-natural transformations. 
\end{example}

\section{$2$-Categories equipped with promorphisms} \label{section:2-categories equipped with promorphisms}
In the introduction we described $2$-categories equipped with promorphisms, `promorphism equipments' in short, as pseudo double categories $\K$ in which every vertical map $\map fAC$ is accompanied by two horizontal morphisms $\hmap{C(f, \id)}AC$ and $\hmap{C(\id, f)}CA$, respectively its `companion' and `conjoint'. This description is close to the original one given by Wood \cite{Wood82}. However we will keep on following Shulman's paper \cite{Shulman08} and define promorphism equipments as pseudo double categories in which all `cartesian fillers' exist. Afterwards we will recall \cite[Theorem A.2]{Shulman08} which shows that the two definitions coincide.

  In a pseudo double category $\K$, a diagram of the form
\begin{displaymath}
  \begin{tikzpicture}
    \matrix(m)[math175em]{A & B \\ C & D, \\};
    \path[map]  (m-1-1) edge node[left] {$f$} (m-2-1)
                (m-1-2) edge node[right] {$g$} (m-2-2)
                (m-2-1) edge[barred] node[below] {$K$} (m-2-2);
  \end{tikzpicture}
\end{displaymath}
  also written as $A \xrar f C \xslashedrightarrow K D \xlar g B$, is called a \emph{niche}.
\begin{definition} \label{definition:cartesian filler}
	A \emph{cartesian filler} for the niche above is a cell
\begin{displaymath}
  \begin{tikzpicture}
    \matrix(m)[math175em]{A & B \\ C & D \\};
    \path[map]  (m-1-1) edge[barred] node[above] {$K(f,g)$} (m-1-2)
                        edge node[left] {$f$} (m-2-1)
                (m-1-2) edge node[right] {$g$} (m-2-2)
                (m-2-1) edge[barred] node[below] {$K$} (m-2-2);
    \path[transform canvas={shift={($(m-1-1)!0.5!(m-2-1)$)}}] (m-1-2) edge[cell] node[right] {$\eps$} (m-2-2);
  \end{tikzpicture}
\end{displaymath}
  that is universal in that every other filler $\cell\phi JK$ factorises uniquely as
\begin{displaymath}
  \begin{tikzpicture}[textbaseline]
    \matrix(m)[math175em]{X & Y \\ A & B \\ C & D \\};
    \path[map]  (m-1-1) edge[barred] node[above] {$J$} (m-1-2)
                        edge node[left] {$h$} (m-2-1)
                (m-1-2) edge node[right] {$k$} (m-2-2)
                (m-2-1) edge node[left] {$f$} (m-3-1)
                (m-2-2) edge node[right] {$g$} (m-3-2)
                (m-3-1) edge[barred] node[below] {$K$} (m-3-2);
    \path[transform canvas={shift={($(m-2-1)!0.5!(m-1-1)$)}}] (m-2-2) edge[cell] node[right] {$\phi$} (m-3-2);
  \end{tikzpicture}
  =
  \begin{tikzpicture}[textbaseline]
    \matrix(m)[math175em]{X & Y \\ A & B \\ C & \phantom D \\};
    \path[map]  (m-1-1) edge[barred] node[above] {$J$} (m-1-2)
                        edge node[left] {$h$} (m-2-1)
                (m-1-2) edge node[right] {$k$} (m-2-2)
                (m-2-1) edge[barred] node[below] {$K(f,g)$} (m-2-2)
                        edge node[left] {$f$} (m-3-1)
                (m-2-2) edge node[right] {$g$} (m-3-2)
                (m-3-1) edge[barred] node[below] {$K$} (m-3-2);
    \path[transform canvas={shift=(m-2-1))}]
                (m-1-2) edge[transform canvas={xshift=-0.6em}, cell] node[right] {$\ls f\phi_g$} (m-2-2)
                (m-2-2) edge[transform canvas={yshift=-0.4em}, cell] node[right] {$\eps$} (m-3-2);
    \draw (m-3-2) node {$D$.};
  \end{tikzpicture}
\end{displaymath}
	The horizontal morphism $K(f, g)$ is called the \emph{restriction of $K$ along $f$ and $g$}.
\end{definition}
  Cartesian fillers are unique up to precomposition with invertible horizontal cells.
\begin{definition} \label{definition:equipment}
  A pseudo double category in which every niche has a cartesian filler is called a \emph{$2$-category equipped with promorphisms}, or simply a \emph{(promorphism) equipment}.
\end{definition}
	We will call the vertical morphisms of a promorphism equipment simply \emph{morphisms}; the horizontal morphisms will be called \emph{promorphisms}.	In the example below we will see all the examples of the previous section are promorphism equipments, but first we recall the definition of the companion and conjoint of a morphism.

  Given a morphism $\map fAC$, assume that the cartesian fillers
\begin{alignat*}{5}
  \begin{tikzpicture}[textbaseline]
    \matrix(m)[math175em]{A \nc C \\ C \nc C \\};
    \path[map]  (m-1-1) edge[barred] node[above=3pt] {$U_C(f,\id)$} (m-1-2)
                        edge node[left] {$f$} (m-2-1);
    \path       (m-1-2) edge[eq] (m-2-2)
                (m-2-1) edge[eq] (m-2-2);
    \path[transform canvas={shift={($(m-1-1)!0.5!(m-2-1)$)}}] (m-1-2) edge[cell] node[right] {$\ls f \eps$} (m-2-2);
  \end{tikzpicture}
  &\quad\text{and}\quad&&
  \begin{tikzpicture}[textbaseline]
    \matrix(m)[math175em]{C \nc A \\ C \nc C \\};
    \path[map]  (m-1-1) edge[barred] node[above=3pt] {$U_C(\id,f)$} (m-1-2)
                (m-1-2) edge node[right] {$f$} (m-2-2);
    \path       (m-1-1) edge[eq] (m-2-1)
                (m-2-1) edge[eq] (m-2-2);
    \path[transform canvas={shift={($(m-1-1)!0.5!(m-2-1)$)}}] (m-1-2) edge[cell] node[right] {$\eps_f$} (m-2-2);
  \end{tikzpicture}\\
  \intertext{exist; we will abbreviate $C(f, \id) = U_C(f, \id)$ and $C(\id, f) = U_C(\id, f)$. Factorising the identity cells $U_f$ through these cartesian fillers gives cells}
  \begin{tikzpicture}[textbaseline]
    \matrix(m)[math175em]{A \nc A \\ A \nc C \\};
    \path       (m-1-1) edge[eq] (m-1-2)
                        edge[eq] (m-2-1);
    \path[map]  (m-1-2) edge node[right] {$f$} (m-2-2)
                (m-2-1) edge[barred] node[below] {$C(f, \id)$} (m-2-2);
    \path[transform canvas={shift={($(m-1-1)!0.5!(m-2-1)$)}}] (m-1-2) edge[cell] node[right] {$\ls f \eta$} (m-2-2);
  \end{tikzpicture}
  &\quad\text{and}\quad&&
  \begin{tikzpicture}[textbaseline]
    \matrix(m)[math175em]{A \nc A \\ C \nc A \\};
    \path       (m-1-1) edge[eq] (m-1-2)
                (m-1-2) edge[eq] (m-2-2);
    \path[map]  (m-1-1) edge node[left] {$f$} (m-2-1)
                (m-2-1) edge[barred] node[below] {$C(\id, f)$} (m-2-2);
    \path[transform canvas={shift={($(m-1-1)!0.5!(m-2-1)$)}}] (m-1-2) edge[cell] node[right] {$\eta_f$} (m-2-2);
  \end{tikzpicture} \\
  \intertext{that satisfy}
  \begin{tikzpicture}[textbaseline]
    \matrix(m)[math175em]{A \nc A \\ C \nc C \\};
    \path[map]  (m-1-1) edge node[left] {$f$} (m-2-1)
                (m-1-2) edge node[right] {$f$} (m-2-2);
    \path       (m-1-1) edge[eq] (m-1-2)
                (m-2-1) edge[eq] (m-2-2);
    \path[transform canvas={shift={($(m-1-1)!0.5!(m-2-1)$)}}] (m-1-2) edge[cell] node[right] {$U_f$} (m-2-2);
  \end{tikzpicture}
  =
  \begin{tikzpicture}[textbaseline]
    \matrix(m)[math175em]{A \nc A \\ A \nc C \\ C \nc C \\};
    \path[map]  (m-1-2) edge node[right] {$f$} (m-2-2)
                (m-2-1) edge[barred] node[above] {$C(f, \id)$} (m-2-2)
                        edge node[left] {$f$} (m-3-1);
    \path       (m-1-1) edge[eq] (m-1-2)
                        edge[eq] (m-2-1)
                (m-2-2) edge[eq] (m-3-2)
                (m-3-1) edge[eq] (m-3-2);
    \path[transform canvas={shift=(m-2-1))}]
                (m-1-2) edge[cell, shorten >= 7pt, shorten <= -1pt] node[above=3pt, right] {$_f\eta$} (m-2-2)
                (m-2-2) edge[cell] node[right] {$\ls f\eps$} (m-3-2);
  \end{tikzpicture}
  &\quad\text{and}\quad&&
    \begin{tikzpicture}[textbaseline]
    \matrix(m)[math175em]{A \nc A \\ C \nc C \\};
    \path[map]  (m-1-1) edge node[left] {$f$} (m-2-1)
                (m-1-2) edge node[right] {$f$} (m-2-2);
    \path       (m-1-1) edge[eq] (m-1-2)
                (m-2-1) edge[eq] (m-2-2);
    \path[transform canvas={shift={($(m-1-1)!0.5!(m-2-1)$)}}] (m-1-2) edge[cell] node[right] {$U_f$} (m-2-2);
  \end{tikzpicture}
  =
  \begin{tikzpicture}[textbaseline]
    \matrix(m)[math175em]{A \nc A \\ C \nc A \\ C \nc \phantom C \\};
    \path[map]  (m-1-1) edge node[left] {$f$} (m-2-1)
                (m-2-1) edge[barred] node[above] {$C(\id, f)$} (m-2-2)
                (m-2-2) edge node[right] {$f$} (m-3-2);
    \path       (m-1-1) edge[eq] (m-1-2)
                (m-1-2) edge[eq] (m-2-2)
                (m-2-1) edge[eq] (m-3-1)
                (m-3-1) edge[eq] (m-3-2);
    \path[transform canvas={shift=(m-2-1)}]
                (m-1-2) edge[cell, shorten >= 7pt, shorten <= -1pt] node[above=3pt, right] {$\eta_f$} (m-2-2)
                (m-2-2) edge[cell] node[right] {$\eps_f$} (m-3-2);
    \draw (m-3-2) node {$C$.};
  \end{tikzpicture} \\
  \intertext{Moreover the uniqueness of factorisations through cartesian fillers implies that the composites}
  \begin{tikzpicture}[textbaseline]
    \matrix(m)[math175em]{A \nc A \nc C \\ A \nc C \nc C \\};
    \path[map]  (m-1-2) edge[barred] node[above] {$C(f, \id)$} (m-1-3)
                        edge node[right] {$f$} (m-2-2)
                (m-2-1) edge[barred] node[below] {$C(f, \id)$} (m-2-2);
    \path       (m-1-1) edge[eq] (m-1-2)
                        edge[eq] (m-2-1)
                (m-1-3) edge[eq] (m-2-3)
                (m-2-2) edge[eq] (m-2-3);
    \path[transform canvas={shift={($(m-1-1)!0.5!(m-2-2)$)}}]
                (m-1-2) edge[cell] node[right] {$\ls f\eta$} (m-2-2)
                (m-1-3) edge[cell] node[right] {$\ls f\eps$} (m-2-3);
  \end{tikzpicture}
  &\quad\text{and}\quad&&
  \begin{tikzpicture}[textbaseline]
    \matrix(m)[math175em]{C \nc A \nc A \\ C \nc C \nc A \\};
    \path[map]  (m-1-1) edge[barred] node[above] {$C(\id, f)$} (m-1-2)
                (m-1-2) edge node[right] {$f$} (m-2-2)
                (m-2-2) edge[barred] node[below] {$C(\id, f)$} (m-2-3);
    \path       (m-1-2) edge[eq] (m-1-3)
                (m-1-1) edge[eq] (m-2-1)
                (m-1-3) edge[eq] (m-2-3)
                (m-2-1) edge[eq] (m-2-2);
    \path[transform canvas={shift={($(m-1-1)!0.5!(m-2-2)$)}}]
                (m-1-2) edge[cell] node[right] {$\eps_f$}
                 (m-2-2)
                (m-1-3) edge[cell] node[right] {$\eta_f$} (m-2-3);
  \end{tikzpicture}
\end{alignat*}
  are, after both pre- and postcomposition with (inverses of) unitors, equal to the identity cells of $C(f, \id)$ and $C(\id, f)$ respectively. The two equalities involving $\ls f \eps$ and $\ls f \eta$ above define $C(f, \id)$ as the \emph{companion} for $f$, while those involving $\eps_f$ and $\eta_f$ define $C(\id, f)$ as the \emph{conjoint} for $f$; the equalities themselves are called the \emph{companion} and \emph{conjoint identities} respectively. One can think of the companion $C(f, \id)$ as the promorphism `isomorphic' to $f$, while the conjoint $C(\id, f)$ can be thought of as being `adjoint' to $f$. Companions and conjoints were introduced by Grandis and Par\'e in \cite[Sections 1.2 and 1.3]{Grandis-Pare04} where they are called `orthogonal companions' and `orthogonal adjoints'. As we remarked in the introduction, Wood's original definition of equipments, given in \cite{Wood82}, is close to defining equipments as pseudo double categories in which every vertical morphism has a companion and conjoint.

\begin{example} \label{example:cartesian fillers}
	For a niche $A \xrar f C \xslashedrightarrow K D \xlar g B$ in $\mathsf{Mod}$, where $f$ and $g$ are ring homomorphisms and $K$ is a $(C, D)$-bimodule, the restriction $\hmap{K(f, g)}AB$ is $K$ regarded as a $(A, B)$-bimodule via $f$ and $g$, with left action \mbox{$A \tens K \xrar{f \tens \id} C \tens K \xrar l K$} and similar right action. The cartesian filler $\cell\eps{K(f, g)}K$ is simply the identity for $K$: it is clear that any $(f \of h, g \of k)$-bilinear map $\cell\phi JK$, as in \defref{definition:cartesian filler}, can be regarded as a $(h, k)$-bilinear map $J \Rightarrow K(f,g)$, which is the unique factorisation $\ls f\phi_g$ of $\phi$ through $\eps$. In particular the factorisation $\ls f\eta$ of the horizontal unit $U_f$ through the companion cell $\ls f\eps$, where $U_f$ is the ring homomorphism $\map fAC$ regarded as a $(f,f)$-bilinear map, is again simply $f$, now regarded as a $(\id, f)$-bilinear map $A \Rightarrow C(f, \id)$. 
	
	For a niche $A \xrar f C \xslashedrightarrow K D \xlar g B$ in $\Prof$ we have, in \exref{example:unenriched profunctors}, already used the notation $K(f, g)$ to denote the composite $\hmap{K \of (\op f \times g)}AB$. It is readily seen that $K \of (\op f \times g)$, together with the cell $\cell\eps{K(f,g)}K$ that is the identity transformation on $K\of (\op f \times g)$, do indeed form the cartesian filler: any other filler
	\begin{equation} \label{equation:457490}
		\begin{split}
		\begin{tikzpicture}[textbaseline]
			\matrix(m)[math175em]{X \nc Y \\ A \nc B \\ C \nc D \\};
			\path[map]  (m-1-1) edge[barred] node[above] {$J$} (m-1-2)
													edge node[left] {$h$} (m-2-1)
									(m-1-2) edge node[right] {$k$} (m-2-2)
									(m-2-1) edge node[left] {$f$} (m-3-1)
									(m-2-2) edge node[right] {$g$} (m-3-2)
									(m-3-1) edge[barred] node[below] {$K$} (m-3-2);
			\path[transform canvas={shift={($(m-2-1)!0.5!(m-1-1)$)}}] (m-2-2) edge[cell] node[right] {$\phi$} (m-3-2);
		\end{tikzpicture}
		\end{split}
  \end{equation}
  in $\Prof$, that is a transformation $\nat\phi J{K(f \of h, g \of k)}$, can be regarded as a cell $\cell\phi J{K(f,g)}$, because $K\of (\op f \times g) \of (\op h \times k) = K \of \bigpars{\op{(f \of h)} \times (g \of k)}$. In particular the companion $C(f, \id)$ of a functor $\map fAC$ is given by the representable profunctor $C(f, \id)(a,c) = C(fa, c)$. Conjoints are given likewise.
  
  The previous example generalises directly to the enriched case: in $\enProf\V$ the restriction $K(f, g)$ of a $\V$-profunctor $K$ is given by $\hmap{K(f, g) = K \of (\op f \tens g)}AB$.
	
	In $\Mat\V$ the restriction $\hmap{K(f, g)}AB$ is simply given by $K(f, g)(a, b) = K(fa, gb)$; the cartesian filler $K(f, g) \Rar K$ has identities as components. It follows that the companion $C(f, \id)$ of $f$ is given by
	\begin{displaymath}
		C(f, \id)(a, c) = \begin{cases}
			1 & \text{if $fa = c$;} \\
			\emptyset & \text{otherwise,}
		\end{cases}
	\end{displaymath}
	while both $\ls f\eps$ and $\ls f\eta$ consist of identities. The conjoint $C(\id, f)$ is given likewise.
	
	In $\Span\E$ the restriction $K(f, g)$ is given by the pullback $K(f, g) = (f \times g)^* k$, where $\map kK{C \times D}$; the cartesian filler is the projection $K(f, g) \to K$. The universal property follows immediately from the fact that cells $\phi$ in $\Span\E$ are commutative squares like \eqref{diagram:cell of spans}, which factor through the pullback $K(f, g)$. Hence the companion $C(f, \id)$ is the span $A \xlar\id A \xrar f C$, with $\ls f\eps = f$ and $\ls f\eta = \id_A$, while the conjoint $C(\id, f)$ is the span $C \xlar f A \xrar\id A$.
\end{example}

  A pseudo double category has all cartesian fillers if and only if it has both companions and conjoints for vertical morphisms, as the following shows.
\begin{proposition}[Shulman] \label{cartesian fillers in terms of companions and conjoints}
	Let $A \xrar f C \xslashedrightarrow K D \xlar g B$ be a niche in a pseudo double category $\K$. If $f$ has a companion $C(f, \id)$ and $g$ has a conjoint $D(\id, g)$ then the composition
	\begin{displaymath}
		C(f, \id) \hc K \hc D(\id, g) \xRar{\ls f\eps \hc \id_K \hc \eps_g} U_C \hc K \hc U_D \iso K,
  \end{displaymath}
  is a cartesian filler. In particular there is a canonical isomorphism $C(f, \id) \hc K \hc D(\id, g) \iso K(f, g)$.
\end{proposition}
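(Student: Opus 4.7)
The plan is to verify directly that the composite cell $\bar\eps := (\ls f\eps \hc \id_K \hc \eps_g)$ followed by the unitor $U_C \hc K \hc U_D \iso K$ satisfies the universal property of \defref{definition:cartesian filler} for the given niche; the canonical isomorphism $K(f, g) \iso C(f, \id) \hc K \hc D(\id, g)$ is then immediate from the uniqueness of cartesian fillers up to invertible horizontal cells.

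For the existence part of the universal property, given any cell $\cell\phi JK$ with $\hmap JXY$, vertical source $f \of h$ and vertical target $g \of k$, I would construct the required factorisation $\ls f\phi_g$ as the horizontal composite, after rewriting $J$ via unitors as $U_X \hc J \hc U_Y$, of three pieces: on the left, the vertical composite of $U_h$ above $\ls f\eta$, a cell $U_X \Rightarrow C(f, \id)$ with vertical edges $h$ and $f \of h$; in the middle, $\phi$ itself; and on the right, the vertical composite of $U_k$ above $\eta_g$, a cell $U_Y \Rightarrow D(\id, g)$ with vertical edges $g \of k$ and $k$. The vertical edges at the two seams agree with those of $\phi$, so the horizontal composite is well-defined. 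Checking $\bar\eps \of \ls f\phi_g = \phi$ is then a routine diagram chase: by the interchange law the composite splits into three columns, and the vertical companion and conjoint identities $\ls f\eps \of \ls f\eta = U_f$ and $\eps_g \of \eta_g = U_g$ collapse the flanking columns to $U_{f \of h}$ and $U_{g \of k}$, which disappear after unitor coherence.

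The main obstacle is uniqueness, and for this I would first establish a \emph{companion bijection lemma}: for any promorphism $\hmap HAY$, vertical composition with $\ls f\eps \hc \id_H$ is a bijection between cells $J \Rightarrow C(f, \id) \hc H$ with left edge $h$ and cells $J \Rightarrow H$ (modulo unitors) with left edge $f \of h$, the inverse sending $\beta$ to the horizontal composite of $\ls f\eta \of U_h$ with $\beta$. One direction uses the vertical companion identity $\ls f\eps \of \ls f\eta = U_f$, while the other uses the \emph{horizontal} companion identity $\ls f\eta \hc \ls f\eps \iso U_{C(f, \id)}$ recorded in the excerpt immediately after the defining equations of $\ls f\eta$ and $\ls f\eps$. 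The dual conjoint bijection for $\eps_g$ holds symmetrically. Composing the two bijections exhibits $\psi \mapsto \bar\eps \of \psi$ as a bijection from cells $J \Rightarrow C(f, \id) \hc K \hc D(\id, g)$ with vertical edges $h$, $k$ to cells $J \Rightarrow K$ with vertical edges $f \of h$, $g \of k$; hence $\phi$ has a unique preimage, necessarily the $\ls f\phi_g$ constructed above, completing the proof.
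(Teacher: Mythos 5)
Your proposal is correct and follows essentially the same route as the paper's (sketched) proof: the same explicit factorisation $(\ls f\eta \of U_h) \hc \phi \hc (\eta_g \of U_k)$ for existence, with uniqueness extracted from the vertical and horizontal companion/conjoint identities exactly as in Shulman's argument that the paper cites. Your ``companion bijection lemma'' is just a cleanly packaged form of that uniqueness step, so no further comment is needed.
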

\begin{proof}[Sketch of the proof.]
	This is shown in the proof of \cite[Theorem 4.1]{Shulman08}, and follows directly from the companion and conjoint identities. For example, any other filler $\cell\phi JK$ as in \eqref{equation:457490} factorises uniquely through the cell above as the composition
	\begin{displaymath}
		J \iso U_X \hc J \hc U_Y \xRar{(\ls f \eta \of U_h) \hc \phi \hc (\eta_g \of U_k)} C(f, \id) \hc K \hc D(\id, g).
	\end{displaymath}
	Factorising the cartesian fillers $C(f, \id) \hc K \hc D(\id, g)$ and $K(f, g)$ through each other we obtain the canonical isomorphism $C(f, \id) \hc K \hc D(\id, g) \iso K(f,g)$.
\end{proof}
\begin{example}
	In $\Prof$ the isomorphism $C(f, \id) \hc_C K \hc_D D(\id, g) \iso K(f, g)$ is simply an instance of the Yoneda isomorphisms, which were described in \exref{example:unenriched profunctors}. The same holds for $\enProf\V$.
\end{example}
  
  Remember that an \emph{adjunction} $l \ladj r$ in a bicategory $\catvar B$ consists of morphisms $\map lXY$ and $\map rYX$, as well as cells $\cell\eta{\id_X}{r \of l}$ (the \emph{unit}) and $\cell\eps{l \of r}{\id_Y}$ (the \emph{counit}), satisfying the triangle identities
  \begin{displaymath}
	  \bigbrks{l \xRar{\id_l \hc \eta} l \of r \of l \xRar{\eps \hc \id_l} l} = \id_l \qquad \text{and} \qquad \bigbrks{r \xRar{\eta \hc \id_r} r \of l \of r \xRar{\id_r \hc \eps} r} = \id_r.
	\end{displaymath}
	These generalise the usual adjunctions between categories, in the $2$-category $\Cat$ of categories, functors and natural transformations. The following, which is a direct consequence of the companion and conjoint identities, will be used throughout.
\begin{proposition} \label{companion-conjoint adjunction}
	For any morphism $\map fAC$ in an equipment $\K$, the companion $C(f, \id)$ is left adjoint to the conjoint $C(\id, f)$ in the horizontal bicategory $H(\K)$. The unit $\ls f\eta_f$ and counit $\ls f\eps_f$ of this adjunction are given by
	\begin{flalign*}
		&& \ls f\eta_f &= \bigbrks{U_A \iso U_A \hc U_A \xRar{\ls f\eta \hc \eta_f} C(f, \id) \hc C(\id, f)}& \\
		\text{and} &&\ls f\eps_f &= \bigbrks{C(\id, f) \hc C(f, \id) \xRar{\eps_f \hc \ls f\eps} U_C \hc U_C \iso U_C}.&
	\end{flalign*}
\end{proposition}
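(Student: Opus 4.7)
The plan is to verify the two triangle identities for the adjunction $C(f,\id) \dashv C(\id,f)$ in $H(\K)$, with the proposed unit and counit. By the coherence of pseudo double categories, I will freely absorb associators and unitors in what follows, writing composites as grids.

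First I would write out the first triangle identity, which demands that
\begin{displaymath}
\bigbrks{C(f,\id) \xRar{\ls f\eta_f \hc \id} C(f,\id) \hc C(\id,f) \hc C(f,\id) \xRar{\id \hc \ls f\eps_f} C(f,\id)} = \id_{C(f,\id)}.
\end{displaymath}
Substituting the definitions of $\ls f\eta_f$ and $\ls f\eps_f$, the left-hand side is the vertical composite of the two rows
\begin{displaymath}
\ls f\eta \hc \eta_f \hc \id_{C(f,\id)} \qquad \text{and} \qquad \id_{C(f,\id)} \hc \eps_f \hc \ls f\eps,
\end{displaymath}
arranged as a $3 \times 2$ grid (three columns, two rows of cells). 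By the interchange law this composite equals the horizontal composite of the three column-wise composites
\begin{displaymath}
(\id_{C(f,\id)} \of \ls f\eta) \hc (\eps_f \of \eta_f) \hc (\ls f\eps \of \id_{C(f,\id)}) = \ls f\eta \hc (\eps_f \of \eta_f) \hc \ls f\eps,
\end{displaymath}
and the conjoint identity (the ``vertical'' one relating $\eta_f$, $\eps_f$ and $U_f$) reduces the middle factor to $U_f$. So the composite becomes $\ls f\eta \hc U_f \hc \ls f\eps$.

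The remaining step is to recognise this last expression as $\id_{C(f,\id)}$. This follows because $U$ acts as a (pseudo)functor $\K_0 \to \K_1$: horizontally composing any cell $\phi$ with the horizontal identity $U_f$ on one of its vertical edges equals $\phi$ pre- and post-composed with appropriate unitors. Applying this to absorb the central $U_f$ yields $\ls f\eta \hc \ls f\eps$ (modulo unitors), which is $\id_{C(f,\id)}$ by the ``horizontal'' companion identity given in the excerpt. This proves the first triangle identity.

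The second triangle identity,
\begin{displaymath}
\bigbrks{C(\id,f) \xRar{\id \hc \ls f\eta_f} C(\id,f) \hc C(f,\id) \hc C(\id,f) \xRar{\ls f\eps_f \hc \id} C(\id,f)} = \id_{C(\id,f)},
\end{displaymath}
is handled by a completely symmetric argument. The analogous $3 \times 2$ grid, again simplified by interchange, reduces to $\eta_f \hc U_f \hc \eps_f$, with the middle $U_f$ arising this time from the ``vertical'' companion identity $\ls f\eps \of \ls f\eta = U_f$. Absorbing $U_f$ as before and applying the ``horizontal'' conjoint identity $\eps_f \hc \eta_f = \id_{C(\id,f)}$ finishes the proof. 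The only real obstacle is bookkeeping of the associators and unitors that I am suppressing; this is handled uniformly by Mac Lane-style coherence, which guarantees that any way of inserting them yields the same result.
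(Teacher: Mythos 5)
Your proof is correct and is exactly the argument the paper has in mind: the paper states the result without proof as "a direct consequence of the companion and conjoint identities", and your verification of the two triangle identities via the interchange law, the vertical identities $\eps_f \of \eta_f = U_f$ and $\ls f\eps \of \ls f\eta = U_f$ in the middle column, and the horizontal identities at the end is precisely that consequence spelled out. One small slip: in the second triangle identity the reduced composite should read $\eps_f \hc U_f \hc \eta_f$ rather than $\eta_f \hc U_f \hc \eps_f$ (the latter is not even composable, since the right vertical edge of $\eta_f$ is $\id_A$ while the left vertical edge of $U_f$ is $f$); your stated conclusion $\eps_f \hc \eta_f = \id_{C(\id,f)}$ is nevertheless the right one, so this is only a transcription error.
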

\begin{example}
	For a functor $\map fAC$ the unit $\ls f\eta_f$ of the adjunction $C(f, \id) \ladj C(\id, f)$ is, under the Yoneda isomorphism $C(f, \id) \hc_C C(\id, f) \iso C(f, f)$, simply given by the action $\map f{A(a_1, a_2)}{C(fa_1, fa_2)}$ of $f$ on maps in $A$, while the counit $\ls f\eps_f$ is induced by composition $C(c_1, fb) \times C(fb, c_2) \to C(c_1, c_2)$ in $C$.
\end{example}

	The following definition recalls the notions of `monoid' and `bimodule' in pseudo double categories, from \cite[Section 11]{Shulman08}; see also \cite[Section 5.3]{Leinster04}. These are generalisations of the usual notions of monoids and bimodules in a monoidal category, including the famous example of rings being monoids in the monoidal category $\mathsf{Ab}$ of abelian groups and their tensor product.
\begin{definition}\label{definition:monoids and bimodules}
	Let $\K$ be a pseudo double category.
	\begin{itemize}[label=-]
		\item	A \emph{monoid} $A$ in $\K$ consists of $A = (A_0, A, m, e)$ where $\hmap A{A_0}{A_0}$ is a horizontal morphism in $\K$ and $\cell m{A \hc A}A$ and $\cell e{U_{A_0}}A$ are horizontal cells satisfying the associativity and unit axioms below. The cells $m$ and $e$ are called the \emph{multiplication} and \emph{unit} of $A$.
		\begin{gather*}
			m \of (\id_A \hc m) = m \of (m \hc \id_A) \\
			m \of (e \hc \id_A) = \id_A = m \of (\id_A \hc e)
		\end{gather*}
		\item	Given monoids $A$ and $B$, a \emph{morphism of monoids} $\map fAB$ consists of a vertical morphism $\map{f_0}{A_0}{B_0}$ and a cell
		\begin{displaymath}
			\begin{tikzpicture}
				\matrix(m)[math175em]{A_0 & A_0 \\ B_0 & B_0 \\};
				\path[map]	(m-1-1) edge[barred] node[above] {$A$} (m-1-2)
														edge node[left] {$f_0$} (m-2-1)
										(m-1-2) edge node[right] {$f_0$}  (m-2-2)
										(m-2-1) edge[barred] node[below] {$B$} (m-2-2);
				\path[transform canvas={shift=($(m-1-1)!0.5!(m-2-1)$)}]	(m-1-2) edge[cell] node[right] {$f$} (m-2-2);
			\end{tikzpicture}
		\end{displaymath}
		that is compatible with the multiplications and units of $A$ and $B$:
		\begin{displaymath}
			m_B \of (f \hc f) = f \of m_A \qquad \text{and} \qquad f \of e_A = e_B \of U_{f_0}.
		\end{displaymath}
		\item	Given monoids $A$ and $B$, an \emph{$(A,B)$-bimodule} $J$ consists of $J = (J, l, r)$, where $\hmap J{A_0}{B_0}$ is a horizontal morphism in $\K$ and $\cell l{A \hc J}J$ and $\cell r{J \hc B}J$ are horizontal cells defining the \emph{actions} of $A$ and $B$ on $J$. These actions each satisfy the associativity and unit axioms
		\begin{displaymath}
			l \of (\id \hc l) = l \of (m \hc \id) \qquad \text{and} \qquad l \of (e \hc \id) = \id,
		\end{displaymath}
		likewise for $r$, and together satisfy the commutativity axiom
		\begin{displaymath}
			l \of (\id \hc r) = r \of (l \hc \id).
		\end{displaymath}
		\item Given monoids $A$, $B$, $C$ and $D$, morphisms $\map fAB$ and $\map gCD$ and bimodules $\hmap JAB$ and $\hmap KCD$, a \emph{cell} $\phi$ as on the left below
		\begin{displaymath}
			\begin{tikzpicture}[baseline]
				\matrix(m)[math175em]{A & B \\ C & D \\};
				\path[map]	(m-1-1) edge[barred] node[above] {$J$} (m-1-2)
														edge node[left] {$f$} (m-2-1)
										(m-1-2) edge node[right] {$g$}  (m-2-2)
										(m-2-1) edge[barred] node[below] {$K$} (m-2-2);
				\path[transform canvas={shift=($(m-1-1)!0.5!(m-2-1)$)}]	(m-1-2) edge[cell] node[right] {$\phi$} (m-2-2);
			\end{tikzpicture}
			\qquad\qquad\qquad\begin{tikzpicture}[baseline]
				\matrix(m)[math175em]{A_0 & B_0 \\ C_0 & D_0 \\};
				\path[map]	(m-1-1) edge[barred] node[above] {$J$} (m-1-2)
														edge node[left] {$f_0$} (m-2-1)
										(m-1-2) edge node[right] {$g_0$}  (m-2-2)
										(m-2-1) edge[barred] node[below] {$K$} (m-2-2);
				\path[transform canvas={shift=($(m-1-1)!0.5!(m-2-1)$)}]	(m-1-2) edge[cell] node[right] {$\phi$} (m-2-2);
			\end{tikzpicture}
		\end{displaymath}
		is a cell in $\K$ as on the right, satisfying the following compatibility axioms.
		\begin{displaymath}
			l_K \of (f \hc \phi) = \phi \of l_J \qquad \qquad r_K \of (\phi \hc g) = \phi \of r_J
		\end{displaymath}
	\end{itemize}
\end{definition}
	
	Before giving examples we recall \cite[Proposition 11.10]{Shulman08}, which asserts that monoids and bimodules in an equipment $\K$ form themselves an equipment $\Mod\K$, under a mild condition. Recall that a \emph{reflexive pair} in a category $\C$ is a pair of parallel maps $f$, $\mmap gXY$ together with a common section $\map sYX$, that is $f \of s = \id_Y = g \of s$. A \emph{reflexive coequaliser} is a coequaliser of a reflexive pair.
\begin{proposition}[Shulman] \label{bimodule equipments}
	Let $\K$ be an equipment such that the categories $H(\K)(A, B)$ have reflexive coequalisers, that are preserved by horizontal composition on both sides. Then the monoids and bimodules of $\K$, together with their morphisms and cells, again form an equipment $\Mod\K$.
\end{proposition}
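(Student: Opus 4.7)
The plan is to build $\Mod\K$ in two main stages: first verify that monoids, monoid morphisms, bimodules and bimodule cells assemble into a pseudo double category, and then produce cartesian fillers from those of $\K$. The vertical category $(\Mod\K)_0$ has monoids as objects and monoid morphisms as arrows, with composition and identities inherited from $\K_0$; the compatibility cells compose by pasting, and the unit axioms hold on the nose. Cells of $\Mod\K$ are cells of $\K$ subject to two equational conditions (left and right action compatibility), which are preserved under vertical composition in $\K$, so $(\Mod\K)_1$ inherits its vertical composition from $\K_1$.

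The horizontal structure requires real work. Given bimodules $\hmap JAB$ and $\hmap HBC$, define $J \hc_B H$ as the coequaliser in $H(\K)(A_0, C_0)$ of the parallel pair
\begin{displaymath}
	r_J \hc \id_H,\ \id_J \hc l_H \colon J \hc B \hc H \rightrightarrows J \hc H,
\end{displaymath}
which is reflexive via the common section $\id_J \hc e_B \hc \id_H$; by hypothesis this coequaliser exists. The $A$-action on $J \hc_B H$ is induced from $l_J \hc \id_H$, and the $C$-action from $\id_J \hc r_H$, by invoking preservation of reflexive coequalisers under horizontal composition on the left and right respectively; the associativity, unit and commutativity axioms for these actions follow from uniqueness of the universal factorisations. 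The unit bimodule on $A$ is $A$ itself with its multiplication as both actions, and the unitor $A \hc_A J \iso J$ records the fact that $J$, with its left action, presents itself as the coequaliser of $m \hc \id_J$ against $\id_A \hc l_J$. The associator $(J \hc_B H) \hc_C K \iso J \hc_B (H \hc_C K)$ is produced by realising both sides as the same iterated coequaliser of the threefold diagram built from the $B$- and $C$-actions, using preservation on both sides once more. Horizontal composition of cells $\phi \colon J \Rar K$ and $\psi \colon H \Rar L$ is defined by showing that $\phi \hc \psi$, composed with the coequaliser map into $K \hc_D L$, coequalises the parallel pair defining $J \hc_B H$; this uses only that $\phi$ respects the right $B$-action and $\psi$ the left $B$-action.

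For cartesian fillers, consider a niche $A \xrar f C \xslashedrightarrow K D \xlar g B$ in $\Mod\K$, with underlying niche $A_0 \xrar{f_0} C_0 \xslashedrightarrow K D_0 \xlar{g_0} B_0$ in $\K$, and let $\cell\eps{K(f_0, g_0)}K$ be the $\K$-cartesian filler. I endow $K(f_0, g_0)$ with bimodule structure by observing that the composite
\begin{displaymath}
	A \hc K(f_0, g_0) \xRar{f \hc \eps} C \hc K \xRar{l_K} K
\end{displaymath}
has vertical legs $f_0$ and $g_0$ (since $f$ has legs $f_0, f_0$), so factors uniquely through $\eps$ to give an $A$-action; dually for a $B$-action. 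Uniqueness of factorisations through $\eps$ transports the action axioms from $K$ to $K(f_0, g_0)$, and makes $\eps$ itself a $\Mod\K$-cell. For the universal property, a $\Mod\K$-cell $\cell\phi JK$ over $f$ and $g$ admits a unique $\K$-factorisation $\ls{f_0}\phi_{g_0} \colon J \Rar K(f_0, g_0)$; compatibility of this factorisation with the $A$- and $B$-actions follows by applying the uniqueness of factorisations through $\eps$ to the two sides of each action-compatibility equation.

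The main obstacle I anticipate is the coherence of the pseudo double category structure on $\Mod\K$: verifying the pentagon and triangle axioms for the associators and unitors built from iterated reflexive coequalisers is a diagram chase requiring repeated use of preservation of reflexive coequalisers on both sides, and in practice is best carried out by choosing, once and for all, canonical presentations of all iterated composites as coequalisers of diagrams indexed by bracketings. Once the pseudo double category axioms are in place, the construction of cartesian fillers is essentially formal and rests only on the universal property of restrictions in $\K$.
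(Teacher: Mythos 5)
Your construction matches the paper's (and Shulman's) proof essentially step for step: the horizontal composite as a reflexive coequaliser with common section $\id \hc e \hc \id$, actions induced via preservation of these coequalisers by $\hc$, and cartesian fillers lifted from $\K$ with the actions obtained by factoring $l_K \of (f \hc \eps)$ through $\eps$. The one point to repair is horizontal composition of cells with nontrivial vertical boundary: the coequaliser's universal property lives in the hom-category $H(\K)(A_0, C_0)$ of \emph{horizontal} cells, so before invoking it one must first factor $\phi \hc \psi$ through the cartesian filler $(K \hc_D L)(f_0, h_0) \Rar K \hc_D L$ to obtain a horizontal cell $J \hc H \Rar (K \hc_D L)(f_0, h_0)$, exactly as the paper does.
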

\begin{proof}[Sketch of the proof.]
	We briefly recall the constructions that we will use here; for the details see Shulman's article \cite[Proposition 11.10]{Shulman08}.	The vertical composition of morphisms of monoids and that of cells between bimodules is given by vertical composition in $\K$. For the horizontal composition, consider two composable bimodules $\hmap JAB$ and $\hmap HBC$; their horizontal composite $\hmap{J \hc_B H}AC$ is constructed as the reflexive coequaliser
	\begin{displaymath}
		\begin{tikzpicture}
			\matrix(m)[math175em, column sep=2.5em]{J \hc B \hc H & J \hc H & J \hc_B H \\};
			\path	(m-1-1)	edge[transform canvas={yshift=3.5pt}, cell] node[above] {$r \hc \id$} (m-1-2)
										edge[transform canvas={yshift=-3.5pt}, cell] node[below] {$\id \hc l$} (m-1-2)
						(m-1-2) edge[cell] (m-1-3);
		\end{tikzpicture}
	\end{displaymath}
	in $H(\K)(A_0, C_0)$. By the unit axioms for the actions of $B$ on $J$ and $H$, a section for the parallel cells can be given by the composite
	\begin{displaymath}
		J \hc H \iso J \hc U_{B_0} \hc H \xRar{\id \hc e \hc \id} J \hc B \hc H.
	\end{displaymath}
	The unit for horizontal composition, on a monoid $A$, is itself considered as an $(A, A)$\ndash bimodule.
	
	Given horizontally composable cells
	\begin{displaymath}
    \begin{tikzpicture}
      \matrix(m)[math175em]{A \nc B \nc E \\ C \nc D \nc F \\};
      \path[map]  (m-1-1) edge[barred] node[above] {$J$} (m-1-2)
                          edge node[left] {$f$} (m-2-1)
                  (m-1-2) edge[barred] node[above] {$H$} (m-1-3)
                          edge node[right] {$g$} (m-2-2)
                  (m-1-3) edge node[right] {$h$} (m-2-3)
                  (m-2-1) edge[barred] node[below] {$K$} (m-2-2)
                  (m-2-2) edge[barred] node[below] {$L$} (m-2-3);
      \path[transform canvas={shift={($(m-1-1)!0.5!(m-2-2)$)}}]
                  (m-1-2) edge[cell] node[right] {$\phi$} (m-2-2)
                  (m-1-3) edge[cell] node[right] {$\psi$} (m-2-3);
    \end{tikzpicture}
  \end{displaymath}
  of bimodules, their horizontal composition $\phi \hc_g \psi$ is defined as follows. First consider the composition $J \hc H \xRar{\phi \hc \psi} K \hc L \Rar K \hc_D L$, which factors as a horizontal cell $J \hc H \Rar (K \hc_D L)(f, h)$ through the cartesian filler for the niche $A \xrar f C \xslashedrightarrow{K \hc_D L} F \xlar h E$. The fact that the cells $\phi$ and $\psi$ are compatible with the actions implies that this factorisation factors further as $J \hc_B H \Rar (K \hc_D L)(f, h)$ which, composed with the cartesian filler, corresponds to a cell $J \hc_B H \Rar K \hc_D L$. This is what $\phi \hc_g \psi$ is defined to be.
  
  Finally consider a niche $A \xrar f C \xslashedrightarrow K D \xlar g B$ consisting of morphisms of monoids $f$ and $g$ and a bimodule $K$. The underlying niche $A_0 \xrar{f_0} C_0 \xslashedrightarrow K D_0 \xlar{g_0} B_0$ has a cartesian filler $\cell\eps{K(f_0, g_0)}K$ in $\K$, which can be made into a cell of bimodules as follows. The action $A \hc K(f_0, g_0) \Rar K(f_0, g_0)$ of $A$ is given by the factorisation of $A \hc K(f_0, g_0) \xRar{f \hc \eps} C \hc K \xRar l K$ through $\eps$; the action of $B$ is given similarly. That $\eps$ is compatible with these actions follows immediately. Now the underlying cell of any other filler $\phi$ will factor through $\eps$ as a cell $\phi'$ in $\K$. It is easy to see that the universality of $\eps$ in $\K$ implies that $\phi'$ is compatible with the actions on its source and target bimodules, so that it is a cell of bimodules: we conclude that $\eps$ is universal in $\Mod\K$ as well.
\end{proof}

\begin{example}
	If $\V$ is a cocomplete symmetric pseudomonoidal category whose binary tensor product $\dash \tens \dash$ preserves colimits in both variables, then the equipment $\Mat\V$ of $\V$-matrices satisfies the condition of the previous proposition, so that the monoids and bimodules in $\Mat\V$ form an equipment $\Mod{\Mat\V}$.
	
	A monoid $A$ in $\Mat\V$ consists of a set $A_0$ and a $\V$-object $A(a, b)$ for every pair $a$, $b \in A_0$, such that the maps $\map m{A(a, b) \tens A(b, c)}{A(a, c)}$ and \mbox{$\map e1{A(a, a)}$}, induced by its multiplication and unit, make $A$ into a $\V$-category. Likewise a morphism \mbox{$\map fAB$} of monoids is a $\V$-functor. Given $\V$-categories $A$ and $B$, an $(A, B)$\ndash bimodule \mbox{$\hmap JAB$} consists of $\V$-objects $J(a, b)$, for $a \in A$, $b \in B$, together with action maps
	\begin{displaymath}
		\map l{A(a_1, a_2) \tens J(a_2, b)}{J(a_1, b)} \qquad \text{and} \qquad \map r{J(a, b_1) \tens B(b_1, b_2)}{J(a, b_2)},
	\end{displaymath}
	which can be reconsidered as $\V$\ndash functors $\map{J(\dash, b)}{\op A}\V$ and $\map{J(a, \dash)}B\V$ respectively, for each $a \in A$ and $b \in B$. The compatibility axiom for $l$ and $r$ implies that giving such families of $\V$-functors is equivalent to giving a single $\V$-profunctor $\map J{\op A \tens B}\V$ (see \cite[Section 1.4]{Kelly82}). Likewise, a cell of bimodules is precisely a natural transformation of $\V$-profunctors.
	
	One readily checks that the horizontal compositions, that make $\Mod{\Mat\V}$ into a pseudo double category and that are given in the proof above, coincide with the horizontal compositions of $\enProf\V$ as given in \exref{example:enriched profunctor equipments}. We conclude that the equipments $\Mod{\Mat\V}$ and $\enProf\V$ are `isomorphic', even though we do not yet know what that means; functors between pseudo double categories will be treated in \chapref{chapter:monads on equipments}.
\end{example}

	Monoids in $\Span\E$ are `internal categories in $\E$', as follows.
\begin{example} \label{example:internal profunctor equipments}
	A monoid $A$ in $\Span\E$ is given by a span $\map dA{A_0 \times A_0}$ in $\E$ equipped with a multiplication and unit as follows; we will write $d = (d_0, d_1)$, and $\map{d_2 = (d_0 \of p, d_1 \of q)}{A \times_{A_0} A}{A_0 \times A_0}$ for the induced map on the composite of $A$ with itself, where $p$ and $q$ are the projections onto the first and second factor of $A \times_{A_0} A$. The multiplication and unit of $A$ are given by morphisms
	\begin{displaymath}
		\begin{tikzpicture}[baseline]
			\matrix(m)[math175em, column sep=0.5em]{A \times_{A_0} A & & A \\ & A_0 \times A_0 & \\};
			\path[map]	(m-1-1) edge node[above] {$m$} (m-1-3)
													edge node[below left] {$d_2$} (m-2-2)
									(m-1-3) edge node[below right] {$d$} (m-2-2);
		\end{tikzpicture}
		\qquad\qquad\begin{tikzpicture}[baseline]
			\matrix(m)[math175em, column sep=0.5em]{A_0 & & A \\ & A_0 \times A_0 & \\};
			\path[map]	(m-1-1) edge node[above] {$e$} (m-1-3)
													edge node[below left] {$(\id, \id)$} (m-2-2)
									(m-1-3) edge node[below right] {$d$} (m-2-2);
		\end{tikzpicture}
	\end{displaymath}
	in the slice category $\E \slash A_0 \times A_0$. Satisfying the associativity and unit axioms, these make $A$ into a \emph{category internal in $\E$} (see \cite[Section XII.1]{MacLane98}), with an `object of objects' $A_0$ and an `object of morphisms' $A$. The maps $\map{d_0}A{A_0}$ and $\map{d_1}A{A_0}$ are the source and target maps, so that the pullback $A \times_{A_0} A$ of $d_0$ and $d_1$ forms the `object of composable morphisms'. The map $m$ then gives the composition of $A$ while $e$ supplies its identities. In the case that $\E = \Set$ this recovers ordinary categories as categories internal in $\Set$, with the difference that where an ordinary category $A'$ has sets of maps $A'(a, b)$ for any pair of objects $a$ and $b$, a category $A$ internal in $\Set$ has a single set of maps $A$, and comes with a map $\map dA{A_0 \times A_0}$ that specifies sources and targets. 
	
	Likewise a map of monoids $\map fAB$ is an \emph{internal functor}: it consists of a pair of maps $\map fAB$ and $\map{f_0}{A_0}{B_0}$ making
	\begin{displaymath}
		\begin{tikzpicture}
			\matrix(m)[math175em]{A & B \\ A_0 \times A_0 & B_0 \times B_0 \\};
			\path[map]	(m-1-1) edge node[above] {$f$} (m-1-2)
													edge node[left] {$d_A$} (m-2-1)
									(m-1-2) edge node[right] {$d_B$} (m-2-2)
									(m-2-1) edge node[below] {$f_0 \times f_0$} (m-2-2);
		\end{tikzpicture}
	\end{displaymath}
	commute, and which are compatible with the composition and units of $A$ and $B$.
	
	An $(A, B)$-bimodule $J$ is given by a span $\map dJ{A_0 \times B_0}$, which can be thought of as an object of `$\E$-objects indexed by pairs of objects in $A_0$ and $B_0$', with action maps $\map l{A \times_{A_0} J}J$ and $\map r{J \times_{B_0} B}J$, over $A_0 \times B_0$, that are associative and unital. Bimodules in $\Span\E$ are called \emph{internal profunctors} (`internal hom-functors' in \cite{MacLane98}). Again profunctors $J$ that are internal in $\Set$ are equivalent to the usual profunctors, but are given as a single set $J$ of morphisms, as opposed to giving a set $J(a, b)$ of morphisms for each pair of objects $a \in A$ and $c \in C$. A cell $\phi$ as on the left below is given by a morphism $\map\phi JK$ in $\E$ on the right, compatible with actions of the internal categories involved. Such a cell is called an \emph{internal transformation} of internal profunctors.
	\begin{equation} \label{diagram:natural transformation of internal profunctors}
		\begin{tikzpicture}[baseline]
				\matrix(m)[math175em]{A \nc B \\ C \nc D \\};
				\path[map]	(m-1-1) edge[barred] node[above] {$J$} (m-1-2)
														edge node[left] {$f$} (m-2-1)
										(m-1-2) edge node[right] {$g$}  (m-2-2)
										(m-2-1) edge[barred] node[below] {$K$} (m-2-2);
				\path[transform canvas={shift=($(m-1-1)!0.5!(m-2-1)$)}]	(m-1-2) edge[cell] node[right] {$\phi$} (m-2-2);
		\end{tikzpicture}
		\qquad\qquad\qquad\begin{tikzpicture}[baseline]
			\matrix(m)[math175em]{J \nc K \\ A_0 \times B_0 \nc C_0 \times D_0 \\};
			\path[map]	(m-1-1) edge node[above] {$\phi$} (m-1-2)
													edge node[left] {$d_J$} (m-2-1)
									(m-1-2) edge node[right] {$d_K$} (m-2-2)
									(m-2-1) edge node[below] {$f_0 \times g_0$} (m-2-2);
		\end{tikzpicture}
	\end{equation}
	Just like there is a notion of natural transformations between ordinary functors, there is a notion of `internal transformation' between a pair of internal functors $f$ and $\map gAB$ as well: see \eqref{diagram:natural transformation of internal functors} below. Like in the ordinary case (\exref{example:vertical cells in Prof}), such transformations correspond to vertical cells $\phi$ between internal profunctors, as we shall see in \propref{internal transformations}.
	
	If $\E$ has reflexive coequalisers preserved by pullbacks then $\Span\E$ satisfies the condition of the proposition above, so that categories and profunctors internal to $\E$ form an equipment, which we shall denote $\inProf\E$. Its horizontal composition $J \hc_B H$ of internal profunctors $\hmap JAB$ and $\hmap HBC$ can be given by the reflexive coequaliser of pullbacks
	\begin{displaymath}
		J \times_{B_0} B \times_{B_0} H \rightrightarrows J \times_{B_0} H \to J \hc_B H,
	\end{displaymath}
	where the parallel pair of maps is given by the actions of $B$ on $J$ and $H$. The internal unit profunctor $U_A$ is simply $\map dA{A_0 \times A_0}$ itself, with actions given by composition. Moreover, the horizontal composite $\phi \hc_g \psi$ of two cells on the left below is given by the unique factorisation on the right.
	\begin{displaymath}
    \begin{tikzpicture}[baseline]
      \matrix(m)[math175em]{A \nc B \nc E \\ C \nc D \nc F \\};
      \path[map]  (m-1-1) edge[barred] node[above] {$J$} (m-1-2)
                          edge node[left] {$f$} (m-2-1)
                  (m-1-2) edge[barred] node[above] {$H$} (m-1-3)
                          edge node[right] {$g$} (m-2-2)
                  (m-1-3) edge node[right] {$h$} (m-2-3)
                  (m-2-1) edge[barred] node[below] {$K$} (m-2-2)
                  (m-2-2) edge[barred] node[below] {$L$} (m-2-3);
      \path[transform canvas={shift={($(m-1-1)!0.5!(m-2-2)$)}}]
                  (m-1-2) edge[cell] node[right] {$\phi$} (m-2-2)
                  (m-1-3) edge[cell] node[right] {$\psi$} (m-2-3);
    \end{tikzpicture}
		\qquad\qquad\qquad\begin{tikzpicture}[baseline]
			\matrix(m)[math175em]{J \times_{B_0} H & J \hc_B H \\ K \times_{D_0} L & K \hc_D L \\};
			\path[map]	(m-1-1) edge (m-1-2)
													edge node[left] {$\phi \times_{g_0} \psi$} (m-2-1)
									(m-1-2) edge[dashed] node[right] {$\phi \hc_g \psi$} (m-2-2)
									(m-2-1) edge (m-2-2);
		\end{tikzpicture}
  \end{displaymath}
  
	Finally, the proof of the proposition above shows that the cartesian filler $K(f, g)$ of a niche $A \xrar f C \xslashedrightarrow K D \xlar g B$ in $\inProf\E$ can be constructed in $\Span\E$, so the span underlying the profunctor $K(f, g)$ is the pullback $(f_0 \times g_0)^* k$, where $\map kK{C_0 \times D_0}$ is the span underlying $K$. The factorisation $\map{\ls f\phi_g}J{K(f, g)}$ of any internal transformation $\phi$ of the form \eqref{diagram:natural transformation of internal profunctors}, is obtained by factoring its underlying map $\map \phi JK$ through the pullback $K(f, g)$.
\end{example}
	
	Although internal categories can be considered in any category $\E$ with finite limits, we will mostly be interested in the case of $\E$ being the category $\ps\SS = \fun{\op\SS} \Set$ of presheaves on a small category $\SS$. 
\begin{example}\label{example:monoids in presheaf span equipments}
	Let $\SS$ be a small category. Since pullbacks of presheaves are given pointwise, it follows that giving the multiplication $m$ and unit $e$ of a monoid $\hmap A{A_0}{A_0}$ in $\Span{\ps\SS}$ is the same as giving the sets $(A_0)_s$ and $A_s$, for each $s \in \ob S$, the structure of a category. Moreover, the fact that $m$ and $e$ of $A$ are maps of presheaves is equivalent to the two images of each $\map urs$ in $\SS$, under $A$ and $A_0$, constituting a functor $\map{A_u}{A_s}{A_r}$. Thus monoids in $\Span{\ps\SS}$ are functors $\map A{\op\SS}\cat$, which we shall call \emph{$\SS$-indexed categories}\footnote{We should warn the reader that the same terminology is used in literature to mean \emph{pseudo}functors $\op\SS \to \Cat$, where $\Cat$ is the $2$-category of categories, functors and transformations.}. Similarly a map of monoids $\map fAB$ is an \emph{$\SS$-indexed functor}, that is a natural transformation $\nat fAB$.
	
	An $(A, B)$-bimodule $J$ in $\Span{\ps\SS}$ is, by definition, a natural transformation of presheaves $\map dJ{A_0 \times B_0}$ together with actions $\map l{A \times_{A_0} J}J$ and $\map r{J \times_{B_0} B}J$, which are transformations over $A_0 \times B_0$ as well, and satisfy the bimodule axioms. We will call such bimodules \emph{$\SS$-indexed profunctors}. An elementary calculation will show that the coequalisers of $\E = \Set$ are preserved by pullbacks; that this is the case for any category of presheaves $\E = \ps\SS$ follows since limits and colimits of presheaves are computed pointwise. Hence we can apply \propref{bimodule equipments}, so that $\SS$-indexed categories and $\SS$-indexed profunctors form an equipment $\psProf{\SS} = \Mod{\Span{\ps\SS}}$.
\end{example}

	As an example we consider the equipment $\psProf{\GG_1}$ of categories and profunctors indexed by $\GG_1 = (0 \rightrightarrows 1)$. Briefly speaking, categories indexed by $\GG_1$ are `double categories without horizontal compositions'.
\begin{example}\label{example:GG1-indexed bimodules}	
	A $\GG_1$-indexed category $\map A{\op{\GG_1}}\cat$ consists of two categories $A_0$ and $A_1$ together with a pair of functors $L$ and $\map R{A_1}{A_0}$, that is it consists of the data underlying a category internal in $\Cat$ as in \eqref{diagram:internal category}. Hence, like a double category, we think of the objects $a, a', \dotsc$ of $A_0$ as objects of $A$, the morphisms $\map pa{a'}$ of $A_0$ as vertical morphisms of $A$, the objects $j$ of $A_1$, with $Lj = a_1$ and $Rj = a_2$, as horizontal morphisms $\hmap j{a_1}{a_2}$ of $A$ and, lastly, the morphisms $\map xjk$ of $A_1$, with $Lx = p$ and $Rx = q$, as cells $x$ of $A$ of the form as shown below. The composition of $A_0$ and $A_1$ gives vertical composition of vertical morphisms and cells respectively. In other words, a $\GG_1$-indexed category $A$ is a `double category without horizontal compositions'. In fact, in \secref{section:monad examples} we shall consider the `free strict double category'-monad $D$ on the double category of $\GG_1$-indexed categories and their profunctors, and a $D$-algebra structure on a $\GG_1$-indexed category $A$ will equip $A$ with horizontal compositions.
	\begin{displaymath}
		\begin{tikzpicture}
			\matrix(m)[math175em]{a_1 \nc a_2 \\ a'_1 \nc a'_2 \\};
			\path[map]  (m-1-1) edge[barred] node[above] {$j$} (m-1-2)
													edge node[left] {$p$} (m-2-1)
									(m-1-2) edge node[right] {$q$} (m-2-2)
									(m-2-1) edge[barred] node[below] {$k$} (m-2-2);
			\path[transform canvas={shift={($(m-1-2)!(0,0)!(m-2-2)$)}}] (m-1-1) edge[cell] node[right] {$x$} (m-2-1);
		\end{tikzpicture}
	\end{displaymath}
	A $\GG_1$-indexed functor $\map fAB$ is simply given by two functors $\map {f_0}{A_0}{B_0}$ and $\map{f_1}{A_1}{B_1}$ such that $L \of f_1 = f_0 \of L$ and $R \of f_1 = f_0 \of R$. Thus $f$ maps objects, vertical morphisms and horizontal morphisms, as well as cells, of $A$ to those of $B$, in a way that preserves vertical and horizontal sources and targets, as well as vertical compositions.

	A $\GG_1$-indexed profunctor $\hmap JAB$ is given as follows. Its underlying span in $\Span{\ps{\GG_1}}$ consists of sets $J_0$ and $J_1$, together with functions $L$, $R$, $d_0$ and $d_1$, that make both the `$L$-square' and the `$R$-square' in the diagram on the left below commute.
	\begin{displaymath}
		\begin{tikzpicture}[baseline]
			\matrix(m)[math2em]
			{ J_1 & \ob A_1 \times \ob B_1 \\
				J_0 & \ob A_0 \times \ob B_0 \\	};
			\path[map]	(m-1-1) edge node[above] {$d_1$} (m-1-2)
													edge[transform canvas={xshift=-2.5pt}] node[left] {$L$} (m-2-1)
													edge[transform canvas={xshift=2.5pt}] node[right] {$R$} (m-2-1)
									(m-1-2) edge[transform canvas={xshift=-2.5pt}] node[left] {$L \times L$} (m-2-2)
													edge[transform canvas={xshift=2.5pt}] node[right] {$R \times R$} (m-2-2)
									(m-2-1) edge node[below] {$d_0$} (m-2-2);
		\end{tikzpicture}
		\qquad\qquad\qquad\qquad\begin{tikzpicture}[baseline]
				\matrix(m)[math175em]{a_1 \nc a_2 \\ b_1 \nc b_2 \\};
				\path[map]  (m-1-1) edge[barred] node[above] {$h$} (m-1-2)
														edge node[left] {$s$} (m-2-1)
										(m-1-2) edge node[right] {$t$} (m-2-2)
										(m-2-1) edge[barred] node[below] {$k$} (m-2-2);
				\path[transform canvas={shift={($(m-1-2)!(0,0)!(m-2-2)$)}}] (m-1-1) edge[cell] node[right] {$u$} (m-2-1);
		\end{tikzpicture}
	\end{displaymath}
	Given a pair of objects $a \in A$ and $b \in B$ we write $J_0(a, b) = \inv d_0\set{(a, b)}$, and likewise $J_1(h, k) = \inv d_1\set{(h,k)}$ for horizontal morphisms $\hmap h{a_1}{a_2}$ in $A$ and $\hmap k{b_1}{b_2}$ in $B$.	As with ordinary profunctors (\exref{example:unenriched profunctors}) we shall think of elements $s  \in J_0(a_1, b_1)$ as vertical morphisms $\map s{a_1}{b_1}$, and of $u \in J_1(h, k)$, with $Lu = s$ and $Ru = t$, as a cell $u$ as shown on right above. That the sources and targets of $k$, $h$, $s$ and $t$ indeed coincide as drawn follows from the commuting of the two squares on the left above. The left action of $A$ on $J$ is given by functions $\map\of{A_i \times_{\ob A_i} J_i}{J_i}$, for $i = 0, 1$, that make both squares in the diagram on the left below commute, and that satisfy associativity and unit axioms.
	\begin{displaymath}
		\begin{tikzpicture}[baseline]
			\matrix(m)[math2em]
			{	A_1 \times_{\ob A_1} J_1 & J_1 \\
				A_0 \times_{\ob A_0} J_0 & J_0 \\ };
			\path[map]	(m-1-1) edge node[above] {$\of$} (m-1-2)
													edge[transform canvas={xshift=-2.5pt}] node[left] {$L \times_L L$} (m-2-1)
													edge[transform canvas={xshift=2.5pt}] node[right] {$R \times_R R$} (m-2-1)
									(m-1-2) edge[transform canvas={xshift=-2.5pt}] node[left] {$L$} (m-2-2)
													edge[transform canvas={xshift=2.5pt}] node[right] {$R$} (m-2-2)
									(m-2-1) edge node[below] {$\of$} (m-2-2);
		\end{tikzpicture}
		\qquad\qquad\qquad\qquad\begin{tikzpicture}[baseline]
			\matrix(m)[math175em]{a'_1 \nc a'_2 \\ a_1 \nc a_2 \\};
			\path[map]  (m-1-1) edge[barred] node[above] {$j$} (m-1-2)
													edge node[left] {$p$} (m-2-1)
									(m-1-2) edge node[right] {$q$} (m-2-2)
									(m-2-1) edge[barred] node[below] {$h$} (m-2-2);
			\path[transform canvas={shift={($(m-1-2)!(0,0)!(m-2-2)$)}}] (m-1-1) edge[cell] node[right] {$x$} (m-2-1);
		\end{tikzpicture}
	\end{displaymath}
	This lets us `precompose' the morphisms and cells of $A$ with those of $J$: for example we can compose the cell $x$ in $A$ with $u$ above to obtain a cell $\cell{u \of x}jk$ in $J$. The commuting of the diagram on the left states that these compositions need to be compatible with $L$ and $R$ (e.g.\ $L(u \of x) = s \of p$); of course they also need to be associative and unital. Analogously $J$ comes equipped with an action of $B$, given by functions $\map\of{B_i \times_{\ob B_i} J_i}{J_i}$ that satisfy similar axioms, while the actions of $A$ and $B$ commute.

	Given a second $\GG_1$-profunctor $\hmap HBC$, the horizontal composition $J \hc_B H$ is given by the coequalisers
	\begin{equation} \label{equation:composition of internal profunctors}
		J_i \times_{\ob B_i} B_i \times_{\ob B_i} H_i \rightrightarrows J_i \times_{\ob B_i} H_i \to (J \hc_B H)_i,
	\end{equation}
	for $i = 0, 1$, where the parallel pair of maps is given by the actions of $B$ on $J$ and $H$. Thus, for an object $a$ in $A$ and $c$ in $C$, the set of vertical maps $(J \hc_B H)_0(a, c)$ is the set of equivalence classes of formal vertical composites
	\begin{displaymath}
		a \xrar s b \xrar t c, \qquad\qquad\qquad\qquad \text{$s \in J$ and $t \in H$,}
	\end{displaymath}
	modulo the usual coend relation, which coequalises post- and precomposition with maps of $B_0$. The sets of cells $(J \hc_B H)_1$ are given similarly: they consist of equivalence classes of formal vertical composites of cells.
	
	This leaves the $\GG_1$-indexed cells in $\psProf{\GG_1}$, as on the left below, where $J$ and $K$ are $\GG_1$-indexed profunctors and $f$ and $g$ are $\GG_1$-indexed functors.
\begin{displaymath}
	\begin{tikzpicture}[baseline]
		\matrix(m)[math175em]{A & B \\ C & D \\};
		\path[map]  (m-1-1) edge[barred] node[above] {$J$} (m-1-2)
												edge node[left] {$f$} (m-2-1)
								(m-1-2) edge node[right] {$g$} (m-2-2)
								(m-2-1) edge[barred] node[below] {$K$} (m-2-2);
		\path[transform canvas={shift={($(m-1-2)!(0,0)!(m-2-2)$)}}] (m-1-1) edge[cell] node[right] {$\phi$} (m-2-1);
	\end{tikzpicture}
	\qquad\qquad\begin{tikzpicture}[baseline]
		\matrix(m)[math175em]{J_i \nc \ob A_i \times \ob B_i \\ K_i \nc \ob C_i \times \ob D_i \\};
		\path[map]  (m-1-1) edge node[above] {$d_i$} (m-1-2)
												edge node[left] {$\phi_i$} (m-2-1)
								(m-1-2) edge node[right] {$f_i \times g_i$} (m-2-2)
								(m-2-1) edge node[below] {$d_i$} (m-2-2);
	\end{tikzpicture}
	\qquad\qquad\begin{tikzpicture}[baseline]
		\matrix(m)[math175em]{J_1 \nc J_0 \\ K_1 \nc K_0 \\};
		\path[map]  (m-1-1) edge[transform canvas={yshift=2.5pt}] node[above] {$L$} (m-1-2)
												edge[transform canvas={yshift=-2.5pt}] node[below] {$R$} (m-1-2)
												edge node[left] {$\phi_1$} (m-2-1)
								(m-1-2) edge node[right] {$\phi_0$} (m-2-2)
								(m-2-1) edge[transform canvas={yshift=2.5pt}] node[above] {$L$} (m-2-2)
												edge[transform canvas={yshift=-2.5pt}] node[below] {$R$} (m-2-2);
	\end{tikzpicture}
\end{displaymath}
	Such a cell $\phi$ is given by a pair of maps $\map{\phi_i}{J_i}{K_i}$, for $i = 0, 1$, that make the two diagrams on the right above commute, and that are compatible with the actions of $A$, $B$, $C$ and $D$. This means that $\phi_0$ and $\phi_1$ map the vertical morphisms and vertical cells of $J$ to those of $K$, in a way that is compatible with horizontal and vertical sources and targets, as shown below.
\begin{align*}
	\phi &\colon \bigpars{a \xrar s b} \mapsto \bigpars{fa \xrar{\phi s} gb} \\
	\phi &\colon \begin{tikzpicture}[textbaseline]
		\matrix(m)[math175em]{a_1 \nc a_2 \\ b_1 \nc b_2 \\};
		\path[map]  (m-1-1) edge[barred] node[above] {$h$} (m-1-2)
												edge node[left] {$s_1$} (m-2-1)
								(m-1-2) edge node[right] {$s_2$} (m-2-2)
								(m-2-1) edge[barred] node[below] {$k$} (m-2-2);
		\path[transform canvas={shift={($(m-1-2)!(0,0)!(m-2-2)$)}}] (m-1-1) edge[cell] node[right] {$u$} (m-2-1);
	\end{tikzpicture} \mapsto \begin{tikzpicture}[textbaseline]
	\matrix(m)[math175em]{fa_1 \nc fa_2 \\ gb_1 \nc gb_2 \\};
		\path[map]  (m-1-1) edge[barred] node[above] {$fh$} (m-1-2)
												edge node[left] {$\phi s_1$} (m-2-1)
								(m-1-2) edge node[right] {$\phi s_2$} (m-2-2)
								(m-2-1) edge[barred] node[below] {$gk$} (m-2-2);
		\path[transform canvas={shift={($(m-1-2)!(0,0)!(m-2-2)$)}}] (m-1-1) edge[cell] node[right] {$\phi u$} (m-2-1);
	\end{tikzpicture}
\end{align*}
	These assignments are compatible with respect to the actions of $A$ and $B$ in the usual sense: for example $\phi(u \of x) = \phi u \of fx$, for the composite of a cell $u$ in $J$ and $x$ in $A$.
\end{example}

	We now briefly return to the comment in \exref{example:internal profunctor equipments}, about the vertical cells in the equipment $\inProf\E$ of profunctors internal to $\E$. Just like natural transformations between ordinary functors, there is a notion of \emph{internal transformations} $\nat\phi fg$ between internal functors $f$ and $\map gAB$, as defined in e.g.\ \cite[page 107]{Street74}. Such transformations are given by a map $\phi$ as on the left below, such that the naturality diagram on the right commutes. In the proposition below we shall prove that, like in $\Prof$ (see \exref{example:vertical cells in Prof}), the vertical cells of $\inProf\E$ are exactly such internal transformations.
	\begin{equation} \label{diagram:natural transformation of internal functors}
		\begin{tikzpicture}[baseline]
			\matrix(m)[math175em, column sep=0.5em]{A_0 \nc \nc B \\ \nc B_0 \times B_0 \nc \\};
			\path[map]	(m-1-1) edge node[above] {$\phi$} (m-1-3)
													edge node[below left] {$(f_0, g_0)$} (m-2-2)
									(m-1-3) edge node[below right] {$d_B$} (m-2-2);
		\end{tikzpicture}
		\qquad\qquad\quad\begin{tikzpicture}[baseline]
			\matrix(m)[math175em]
			{	A \nc B \times_{B_0} B \\
				B \times_{B_0} B \nc B \\};
			\path[map]	(m-1-1) edge node[above] {$(f, \phi \of d_1)$} (m-1-2)
													edge node[left] {$(\phi \of d_0, g)$} (m-2-1)
									(m-1-2) edge node[right] {$m_B$} (m-2-2)
									(m-2-1) edge node[below] {$m_B$} (m-2-2);
		\end{tikzpicture}
	\end{equation}
	Internal categories, functors and the transformations above form a $2$-category $\inCat\E$, where the natural transformations compose as follows. The vertical composition of the transformations $\nat\phi fg$ and $\nat\psi gh$, where $f$, $g$, $\map hAB$, is given by
	\begin{displaymath}
		A_0 \iso A_0 \times_{A_0} A_0 \xrar{\phi \times \psi} B \times_{B_0} B \xrar{m_B} B,
	\end{displaymath}
	while the horizontal composite of $\nat\phi fg$ and $\nat\psi hk$, now with $f$, $\map gAB$ and $h$, $\map kBC$, is given by
	\begin{displaymath}
		A_0 \iso A_0 \times_{A_0} A_0 \xrar{\phi \times g_0} B \times_{B_0} B_0 \xrar{h \times \psi} C \times_{C_0} C \xrar{m_C} C.
	\end{displaymath}
\begin{proposition} \label{internal transformations}
	Let $\E$ be a category with finite limits. Transformations $\phi$ of internal profunctors (see \exref{example:internal profunctor equipments}), that are of the form as on the left below, correspond to maps $\phi_0$ in $\E$, as in the middle, that make the diagram on the right commute.
	\begin{equation} \label{equation:126483}
		\begin{split}
		\begin{tikzpicture}[baseline]
			\matrix(m)[math175em]{A \nc A \\ C \nc D \\};
			\path[map]	(m-1-1) edge node[left] {$f$} (m-2-1)
									(m-1-2) edge node[right] {$g$} (m-2-2)
									(m-2-1) edge[barred] node[below] {$K$} (m-2-2);
			\path				(m-1-1) edge[eq] (m-1-2)
													edge[transform canvas={shift=($(m-1-2)!0.5!(m-2-2)$)}, cell] node[right] {$\phi$} (m-2-1);				
		\end{tikzpicture}
		\quad\quad\begin{tikzpicture}[baseline]
			\matrix(m)[math175em, column sep=0em]{A_0 \nc \nc K \\ \nc C_0 \times D_0 \nc \\};
			\path[map]	(m-1-1) edge node[above] {$\phi_0$} (m-1-3)
													edge node[below left] {$(f_0, g_0)$} (m-2-2)
									(m-1-3) edge node[below right] {$k$} (m-2-2);
		\end{tikzpicture}
		\quad\quad\begin{tikzpicture}[baseline]
			\matrix(m)[math175em]
			{	A \nc C \times_{C_0} K \\
				K \times_{D_0} D \nc K \\};
			\path[map]	(m-1-1) edge node[above] {$(f, \phi_0 \of d_1)$} (m-1-2)
													edge node[left] {$(\phi_0 \of d_0, g)$} (m-2-1)
									(m-1-2) edge node[right] {$l$} (m-2-2)
									(m-2-1) edge node[below] {$r$} (m-2-2);
		\end{tikzpicture}
		\end{split}
	\end{equation}
	This correspondence preserves the factorisations through the restriction $K(f, g)$ (again see \exref{example:internal profunctor equipments}) in the sense that $\map{(\ls f\phi_g)_0}{A_0}{K(f, g)}$ coincides with the factorisation of $\phi_0$ through the pullback $K(f, g)$. Moreover it preserves vertical composition in the sense that $(\chi \of \phi)_0 = \chi \of \phi_0$, for any transformation $\chi$ with horizontal source $K$, and horizontal composition in the sense that
	\begin{displaymath}
		\begin{tikzpicture}[baseline]
			\matrix(m)[math175em, column sep=2.25em]{A_0 \times_{A_0} H & K \times_{D_0} L \\ A \hc_A H & K \hc_D L \\};
			\path[map]	(m-1-1) edge node[above] {$\phi_0 \times_{g_0} \psi$} (m-1-2)
									(m-1-2) edge (m-2-2)
									(m-2-1) edge node[below] {$\phi \hc_g \psi$} (m-2-2);
			\path[color=white, font=]	(m-1-1) edge node[color=black, sloped] {$\iso$} (m-2-1);
		\end{tikzpicture}
	\end{displaymath}
	commutes, for any transformation $\cell\psi KL$ with left vertical map $g$.
	
	If $\E$ has coequalisers preserved by pullbacks, so that internal profunctors in $\E$ from an equipment $\inProf\E$ by \propref{bimodule equipments}, then this correspondence induces an isomorphism of $2$\ndash categories $V(\inProf\E) \iso \inCat\E$.
\end{proposition}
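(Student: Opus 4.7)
The plan is to establish the bijection in part 1 explicitly, using the unit $e_A$ of the monoid $A$ in one direction and the actions of $C$ and $D$ on $K$ in the other; the preservation statements then follow by direct inspection, and part 5 by specialising $K = U_B$.

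For the forward direction, given a transformation $\phi$ whose underlying map in $\E$ is $\map\phi AK$, I would set $\phi_0 = \phi \of e_A$. The condition $k \of \phi_0 = (f_0, g_0)$ is immediate from $k \of \phi = (f_0 \times g_0) \of d_A$ combined with $d_A \of e_A = (\id, \id)$. For the naturality square, composing the left-action compatibility $l_K \of (f \times \phi) = \phi \of m_A$ with the map $\map{(\id_A, e_A \of d_1)}A{A \times_{A_0} A}$ and using the unit axiom $m_A \of (\id_A, e_A \of d_1) = \id_A$ yields $\phi = l_K \of (f, \phi_0 \of d_1)$; likewise the right-action compatibility gives $\phi = r_K \of (\phi_0 \of d_0, g)$, and together these constitute the square. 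Conversely, given $\phi_0$ satisfying the square, I would define $\phi = l_K \of (f, \phi_0 \of d_1) = r_K \of (\phi_0 \of d_0, g)$; the two expressions agree by the square. A routine check using associativity of the actions together with the functoriality identities $f \of e_A = e_C \of f_0$ and $g \of e_A = e_D \of g_0$ shows that $\phi$ so defined is a cell of internal profunctors, and the two constructions are mutually inverse by the same unit axioms.

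Each of the three preservation statements is then a short chase. For the factorisation through $K(f, g)$, the cell $\ls f\phi_g$ has underlying map the unique factorisation of $\phi$ through the pullback $K(f, g) \hookrightarrow K$, so composing with $e_A$ exhibits $(\ls f\phi_g)_0$ as the factorisation of $\phi_0 = \phi \of e_A$ through that same pullback. Preservation of vertical composition reduces to the evident identity $(\chi \of \phi) \of e_A = \chi \of (\phi \of e_A)$, since vertical composition of cells in $\inProf\E$ is given by composition of underlying maps in $\E$. For horizontal composition, the underlying map of $\phi \hc_g \psi$ is obtained by factoring $\phi \times_{g_0} \psi$ through the coequalisers defining $U_A \hc_A H$ and $K \hc_D L$; the claimed commuting diagram then expresses the compatibility of this factorisation with the canonical insertion on the left-hand side, which is induced by $e_A$.

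Finally, restricting to $K = U_B$ turns the naturality square of part 1 into the defining square \eqref{diagram:natural transformation of internal functors} of an internal transformation, so the bijection identifies the vertical cells of $\inProf\E$ with the internal transformations of $\inCat\E$. Combining the preservation results above with the description of the unitor $U_B \hc_B U_B \iso U_B$ as the cell induced by $m_B$, one checks that the vertical and horizontal compositions in $V(\inProf\E)$ correspond to the usual formulas for composition of natural transformations in $\inCat\E$. The main obstacle is the bookkeeping around the coequaliser construction in the horizontal composition step; once one notes that the canonical insertions are compatible with $e_A$ modulo the coend relation, each required diagram commutes on the nose.
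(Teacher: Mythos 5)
Your proposal is correct and follows essentially the same route as the paper: the assignment $\phi \mapsto \phi_0 = \phi \of e_A$, the inverse given by either leg of the naturality square, the verification via the unit and associativity axioms for the actions, and the specialisation $K = U_B$ for the $2$-categorical statement all match the paper's argument. The preservation checks (pullback universality for $K(f,g)$, the evident identity for vertical composition, and the compatibility of $e_A$ with the coequaliser insertions for horizontal composition) are likewise the ones used in the paper.
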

	We will often abuse notation and denote $\phi_0$ by $\phi$ as well; it will always be clear which of the two is meant.
\begin{proof}
	To a cell $\phi$ above, which is given by a map $\map\phi AK$ in $\E$, we assign the composition $\phi_0 = \bigbrks{A_0 \xrar{e_A} A \xrar\phi K}$. That it makes the diagram on the right in \eqref{equation:126483} commute follows from the fact that
	\begin{displaymath}
		\begin{tikzpicture}
			\matrix(m)[math175em, column sep=0.35em, row sep=0.75em]
				{	&[-0.75em] A \times_{A_0} A & & C \times_{C_0} K \\
					A \times_{A_0} A_0 & & & \\
					& A & & K \\
					A_0 \times_{A_0} A & & & \\
					& A \times_{A_0} A & & K \times_{D_0} D \\};
			\path[map]	(m-1-2) edge node[above] {$f \times \phi$} (m-1-4)
													edge node[right] {$m_A$} (m-3-2)
									(m-1-4) edge node[right] {$l$} (m-3-4)
									(m-2-1) edge node[above left] {$\id \times e_A$} (m-1-2)
									(m-3-2) edge node[below] {$\phi$} (m-3-4)
									(m-4-1) edge node[below left] {$e_A \times \id$} (m-5-2)
									(m-5-2) edge node[right] {$m_A$} (m-3-2)
													edge node[below] {$\phi \times g$} (m-5-4)
									(m-5-4) edge node[right] {$r$} (m-3-4);
			\path[color=white, font=]	(m-2-1) edge node[color=black, sloped, right=-4pt] {$\iso$} (m-3-2)
																(m-4-1) edge node[color=black, sloped, right=-4pt] {$\iso$} (m-3-2);
		\end{tikzpicture}
	\end{displaymath}
	commutes, where the commuting squares describe the naturality of $\phi$ and the commuting triangles form the unit axiom for $A$. Conversely a map $\map{\phi_0}{A_0}K$ induces a cell $\cell\phi{U_A}K$ given by the composite $A \to K$ of either leg of the naturality diagram \eqref{equation:126483} of $\phi_0$ above. That the resulting map is natural follows from the facts that $f$ and $g$ are compatible with composition and that the actions $l$ and $r$ are associative. That the two assignments $\phi \mapsto \phi_0$ and $\phi_0 \mapsto \phi$ thus given are inverse to each other follows from the commutativity diagram above, together with the fact that $f$ and $l$ (or $g$ and $r$) are compatible with the units of $A$, $B$ and $C$.
	
	We saw in \exref{example:internal profunctor equipments} that the restriction $K(f, g)$ is the pullback of $\map kK{C_0 \times D_0}$ along $f_0 \times g_0$; that the correspondence above preserves the factorisations through $K(f, g)$ in the way as claimed follows directly from the universal property of the pullback $K(f, g)$. That the diagram for the horizontal composition commutes follows from the commuting of
	\begin{displaymath}
		\begin{tikzpicture}
			\matrix(m)[math2em, column sep=2.25em]
			{	A_0 \times_{A_0} H & A \times_{A_0} H & K \times_{D_0} L \\
				H & A \hc_A H & K \hc_D L, \\ };
			\path[map]	(m-1-1) edge[bend left=35] node[above] {$\phi_0 \times_{g_0} \psi$} (m-1-3)
													edge node[above] {$e \times \id$} (m-1-2)
									(m-1-2) edge node[above] {$\phi \times_{g_0} \psi$} (m-1-3)
													edge (m-2-2)
													edge[shorten >= 4pt, shorten <= 2pt] node[below right, inner sep=1.5pt] {$l$} (m-2-1)
									(m-1-3) edge (m-2-3)
									(m-2-2) edge node[below] {$\phi \hc_g \psi$} (m-2-3);
			\path[color=white, font=]	(m-1-1) edge node[color=black, sloped] {$\iso$} (m-2-1)
																(m-2-1) edge node[color=black, sloped] {$\iso$} (m-2-2);
		\end{tikzpicture}
	\end{displaymath}
	where the square on the right commutes by definition of $\phi \hc_g \psi$, while the triangles making up the square on the left commute by the unit axiom for $H$ and the definition of the Yoneda isomorphism $H \iso A \hc_A H$.
	
	Of course if $\phi$ is vertical, i.e.\ $K = U_B$ from some internal category $B$, then $\phi_0$ is an internal transformation as given in \eqref{diagram:natural transformation of internal functors}; checking that this induces an isomorphism $V(\inProf\E) \iso \inCat\E$ is straightforward.
\end{proof}
  Having introduced the main examples, the following propositions record some useful properties of equipments. Some of these can be found in \cite{Shulman08}; they all follow as specialisations of the Theorems and Corollaries 7.16 -- 7.22  of \cite{Cruttwell-Shulman10} for `virtual equipments' to the ordinary equipments that we consider. Remember that the companion $\hmap{B(f,\id)}AB$ of a map $\map fAB$ is defined by two cells $\ls f\eps$ and $\ls f\eta$ that satisfy the companion identities; see the discussion following \defref{definition:equipment}.
\begin{proposition}[Shulman] \label{companion compositors}
	If $B(f, \id)$ and $C(g, \id)$ are the companions of composable morphisms $\map fAB$ and $\map gBC$, in an equipment $\K$, then the compositions of the diagrams below define the composition $B(f, \id) \hc C(g, \id)$ as the companion of $g \of f$.
	\begin{displaymath}
    \begin{tikzpicture}[textbaseline]
      \matrix(m)[math175em]{A & B & C \\ B & B & C \\ C & C & C \\};
      \path[map]  (m-1-1) edge[barred] node[above] {$B(f,\id)$} (m-1-2)
                          edge node[left] {$f$} (m-2-1)
                  (m-1-2) edge[barred] node[above] {$C(g,\id)$} (m-1-3)
                  (m-2-1) edge node[left] {$g$} (m-3-1)
                  (m-2-2) edge[barred] node[above] {$C(g,\id)$} (m-2-3)
                          edge node[right] {$g$} (m-3-2);
      \path       (m-1-2) edge[eq] (m-2-2)
                  (m-1-3) edge[eq] (m-2-3)
                  (m-2-1) edge[eq] (m-2-2)
                  (m-2-3) edge[eq] (m-3-3)
                  (m-3-1) edge[eq] (m-3-2)
                  (m-3-2) edge[eq] (m-3-3);
      \path[transform canvas={shift=($(m-2-1)!0.5!(m-2-2)$)}]
                  (m-1-2) edge[cell] node[right] {$\ls f \eps$} (m-2-2)
                  (m-1-3) edge[cell, shorten >= 7pt, shorten <= -1pt] node[above right] {$\id$} (m-2-3)
                  (m-2-2) edge[cell] node[right] {$U_g$} (m-3-2)
                  (m-2-3) edge[cell] node[right] {$\ls g \eps$} (m-3-3);
    \end{tikzpicture}
    \qquad \qquad
    \begin{tikzpicture}[textbaseline]
      \matrix(m)[math175em]{A & A & A \\ A & B & B \\ A & B & C \\};
      \path[map]  (m-1-2) edge node[right] {$f$} (m-2-2)
                  (m-1-3) edge node[right] {$f$} (m-2-3)
                  (m-2-1) edge[barred] node[below] {$B(f, \id)$} (m-2-2)
                  (m-2-3) edge node[right] {$g$} (m-3-3)
                  (m-3-1) edge[barred] node[below] {$B(f, \id)$} (m-3-2)
                  (m-3-2) edge[barred] node[below] {$C(g, \id)$} (m-3-3);
      \path       (m-1-1) edge[eq] (m-1-2)
                          edge[eq] (m-2-1)
                  (m-1-2) edge[eq] (m-1-3)
                  (m-2-1) edge[eq] (m-3-1)
                  (m-2-2) edge[eq] (m-2-3)
                          edge[eq] (m-3-2);
			\path[transform canvas={shift=($(m-2-1)!0.5!(m-2-2)$)}]
                  (m-1-2) edge[cell] node[right] {$\ls f \eta$} (m-2-2)
                  (m-1-3) edge[cell] node[right] {$U_f$} (m-2-3)
                  (m-2-2) edge[cell, shorten >= -1pt, shorten <= 7pt] node[below=1.5pt, right] {$\id$} (m-3-2)
                  (m-2-3) edge[cell] node[right] {$\ls g \eta$} (m-3-3);
    \end{tikzpicture}
  \end{displaymath}
  Consequently, any other companion $C(g \of f, \id)$ of the composite $g \of f$ induces a canonical horizontal invertible cell $B(f, \id) \hc C(g, \id) \Rar C(g \of f, \id)$ that is the factorisation of the cartesian filler above through $\cell{\ls{g \of f} \eps}{C(g \of f, \id)}{U_C}$.
  
  Analogously the composition $C(\id, g) \hc B(\id, f)$ can be taken as the conjoint of $g \of f$, and any other conjoint $C(\id, g \of f)$ for $g \of f$ induces a canonical horizontal invertible cell $C(\id, g) \hc B(\id, f) \Rar C(\id, g \of f)$.
\end{proposition}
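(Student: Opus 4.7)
The plan is to verify the two companion identities (stated after \defref{definition:equipment}) for the cells $\cell{\ls{g\of f}\eps}{B(f,\id) \hc C(g,\id)}{U_C}$ and $\cell{\ls{g\of f}\eta}{U_A}{B(f,\id) \hc C(g,\id)}$ defined by the two displayed diagrams, and then derive the final statement from the universality of cartesian fillers.

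For the first identity $\ls{g\of f}\eps \of \ls{g\of f}\eta = U_{g\of f}$, I would stack the $\eta$-diagram on top of the $\eps$-diagram, identifying the shared row $A \xslashedrightarrow{B(f,\id)} B \xslashedrightarrow{C(g,\id)} C$, so as to obtain a single $4 \times 2$ grid of cells. By the interchange law it suffices to compose each column vertically and then take the horizontal composite. Read downward, the left column is $\ls f\eta$, $\id_{B(f,\id)}$, $\ls f\eps$, $U_g$: the first three telescope to $U_f$ by the first companion identity for $f$, and postcomposing with $U_g$ yields $U_{g\of f}$ since $U$ is a functor (diagram~\eqref{diagram:internal category}). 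Symmetrically, the right column reduces to $U_{g\of f}$ by the first companion identity for $g$. Their horizontal composite $U_{g\of f} \hc U_{g\of f}$ equals $U_{g\of f}$ modulo the unitor $U_A \hc U_A \iso U_A$ (and analogously on the target side).

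For the second identity, I would place the two diagrams side by side, identifying the shared middle column $A \xrar f B \xrar g C$, yielding a $2 \times 4$ grid. Composing row by row via interchange, the top row $\ls f\eta \hc U_f \hc \ls f\eps \hc \id_{C(g,\id)}$ simplifies as follows: naturality of the right unitor $\mathfrak r$ identifies $\ls f\eta \hc U_f$ with $\ls f\eta$ modulo $\mathfrak r$, so the top row becomes $(\ls f\eta \hc \ls f\eps) \hc \id_{C(g,\id)}$ up to unitors, which by the second companion identity for $f$ equals $\id_{B(f,\id)} \hc \id_{C(g,\id)} = \id_{B(f,\id) \hc C(g,\id)}$. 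Dually, the bottom row $\id_{B(f,\id)} \hc \ls g\eta \hc U_g \hc \ls g\eps$ reduces by naturality of $\mathfrak l$ and the second companion identity for $g$ to the same identity cell. Their vertical composite is $\id_{B(f,\id) \hc C(g,\id)}$, as required.

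The main obstacle is the bookkeeping of associators and unitors in the second identity, handled by repeated invocation of naturality of $\mathfrak a$, $\mathfrak l$, and $\mathfrak r$, licensed by the coherence theorem cited after \defref{definition:pseudo double category}. The final statement is then automatic: given any other companion $C(g\of f, \id)$, the universal factorisation of the newly constructed cartesian filler through the given one produces a horizontal cell $B(f,\id) \hc C(g,\id) \Rar C(g\of f, \id)$, invertible because cartesian fillers are unique up to precomposition with an invertible horizontal cell, as noted after \defref{definition:cartesian filler}. The conjoint case is dual, with $\ls f\eta, \ls f\eps, \ls g\eta, \ls g\eps$ replaced by $\eta_f, \eps_f, \eta_g, \eps_g$ and the composition reordered to $C(\id, g) \hc B(\id, f)$.
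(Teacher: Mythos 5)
Your proposal is correct and follows the same route as the paper: the paper's proof simply observes that the companion identities for $f$ and $g$ immediately yield the two companion identities for the candidate cells (vertical composite $= U_{g \of f}$, horizontal composite $=$ identity up to unitors), and obtains the canonical invertible cell from uniqueness of companions/cartesian fillers. You have merely spelled out the column-wise and row-wise interchange computations that the paper declares "clear", and your handling of the unitor bookkeeping via coherence is exactly what is intended.
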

	Notice that, using the unit axiom for $\K$, the cartesian filler on the left above can also be written as the composition
\begin{displaymath}
	B(f, \id) \hc C(g, \id) \xRar{\ls f\eps \hc \id} U_B \hc C(g, \id) \xRar{\mathfrak l} C(g, \id) \xRar{\ls g\eps} U_C.
\end{displaymath}
\begin{proof}
	It is clear that the companion identities imply that the compositions of the two diagrams above vertically compose to the identity of $g \of f$ and horizontally compose to the horizontal identity on $B(f, \id) \hc C(g, \id)$, that is they define $B(f, \id) \hc C(g, \id)$ as the companion of $g \of f$.  The existence of the canonical isomorphisms follows from the fact that any two companions of $g \of f$ are isomorphic (via invertible horizontal cells).
\end{proof}
	Although the bijective correspondence itself, below, is given by Grandis and Par\'e in \cite[Section 1.6]{Grandis-Pare04} (see also \cite[Corollary 7.21]{Cruttwell-Shulman10}), the author has not seen the explicit descriptions of its functoriality before.
\begin{proposition}[Grandis and Par\'e] \label{left and right cells}
	In any equipment $\K$ there are bijective correspondences between the three sets consisting of all cells that are of the form
	\begin{displaymath}
		\begin{tikzpicture}[textbaseline]
      \matrix(m)[math175em]{A & B & D \\ A & C & D, \\};
      \path[map]  (m-1-1) edge[barred] node[above] {$J$} (m-1-2)
                  (m-1-2) edge[barred] node[above] {$D(g, \id)$} (m-1-3)
                  (m-2-1) edge[barred] node[below] {$C(f, \id)$} (m-2-2)
                  (m-2-2) edge[barred] node[below] {$K$} (m-2-3);
      \path       (m-1-1) edge[eq] (m-2-1)
                  (m-1-2) edge[cell] (m-2-2)
                  (m-1-3) edge[eq] (m-2-3);
    \end{tikzpicture}
    \quad \quad
    \begin{tikzpicture}[textbaseline]
      \matrix(m)[math175em]{A & B \\ C & D \\};
        \path[map]  (m-1-1) edge[barred] node[above] {$J$} (m-1-2)
                            edge node[left] {$f$} (m-2-1)
                    (m-1-2) edge node[right] {$g$} (m-2-2)
                    (m-2-1) edge[barred] node[below] {$K$} (m-2-2);
        \path[transform canvas={shift={($(m-1-2)!0.5!(m-2-2)$)}}] (m-1-1) edge[cell] (m-2-1);
    \end{tikzpicture}
    \qquad \text{and} \qquad
    \begin{tikzpicture}[textbaseline]
      \matrix(m)[math175em]{C & A & B \\ C & D & B \\};
      \path[map]  (m-1-1) edge[barred] node[above] {$C(\id, f)$} (m-1-2)
                  (m-1-2) edge[barred] node[above] {$J$} (m-1-3)
                  (m-2-1) edge[barred] node[below] {$K$} (m-2-2)
                  (m-2-2) edge[barred] node[below] {$D(\id, g)$} (m-2-3);
      \path       (m-1-1) edge[eq] (m-2-1)
                  (m-1-2) edge[cell] (m-2-2)
                  (m-1-3) edge[eq] (m-2-3);
    \end{tikzpicture}
  \end{displaymath}
  respectively. For a cell $\phi$ that is of the second form above, we write $\lambda\phi$ for the corresponding cell of the first form and $\rho\phi$ for that of the third form; they are given by the compositions
  \begin{displaymath}
	  \lambda\phi = \begin{tikzpicture}[textbaseline]
		  \matrix(m)[math175em]{A & A & B & D \\ A & C & D & D \\};
		  \path[map]	(m-1-2) edge[barred] node[above] {$J$} (m-1-3)
													edge node[right] {$f$} (m-2-2)
									(m-1-3) edge[barred] node[above] {$D(g, \id)$} (m-1-4)
													edge node[right] {$g$} (m-2-3)
									(m-2-1) edge[barred] node[below] {$C(f, \id)$} (m-2-2)
									(m-2-2) edge[barred] node[below] {$K$} (m-2-3);
			\path				(m-1-1) edge[eq] (m-1-2)
													edge[eq] (m-2-1)
									(m-1-4) edge[eq] (m-2-4)
									(m-2-3) edge[eq] (m-2-4);
			\path[transform canvas={shift=($(m-1-2)!0.5!(m-2-2)$)}]
									(m-1-2) edge[cell] node[right] {$\ls f\eta$} (m-2-2)
									(m-1-3) edge[cell] node[right] {$\phi$} (m-2-3)
									(m-1-4) edge[cell] node[right] {$\ls g\eps$} (m-2-4);
		\end{tikzpicture}
		\quad \text{and} \quad
		\rho\phi = \begin{tikzpicture}[textbaseline]
			\matrix(m)[math175em]{C & A & B & B \\ C & C & D & B. \\};
		  \path[map]	(m-1-1) edge[barred] node[above] {$C(\id, f)$} (m-1-2)
									(m-1-2) edge[barred] node[above] {$J$} (m-1-3)
													edge node[right] {$f$} (m-2-2)
									(m-1-3)	edge node[right] {$g$} (m-2-3)
									(m-2-2) edge[barred] node[below] {$K$} (m-2-3)
									(m-2-3) edge[barred] node[below] {$D(\id, g)$} (m-2-4);
			\path				(m-1-1) edge[eq] (m-2-1)
									(m-1-3) edge[eq] (m-1-4)
									(m-1-4) edge[eq] (m-2-4)
									(m-2-1) edge[eq] (m-2-2);
			\path[transform canvas={shift=($(m-1-2)!0.5!(m-2-2)$)}]
									(m-1-2) edge[cell] node[right] {$\eps_f$} (m-2-2)
									(m-1-3) edge[cell] node[right] {$\phi$} (m-2-3)
									(m-1-4) edge[cell] node[right] {$\eta_g$} (m-2-4);
		\end{tikzpicture}
	\end{displaymath}
	The assignments $\phi \mapsto \lambda\phi$ and $\phi \mapsto \rho\phi$ preserve horizontal identity cells as well as compositions, the latter in the following sense. For vertically composable cells as on the left below the diagram of horizontal cells on the right commutes, where the canonical cells $\lambda_\hc$ are given by the previous proposition.
	\begin{equation} \label{diagram:vertical naturality of lambda}
		\begin{split}
		\begin{tikzpicture}[baseline]
			\matrix(m)[math175em]{A \nc B \\ C \nc D \\ E \nc F \\};
			\path[map]	(m-1-1) edge[barred] node[above] {$J$} (m-1-2)
													edge node[left] {$f$} (m-2-1)
									(m-1-2) edge node[right] {$g$} (m-2-2)
									(m-2-1) edge[barred] node[below] {$K$} (m-2-2)
													edge node[left] {$h$} (m-3-1)
									(m-2-2) edge node[right] {$k$} (m-3-2)
									(m-3-1) edge[barred] node[below] {$L$} (m-3-2);
			\path[transform canvas={shift=(m-2-1)}]
									(m-1-2) edge[cell] node[right] {$\phi$} (m-2-2)
									(m-2-2) edge[cell, transform canvas={yshift=-3pt}] node[right] {$\psi$} (m-3-2);
		\end{tikzpicture}
		\qquad\begin{tikzpicture}[baseline]
			\matrix(k)[math, yshift=3.5em]{C(f, \id) \hc K \hc F(k, \id) \\};
			\matrix(l)[math2em]
			{	J \hc D(g, \id) \hc F(k, \id) \nc C(f, \id) \hc E(h, \id) \hc L \\ };
			\matrix(m)[math2em, yshift=-4em]{J \hc F(k \of g, \id) \nc E(h \of f, \id) \hc L \\};
			\path	(l-1-1)	edge[cell] node[left] {$\id \hc \lambda_\hc$} (m-1-1)
										edge[cell] node[above left] {$\lambda\phi \hc \id$} (k-1-1)
						(k-1-1) edge[cell] node[above right] {$\id \hc \lambda\psi$} (l-1-2)
						(m-1-1) edge[cell] node[below] {$\lambda(\psi \of \phi)$} (m-1-2)
						(l-1-2) edge[cell] node[right] {$\lambda_\hc \hc \id$} (m-1-2);
		\end{tikzpicture}
		\end{split}
	\end{equation}
	If $\phi$ is a horizontal cell, that is $f = \id_C$ and $g = \id_D$, this reduces to
	\begin{displaymath}
		\cell{\lambda(\psi \of \phi) = \lambda \psi \of (\phi \hc \id)}{J \hc D(g, \id)}{C(f, \id) \hc L};
	\end{displaymath}
	likewise when $\psi$ is a horizontal cell. Secondly, given horizontally composable cells as on the left below, the diagram on the right commutes.
	\begin{equation} \label{diagram:horizontal naturality of lambda}
		\begin{split}
		\begin{tikzpicture}[baseline]
			\matrix(m)[math175em]{A \nc B \nc G \\ C \nc D \nc H \\};
			\path[map]	(m-1-1) edge[barred] node[above] {$J$} (m-1-2)
													edge node[left] {$f$} (m-2-1)
									(m-1-2) edge[barred] node[above] {$M$} (m-1-3)
													edge node[right] {$g$} (m-2-2)
									(m-1-3) edge node[right] {$l$} (m-2-3)
									(m-2-1) edge[barred] node[below] {$K$} (m-2-2)
									(m-2-2) edge[barred] node[below] {$N$} (m-2-3);
			\path[transform canvas={shift=($(m-1-1)!0.5!(m-2-2)$)}]
									(m-1-2) edge[cell] node[right] {$\phi$} (m-2-2)
									(m-1-3) edge[cell] node[right] {$\chi$} (m-2-3);
		\end{tikzpicture}
	\qquad\begin{tikzpicture}[baseline]
			\matrix(k)[math, yshift=-1.625em]{J \hc D(g, \id) \hc N \\};
			\matrix(l)[math2em, yshift=1.625em, column sep=3em]{J \hc M \hc H(l, \id) \nc C(f, \id) \hc K \hc N \\};
			\path	(l-1-1) edge[cell] node[below left] {$\id \hc \lambda\chi$} (k-1-1)
										edge[cell] node[above] {$\lambda(\phi \hc \chi)$} (l-1-2)
						(k-1-1) edge[cell] node[below right] {$\lambda\phi \hc \id$} (l-1-2);
		\end{tikzpicture}
		\end{split}
	\end{equation}
		
	Analogously for $\phi \mapsto \rho\phi$ the following hold, where the cells $\rho_\hc$ are given by previous proposition.
	\begin{gather*}
		\rho(\phi \of \psi) \of (\rho_\hc \hc \id) = (\id \hc \rho_\hc) \of (\rho\psi \hc \id) \of (\id \hc \rho\phi) \\
		\rho(\phi \hc \chi) = (\id \hc \rho\chi) \of (\rho\phi \hc \id)
	\end{gather*}
\end{proposition}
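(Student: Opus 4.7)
My plan is first to establish the bijection $\phi \leftrightarrow \lambda\phi$, from which the correspondence with the third form follows by the dual argument, exchanging companions for conjoints. An inverse to $\lambda$ is given explicitly by sending a horizontal cell $\alpha \colon J \hc D(g, \id) \Rar C(f, \id) \hc K$ to
\[
	\bar\alpha = \Bigbrks{J \iso J \hc U_B \xRar{\id \hc \ls g\eta} J \hc D(g, \id) \xRar{\alpha} C(f, \id) \hc K \xRar{\ls f\eps \hc \id} U_C \hc K \iso K},
\]
which reads off as a cell of the middle form with $f$ on the left and $g$ on the right. The identities $\overline{\lambda\phi} = \phi$ and $\lambda\bar\alpha = \alpha$ both reduce, via the interchange law, to the companion identity for $f$ (collapsing a $\ls f\eta$-$\ls f\eps$ pair) and the conjoint identity for $g$ (collapsing an $\eta_g$-$\eps_g$ pair). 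The bijection for $\rho$ is established analogously, with companions and conjoints interchanged.

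Preservation of horizontal identity cells is immediate: specialising the formula to $f = \id_A$ and $g = \id_B$ turns $\ls f\eta$ and $\ls g\eps$ into identity cells up to unitors, so $\lambda \id_J = \id_J$. For the pentagon \eqref{diagram:vertical naturality of lambda} I would expand both composites as pastings in $\K$ using the defining formulas for $\lambda\phi$, $\lambda\psi$, and $\lambda(\psi \of \phi)$. The key observation is that vertically stacking $\ls f\eta$ above $\id_{C(f, \id)} \hc \ls h\eta$ exhibits $C(f, \id) \hc E(h, \id)$ as a companion of $h \of f$, as in \propref{companion compositors}, so composing further with $\lambda_\hc$ recovers $\ls{h \of f}\eta$; dually, stacking $\ls g\eps \hc \id_{F(k, \id)}$ above $\ls k\eps$ and then applying $\lambda_\hc$ recovers $\ls{k \of g}\eps$. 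Substituting these identifications into the upper composite and using the interchange law to bring the outer $\eta$'s and $\eps$'s together with $\psi \of \phi$ in the middle yields $\lambda(\psi \of \phi) \of (\id \hc \lambda_\hc)$, as required. The special cases when $\phi$ or $\psi$ is horizontal follow by direct specialisation.

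The horizontal composition identity \eqref{diagram:horizontal naturality of lambda} is the quickest: expanding both $\lambda(\phi \hc \chi)$ and $(\lambda\phi \hc \id_N) \of (\id_J \hc \lambda\chi)$ by the defining formula, the interchange law shows the two composites differ only by a detour through $D(g, \id)$ at the middle strand, formed by the $\ls g\eta$ from $\lambda\chi$ followed vertically by the $\ls g\eps$ from $\lambda\phi$; the companion identity $\ls g\eps \of \ls g\eta = U_g$ collapses this detour and recovers $\lambda(\phi \hc \chi)$. The analogous statements for $\rho$ follow by dualising each step, invoking the second clause of \propref{companion compositors} for the conjoint compositor $\rho_\hc$. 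The principal obstacle is notational rather than conceptual: keeping track of the insertion and removal of unitors and associators, and ensuring that adjacent vertical and horizontal edges match before each application of the interchange law.
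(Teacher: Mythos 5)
Your proposal is correct and follows essentially the same route as the paper's proof: the same explicit inverse $\alpha \mapsto (\ls f\eps \hc \id) \of \alpha \of (\id \hc \ls g\eta)$, the same appeal to \propref{companion compositors} to identify the stacked companion cells with $\ls{h \of f}\eta$ and $\ls{k \of g}\eps$ in the pentagon \eqref{diagram:vertical naturality of lambda}, and the same $\ls g\eps$--$\ls g\eta$ cancellation for \eqref{diagram:horizontal naturality of lambda}. One small terminological slip: in the $\lambda$ bijection both collapses are instances of the \emph{companion} identities (for $f$ and for $g$, via $\ls g\eps \of \ls g\eta = U_g$); the conjoint cells $\eta_g$, $\eps_g$ only enter in the $\rho$ correspondence.
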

\begin{proof}
	Let $\psi$ be a horizontal cell $J \hc D(g, \id) \Rar C(f, \id) \hc K$, i.e.\ $\psi$ is of the first form above. It is clear that the companion identities for $C(f, \id)$ and $D(g, \id)$ imply that mapping $\psi$ to the composition of the diagram below gives an inverse to $\phi \mapsto \lambda\phi$. Similarly composing a cell, that is of the third form above, with the conjoint cells $\eta_f$ and $\eps_g$ gives an inverse to $\phi \mapsto \rho\phi$.
	\begin{displaymath}
		\begin{tikzpicture}
			\matrix(m)[math175em]{A & B & B \\ A & B & D \\ A & C & D \\ C & C & D \\};
			\path[map]	(m-1-1) edge[barred] node[above] {$J$} (m-1-2)
									(m-1-3) edge node[right] {$g$} (m-2-3)
									(m-2-1) edge[barred] node[below] {$J$} (m-2-2)
									(m-2-2) edge[barred] node[below] {$D(g, \id)$} (m-2-3)
									(m-3-1) edge[barred] node[above] {$C(f, \id)$} (m-3-2)
													edge node[left] {$f$} (m-4-1)
									(m-3-2) edge[barred] node[above] {$K$} (m-3-3)
									(m-4-2) edge[barred] node[below] {$K$} (m-4-3);
			\path				(m-1-1) edge[eq] (m-2-1)
									(m-1-2) edge[eq] (m-1-3)
													edge[eq] (m-2-2)
									(m-2-1) edge[eq] (m-3-1)
									(m-2-2) edge[cell] node[right] {$\psi$} (m-3-2)
									(m-2-3) edge[eq] (m-3-3)
									(m-3-2) edge[eq] (m-4-2)
									(m-3-3) edge[eq] (m-4-3)
									(m-4-1) edge[eq] (m-4-2);
			\path[transform canvas={shift=($(m-2-1)!0.5!(m-3-2)$)}]
                  (m-1-2) edge[cell] node[right] {$\id$} (m-2-2)
                  (m-1-3) edge[cell] node[right] {$\ls g \eta$} (m-2-3)
                  (m-3-2) edge[cell] node[right] {$\ls f \eps$} (m-4-2)
                  (m-3-3) edge[cell] node[right] {$\id$} (m-4-3);
		\end{tikzpicture}
	\end{displaymath}
	
	That the assignments $\lambda$ and $\rho$ preserve horizontal identity cells follows from the companion and conjoint identities. To show that $\lambda$ preserves vertical compositions, consider the cells $\phi$ and $\psi$ of \eqref{diagram:vertical naturality of lambda}; we will show that the diagram \eqref{diagram:vertical naturality of lambda} commutes. The composite of $\lambda\phi \hc \id$ and $\id \hc \lambda \psi$ in the top leg of the diagram can be written as the composition of the following grid of cells, where the empty cells denote identity cells.
	\begin{displaymath}
		\begin{tikzpicture}
			\matrix(m)[math175em]
			{	A & A & A & B & D & F \\
				A & C & C & D & D & F \\
				A & C & E & F & F & F \\ };
			\path[map]	(m-1-2)	edge node[right] {$f$} (m-2-2)
									(m-1-3) edge[barred] node[above] {$J$} (m-1-4)
													edge node[left] {$f$} (m-2-3)
									(m-1-4) edge[barred] node[above] {$D(g, \id)$} (m-1-5)
													edge node[right] {$g$} (m-2-4)
									(m-1-5) edge[barred] node[above] {$F(k, \id)$} (m-1-6)
									(m-2-1) edge[barred] node[below] {$C(f, \id)$} (m-2-2)
									(m-2-3) edge[barred] node[below] {$K$} (m-2-4)
													edge node[right] {$h$} (m-3-3)
									(m-2-4) edge node[right] {$k$} (m-3-4)
									(m-2-5) edge[barred] node[above] {$F(k, \id)$} (m-2-6)
													edge node[left] {$k$} (m-3-5)
									(m-3-1) edge[barred] node[below] {$C(f, \id)$} (m-3-2)
									(m-3-2) edge[barred] node[below] {$E(h, \id)$} (m-3-3)
									(m-3-3) edge[barred] node[below] {$M$} (m-3-4);
			\path				(m-1-1) edge[eq] (m-1-2)
													edge[eq] (m-2-1)
									(m-1-2) edge[eq] (m-1-3)
									(m-1-5) edge[eq] (m-2-5)
									(m-1-6) edge[eq] (m-2-6)
									(m-2-1) edge[eq] (m-3-1)
									(m-2-2) edge[eq] (m-2-3)
													edge[eq] (m-3-2)
									(m-2-4) edge[eq] (m-2-5)
									(m-2-6) edge[eq] (m-3-6)
									(m-3-4) edge[eq] (m-3-5)
									(m-3-5) edge[eq] (m-3-6);
			\path[transform canvas={shift=(m-2-3)}]
									(m-1-2) edge[cell] node[right] {$\ls f\eta$} (m-2-2)
									(m-1-4) edge[cell] node[right] {$\phi$} (m-2-4)
									(m-1-5) edge[cell] node[right] {$\ls g\eps$} (m-2-5)
									(m-2-3) edge[cell] node[right] {$\ls h\eta$} (m-3-3)
									(m-2-4) edge[cell] node[right] {$\psi$} (m-3-4)
									(m-2-6) edge[cell] node[right] {$\ls k\eps$} (m-3-6);
		\end{tikzpicture}
	\end{displaymath}
	It follows from the definition (see \propref{companion compositors}) of the canonical cells $\lambda_\hc$ that postcomposing the square of companion cells on the left with $\lambda_\hc$ gives the companion cell $\ls{h \of f}\eta$; in the same way precomposing the companion cell $\ls{k \of g}\eps$ in $\lambda(\psi \of \phi)$ with $\lambda_\hc$ gives the square of companion cells on the right above. This shows that both legs of the diagram \eqref{diagram:vertical naturality of lambda} are equal.
	 
	Now suppose that $\phi$ in \eqref{diagram:vertical naturality of lambda} is a horizontal cell. In that case postcomposing the left subsquare in the diagram above with $\cell{\lambda_\hc}{C(\id, \id) \hc E(h, \id)}{E(h, \id)}$ reduces it to the single companion cell $\ls h\eta$, since both are factorisations of $U_h$ through $\ls h\eps$. Likewise, precomposing the right subsquare with the composite
	\begin{displaymath}
		F(k, \id) \xRar{\inv{\mathfrak l}} U_D \hc F(k, \id) \xRar{\ls\id\eta \hc \id} D(\id, \id) \hc F(k, \id)
	\end{displaymath}
	reduces it to $\ls k\eps$. It follows that precomposing the top leg of \eqref{diagram:vertical naturality of lambda} with the above composition gives $\lambda\psi \of (\phi \hc \id)$. On the other hand, the universal property of the companion cell $\ls k\eps$ implies that the composite above, followed by $\lambda_\hc$, is equal to the identity on $F(k, \id)$, so that the bottom leg of \eqref{diagram:vertical naturality of lambda}, precomposed with the composite above, reduces to $\lambda(\psi \of \phi)$. We conclude that $\lambda(\psi \of \phi) = \lambda\psi \of (\phi \hc \id)$.
	 
	Finally it is easily seen that the diagram \eqref{diagram:horizontal naturality of lambda} for the vertically composable cells $\phi$ and $\chi$ commutes because, in the vertical composition of $\id \hc \lambda\chi$ and $\lambda\phi \hc \id$, the companion cell $\ls g\eps$ in $\lambda \phi$ cancels with the companion cell $\ls g\eta$ in $\lambda \chi$.
\end{proof}

	Recall that any equipment $\K$ induces a $2$-category $V(\K)$ of morphisms and vertical cells, as well as a bicategory $H(\K)$ of promorphisms and horizontal cells. The assignments $\phi \mapsto \lambda\phi$ and $\phi \mapsto \rho\phi$ restrict to `pseudofunctors' between $V(\K)$ and $H(\K)$, as follows. Recall that a \emph{pseudofunctor} $\map F{\catvar B}{\catvar C}$ between bicategories is a `weakly' $\cat$-enriched functor, preserving composition and identities only up to coherent invertible cells in $\catvar C$, which are called \emph{compositors} and \emph{unitors} (see \cite[Definition 1.5.8]{Leinster04}). A pseudofunctor $F$ is called \emph{locally fully faithful} whenever it restricts to fully faithful functors $\catvar B(X, Y) \to \catvar C(FX, FY)$ on the morphism-categories. Given a bicategory $\catvar B$ we write $\op{\catvar B}$ and $\co{\catvar B}$ for the bicategories obtained by reversing respectively the morphisms or the cells of $\catvar B$, that is $\op{\catvar B}(X, Y) = \catvar B(Y, X)$ and $\co{\catvar B}(X, Y) = \op{\catvar B(X, Y)}$.
\begin{proposition}[Shulman] \label{companion and conjoint pseudofunctors}
	Choosing a companion $B(f, \id)$ for every morphism $\map fAB$, together with the assignment
	\begin{equation} \label{equation:24513}
		\phi \mapsto \bigl\lbrack B(g, \id) \xRar{\inv{\mathfrak l}} U_A \hc B(g, \id) \xRar{\lambda\phi} B(f, \id) \hc U_B \xRar{\mathfrak r} B(f, \id)\bigr\rbrack,
	\end{equation}
	for each vertical cell $\cell\phi fg$ and where $\lambda\phi$ is given in the previous proposition, gives a pseudofunctor $\map\lambda{\co{V(\K)}}{H(\K)}$ that restricts to the identity on objects and that is locally fully faithful. Its compositor components
  \begin{displaymath}
		\cell{\lambda_\hc}{B(f, \id) \hc C(h, \id)}{C(h \of f, \id)}
	\end{displaymath}
	are the canonical cells given by \propref{companion compositors}, while its unitors $U_A \Rar A(\id, \id)$ are given by the companion cells $\ls\id\eta$.
	
	Dually, choosing a conjoint $B(\id, f)$ for each $\map fAB$ induces a locally fully faithful pseudofunctor $\map\rho{\op{V(\K)}}{H(\K)}$ that restricts to the identity on objects; its action on a vertical cell $\cell\phi fg$ is given by
	\begin{displaymath}
		\phi \mapsto \bigl\lbrack B(\id, f) \xRar{\inv{\mathfrak r}} B(\id, f) \hc U_A \xRar{\rho\phi} U_B \hc B(\id, g) \xRar{\mathfrak l} B(\id, g) \bigr\rbrack.
	\end{displaymath}
\end{proposition}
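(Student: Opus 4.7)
The plan is to apply the bijection $\phi \mapsto \lambda\phi$ of \propref{left and right cells} to the vertical cells of $\K$ and then to verify the pseudofunctor axioms. I write $\bar\lambda$ for the assignment \eqref{equation:24513} to distinguish it from the $\lambda$ used in the previous proposition; that $\bar\lambda$ is the identity on objects is built into the construction.

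For local fully faithfulness, a vertical $2$-cell $\cell\phi fg$ in $V(\K)$, with $f, g\colon A \to B$, has horizontal source $U_A$ and target $U_B$, so by \propref{left and right cells} the assignment $\phi \mapsto \lambda\phi$ restricts to a bijection onto the set of horizontal cells $U_A \hc B(g, \id) \Rar B(f, \id) \hc U_B$; postcomposing with $\mathfrak r$ and precomposing with $\inv{\mathfrak l}$ (both invertible) yields a bijection onto $H(\K)(B(g, \id), B(f, \id))$, as required. Functoriality of $\bar\lambda$ on each hom-category amounts to preservation of the vertical composition of $2$-cells in $V(\K)$ which, as recalled just before \defref{definition:pseudo double category}, corresponds in $\K$ to the unitor-padded composite $\mathfrak l \of (\phi \hc \psi) \of \inv{\mathfrak l}$ of the cells of $\K$. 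Applying the horizontal naturality \eqref{diagram:horizontal naturality of lambda} to $\phi \hc \psi$, together with a short manipulation of unitors, gives the identity $\bar\lambda(\psi \of \phi) = \bar\lambda\phi \of \bar\lambda\psi$ in $H(\K)$; the reversal in the order of composition is precisely what the domain $\co{V(\K)}$ records. Preservation of identities reduces, via the companion identity for $U_f$, to the fact that $\ls f\eta \hc \ls f\eps$ collapses to the identity on $B(f, \id)$ after composition with unitors.

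I would then check the pseudofunctor structure. The proposed unitor $\ls\id\eta\colon U_A \Rar A(\id_A, \id_A)$ is invertible, with inverse $\mathfrak l \of (\ls\id\eps \hc \id) \of \inv{\mathfrak r}$ by the companion identity for $\id_A$; the compositors $\lambda_\hc$ are invertible by \propref{companion compositors}. The main naturality axiom --- that for horizontally composable $2$-cells $\cell\phi fg$ and $\cell\psi hk$ in $V(\K)$, equivalently vertically composable vertical cells of $\K$, the compositor $\lambda_\hc$ is natural in $(\phi, \psi)$ --- is the content of the vertical naturality diagram \eqref{diagram:vertical naturality of lambda} in the vertical-cell case, since the canonical cells $\lambda_\hc$ appearing there are by definition our compositors.

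The main obstacle, and the most laborious step, is verifying the pseudofunctor coherence axioms --- the associativity pentagon for $\lambda_\hc$ and the unit triangles for $\ls\id\eta$. These follow from the uniqueness of factorisation through the companion cartesian cells $\ls f\eps$: both sides of each coherence diagram are horizontal cells whose postcomposition with the appropriate iterated companion cell yields the same composite in $\K$, forcing the two sides to agree. The calculation is essentially bookkeeping with the associators and unitors of $\K$, substantially simplified by the coherence theorem for pseudo double categories cited after \defref{definition:pseudo double category}. The dual statement for $\rho$ follows by the symmetric argument, with conjoints in place of companions and the dual naturality properties from \propref{left and right cells}.
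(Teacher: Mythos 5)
Your proposal is correct and follows essentially the same route as the paper's proof: functoriality on hom-categories via the (horizontal/vertical) naturality properties of $\lambda$ established in \propref{left and right cells}, naturality of the compositors via diagram \eqref{diagram:vertical naturality of lambda}, and the coherence axioms via uniqueness of factorisations through the companion cartesian fillers. The only real difference is that you spell out local full faithfulness from the bijection of \propref{left and right cells} (which the paper's proof leaves implicit) and cite \eqref{diagram:horizontal naturality of lambda} where the paper redoes the $\ls g\eta$-against-$\ls g\eps$ cancellation explicitly; both amount to the same computation.
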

	We are abusing notation by denoting the composition \eqref{equation:24513} again by $\lambda\phi$. This will not lead to confusion however, since we will never (except in the proof below) use the original cell $\cell{\lambda\phi}{U_A \hc B(g, \id)}{B(f, \id) \hc U_B}$, as given by the previous proposition, when $\phi$ is vertical: it is far more natural to consider its composition with the unitors, as given above. Since in the proof below we will use both the original and the new definition of $\lambda\phi$, we will denote \eqref{equation:24513} temporarily by $\lambda'\phi$.
\begin{proof}
	To see that the action of $\lambda'$ on vertical cells preserves vertical composition, consider composable vertical cells $\cell\phi fg$ and $\cell\chi gh$, where $f$, $g$ and $h$ are morphisms $A \to B$; we need
	\begin{equation} \label{equation:641891}
	\cell{\lambda'(\mathfrak r \of (\phi \hc \chi) \of \inv{\mathfrak l}) = \lambda'\phi \of \lambda'\chi}{B(h, \id)}{B(f, \id)}
	\end{equation}
	where, by definition of $V(\K)$, the left-hand side is the image of the vertical composite of $\phi$ and $\chi$. Under the isomorphisms $U_A^{\hc 3} \hc B(h, \id) \iso B(h, \id)$ and $B(f, \id) \hc U_B^{\hc 3} \iso B(f, \id)$, given by the unitors of $\K$, the left-hand side equals
	\begin{displaymath}
		U_A \hc U_A \hc U_A \hc B(h, \id) \xRar{\ls f\eta \hc \phi \hc \chi \hc \ls h\eps} B(f, \id) \hc U_B \hc U_B \hc U_B,
	\end{displaymath}
	while the right-hand side, under the isomorphisms $U_A^{\hc 4} \hc B(h, \id) \iso B(h,\id)$ and $B(f, \id) \hc U_B^{\hc 4} \iso B(f, \id)$, equals the composite
	\begin{multline*}
		U_A \hc U_A \hc U_A \hc U_A \hc B(h, \id) \xRar{\id \hc \ls g\eta \hc \chi \hc \ls h\eps} U_A \hc U_A \hc B(g, \id) \hc U_B \hc U_B \\
		\xRar{\ls f\eta \hc \phi \hc \ls g\eps \hc \id} B(f, \id) \hc U_B \hc U_B \hc U_B \hc U_B.
	\end{multline*}
	Cancelling $\ls g\eta$ against $\ls g\eps$ here we see that the two cells above coincide up to composition with unitors, so that the equality \eqref{equation:641891} follows.
	
	Given vertically composable vertical cells $\phi$ and $\psi$, the fact that the compositors $\lambda_\hc$ are natural, that is $\lambda_\hc \of (\lambda' \phi \hc \lambda' \psi) = \lambda'(\psi \of \phi) \of \lambda_\hc$, is equivalent to the commuting, for $\phi$ and $\psi$, of the diagram \eqref{diagram:vertical naturality of lambda} of the previous proposition. That the compositors satisfy the associativity axiom follows from the naturality of the associator $\mathfrak a$ and the uniqueness of factorisations through cartesian fillers. Finally it is easy to show that the cells $\cell{\ls{\id}\eta}{U_A}{A(\id, \id)}$ can be taken as the unitors for $\lambda$. 
\end{proof}
\begin{example} \label{example:left cell corresponding to vertical cell}
	Recall, from \exref{example:vertical cells in Prof}, that a vertical cell $\cell\phi fg$ between functors $f$ and $\map gAB$, in the equipment $\Prof$ of (unenriched) profunctors, is simply an ordinary natural transformation $\nat\phi fg$. In this case the corresponding horizontal cell $\cell{\lambda\phi}{B(g, \id)}{B(f, \id)}$ is given by the composite
	\begin{displaymath}
		B(g, \id) \iso U_A \hc_A U_A \hc_A B(g, \id) \xRar{\eta_f \hc_f \phi \hc \ls g\eps} B(f, \id) \hc_B U_B \hc_B U_B \iso B(f, \id).
	\end{displaymath}
	Recalling the definition of the Yoneda isomorphisms here (\exref{example:unenriched profunctors}) we find that under this composite a map $\map s{ga}b$ in $B(ga, b)$ is mapped as $s \mapsto (\id_a, \id_a, s) \mapsto (\id_{fa}, \phi_a, s) \mapsto s \of \phi_a$, where $s \of \phi_a$ is contained in $B(fa, b)$. Thus $\lambda\phi$ is given by precomposition with the components of $\phi$. In the case of enriched profunctors $\enProf\V$ the action of $\lambda$ on $\V$-natural transformations is given similarly.
\end{example}

	We end this section with a definition that will be crucial later on, as explained by the remark following it.
\begin{definition} \label{definition:invertible cells}
  In an equipment $\K$ consider a cell $\phi$, together with its corresponding horizontal cells $\lambda\phi$ and $\rho\phi$ as given by \propref{left and right cells}. We say that $\phi$ is \emph{left invertible} if $\lambda\phi$ has an inverse in $H(\K)$; if $\rho\phi$ has an inverse then we say that $\phi$ is \emph{right invertible}.
\end{definition}
\begin{remark} \label{remark:why invertible cells}
	Once we have defined what a monad $T$ on an equipment $\K$ is, in \defref{definition:monad}, a `strict' $T$-algebra $A$ will be defined as usual: it is an object $A$ together with a structure morphism $\map a{TA}A$, satisfying associativity and unit axioms. Given a second $T$-algebra $B$, a `lax $T$-promorphism' $J$ can then be defined (see \defref{definition:lax promorphism}) as a promorphism $\hmap JAB$ equipped with a `lax structure cell'
	\begin{displaymath}
		\begin{tikzpicture}
			\matrix(m)[math175em]{TA & TB \\ A & B, \\};
			\path[map]	(m-1-1) edge[barred] node[above] {$TJ$} (m-1-2)
													edge node[left] {$a$} (m-2-1)
									(m-1-2) edge node[right] {$b$} (m-2-2)
									(m-2-1) edge[barred] node[below] {$J$} (m-2-2);
			\path[transform canvas={shift=($(m-1-1)!0.5!(m-2-1)$)}]	(m-1-2) edge[cell] node[right] {$\bar J$} (m-2-2);
		\end{tikzpicture}
	\end{displaymath}
	satisfying associativity and unit axioms; compare the (well-known) definition of a colax $T$-morphism (\defref{definition:colax morphism}). Usually an object with some `lax structure cell' is called `pseudo' whenever the structure cell is invertible. In this case however, asking that $\bar J$ be invertible in $\K_1$ is clearly too strong: it would mean that the structure maps $a$ and $b$ are isomorphisms. The solution to this, it will turn out, is to consider `left pseudo lax' and `right pseudo lax' $T$-promorphisms instead, for which $\bar J$ is respectively left or right invertible.
\end{remark}
\begin{example}
	A cell of $\V$-matrices 
	\begin{displaymath}
		\begin{tikzpicture}
			\matrix(m)[math175em]{A & B \\ C & D \\};
			\path[map]  (m-1-1) edge[barred] node[above] {$J$} (m-1-2)
													edge node[left] {$f$} (m-2-1)
									(m-1-2) edge node[right] {$g$} (m-2-2)
									(m-2-1) edge[barred] node[below] {$K$} (m-2-2);
			\path[transform canvas={shift={($(m-1-2)!(0,0)!(m-2-2)$)}}] (m-1-1) edge[cell] node[right] {$\phi$} (m-2-1);
		\end{tikzpicture}
	\end{displaymath}
	corresponds to the horizontal cell $\cell{\rho\phi}{C(\id, f) \hc J}{K \hc D(\id, g)}$ that is given by 
	\begin{displaymath}
		\bigpars{C(\id, f) \hc J}(c, b) = \coprod_{fa = c} J(a, b) \xrar{\coprod \phi_{a, b}} K(c, gb) = \pars{K \hc D(\id, g)}(c, b);
	\end{displaymath}
	thus $\phi$ is right invertible if and only if the maps $\coprod_{fa = c} \phi_{a, b}$ are isomorphisms, for all $b \in B$ and $c \in C$.
\end{example}
\begin{example} \label{example:right invertible spans}
	In $\Span\E$ the horizontal cell $\cell{\rho\phi}{C(\id, f) \times_A J}{K \times_D D(\id, g)}$, corresponding to a cell $\phi$ as above, is given by the morphism of spans on the left below. Here $K \times_D B$ is the pullback of $k_D$ and $g$, with projections $p$ and $q$. 
	\begin{displaymath}
		\begin{tikzpicture}[baseline]
			\matrix(m)[math175em, column sep=2.25em, row sep=0.6em]{& J & \\ C & & B \\ & \phantom K & \\ C & & B \\};
			\path[map]	(m-1-2)	edge node[above left] {$f \of j_A$} (m-2-1)
													edge node[desc] {$(\phi, j_B)$} (m-3-2)
													edge node[above right] {$j_B$} (m-2-3)
									(m-3-2) edge node[below=4pt, right=-2pt] {$k_C \of p$} (m-4-1)
													edge node[below=4pt, left=-2pt] {$q$} (m-4-3);
			\path				(m-2-1) edge[eq] (m-4-1)
									(m-2-3) edge[eq] (m-4-3);
			\draw	(m-3-2) node {$K \times_D B$};
		\end{tikzpicture}
		\qquad\qquad\qquad\begin{tikzpicture}[baseline]
			\matrix(m)[math175em]{J & B \\ K & D \\};
			\path[map]	(m-1-1) edge node[above] {$j_B$} (m-1-2)
													edge node[left] {$\phi$} (m-2-1)
									(m-1-2) edge node[right] {$g$} (m-2-2)
									(m-2-1) edge node[below] {$k_D$} (m-2-2);
		\end{tikzpicture}
	\end{displaymath}
	We conclude that $\phi$ is right invertible if and only if the square on the right above is a pullback square.
\end{example}
	The final result of this section, which seems to be new, concerns the right invertibility of cells between bimodules. It is the first of several results that reduce statements about bimodules in $\Mod\K$ to statements about the underlying promorphisms in $\K$, which are usually easier to check.
\begin{proposition} \label{right invertible cells between bimodules}
	Let $\K$ be an equipment that satisfies the condition of \propref{bimodule equipments}, so that bimodules in $\K$ form an equipment $\Mod\K$. A cell of bimodules $\phi$, on the left below, is right invertible whenever the underlying cells $\cell fAC$ and $\cell \phi JK$, on the right, are right invertible in $\K$.
	\begin{displaymath}
		\begin{tikzpicture}
			\matrix(m)[math175em]{A & B \\ C & D \\};
			\path[map]  (m-1-1) edge[barred] node[above] {$J$} (m-1-2)
													edge node[left] {$f$} (m-2-1)
									(m-1-2) edge node[right] {$g$} (m-2-2)
									(m-2-1) edge[barred] node[below] {$K$} (m-2-2);
			\path[transform canvas={shift={($(m-1-2)!(0,0)!(m-2-2)$)}}] (m-1-1) edge[cell] node[right] {$\phi$} (m-2-1);
		\end{tikzpicture}
		\qquad\qquad\begin{tikzpicture}
			\matrix(m)[math175em]{A_0 & A_0 \\ C_0 & C_0 \\};
			\path[map]  (m-1-1) edge[barred] node[above] {$A$} (m-1-2)
													edge node[left] {$f_0$} (m-2-1)
									(m-1-2) edge node[right] {$f_0$} (m-2-2)
									(m-2-1) edge[barred] node[below] {$C$} (m-2-2);
			\path[transform canvas={shift={($(m-1-2)!(0,0)!(m-2-2)$)}}] (m-1-1) edge[cell] node[right] {$f$} (m-2-1);
		\end{tikzpicture}
		\qquad\qquad\begin{tikzpicture}
			\matrix(m)[math175em]{A_0 & B_0 \\ C_0 & D_0 \\};
			\path[map]  (m-1-1) edge[barred] node[above] {$J$} (m-1-2)
													edge node[left] {$f_0$} (m-2-1)
									(m-1-2) edge node[right] {$g_0$} (m-2-2)
									(m-2-1) edge[barred] node[below] {$K$} (m-2-2);
			\path[transform canvas={shift={($(m-1-2)!(0,0)!(m-2-2)$)}}] (m-1-1) edge[cell] node[right] {$\phi$} (m-2-1);
		\end{tikzpicture}
	\end{displaymath}
\end{proposition}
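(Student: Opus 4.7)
The strategy is to show that the horizontal cell $\rho\phi$ computed in $\Mod\K$ is invertible in $H(\Mod\K)$. First I will argue that a horizontal cell $\Phi$ of bimodules is invertible in $H(\Mod\K)$ if and only if its underlying horizontal cell in $H(\K)$ is invertible; given a $\K$-inverse $\Psi$ of $\Phi$, one postcomposes the bimodule compatibility $\Phi \of l_J = l_K \of (\id \hc \Phi)$ with $(\id \hc \Psi)$ to obtain $\Phi \of l_J \of (\id \hc \Psi) = l_K$, then precomposes with $\Psi$ to obtain $l_J \of (\id \hc \Psi) = \Psi \of l_K$, and analogously for the right action. It therefore suffices to show that the underlying cell $\widetilde{\rho\phi}$ of $\rho\phi_{\Mod\K}$ in $H(\K)$ is invertible.

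The cell $\widetilde{\rho\phi}$ can be identified using the construction of horizontal composition in $\Mod\K$ given in the proof of \propref{bimodule equipments}: it is the unique cell between the coequalisers $(C(\id, f) \hc_A J)_\K$ and $(K \hc_D D(\id, g))_\K$ induced on them by the horizontal composite $\eps_{f,\K} \hc \phi \hc \eta_{g,\K}$ in $\K$ (where the subscript $\K$ indicates the underlying cell of the conjoint cell of $\Mod\K$). Applying \propref{cartesian fillers in terms of companions and conjoints} in $\K$ to factor the promorphisms underlying the conjoints of $\Mod\K$ as $C(\id, f_0) \iso C \hc C_0(\id, f_0)$ and $D(\id, g_0) \iso D \hc D_0(\id, g_0)$, where $C_0(\id, f_0)$ and $D_0(\id, g_0)$ now denote the conjoints in $\K$, one may absorb the free factors of $C$ and $D$ in the source and target by the left-unit axioms for the $C$- and $D$-actions on $K$. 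Under these rewritings, $\widetilde{\rho\phi}$ is induced on the coequalisers by the cell $\rho\phi_\K \colon C_0(\id, f_0) \hc J \Rar K \hc D_0(\id, g_0)$ in $\K$, which is invertible by hypothesis.

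The final step is to verify that $\rho\phi_\K$ really does descend to an isomorphism on these coequalisers, i.e., that under $\rho\phi_\K$ the source parallel pair (built from the right $A$-action on $C(\id, f)$ and the left $A$-action on $J$) matches the target parallel pair (built from the right $D$-action on $K$ and the left $D$-action on $D(\id, g)$). By the formulas from the proof of \propref{bimodule equipments}, the right $A$-action on $C(\id, f)$ is built from the multiplication on $C$ together with the cell $f$; the hypothesis that $f$ is right invertible (that is, $\rho f_\K$ is an iso) allows one to transport this action through the factorisation $C(\id, f_0) \iso C \hc C_0(\id, f_0)$ and relate it to the right $D$-action on $K$, after which the bimodule axiom $\phi \of l_J = l_K \of (f \hc \phi)$ identifies it with the left $D$-action on $D(\id, g)$ via $\rho\phi_\K$. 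Combined with invertibility of $\rho\phi_\K$, this produces the required isomorphism $\widetilde{\rho\phi}$ on the coequalisers.

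The principal obstacle is this last diagram chase, which requires careful bookkeeping with the explicit formulas from the proof of \propref{bimodule equipments} for the bimodule structures on the conjoints $C(\id, f)$ and $D(\id, g)$. In particular it explains the asymmetry in the hypothesis---only the right invertibility of $f$, not of $g$, is demanded---reflecting the fact that $\rho$ is computed with $C(\id, f)$ on the left and $D(\id, g)$ on the right, so that the $A$-side is where the action must be transported across $\rho\phi_\K$.
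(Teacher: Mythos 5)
Your opening reduction---that a horizontal cell of bimodules is invertible in $H(\Mod\K)$ as soon as its underlying cell is invertible in $H(\K)$, since a $\K$-inverse automatically respects the actions---is correct, and your diagnosis of why only $f$, and not $g$, must be right invertible is also accurate. The gap lies in the comparison of coequalisers at the heart of your argument. You propose to compare the presentation of the source $C(\id,f)\hc_A J$ as the coequaliser of $C(\id,f)\hc A\hc J\rightrightarrows C(\id,f)\hc J$ with the presentation of the target $K\hc_D D(\id,g)$ as the coequaliser of $K\hc D\hc D(\id,g)\rightrightarrows K\hc D(\id,g)$, after ``absorbing the free factors of $C$ and $D$ by the unit axioms''. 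But the unit axioms only supply sections of the action maps, not isomorphisms: there is no isomorphism $C\hc C_0(\id,f_0)\hc J\iso C_0(\id,f_0)\hc J$ before passing to the quotient. Moreover a levelwise isomorphism between these two coequaliser diagrams cannot exist in general: at the middle level it would require $C\hc C_0(\id,f_0)\hc J \iso K\hc D\hc D_0(\id,g_0)$, which via your isomorphism $\rho\phi$ amounts to $C\hc K\iso K\hc D$. So the ``diagram chase'' you defer is not mere bookkeeping; as you have set it up, the claimed matching of the two parallel pairs fails.

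The paper circumvents this by choosing a different presentation of the target. Since the composite $\eps_f\hc_f\phi\hc_g\eta_g$ naturally lands in $U_C\hc_C K(\id,g)$, one presents the target as the coequaliser of $C\hc C\hc K(\id,g)\rightrightarrows C\hc K(\id,g)$, which has the \emph{same shape} as the source coequaliser: both coequalise a left monoid action on the leftmost tensor factor. The comparison is then genuinely levelwise: at the middle level it is $\id_C\hc\rho\phi$, invertible by hypothesis, and at the top level it is $\id_C\hc\rho(f\hc\phi)$, invertible because $\rho(f\hc\phi)=(\id\hc\rho\phi)\of(\rho f\hc\id)$ by the functoriality of $\rho$ (\propref{left and right cells}) and both factors are invertible by hypothesis. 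The induced cell on coequalisers, which is $\rho\phi$ in $\Mod\K$, is therefore invertible. If you wish to salvage your route, replacing your presentation of the target by this one is the missing move; the rest of your outline then goes through.
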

\begin{proof}
	To compute $\rho\phi = \eps_f \hc_f \phi \hc_g \eta_g$ we will use the fact that cartesian fillers (and thus companion and conjoint cells) in $\Mod \K$ can be computed in $\K$, and likewise the factorisations through them, as was described in the proof of \propref{bimodule equipments}. Thus $\phi' = \phi \hc_g \eta_g$, which is the factorisation, in $\Mod\K$, of $\phi$ through the restriction $K(\id, g)$, is computed in $\K$ as the factorisation of $\phi$ through $K(\id, g_0)$: we conclude that the cell $\phi \hc \eta_{g_0}$ of $\K$ underlies the cell $\phi'$ of $\Mod\K$. On the other hand the cell underlying $\eps_f$ is the cartesian filler of the niche $C_0 \xrar{\id} C_0 \xsrar C C_0 \xlar{f_0} A_0$ which, by \propref{cartesian fillers in terms of companions and conjoints}, we can take to be the composite $\id_C \hc \eps_{f_0}$.
	
	We recall from the proof of \propref{bimodule equipments} that the composite $\rho\phi = \eps_f \hc_f \phi'$ is defined by the following diagram in $H(\K)$, where both rows are coequalisers.
	\begin{displaymath}
		\begin{tikzpicture}
			\matrix(m)[math2em]
			{ C(\id, f) \hc A \hc J & C(\id, f) \hc J & C(\id, f) \hc_A J \\
				C \hc C \hc K(\id, g) & C \hc K(\id, g) & C \hc_C K(\id, g) \\ };
			\path	(m-1-1)	edge[transform canvas={yshift=3.5pt}, cell] node[above] {$r \hc \id$} (m-1-2)
										edge[transform canvas={yshift=-3.5pt}, cell] node[below] {$\id \hc l$} (m-1-2)
										edge[cell] node[left] {$\eps_f \hc f \hc \phi'$} (m-2-1)
						(m-1-2) edge[cell] (m-1-3)
										edge[cell] node[right] {$\eps_f \hc \phi'$} (m-2-2)
						(m-1-3) edge[cell, dashed] node[right] {$\eps_f \hc_f \phi'$} (m-2-3)
						(m-2-1)	edge[transform canvas={yshift=3.5pt}, cell] node[above] {$m \hc \id$} (m-2-2)
										edge[transform canvas={yshift=-3.5pt}, cell] node[below] {$\id \hc l$} (m-2-2)
						(m-2-2) edge[cell] (m-2-3);
		\end{tikzpicture}
	\end{displaymath}
	As we have seen, the cell underlying $\eps_f \hc \phi'$ is the composite $\id_C \hc \eps_{f_0} \hc \phi \hc \eta_{g_0} = \id_C \hc \rho\phi$, which is invertible by assumption. Likewise the cell underlying $\eps_f \hc f \hc \phi'$ is the composite $\id_C \hc \eps_{f_0} \hc f \hc \phi \hc \eta_{g_0} = \id_C \hc \rho(f \hc \phi)$. The latter is also invertible because $\rho(f \hc \phi)$ can be written as a vertical composite of $\rho f$ and $\rho \phi$ by \propref{left and right cells}, both of which are invertible by assumption. Hence in the diagram above the two cells, that are drawn both solidly and vertically, are invertible and it follows that the dashed cell, which equals $\rho\phi$, is invertible as well. This concludes the proof.
\end{proof}

  \chapter{Weighted colimits} \label{chapter:weighted colimits}
	Having introduced equipments in the previous chapter we can now recall the notion of weighted colimits in such equipments. These are the main objects of our study and include, for example, `pointwise' left Kan extensions. Weighted colimits can be introduced in two ways, both of which will be given here. The first way goes back to Wood's original paper \cite{Wood82}, and defines weighted colimits in so-called `closed' equipments---equipments equipped with extra structure generalising that of closed monoidal categories. For well-behaved $\V$ and $\E$, the equipments $\enProf\V$ and $\inProf\E$, of $\V$\ndash enriched profunctors and internal profunctors in $\E$, are closed, and this first approach to defining weighted colimits is very close to how weighted colimits are defined in the $2$-category $\enCat\V$ of $\V$-categories. In particular, colimits in $\enProf\V$ that are `weighted by companions' are precisely the `enriched' left Kan extensions in $\enCat\V$ that were introduced by Dubuc \cite[Section I.4]{Dubuc70}. Satisfying a stronger universal property than ordinary Kan extensions, this is the usual notion of Kan extensions of enriched functors, see for example Kelly's book \cite[Formula 4.20]{Kelly82}.
	
	The second approach uses the notion of `left Kan extensions in a pseudo double category', as given by Grandis and Par\'e in \cite[Section 2]{Grandis-Pare08}, which generalises the classical notion of Kan extension, in ordinary $2$-categories. We will show that, in a closed equipment, a certain type of such left Kan extensions (which we will call `top absolute' extensions) coincide with the weighted colimits as they were defined in the first approach. Using this alternative definition has several advantages. Firstly it allows us to define weighted colimits in pseudo double categories, which are much more common than equipments (even more so than closed equipments). This turns out to be very useful later on, because the pseudo double categories $\rcProm T$ of `algebraic promorphisms', for a monad $T$ on an equipment (\defref{definition:right colax promorphisms}), will, when $T$ is not a pseudomonad, in general not be an equipment (see \propref{restrictions in T-promrc}).
	
	Secondly we can follow Grandis and Par\'e and, using their `double comma objects', introduce a notion of `pointwise' weighted colimits, generalising Street's notion of pointwise left Kan extensions in $2$-categories, that was introduced in \cite{Street74}. Just like Dubuc's notion of `enriched' extensions is the right notion when considering enriched functors, so is Street's notion of `pointwise' extensions the right one for internal functors. In fact Street's notion can be used for enriched functors as well, but is too strong in general: it is easy to give a pair of $2$-functors (i.e.\ $\cat$-enriched) whose enriched left Kan extension exists, but whose pointwise left Kan extension does not, see \exref{example:stronger pointwise left Kan extension}. Returning to weighted colimits in equipments, the main idea of this chapter is to, in the language of equipments, describe a property of double comma objects whose failure to hold can be thought of as the cause of the difference between pointwise and ordinary weighted colimits. Calling double comma objects with this property `strong' the main result (\thmref{weighted colimits are pointwise if double commas are strong}) of this chapter states that if all double comma objects in an equipment $\K$ are strong then all weighted colimits in $\K$ are pointwise. Since double comma objects in $\inProf\E$ are strong (\propref{internal double comma objects are strong}), we conclude that the notion of weighted colimits in equipments unifies the `right notions' of weighted colimits in $\enProf\V$ and $\inProf\E$: in the first they generalise the notion of enriched left Kan extension while in the second they generalise the notion of pointwise left Kan extension. Later we will prove that, given a monad $T$ on an equipment $\K$, the equipment of `algebraic profunctors' $\rcProm T$ has strong comma objects whenever $\K$ does, so that in that case the weighted colimits in $\rcProm T$ are also pointwise; see \thmref{algebraic weighted colimits are pointwise if double commas are strong}. 
		
\section{Weighted colimits in closed equipments} \label{section:closed equipments}
	We start with the notion of closed equipment, which generalises that of a closed monoidal category. The main ideas here go back to Wood's original paper \cite{Wood82}, while the author was introduced to them by the $n$Lab article on equipments \cite{nLab_on_equipments}, which is mostly written by Shulman. Nothing in this section is new.
	
	Let $\catvar B$ be a bicategory, with horizontal composition denoted $\hc$. Recall that $\catvar B$ is called \emph{closed} if for any morphism $\map HBC$ the functors
	\begin{displaymath}
		\map{H \hc \dash}{\catvar B(C, D)}{\catvar B(B, D)} \qquad \text{and} \qquad \map{\dash \hc H}{\catvar B(A, B)}{\catvar B(A, C)}
	\end{displaymath}
	both have right adjoints, in that case denoting them by $H \lhom \dash$ and $\dash \rhom H$ respectively. This means that, for morphisms $\map JAB$, $\map HBC$ and $\map KAC$, there are correspondences of cells
	\begin{equation} \label{equation:horizontal composition adjunction}
		\catvar B(B, C)(H, J \lhom K) \iso \catvar B(A, C)(J \hc H, K) \iso \catvar B(A, B)(J, K \rhom H)
	\end{equation}
	of which, on first sight, the first is natural in $H$ and $K$, while the second is natural in $J$ and $K$. However, the right adjoints can be made into functors
	\begin{displaymath}
		\map\lhom{\op{\catvar B(A, B)} \times \catvar B(A, C)}{\catvar B(B, C)} \qquad \text{and} \qquad \map\rhom{\catvar B(A, C) \times \op{\catvar B(B, C)}}{\catvar B(A, B)}
	\end{displaymath}
	in the usual way, and with respect to these both the correspondences above are natural in each of the variables $J$, $H$ and $K$. For example, given cells $\cell \phi HJ$ and $\cell \psi KL$, their image $\cell{\phi \lhom \psi}{J \lhom K}{H \lhom L}$ is the adjoint of the composite
	\begin{displaymath}
		H \hc (J \lhom K) \xRar{\phi \hc \id} J \hc (J \lhom K) \xRar{\ev_\lhom} K \xRar\psi L.
	\end{displaymath}
	Here $\cell{\ev_\lhom}{J \hc (J \lhom K)}K$ denotes the counit, called \emph{evaluation}, of the adjunction $J \hc \dash \ladj J \lhom \dash$; the unit, \emph{coevaluation}, will be denoted $\cell{\coev_\lhom}H{J \lhom (J \hc H)}$. Likewise the counit and unit of $\dash \hc H \ladj \dash \rhom H$ are given by cells $\cell{\ev_\rhom}{(K \rhom H) \hc H}K$ and $\cell{\coev_\rhom}J{(J \hc H) \rhom H}$. Often we will suppress the subscripts $\lhom$ and $\rhom$ from these notations and simply write $\ev$ and $\coev$, when no confusion can arise. Finally, we will call $J \lhom K$ the \emph{left hom} of $J$ and $K$, while $K \rhom H$ will be called the \emph{right hom}.
	
	Recall that every pseudo double category $\K$ contains an underlying bicategory $H(\K)$ of horizontal morphisms and horizontal cells.
\begin{definition} \label{definition:closed pseudo double category}
	A pseudo double category $\K$ is called \emph{closed} whenever $H(\K)$ is closed.
\end{definition}
\begin{example}
	The equipment $\Mat\V$ is closed whenever $\V$ is a closed symmetric monoidal category that has products. In that case the left hom $J \lhom K$ of $\V$-matrices $\hmap JAB$ and $\hmap KAC$ can be given by
	\begin{displaymath}
		(J \lhom K)(b, c) = \prod_{a \in A} \brks{J(a, b), K(a, c)},
	\end{displaymath}
	where $\brks{\dash, \dash}$ denotes the inner hom functor of $\V$; the right homs are given similarly. To show that with these definitions $\catvar B = H(\Mat\V)$ satisfies the correspondences \eqref{equation:horizontal composition adjunction} notice that, for any $\hmap HBC$, a map of $\V$-matrices $\cell\phi{J \hc H}K$ is exactly a family of maps $\map{\phi_{a, b, c}}{J(a, b) \tens H(b, c)}{K(a, c)}$, for $a \in A$, $b \in B$ and $c \in C$. Under the tensor-hom adjunction of $\V$, this corresponds to a family $\map{\phi_{a, b, c}^\flat}{H(b, c)}{\brks{J(a, b), K(a, c)}}$, forming a map $\cell{\phi^\flat}H{J \lhom K}$\footnote{It is customary to denote the cell $H \Rar J \lhom K$, that corresponds to $\cell\phi{J \hc H}K$ under \eqref{equation:horizontal composition adjunction}, by $\phi^\flat$. Likewise, the cell $J \hc H \Rar K$ corresponding to $\cell\psi H{J \lhom K}$ is denoted $\psi^\sharp$.}.
\end{example}
	
	To give a condition under which the equipment $\Span\E$ of spans in $\E$ is closed, where $\E$ has finite limits, notice that the slice categories $\E \slash X$, with $X \in \ob \E$, are cartesian: the products of $\E \slash X$ are pullbacks in $\E$, over $X$, while the identity map on $X$ is the terminal object of $\E \slash X$. The category $\E$ is called \emph{locally cartesian closed} whenever all its cartesian slice categories are cartesian closed. In that case each functor $\map{f^*}{\E \slash Y}{\E \slash X}$, given by pullback along any $\map fXY$ in $\E$, has, besides a left adjoint $f_*$ given by postcomposition with $f$, a right adjoint $\map{\prod_f}{\E \slash X}{\E \slash Y}$ as well (see \cite[Theorem I.9.4]{MacLane-Moerdijk92}).
\begin{example}
	The horizontal bicategory underlying the equipment $\Span\E$ is the bicategory of spans and, as we saw in \lemref{horizontal composition of spans}, horizontal composition with a span $\map jJ{A \times B}$ can be given as
	\begin{displaymath}
		\map{J \hc \dash = (j_A \times \id_C)_* \of (j_B \times \id_C)^*}{\E \slash B \times C}{\E \slash A \times C}.
	\end{displaymath}
	It follows from the discussion above that a right adjoint to this functor can be given by $J \lhom \dash = \prod_{j_B \times \id_C} \of (j_A \times \id_C)^*$, whenever $\E$ is locally cartesian closed. For $\map hH{B \times C}$ the right homs $\dash \rhom H$ can be constructed in the same way, as the roles of $J$ and $H$ in $J \hc H$ are completely symmetric. Hence $\Span\E$ is closed if $\E$ is locally cartesian closed.
\end{example}
	The equipment $\Mod\K$ of monoids and bimodules (see \defref{definition:monoids and bimodules}) in a closed equipment $\K$ is often again closed by the following result, which is \cite[Proposition 11.16]{Shulman08}.
\begin{proposition}[Shulman]
	Let $\K$ be a closed equipment such that every category $H(\K)(A, B)$ has reflexive coequalisers and equalisers. Then $\Mod\K$ is again closed.
\end{proposition}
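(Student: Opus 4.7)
The plan is to construct, for each pair of bimodules $\hmap JAB$ and $\hmap KAC$ in $\K$, a $(B, C)$-bimodule $J \lhom K$ together with a bimodule evaluation $J \hc_B (J \lhom K) \Rar K$ that is universal, and to do the dual construction for right homs. The starting point is the underlying left hom $J \lhom_\K K$, which already exists in the closed equipment $\K$; the task is to cut out the sub-object encoding left $A$-equivariance and to equip it with commuting $B$- and $C$-actions.

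First I would observe that $J \lhom_\K K$ carries candidate $(B, C)$-bimodule structure cells already in $\K$. The \emph{right} $B$-action on $J$ produces a \emph{left} $B$-action on $J \lhom_\K K$, as the cell adjoint under $J \hc \dash \ladj J \lhom_\K \dash$ to the composite $J \hc B \hc (J \lhom_\K K) \xRar{r_J \hc \id} J \hc (J \lhom_\K K) \xRar{\ev} K$; dually, the right $C$-action on $K$ yields a right $C$-action on $J \lhom_\K K$ built from $(J \lhom_\K K) \hc C \Rar J \lhom_\K (K \hc C) \xRar{J \lhom_\K r_K} J \lhom_\K K$. The associativity, unit and commutativity of these two actions follow formally from the axioms for $r_J$ and $r_K$ together with naturality of the hom--tensor adjunctions.

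Next I would define $J \lhom K$ as the equaliser, in $H(\K)(B_0, C_0)$, of two horizontal cells $J \lhom_\K K \rightrightarrows (A \hc J) \lhom_\K K$ encoding left $A$-equivariance: the first is $l_J \lhom_\K K$, obtained by functoriality from the $A$-action on $J$; the second is the composite $J \lhom_\K K \xRar{J \lhom_\K \tilde l_K} J \lhom_\K (A \lhom_\K K) \iso (A \hc J) \lhom_\K K$, where $\tilde l_K$ is the coaction adjoint to $l_K$ and the iso is the canonical $(J' \hc H') \lhom_\K K' \iso H' \lhom_\K (J' \lhom_\K K')$. This equaliser exists by hypothesis. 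The crucial check is that the $B$- and $C$-actions constructed in the previous paragraph restrict to this equaliser; this uses the bimodule commutativity axioms $l_J \of (\id \hc r_J) = r_J \of (l_J \hc \id)$ for $J$ and the analogue for $K$, applied under the naturality of the two arrows being equalised.

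Finally the universal property is verified as a chain of translations. A cell of bimodules $\phi \colon J \hc_B H \Rar K$ is, by the construction of $J \hc_B H$ as a reflexive coequaliser (\propref{bimodule equipments}), a cell $\tilde\phi \colon J \hc H \Rar K$ in $\K$ that coequalises the two $B$-actions and is compatible with $l_K$ and $r_K$; transposing under $J \hc \dash \ladj J \lhom_\K \dash$ produces $\tilde\phi^\flat \colon H \Rar J \lhom_\K K$, and the three conditions on $\tilde\phi$ convert respectively into $B$-equivariance (from the coequaliser condition and our definition of the $B$-action), $C$-equivariance (from compatibility with $r_K$), and factorisation through the equaliser $J \lhom K$ (from compatibility with the $l$'s). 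The main obstacle I expect is this last translation: showing that the $A$-linearity of $\tilde\phi$ corresponds exactly, under the adjunction, to $\tilde\phi^\flat$ equalising the two cells defining $J \lhom K$, since it requires unwinding the interplay between $l_K$, its coaction $\tilde l_K$, and the associator identifying $(A \hc J) \lhom_\K K$ with $J \lhom_\K (A \lhom_\K K)$. The right hom $\hmap{K \rhom H}AB$, for $\hmap HBC$ and $\hmap KAC$, is constructed by the symmetric procedure, using an equaliser recording $C$-equivariance and inheriting $A$- and $B$-actions from $l_K$ and $l_H$ respectively.
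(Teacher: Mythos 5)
Your construction is exactly the one the paper (following Shulman) uses: define $J \lhom_A K$ as the equaliser in $H(\K)(B_0, C_0)$ of the two cells $J \lhom K \rightrightarrows (A \hc J) \lhom K$ induced by the $A$-actions on $J$ and on $K$, and transport the $B$- and $C$-actions from $r_J$ and $r_K$ through the adjunction. The paper only sketches this; your additional checks (that the actions restrict to the equaliser, and the three-way translation of the universal property) are the correct routine details filling in that sketch.
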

\begin{proof}[Sketch of the proof]
	The fact that $\K$ is closed means that the reflexive coequalisers in $H(\K)(A, B)$ are preserved by $\hc$ on both sides so that, by \propref{bimodule equipments}, the monoids and bimodules in $\K$ form an equipment $\Mod\K$. Given bimodules $\hmap JAB$ and $\hmap KAC$, their left hom $J \lhom_A K$ can be constructed as the equaliser
	\begin{displaymath}
		\begin{tikzpicture}
			\matrix(m)[math175em, column sep=2em]{J \lhom_A K & J \lhom K & (A \hc J) \lhom K \\};
			\path	(m-1-1)	edge[cell] (m-1-2)
						(m-1-2)	edge[transform canvas={yshift=3.5pt}, cell]  (m-1-3)
										edge[transform canvas={yshift=-3.5pt}, cell] (m-1-3);
		\end{tikzpicture}
	\end{displaymath}
	in $H(\K)(B_0, C_0)$, where the pair of parallel morphisms are induced by the action of $A$ on $J$ and $K$ respectively.
\end{proof}
\begin{example}
	If $\V$ is a closed symmetric monoidal category that is both complete and cocomplete then the equipment $\enProf\V$ of $\V$-profunctors is closed by the previous proposition: the left hom $J \lhom_A K$ of $\V$-profunctors $\hmap JAB$ and $\hmap KAC$ is given by the equalisers
	\begin{displaymath}
    (J \lhom_A K)(b, c) \to \prod\limits_{a \in A} \inhom{J(a, b), K(a, c)} \rightrightarrows \prod\limits_{a_1, a_2 \in A} \inhom{A(a_1, a_2) \tens J(a_2, b), K(a_1, c)},
  \end{displaymath}
  where the pair of maps are induced by the action of $A(a_1, a_2)$ on $J(a_2, b)$ and $K(a_2, c)$ respectively; shortly this is written as the end
  \begin{displaymath}
    J \lhom_A K = \int_{a \in A} \inhom{J(a, \dash), K(a, \dash)}.
  \end{displaymath}
  Notice that, by definition (see \cite[Formula 2.10]{Kelly82}), this means that $(J \lhom_A K)(b, c)$ is the object of $\V$-natural transformations $J(\dash, b) \natarrow K(\dash, c)$:
  \begin{equation} \label{equation:left hom and natural transformation object}
	  (J \lhom_A K)(b, c) = \brks{A, \V}(J(\dash, b), K(\dash, c)).
	\end{equation}
	The right homs are given similarly.
	
	The symmetric monoidal category $\2 = (\bot \to \top)$ is closed with respect to implication, that is
	\begin{displaymath}
		\brks{x, y} = \begin{cases}
			\bot & \text{if $(x,y) = (\top, \bot)$;} \\
			\top & \text{otherwise.}
			\end{cases}
	\end{displaymath}
	Notice that equalisers in $\2$ are always trivial, since $\2$ consists of a single non-identity map. Thus, the left hom $J \lhom_A K$ of $\2$-profunctors $\hmap JAB$ and $\hmap KAC$ (see \exref{example:enriched profunctor equipments}) is given by
	\begin{displaymath}
		(J \lhom_A K)(y, z) = \bigwedge_{x \in A} \brks{J(x, y), K(x,z)},
	\end{displaymath}
	in other words $x \sim_{J \lhom_A K} z$ if and only if $x \sim_J y$ implies $x \sim_K z$ for all $x \in A$.
\end{example}
	
	Before defining weighted colimits in closed equipments, we record some useful properties of the adjoints $\lhom$ and $\rhom$.
\begin{proposition} \label{left and right hom properties}
  Given composable horizontal morphisms $A \xslashedrightarrow H B \xslashedrightarrow K C \xslashedrightarrow L D$ in a closed pseudo double category $\K$, as well as $\hmap MAD$, the following pairs of horizontal morphisms are naturally isomorphic in $H(\K)$.
  \begin{itemize}
    \item[-] $K \lhom (H \lhom M) \iso (H \hc K) \lhom M$
    \item[-] $H \lhom (M \rhom L) \iso (H \lhom M) \rhom L$
    \item[-] $(M \rhom L) \rhom K \iso M \rhom (K \hc L)$
  \end{itemize}
\end{proposition}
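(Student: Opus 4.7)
The plan is to derive each of the three isomorphisms from the uniqueness of adjoints in the bicategory $H(\K)$, with the associator of $\K$ providing the needed comparison of the underlying left adjoints. Throughout I would make implicit use of the naturality of the adjunction correspondences \eqref{equation:horizontal composition adjunction} in all three of their arguments.

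For the first isomorphism, observe that the two composite adjunctions
\[
  H \hc (K \hc \dash) \ladj K \lhom (H \lhom \dash) \qquad \text{and} \qquad (H \hc K) \hc \dash \ladj (H \hc K) \lhom \dash
\]
both give adjoint pairs between $H(\K)(C, D)$ and $H(\K)(A, D)$. The associator $\cell{\mathfrak a}{(H \hc K) \hc X}{H \hc (K \hc X)}$ of $\K$ is a natural isomorphism between the two left adjoints, and therefore induces a natural isomorphism of the right adjoints which, evaluated at $M$, yields the comparison $(H \hc K) \lhom M \iso K \lhom (H \lhom M)$. Equivalently, I would exhibit this isomorphism as the adjunct (under $(H \hc K) \hc \dash \ladj (H \hc K) \lhom \dash$) of the composite
\[
  (H \hc K) \hc \bigl(K \lhom (H \lhom M)\bigr) \xRar{\mathfrak a} H \hc \bigl(K \hc (K \lhom (H \lhom M))\bigr) \xRar{\id \hc \ev_\lhom} H \hc (H \lhom M) \xRar{\ev_\lhom} M,
\]
and construct its inverse as the adjunct of a dual composite built from $\inv{\mathfrak a}$ and $\coev_\lhom$; the triangle identities for the two $\lhom$-adjunctions, together with the coherence of $\mathfrak a$, show the two are mutually inverse.

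The second isomorphism is handled analogously, now combining the adjunctions $H \hc \dash \ladj H \lhom \dash$ and $\dash \hc L \ladj \dash \rhom L$. For any $\hmap XBC$, both $H(\K)(B,C)\bigl(X, H \lhom (M \rhom L)\bigr)$ and $H(\K)(B,C)\bigl(X, (H \lhom M) \rhom L\bigr)$ reduce via these adjunctions to sets of cells of the form $(H \hc X) \hc L \Rar M$ and $H \hc (X \hc L) \Rar M$ respectively, which the associator identifies naturally in $X$; the bicategorical Yoneda lemma on $H(\K)(B,C)$ then yields the desired isomorphism. The third isomorphism is perfectly symmetric to the first, obtained by replacing $\lhom$ with $\rhom$ throughout and applying the adjunction $\dash \hc L \ladj \dash \rhom L$ followed by $\dash \hc K \ladj \dash \rhom K$.

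The main ``obstacle'' is purely coherence bookkeeping: one must verify that the comparison cells so defined are natural in all of $H$, $K$, $L$, and $M$. Both concerns follow formally from the naturality of \eqref{equation:horizontal composition adjunction}, the naturality and pentagon coherence of $\mathfrak a$, and the triangle identities for the closed-structure adjunctions. In practice I would present the proof by writing out one explicit formula as above for the first isomorphism and remarking that the remaining verifications, as well as the second and third isomorphisms, are entirely analogous.
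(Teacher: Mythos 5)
Your proof is correct and follows essentially the same route as the paper: both derive the isomorphisms by applying the adjunction correspondences \eqref{equation:horizontal composition adjunction} (once or twice) to the natural isomorphism $H(\K)(A,D)\bigl((H \hc K) \hc L, M\bigr) \iso H(\K)(A,D)\bigl(H \hc (K \hc L), M\bigr)$ induced by the associator, and then invoke the Yoneda lemma. Your extra explicit description of the comparison cell as an adjunct of an evaluation composite is a harmless elaboration of the same argument.
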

\begin{proof}
  Using the Yoneda lemma, these follow directly from applying the isomorphisms \eqref{equation:horizontal composition adjunction}, once or twice, to the sides of the natural isomorphism below, that is given by precomposition with the associator for horizontal composition.
  \begin{displaymath}
    H(\K)(A,D)((H \hc K) \hc L, M) \iso H(\K)(A,D)(H \hc (K \hc L), M) \qedhere
  \end{displaymath}
\end{proof}
	The following is \cite[Proposition 5.11]{Shulman08}.
\begin{proposition}[Shulman] \label{cartesian filler properties}
  For a niche $A \xrar f C \xslashedrightarrow K D \xlar g B$ in a closed equipment $\K$, the following triples of promorphisms are naturally isomorphic in $H(\K)$.
  \begin{itemize}
    \item[-] $C(f, \id) \hc K \iso K(f, \id) \iso C(\id, f) \lhom K$
    \item[-] $K \hc D(\id, g) \iso K(\id, g) \iso K \rhom D(g, \id)$
  \end{itemize}
  From left to right, these isomorphisms are given by the horizontal cells that are adjoint to the counits of the companion-conjoint adjunction, see \propref{companion-conjoint adjunction}. Combining them gives isomorphisms
  \begin{displaymath}
	  C(f, \id) \hc K \hc D(\id, g) \iso K(f, g) \iso C(\id, f) \lhom K \rhom D(g, \id).
	\end{displaymath}
\end{proposition}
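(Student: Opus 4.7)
The plan is to obtain each triple by concatenating two isomorphisms: the first a direct specialisation of \propref{cartesian fillers in terms of companions and conjoints}, and the second an instance of the uniqueness of right adjoints in the closed bicategory $H(\K)$. For the first, setting $g = \id_D$ (respectively $f = \id_C$) in that proposition collapses the conjoint (respectively companion) on the opposite side to a horizontal unit and directly yields the isomorphisms $C(f, \id) \hc K \iso K(f, \id)$ and $K \hc D(\id, g) \iso K(\id, g)$.

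For the second isomorphism of the first triple, $C(f, \id) \hc K \iso C(\id, f) \lhom K$, I would exhibit both sides as right adjoints of the common functor $C(\id, f) \hc \dash \colon H(\K)(A, D) \to H(\K)(C, D)$. The first such right adjoint is produced by left whiskering the bicategorical adjunction $C(f, \id) \ladj C(\id, f)$ of \propref{companion-conjoint adjunction}, yielding an adjunction
\begin{displaymath}
	C(\id, f) \hc \dash \ladj C(f, \id) \hc \dash
\end{displaymath}
of functors between hom-categories, whose counit at $K$ is the whiskering
\begin{displaymath}
	C(\id, f) \hc C(f, \id) \hc K \xRar{\ls f\eps_f \hc \id_K} U_C \hc K \iso K
\end{displaymath}
of the companion-conjoint counit $\ls f\eps_f$. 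The second right adjoint is the closedness adjunction $C(\id, f) \hc \dash \ladj C(\id, f) \lhom \dash$ of \defref{definition:closed pseudo double category}. Uniqueness of right adjoints then produces the desired natural isomorphism, which by construction is precisely the transpose, under the closedness adjunction, of the whiskered counit above, matching the claim in the statement that the isomorphism is adjoint to the counit of the companion-conjoint adjunction. The second triple is handled dually: right whiskering $D(g, \id) \ladj D(\id, g)$ exhibits $\dash \hc D(\id, g)$ and $\dash \rhom D(g, \id)$ as coinciding right adjoints to $\dash \hc D(g, \id)$, and evaluation at $K$ gives $K \hc D(\id, g) \iso K \rhom D(g, \id)$.

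The combined threefold isomorphism $C(f, \id) \hc K \hc D(\id, g) \iso K(f, g) \iso C(\id, f) \lhom K \rhom D(g, \id)$ is then obtained by pasting the isomorphisms already established: the left half is \propref{cartesian fillers in terms of companions and conjoints} applied to the full niche, while the right half follows by combining the two triples, for instance by first applying the second triple to $K(f, \id)$ to obtain $K(f, \id) \rhom D(g, \id) \iso K(f, g)$, and then substituting $C(\id, f) \lhom K$ for $K(f, \id)$ via the first triple. The isomorphism $H \lhom (M \rhom L) \iso (H \lhom M) \rhom L$ from \propref{left and right hom properties} ensures the unbracketed expression $C(\id, f) \lhom K \rhom D(g, \id)$ is unambiguous. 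The only step requiring real care, and hence the main obstacle, is the identification of the counit of the whiskered $2$-categorical adjunction $C(\id, f) \hc \dash \ladj C(f, \id) \hc \dash$ with the whiskering of the bicategorical counit $\ls f\eps_f$; this amounts to a direct verification against the triangle identities for $C(f, \id) \ladj C(\id, f)$ as spelled out in \propref{companion-conjoint adjunction}.
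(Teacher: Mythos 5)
Your proposal is correct and takes essentially the same route as the paper: the paper also derives $C(f,\id) \hc K \iso C(\id,f) \lhom K$ by chaining the natural bijections $H(\K)(A,D)(J, C(f,\id) \hc K) \iso H(\K)(C,D)(C(\id,f) \hc J, K) \iso H(\K)(A,D)(J, C(\id,f) \lhom K)$ coming from the companion--conjoint and closedness adjunctions, which is exactly your ``two right adjoints of $C(\id,f) \hc \dash$'' argument phrased via Yoneda. The remaining steps (the first isomorphism from \propref{cartesian fillers in terms of companions and conjoints}, duality for the second triple, and pasting for the combined statement) match the paper as well.
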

\begin{proof}
  We will only treat the first triple, the other being dual. The first isomorphism $C(f, \id) \hc K \iso K(f, \id)$ is given by \propref{cartesian fillers in terms of companions and conjoints}. That $C(f, \id) \hc K \iso C(\id, f) \lhom K$ holds as well follows from the correspondences below, where $\hmap JAD$ is any promorphism. They are given by the companion-conjoint adjunction and the composition-left hom adjunction respectively.
  \begin{align*}
	  H(\K)(A, D)(J, C(f, \id) \hc K) &\iso H(\K)(C, D)(C(\id, f) \hc J, K) \\
	  &\iso H(\K)(A, D)(J, C(\id, f) \lhom K) \qedhere
	\end{align*}	
\end{proof}
	Combining the previous propositions we obtain the following corollary. These isomorphisms will be used extensively.
\begin{corollary} \label{companion, conjoint and left hom isomorphisms}
	Given promorphisms $\hmap HAB$, $\hmap KAC$ and $\hmap LCD$ in a closed equipment $\K$, as well as morphisms $\map fAC$ and $\map gDB$, the following natural isomorphisms exist in $H(\K)$.
	\begin{itemize}
		\item[-] $H \lhom \bigpars{C(f, \id) \hc L} \iso \bigpars{C(\id, f) \hc H} \lhom L$
		\item[-] $B(g, \id) \hc \pars{H \lhom K} \iso \bigpars{H \hc B(\id, g)} \lhom K$
	\end{itemize}
	The first is adjoint to evaluation followed by the counit $\ls f\eps_f$ of the companion-conjoint adjunction (see \propref{companion-conjoint adjunction}), while the second is adjoint to $\ls g\eps_g$ followed by evaluation.
\end{corollary}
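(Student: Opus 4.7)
The plan is to establish both isomorphisms by a Yoneda-style argument: I will show that each of the two promorphisms appearing in a given isomorphism represents the same functor $\op{H(\K)(B,D)} \to \Set$ (or $\op{H(\K)(D,C)} \to \Set$ respectively), by rewriting a generic hom-set using the adjunctions we have available, namely the left/right hom adjunctions \eqref{equation:horizontal composition adjunction} and, crucially, the companion--conjoint adjunction of \propref{companion-conjoint adjunction}. Recall that $C(f,\id) \ladj C(\id,f)$ in $H(\K)$ has counit $\ls f\eps_f \colon C(\id,f) \hc C(f,\id) \Rar U_C$ and yields, for any third object, an adjunction $C(\id,f) \hc \dash \ladj C(f,\id) \hc \dash$ on hom-categories; dually for the companion $B(g,\id) \ladj B(\id,g)$.

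For the first isomorphism, I would take any $\hmap MBD$ and compute
\begin{align*}
H(\K)(B,D)\bigpars{M, H \lhom (C(f,\id) \hc L)} &\iso H(\K)(A,D)\bigpars{H \hc M, C(f,\id) \hc L} \\
&\iso H(\K)(C,D)\bigpars{C(\id,f) \hc H \hc M, L} \\
&\iso H(\K)(B,D)\bigpars{M, (C(\id,f) \hc H) \lhom L},
\end{align*}
where the first and third steps use the left hom adjunction and the middle step uses $C(\id,f) \hc \dash \ladj C(f,\id) \hc \dash$. By the Yoneda lemma for bicategories, this natural chain of bijections produces the claimed invertible horizontal cell. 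To identify the image of the identity cell under the composite bijection, I would trace it through each step: the first adjunction turns the identity into the evaluation $\ev_\lhom$; the second adjunction precomposes with $C(\id,f)$ and postcomposes with $\ls f\eps_f$, producing the composite $C(\id,f) \hc H \hc M \xRar{C(\id,f) \hc \ev} C(\id,f) \hc C(f,\id) \hc L \xRar{\ls f\eps_f \hc L} L$, which is exactly ``evaluation followed by the counit $\ls f\eps_f$'' as stated; the third adjunction then gives the isomorphism itself as the cell adjoint to this composite.

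For the second isomorphism the argument is entirely parallel but uses the companion--conjoint adjunction on the other side. For any $\hmap MDC$ I would compute
\begin{align*}
H(\K)(D,C)\bigpars{M, B(g,\id) \hc (H \lhom K)} &\iso H(\K)(B,C)\bigpars{B(\id,g) \hc M, H \lhom K} \\
&\iso H(\K)(A,C)\bigpars{H \hc B(\id,g) \hc M, K} \\
&\iso H(\K)(D,C)\bigpars{M, (H \hc B(\id,g)) \lhom K},
\end{align*}
now using $B(\id,g) \hc \dash \ladj B(g,\id) \hc \dash$ in the first step. Tracing the identity cell along the composite bijection gives $H \hc B(\id,g) \hc B(g,\id) \hc (H \lhom K) \xRar{H \hc \ls g\eps_g \hc \id} H \hc (H \lhom K) \xRar{\ev_\lhom} K$, confirming the description of the isomorphism as being adjoint to the counit $\ls g\eps_g$ followed by evaluation.

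I do not foresee any serious obstacle: both parts are bookkeeping with adjoint pairs. The only point requiring a little care is the direction of the induced adjunction between hom-categories obtained from $C(f,\id) \ladj C(\id,f)$ (and analogously for $g$); namely, horizontal pre-composition reverses the order, so the left adjoint on hom-categories is $C(\id,f) \hc \dash$, not $C(f,\id) \hc \dash$. Once this is sorted out, the Yoneda argument and the identification of the adjoint cell proceed mechanically.
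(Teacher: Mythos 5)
Your argument is correct and is essentially the computation underlying the paper's proof: the corollary is obtained there by combining \propref{left and right hom properties} and \propref{cartesian filler properties}, both of which are themselves established by exactly the Yoneda/adjunction chains you write out (the paper's route just passes through the intermediate objects $C(\id,f) \lhom L$ and $B(\id,g) \lhom (H \lhom K)$ rather than invoking the adjunction $C(\id,f) \hc \dash \ladj C(f,\id) \hc \dash$ directly). Your orientation of that induced adjunction is the right one, and your trace of the identity cell correctly recovers the stated description of the adjoint cells, which the paper asserts without computation.
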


	In a closed equipment weighted colimits and weighted limits can be defined as follows.
\begin{definition} \label{definition:weighted colimits and limits}
  Let $\K$ be a closed equipment. Given a promorphism $\hmap JAB$ and a morphism $\map dAM$ in $\K$, the \emph{$J$-weighted colimit of $d$}, if it exists, is a morphism $\map{\colim_J d}BM$ together with an isomorphism
  \begin{displaymath}
    M(\colim\nolimits_J d, \id) \iso J \lhom M(d, \id)
  \end{displaymath}
  in $H(\K)$, of promorphisms $B \slashedrightarrow M$. Likewise, given a morphism $\map eBM$, the \emph{$J$-weighted limit of $e$}, if it exists, is a morphism \mbox{$\map{\lim_J e}AM$} together with an isomorphism
  \begin{displaymath}
    M(\id, \lim\nolimits_J e) \iso M(\id, e) \rhom J
  \end{displaymath}
  in $H(\K)$, of promorphisms $M \slashedrightarrow A$.
\end{definition}
	We will call the pair of morphisms $\cdiag MdAJB$ above a \emph{colimit diagram} in $\K$; the pair $\ldiag AJBeM$ is called a \emph{limit diagram}.
\begin{example} \label{example:weighted colimits in V-Prof}
	Given a $\V$-profunctor $\hmap JAB$ and a $\V$-functor $\map dAM$ in $\enProf\V$, the $J$-weighted colimit of $d$ is the $\V$-functor $\map{\colim_J d}BM$ satisfying
	\begin{equation} \label{equation:defining equation enriched weighted colimit}
		M\bigpars{\colim_J d(b), m} \iso \brks{\op A, \V}\bigpars{J(\dash, b), M(d\dash, m)},
	\end{equation}
	natural in both $b$ and $m$, as follows from \eqref{equation:left hom and natural transformation object}, where the $\V$-object on the right is that of $\V$-natural transformations $J(\dash, b) \natarrow M(d \dash, m)$. This means that at each $b$ the image $\colim_J d(b)$ is the usual $J(\dash, b)$-weighted colimit of $d$ in $M$, see \cite[Formula 3.5]{Kelly82}. Conversely if all weighted colimits $\colim_{J(\dash, b)} d$ exist in $M$, then $\colim_J d$ exists and can be given by $\colim_J d (b) = \colim_{J(\dash, b)} d$. This follows from the fact that the assignment $(W, d) \mapsto \colim_W d$, for pairs consisting of a $\V$-weight $\map W{\op A}\V$ and $\V$-diagram $\map dAM$ whose weighted colimit exists, is functorial, see \cite[Formula 3.11]{Kelly82}. Thus $\colim_J d$ can be thought of as being a `parametrised weighted colimit'.
	
	It follows that $\colim_J d$ can be given as
	\begin{equation} \label{equation:weighted colimit in terms of coends of copowers}
		\colim_J d(b) = \int^{a \in A} da \tens J(a, b),
	\end{equation}
	in terms of coends of copowers in $M$, see \cite[Formula 3.70]{Kelly82}, whenever such colimits exist in $M$. Here $da \tens J(a, b)$ is the copower of $da$ by $J(a,b)$, defined by an isomorphism $M(da \tens J(a, b), m) \iso \brks{J(a, b), M(da, m)}$ that is natural in $m$.
	
	In the case that $B = 1$, the unit $\V$-category consisting of a single object $*$ and morphism object $1(*,*) = 1$, the $\V$-profunctor $J$ is simply a $\V$-functor $\map J{\op A}\V$, under the isomorphism $\op A \tens 1 \iso \op A$, and $\map{\colim_J d}1M$ is the ordinary $J$\ndash weighted colimit of $d$.
	
	If $J$ is the companion $B(j, \id)$ of a $\V$-functor $\map jAB$ then the defining isomorphism becomes
	\begin{displaymath}
		M\bigpars{\colim_{B(j,\id)} d(b), m} \iso \brks{\op A, \V}\bigpars{B(j \dash, b), M(d\dash, m)},
	\end{displaymath}
	which is exactly the identity defining $\colim_{B(j, \id)} d$ as the \emph{enriched} left Kan extension of $d$ along $j$, in the sense of \cite[Formula 4.20]{Kelly82}, where they are simply called `left Kan extensions'. Enriched left Kan extensions are in particular (ordinary) left Kan extensions, see e.g.\ \cite[Theorem 4.43]{Kelly82}, and were first introduced (in the case of $M$ having $\V$-copowers) by Dubuc in his lecture notes \cite[Section I.4]{Dubuc70}, where they are called `pointwise' Kan extensions. We shall however reserve the prefix `pointwise' for a different notion of left Kan extension, given by Street, which is generally stronger than Dubuc's notion.
	
	If $\hmap JAB$ is a $\2$-profunctor and $\map dAM$ an order preserving map then the defining isomorphism \eqref{equation:defining equation enriched weighted colimit} is equivalent to
	\begin{displaymath}
		\colim_J d(y) = \sup\set{dx : x \sim_J y}.
	\end{displaymath}
	In the case that $J = B(j, \id)$ we recover the description of the left Kan extension of a pair of order preserving maps that was given in the introduction.
	
	So far we have recovered both ordinary weighted colimits and pointwise left Kan extensions as weighted colimits in $\enCat\V$; compare the unenriched situation as described in \tableref{table:weighted colimits in Cat} of the introduction. To see that ordinary (conical) colimits can be recovered as well, recall that the assignment $M \mapsto \ucat M$ mapping a $\V$-category $M$ to its underlying category $\ucat M$ has a left adjoint $C \mapsto \fren \V C$, as follows. Given an ordinary category $C$, the free $\V$-category $\fren \V C$ has the same objects, while its hom-objects $\fren \V C(x,y)$ are the copowers $C(x, y) \tens 1$, see \cite[Section 2.5]{Kelly82}. Notice that $\fren\V * = 1$, where $*$ is the terminal category. Given an ordinary category $C$, consider the companion $1(\fren \V!, \id)$ of the $\V$-functor $\map{\fren \V!}{\fren \V C}1$ induced by the terminal functor $\map !C*$. Then, for any unenriched functor $\map dC{\ucat M}$, the $1(\fren \V!, \id)$\ndash weighted colimit of the $\V$-functor $\map{d^\sharp}{\fren \V C}M$, that corresponds to $d$ under the adjunction $\fren \V\dash \ladj \ucat\dash$, is the conical colimit of $d$ in $M$. Indeed, the $\V$-profunctor $1(\fren\V !, \id)$, considered as a $\V$-functor $\op{\fren \V C} \to \V$, coincides with the adjoint $\Delta^\sharp_1$ of the constant functor $\map{\Delta_1}{\op C}{\ucat \V}$ at $1 \in \ucat\V$. It follows that $\colim_{1(\fren\V !, \id)} d^\sharp$ is the ordinary $\Delta^\sharp_1$-weighted colimit of $d^\sharp$, which is by definition the conical colimit of $d$, see \cite[Section 3.8]{Kelly82}.
\end{example}

\section{Weighted colimits in pseudo double categories}	
	In this section we compare the definition above, of weighted colimits in a closed equipment, to that of `left Kan extensions in a pseudo double category', as introduced by Grandis and Par\'e in \cite[Section 2]{Grandis-Pare08}. The results in this section are new.
	
	Remember that in a $2$-category $\C$ the (ordinary) left Kan extension of \mbox{$\map dAM$} along $\map jAB$ is a pair $(\map lBM, \cell\eta d{l \of j})$ such that, for any other pair $(\map eBM, \cell\phi d{e \of j})$ there is a unique cell $\cell{\phi'} le$ such that
	\begin{displaymath}
		\bigbrks{d \xRar{\eta} l \of j \xRar{\phi' \of j} e \of j} = \phi.
	\end{displaymath}
	In this case $\eta$ is said to \emph{exhibit} $l$ as the left Kan extension of $d$ along $j$; also $\eta$ is called the unit of $l$. Grandis and Par\'e generalise this notion to pseudo double categories as follows.
\begin{definition} \label{definition:left Kan extensions in pseudo double categories}
	In a pseudo double category $\K$ consider vertical morphisms $\map dAM$ and $\map lBM$, as well as horizontal morphisms $\hmap JAB$ and $\hmap IMN$. The cell $\eta$ below \emph{exhibits $l$ as the left Kan extension of $d$ from $J$ to $I$} if every cell $\phi$ factors uniquely through $\eta$ as a vertical cell $\phi'$, as shown. The cell $\eta$ is called the \emph{unit} of $l$.
	\begin{displaymath}
		\begin{tikzpicture}[textbaseline]
			\matrix(m)[math175em]{A & B \\ M & N \\};
			\path[map]	(m-1-1) edge[barred] node[above] {$J$} (m-1-2)
													edge node[left] {$d$} (m-2-1)
									(m-1-2) edge node[right] {$e$} (m-2-2)
									(m-2-1) edge[barred] node[below] {$I$} (m-2-2);
			\path[transform canvas={shift=($(m-1-2)!0.5!(m-2-2)$)}] (m-1-1) edge[cell] node[right] {$\phi$} (m-2-1);
		\end{tikzpicture}
		= \begin{tikzpicture}[textbaseline]
			\matrix(m)[math175em]{A & B & B \\ M & N & N \\[-3.25em] \phantom M & \phantom M & \phantom M \\};
			\path[map]	(m-1-1) edge[barred] node[above] {$J$} (m-1-2)
													edge node[left] {$d$} (m-2-1)
									(m-1-2) edge node[right] {$l$} (m-2-2)
									(m-1-3) edge node[right] {$e$} (m-2-3)
									(m-2-1) edge[barred] node[below] {$I$} (m-2-2);
			\path				(m-1-2) edge[eq] (m-1-3)
									(m-2-2) edge[eq] (m-2-3);
			\path[transform canvas={shift=($(m-1-2)!0.5!(m-2-3)$)}]	(m-1-1) edge[cell] node[right] {$\eta$} (m-2-1)
									(m-1-2) edge[cell] node[right] {$\phi'$} (m-2-2);
		\end{tikzpicture}
	\end{displaymath}
	
	A left Kan extension $l$ is called an \emph{absolute extension} by Grandis and Par\'e when cells $\phi$ of the more general form below also factor uniquely through $\eta$, as shown. In the case that such factorisations only exist for cells $\phi$ with $K = U_N$ we will call $l$ a \emph{top absolute extension}.
	\begin{displaymath}
		\begin{tikzpicture}[textbaseline]
			\matrix(m)[math175em]{A & B & C \\ M & N & P \\};
			\path[map]	(m-1-1) edge[barred] node[above] {$J$} (m-1-2)
													edge node[left] {$d$} (m-2-1)
									(m-1-2) edge[barred] node[above] {$H$} (m-1-3)
									(m-1-3) edge node[right] {$e$} (m-2-3)
									(m-2-1) edge[barred] node[below] {$I$} (m-2-2)
									(m-2-2) edge[barred] node[below] {$K$} (m-2-3);
			\path				(m-1-2) edge[cell] node[right] {$\phi$} (m-2-2);
		\end{tikzpicture}
		= \begin{tikzpicture}[textbaseline]
			\matrix(m)[math175em]{A & B & C \\ M & N & P \\[-3.25em] \phantom M & \phantom M & \phantom M \\};
			\path[map]	(m-1-1) edge[barred] node[above] {$J$} (m-1-2)
													edge node[left] {$d$} (m-2-1)
									(m-1-2) edge[barred] node[above] {$H$} (m-1-3)
													edge node[right] {$l$} (m-2-2)
									(m-1-3) edge node[right] {$e$} (m-2-3)
									(m-2-1) edge[barred] node[below] {$I$} (m-2-2)
									(m-2-2) edge[barred] node[below] {$K$} (m-2-3);
			\path[transform canvas={shift=($(m-1-2)!0.5!(m-2-3)$)}]	(m-1-1) edge[cell] node[right] {$\eta$} (m-2-1)
									(m-1-2) edge[cell] node[right] {$\phi'$} (m-2-2);
		\end{tikzpicture}
	\end{displaymath}
\end{definition}
\begin{remark} \label{remark:left Kan extensions in pseudo double categories}
	Notice that the weakest kind of factorisation above, that of cells $\cell \phi JI$, implies that any two cells exhibiting the left Kan extension of $d$ from $J$ to $I$ coincide, up to horizontal composition on the right with a vertical isomorphism. In particular, if we know that the (top) absolute left Kan extension of $d$ from $J$ to $I$ exists then any exhibition of $l$ as the (ordinary) left Kan extension of $d$ from $J$ to $I$ also exhibits it as the (top) absolute extension.
\end{remark}

	The proposition below proves that $J$-weighted colimits in a closed equipment coincide with top absolute extensions from $J$ to a unit promorphism. To state it notice that every cell below corresponds to a horizontal cell $\cell{\lambda\eta}{J \hc M(l, \id)}{M(d, \id)}$ (see \propref{left and right cells}) that in turn is adjoint, under the adjunction \eqref{equation:horizontal composition adjunction}, to a horizontal cell $\cell{(\lambda\eta)^\flat}{M(l, \id)}{J \lhom M(d, \id)}$. 
	\begin{displaymath}
		\begin{tikzpicture}
			\matrix(m)[math175em]{A & B \\ M & M \\};
			\path[map]	(m-1-1) edge[barred] node[above] {$J$} (m-1-2)
													edge node[left] {$d$} (m-2-1)
									(m-1-2) edge node[right] {$l$} (m-2-2);
			\path				(m-2-1) edge[eq] (m-2-2);
			\path[transform canvas={shift=($(m-1-2)!0.5!(m-2-2)$)}] (m-1-1) edge[cell] node[right] {$\eta$} (m-2-1);
		\end{tikzpicture}
	\end{displaymath}
	The correspondence above is bijective; in particular any isomorphism $M(l, \id) \iso J \lhom M(d, \id)$, that defines $l$ as the $J$-weighted colimit of $d$ (\defref{definition:weighted colimits and limits}), is, from left to right, of the form $(\lambda\eta)^\flat$ for some unique cell $\eta$ that is of the form above.
\begin{example} \label{example:units of weighted colimits in V-Prof}
	Of course in the case of the equipment $\enProf\V$ of $\V$-profunctors, the cell $\eta$ corresponding to $M(l, \id) \iso J \lhom M(d, \id)$ is the $\V$-natural transformation $J \natarrow M(d, l)$ which, restricted to each $b$ in $B$, is the unit $\nat{\eta_{\dash, b}}{J(\dash, b)}{M(d\dash, lb)}$ that defines $lb$ as the $J(\dash, b)$-weighted colimit of $d$ (see \exref{example:weighted colimits in V-Prof}).
\end{example}
\begin{proposition} \label{weighted colimits equivalent to top absolute left Kan extensions}
	For a cell $\eta$ above, in a closed equipment $\K$, the following are equivalent:
	\begin{itemize}
		\item[-]	$\eta$ defines $l$ as the top absolute left Kan extension of $d$ from $J$ to $U_M$ (\defref{definition:left Kan extensions in pseudo double categories});
		\item[-]	the corresponding horizontal cell $(\lambda\eta)^\flat$, as given above, is an isomorphism, thus defining $l$ as the $J$-weighted colimit of $d$ (\defref{definition:weighted colimits and limits}).
	\end{itemize}
\end{proposition}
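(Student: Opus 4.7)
The plan is to translate both sides of the claimed equivalence into statements about horizontal cells in $H(\K)$, via the adjunctions $\lambda$, $\lhom$, and $\rhom$, and then observe that the two translated conditions coincide.

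First I would establish, for every $\hmap HBC$ and every $\map eCM$, a bijection between cells $\phi$ of the shape arising in the top-absolute definition---with horizontal source $J \hc H$, horizontal target $U_M$, left vertical $d$, right vertical $e$---and horizontal cells $\phi^\sharp \colon H \Rar J \lhom M(d, e)$. This is obtained by applying $\lambda$ (Proposition \ref{left and right cells}) to get a horizontal cell $J \hc H \hc M(e, \id) \Rar M(d, \id)$, then taking the adjoint across $J \hc \dash$, then across $\dash \hc M(e, \id)$, and finally composing with the isomorphism $(J \lhom M(d, \id)) \rhom M(e, \id) \iso J \lhom M(d, e)$ obtained from Propositions \ref{left and right hom properties} and \ref{cartesian filler properties}. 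Using the universal property of the restriction $M(l, e)$ directly, the analogous vertical cells $\phi'$ with top $H$, bottom $U_M$, left $l$, right $e$ correspond bijectively to horizontal cells $(\phi')^\sharp \colon H \Rar M(l, e)$.

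Next I would check that the factorisation $\phi = \eta \hc \phi'$ of the top-absolute definition translates to the equation $\phi^\sharp = \widetilde{(\lambda\eta)^\flat} \of (\phi')^\sharp$, where $\widetilde{(\lambda\eta)^\flat}$ denotes the horizontal cell $M(l, e) \Rar J \lhom M(d, e)$ obtained from $(\lambda\eta)^\flat$ by applying $\dash \rhom M(e, \id)$. The key input is the horizontal-composition identity $\lambda(\eta \hc \phi') = (\lambda\eta \hc \id) \of (\id_J \hc \lambda\phi')$ of Proposition \ref{left and right cells}, combined with the naturality of the two hom adjunctions. Under this translation, the top-absolute universal property at a fixed pair $(H, e)$ becomes exactly the statement that precomposition with $\widetilde{(\lambda\eta)^\flat}$ gives a bijection between $H(\K)(B, M)(H, M(l, e))$ and $H(\K)(B, M)(H, J \lhom M(d, e))$. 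Testing uniqueness at $H = J \lhom M(d, e)$ with $\phi^\sharp = \id$ produces a right inverse to $\widetilde{(\lambda\eta)^\flat}$, and uniqueness at $H = M(l, e)$ forces it to also be a left inverse; so the top-absolute property is equivalent to $\widetilde{(\lambda\eta)^\flat}$ being invertible in $H(\K)$ for every $e$. Setting $e = \id_M$ recovers $(\lambda\eta)^\flat$ itself, while conversely the invertibility of $(\lambda\eta)^\flat$ implies that of each $\widetilde{(\lambda\eta)^\flat}$ since $\dash \rhom M(e, \id)$ is a functor.

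The main obstacle will be the second step: verifying carefully that horizontal precomposition of $\phi'$ by $\eta$ corresponds, after the full chain of adjunction passes, to precomposition of $(\phi')^\sharp$ with $\widetilde{(\lambda\eta)^\flat}$. This is a coherence diagram chase involving the associators of $\K$, the counits of $\lhom$ and $\rhom$, and the canonical isomorphisms of Proposition \ref{cartesian filler properties}; although purely formal, it is the bookkeeping that makes the rest of the argument go through.
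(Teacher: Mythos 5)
Your proposal is correct and follows essentially the same route as the paper: translate the top-absolute factorisation through $\lambda$ and the tensor--hom adjunction into a representability statement, and conclude that the canonical comparison cell is invertible by the uniqueness of universal arrows (your Yoneda argument at $H = J \lhom M(d,e)$ and $H = M(l,e)$ is exactly this). The paper sidesteps the coherence chase you flag as the main obstacle by never peeling off $e$ with $\rhom$: it absorbs the companion $M(e, \id)$ into the free horizontal parameter, replacing $H$ by $H' = H \hc M(e, \id)$ (which loses no generality, since $e = \id_M$ already realises every $H'$), so that a single universal-arrow statement for $J \hc \dash$ against $M(d, \id)$, compared with the counit $\ev$ of $J \hc \dash \ladj J \lhom \dash$, finishes the argument without the right hom or the family of isomorphisms indexed by $e$.
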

\begin{proof}
	The first statement means that we have unique factorisations
	\begin{displaymath}
		\begin{tikzpicture}[textbaseline]
			\matrix(m)[math175em]{A & B & C \\ M & \phantom M & M \\};
			\path[map]	(m-1-1) edge[barred] node[above] {$J$} (m-1-2)
													edge node[left] {$d$} (m-2-1)
									(m-1-2) edge[barred] node[above] {$H$} (m-1-3)
									(m-1-3) edge node[right] {$e$} (m-2-3);
			\path				(m-2-1) edge[eq] (m-2-3)
									(m-1-2) edge[cell] node[right] {$\phi$} (m-2-2);
		\end{tikzpicture}
		= \begin{tikzpicture}[textbaseline]
			\matrix(m)[math175em]{A & B & C \\ M & M & M, \\};
			\path[map]	(m-1-1) edge[barred] node[above] {$J$} (m-1-2)
													edge node[left] {$d$} (m-2-1)
									(m-1-2) edge[barred] node[above] {$H$} (m-1-3)
													edge node[right] {$l$} (m-2-2)
									(m-1-3) edge node[right] {$e$} (m-2-3);
			\path				(m-2-1) edge[eq] (m-2-2)
									(m-2-2) edge[eq] (m-2-3);
			\path[transform canvas={shift=($(m-1-2)!0.5!(m-2-3)$)}]	(m-1-1) edge[cell] node[right] {$\eta$} (m-2-1)
									(m-1-2) edge[cell] node[right] {$\phi'$} (m-2-2);
		\end{tikzpicture}
	\end{displaymath}
	which we claim is equivalent to the existence of the following unique factorisations of horizontal cells. This follows directly from the fact that under the correspondence $\phi \mapsto \lambda\phi$ (which preserves composition, see \propref{left and right cells}) a factorisation of the first type corresponds to one of the second type, and vice versa.
	\begin{displaymath}
		\begin{tikzpicture}[textbaseline]
			\matrix(m)[math175em]{A & B & M \\ A & & M \\};
			\path[map]	(m-1-1) edge[barred] node[above] {$J$} (m-1-2)
									(m-1-2) edge[barred] node[above] {$H'$} (m-1-3)
									(m-2-1) edge[barred] node[below] {$M(d, \id)$} (m-2-3);
			\path				(m-1-1) edge[eq] (m-2-1)
									(m-1-3) edge[eq] (m-2-3)
									(m-1-2) edge[cell] node[right] {$\psi$} (m-2-2);
		\end{tikzpicture}
		= \begin{tikzpicture}[textbaseline]
			\matrix(m)[math175em]{A & B & M \\ A & B & M \\ A & & M \\};
			\path[map]	(m-1-1) edge[barred] node[above] {$J$} (m-1-2)
									(m-1-2) edge[barred] node[above] {$H'$} (m-1-3)
									(m-2-1) edge[barred] node[below] {$J$} (m-2-2)
									(m-2-2) edge[barred] node[below] {$M(l, \id)$} (m-2-3)
									(m-3-1) edge[barred] node[below] {$M(d, \id)$} (m-3-3);
			\path				(m-1-1) edge[eq] (m-2-1)
									(m-1-2) edge[eq] (m-2-2)
									(m-1-3) edge[eq] (m-2-3)
									(m-2-1) edge[eq] (m-3-1)
									(m-2-3) edge[eq] (m-3-3);
			\path[transform canvas={yshift=-0.4em}]	(m-2-2) edge[cell] node[right] {$\lambda\eta$} (m-3-2);
			\path[transform canvas={shift=($(m-2-2)!0.5!(m-2-3)$)}]	(m-1-1) edge[cell] node[right] {$\id$} (m-2-1)
									(m-1-2) edge[cell] node[right] {$\psi'$} (m-2-2);
		\end{tikzpicture}
	\end{displaymath}
	But the factorisation of horizontal cells above means precisely that $\lambda\eta$ forms a `universal arrow from the functor $J \hc \dash$ to the object $M(d, \id)$', as defined on \cite[Page 58]{MacLane98}. Since evaluation $\cell\ev{J \hc J \lhom M(d, \id)}{M(d, \id)}$, which is the counit of the adjunction $J \hc \dash \ladj J \lhom \dash$, forms a universal arrow from $J \hc \dash$ to $M(d, \id)$ as well (see \cite[Theorem IV.1.2]{MacLane98}), and because universal arrows are unique up to isomorphism, we conclude that the factorisation of horizontal cells above exists if and only if there is a unique invertible horizontal cell $\cell\chi{M(l, \id)}{J \lhom M(d, \id)}$ such that $\ev \of (\id_J \hc \chi) = \lambda\eta$. This equation means that $\chi$ is adjoint to $\lambda\eta$, so that the latter statement is equivalent to $(\lambda\eta)^\flat = \chi$ being invertible, which is what we wanted to show.
\end{proof}
	
	Consequently we extend \defref{definition:weighted colimits and limits}, of weighted colimits in closed equipments, to one for pseudo double categories, as follows.
\begin{definition} \label{definition:weighted colimits in double categories}
	Given morphisms $\hmap JAB$ and $\map dAM$ in a pseudo double category $\K$, the \emph{$J$-weighted colimit of $d$} is the top absolute left Kan extension of $d$ from $J$ to $U_M$, as defined in \defref{definition:left Kan extensions in pseudo double categories}.
\end{definition}
	
	In \exref{example:weighted colimits in V-Prof} we saw that colimits weighted by companions in $\enProf\V$ are left Kan extensions. Recall that in any double category $\K$, the companion of a vertical map $\map jAB$ is defined as the horizontal morphism $\hmap{B(j, \id)}AB$ that comes equipped with a cartesian filler $\cell{\ls j\eps}{B(j, \id)}B$ of the niche $B \xrar\id B \xsrar{U_B} B \xlar j A$, that defines $B(j, \id)$ as the restriction of $U_B$ along $j$ and $\id$; see the discussion following \defref{definition:cartesian filler}. Factorising the horizontal unit $U_j$ through $\ls j\eps$ gives $\cell{\ls j\eta}{U_A}{B(j, \id)}$.
\begin{proposition} \label{colimits weighted by companions}
	Let $\map jAB$, $\map dAM$ and $\map lBM$ be vertical maps in a pseudo double category $\K$ and assume that $j$ has a companion. If the cell
	\begin{displaymath}
		\begin{tikzpicture}
			\matrix(m)[math175em]{A & B \\ M & M \\};
			\path[map]	(m-1-1) edge[barred] node[above] {$B(j, \id)$} (m-1-2)
													edge node[left] {$d$} (m-2-1)
									(m-1-2) edge node[right] {$l$} (m-2-2);
			\path				(m-2-1) edge[eq] (m-2-2);
			\path[transform canvas={shift=($(m-1-2)!0.5!(m-2-2)$)}] (m-1-1) edge[cell] node[right] {$\zeta$} (m-2-1);
		\end{tikzpicture}
	\end{displaymath}
	exhibits $l$ as the $B(j, \id)$-weighted colimit of $d$ then the vertical cell $\cell{\zeta \of \ls j\eta}d{l \of j}$ exhibits $l$ as the left Kan extension of $d$ along $j$ in the vertical $2$-category $V(\K)$. The converse holds whenever the $B(j,\id)$-weighted colimit of $d$ is known to exist.
\end{proposition}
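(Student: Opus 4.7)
The plan is to establish a bijection that transfers the top absolute factorisation property of $\zeta$ to the ordinary Kan extension property of $\zeta \of \ls j\eta$ in $V(\K)$. Specifically, for any morphism $\map eBM$, I would set up a bijection between (a) vertical cells $\phi\colon d \Rar e \of j$ in $V(\K)$, regarded as cells $U_A \Rar U_M$ with vertical source $d$ and target $e \of j$, and (b) cells $\phi^\sharp\colon B(j, \id) \Rar U_M$ with vertical source $d$ and target $e$. The backward direction sends $\phi^\sharp$ to the vertical composite $\phi^\sharp \of \ls j\eta$ in $\K$, while the forward direction sends $\phi$ to the horizontal composite $\phi \hc (U_e \of \ls j\eps)$, composed with unitors so the horizontal source becomes $B(j, \id)$ and the target $U_M$. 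That these are mutually inverse is a direct application of the companion identities $\ls j\eps \of \ls j\eta = U_j$ and $\ls j\eta \hc \ls j\eps = \id_{B(j,\id)}$; in particular, the backward map carries $\zeta$ to $\zeta \of \ls j\eta$.

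Next I would translate the factorisation condition across this bijection. For a vertical cell $\phi'\colon l \Rar e$, the horizontal composite $\zeta \hc \phi'$ in $\K$ (formed with middle vertical $l$, modulo unitors) corresponds under the backward map to the pasting $(\phi' \cdot j) \of_{V(\K)} (\zeta \of \ls j\eta)$, where $\phi' \cdot j$ denotes the whiskering of $\phi'$ by $j$. This is a computation using the interchange law together with the descriptions, recalled at the end of \secref{section:2-categories equipped with promorphisms}, of composition in $V(\K)$: the whiskering $\phi' \cdot j$ is by definition $\phi' \of U_j$ in $\K$, and vertical $2$-cell composition in $V(\K)$ is horizontal composition in $\K$ modulo unitors.

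The forward implication is then immediate. By \defref{definition:weighted colimits in double categories}, the assumption that $\zeta$ exhibits $l$ as the $B(j, \id)$-weighted colimit of $d$ specialises (taking $H = U_B$) to the ordinary left Kan extension property of \defref{definition:left Kan extensions in pseudo double categories} for cells $\phi^\sharp$ of the above shape. The bijection turns any vertical cell $\phi\colon d \Rar e \of j$ into a unique such $\phi^\sharp$, and the universal property of $\zeta$ yields a unique vertical cell $\phi'\colon l \Rar e$ with $\phi^\sharp = \zeta \hc \phi'$ modulo unitors; transporting this back as in the preceding paragraph gives the required factorisation of $\phi$ through $\zeta \of \ls j\eta$, with uniqueness of $\phi'$ preserved. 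For the converse, assume the $B(j, \id)$-weighted colimit of $d$ exists, exhibited by some $\tilde\zeta$ with apex $\tilde l$. By the forward direction, $\tilde\zeta \of \ls j\eta$ exhibits $\tilde l$ as the left Kan extension of $d$ along $j$ in $V(\K)$, so uniqueness of left Kan extensions in $V(\K)$ produces a unique invertible vertical cell $\phi\colon l \Rar \tilde l$ with $(\phi \cdot j) \of_{V(\K)} (\zeta \of \ls j\eta) = \tilde\zeta \of \ls j\eta$; transporting across the bijection yields $\tilde\zeta = \zeta \hc \phi$ modulo unitors, whence horizontally composing with $\phi^{-1}$ shows that $\zeta$ itself exhibits $l$ as the weighted colimit. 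The principal technical subtlety throughout is the unitor bookkeeping in the pseudo double category $\K$, especially in the interchange-law computation of the middle step, where horizontal and vertical composition must be interwoven cleanly.
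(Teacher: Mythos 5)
Your proof is correct and follows essentially the same route as the paper: the same bijection $\phi \mapsto \phi \of \ls j\eta$, $\chi \mapsto \chi \hc (U_e \of \ls j\eps)$ built from the companion identities, the same naturality under right horizontal composition with vertical cells to transfer the universal property, and a converse that unpacks exactly the content of \remref{remark:left Kan extensions in pseudo double categories} (uniqueness of exhibiting cells up to an invertible vertical cell), which the paper simply cites.
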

\begin{proof}
	The implication follows directly from the fact that there is a correspondence between cells of the form
	\begin{displaymath}
		\begin{tikzpicture}[textbaseline]
			\matrix(m)[math175em]{A & B \\ M & M \\};
			\path[map]	(m-1-1) edge[barred] node[above] {$B(j, \id)$} (m-1-2)
													edge node[left] {$d$} (m-2-1)
									(m-1-2) edge node[right] {$e$} (m-2-2);
			\path				(m-2-1) edge[eq] (m-2-2);
			\path[transform canvas={shift=($(m-1-2)!0.5!(m-2-2)$)}] (m-1-1) edge[cell] node[right] {$\phi$} (m-2-1);
		\end{tikzpicture}
		\qquad \text{and} \qquad \begin{tikzpicture}[textbaseline]
			\matrix(m)[math175em]{A & A \\ & B \\ M & M, \\ };
			\path[map]	(m-1-1) edge node[left] {$d$} (m-3-1)
									(m-1-2) edge node[right] {$j$} (m-2-2)
									(m-2-2) edge node[right] {$e$} (m-3-2);
			\path				(m-1-1) edge[eq] (m-1-2)
									(m-3-1) edge[eq] (m-3-2);
			\path[transform canvas={shift=($(m-2-2)!0.5!(m-3-2)$)}] (m-1-1) edge[cell] node[right] {$\chi$} (m-2-1);
		\end{tikzpicture}
	\end{displaymath}
	given by $\phi \mapsto \phi \of \ls j\eta$ and $\chi \mapsto \chi \hc (U_e \of \ls j\eps)$, that is natural with respect to horizontal composition on the right with vertical maps $e \Rar f$, for any second map $\map fBM$. The converse follows from \remref{remark:left Kan extensions in pseudo double categories}.
\end{proof}
	
	Consequently we make the following definition.
\begin{definition} \label{definition:weighted left Kan extensions}
	Consider morphisms $\map jAB$ and $\map dAM$ in a pseudo double category $\K$, such that $j$ has a companion $B(j, \id)$. We define the \emph{weighted left Kan extension} $\map{\lan_j d}BM$ of $d$ along $j$ to be the weighted colimit $\lan_j d = \colim_{B(j, \id)} d$.
\end{definition}
	As we saw in \exref{example:weighted colimits in V-Prof}, the weighted left Kan extensions in the equipment $\enProf\V$ of $\V$-profunctors are precisely the enriched Kan extensions in $\enCat\V$. Proving that the weighted left Kan extensions in the equipment $\inProf\E$ of profunctors internal to $\E$ are precisely the `pointwise' Kan extensions in $\inCat\E$ will be the aim of the final section of this chapter.
	
	We close this section by giving two useful properties of weighted colimits. The first is a `Fubini theorem' for iterated weighted colimits, like the Fubini theorems for iterated limits in ordinary categories (\cite[Section IX.8]{MacLane98}) and for iterated ordinary weighted limits in enriched categories (\cite[Section 3.3]{Kelly82}). Its proof is easy and omitted.
\begin{proposition}
	Consider horizontally composable cells
	\begin{displaymath}
		\begin{tikzpicture}
			\matrix(m)[math175em]{A & B & C \\ M & M & M \\};
			\path[map]	(m-1-1) edge[barred] node[above] {$J$} (m-1-2)
													edge node[left] {$d$} (m-2-1)
									(m-1-2) edge[barred] node[above] {$H$} (m-1-3)
													edge node[right] {$l$} (m-2-2)
									(m-1-3) edge node[right] {$k$} (m-2-3);
			\path	(m-2-1) edge[eq] (m-2-2)
						(m-2-2) edge[eq] (m-2-3);
			\path[transform canvas={shift={($(m-1-2)!0.5!(m-2-3)$)}}] (m-1-1) edge[cell] node[right] {$\eta$} (m-2-1)
			(m-1-2) edge[cell] node[right] {$\zeta$} (m-2-2);
		\end{tikzpicture}
	\end{displaymath}
	in a pseudo double category. If $\eta$ exhibits $l$ as the $J$-weighted colimit of $d$ and $\zeta$ exhibits $k$ as the $H$-weighted colimit of $l$ then $\eta \hc \zeta$ exhibits $k$ as the $J \hc H$-weighted colimit of $d$.
\end{proposition}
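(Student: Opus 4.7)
The plan is to iterate the top-absolute universal properties of $\eta$ and $\zeta$ once each. By \defref{definition:weighted colimits in double categories} the hypotheses say that $\eta$ exhibits $l$ as the top absolute left Kan extension of $d$ from $J$ to $U_M$, and $\zeta$ exhibits $k$ as the top absolute left Kan extension of $l$ from $H$ to $U_M$. To show that $\eta \hc \zeta$ exhibits $k$ as the $(J \hc H)$-weighted colimit of $d$, I need to factor an arbitrary cell $\phi \colon (J \hc H) \hc K \Rar U_M$ (with $\hmap KCD$ arbitrary, left vertical $d$ and right vertical $\map eDM$) uniquely through $\eta \hc \zeta$ as $(\eta \hc \zeta) \hc \phi'$ for some cell $\phi' \colon K \Rar U_M$ with left vertical $k$ and right vertical $e$.

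First I would reassociate to view $\phi$ as a cell $J \hc (H \hc K) \Rar U_M$. Applying the top-absolute property of $\eta$ with ``absolute part'' $H \hc K$ yields a unique cell $\phi_1 \colon H \hc K \Rar U_M$, with left vertical $l$ and right vertical $e$, such that $\phi = \eta \hc \phi_1$. Next, applying the top-absolute property of $\zeta$ to $\phi_1$, with ``absolute part'' $K$, yields a unique cell $\phi' \colon K \Rar U_M$, with left vertical $k$ and right vertical $e$, such that $\phi_1 = \zeta \hc \phi'$. Reassociating once more gives
\begin{displaymath}
  \phi \;=\; \eta \hc (\zeta \hc \phi') \;=\; (\eta \hc \zeta) \hc \phi',
\end{displaymath}
which is the required factorization.

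Uniqueness follows by running the two steps backwards. If $\phi = (\eta \hc \zeta) \hc \psi$ is another such factorization, then by associativity $\phi = \eta \hc (\zeta \hc \psi)$; the uniqueness clause for $\eta$ forces $\zeta \hc \psi = \phi_1$, and the uniqueness clause for $\zeta$ then forces $\psi = \phi'$. There is no genuine obstacle; the only bookkeeping is to ensure that the associators and unitors of $\K$ are inserted consistently (in particular, the bottom $U_M$ must be identified with $U_M \hc U_M$ each time we invoke the top-absolute property, since the defining diagram for top absolute extensions has a bottom of the form $I \hc U_N$). This is harmless by the coherence remark following \defref{definition:pseudo double category}.
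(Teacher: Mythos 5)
Your proof is correct; the paper in fact omits the proof with the remark that it is easy, and your argument — reassociating, applying the top-absolute universal property of $\eta$ and then of $\zeta$, and running the same two steps backwards for uniqueness — is exactly the intended routine verification. Your closing remark about unitors (identifying $U_M$ with $U_M \hc U_M$ at each step) correctly identifies the only bookkeeping point, and it is indeed handled by the coherence conventions of \defref{definition:pseudo double category}.
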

	Remember that a cell $\phi$ in an equipment is called right invertible whenever its corresponding horizontal cell $\rho\phi$, that is obtained by horizontally composing $\phi$ with conjoint cells (see \propref{left and right cells}), is invertible.
\begin{proposition} \label{weighted colimits are preserved by precomposition with invertible cells}
	Consider vertically composable cells
	\begin{displaymath}
		\begin{tikzpicture}
			\matrix(m)[math175em]{A & B \\ C & D \\ M & M \\};
			\path[map]	(m-1-1) edge[barred] node[above] {$J$} (m-1-2)
													edge node[left] {$f$} (m-2-1)
									(m-1-2) edge node[right] {$g$} (m-2-2)
									(m-2-1) edge[barred] node[below] {$K$} (m-2-2)
													edge node[left] {$d$} (m-3-1)
									(m-2-2) edge node[right] {$l$} (m-3-2);
			\path	(m-3-1) edge[eq] (m-3-2);
			\path[transform canvas={shift=(m-2-2)}]	(m-1-1) edge[cell] node[right] {$\phi$} (m-2-1);
			\path[transform canvas={shift=(m-2-2), yshift=-3pt}]	(m-2-1) edge[cell] node[right] {$\zeta$} (m-3-1);
		\end{tikzpicture}
	\end{displaymath}
	in an equipment. If $\phi$ is right invertible then $\zeta$ exhibits $l$ as a weighted colimit if and only if $\zeta \of \phi$ exhibits $l \of g$ as a weighted colimit.
\end{proposition}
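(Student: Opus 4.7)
The plan is to translate both weighted colimit universal properties into equations of horizontal cells via the correspondence $\alpha \mapsto \rho\alpha$ of \propref{left and right cells}, and then exploit the hypothesis that $\rho\phi$ is an isomorphism.

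Under $\rho$, together with the conjoint-compositors $M(\id, d \of f) \iso M(\id, d) \hc C(\id, f)$ and $M(\id, l \of g) \iso M(\id, l) \hc D(\id, g)$ supplied by \propref{companion compositors}, a cell $\chi\colon J \hc H \Rightarrow U_M$ with verticals $d \of f$ and $e$ corresponds bijectively to a horizontal cell $\rho\chi\colon M(\id, d) \hc C(\id, f) \hc J \hc H \Rightarrow M(\id, e)$, and a cell $\chi'\colon H \Rightarrow U_M$ with verticals $l \of g$ and $e$ corresponds to a horizontal cell $M(\id, l) \hc D(\id, g) \hc H \Rightarrow M(\id, e)$, which is itself the $\rho$\ndash image of a cell $\chi''\colon D(\id, g) \hc H \Rightarrow U_M$ with verticals $l$ and $e$. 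Applying the functoriality of $\rho$ with respect to vertical and horizontal composition, the equation $\chi = (\zeta \of \phi) \hc \chi'$ then translates to the equation
\[
\rho\chi \;=\; (\id_{M(\id, l)} \hc \rho\chi'') \of (\rho\zeta \hc \id_{D(\id, g) \hc H}) \of (\id_{M(\id, d)} \hc \rho\phi \hc \id_H)
\]
of horizontal cells, suppressing associators, unitors and compositor cells.

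Since $\rho\phi$ is invertible by hypothesis, precomposition with $\id \hc \rho\phi \hc \id$ is a bijection on horizontal cells; translating back through $\rho^{-1}$, this furnishes a natural bijection between cells $\chi\colon J \hc H \Rightarrow U_M$ as above and cells $\tilde\chi\colon K \hc (D(\id, g) \hc H) \Rightarrow U_M$ with verticals $d$ and $e$, which carries factorisations $\chi = (\zeta \of \phi) \hc \chi'$ to factorisations $\tilde\chi = \zeta \hc \chi''$. The direction $\Rightarrow$ is then immediate: given any $\chi$, the weighted colimit property of $\zeta$ applied to the corresponding $\tilde\chi$ (with $H' := D(\id, g) \hc H$) produces a unique $\chi''$, and hence a unique $\chi'$.

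For the converse $\Leftarrow$ I would apply the hypothesis to the auxiliary cell $\chi_\psi := \psi \of (\phi \hc \ls g\eps \hc U_{H'})\colon J \hc D(g, \id) \hc H' \Rightarrow U_M$ built from an arbitrary cell $\psi\colon K \hc H' \Rightarrow U_M$ with verticals $d, e$, obtaining a unique factorisation $\chi_\psi = (\zeta \of \phi) \hc \chi_1$. The main obstacle is then to show, using the companion identity $\id_{D(g, \id)} = \ls g\eta \hc \ls g\eps$ (modulo unitors) together with the uniqueness clause in the hypothesis, that $\chi_1$ must equal $\chi'' \of (\ls g\eps \hc U_{H'})$ for a unique $\chi''\colon H' \Rightarrow U_M$ with verticals $l$ and $e$; a straightforward interchange-law computation will then give $\psi = \zeta \hc \chi''$.
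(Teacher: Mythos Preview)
Your forward direction is essentially the paper's argument and is correct: both translate via conjoints and $\rho$ to reduce the factorisation problem for $\zeta \of \phi$ (with test promorphism $H$) to the factorisation problem for $\zeta$ (with test promorphism $D(\id,g) \hc H$), using that $\rho\phi$ is invertible.

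The converse direction, however, has a genuine gap, and not one that can be closed. Your plan requires that the unique factorisation $\chi_1\colon D(g,\id) \hc H' \Rightarrow U_M$ (vertical source $l \of g$) itself factor as $\chi'' \of (\ls g\eps \hc \id_{H'})$ for some $\chi''\colon H' \Rightarrow U_M$ with vertical source $l$. But the assignment $\chi'' \mapsto \chi'' \of (\ls g\eps \hc \id)$ is not a bijection: there is no way to strip the $g$ from the vertical source of $\chi_1$ using companion cells alone (precomposing with $\ls g\eta$ leaves the vertical source as $l \of g$, not $l$), and the uniqueness clause in the hypothesis only pins $\chi_1$ down among cells with source $D(g,\id) \hc H'$, not its factorisation through $\ls g\eps$. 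In fact the converse implication is \emph{false} as stated. In $\Prof$ take $A=B=C=*$, $D=\{0,1\}$ discrete, $J=U_*$, $K(*,0)=K(*,1)=\{*\}$, $f=\id_*$, $g(*)=0$, $d(*)=X$, and $l(0)=X$, $l(1)=Y$ with $\zeta_0 = \id_X$ and $\zeta_1\colon X \to Y$ any map with $Y\not\cong X$; then $\rho\phi$ is an isomorphism (both sides are the terminal profunctor $*\slashedrightarrow *$) and $\zeta\of\phi$ exhibits $l\of g$ as the $J$\ndash weighted colimit of $d\of f$ (it only sees $\zeta_0$), yet $\zeta$ does not exhibit $l$ as the $K$\ndash weighted colimit of $d$ since $\zeta_1$ is not invertible. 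The paper's own proof shares this defect: its claim that the right-hand dashed correspondence (unique factorisation of cells $K \hc D(\id,g)\hc H \Rightarrow U_M$ through $\zeta$, for arbitrary $H$) is equivalent to $\zeta$ being a weighted colimit, justified by ``taking $g=\id_B$'', is not valid since $g$ is fixed by $\phi$. Only the forward implication is correct in general, and that is all that is used elsewhere in the thesis.
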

\begin{proof}
	Consider cells that are of one of the four forms shown below. 
	\begin{displaymath}
		\begin{tikzpicture}
			\matrix(k)[math175em]{A & B & E \\ C & & \\ M & \phantom M & M \\};
			\path[map]	(k-1-1) edge[barred] node[above] {$J$} (k-1-2)
													edge node[left] {$f$} (k-2-1)
									(k-1-2) edge[barred] node[above] {$H$} (k-1-3)
									(k-1-3) edge node[right] {$e$} (k-3-3)
									(k-2-1) edge node[left] {$d$} (k-3-1);
			\path	(k-3-1) edge[eq] (k-3-3);
			\path[transform canvas={shift={($(k-2-2)!0.5!(k-3-2)$)}}]	(k-1-2) edge[cell] node[right] {$\psi$} (k-2-2);
			
			\matrix(l)[math175em, xshift=17.5em]{C & D & B & E \\ M & \phantom M & \phantom M & M \\};
			\path[map]	(l-1-1) edge[barred] node[above] {$K$} (l-1-2)
													edge node[left] {$d$} (l-2-1)
									(l-1-2) edge[barred] node[above] {$D(\id, g)$} (l-1-3)
									(l-1-3) edge[barred] node[above] {$H$} (l-1-4)
									(l-1-4) edge node[right] {$e$} (l-2-4);
			\path	(l-2-1) edge[eq] (l-2-4);
			\path[transform canvas={shift={($(l-1-3)!0.5!(l-2-3)$)}, xshift=-17.5em}]	(l-1-2) edge[cell] node[right] {$\chi$} (l-2-2);
			
			\matrix(m)[math175em, yshift=-12.5em]{B & E \\ D & \\ M & M \\};
			\path[map]	(m-1-1) edge[barred] node[above] {$H$} (m-1-2)
													edge node[left] {$g$} (m-2-1)
									(m-1-2) edge node[right] {$e$} (m-3-2)
									(m-2-1) edge node[left] {$l$} (m-3-1);
			\path (m-3-1) edge[eq] (m-3-2);
			\path[transform canvas={shift={($(m-2-2)!0.5!(m-3-2)$)}, yshift=12.5em}]	(m-1-1) edge[cell] node[right] {$\psi'$} (m-2-1);
			
			\matrix(n)[math175em, xshift=17.5em, yshift=-12.5em]{D & B & E \\ M & \phantom M & M \\};
			\path[map]	(n-1-1) edge[barred] node[above] {$D(\id, g)$} (n-1-2)
													edge node[left] {$l$} (n-2-1)
									(n-1-2) edge[barred] node[above] {$H$} (n-1-3)
									(n-1-3) edge node[right] {$e$} (n-2-3);
			\path	(n-2-1) edge[eq] (n-2-3)
						(n-1-2) edge[cell] node[right] {$\chi'$} (n-2-2);
			
			\path[<->]	(k) edge[shorten >= 2em, shorten <= 2em] (l)
									(m) edge[shorten >= 2em, shorten <= 2em] (n);
			\path[<->, dashed] (k) edge[shorten <= 0.75em, shorten >= 1.25em] (m)
									(l) edge[shorten >= 2em, shorten <= 2em] (n);
		\end{tikzpicture}
	\end{displaymath}
	Here the solid double headed arrows denote the following bijective correspondences. Cells that are of form of $\psi'$ correspond to those of the form of $\chi'$ under the assignments $\psi' \mapsto (U_l \of \eps_g) \hc \psi'$ and $\chi' \mapsto \chi' \of (\eta_g \hc \id_H)$; that these give a bijective correspondence follows from the conjoint identities. Likewise cells of the form of $\psi$ correspond to those of the form of $\chi$, by assigning $\psi \mapsto \bigpars{(U_d \of \eps_f) \hc \psi} \of \bigpars{\inv{(\rho\phi)} \hc \id_H}$ and $\chi \mapsto \chi \of (\phi \hc \eta_g \hc \id_H)$. That the latter give a bijective correspondence too follows from the conjoint identities and the fact that $\rho\phi = \eps_f \hc \phi \hc \eta_g$, see \propref{left and right cells}.
	
	Moreover $\zeta \of \phi$ exhibits $l \of g$ as a weighted colimit if and only if every cell $\psi$ above factors uniquely through $\zeta \of \phi$ as a cell $\psi'$ as above, such that $(\zeta \of \phi) \hc \psi' = \psi$, as is denoted by the dashed double headed arrow on the left. Likewise if $\zeta$ exhibits $l$ as a weighted colimit of $d$ then every cell $\chi$ above factors uniquely through $\zeta$ as a cell $\chi'$ as above, such that $\zeta \hc \chi' = \chi$, as denoted on the right by the dashed double headed arrow. The latter are in fact equivalent: by taking $g = \id_B$ and vertically precomposing the cells $\chi$ and $\chi'$ with the isomorphisms $H \iso B(\id, \id) \hc H$ we see that, in terms of \propref{weighted colimits equivalent to top absolute left Kan extensions}, $\zeta$ exhibits $l$ as the top absolute left Kan extension of $d$ along $J$.
	
	We claim that these two conditions are equivalent. Indeed consider four cells $\psi$, $\chi$, $\psi'$ and $\chi'$ above, such that the pairs $(\psi, \chi)$ and $(\psi', \chi')$ correspond under the solid correspondences. To see that the correspondence on the right, that is $\zeta \hc \chi' = \chi$, implies that on the left, i.e.\ $(\zeta \of \phi) \hc \psi' = \psi$, consider the simplified grids of cells below: still representing composites of cells these only describe the shape of non-identity cells and how they fit together; to save space objects, morphisms and promorphisms are left out. The identities follow from the assumptions that $\psi'$ and $\chi'$ correspond, $\zeta \hc \chi' = \chi$ and that $\psi$ and $\chi$ correspond.
	\begin{displaymath}
		\begin{tikzpicture}[textbaseline, x=0.7cm, y=0.6cm, font=\scriptsize]
			\draw (1,0) -- (0,0) -- (0,2) -- (1,2) -- (1,0) -- (2,0) -- (2,2) -- (1,2)
				(0,1) -- (1,1);
			\draw[shift={(0.5,0)}]
				(1,1) node {$\psi'$};
			\draw[shift={(0.5,0.5)}]
				(0,0) node {$\zeta$}
				(0,1) node {$\phi$};
		\end{tikzpicture}
		\quad = \quad \begin{tikzpicture}[textbaseline, x=0.7cm, y=0.6cm, font=\scriptsize]
			\draw	(0,1) -- (0,0) -- (3,0) -- (3,1) -- (0,1) -- (0,2) -- (2,2) -- (2,1)
						(1,0) -- (1,2);
			\draw[shift={(0,0.5)}]
				(2,0) node {$\chi'$};
			\draw[shift={(0.5,0.5)}]
				(0,0) node {$\zeta$}
				(0,1) node {$\phi$}
				(1,1) node {$\eta_g$};
		\end{tikzpicture}
		\quad = \quad \begin{tikzpicture}[textbaseline, x=0.7cm, y=0.6cm, font=\scriptsize]
			\draw (2,1) -- (3,1) -- (3,0) -- (0,0) -- (0,2) -- (2,2) -- (2,1) -- (0,1)
						(1,2) -- (1,1);
			\draw[shift={(0.5,0.5)}]
				(1,0) node {$\chi$}
				(0,1) node {$\phi$}
				(1,1) node {$\eta_g$};
		\end{tikzpicture}
		\quad = \quad \begin{tikzpicture}[textbaseline, x=0.7cm, y=0.6cm, font=\scriptsize]
			\draw (0,0) -- (0,2) -- (2,2) -- (2,0) -- (0,0)
				(1,1) node {$\psi$};
		\end{tikzpicture}
	\end{displaymath}
	For the converse assume that $(\zeta \of \phi) \hc \psi' = \psi$ holds. Then
	\begin{align*}
		\begin{tikzpicture}[textbaseline, x=0.7cm, y=0.6cm, font=\scriptsize]
			\draw (1,1) -- (0,1) -- (0,0) -- (3,0) -- (3,1) -- (1,1) -- (1,0);
			\draw[shift={(0,0.5)}]
				(2,0) node {$\chi'$};
			\draw[shift={(0.5,0.5)}]
				(0,0) node {$\zeta$};
		\end{tikzpicture}
		&\quad = \quad \begin{tikzpicture}[textbaseline, x=0.7cm, y=0.6cm, font=\scriptsize]
			\draw	(1,1) -- (0,1) -- (0,0) -- (3,0) -- (3,1) -- (1,1) -- (1,0)
						(0,1) -- (0,2) -- (2,2) -- (2,1)
						(0,2) -- (0,3) -- (2,3) -- (2,2);
			\draw[shift={(0,0.5)}]
				(2,0) node {$\chi'$}
				(1,1) node {$\rho\phi$}
				(1,2) node {$\inv{(\rho\phi)}$};
			\draw[shift={(0.5,0.5)}]
				(0,0) node {$\zeta$};
		\end{tikzpicture}
		\quad = \quad \begin{tikzpicture}[textbaseline, x=0.7cm, y=0.6cm, font=\scriptsize]
			\draw (2,1) -- (0,1) -- (0,3) -- (2,3) -- (2,0) -- (3,0) -- (3,2) -- (0,2)
						(1,2) -- (1,0) -- (2,0);
			\draw[shift={(0.5,0)}]
				(2,1) node {$\psi'$};
			\draw[shift={(0.5,0.5)}]
				(1,0) node {$\zeta$}
				(0,1) node {$\eps_f$}
				(1,1) node {$\phi$};
			\draw[shift={(0,0.5)}]
				(1,2) node {$\inv{(\rho\phi)}$};
		\end{tikzpicture} \\[1em]
		&\quad = \quad \begin{tikzpicture}[textbaseline, x=0.7cm, y=0.6cm, font=\scriptsize]
			\draw (1,1) -- (0,1) -- (0,2) -- (3,2) -- (3,0) -- (1,0) -- (1,2)
						(0,2) -- (0,3) -- (2,3) -- (2,2)
						(2,1) node {$\psi$};
			\draw[shift={(0.5,0.5)}]
				(0,1) node {$\eps_f$};
			\draw[shift={(0, 0.5)}]
				(1,2) node {$\inv{(\rho\phi)}$};
		\end{tikzpicture}
		\quad = \quad \begin{tikzpicture}[textbaseline, x=0.7cm, y=0.6cm, font=\scriptsize]
			\draw (0,0) -- (3,0) -- (3,1) -- (0,1) -- (0,0);
			\draw[shift={(0.5,0.5)}]
				(1,0) node {$\chi$};
		\end{tikzpicture}\,,
	\end{align*}
	where the identities follow from $\rho\phi = \eps_f \hc \phi \hc \eta_g$ (\propref{left and right cells}) and the assumption that $\psi'$ and $\chi'$ correspond; $(\zeta \of \phi) \hc \psi' = \psi$; the assumption that $\psi$ and $\chi$ correspond. This completes the proof.
\end{proof}
\begin{nremark}
	The economic representation of composites that was introduced in the proof above will be used regularly from now on. This will save some space as the composites of cells that we need to consider are getting bigger.
\end{nremark}
	
\section{Pointwise weighted colimits}	\label{section:pointwise weighted colimits}
	In his paper \cite{Street74} Street uses `comma objects' to give a notion of `pointwise' left Kan extensions in $2$-categories, which was generalised by Grandis and Par\'e in \cite[Section 4.1]{Grandis-Pare08} to a notion of `pointwise' Kan extensions in pseudo double categories, using `double comma objects'. By modifying the latter slightly, we will introduce our notion of `pointwise' weighted colimits, given in \defref{definition:pointwise weighted colimits}. Extending the notion of pointwise left Kan extensions, these are in general stronger than the ordinary weighted colimits, as we will see in \exref{example:stronger pointwise left Kan extension}. To understand this difference we will consider `strong' double comma objects in equipments, that satisfy a property isolating the cause of this difference. The main result of this section (\thmref{weighted colimits are pointwise if double commas are strong}) proves that if each double comma object in an equipment $\K$ is strong, then the pointwise and ordinary notion of weighted colimit coincide. In particular this holds for the equipment $\K = \inProf\E$ of profunctors internal to $\E$ so that, where weighted colimits in $\enProf\V$ generalise Dubuc's enriched left Kan extensions (see \exref{example:weighted colimits in V-Prof}), in $\inProf\E$ they generalise Street's pointwise left Kan extensions. The notion of strong double comma objects introduced here as well as all results in this section about them seem to be new.
	
	We begin with the well-known notion of comma objects in $2$-categories.
\begin{definition} \label{definition:comma object}
	Given two morphisms $\map fAC$ and $\map gBC$ in a $2$-category $\C$, the \emph{comma object} $f \slash g$ of $f$ and $g$ consists of an object $f \slash g$ equipped with projections $\pi_A$ and $\pi_B$, as in the diagram below, as well as a cell $\cell\pi{f \of \pi_A}{g \of \pi_B}$, satisfying the following $1$-dimensional and $2$-dimensional universal properties.
	\begin{displaymath}
		\begin{tikzpicture}
			\matrix(m)[math175em]{f \slash g & A \\ B & C \\};
			\path[map]	(m-1-1) edge node[above] {$\pi_A$} (m-1-2)
													edge node[left] {$\pi_B$} (m-2-1)
									(m-1-2) edge node[right] {$f$} (m-2-2)
									(m-2-1) edge node[below] {$g$} (m-2-2);
			\path				(m-1-2) edge[cell, shorten >= 8pt, shorten <= 8pt] node[below right] {$\pi$} (m-2-1);
		\end{tikzpicture}
	\end{displaymath}
	Given any other cell $\phi$ in $\C$ as on the left below, the $1$-dimensional property states that there exists a unique morphism $\map{\phi'}U{f \slash g}$ such that $\pi_A \of \phi' = \phi_A$, \mbox{$\pi_B \of \phi' = \phi_B$} and $\pi \of \phi' = \phi$.
	\begin{displaymath}
		\begin{tikzpicture}[textbaseline]
			\matrix(m)[math175em]{U & A \\ B & C \\};
			\path[map]	(m-1-1) edge node[above] {$\phi_A$} (m-1-2)
													edge node[left] {$\phi_B$} (m-2-1)
									(m-1-2) edge node[right] {$f$} (m-2-2)
									(m-2-1) edge node[below] {$g$} (m-2-2);
			\path				(m-1-2) edge[cell, shorten >= 6pt, shorten <= 6pt] node[below right] {$\phi$} (m-2-1);
		\end{tikzpicture}
		\qquad\qquad\qquad\qquad\qquad\begin{tikzpicture}[textbaseline]
			\matrix(m)[math175em]{U & A \\ B & C \\};
			\path[map]	(m-1-1) edge node[above] {$\psi_A$} (m-1-2)
													edge node[left] {$\psi_B$} (m-2-1)
									(m-1-2) edge node[right] {$f$} (m-2-2)
									(m-2-1) edge node[below] {$g$} (m-2-2);
			\path				(m-1-2) edge[cell, shorten >= 6pt, shorten <= 6pt] node[below right] {$\psi$} (m-2-1);
		\end{tikzpicture}
	\end{displaymath}
	The $2$-dimensional property is the following. Suppose we are given a further cell $\psi$ as on the right above, which factors through $\pi$ as $\map{\psi'}U{f \slash g}$, like $\phi$ factors as $\phi'$. Then for any pair of cells $\cell{\xi_A}{\phi_A}{\psi_A}$ and $\cell{\xi_B}{\phi_B}{\psi_B}$, such that $\psi \of (f \of \xi_A) = (g \of \xi_B) \of \phi$, there exists a unique cell $\cell{\xi'}{\phi'}{\psi'}$ such that $\pi_A \of \xi' = \xi_A$ and $\pi_B \of \xi' = \xi_B$.
\end{definition}
	We remark that formally the comma object $f \slash g$ can also be defined as the $\cat$\ndash enriched $J$-weighted limit of the diagram of $f$ and $g$, with the weight as follows, where $*$ denotes the terminal category and $\2 = (\bot \to \top)$.
	\begin{displaymath}
		J = \begin{tikzpicture}[textbaseline]
			\matrix(m)[math175em]{ & * \\ * & \2 \\};
			\path[map]	(m-1-2) edge node[right] {$\top$} (m-2-2)
									(m-2-1) edge node[below] {$\bot$} (m-2-2);
		\end{tikzpicture}
	\end{displaymath}
	
\begin{example} \label{example:comma objects}
	In the case that $f$ and $g$ are ordinary functors in $\C = \Cat$ the comma object $f \slash g$ is the usual comma category, whose objects are triples $(a, x, b)$ with $a \in A$, $b \in B$ and $\map x{fa}{gb}$ in $C$, while a map $(a, x, b) \to (a', x', b')$ is a pair $\map{(u, v)}{(a,b)}{(a', b')}$ in $A \times B$ making the diagram
	\begin{displaymath}
		\begin{tikzpicture}
			\matrix(m)[math175em]{fa & gb \\ fa' & gb' \\};
			\path[map]	(m-1-1) edge node[above] {$x$} (m-1-2)
													edge node[left] {$fu$} (m-2-1)
									(m-1-2) edge node[right] {$gv$} (m-2-2)
									(m-2-1) edge node[below] {$x'$} (m-2-2);
		\end{tikzpicture}
	\end{displaymath}
	commute. The natural transformation $\nat \pi{f \of \pi_A}{g \of \pi_B}$ is given by $\pi_{(a, x, b)} = x$.
	
	Comma categories can be generalised in several ways: firstly if $\map fAC$ and $\map gBC$ are $2$-functors then the comma object $f \slash g$ in $\twoCat$, the $2$-category of $2$\ndash categories, $2$-functors and $2$-natural transformations, has as objects and morphisms the same ones as given above, while a cell \mbox{$(u,v) \Rar (u', v')$}, for morphisms $(u,v)$ and $\map{(u', v')}{(a, b)}{(a', b')}$, consists of a pair of cells \mbox{$\cell mu{u'}$} in $A$ and $\cell nv{v'}$ in $B$ that make the diagram below commute. The $2$-natural transformation $\nat \pi{f \of \pi_A}{g \of \pi_B}$ is again given by $\pi_{(a, x, b)} = x$.
	\begin{displaymath}
		\begin{tikzpicture}
			\matrix(m)[math2em, row sep=2.5em, column sep=3em]{fa & gb \\ fa' & gb' \\};
			\path[map]	(m-1-1) edge node[above] {$x$} (m-1-2)
													edge[bend right=33] node[left] {$fu$} (m-2-1)
													edge[bend left=33] node[right] {$fu'$} (m-2-1)
									(m-1-2) edge[bend right=33] node[left] {$gv$} (m-2-2)
													edge[bend left=33] node[right] {$gv'$} (m-2-2)
									(m-2-1) edge node[below] {$x'$} (m-2-2)
									($(m-1-1)!0.5!(m-2-1)+(-0.75em,-0.4em)$) edge[cell] node[above] {$fm$} ($(m-1-1)!0.5!(m-2-1)+(0.75em,-0.4em)$)
                  ($(m-1-2)!0.5!(m-2-2)-(0.75em,0.4em)$) edge[cell] node[above] {$gn$} ($(m-1-2)!0.5!(m-2-2)+(0.75em,-0.4em)$);
		\end{tikzpicture}
	\end{displaymath}

	Secondly if $\C = \psCat\SS$ then $f \slash g$, for two $\SS$-indexed functors $\map fAB$ and $\map gBC$, is given pointwise, that is $(f \slash g)_s = f_s \slash g_s$, where $f_s \slash g_s$ is the comma category of the functors $f_s$ and $g_s$ as given above. In general the comma object of two internal functors $\map fAC$ and $\map gBC$ in any category $\E$ with finite limits can be similarly constructed. We shall not do so but instead treat, in detail, the more general construction of internal double comma objects below.
\end{example}

	We can now give Street's definition of pointwise left Kan extensions that was introduced in \cite[Section 4]{Street74}.
\begin{definition} \label{definition:pointwise left Kan extensions}
	Let $\map dAM$ and $\map jAB$ be morphisms in a $2$-category that has all comma objects. The cell $\cell\eta d{l \of j}$ in the diagram below is said to exhibit $l$ as the \emph{pointwise} left Kan extension of $d$ along $j$ if, for each $\map fCB$, the composition of cells below exhibits $l \of f$ as the left Kan extension of $d \of \pi_A$ along $\pi_C$.
	\begin{equation} \label{diagram:pointwise Kan extension}
		\begin{split}
		\begin{tikzpicture}
			\matrix(m)[math2em, row sep=0.75em, column sep=2.7em]{j \slash f \nc A \nc[-0.5em] \\ \nc \nc M \\ C \nc B \nc \\};
			\path[map]	(m-1-1) edge node[above] {$\pi_A$} (m-1-2)
													edge node[left] {$\pi_C$} (m-3-1)
									(m-1-2) edge node[above right] {$d$} (m-2-3)
													edge node[left] {$j$} (m-3-2)
									(m-3-1) edge node[below] {$f$} (m-3-2)
									(m-3-2) edge node[below right] {$l$} (m-2-3);
			\path				(m-1-2) edge[cell, shorten >=14.5pt, shorten <=14.5pt] node[above left] {$\pi$} (m-3-1)
									($(m-1-2)!0.5!(m-2-3)$) edge[cell, shorten >=9pt, shorten <=9pt] node[below right, yshift=3pt] {$\eta$} (m-3-2);
		\end{tikzpicture}
		\end{split}
	\end{equation}
\end{definition}
\begin{remark}
	Taking $f = \id$ one can show that the cell $\eta$ above exhibits $l$ as the ordinary left Kan extension of $d$ along $j$, see \cite[Corollary 19]{Street74}. The following example shows that, in the case of $\C = \enCat\V$, Street's notion of pointwise left Kan extension is, in general, stronger than Dubuc's notion of enriched left Kan extension, that was given in \exref{example:weighted colimits in V-Prof}. Thus enriched left Kan extensions in $\enProf\V$, as defined in \defref{definition:weighted left Kan extensions}, will in general not be pointwise. In fact, Street remarks at the end of the introduction to \cite{Street74} that ``For $\V = \Set$ and $\V = \2$ the definitions do agree; for $\V = \mathsf{AbGp}$ and $\V = \cat$, they do not.''
\end{remark}
\begin{example} \label{example:stronger pointwise left Kan extension}
	To construct an enriched left Kan extension of $2$-functors that is not pointwise consider the $2$-categories
	\begin{displaymath}
		B = \bigpars{\begin{tikzpicture}[textbaseline, font=\scriptsize]
			\matrix(m)[math2em, column sep=2.5em]{\nc \\};
			\path[map]	(m-1-1) edge[bend left=40] node[above] {$u$} (m-1-2)
													edge[bend right=40] node[below] {$v$} (m-1-2);
			\fill	(m-1-1) circle (1pt) node[left] {$x$}
						(m-1-2) circle (1pt) node[right] {$y$};
			\path	($(m-1-1)!0.5!(m-1-2)+(0,0.8em)$) edge[cell] ($(m-1-1)!0.5!(m-1-2)-(0,0.8em)$);
		\end{tikzpicture}}
		\qquad \text{and} \qquad
		M = \bigpars{\begin{tikzpicture}[textbaseline, font=\scriptsize]
			\matrix(m)[math2em, column sep=2.5em]{\nc \\};
			\path[map]	(m-1-1) edge[bend left=40] node[above] {$u'$} (m-1-2)
													edge[bend right=40] node[below] {$v'$} (m-1-2);
			\fill	(m-1-1) circle (1pt) node[left] {$x'$}
						(m-1-2) circle (1pt) node[right] {$y'$};
		\end{tikzpicture}}.
	\end{displaymath}
	Writing $\map x*B$ and $\map{x'}*M$ for the $2$-functors that map the single object of the terminal $2$-category $*$ to the objects $x$ in $B$ and $x'$ in $M$ respectively, we claim that the enriched left Kan extension $\map lBM$ of $x'$ along $x$ is the constant $2$-functor mapping all of $B$ to $x'$. Indeed since $*$ has no non-identity morphisms the coends in \eqref{equation:weighted colimit in terms of coends of copowers} reduce to $l(x) = x'$ and $l(y) = \2 \tens x'$, where $\2 = (\bot \to \top)$. It is easily checked that the latter copower exists and is equal to $x'$. Notice that $l \of x = x'$; the unit $\eta$ exhibiting $l$ is the identity transformation for $x'$.
	
	To show that $l$ fails to be pointwise take $\map{f = y}*B$ in \eqref{diagram:pointwise Kan extension}: we claim that the composite of $\pi$ and $\eta$ does not exhibit $l \of y = x'$ as the left Kan extension of $x' \of \pi_A$ along $\pi_C$. To see this notice that the comma object $f \slash g$ (as given in \exref{example:comma objects}) is the discrete $2$-category consisting of the triples $(*, \map uxy, *)$ and $(*, \map vxy, *)$, so that the assignments $(*, u, *) \mapsto u'$ and $(*, v, *) \mapsto v'$ trivially form a $2$-natural transformation $\nat\phi{x' \of \pi_A}{y' \of \pi_C}$. However both possible candidates $\nat{\phi'}{x' = l \of y}{y'}$, for a factorisation of $\phi$ through \eqref{diagram:pointwise Kan extension}, consist of a single map in $M$ that is either $u'$ or $v'$ so that, precomposed with \eqref{diagram:pointwise Kan extension}, both $\phi'$ do not equal $\phi$. We conclude that $l$ is a left Kan extension that is not pointwise.
	
	If a pointwise left Kan extension of $x$ and $x'$ exists then it coincides with $l$ (indeed, both are in particular ordinary left Kan extensions of $x$ and $x'$, which are unique up to isomorphism), so we can conclude that the pointwise left Kan extension of $x$ and $x'$ does not exist.
\end{example}
	
	The following definition introduces comma objects of a horizontal and vertical morphism, in a double category. It is one of several possible definitions, the strongest of which is used by Grandis and Par\'e in \cite[Section 3.2]{Grandis-Pare08}, to define pointwise left Kan extensions in double categories; details are given in \remref{remark:pointwise weighted colimits}.
\begin{definition} \label{definition:double comma object}
	Given a horizontal morphism $\hmap JAB$ and a vertical morphism $\map fCB$ in a pseudo double category $\K$, the \emph{(vertical) double comma object} $J \slash f$ of $J$ and $f$ consists of an object $J \slash f$ equipped with projections $\pi_A$ and $\pi_C$ as well as a cell $\pi$, as in the diagram below, satisfying the following $1$-dimensional and $2$-dimensional universal properties.
	\begin{displaymath}
		\begin{tikzpicture}
			\matrix(m)[math175em]{J \slash f & J \slash f \\ & C \\ A & B \\};
			\path[map]	(m-1-1) edge node[left] {$\pi_A$} (m-3-1)
									(m-1-2)	edge node[right] {$\pi_C$} (m-2-2)
									(m-2-2) edge node[right] {$f$} (m-3-2)
									(m-3-1) edge[barred] node[below] {$J$} (m-3-2);
			\path[transform canvas={shift={($(m-2-2)!0.5!(m-3-2)$)}}] (m-1-1) edge[cell] node[right] {$\pi$} (m-2-1);
			\path				(m-1-1) edge[eq] (m-1-2);
		\end{tikzpicture}
	\end{displaymath}
	Given any other cell $\phi$ in $\K$ as on the left below, the $1$-dimensional property states that there exists a unique vertical morphism $\map{\phi'}X{J \slash f}$ such that $\pi_A \of \phi' = \phi_A$, \mbox{$\pi_C \of \phi' = \phi_C$} and $\pi \of U_{\phi'} = \phi$.
	\begin{displaymath}
		\begin{tikzpicture}[baseline]
			\matrix(m)[math175em]{X & X \\ \phantom C & C \\ A & B \\};
			\path[map]	(m-1-1) edge node[left] {$\phi_A$} (m-3-1)
									(m-1-2)	edge node[right] {$\phi_C$} (m-2-2)
									(m-2-2) edge node[right] {$f$} (m-3-2)
									(m-3-1) edge[barred] node[below] {$J$} (m-3-2);
			\path[transform canvas={shift={($(m-2-2)!0.5!(m-3-2)$)}}] (m-1-1) edge[cell] node[right] {$\phi$} (m-2-1);
			\path				(m-1-1) edge[eq] (m-1-2);
		\end{tikzpicture}
		\qquad\qquad\begin{tikzpicture}[baseline]
			\matrix(m)[math175em]{Y & Y \\ & C \\ A & B \\};
			\path[map]	(m-1-1) edge node[left] {$\psi_A$} (m-3-1)
									(m-1-2)	edge node[right] {$\psi_C$} (m-2-2)
									(m-2-2) edge node[right] {$f$} (m-3-2)
									(m-3-1) edge[barred] node[below] {$J$} (m-3-2);
			\path[transform canvas={shift={($(m-2-2)!0.5!(m-3-2)$)}}] (m-1-1) edge[cell] node[right] {$\psi$} (m-2-1);
			\path				(m-1-1) edge[eq] (m-1-2);
		\end{tikzpicture}
		\qquad\qquad\begin{tikzpicture}[baseline]
			\matrix(m)[math175em]{X & Y \\ J \slash f & J \slash f \\};
			\path[map]	(m-1-1) edge[barred] node[above] {$K$} (m-1-2)
													edge node[left] {$\phi'$} (m-2-1)
									(m-1-2) edge node[right] {$\psi'$} (m-2-2);
			\path				(m-2-1) edge[eq] (m-2-2);
			\path[transform canvas={shift=($(m-1-2)!0.5!(m-2-2)$)}] (m-1-1) edge[cell] node[right] {$\xi'$} (m-2-1);
		\end{tikzpicture}
	\end{displaymath}
	The $2$-dimensional property is the following. Suppose we are given a further cell $\psi$ as in the middle above, which factors through $\pi$ as $\map{\psi'}Y{J \slash f}$, like $\phi$ factors as $\phi'$. Then for any horizontal map $\hmap KXY$ and any pair of cells $\xi_A$ and $\xi_C$, such that the identity below holds, there exists a unique cell $\xi'$, as on the right above, such that $U_{\pi_A} \of \xi' = \xi_A$ and $U_{\pi_C} \of \xi' = \xi_C$.
	\begin{displaymath}
		\begin{tikzpicture}[textbaseline]
			\matrix(m)[math175em]{X & Y & Y \\ \phantom Y & & C \\ A & A & B \\};
			\path[map]	(m-1-1) edge[barred] node[above] {$K$} (m-1-2)
													edge node[left] {$\phi_A$} (m-3-1)
									(m-1-2) edge node[right] {$\psi_A$} (m-3-2)
									(m-1-3) edge node[right] {$\psi_C$} (m-2-3)
									(m-2-3) edge node[right] {$f$} (m-3-3)
									(m-3-2) edge[barred] node[below] {$J$} (m-3-3);
			\path				(m-1-2) edge[eq] (m-1-3)
									(m-3-1) edge[eq] (m-3-2);
			\path[transform canvas={shift={($(m-2-3)!0.5!(m-3-2)$)}}]	(m-1-1) edge[cell] node[right] {$\xi_A$} (m-2-1);
			\path[transform canvas={shift={($(m-2-3)!0.5!(m-3-2)$)}, xshift=2pt}]	(m-1-2) edge[cell] node[right] {$\psi$} (m-2-2);
		\end{tikzpicture}
		= \begin{tikzpicture}[textbaseline]
			\matrix(m)[math175em]{X & X & Y \\ \phantom Y & C & C \\ A & B & B \\};
			\path[map]	(m-1-1) edge node[left] {$\phi_A$} (m-3-1)
									(m-1-2) edge[barred] node[above] {$K$} (m-1-3)
													edge node[left] {$\phi_C$} (m-2-2)
									(m-1-3) edge node[right] {$\psi_C$} (m-2-3)
									(m-2-2) edge node[right] {$f$} (m-3-2)
									(m-2-3) edge node[right] {$f$} (m-3-3)
									(m-3-1) edge[barred] node[below] {$J$} (m-3-2);
			\path				(m-1-1) edge[eq] (m-1-2)
									(m-2-2) edge[eq] (m-2-3)
									(m-3-2) edge[eq] (m-3-3);
			\path[transform canvas={shift={($(m-2-3)!0.5!(m-3-2)$)}}]	(m-1-1) edge[cell] node[right] {$\phi$} (m-2-1);
			\path[transform canvas={shift={($(m-2-2)!0.5!(m-2-3)$)}}]	(m-1-2) edge[cell] node[right] {$\xi_C$} (m-2-2);
		\end{tikzpicture}
	\end{displaymath}
\end{definition}
	Just like comma objects can be defined as weighted limits, the vertical double comma objects $J \slash f$ is the `double limit' of the functor $\map DI\K$, where $I$ is the double category $0 \slashedrightarrow 1 \leftarrow 2$, that maps onto $J$ and $f$. Analogous to the definition of classical limits, the double limit of $D$ is defined in \cite[Section 4]{Grandis-Pare99} as the `double terminal object' of the `double comma category' $\Delta \slash D$, where $\map\Delta \K{\K^I}$ is the diagonal `double functor' and $\map D *{\K^I}$ is the constant functor on $D \in \K^I$. Note that Grandis and Par\'e draw horizontal morphisms (i.e.\ promorphisms) vertically and vertical morphisms horizontally. We discuss the dual notion, that of `double colimit', in more detail in \secref{section:applications of the main theorem}.
\begin{example} \label{example:double comma objects}
	The construction of double comma objects is similar to that of comma objects, as given in \exref{example:comma objects}. For example, the double comma object $J \slash f$ of an (unenriched) profunctor $\hmap JAB$ and a functor $\map fCB$ is the category with triples $(a, \map ja{fc}, c)$, where $j \in J(a, fc)$, as objects and pairs $\map{(x, y)}{(a, j, c)}{(a', j', c')}$, that make the diagram below commute in $J$, as morphisms.
	\begin{equation} \label{diagram:morphism in a double comma category}
		\begin{split}
		\begin{tikzpicture}
			\matrix(m)[math175em]{ a \nc fc \\ a' \nc fc' \\ };
			\path[map]	(m-1-1) edge node[above] {$j$} (m-1-2)
													edge node[left] {$x$} (m-2-1)
									(m-1-2) edge node[right] {$fy$} (m-2-2)
									(m-2-1) edge node[below] {$j'$} (m-2-2);
		\end{tikzpicture}
		\end{split}
	\end{equation}
	The transformation $\nat\pi{U_{J \slash f}}{J(\pi_A, f \of \pi_C)}$ maps each morphism $(x, y)$ to the diagonal $j' \of x = fy \of j$ of the square above. To see that $\pi$ satisfies the $1$\ndash dimensional universal property consider a second transformation $\nat\phi{U_X}{J(\phi_A, f \of \phi_B)}$; it factors uniquely through $\pi$ as the functor $\map{\phi'}X{J \slash f}$ that is given by $\phi'(v) = \bigpars{\phi_A(v), \phi_{(v,v)}(\id_v), \phi_C(v)}$ on objects, with $\map{\phi_{(v,v)}(\id_v)}{\phi_A v}{f \of \phi_C(v)}$ in $J$, and by $\phi'(s) = (\phi_A s, \phi_B s)$ on morphisms $\map suv$.
	
	Moreover given a further natural transformation $\nat\psi{U_Y}{J(\psi_A, f \of \psi_B)}$ that factors as $\map{\psi'}Y{J \slash f}$, as well as transformations $\nat{\xi_A}K{A(\phi_A, \psi_A)}$ and $\nat{\xi_C}K{C(\phi_C, \psi_C)}$, as in the definition above, the transformation $\nat{\xi'}K{J \slash f(\phi', \psi')}$, that is unique such that $U_{\pi_A} \of \xi' = \xi_A$ and $U_{\pi_C} \of \xi' = \xi_C$, can be given by mapping each morphism $\map kuv$ in $K(u,v)$ to the morphism $(\xi_A k, \xi_B k)$ in $J \slash f (\phi' u, \psi' v)$. This shows that $\pi$ satisfies the $2$-dimensional universal property as well.
	
	If $J$ is a $2$-profunctor (i.e.\ $\cat$-enriched) and $f$ is a $2$-functor, then the double comma object $J \slash f$ has again the same objects and morphisms as the ones given above, while a cell $(x, y) \Rightarrow (x', y')$, consists of a pair of cells $\cell mx{x'}$ in $A$ and $\cell ny{y'}$ in $C$, so that the diagram below commutes in $J$.
	\begin{displaymath}
		\begin{tikzpicture}
			\matrix(m)[math2em, row sep=2.5em, column sep=3em]{a & fc \\ a' & fc' \\};
			\path[map]	(m-1-1) edge node[above] {$j$} (m-1-2)
													edge[bend right=33] node[left] {$x$} (m-2-1)
													edge[bend left=33] node[right] {$x'$} (m-2-1)
									(m-1-2) edge[bend right=33] node[left] {$fy$} (m-2-2)
													edge[bend left=33] node[right] {$fy'$} (m-2-2)
									(m-2-1) edge node[below] {$j'$} (m-2-2)
									($(m-1-1)!0.5!(m-2-1)+(-0.75em,-0.2em)$) edge[cell] node[above] {$m$} ($(m-1-1)!0.5!(m-2-1)+(0.75em,-0.2em)$)
                  ($(m-1-2)!0.5!(m-2-2)-(0.75em,0.4em)$) edge[cell] node[above] {$fn$} ($(m-1-2)!0.5!(m-2-2)+(0.75em,-0.4em)$);
		\end{tikzpicture}
	\end{displaymath}
\end{example}

	We now turn to constructing the double comma object $J \slash f$ of an internal profunctor $\hmap JAB$ and an internal functor $\map fCB$ in a category $\E$ with finite limits, leaving the details to \appref{appendix:double comma object proofs}. As its $\E$-object of objects take the wide pullback $(J \slash f)_0 = A_0 \times_{A_0} J \times_{B_0} C_0$ of the diagram $A_0 \xrar{\id} A_0 \xlar{d_0} J \xrar{d_1} B_0 \xlar{f_0} C_0$ and define its object of morphisms $J \slash f$ to be the pullback
	\begin{equation} \label{diagram:morphism object of internal double comma object}
		\begin{split}
		\begin{tikzpicture}
			\matrix(m)[math2em, column sep=0.75em]{J \slash f \nc (J \slash f)_0 \times_{C_0} C \\ A \times_{A_0} (J \slash f)_0 \nc (J \slash f)_0, \\};
			\path[map]	(m-1-1)	edge (m-1-2)
													edge (m-2-1)
									(m-1-2) edge node[right] {$r$} (m-2-2)
									(m-2-1) edge node[below] {$l$} (m-2-2);
			\coordinate (hook) at ($(m-1-1)+(0.4,-0.4)$);
			\draw (hook)+(0,0.17) -- (hook) -- +(-0.17,0);
		\end{tikzpicture}
		\end{split}
	\end{equation}
	where the corners are pullbacks of $(J \slash f)_0 \to C_0 \xlar{d_0} C$ and $A \xrar{d_1} A_0 \leftarrow (J \slash f)_0$ respectively, and the maps $r$ and $l$ are the compositions
	\begin{flalign} \label{diagram:actions on the double comma object}
		&& (J \slash f)_0 \times_{C_0} C &\to A_0 \times_{A_0} J \times_{B_0} B \times_{C_0} C_0 \xrar{\id \times r \times \id} (J \slash f)_0 & \notag \\
		\text{and} && A \times_{A_0} (J \slash f)_0 &\iso A_0 \times_{A_0} A \times_{A_0} J \times_{B_0} C_0 \xrar{\id \times l \times \id} (J \slash f)_0. & 
	\end{flalign}
	Here the unlabelled map is given by applying $f$ to $C$ after using $C_0 \times_{C_0} C \iso C \times_{C_0} C_0$, while the isomorphism is induced by $A \times_{A_0} A_0 \iso A_0 \times_{A_0} A$ and the maps $l$ and $r$ denote the actions of $A$ and $B$ on $J$. Notice that $(J \slash f)_0$ and $J \slash f$ are the internal versions of the sets of objects and morphisms that define the comma category of ordinary functors, as given above. Indeed $(J \slash f)_0$ is the internal version of the set of triples $(a, \map ja{fc}, c)$, while $J \slash f$ is the $\E$-object of commutative squares of the form \eqref{diagram:morphism in a double comma category}. The source and target maps \mbox{$\map{d = (d_0, d_1)}{J \slash f}{(J \slash f)_0 \times (J \slash f)_0}$} are given by the projections
	\begin{flalign*}
		&& d_0 &= \bigbrks{J \slash f \to (J \slash f)_0 \times_{C_0} C \to (J \slash f)_0} & \\
		\text{and} && d_1 & = \bigbrks{J \slash f \to A \times_{A_0} (J \slash f)_0 \to (J \slash f)_0}. &
	\end{flalign*}
	So far we have defined an $\E$-span $\map d{f \slash g}{(f \slash g)_0 \times (f \slash g)_0}$. That it can be made into an internal category that is the double comma object of $J$ and $f$ is asserted by the following proposition, whose proof is given in \appref{appendix:double comma object proofs}.
\begin{proposition} \label{internal double comma objects}
	The $\E$-span $\map d{J \slash f}{(J \slash f)_0 \times (J \slash f)_0}$ above can be given the structure of a category internal to $\E$. Moreover, $J \slash f$ is made into the double comma object of $J$ and $f$ in $\inProf\E$ by the internal functors $\map{\pi_A}{J \slash f}A$, given by the projections
	\begin{displaymath}
		\map{(\pi_A)_0}{(J \slash f)_0}{A_0} \qquad \text{and} \qquad \pi_A = \bigbrks{J \slash f \to A \times_{A_0} (J \slash f)_0 \to A},
	\end{displaymath}
	and $\map{\pi_C}{J \slash f}C$, given similarly, together with the cell
	\begin{displaymath}
		\begin{tikzpicture}
			\matrix(m)[math175em]{J \slash f & J \slash f \\ & C \\ A & B \\};
			\path[map]	(m-1-1) edge node[left] {$\pi_A$} (m-3-1)
									(m-1-2)	edge node[right] {$\pi_C$} (m-2-2)
									(m-2-2) edge node[right] {$f$} (m-3-2)
									(m-3-1) edge[barred] node[below] {$J$} (m-3-2);
			\path[transform canvas={shift={($(m-2-2)!0.5!(m-3-2)$)}}] (m-1-1) edge[cell] node[right] {$\pi$} (m-2-1);
			\path				(m-1-1) edge[eq] (m-1-2);
		\end{tikzpicture}
	\end{displaymath}
	of internal profunctors that, under the correspondence of \propref{internal transformations}, is given by the projection $(J \slash f)_0 \to J$.
\end{proposition}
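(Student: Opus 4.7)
The plan is to build the internal-category structure on $J \slash f$ directly from the universal properties of the pullbacks defining $(J \slash f)_0$ and $J \slash f$ in \eqref{diagram:morphism object of internal double comma object}, transport the relevant axioms from those of $A$, $C$ and from the bimodule axioms of $J$, and then verify the two universal properties by reducing them (via the dictionary of \propref{internal transformations}) to the universal properties of the same pullbacks in $\E$.

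First I would define composition and identity. Heuristically an element of $J \slash f$ is a quadruple $(u_1, y, x, u_2)$ with $u_i \in (J \slash f)_0$, $\map x{a_1}{a_2}$ in $A$ and $\map y{c_1}{c_2}$ in $C$ satisfying $r(u_1, y) = l(x, u_2)$ in $(J \slash f)_0$; formally such data is packaged by the pullback \eqref{diagram:morphism object of internal double comma object}. The multiplication $\map m{J \slash f \times_{(J \slash f)_0} J \slash f}{J \slash f}$ is then induced, via the universal property of that pullback, by the map that composes the $A$-components using $m_A$ and the $C$-components using $m_C$; that the composite still lies over $(J \slash f)_0$ via the same equaliser condition follows from the associativity of the actions of $A$ and $B$ on $J$ together with the fact that $f$ preserves composition. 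The unit $\map e{(J \slash f)_0}{J \slash f}$ is obtained analogously from $e_A$ and $e_C$, and the unit axiom for $J$ shows that $e$ lies over $(J \slash f)_0$. The associativity and unit axioms for $(J \slash f, m, e)$ are then inherited, componentwise, from those of $A$ and $C$. That $\pi_A$ and $\pi_C$ are internal functors is immediate from the definition of $m$ and $e$, since they are built from $m_A, e_A$ and $m_C, e_C$ respectively via the projections. To check that $\pi$ is a well-defined cell I would invoke \propref{internal transformations}: the middle projection $(J \slash f)_0 \to J$ lands in $J(\pi_A, f \of \pi_C)$ by construction, and its naturality with respect to the left $A$- and right $B$-actions is exactly the equation $l = r$ postcomposed with the projection $(J \slash f)_0 \to J$, which holds by the very definition of $J \slash f$ as the pullback of $l$ and $r$ in \eqref{diagram:actions on the double comma object}.

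For the $1$-dimensional universal property, given a cell $\phi$ as in \defref{definition:double comma object}, \propref{internal transformations} supplies an underlying map $\phi_0 \colon X_0 \to J$. I would define $(\phi')_0 \colon X_0 \to (J \slash f)_0$ by pairing $(\phi_A)_0$, $\phi_0$ and $(\phi_C)_0$ using the universal property of the wide pullback $(J \slash f)_0 = A_0 \times_{A_0} J \times_{B_0} C_0$, and define $\phi' \colon X \to J \slash f$ by factoring the pair $(\phi_A, \phi_C)$ through the pullback \eqref{diagram:morphism object of internal double comma object}; the compatibility needed for this factorisation is precisely the naturality of $\phi_0$ translated through \propref{internal transformations}. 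The three required identities $\pi_A \of \phi' = \phi_A$, $\pi_C \of \phi' = \phi_C$ and $\pi \of U_{\phi'} = \phi$ then follow directly from the respective projections, and uniqueness follows from the joint monomorphicity of those projections. The $2$-dimensional property is handled by the same recipe one dimension up: given $\xi_A, \xi_C$ satisfying the compatibility condition, the map $\xi' \colon K \to J \slash f$ underlying the desired cell is obtained by factoring $(\xi_A, \xi_C)$ through \eqref{diagram:morphism object of internal double comma object}, and the hypothesised equation between $\phi, \psi, \xi_A, \xi_C$, translated once more via \propref{internal transformations}, is exactly the matching condition in $(J \slash f)_0$ that this factorisation requires.

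The main technical obstacle will be purely bookkeeping: keeping track of which projection of the iterated pullback each piece of data comes from, and translating systematically between cells of $\inProf\E$ and their underlying $\E$-morphisms via \propref{internal transformations}. Once this dictionary is in place every verification — the internal-category axioms for $J \slash f$, the functoriality of $\pi_A$ and $\pi_C$, the naturality of $\pi$, and both universal properties — reduces to a single application of the universal property of a finite limit in $\E$, so no further ideas beyond the construction itself are needed.
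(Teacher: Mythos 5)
Your proposal is correct and follows essentially the same route as the paper's proof in the appendix: the multiplication and unit are induced from $m_A, m_C$ and $e_A, e_C$ via the universal property of the pullback \eqref{diagram:morphism object of internal double comma object} (with well-definedness coming from the associativity and commutativity of the actions on $J$), and both universal properties are verified by writing down the explicit factorisations through the defining limits, with uniqueness from the joint monicity of the projections. The only caveat is that the factorisation $\phi'$ must record the composites $\phi'_0 \of d_0$ and $\phi'_0 \of d_1$ alongside $\phi_C$ and $\phi_A$ to land in the pullback, but this is exactly the bookkeeping you flag.
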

\begin{remark} \label{remark:comma object and cartesian filler}
	Remember from \exref{example:internal profunctor equipments} that the restriction $J(\id, f)$ is given by the pushout of the maps $\map dJ{A_0 \times B_0}$ and $\map{\id \times f_0}{A_0 \times C_0}{A_0 \times B_0}$. It will be useful to notice that we can take $J(\id, f) = (J \slash f)_0$ as this pullback, such that the cartesian filler $J(\id, f) \natarrow J$ is simply the projection $(J \slash f)_0 \to J$. It is easily checked that with this choice the actions of $A$ and $C$ (as given in the proof of \propref{bimodule equipments}), that make $J(\id, f)$ into an internal profunctor, are the maps $l$ and $r$ given in \eqref{diagram:actions on the double comma object}, that were used to define $J \slash f$.
\end{remark}
	
	Double comma objects of companions are closely related to comma objects, as follows.
\begin{proposition} \label{double comma objects of companions}
	Let $\map jAB$ and $\map fCB$ be morphisms in an equipment $\K$. If the cell
	\begin{displaymath}
		\begin{tikzpicture}
			\matrix(m)[math175em]{X & X \\ & C \\ A & B \\};
			\path[map]	(m-1-1) edge node[left] {$\pi_A$} (m-3-1)
									(m-1-2) edge node[right] {$\pi_C$} (m-2-2)
									(m-2-2) edge node[right] {$f$} (m-3-2)
									(m-3-1) edge[barred] node[below] {$B(j, \id)$} (m-3-2);
			\path				(m-1-1) edge[eq] (m-1-2);
			\path[transform canvas={shift={($(m-2-2)!0.5!(m-3-2)$)}}] (m-1-1) edge[cell] node[right] {$\pi$} (m-2-1);
		\end{tikzpicture}
	\end{displaymath}
	defines $X$ as the double comma object of $B(j,\id)$ and $f$ then the vertical cell $\ls j\eps \of \pi$ defines $X$ as the comma object of $j$ and $f$ in the vertical $2$-category $V(\K)$. The converse holds whenever the double comma object $B(j, \id) \slash f$ is known to exist.
\end{proposition}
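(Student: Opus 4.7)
The strategy is to exploit the universal property of the cartesian filler $\ls j\eps\colon B(j,\id) \Rar U_B$ to establish a bijection between the data used in the double comma universal property and the data used in the comma object universal property in $V(\K)$, and then transport each condition across this bijection.

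First I would record the following bijection, which is immediate from the cartesian filler property. For any object $Y$ and morphisms $\map{\psi_A}YA$, $\map{\psi_C}YC$, cells $\psi$ of the form required in the definition of $B(j,\id) \slash f$, namely with horizontal source and target $U_Y$, bottom $B(j,\id)$, and vertical sides $\psi_A$ and $f \of \psi_C$, correspond bijectively to vertical cells $\cell\chi{j \of \psi_A}{f \of \psi_C}$ in $V(\K)$. The forward map sends $\psi$ to $\ls j\eps \of \psi$, and the inverse sends $\chi$ to its unique factorisation through the cartesian filler $\ls j\eps$. By construction, the pair $\pi$ and $\ls j\eps \of \pi$ corresponds to itself under this bijection.

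Next, for the forward direction, I use this bijection to translate the universal properties of $\pi$ directly into those of $\ls j\eps \of \pi$. Given a vertical cell $\cell\chi{j \of \phi_A}{f \of \phi_C}$ in $V(\K)$, let $\phi$ be the cell corresponding to $\chi$; the $1$-dimensional property of the double comma produces a unique $\map{\phi'}U X$ with $\pi_A \of \phi' = \phi_A$, $\pi_C \of \phi' = \phi_C$, and $\pi \of U_{\phi'} = \phi$. Vertically composing the last equation with $\ls j\eps$ yields $(\ls j\eps \of \pi) \of U_{\phi'} = \chi$, and uniqueness transports back through the bijection, giving the $1$-dimensional universal property of $X$ as the comma object of $j$ and $f$. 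For the $2$-dimensional property, I apply the $2$-dimensional property of the double comma with $K = U_U$; this case is precisely the $2$-dimensional property of the comma object in $V(\K)$.

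For the converse, assume $\ls j\eps \of \pi$ makes $X$ the comma object of $j$ and $f$, and that $B(j,\id) \slash f$ exists as $(Y, \pi^Y)$. The forward direction then makes $Y$ a comma object of $j$ and $f$ via $\ls j\eps \of \pi^Y$. Applying the $1$-dimensional comma universal property of $X$ to the cell $\ls j\eps \of \pi^Y$ yields $\map{\phi'}YX$ with $\pi_A^X \of \phi' = \pi_A^Y$, $\pi_C^X \of \phi' = \pi_C^Y$, and $(\ls j\eps \of \pi^X) \of U_{\phi'} = \ls j\eps \of \pi^Y$; the uniqueness of factorisation through the cartesian filler $\ls j\eps$ upgrades the last equation to $\pi^X \of U_{\phi'} = \pi^Y$. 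A symmetric argument gives $\map{\psi'}XY$, and the standard uniqueness arguments in the two comma universal properties force $\psi' \of \phi' = \id_Y$ and $\phi' \of \psi' = \id_X$. Transferring the double comma property of $(Y, \pi^Y)$ along this vertical isomorphism (using $\pi^X = \pi^Y \of U_{\psi'}$ throughout) gives both universal properties of $(X, \pi)$ as the double comma object $B(j,\id) \slash f$.

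The main obstacle is the converse's $2$-dimensional part, since the comma universal property of $X$ in $V(\K)$ only knows about cells whose horizontal sources are identities, whereas the double comma property requires factorisations for arbitrary horizontal $K$; this is resolved precisely by the presence of the a priori existing double comma $Y$, whose universal property for general $K$ transfers to $X$ along the isomorphism constructed above. Each remaining verification is a routine chase through the cartesian filler identities and the uniqueness of factorisations.
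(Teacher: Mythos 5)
Your proposal is correct and follows essentially the same route as the paper's proof: the same bijection between double-comma-shaped cells and vertical cells $j \of \psi_A \Rar f \of \psi_C$ induced by the cartesian filler $\ls j\eps$, the same translation of the $1$-dimensional properties and of the $2$-dimensional property restricted to $K = U_X$, and the same resolution of the converse via the fact that the $1$-dimensional property alone determines the double comma object up to isomorphism, so that the full $2$-dimensional property transfers from the assumed $B(j,\id) \slash f$. The only difference is that you spell out the uniqueness-up-to-isomorphism argument for the converse explicitly, where the paper states it in one line.
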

\begin{proof}
	This follows from the correspondence between cells of the form
	\begin{displaymath}
		\begin{tikzpicture}[textbaseline]
			\matrix(m)[math175em]{Y & Y \\ & C \\ A & B \\};
			\path[map]	(m-1-1) edge node[left] {$\phi_A$} (m-3-1)
									(m-1-2) edge node[right] {$\phi_C$} (m-2-2)
									(m-2-2) edge node[right] {$f$} (m-3-2)
									(m-3-1) edge[barred] node[below] {$B(j, \id)$} (m-3-2);
			\path				(m-1-1) edge[eq] (m-1-2);
			\path[transform canvas={shift={($(m-2-2)!0.5!(m-3-2)$)}}] (m-1-1) edge[cell] node[right] {$\phi$} (m-2-1);
		\end{tikzpicture}
		\qquad\text{and}\qquad\begin{tikzpicture}[textbaseline]
			\matrix(m)[math175em]{Y & Y \\ A & C \\ B & B, \\};
			\path[map]	(m-1-1) edge node[left] {$\phi_A$} (m-2-1)
									(m-1-2) edge node[right] {$\phi_C$} (m-2-2)
									(m-2-1) edge node[left] {$j$} (m-3-1)
									(m-2-2) edge node[right] {$f$} (m-3-2);
			\path				(m-3-1) edge[eq] (m-3-2)
									(m-1-1) edge[eq] (m-1-2);
			\path[transform canvas={shift={($(m-2-2)!0.5!(m-3-2)$)}}] (m-1-1) edge[cell] node[right] {$\psi$} (m-2-1);
		\end{tikzpicture}
	\end{displaymath}
	that is given by $\phi \mapsto \ls j\eps \of \phi$ and $\psi \mapsto (\ls j\eta \of U_{\phi_A}) \hc \psi$, and that is natural with respect to precomposition with vertical cells and horizontal composition with vertical cells \mbox{$h \Rar \phi_A$} or $\phi_C \Rar k$, where $\map hYA$ and $\map kYC$. It follows that the $1$\ndash dimensional property of $\pi$ coincides with that of $\ls j\eps \of \pi$, while the $2$-dimensional property of $\pi$ coincides with that of $\ls j\eps \of \pi$ when restricted to cells $\xi_A$ and $\xi_C$ with $K = U_X$ (see \defref{definition:double comma object}). The implication follows, and its converse follows from the fact that double comma objects are uniquely determined, up to isomorphism, by their $1$-dimensional property alone.
\end{proof}

	Following Grandis and Par\'e \cite[Section 4.1]{Grandis-Pare08} we generalise Street's notion of pointwise left Kan extensions (\defref{definition:pointwise left Kan extensions}) to pointwise weighted colimits. The remark below explains the differences between our and Grandis and Par\'e's definition. Suppose that the cell $\pi$ defines the double comma object $J \slash f$ of morphisms $\map fCB$ and $\hmap JAB$, in a pseudo double category $\K$. If the companion of $\pi_C$ exists in $\K$, and is defined by the cartesian filler $\ls{\pi_C}\eps$, then we will denote the composite $\pi \hc (U_f \of \ls{\pi_C}\eps)$ again by $\pi$; it is of the form
	\begin{displaymath}
		\begin{tikzpicture}
			\matrix(m)[math175em, column sep=2em]{J \slash f & C \\ A & B. \\[-3.25em] \phantom{J\slash f} & \phantom{J\slash f} \\};
			\path[map]	(m-1-1) edge[barred] node[above] {$C(\pi_C, \id)$} (m-1-2)
													edge node[left] {$\pi_A$} (m-2-1)
									(m-1-2) edge node[right] {$f$} (m-2-2)
									(m-2-1) edge[barred] node[below] {$J$} (m-2-2);
			\path[transform canvas={shift=($(m-1-2)!0.5!(m-2-2)$)}]	(m-1-1) edge[cell] node[right] {$\pi$} (m-2-1);
		\end{tikzpicture}
	\end{displaymath}
\begin{definition} \label{definition:pointwise weighted colimits}
	Consider morphisms $\map dAM$ and $\hmap JAB$ in a pseudo double category that has companions and double comma objects. The cell $\eta$ in the diagram below exhibits $l$ as the \emph{pointwise} $J$-weighted colimit if, for each $\map fCB$, the composition below exhibits $l \of f$ as the $C(\pi_C, \id)$-weighted colimit of $d \of \pi_A$.
	\begin{equation} \label{diagram:pointwise weighted colimit}
		\begin{split}
		\begin{tikzpicture}
			\matrix(m)[math175em, column sep=2em]
			{ J \slash f \nc C \\ A \nc B \\ M \nc M \\[-3.25em] \phantom{J \slash f} \nc \phantom{J \slash f} \\ };
			\path[map]	(m-1-1) edge[barred] node[above] {$C(\pi_C, \id)$} (m-1-2)
													edge node[left] {$\pi_A$} (m-2-1)
									(m-1-2) edge node[right] {$f$} (m-2-2)
									(m-2-1) edge[barred] node[below] {$J$} (m-2-2)
													edge node[left] {$d$} (m-3-1)
									(m-2-2) edge node[right] {$l$} (m-3-2);
			\path				(m-3-1) edge[eq] (m-3-2);
			\path[transform canvas={shift=(m-2-2)}] (m-1-1) edge[cell] node[right] {$\pi$} (m-2-1);
			\path[transform canvas={shift=(m-2-2), yshift=-2pt}]	(m-2-1) edge[cell] node[right] {$\eta$} (m-3-1);
		\end{tikzpicture}
		\end{split}
	\end{equation}
	In terms of \defref{definition:weighted left Kan extensions} this means that $l \of f$ is the weighted left Kan extension of $d \of \pi_A$ along $\pi_C$. Also we will, when $J = B(j, \id)$ for $\map jAB$, say that $\eta$ exhibits $l$ as the \emph{pointwise} weighted left Kan extension of $d$ along $j$.
\end{definition}
\begin{remark} \label{remark:pointwise weighted colimits}
	Grandis and Par\'e use in their definition of pointwise left Kan extensions in strict double categories the following stronger notion of double comma objects $J \slash f$, that is given in \cite[Section 3.2]{Grandis-Pare08}. If the cell $\pi$ defines $J \slash f$ then, besides asking that $\pi$ satisfies the $1$-dimensional `vertical' universal property of \defref{definition:double comma object}, they ask the corresponding cell $\pi' = \ls{\pi_A}\eta \hc \pi \hc (U_f \of \ls{\pi_C}\eps)$ below to satisfy the following $1$-dimensional `horizontal' universal property: each cell $\phi$ below factors uniquely through $\pi'$ as a horizontal morphism $\hmap{\phi'}X{J \slash f}$, such that $\phi' \hc A(\pi_A, \id) = \phi_A$, $\phi' \hc C(\pi_C, \id) = \phi_C$ and
	\begin{displaymath}
		\begin{tikzpicture}[textbaseline]
			\matrix(m)[math175em]{ X & \phantom A & C \\ X & A & B \\ };
			\path[map]	(m-1-1) edge[barred] node[above] {$\phi_C$} (m-1-3)
									(m-1-3) edge node[right] {$f$} (m-2-3)
									(m-2-1) edge[barred] node[below] {$\phi_A$} (m-2-2)
									(m-2-2) edge[barred] node[below] {$J$} (m-2-3);
			\path				(m-1-1) edge[eq] (m-2-1)
									(m-1-2) edge[cell] node[right] {$\phi$} (m-2-2);
		\end{tikzpicture}
		= \begin{tikzpicture}[textbaseline]
			\matrix(m)[math175em]{ X & J \slash f & \phantom A & C \\ X & J \slash f & A & B. \\ };
			\path[map]	(m-1-1) edge[barred] node[above] {$\phi'$} (m-1-2)
									(m-1-2) edge[barred] node[above] {$C(\pi_C, \id)$} (m-1-4)
									(m-1-4) edge node[right] {$f$} (m-2-4)
									(m-2-1) edge[barred] node[below] {$\phi'$} (m-2-2)
									(m-2-2) edge[barred] node[below] {$A(\pi_A, \id)$} (m-2-3)
									(m-2-3) edge[barred] node[below] {$J$} (m-2-4);
			\path				(m-1-1) edge[eq] (m-2-1)
									(m-1-2) edge[eq] (m-2-2)
									(m-1-3) edge[cell] node[right] {$\pi'$} (m-2-3);
		\end{tikzpicture}
	\end{displaymath}
	Similarly there is a `horizontal' counterpart to the `vertical' $2$-dimensional property of $J \slash f$ given in \defref{definition:double comma object}, and the double comma objects used by Grandis and Par\'e satisfy a `global' $2$-dimensional property, that combines both the horizontal and vertical directions. They remark that these are very strong conditions and write that one ``\dots can only expect [their] comma objects to exist in very particular double categories, having some sort of symmetry in themselves \dots''. Their main example is the double category $\mathsf{Dbl}$, that has pseudo double categories as objects, lax functors (\defref{definition:lax functor}) as vertical morphisms and colax functors (see \secref{section:monad examples}) as horizontal morphisms, see \cite[Section 1.4]{Grandis-Pare08}. On the other hand, the double comma objects in the typical equipment $\Prof$ of profunctors seem not to satisfy the horizontal $1$-dimensional property above.
	
	The second difference is that Grandis and Par\'e ask the composite \eqref{diagram:pointwise weighted colimit} to exhibit $l \of f$ just as an ordinary extension of $d \of \pi_A$, whereas we ask it to be a top absolute extension, which fits in better with our notion of weighted colimits (\propref{weighted colimits equivalent to top absolute left Kan extensions}). We conclude that our notion of pointwise weighted colimits can be applied to more double categories than that of Grandis and Par\'e, because our requirements on double comma objects are weaker, and that it is stronger, because we ask the extension $l \of f$ to be top absolute. A further consequence of our weaker double comma objects is that it does not seem possible to prove that, in general, every pointwise weighted colimit is a weighted colimit. In the case of Grandis and Par\'e's definition this does hold, as is shown in \cite[Theorem 4.2]{Grandis-Pare08}, whose proof uses the horizontal $1$-dimensional property of their comma objects. We will however be able to prove such a result in the case of the equipments $\inProf\E$. In fact the main result (\thmref{weighted colimits are pointwise if double commas are strong}) of this section implies that in $\inProf\E$ the notion of pointwise weighted colimits (\defref{definition:pointwise weighted colimits}) coincides with that of weighted colimits (\defref{definition:weighted colimits and limits}).
\end{remark}
	
	The following is a consequence of \propref{colimits weighted by companions} and \propref{double comma objects of companions}.
\begin{proposition}
	Let $\K$ be a pseudo double category that has companions and double comma objects. If the cell
	\begin{displaymath}
		\begin{tikzpicture}
			\matrix(m)[math175em]{A & B \\ M & M \\};
			\path[map]	(m-1-1) edge[barred] node[above] {$B(j, \id)$} (m-1-2)
													edge node[left] {$d$} (m-2-1)
									(m-1-2) edge node[right] {$l$} (m-2-2);
			\path				(m-2-1) edge[eq] (m-2-2);
			\path[transform canvas={shift=($(m-1-2)!0.5!(m-2-2)$)}] (m-1-1) edge[cell] node[right] {$\zeta$} (m-2-1);
		\end{tikzpicture}
	\end{displaymath}
	exhibits $l$ as the pointwise weighted left Kan extension of $d$ along $j$ then the vertical composite $\zeta \of \ls j\eta$ exhibits $l$ as the pointwise left Kan extension of $d$ along $j$ in $V(\K)$.
\end{proposition}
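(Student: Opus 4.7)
The plan is, for each vertical morphism $\map fCB$, to reduce the proposition to an explicit identity of cells in $\K_1$, solvable by the two companion identities for $j$ and $\pi_C$. By \propref{double comma objects of companions}, the double comma object $B(j, \id) \slash f$ also presents the comma object $j \slash f$ in the $2$-category $V(\K)$, with comma cell $\pi' = \ls j\eps \of \pi_{\textup{dc}}$, where $\pi_{\textup{dc}}$ denotes the unmodified double comma cell (with horizontal source $U_{B(j, \id) \slash f}$). Thus the candidate Street unit at $f$, in the sense of \defref{definition:pointwise left Kan extensions}, is the vertical cell
\begin{displaymath}
	\theta_S = \bigl(l \ast \pi'\bigr) \of \bigl((\zeta \of \ls j\eta) \ast \pi_A\bigr)
\end{displaymath}
of $V(\K)$, and we must show it exhibits $l \of f$ as the ordinary left Kan extension of $d \of \pi_A$ along $\pi_C$ in $V(\K)$.

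First, applying the pointwise weighted-colimit hypothesis at $f$, the composite cell $\xi$ appearing in \eqref{diagram:pointwise weighted colimit} exhibits $l \of f$ as the $C(\pi_C, \id)$-weighted colimit of $d \of \pi_A$. Then \propref{colimits weighted by companions}, now applied with $\pi_C$ in place of $j$, converts this into the assertion that the vertical cell $\xi \of \ls{\pi_C}\eta$ of $\K$ exhibits $l \of f$ as the ordinary left Kan extension of $d \of \pi_A$ along $\pi_C$ in $V(\K)$. It therefore suffices to verify $\theta_S = \xi \of \ls{\pi_C}\eta$ in $\K_1$.

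Unpacking whiskering and vertical composition in $V(\K)$ in terms of cells of $\K$, the interchange law rewrites $\theta_S$, modulo unitors, as $(\zeta \hc U_l) \of (\ls j\eta \hc \ls j\eps) \of (U_{\pi_A} \hc \pi_{\textup{dc}})$; the companion identity $\ls j\eta \hc \ls j\eps = \id_{B(j,\id)}$ then collapses the middle layer to an identity, leaving $\zeta \of \pi_{\textup{dc}}$. On the other hand, interchange applied to $[\pi_{\textup{dc}} \hc (U_f \of \ls{\pi_C}\eps)] \of \ls{\pi_C}\eta$ rewrites this composite as $\pi_{\textup{dc}} \hc [U_f \of (\ls{\pi_C}\eps \of \ls{\pi_C}\eta)]$, and the second companion identity $\ls{\pi_C}\eps \of \ls{\pi_C}\eta = U_{\pi_C}$ collapses the bracketed factor to $U_{f \of \pi_C}$, which is absorbed into $\pi_{\textup{dc}}$ by the unitor; postcomposing with $\zeta$ yields $\zeta \of \pi_{\textup{dc}}$ as well. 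Hence $\theta_S$ and $\xi \of \ls{\pi_C}\eta$ agree.

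The only real subtlety, rather than a deep obstacle, is the bookkeeping of associators and unitors while shuffling horizontal and vertical compositions; this is handled either by appealing to the Grandis--Par\'e strictification \cite[Theorem 7.5]{Grandis-Pare99} of pseudo double categories, or by routine explicit computation. The conceptual content of the argument is precisely the pair of companion identities, applied once at $j$ on the Street side and once at $\pi_C$ on the weighted side, both reducing the two composites to the common normal form $\zeta \of \pi_{\textup{dc}}$.
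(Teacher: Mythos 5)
Your proposal is correct and follows essentially the same route as the paper's proof: both invoke \propref{double comma objects of companions} to present the comma object in $V(\K)$ via $\pi' = \ls j\eps \of \pi$, apply \propref{colimits weighted by companions} at $\pi_C$, and reduce everything to the observation that both composites collapse, via the two companion identities, to the common cell $\zeta \of \pi$. Your write-up merely makes the interchange/unitor bookkeeping more explicit than the paper does.
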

\begin{proof}
	Let $\map fCB$ be any morphism and let us write $\zeta' = \zeta \of \ls j\eta$. If the cell $\pi$ defines the double comma object $B(j, \id) \slash f$ then the vertical composite $\pi' = \ls j\eps \of \pi$ defines $B(j, \id) \slash f$ as the comma object of $j$ and $f$ by \propref{double comma objects of companions}, and we claim that the vertical cell $(\zeta' \of U_{\pi_A}) \hc (U_l \of \pi') = \zeta \of \pi$ (i.e.\ \eqref{diagram:pointwise Kan extension} with $\eta = \zeta'$ and $\pi = \pi'$) exhibits $l \of f$ as the left Kan extension of $d \of \pi_A$ along $\pi_C$, which completes the proof. Indeed $(\zeta \of \pi) \hc (U_{l \of f} \of \ls{\pi_C} \eps)$ (i.e.\ \eqref{diagram:pointwise weighted colimit} with $\eta = \zeta$) exhibits $l \of f$ as the weighted left Kan extension of $d \of \pi_A$ along $\pi_C$ by assumption, so that the claim follows from \propref{colimits weighted by companions}.
\end{proof}

	The following definition is the main idea of this chapter. It describes a condition on double comma objects that, when satisfied by all double comma objects, implies that all weighted colimits in an equipment $\K$ are pointwise. Remember that, if
	\begin{displaymath}
		\begin{tikzpicture}
			\matrix(m)[math175em, column sep=2em]{J \slash f & C \\ A & B \\[-3.25em] \phantom{J\slash f} & \phantom{J\slash f} \\};
			\path[map]	(m-1-1) edge[barred] node[above] {$C(\pi_C, \id)$} (m-1-2)
													edge node[left] {$\pi_A$} (m-2-1)
									(m-1-2) edge node[right] {$f$} (m-2-2)
									(m-2-1) edge[barred] node[below] {$J$} (m-2-2);
			\path[transform canvas={shift=($(m-1-2)!0.5!(m-2-2)$)}]	(m-1-1) edge[cell] node[right] {$\pi$} (m-2-1);
		\end{tikzpicture}
	\end{displaymath}
	defines $J \slash f$ as the double comma object of $J$ and $f$ (as in \defref{definition:pointwise weighted colimits}), then we write $\pi_f$ for its factorisation $C(\pi_C, \id) \Rar J(\id, f)$ through the restriction $J(\id, f)$, see \defref{definition:cartesian filler}.
\begin{definition} \label{definition:strong double comma objects}
	Assume that the cell $\pi$ above defines $J \slash f$ as the double comma object of $\hmap JAB$ and $\map fCB$. We will call $J \slash f$ \emph{strong} if every cell $\phi$ below factors uniquely through $\pi_f$ as follows.
	\begin{equation} \label{equation:strong double comma objects}
		\begin{split}
		\begin{tikzpicture}[textbaseline]
			\matrix(m)[math175em]
			{ J \slash f \nc C \nc D \\
				A \nc \nc \\
				M \nc \nc M \\[-3.25em]
				\phantom{J \slash f} \nc \phantom{J \slash f} \nc \phantom{J \slash f} \\ };
			\path[map]	(m-1-1) edge[barred] node[above] {$C(\pi_C, \id)$} (m-1-2)
													edge node[left] {$\pi_A$} (m-2-1)
									(m-1-2) edge[barred] node[above] {$H$} (m-1-3)
									(m-1-3) edge node[right] {$k$} (m-3-3)
									(m-2-1) edge node[left] {$h$} (m-3-1);
			\path				(m-3-1) edge[eq] (m-3-3);
			\path[transform canvas={shift=($(m-2-2)!0.5!(m-3-2)$)}]	(m-1-2) edge[cell] node[right] {$\phi$} (m-2-2);
		\end{tikzpicture}
		= \begin{tikzpicture}[textbaseline]
			\matrix(m)[math175em]
			{ J \slash f \nc C \nc D \\
				A \nc C \nc D \\
				M \nc \nc M \\[-3.25em]
				\phantom{J \slash f} \nc \phantom{J \slash f} \nc \phantom{J \slash f} \\ };
			\path[map]	(m-1-1) edge[barred] node[above] {$C(\pi_C, \id)$} (m-1-2)
													edge node[left] {$\pi_A$} (m-2-1)
									(m-1-2) edge[barred] node[above] {$H$} (m-1-3)
									(m-2-1) edge[barred] node[below] {$J(\id, f)$} (m-2-2)
													edge node[left] {$h$} (m-3-1)
									(m-2-2) edge[barred] node[above] {$H$} (m-2-3)									
									(m-2-3) edge node[right] {$k$} (m-3-3);									
			\path				(m-1-2) edge[eq] (m-2-2)
									(m-1-3) edge[eq] (m-2-3)
									(m-3-1) edge[eq] (m-3-3)
									(m-2-2) edge[cell] node[right] {$\phi'$} (m-3-2);
			\path[transform canvas={shift=($(m-2-2)!0.5!(m-2-3)$)}]	(m-1-1) edge[cell] node[right] {$\pi_f$} (m-2-1);
		\end{tikzpicture}
		\end{split}
	\end{equation}
\end{definition}
\begin{example}
	In $\Prof$ any double comma object $J \slash f$ of a profunctor $\hmap JAB$ and a functor $\map fCB$ is strong. To see this recall that $J \slash f$ has triples $(a, \map ja{fc}, c)$ as objects and pairs $\map{(x,y)}{(a, j, c)}{(a', j', c')}$, with $\map xa{a'}$ and $\map yc{c'}$ such that $fy \of j = j' \of x$, as morphisms, and notice that the natural transformation $\nat{\pi_f}{C(\pi_C, \id)}{J(\pi_A, f)}$ is simply given by
	\begin{displaymath}
		(\pi_f)_{((a, j, c), c')} (\map xc{c'}) = \bigbrks{a \xrar j fc \xrar{fx} fc'}.
	\end{displaymath}
	For a natural transformation $\nat\phi{H(\pi_C, \id)}{M(h \of \pi_A, k)}$ as above, a factorisation $\nat{\phi'}{J(\id, f) \hc_C H}{M(h, k)}$ consists of components
	\begin{displaymath}
		\map{\phi_{(a, d)}}{\int^{c \in C} J(a, fc) \times H(c, d)}{M(ha, kd)}
	\end{displaymath}
	which we take to be induced by the composites
	\begin{displaymath}
		J(a, fc) \times H(c, d) \iso \coprod_{j \in J(a, fc)} H(c,d) \xrar{\coprod_j \phi_{((a, j, c), d)}} M(ha, kd).
	\end{displaymath}
	That these induce a well-defined map on the coend above follows from the naturality of $\phi$ with respect to maps of the form $(\id, x)$ in $J \slash f$, where $\map xc{c'}$. Indeed this naturality means that the diagrams
	\begin{displaymath}
		\begin{tikzpicture}
			\matrix(m)[math175em, column sep=0.5em]{H(c', d) & \phantom{M(ha, kd)} & M(ha, kd) \\ \phantom{M(ha, kd)} & H(c, d) & \\};
			\path[map]	(m-1-1) edge node[above] {$\phi_{((a, fx \of j, c'), d)}$} (m-1-3)
													edge node[below left] {$H(x, d)$} (m-2-2)
									(m-2-2) edge node[below right] {$\phi_{((a, j, c), d)}$} (m-1-3);
		\end{tikzpicture}
	\end{displaymath}
	commute, for every $\map jac$ in $J$. Finally notice that the $((a, j, b), d)$-component of the composite $\nat{\pi_f \hc_C \id_H}{C(\pi_C, \id) \hc_C H}{J(\id, f) \hc_C H}$, under the isomorphism $H(\pi_C, \id) \iso C(\pi_C, \id) \hc_C H$, is simply the composite of insertions
	\begin{displaymath}
		H(b, d) \to J(a, fb) \times H(b,d) \to \int^{c \in C} J(a, fc) \times H(c,d),
	\end{displaymath}
	the first at $j \in J(a, fb)$ and the second at $c = b$. From this it easily follows that $\phi'$ is the unique transformation such that $\phi = \phi' \of (\pi_f \hc_C \id_H)$.
\end{example}
\begin{example}
	To give an example of a comma object in the equipment $\enProf\cat$, of $2$-categories, $2$-functors, $2$-profunctors and $2$-natural transformations, that is not strong, we can again consider the $2$-categories of \exref{example:stronger pointwise left Kan extension}:
	\begin{displaymath}
		B = \bigpars{\begin{tikzpicture}[textbaseline, font=\scriptsize]
			\matrix(m)[math2em, column sep=2.5em]{\nc \\};
			\path[map]	(m-1-1) edge[bend left=40] node[above] {$u$} (m-1-2)
													edge[bend right=40] node[below] {$v$} (m-1-2);
			\fill	(m-1-1) circle (1pt) node[left] {$x$}
						(m-1-2) circle (1pt) node[right] {$y$};
			\path	($(m-1-1)!0.5!(m-1-2)+(0,0.8em)$) edge[cell] ($(m-1-1)!0.5!(m-1-2)-(0,0.8em)$);
		\end{tikzpicture}}
		\qquad \text{and} \qquad
		M = \bigpars{\begin{tikzpicture}[textbaseline, font=\scriptsize]
			\matrix(m)[math2em, column sep=2.5em]{\nc \\};
			\path[map]	(m-1-1) edge[bend left=40] node[above] {$u'$} (m-1-2)
													edge[bend right=40] node[below] {$v'$} (m-1-2);
			\fill	(m-1-1) circle (1pt) node[left] {$x'$}
						(m-1-2) circle (1pt) node[right] {$y'$};
		\end{tikzpicture}}.
	\end{displaymath}
	Taking $A = * = C$ to be the terminal $2$-category, we choose \mbox{$\map{f = y}CB$}, the inclusion that maps the single object of $A = *$ to $y$, and likewise $\map{j = x}AB$, \mbox{$\map{h = x'}AM$} and $\map{k = y'}CM$. Then the double comma object $B(j, \id) \slash f$ (as in \exref{example:double comma objects}) is the discrete $2$-category consisting of the triples \mbox{$(*, \map uxy, *)$} and \mbox{$(*, \map vxy, *)$}, and the assignments $\bigpars{(*, u, *), *} \mapsto u'$ and $\bigpars{(*, v, *),*} \mapsto v'$ trivially form a $2$-natural transformation $\nat\phi{C(\pi_C, \id)}{M(h \of \pi_A, k)}$. However, giving a cell
	\begin{displaymath}
		\begin{tikzpicture}
			\matrix(m)[math175em]{A & B \\ M & M \\};
			\path[map]	(m-1-1) edge[barred] node[above] {$B(j, f)$} (m-1-2)
													edge node[left] {$h$} (m-2-1)
									(m-1-2) edge node[right] {$k$} (m-2-2);
			\path				(m-2-1) edge[eq] (m-2-2)
									(m-1-1) edge[transform canvas={shift=($(m-1-2)!0.5!(m-2-2)$)}, cell] node[right] {$\phi'$} (m-2-1);
		\end{tikzpicture}
	\end{displaymath}
	is equivalent to giving a functor $\map{\phi'}{B(x, y)}{M(x', y')}$, and $\phi' \of \pi_f = \phi$ implies $\phi' u = u'$ and $\phi' v = v'$. We conclude that such a $\phi'$ cannot exist, since there is no cell from $u'$ to $v'$ in $M$.
\end{example}
\begin{proposition} \label{internal double comma objects are strong}
	Let $\E$ be a category with finite limits and reflexive coequalisers preserved by pullbacks, so that internal profunctors in $\E$ form an equipment $\inProf\E$ by \propref{bimodule equipments}. The double comma objects of $\inProf\E$, that exist by \propref{internal double comma objects}, are all strong.
\end{proposition}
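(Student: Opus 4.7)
The plan is to work with the explicit construction of $J \slash f$ given in \propref{internal double comma objects} and verify the factorisation property of \defref{definition:strong double comma objects} by translating it into a statement about maps in $\E$. First I would recall from \remref{remark:comma object and cartesian filler} that the $\E$-object underlying the restriction $J(\id, f)$ coincides with $(J \slash f)_0 = A_0 \times_{A_0} J \times_{B_0} C_0$, with cartesian filler $J(\id, f) \Rar J$ given by the projection to $J$. Unwinding the construction, the underlying map of spans of $\pi_f\colon C(\pi_C, \id) \Rar J(\id, f)$ turns out to be precisely the right $C$-action $r\colon (J \slash f)_0 \times_{C_0} C \to (J \slash f)_0$ from \eqref{diagram:actions on the double comma object}.

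Next I would identify the two horizontal composites involved as $\E$-objects. Applying \propref{cartesian fillers in terms of companions and conjoints} yields $C(\pi_C, \id) \hc H \iso H(\pi_C, \id)$, whose underlying $\E$-object is the pullback $(J \slash f)_0 \times_{C_0} H$. On the other hand, by the coequaliser formula of \exref{example:internal profunctor equipments}, the composite $J(\id, f) \hc H$ is the reflexive coequaliser
\begin{displaymath}
	(J \slash f)_0 \times_{C_0} C \times_{C_0} H \rightrightarrows (J \slash f)_0 \times_{C_0} H \xrar{q} J(\id, f) \hc H,
\end{displaymath}
whose parallel pair is given by the right action $r$ on the left and the left action of $C$ on $H$ on the right. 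A careful unpacking of the definition of the horizontal composition of cells then shows that, under the identification $C(\pi_C, \id) \hc H \iso (J \slash f)_0 \times_{C_0} H$, the cell $\pi_f \hc \id_H$ is the coequaliser map $q$ itself.

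With these identifications, the factorisation in \eqref{equation:strong double comma objects} becomes purely a question of factoring a map in $\E$ through $q$. By the cell axioms of \defref{definition:monoids and bimodules}, a cell $\phi$ with source $C(\pi_C, \id) \hc H$ and target $U_M$ corresponds to a map $\bar\phi\colon (J \slash f)_0 \times_{C_0} H \to M$ in $\E$ that is compatible with the left action of the internal category $J \slash f$ and with the right action of $D$; likewise a factorising cell $\phi'$ corresponds to a map $\bar{\phi'}\colon J(\id, f) \hc H \to M$ compatible with the left $A$-action and right $D$-action. The equation $\phi = \phi' \of (\pi_f \hc \id_H)$ becomes $\bar\phi = \bar{\phi'} \of q$, which by the universal property of the coequaliser admits a unique solution $\bar{\phi'}$ precisely when $\bar\phi$ coequalises the parallel pair.

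The hard part will be verifying this coequaliser condition, and I expect it to follow from the compatibility of $\phi$ with the left action of $J \slash f$: the internal morphisms of $J \slash f$ of the form $(\id_a, y)\colon (a, j, c) \to (a, fy \of j, c')$, made available by the defining relation $j' = fy \of j$ of the comma structure, encode exactly the right-action-versus-left-action identification being coequalised. Granted this step, the remaining compatibility of $\bar{\phi'}$ with the $A$- and $D$-actions transfers from $\bar\phi$ automatically since $q$ is an epimorphism, and uniqueness is immediate.
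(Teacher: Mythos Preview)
Your proposal is correct and follows essentially the same approach as the paper. The paper likewise identifies $\pi_f$ with the right $C$-action $r$, shows that $\pi_f \hc \id_H$ becomes the coequaliser map $(J \slash f)_0 \times_{C_0} H \to (J \slash f)_0 \hc_C H$, and then verifies the forking condition from the left $J \slash f$-compatibility of $\phi$ by precomposing the naturality square with the map $(J \slash f)_0 \times_{C_0} C \to J \slash f$ picking out exactly your morphisms of type $(\id_a, y)$.
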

\begin{proof}
	Let $\hmap JAB$ be an internal profunctor and $\map fCB$ be an internal functor, and consider an internal transformation $\nat\phi{H(\pi_C, \id)}M$ as in \eqref{equation:strong double comma objects}, which we consider to be precomposed with the isomorphism $H(\pi_C, \id) \iso C(\pi_C, \id) \hc_C H$ given by \propref{cartesian fillers in terms of companions and conjoints}. We have to show that there exists a unique factorisation $\nat{\phi'}{J(\id, f) \hc_C H}M$ such that \eqref{equation:strong double comma objects} holds.
	
	We begin with computing the horizontal composite
	\begin{equation} \label{equation:366623}
		H(\pi_C, \id) \iso C(\pi_C, \id) \hc_C H \xrar{\pi_f \hc_C \id_H} J(\id, f) \hc_C H,
	\end{equation}
	where the cell $\pi_f$, after being composed with unitors, coincides with the horizontal composite
	\begin{displaymath}
		\begin{tikzpicture}
			\matrix(m)[math175em]
			{	J \slash f & J \slash f & C \\
				A & C & C \\[-0.3em]
				\phantom{J \slash f} \nc \phantom{J \slash f} \nc \phantom{J \slash f} \\ };
			\path[map]	(m-1-1) edge node[left] {$\pi_A$} (m-2-1)
									(m-1-2) edge[barred] node[above] {$C(\pi_C, \id)$} (m-1-3)
													edge node[right] {$\pi_C$} (m-2-2)
									(m-2-1) edge[barred] node[below] {$J(\id, f)$} (m-2-2);
			\path				(m-1-1) edge[eq] (m-1-2)
									(m-1-3) edge[eq] (m-2-3)
									(m-2-2) edge[eq] (m-2-3);
			\path[transform canvas={shift=($(m-2-1)!0.5!(m-2-2)$)}]	(m-1-2) edge[cell] node[right] {$\pi'$} (m-2-2)
									(m-1-3) edge[cell] node[right] {$\ls{\pi_C}\eps$} (m-2-3);
		\end{tikzpicture}
	\end{displaymath}
	where $\pi'$ denotes the factorisation of the cell $\pi$, that defines $J \slash f$, through the restriction $J(\id, f)$, and $\ls{\pi_C}\eps$ is the cartesian filler defining the companion $C(\pi_C, \id)$.
	
	Recall from \remref{remark:comma object and cartesian filler} that we can take $J(\id, f) = (J \slash f)_0$. With that choice the cartesian filler $J(\id, f) \to J$ is the projection $(J \slash f)_0 \to J$ and the factorisation $\nat{\pi'}{J \slash f}{J(\id, f)}$, as a map $(J \slash f)_0 \to (J \slash f)_0$ (see \propref{internal transformations}), is simply the identity. Likewise the companion $C(\pi_C, \id)$ is the pullback of $\map dC{C_0 \times C_0}$ along $\map{(\pi_C)_0 \times \id}{(J \slash f)_0 \times C_0}{C_0 \times C_0}$, so that we can take $C(\pi_C, \id) = (J \slash f)_0 \times_{C_0} C$; it follows that the transformation $\nat{\pi_f = \pi' \hc \ls{\pi_C}\eps}{C(\pi_C, \id)}{J(\id, f)}$ is the right action $\map r{(J \slash f)_0 \times_{C_0} C}{(J \slash f)_0}$ given by \eqref{diagram:actions on the double comma object}. Finally we also take $H(\pi_C, \id) = (J \slash f)_0 \times_{C_0} H$. Recall from the proof of \propref{cartesian fillers in terms of companions and conjoints} that the isomorphism $H(\pi_C, \id) \xrar\iso C(\pi_C, \id) \hc_C H$ is the composite
	\begin{displaymath}
		H(\pi_C, \id) \iso U_{J \slash f} \hc_{J \slash f} H(\pi_C, \id) \xrar{\ls{\pi_C} \eta \hc \eps} C(\pi_C, \id) \hc_C H,
	\end{displaymath}
	where $\ls{\pi_C}\eta$ corresponds to $\map{\bigpars{\id, e \of (\pi_C)_0}}{(J \slash f)_0}{(J \slash f)_0 \times_{C_0} C}$ under \propref{internal transformations} while $\eps$ is the cartesian filler, i.e.\ the projection $(J \slash f)_0 \times_{C_0} H \to H$. Thus, using the remark on horizontal compositions in \propref{internal transformations}, we conclude that the composite \eqref{equation:366623} equals
	\begin{multline*}
		(J \slash f)_0 \times_{C_0} H \xrar{\bigpars{\id, e \of (\pi_C)_0} \times_{C_0} \id_H} \bigpars{(J \slash f)_0 \times_{C_0} C} \times_{C_0} H \\
		\xrar{r \times_{C_0} \id_H} (J \slash f)_0 \times_{C_0} H \to (J \slash f)_0 \hc_C H.
	\end{multline*}
	Here the first two maps cancel by the unit axiom for actions, so that the coequaliser $(J \slash f)_0 \times_{C_0} H \to (J \slash f)_0 \hc_C H$ remains.
	
	It follows that the identity \eqref{equation:strong double comma objects} is equivalent to the commuting of the diagram
	\begin{displaymath}
		\begin{tikzpicture}
			\matrix(m)[math2em]
			{	(J \slash f)_0 \times_{C_0} C \times_{C_0} H & (J \slash f)_0 \times_{C_0} H &[1.5em] (J \slash f)_0 \hc_C H & M \\
				& & A_0 \times D_0 & M_0 \times M_0. \\ };
			\path[map, transform canvas={yshift=3pt}]	(m-1-1)	edge node[above] {$r \times \id$} (m-1-2);
			\path[map, transform canvas={yshift=-3pt}]	(m-1-1) edge node[below] {$\id \times l$} (m-1-2);
			\path[map]	(m-1-2) edge[bend left=30] node[above] {$\phi$} (m-1-4)
													edge node[above] {$\pi_f \hc_C \id_H$} (m-1-3)
													edge node[below left] {$(\pi_A)_0 \times h_D$} (m-2-3)
									(m-1-3) edge node[above] {$\phi'$} (m-1-4)
													edge (m-2-3)
									(m-1-4) edge node[right] {$d$} (m-2-4)
									(m-2-3) edge node[below] {$h_0 \times k_0$} (m-2-4);
		\end{tikzpicture}
	\end{displaymath}
	That is, to prove that every internal transformation $\phi$ induces a unique $\phi'$ that satisfies \eqref{equation:strong double comma objects} we have to show that $\phi$ forks the actions $r \times \id$ and $\id \times l$, that is $\phi \of (r \times \id) = \phi \of (\id \times l)$. But this is a consequence of the naturality of $\phi$, as follows. Its naturality with respect to $J \slash f$ (defined as the pullback \eqref{diagram:morphism object of internal double comma object}) means that the diagram
	\begin{displaymath}
		\begin{tikzpicture}
			\matrix(m)[math2em]
			{ J \slash f \times_{(J \slash f)_0} \bigpars{(J \slash f)_0 \times_{C_0} H} & (J \slash f)_0 \times_{C_0} C \times_{C_0} H & (J \slash f)_0 \times_{C_0} H \\
				A \times_{A_0} \bigpars{(J \slash f)_0 \times_{C_0} H} & M \times_{M_0} M & M \\ };
			\path[map]	(m-1-1) edge (m-1-2)
													edge (m-2-1)
									(m-1-2) edge node[above] {$\id \times_{C_0} l$} (m-1-3)
									(m-1-3) edge node[right] {$\phi$} (m-2-3)
									(m-2-1) edge node[below] {$h \times_{h_0} \phi$} (m-2-2)
									(m-2-2) edge node[below] {$m$} (m-2-3);
		\end{tikzpicture}
	\end{displaymath}
	commutes, where the unlabelled maps are induced by the projections $J \slash f \to (J \slash f)_0 \times_{C_0} C$, $(J \slash f)_0 \times_{C_0} H \to H$ and $J \slash f \to A \times_{A_0} (J \slash f)_0 \to A$ and where $m$ is the composition of $M$. Next consider the map
	\begin{displaymath}
		(J \slash f)_0 \times_{C_0} C \times_{C_0} H \to J \slash f \times_{(J \slash f)_0} \bigpars{(J \slash f)_0 \times_{C_0} H},
	\end{displaymath}
	 that is induced by $(J \slash f)_0 \times_{C_0} C \to J \slash f$, which is in turn induced by the identity on $(J \slash f)_0 \times_{C_0} C$ and the composite $(J \slash f)_0 \times_{C_0} C \xrar r (J \slash f)_0 \xrar{(e \of (\pi_A)_0, \id)} A \times_{A_0} (J \slash f)_0$, the action $\map r{(J \slash f)_0 \times_{C_0} C}{(J \slash f)_0}$ and the identity on $H$. It is easy to see that composing this map with the diagram above gives a commuting square whose top leg is $\phi \of (\id \times_{C_0} l)$ and (using the unit axioms for $h$ and $m$) whose bottom leg is $\phi \of (r \times_{C_0} \id)$, so that $\phi$ forks $\id \times_{C_0} l$ and $r \times_{C_0} \id$. This completes the proof.
\end{proof}

	We close this chapter with its main result.
\begin{theorem} \label{weighted colimits are pointwise if double commas are strong}
	Consider a pseudo double category $\K$ that has companions as well as double comma objects, which are all strong. A cell
	\begin{displaymath}
		\begin{tikzpicture}
			\matrix(m)[math175em]{A & B \\ M & M \\};
			\path[map]	(m-1-1) edge[barred] node[above] {$J$} (m-1-2)
													edge node[left] {$d$} (m-2-1)
									(m-1-2) edge node[right] {$l$} (m-2-2);
			\path				(m-2-1) edge[eq] (m-2-2);
			\path[transform canvas={shift=($(m-1-2)!0.5!(m-2-2)$)}] (m-1-1) edge[cell] node[right] {$\zeta$} (m-2-1);
		\end{tikzpicture}
	\end{displaymath}
	exhibits $l$ as the $J$-weighted colimit of $d$ (\defref{definition:weighted colimits in double categories}) if and only if it exhibits $l$ as the pointwise $J$-weighted colimit of $d$ (\defref{definition:pointwise weighted colimits}).
\end{theorem}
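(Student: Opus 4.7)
The plan is to unpack both conditions as universal factorization properties and to show they are translated into each other by a chain of natural bijections indexed by morphisms $\map fCB$. By \propref{weighted colimits equivalent to top absolute left Kan extensions}, $\zeta$ exhibits $l$ as the $J$-weighted colimit of $d$ if and only if every cell $\phi\colon J \hc H \Rar U_M$, with left vertical $d$, arbitrary right vertical $\map kDM$ and arbitrary $\hmap HBD$, factors uniquely through the horizontal composite $\zeta \hc \id_H$ (modulo unitors) as a cell $H \Rar U_M$ with left vertical $l$ and right vertical $k$. Likewise the pointwise condition says that for every $\map fCB$ and every cell $\psi\colon C(\pi_C, \id) \hc H \Rar U_M$, with $\hmap HCD$, left vertical $d \of \pi_A$ and right vertical $k$, there is a unique factorization through (the horizontal composite with $\id_H$ of) the cell in \eqref{diagram:pointwise weighted colimit} as a cell $H \Rar U_M$ with left vertical $l \of f$ and right vertical $k$.

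The key is to establish, for each $f$, a natural bijection between these two kinds of cells. Starting from $\psi$, the strong double comma property of \defref{definition:strong double comma objects} factors $\psi$ uniquely through $\pi_f \hc \id_H$ as a cell $\psi''\colon J(\id, f) \hc H \Rar U_M$ with left vertical $d$; the canonical isomorphism $J \hc B(\id, f) \iso J(\id, f)$ from \propref{cartesian fillers in terms of companions and conjoints} then transports $\psi''$ into a cell $\phi\colon J \hc B(\id, f) \hc H \Rar U_M$ with left vertical $d$, which is exactly the shape handled by the ordinary weighted colimit property of $\zeta$ (taking $B(\id, f) \hc H$ in place of $H$). On the factorization side, a cell $\psi'\colon H \Rar U_M$ with left vertical $l \of f$ corresponds, via the conjoint cell $\eta_f$ and the adjunction of \propref{companion-conjoint adjunction}, to a cell $\phi'\colon B(\id, f) \hc H \Rar U_M$ with left vertical $l$, which is exactly the shape of the expected factorization of $\phi$ through $\zeta \hc \id_{B(\id, f) \hc H}$.

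The ``if'' direction then follows by instantiating the pointwise condition at $f = \id_B$, for which $J(\id, \id_B) \iso J$ and $B(\id, \id_B) = U_B$, so that the bijection above reduces this particular case of the pointwise condition to the ordinary one. The ``only if'' direction applies the same bijection uniformly over all $f$ to deduce the pointwise condition from the ordinary one. The main obstacle, and the substance of the proof, is to verify that the chain of bijections above is compatible with the respective ``factorization through $\zeta$'' and ``factorization through the composite of \eqref{diagram:pointwise weighted colimit}'' relations; i.e.\ that if $\psi'$ witnesses the pointwise factorization of $\psi$, then the corresponding $\phi'$ witnesses the ordinary factorization of $\phi$, and conversely. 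This reduces to a diagram chase using the companion and conjoint identities together with the defining equation of $\pi_f$ as the factorization of $\pi$ through the cartesian filler $J(\id, f) \Rar J$.
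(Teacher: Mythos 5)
Your proposal follows essentially the same route as the paper's proof: it sets up, for each $\map fCB$, the same pair of bijections (one from the strong double comma property of $J \slash f$ together with $J(\id,f) \iso J \hc B(\id,f)$, the other from the conjoint cells $\eta_f$, $\eps_f$) between the cells witnessing the pointwise condition and those witnessing the ordinary top-absolute condition, and recovers the ordinary condition from the pointwise one at $f = \id_B$. The one substantive step you defer --- verifying that these bijections intertwine the two factorization relations --- is exactly where the paper spends most of its effort (two chains of grid identities using the conjoint identities and the defining equation of $\pi_f$), but you have correctly identified both what must be checked and the tools that make it go through.
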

\begin{proof}
	Let $\hmap HCD$, $\map fCB$ and $\map eDM$ be any morphisms in $\K$, and consider cells that are of the form as in the diagram below.
	\begin{displaymath}
		\begin{tikzpicture}
			\matrix(k)[math175em]{J \slash f & C & D \\	A & & \\ M & & M \\[-3.25em] \phantom{J \slash f} & \phantom{J \slash f} & \phantom{J \slash f} \\ };
			\path[map]	(k-1-1) edge[barred] node[above] {$C(\pi_C, \id)$} (k-1-2)
													edge node[left] {$\pi_A$} (k-2-1)
									(k-1-2) edge[barred] node[above] {$H$} (k-1-3)
									(k-1-3) edge node[right] {$e$} (k-3-3)
									(k-2-1) edge node[left] {$d$} (k-3-1);
			\path				(k-3-1) edge[eq] (k-3-3);
			\path[transform canvas={shift=($(k-2-2)!0.5!(k-3-2)$)}]	(k-1-2) edge[cell] node[right] {$\phi$} (k-2-2);
			
			\matrix(l)[math175em, xshift=17.5em]{A & B & C & D \\ M & \phantom M & \phantom M & M \\};
			\path[map]	(l-1-1) edge[barred] node[above] {$J$} (l-1-2)
													edge node[left] {$d$} (l-2-1)
									(l-1-2) edge[barred] node[above] {$B(\id, f)$} (l-1-3)
									(l-1-3) edge[barred] node[above] {$H$} (l-1-4)
									(l-1-4) edge node[right] {$e$} (l-2-4);
			\path	(l-2-1) edge[eq] (l-2-4);
			\path[transform canvas={shift={($(l-1-3)!0.5!(l-2-3)$)}, xshift=-17.5em}]	(l-1-2) edge[cell] node[right] {$\psi$} (l-2-2);
			
			\matrix(m)[math175em, yshift=-12.5em]{C & D \\ B & \\ M & M \\};
			\path[map]	(m-1-1) edge[barred] node[above] {$H$} (m-1-2)
													edge node[left] {$f$} (m-2-1)
									(m-1-2) edge node[right] {$e$} (m-3-2)
									(m-2-1) edge node[left] {$l$} (m-3-1);
			\path (m-3-1) edge[eq] (m-3-2);
			\path[transform canvas={shift={($(m-2-2)!0.5!(m-3-2)$)}, yshift=12.5em}]	(m-1-1) edge[cell] node[right] {$\phi'$} (m-2-1);
			
			\matrix(n)[math175em, xshift=17.5em, yshift=-12.5em]{B & C & D \\ M & \phantom M & M \\};
			\path[map]	(n-1-1) edge[barred] node[above] {$B(\id, f)$} (n-1-2)
													edge node[left] {$l$} (n-2-1)
									(n-1-2) edge[barred] node[above] {$H$} (n-1-3)
									(n-1-3) edge node[right] {$e$} (n-2-3);
			\path	(n-2-1) edge[eq] (n-2-3)
						(n-1-2) edge[cell] node[right] {$\psi'$} (n-2-2);
			
			\path[<->]	(k) edge[shorten >= 2em, shorten <= 2em] (l)
									(m) edge[shorten >= 2em, shorten <= 2em] (n);
			\path[<->, dashed] (k) edge[shorten <= 0.75em, shorten >= 1.25em] (m)
									(l) edge[shorten >= 2em, shorten <= 2em] (n);
		\end{tikzpicture}
	\end{displaymath}
	Here the top double headed arrow denotes the correspondence between cells of the form $\phi$ and $\psi$ that follows from the fact that $J \slash f$ is a strong double comma object: for each $\phi$ there is a unique $\psi$ such that $\phi = \psi \of (\pi \hc \eta_f \hc \id_H)$ (here we have taken $J(\id, f) = J \hc B(\id, f)$ in \defref{definition:strong double comma objects} so that $\pi_f = \pi \hc \eta_f$). Also the bottom arrow denotes a bijective correspondence, that is given by the assignments $\phi' \mapsto (U_l \of \eps_f) \hc \phi'$ and $\psi' \mapsto \psi' \of (\eta_f \hc \id_H)$, where $\eps_f$ and $\eta_f$ are the conjoint cells defining $B(\id, f)$.
	
	Moreover $\zeta$ exhibits $l$ as the pointwise $J$-weighted colimit of $d$ if and only if, for any $f$, there is a correspondence between cells of the form $\phi$ and $\phi'$ as is denoted by the dashed arrow on the left: for each $\phi$ there is a unique $\phi'$ such that $\phi = (\zeta \of \pi) \hc \phi'$. Likewise if $\zeta$ exhibits $l$ as the ordinary $J$-weighted colimit of $d$ then there exists a correspondence between cells of the form $\psi$ and $\psi'$ as on the right: for each $\psi$ there is a unique $\psi'$ such that $\psi = \zeta \hc \psi'$. The latter are in fact equivalent: by taking $f = \id_B$ and vertically precomposing the cells $\psi$ and $\psi'$ with the isomorphisms $H \iso B(\id, \id) \hc H$ we see that, in terms of \propref{weighted colimits equivalent to top absolute left Kan extensions}, $\zeta$ exhibits $l$ as the top absolute left Kan extension of $d$ along $J$.
	
	We claim that the two `dashed' correspondences above are equivalent, thus proving the theorem. To see this, consider pairs of cells $(\phi, \psi)$ and $(\phi', \psi')$ that correspond under the top and bottom correspondence above. Then $\psi = \zeta \hc \psi'$ implies
	\begin{displaymath}
		\begin{tikzpicture}[textbaseline, x=0.7cm, y=0.6cm, font=\scriptsize]
			\draw	(1,1) -- (0,1) -- (0,0) -- (2,0) -- (2,2) -- (1,2) -- (1,0)
						(1,2) -- (0,2) -- (0,1);
			\draw[shift={(0.5,0.5)}]
				(0,0) node {$\zeta$}
				(0,1) node {$\pi$};
			\draw[shift={(0.5,0)}]
				(1,1) node {$\phi'$};
		\end{tikzpicture}
		\quad = \quad \begin{tikzpicture}[textbaseline, x=0.7cm, y=0.6cm, font=\scriptsize]
			\draw	(2,1) -- (2,2) -- (0,2) -- (0,0) -- (3,0) -- (3,1) -- (0,1)
						(1,0) -- (1,2);
			\draw[shift={(0.5,0.5)}]
				(0,0) node {$\zeta$}
				(0,1) node {$\pi$}
				(1,1) node {$\eta_f$};
			\draw[shift={(0,0.5)}]
				(2,0) node {$\psi'$};
		\end{tikzpicture}
		\quad = \quad \begin{tikzpicture}[textbaseline, x=0.7cm, y=0.6cm, font=\scriptsize]
			\draw	(2,1) -- (2,2) -- (0,2) -- (0,0) -- (3,0) -- (3,1) -- (0,1)
						(1,1) -- (1,2);
			\draw[shift={(0.5,0.5)}]
				(0,1) node {$\pi$}
				(1,1) node {$\eta_f$}
				(1,0) node {$\psi$};
		\end{tikzpicture}
		\quad = \quad \begin{tikzpicture}[textbaseline, x=0.7cm, y=0.6cm, font=\scriptsize]
			\draw (0,0) -- (2,0) -- (2,2) -- (0,2) -- (0,0);
			\draw (1,1) node {$\phi$};
		\end{tikzpicture}\,.
	\end{displaymath}
	For the converse assume $\phi = (\zeta \of \pi) \hc \phi'$. Then
	\begin{align*}
		\begin{tikzpicture}[textbaseline, x=0.7cm, y=0.6cm, font=\scriptsize]
			\draw	(1,1) -- (3,1) -- (3,0) -- (0,0) -- (0,1) -- (1,1) -- (1,0)
						(2,1) -- (2,2) -- (0,2) -- (0,1);
			\draw[shift={(0.5,0.5)}]
				(0,0) node {$\zeta$};
			\draw[shift={(0,0.5)}]
				(2,0) node {$\psi'$}
				(1,1) node {$\pi_f$};
		\end{tikzpicture}
		&\quad = \quad \begin{tikzpicture}[textbaseline, x=0.7cm, y=0.6cm, font=\scriptsize]
			\foreach \i in {0,1,2}
				\draw (\i,0) -- (\i,3);
			\draw (0,2) -- (3,2) -- (3,0) -- (0,0)
						(0,1) -- (2,1)
						(0,3) -- (2,3);
			\draw[shift={(0.5,0.5)}]
				(0,0) node {$\zeta$}
				(0,2) node {$\pi$}
				(1,1) node {$\eps_f$}
				(1,2) node {$\eta_f$};
			\draw[shift={(0.5,0)}]
				(2,1) node {$\phi'$};
		\end{tikzpicture}
		\quad = \quad \begin{tikzpicture}[textbaseline, x=0.7cm, y=0.6cm, font=\scriptsize]
			\draw	(1,1) -- (0,1) -- (0,0) -- (2,0) -- (2,2) -- (1,2) -- (1,0)
						(1,2) -- (0,2) -- (0,1);
			\draw[shift={(0.5,0.5)}]
				(0,0) node {$\zeta$}
				(0,1) node {$\pi$};
			\draw[shift={(0.5,0)}]
				(1,1) node {$\phi'$};
		\end{tikzpicture} \\[1em]
		&\quad = \quad \begin{tikzpicture}[textbaseline, x=0.7cm, y=0.6cm, font=\scriptsize]
			\draw (0,0) -- (2,0) -- (2,2) -- (0,2) -- (0,0);
			\draw (1,1) node {$\phi$};
		\end{tikzpicture}
		\quad = \quad \begin{tikzpicture}[textbaseline, x=0.7cm, y=0.6cm, font=\scriptsize]
			\draw	(2,1) -- (2,2) -- (0,2) -- (0,0) -- (3,0) -- (3,1) -- (0,1);
			\draw[shift={(0.5,0.5)}]
				(1,0) node {$\psi$};
			\draw[shift={(0,0.5)}]
				(1,1) node {$\pi_f$};
		\end{tikzpicture}
	\end{align*}
	which, since factorisations through $\pi_f$ are unique, implies $\psi = \zeta \hc \psi'$. This concludes the proof.
\end{proof}

	Combined with \propref{internal double comma objects are strong} we obtain:
\begin{corollary}
	Let $\E$ be a category with finite limits and reflexive coequalisers preserved by pullbacks, so that internal profunctors in $\E$ form an equipment $\inProf\E$ by \propref{bimodule equipments}. The pointwise weighted colimits in $\inProf\E$ coincide with the ordinary weighted colimits.
\end{corollary}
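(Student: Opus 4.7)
The plan is to verify that $\inProf\E$ satisfies all three hypotheses of \thmref{weighted colimits are pointwise if double commas are strong}, after which the corollary follows immediately by invoking that theorem. So the strategy is to collect the relevant prior results and observe that, since each piece of structure is already established in the excerpt, there is essentially nothing new to prove.

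First, I would check that $\inProf\E$ is an equipment, so that in particular every morphism has a companion. This is given by \propref{bimodule equipments} applied to $\Span\E$, once we note that the hypothesis on $\E$ (finite limits plus reflexive coequalisers preserved by pullbacks) is exactly what is needed for $\Span\E$ to satisfy the hypothesis of that proposition; this is spelled out in \exref{example:internal profunctor equipments}. In particular, for any internal functor $\map fAB$ the companion $B(f, \id)$ exists. Second, I would observe that $\inProf\E$ has all double comma objects, by invoking \propref{internal double comma objects}, which explicitly constructs $J \slash f$ via the wide pullback $(J \slash f)_0$ and the pullback \eqref{diagram:morphism object of internal double comma object}. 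Third, I would invoke \propref{internal double comma objects are strong}, which asserts that each of these double comma objects is strong in the sense of \defref{definition:strong double comma objects}.

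Having all three hypotheses in place, the final step is to apply \thmref{weighted colimits are pointwise if double commas are strong} to $\K = \inProf\E$: for any cell
\begin{displaymath}
	\begin{tikzpicture}
		\matrix(m)[math175em]{A & B \\ M & M \\};
		\path[map]	(m-1-1) edge[barred] node[above] {$J$} (m-1-2)
												edge node[left] {$d$} (m-2-1)
								(m-1-2) edge node[right] {$l$} (m-2-2);
		\path				(m-2-1) edge[eq] (m-2-2);
		\path[transform canvas={shift=($(m-1-2)!0.5!(m-2-2)$)}] (m-1-1) edge[cell] node[right] {$\zeta$} (m-2-1);
	\end{tikzpicture}
\end{displaymath}
in $\inProf\E$, exhibiting $l$ as the $J$-weighted colimit of $d$ is equivalent to exhibiting $l$ as the pointwise $J$-weighted colimit of $d$, which is exactly the content of the corollary.

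There is no genuine obstacle here: the work is all in the prior results, and the only task is to line them up. If anything, the most delicate point is making sure the hypotheses in the statement of the corollary match exactly those required by \propref{bimodule equipments} (so that $\inProf\E$ is actually an equipment), by \propref{internal double comma objects} (so that double comma objects exist), and by \propref{internal double comma objects are strong} (so that strength holds). Once these are verified in a single sentence each, the corollary follows by a direct appeal to the main theorem.
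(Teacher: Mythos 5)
Your proposal is correct and follows exactly the route the paper takes: the corollary is stated there as an immediate consequence of combining \thmref{weighted colimits are pointwise if double commas are strong} with \propref{internal double comma objects are strong}, with the equipment structure and existence of double comma objects supplied by \propref{bimodule equipments} and \propref{internal double comma objects} respectively. Your only addition is to spell out the hypothesis-checking that the paper leaves implicit, which is harmless.
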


  \chapter{Monads on equipments} \label{chapter:monads on equipments}
	Here we recall the notion of monads on equipments which, analogous to monads on an ordinary category, allow us to describe algebraic structures in equipments. To do so, we will first need to consider functors of equipments and their transformations: these will be recalled in the first section below, and again mostly from \cite[Section 6]{Shulman08}. Following this, \secref{section:monad examples} introduces the two main examples of monads that we will consider: the `free symmetric strict monoidal $\V$-category'-monad $\fsmc$ on $\enProf\V$ and the `free strict double category'\ndash monad $D$ on $\psProf{\GG_1}$, where $\GG_1$ is the category $\pars{0 \rightrightarrows 1}$. Both of them were briefly discussed in the introduction to this thesis. In general the functor underlying a monad $T$, on an equipment $\K$, does need not preserve the horizontal units and horizontal composition of $\K$ strictly. It will be important that the monads $T$ we consider are `normal', meaning that they preserve horizontal units. Many monads, including $\fsmc$ and $D$, are normal, and every normal monad $T$ induces a `vertical $2$-monad' $V(T)$ on the vertical $2$-category $V(\K)$. Given a $2$-monad $S$ on a $2$-category $\C$, we will recall, in \secref{section:monad examples}, the definition of the $2$-category of `pseudo' algebras, $S$\ndash morphisms and $S$-cells in $\C$. Thus, for a normal monad $T$ on $\K$, taking $S = V(T)$ allows us to define $T$-algebras, vertical $T$-morphisms and vertical $T$-cells in $\K$ as the $V(T)$\ndash algebras, $V(T)$-morphisms and $V(T)$-cells in $V(\K)$. Of course in the case $T = \fsmc$ we obtain symmetric monoidal $\V$-categories, symmetric monoidal $\V$-functors and monoidal $\V$\ndash natural transformations, whereas with $T = D$ we recover double categories, functors and transformations.

\section{Functors, transformations and monads}
  We start with the definition of functors.
\begin{definition} \label{definition:lax functor}
  Let $\K$ and $\L$ be pseudo double categories. A \emph{lax functor} $\map F\K\L$ consists of a pair of functors $\map{F_0}{\K_0}{\L_0}$ and $\map{F_1}{\K_1}{\L_1}$, such that $L F_1 = F_0 L$ and $R F_1 = F_0 R$, together with natural transformations
  \begin{displaymath}
    \begin{tikzpicture}
      \matrix(m)[math, column sep=3em, row sep=3em]
        { \K_1 \times_{\K_0} \K_1 & \K_1 & \K_0 \\
          \L_1 \times_{\L_0} \L_1 & \L_1 & \phantom{\L_0} \\};
      \path[map]  (m-1-1) edge node[above] {$\hc_\K$} (m-1-2)
                          edge node[left] {$F_1 \times_{F_0} F_1$} (m-2-1)
                  (m-1-3) edge node[above] {$U$} (m-1-2)
                          edge node[right] {$F_0$} (m-2-3)
                  (m-2-1) edge node[below] {$\hc_\L$} (m-2-2)
                  (m-1-2) edge node[left] {$F_1$} (m-2-2)
                  (m-2-3) edge node[below] {$U$} (m-2-2);
      \path[map]  (m-2-1) edge[shorten >= 8pt, shorten <= 8pt] node[above left] {$F_\hc$} node[nat] {.} (m-1-2)
                  (m-2-3) edge[shorten >= 8pt, shorten <= 8pt] node[above right] {$F_U$} node[nat] {.} (m-1-2);

      \draw       (m-2-3) node {$\L_0$,};
    \end{tikzpicture}
  \end{displaymath}
  whose components are horizontal cells, and that satisfy the usual associativity and unit axioms for lax monoidal functors, see e.g.\ \cite[Section XI.2]{MacLane98}. The transformations $F_\hc$ and $F_U$ are called the \emph{compositor} and \emph{unitor} of $F$. A \emph{normal functor} $\map F\K\L$ is a lax functor for which $F_U$ is the identity; a normal functor whose compositor $F_\hc$ is invertible is called a \emph{pseudofunctor}.
\end{definition}
	Unpacking this, a lax functor $\map F\K\L$ maps the objects, vertical and horizontal morphisms, as well as cells, of $K$ to those of $\L$, in a way that preserves horizontal and vertical sources and targets. Moreover, composition of vertical maps is preserved while horizontal compositions and units are preserved only up to natural coherent cells $\cell{F_\hc}{FJ \hc FH}{F(J \hc H)}$ and $\cell{F_U}{U_{FA}}{FU_A}$.

	The notions of normal functors and pseudofunctors above are slightly stronger than the ones in \cite[Section 6]{Shulman08}, where the unitors $F_U$ are only required to be invertible; such functors we will call \emph{weakly normal}. The stronger notions are preferable as they are easier to work with, while \propref{weakly normal functors coherence} below shows that every weakly normal functor is equivalent to a normal one. Most of the functors that we encounter will be normal.
	
	We will often use the unit axiom: for a normal functor $F$ it means that for any horizontal morphism $\hmap JAB$, the composition
	\begin{displaymath}
		U_{FA} \hc FJ = FU_A \hc FJ \xRar{F_\hc} F(U_A \hc J) \xRar{F\mathfrak l} FJ
	\end{displaymath}
	is equal to $\cell{\mathfrak l}{U_{FA} \hc FJ}{FJ}$, and similar for $\mathfrak r$. Also notice that, with $F$ still normal, the naturality of the unitor implies that $\cell{FU_f = U_{Ff}}{U_{FA}}{U_{FC}}$ for any $\map fAC$.

\begin{example}
	Any pullback preserving functor $\map F\D\E$ between categories with finite limits induces a pseudofunctor $\map{\Span F}{\Span\D}{\Span\E}$, simply by applying $F$ to the spans in $\D$. Clearly this preserves unit spans strictly, while the existence of invertible compositors follows from the fact that $F$ preserves pullbacks. The functor underlying the `free strict category'-monad is an example.
\end{example}

	Recall that the companion $C(f, \id)$ of a morphism $\map fAC$ is defined by two cells $\cell{\ls f \eps}{C(f, \id)}{U_C}$ and $\cell{\ls f \eta}{U_A}{C(f, \id)}$ that satisfy the companion identities, see the discussion following \defref{definition:cartesian filler}. That, between equipments, normal functors are the right morphisms to consider is because they preserve companions and conjoints, as follows.
	
	The following is a direct consequence of \cite[Proposition 6.8]{Shulman08}. Although the proof is relatively simple, it is important to our theory: without the inverses to the compositors below the main result (\thmref{right colax forgetful functor lifts all weighted colimits}) could not have been proven for monads that are not pseudo.
\begin{proposition}[Shulman] \label{normal functors preserve companions}
	Let $\map F\K\L$ be a normal functor between equipments. For any morphism $\map fAC$ in $\K$ the images
	\begin{displaymath}
		\cell{F\ls f\eps}{FC(f, \id)}{U_{FC}} \qquad \text{and} \qquad \cell{F\ls f\eta}{U_{FA}}{FC(f, \id)}
	\end{displaymath}
	define $FC(f, \id)$ as the companion of $Ff$ in $\L$. In particular the canonical map $FC(f, \id) \Rar FC(Ff,\id)$, that is induced by the cartesian filler of $FC(Ff, \id)$, is invertible. Moreover the compositor
	\begin{displaymath}
		\cell{F_\hc}{FC(f, \id) \hc FK}{F\bigpars{C(f, \id) \hc K}}
	\end{displaymath}
	is invertible for any promorphism $\hmap KCD$. Analogous results hold for conjoints: for each morphism $\map gBD$ the image $FD(\id, g)$ is the conjoint of $Fg$ and the compositor
	\begin{displaymath}
		\cell{F_\hc}{FK \hc FD(\id, g)}{F\bigpars{K \hc D(\id, g)}}
	\end{displaymath}
	is invertible.
\end{proposition}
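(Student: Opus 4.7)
The plan is to transport the companion identities defining $C(f,\id)$ in $\K$ along $F$ to obtain the corresponding identities in $\L$. Recall that $\cell{\ls f\eps}{C(f,\id)}{U_C}$ and $\cell{\ls f\eta}{U_A}{C(f,\id)}$ satisfy two equations: a vertical composite (involving $U_f$, $U_A$, $U_C$) equalling $U_f$, and a horizontal composite equalling the identity cell of $C(f,\id)$ after composition with the unitors $\mathfrak l$ and $\mathfrak r$ of $\K$. Since $F_1$ preserves vertical composition of cells, applying $F$ to these two equations yields identities among cells in $\L$. To translate them into the companion identities for $F\ls f\eps$ and $F\ls f\eta$ with respect to $Ff$, two facts are used: first, since $F$ is normal, $FU_A = U_{FA}$, $FU_C = U_{FC}$, and by naturality of $F_U = \id$ also $FU_f = U_{Ff}$; second, the unit axiom for the lax structure of $F$ implies that whenever one of the inputs of $F_\hc$ is a horizontal unit, its composition with $F\mathfrak l$ or $F\mathfrak r$ reduces to the unitor $\mathfrak l$ or $\mathfrak r$ of $\L$. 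With these substitutions, the $F$\nobreakdash-images of the companion identities in $\K$ become, verbatim, the companion identities for $F\ls f\eps$ and $F\ls f\eta$ in $\L$, so that $FC(f,\id)$ is a companion of $Ff$.

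The invertibility of the canonical cell $FC(f,\id) \Rar FC(Ff,\id)$ is then immediate: both horizontal morphisms are companions of the single morphism $Ff$, and any two companions are related by a unique invertible horizontal cell, obtained by factoring the cartesian filler of each through that of the other.

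For the compositor $\cell{F_\hc}{FC(f,\id) \hc FK}{F\bigpars{C(f,\id) \hc K}}$, I plan to show that both sides carry filler cells into $FK$ for the niche $FA \xrar{Ff} FC \xsrar{FK} FD$, and that $F_\hc$ intertwines them. By \propref{cartesian fillers in terms of companions and conjoints} applied in $\L$ (which applies because $FC(f,\id)$ is now a companion of $Ff$), the composite $\mathfrak l \of (F\ls f\eps \hc \id_{FK})$ is a cartesian filler. On the other side, $F\mathfrak l \of F(\ls f\eps \hc \id_K)$ is the $F$-image of the cartesian filler in $\K$; by the naturality of $F_\hc$ together with the unit axiom (used exactly as in the first paragraph to absorb $F_\hc$ with a unit input into the unitor of $\L$), this composite equals the left-hand filler after precomposition with $F_\hc$. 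Applying the universal property of the cartesian filler on the left to the right-hand filler produces a unique cell $\cell\psi{F\bigpars{C(f,\id) \hc K}}{FC(f,\id) \hc FK}$, and a short diagram chase using uniqueness of factorisations through cartesian fillers shows that $\psi$ is a two-sided inverse to $F_\hc$. The argument for conjoints is entirely dual, obtained by reversing the horizontal orientation throughout.

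The main obstacle is the bookkeeping of compositor cells and unitors: ensuring that the normality hypothesis $F_U = \id$ combines with the unit axiom of the lax functor in exactly the right way to eliminate $F_\hc$ wherever a horizontal unit appears. Without this simplification, the companion identities would only transfer up to further coherence cells, and constructing the inverse of $F_\hc$ would become considerably more delicate.
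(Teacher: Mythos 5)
Your treatment of the companion identities and of the comparison cell $FC(f, \id) \Rar FC(Ff, \id)$ is correct and is essentially the paper's own argument: the vertical identity transfers because $F$ preserves vertical composition and $FU_f = U_{Ff}$, the horizontal one by the unit axiom together with the naturality of $F_\hc$, and the comparison cell is invertible because any two companions of $Ff$ are isomorphic.

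The gap is in your proof that the compositor is invertible. Write $e_1 = \mathfrak l \of (F\ls f\eps \hc \id)$ for the cartesian filler you build in $\L$ from the companion $FC(f, \id)$, and $e_2 = F\mathfrak l \of F(\ls f\eps \hc \id)$ for the $F$-image of the cartesian filler in $\K$, so that $e_2 \of F_\hc = e_1$ as you observe. Factorising $e_2$ through $e_1$ yields your $\psi$ with $e_1 \of \psi = e_2$, whence $e_1 \of (\psi \of F_\hc) = e_1$ and the uniqueness of factorisations through $e_1$ gives $\psi \of F_\hc = \id$. But the other composite does not follow the same way: from $e_2 \of (F_\hc \of \psi) = e_2$ one can only conclude $F_\hc \of \psi = \id$ if $e_2$ itself has the uniqueness property, i.e.\ is a cartesian filler in $\L$. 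That is not available at this point --- in fact the paper deduces that $F$-images of cartesian fillers are again cartesian \emph{from} the invertibility of $F_\hc$ (see the proof of \propref{lax functors and left and right cells}), so invoking it here would be circular. As it stands, your uniqueness argument only shows that $F_\hc$ is a split monomorphism in $H(\L)(FA, FD)$.

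The missing direction requires a computation rather than a uniqueness argument. Using the unit axiom and the naturality of $F_\hc$ once more, one checks that $F_\hc \of \psi$ equals $F$ applied to the composite $C(f, \id) \hc K \xRar{\inv{\mathfrak l}} U_A \hc C(f, \id) \hc K \xRar{\ls f\eta \hc \ls f\eps \hc \id} C(f, \id) \hc U_C \hc K \xRar{\id \hc \mathfrak l} C(f, \id) \hc K$, and this composite is the identity by the horizontal companion identity for $C(f, \id)$ in $\K$ together with the coherence $(\id \hc \mathfrak l) \of \mathfrak a = \mathfrak r \hc \id$. This explicit verification is exactly how the paper closes the argument.
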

\begin{proof}
	The images $F\ls f\eps$ and $F\ls f\eta$ satisfy the vertical companion identity because $F$ preserves vertical composition; to see that they satisfy the horizontal one as well notice that
	\begin{multline*}
		\mathfrak r \of (F \ls f\eta \hc F\ls f \eps) \of \inv{\mathfrak l} = F\mathfrak r \of F_\hc \of (F\ls f\eta \hc F\ls f\eps) \of \inv{\mathfrak l} = F\mathfrak r \of F(\ls f\eta \hc \ls f\eps) \of F_\hc \of \inv{\mathfrak l} \\
		= F\mathfrak r \of F(\ls f\eta \hc \ls f\eps) \of F\inv{\mathfrak l} = F\bigpars{\mathfrak r \of (\ls f\eta \hc \ls f\eps) \of \inv{\mathfrak l}} = \id_{FC(f, \id)},
	\end{multline*}
	which follows from the unit axiom for $F$, the naturality of $F_\hc$ and the horizontal companion identity for $C(f, \id)$. We claim that the inverse to $F_\hc$ above is given by the composite
	\begin{align*}
		F\bigpars{C(f, \id) \hc K} &\xRar{\inv{\mathfrak l}} U_{FA} \hc F\bigpars{C(f, \id) \hc K} \\
		&\xRar{F\ls f\eta \hc F(\ls f\eps \hc \id)} FC(f, \id) \hc F(U_C \hc K) \xRar{\id \hc F\mathfrak l} FC(f, \id) \hc FK.
	\end{align*}
	Indeed, using the unit axiom for $F$ we find that pre- and postcomposing this with $F_\hc$ gives respectively
	\begin{multline*}
		F\bigpars{C(f, \id) \hc K} \xRar{F\inv{\mathfrak l}}  F\bigpars{U_A \hc C(f, \id) \hc K} \\
		\xRar{F(\ls f\eta \hc \ls f\eps \hc \id)} F\bigpars{C(f, \id) \hc U_C \hc K} \xRar{F(\id \hc \mathfrak l)} F\bigpars{C(f, \id) \hc K}
	\end{multline*}
	and
	\begin{multline*}
		FC(f, \id) \hc FK \xRar{\inv{\mathfrak l}} U_{FA} \hc FC(f, \id) \hc FK \\
		\xRar{F\ls f\eta \hc F\ls f\eps \hc \id} FC(f, \id) \hc U_{FC} \hc K \xRar{\id \hc \mathfrak l} FC(f, \id) \hc K
	\end{multline*}
	so that the claim follows from the horizontal companion identities for $FC(f, \id)$ and $C(f, \id)$ and the fact that $(\id \hc \mathfrak l) \of \mathfrak a = \mathfrak r \hc \id$, which follows from the coherence axioms for pseudo double categories.
\end{proof}
	
	Normal functors between equipments are compatible with the correspondences of \propref{left and right cells}, as follows.
\begin{proposition} \label{lax functors and left and right cells}
	Let $\map F\K\L$ be a normal functor. The diagram on the left commutes for any cell $\phi$ on the right, where the isomorphisms are given by the previous proposition, and the cells $\lambda\phi$ and $\lambda F\phi$ are given by \propref{left and right cells}.
	\begin{displaymath}
		\begin{tikzpicture}[baseline]
			\matrix(m)[math2em]
			{ FJ \hc FD(Fg, \id) & FJ \hc FD(g, \id) \\
				FC(Ff, \id) \hc FK & F\bigpars{J \hc D(g, \id)} \\
				FC(f, \id) \hc FK & F\bigpars{C(f, \id) \hc K} \\ };
			\path[map]	(m-1-1) edge[cell] node[left] {$\lambda F\phi$} (m-2-1)
									(m-1-2) edge[cell] node[right] {$F_\hc$} (m-2-2)
									(m-2-2) edge[cell] node[right] {$F\lambda\phi$} (m-3-2)
									(m-3-1) edge[cell] node[below] {$F_\hc$} (m-3-2);
			\path[color=white]	(m-1-1) edge node[color=black, font=] {$\iso$} (m-1-2)
									(m-2-1) edge node[color=black, sloped, font=] {$\iso$} (m-3-1);
		\end{tikzpicture}
		\qquad\qquad\begin{tikzpicture}[baseline]
		\matrix(m)[math175em]{ A & B \\ C & D \\};
		\path[map]	(m-1-1) edge[barred] node[above] {$J$} (m-1-2)
												edge node[left] {$f$} (m-2-1)
								(m-1-2) edge node[right] {$g$} (m-2-2)
								(m-2-1) edge[barred] node[below] {$K$} (m-2-2);
		\path[transform canvas={shift=($(m-1-2)!0.5!(m-2-2)$)}] (m-1-1) edge[cell] node[right] {$\phi$} (m-2-1);
	\end{tikzpicture}
	\end{displaymath}
	A similar diagram commutes for the cells $\rho F\phi$ and $F\rho \phi$.
\end{proposition}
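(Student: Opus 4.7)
The plan is to reduce the square to a diagrammatic identity about the compositor $F_\hc$, using that $F$, being normal, preserves companions in the sense made precise by \propref{normal functors preserve companions}.

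First I would observe that by that proposition, $FC(f, \id)$ equipped with the cells $F\ls f\eps$ and $F\ls f\eta$ is a companion for $Ff$ in $\L$, and the left column of the square is precisely the canonical invertible cell $FC(Ff, \id) \Rar FC(f, \id)$ comparing any two choices of companion. Under this identification, the formula for $\lambda F\phi$ in \propref{left and right cells} reads
\begin{displaymath}
  \lambda F\phi \;=\; \bigl\lbrack FJ \hc FD(g, \id) \xRar{F\ls f\eta \hc F\phi \hc F\ls g\eps} FC(f, \id) \hc FK\bigr\rbrack,
\end{displaymath}
with unitors of $\L$ understood. Here I have also used the analogous identification $FD(Fg, \id) \iso FD(g, \id)$ for the conjoint $FD(\id, \dash)$ replaced by the companion variant; the top row of the square is exactly this isomorphism. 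Thus the left-bottom leg of the square equals the cell displayed above, followed by the invertible compositor $\cell{F_\hc}{FC(f, \id) \hc FK}{F\bigpars{C(f, \id) \hc K}}$ along the bottom edge.

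Next I would analyse the top-right leg. After the isomorphism on the top edge, the compositor $\cell{F_\hc}{FJ \hc FD(g, \id)}{F(J \hc D(g, \id))}$ is invertible by \propref{normal functors preserve companions}, and similarly for the bottom edge. Applying naturality of $F_\hc$ three times (once for each of the horizontal cells $\ls f\eta$, $\phi$ and $\ls g\eps$), together with the associativity axiom for lax functors, yields
\begin{displaymath}
  F\bigpars{\ls f\eta \hc \phi \hc \ls g\eps} \of F_\hc = F_\hc \of \bigpars{F\ls f\eta \hc F\phi \hc F\ls g\eps},
\end{displaymath}
where the left-hand $F_\hc$ on the right-hand side is the one along the bottom edge. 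Since $F$ is normal, the unitors of $\K$ and of $\L$ are identified by $F$, so the interior $F\lambda\phi = F\bigpars{\ls f\eta \hc \phi \hc \ls g\eps}$ in $F(C(f, \id) \hc K)$. Composing with the inverse of the bottom $F_\hc$ gives exactly the left-bottom leg, so the square commutes.

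The dual square for $\rho$ is obtained by replacing $\ls f\eta, \ls g\eps$ by the conjoint cells $\eps_f, \eta_g$ throughout and invoking the conjoint half of \propref{normal functors preserve companions} to know the two relevant compositors are invertible; the argument is otherwise identical. The only real work is the bookkeeping of associators and unitors, but by the coherence theorem for pseudo double categories (\cite[Theorem 7.5]{Grandis-Pare99}) these may be suppressed without ambiguity, so this causes no difficulty. The main obstacle, conceptually, is recognising that the canonical comparison $FC(Ff, \id) \Rar FC(f, \id)$ of companions is exactly what makes \propref{normal functors preserve companions} a statement compatible with the formula for $\lambda$; once this is spotted, the rest is naturality of $F_\hc$.
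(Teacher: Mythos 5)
Your proof is correct, but it takes a different route from the paper's. You compute both legs head-on: you first show that the canonical comparison isomorphisms $FC(Ff,\id) \Rar FC(f,\id)$ and $FD(Fg,\id) \Rar FD(g,\id)$ carry $\ls{Ff}\eta$ to $F\ls f\eta$ and $\ls{Fg}\eps$ to $F\ls g\eps$, so that conjugating $\lambda F\phi$ by them yields $F\ls f\eta \hc F\phi \hc F\ls g\eps$, and you then conclude by naturality of $F_\hc$ applied to the threefold composite. The paper instead postcomposes the whole square with $F(\ls f\eps \hc \id)$, observes that this cell is a cartesian filler (since $F(\ls f\eps \hc \id) \of F_\hc = F\ls f\eps \hc \id$ is one and $F_\hc$ is invertible), shows that both legs then collapse to $F\phi \hc \ls{Fg}\eps$ via the companion identities, and finishes by uniqueness of factorisations through cartesian fillers; this sidesteps any need to track how the comparison isomorphism interacts with the $\eta$ cells. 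Your approach is more explicit but leaves one step only asserted: that the comparison isomorphism intertwines $\ls{Ff}\eta$ with $F\ls f\eta$ (and dually for $\eps$). This does hold — both $\chi_f \of \ls{Ff}\eta$ and $F\ls f\eta$ are factorisations of $U_{Ff}$ through the cartesian filler $F\ls f\eps$, hence equal by uniqueness — but it should be said, since it is exactly the content that makes your identification of $\lambda F\phi$ legitimate. With that line added, and modulo the slightly garbled remark about "the conjoint $FD(\id,\dash)$" (the morphism $D(g,\id)$ is a companion throughout the $\lambda$ square; conjoints only enter in the $\rho$ statement), your argument is complete and the coherence bookkeeping is handled exactly as in the paper.
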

\begin{proof}
	Writing $\ls f\eps$ and $\ls{Ff}\eps$ for the companion cells defining the companions $C(f, \id)$ and $FC(Ff, \id)$ respectively, the isomorphism on the left is the unique factorisation $FC(Ff, \id) \Rar FC(f, \id)$ of $F\ls f\eps$ through $\ls{Ff}\eps$. To show that the diagram commutes, we will postcompose it with $\map{F(\ls f\eps \hc \id)}{F\bigpars{C(f, \id) \hc K}}{FK}$. Notice that, since the composite $F(\ls f\eps \hc \id) \of F_\hc = F\ls f\eps \hc \id$ is a cartesian filler, and $F_\hc$ is an isomorphism by the previous proposition, the cell $F(\ls f\eps \hc \id)$ is a cartesian filler as well. Hence, by uniqueness of factorisations through cartesian fillers, if the diagram commutes after postcomposition with $F(\ls f\eps \hc \id)$, then it commutes itself as well. The bottom leg of the diagram above, postcomposed with $F(\ls{f}\eps \hc \id)$, is the composite
	\begin{multline*}
		\bigbrks{FJ \hc FD(Fg, \id) \xRar{\lambda F\phi} FC(Ff, \id) \hc FK \xRar{\ls{Ff}\eps \hc \id} FK} \\
		= \bigbrks{FJ \hc FD(Fg, \id) \xRar{F\phi \hc \ls{Fg}\eps} FK},
	\end{multline*}
	where the equality follows from $\lambda F\phi = \ls{Ff}\eta \hc \phi \hc \ls{Fg} \eps$ and the vertical companion identity $\ls{Ff}\eps \of \ls{Ff}\eta = U_{Ff}$. Likewise, precomposing the bottom leg with $F(\ls f\eps \hc \id)$ gives
	\begin{multline*}
		FJ \hc FD(Fg, \id) \Rar FJ \hc FD(g, \id) \xRar{F_\hc} F\bigpars{J \hc D(g, \id)} \xRar{F(\phi \hc \ls g\eps)} FK \\
		= \bigbrks{FJ \hc FD(Fg, \id) \xRar{F\phi \hc \ls{Fg}\eps} FK}
	\end{multline*}
	as well, and the proof follows.
\end{proof}

	Lax functors between closed equipments are compatible with left homs, as follows.
\begin{proposition} \label{lax functors induce left hom coherence cell}
	Let $\map F\K\L$ be a lax functor between closed equipments. The coherence cells $F_\hc$ induce canonical horizontal cells
	\begin{displaymath}
		F(J \lhom H) \hc K \Rar FJ \lhom (FH \hc K)
	\end{displaymath}
	in $\L$, for any triple of promorphisms $\hmap JAB$ and $\hmap HAC$ in $\K$, and \mbox{$\hmap K{FC}D$} in $\L$.
\end{proposition}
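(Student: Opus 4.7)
The plan is to construct the desired cell as the transpose, under the adjunction $FJ \hc \dash \ladj FJ \lhom \dash$ in $H(\L)$, of a canonical cell
\[
FJ \hc F(J \lhom H) \hc K \Rar FH \hc K.
\]
Indeed, by the correspondence \eqref{equation:horizontal composition adjunction}, which exists because $\L$ is closed, giving a horizontal cell $F(J \lhom H) \hc K \Rar FJ \lhom (FH \hc K)$ in $\L$ is the same as giving one of the displayed form, so it suffices to define the latter.

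First I would form the compositor $\cell{F_\hc}{FJ \hc F(J \lhom H)}{F\bigpars{J \hc (J \lhom H)}}$ and horizontally compose on the right with $\id_K$, obtaining
\[
FJ \hc F(J \lhom H) \hc K \xRar{F_\hc \hc \id_K} F\bigpars{J \hc (J \lhom H)} \hc K.
\]
Next I would apply $F$ to the counit $\cell{\ev}{J \hc (J \lhom H)}H$ of the adjunction $J \hc \dash \ladj J \lhom \dash$ in $H(\K)$, and again horizontally compose on the right with $\id_K$, giving
\[
F\bigpars{J \hc (J \lhom H)} \hc K \xRar{F\ev \hc \id_K} FH \hc K.
\]
Taking the vertical composite of these two cells produces the adjoint cell required above, whose transpose under the closed structure of $\L$ is the sought coherence cell $F(J \lhom H) \hc K \Rar FJ \lhom (FH \hc K)$.

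This construction is canonical because $F_\hc$ is a natural transformation, $\ev$ is universally determined, and adjoint transposition is a bijection, so the resulting cell depends only on $F$ and on the data $J$, $H$ and $K$. There is no genuine obstacle in the argument; the only care needed is in suppressing the associators, but since all choices of bracketing are identified by the coherence theorem for pseudo double categories, this has no effect on the result. In particular, when $F$ is a pseudofunctor the compositor $F_\hc$ is invertible, and one can combine this construction with \propref{left and right hom properties} to recognise the cell as part of an isomorphism, but for the statement at hand only the existence of a canonical cell is asserted.
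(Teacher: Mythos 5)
Your construction is exactly the paper's: take the adjoint, under $FJ \hc \dash \ladj FJ \lhom \dash$, of the composite $FJ \hc F(J \lhom H) \hc K \xRar{F_\hc \hc \id} F(J \hc J \lhom H) \hc K \xRar{F\ev \hc \id} FH \hc K$. The proof is correct and matches the one in the text.
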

\begin{proof}
	Simply take the adjoint of
	\begin{displaymath}
		FJ \hc F(J \lhom H) \hc K \xRar{F_\hc \hc \id} F(J \hc J \lhom H) \hc K \xRar{F\ev \hc \id} FH \hc K. \qedhere
	\end{displaymath}
\end{proof}

	Having introduced functors we turn to transformations.
\begin{definition} \label{definition:transformation}
  A \emph{transformation} $\nat\xi FG$ between lax functors $F$ and $\map G\K\L$ is given by natural transformations $\nat{\xi_0}{F_0}{G_0}$ and $\nat{\xi_1}{F_1}{G_1}$, with $L\xi_1 = \xi_0 L$ and $R\xi_1 = \xi_0R$, and such that the following diagrams commute, which we will call the composition and unit axiom respectively.
  \begin{displaymath}
	  \begin{tikzpicture}[textbaseline]
			\matrix(m)[math2em]
			{	\hc_\L \of \pars{F_1 \times_{F_0} F_1} & F_1 \of \hc_\K \\
				\hc_\L \of \pars{G_1 \times_{G_0} G_1} & G_1 \of \hc_\K \\ };
			\path[map]	(m-1-1) edge node[nat, below] {.} node[above] {$F_\hc$} (m-1-2)
													edge node[nat] {.} node[left] {$\hc_\L \of \pars{\xi_1 \times_{\xi_0} \xi_1}$} (m-2-1)
									(m-1-2) edge node[nat, below] {.} node[right] {$\xi_1 \of \hc_\K$} (m-2-2)
									(m-2-1) edge node[nat] {.} node[below] {$G_\hc$} (m-2-2);
	  \end{tikzpicture}
	  \qquad
		\begin{tikzpicture}[textbaseline]
			\matrix(m)[math2em]
			{	U \of F_0 & F_1 \of U \\
				U \of G_0 & G_1 \of U \\ };
			\path[map]	(m-1-1) edge node[nat, below] {.} node[above] {$F_U$} (m-1-2)
													edge node[nat] {.} node[left] {$U \of \xi_0$} (m-2-1)
									(m-1-2) edge node[nat, below] {.} node[right] {$\xi_1 \of U$} (m-2-2)
									(m-2-1) edge node[nat] {.} node[below] {$G_U$} (m-2-2);
	  \end{tikzpicture}
	\end{displaymath}
\end{definition}
	Unpacking this, a natural transformation $\xi$ consists of a vertical morphism $\map{\xi_A}{FA}{GA}$ for every object $A$ of $\K$, as well as a cell $\xi_J$ for every horizontal morphism $\hmap JAB$ in $\K$, as on the left below. The composition and unit axioms state that the diagrams on the right commute. In the case of normal functors $F$ and $G$ the unit diagram below reduces to $(\xi)_{U_A} = U_{\xi_A}$, for every object $A$ of $\K$.
\begin{displaymath}
	\begin{tikzpicture}[baseline]
		\matrix(m)[math175em]{ FA & FB \\ GA & GB \\};
		\path[map]	(m-1-1) edge[barred] node[above] {$FJ$} (m-1-2)
												edge node[left] {$\xi_A$} (m-2-1)
								(m-1-2) edge node[right] {$\xi_B$} (m-2-2)
								(m-2-1) edge[barred] node[below] {$GJ$} (m-2-2);
		\path[transform canvas={shift=($(m-1-2)!0.5!(m-2-2)$)}] (m-1-1) edge[cell] node[right] {$\xi_J$} (m-2-1);
	\end{tikzpicture}
	\qquad\begin{tikzpicture}[baseline]
		\matrix(m)[math175em]{ FJ \hc FH & F(J \hc H) \\ GJ \hc GH & G(J \hc H) \\};
		\path	(m-1-1) edge[cell] node[above] {$F_\hc$} (m-1-2)
												edge[cell] node[left] {$\xi_J \hc \xi_H$} (m-2-1)
								(m-1-2) edge[cell] node[right] {$\xi_{J \hc H}$} (m-2-2)
								(m-2-1) edge[cell] node[below] {$G_\hc$} (m-2-2);
	\end{tikzpicture}
	\qquad\begin{tikzpicture}[baseline]
		\matrix(m)[math175em]{ U_{FA} & FU_A \\ U_{GA} & GU_A \\};
		\path	(m-1-1) edge[cell] node[above] {$F_U$} (m-1-2)
												edge[cell] node[left] {$U_{\xi_A}$} (m-2-1)
								(m-1-2) edge[cell] node[right] {$\xi_{U_A}$} (m-2-2)
								(m-2-1) edge[cell] node[below] {$G_U$} (m-2-2);
	\end{tikzpicture}
\end{displaymath}

	Like normal functors between equipments, transformations between such functors are also compatible with companions and conjoints, in the following sense.
\begin{proposition} \label{transformations and companions}
	Let $\nat\xi FG$ be a transformation of normal functors $F$ and $\map G\K\L$ between equipments. For any morphism $\map fAC$ the components $\xi_{C(f, \id)}$ and $\xi_{C(\id, f)}$ coincide with the composites
	\begin{displaymath}
		\begin{tikzpicture}[baseline]
			\matrix(m)[math175em]
			{	FA & FA & FA & FC \\
				GA & GA & FC & FC \\
				GA & GC & GC & GC \\ };
			\path[map]	(m-1-1) edge node[left] {$\xi_A$} (m-2-1)
									(m-1-2) edge node[left] {$\xi_A$} (m-2-2)
									(m-1-3) edge[barred] node[above] {$FC(f, \id)$} (m-1-4)
													edge node[left] {$Ff$} (m-2-3)
									(m-2-2) edge node[right] {$Gf$} (m-3-2)
									(m-2-3) edge node[right] {$\xi_C$} (m-3-3)
									(m-2-4) edge node[right] {$\xi_C$} (m-3-4)
									(m-3-1) edge[barred] node[below] {$GC(f, \id)$} (m-3-2);
			\path	(m-1-1)	edge[eq] (m-1-2)
						(m-1-2) edge[eq] (m-1-3)
						(m-1-4) edge[eq] (m-2-4)
						(m-2-1) edge[eq] (m-2-2)
										edge[eq] (m-3-1)
						(m-2-3) edge[eq] (m-2-4)
						(m-3-2) edge[eq] (m-3-3)
						(m-3-3) edge[eq] (m-3-4);
			\path[transform canvas={shift=(m-2-3), xshift=-0.7em}]	(m-2-1) edge[cell] node[right] {$G\ls f\eta$} (m-3-1)
						(m-1-3) edge[cell] node[right] {$F\ls f\eps$} (m-2-3);
		\end{tikzpicture}
		\qquad\text{and}\qquad\begin{tikzpicture}[baseline]
			\matrix(m)[math175em]
			{	FC & FA & FA & FA \\
				FC & FC & GA & GA \\
				GC & GC & GC & GA. \\ };
			\path[map]	(m-1-1) edge[barred] node[above] {$FC(\id, f)$} (m-1-2)
									(m-1-2) edge node[right] {$Ff$} (m-2-2)
									(m-1-3) edge node[right] {$\xi_A$} (m-2-3)
									(m-1-4) edge node[right] {$\xi_A$} (m-2-4)
									(m-2-1) edge node[left] {$\xi_C$} (m-3-1)
									(m-2-2) edge node[left] {$\xi_C$} (m-3-2)
									(m-2-3) edge node[left] {$Gf$} (m-3-3)
									(m-3-3) edge[barred] node[below] {$GC(\id, f)$} (m-3-4);
			\path	(m-1-1)	edge[eq] (m-2-1)
						(m-1-2) edge[eq] (m-1-3)
						(m-1-3) edge[eq] (m-1-4)
						(m-2-1) edge[eq] (m-2-2)
						(m-3-1)	edge[eq] (m-3-2)
						(m-2-3) edge[eq] (m-2-4)
						(m-3-2) edge[eq] (m-3-3)
						(m-2-4) edge[eq] (m-3-4);
			\path[transform canvas={shift=(m-2-3), xshift=-0.7em}]	(m-2-3) edge[cell] node[right] {$G\eta_f$} (m-3-3)
						(m-1-1) edge[cell] node[right] {$F\eps_f$} (m-2-1);
		\end{tikzpicture}
	\end{displaymath}
\end{proposition}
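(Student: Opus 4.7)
The plan is to exploit that $FC(f, \id)$ and $GC(f, \id)$ are the companions of $Ff$ and $Gf$, as established in \propref{normal functors preserve companions}, so defined by the cartesian fillers $F\ls f\eps$ and $G\ls f\eps$ respectively. By the uniqueness part of the universal property of cartesian fillers (\defref{definition:cartesian filler}), any cell with horizontal source $FC(f, \id)$, horizontal target $GC(f, \id)$, vertical source $\xi_A$ and vertical target $\xi_C$ is uniquely determined by its vertical composite with $G\ls f\eps$. Writing $\alpha$ for the composite appearing on the right-hand side of the displayed identity, it therefore suffices to prove the single equation $G\ls f\eps \of \xi_{C(f, \id)} = G\ls f\eps \of \alpha$.

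For the left-hand side, the naturality of $\nat{\xi_1}{F_1}{G_1}$ applied to the morphism $\ls f\eps$ of $\K_1$ yields
\begin{displaymath}
	G\ls f\eps \of \xi_{C(f, \id)} = \xi_{U_C} \of F\ls f\eps.
\end{displaymath}
Because $F$ and $G$ are normal, so that $F_U$ and $G_U$ are identity cells, the unit axiom for $\xi$ forces $\xi_{U_C} = U_{\xi_C}$, reducing the right-hand side to $U_{\xi_C} \of F\ls f\eps$.

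For the right-hand side, postcomposing $\alpha$ with $G\ls f\eps$ places the latter directly beneath the cell $G\ls f\eta$ that occupies the bottom-left region of the grid defining $\alpha$. The vertical companion identity $G\ls f\eps \of G\ls f\eta = U_{Gf}$, together with the interchange law and the naturality identity $Gf \of \xi_A = \xi_C \of Ff$ coming from $\nat{\xi_0}{F_0}{G_0}$, collapses the enlarged grid to the cell $U_{\xi_C} \of F\ls f\eps$. The two computations agree and the first claimed identity follows. The statement about $\xi_{C(\id, f)}$ is strictly dual: it uses that $FC(\id, f)$ and $GC(\id, f)$ are conjoints (again by \propref{normal functors preserve companions}), checking the equality after precomposition with the cartesian filler $F\eps_f$, invoking the naturality of $\xi_1$ at $\eta_f$ and the vertical conjoint identity $F\eta_f \of F\eps_f = U_{Ff}$.

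The main obstacle is the bookkeeping in the collapse step: one must carefully track how the various horizontal unit cells, unitors $\mathfrak l, \mathfrak r$, and the companion identity interact across the three rows and four columns of the grid defining $\alpha$. The normality of $F$ and $G$ is essential here, as it eliminates several intermediate unitor components that would otherwise need to be manipulated explicitly.
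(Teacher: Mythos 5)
Your proof is correct and uses the same ingredients as the paper's: the naturality of $\xi$ at a companion cell, the unit axiom $\xi_{U_C} = U_{\xi_C}$, and the companion identities together with the fact that normal functors preserve companions. The only difference is one of organization — the paper verifies the identity by naturality at $\ls f\eta$ and then horizontally composes with $F\ls f\eps$, invoking the horizontal companion identity, whereas you check equality after vertical postcomposition with the cartesian filler $G\ls f\eps$ and conclude by uniqueness of factorisations; these are mirror images of the same argument.
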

\begin{proof}
	That $\xi_{C(f, \id)}$ equals the composite on the left follows directly by horizontally composing the following identity, which follows from the unit axiom of $\xi$ and its naturality, on the right by $F\ls f\eps$.
	\begin{displaymath}
		G \ls f\eta \of U_{\xi_A} = G \ls f\eta \of \xi_{U_A} = \xi_{C(f, \id)} \of F\ls f\eta
	\end{displaymath}
	A similar argument shows that $\xi_{C(\id, f)}$ coincides with the composite on the right.
\end{proof}

	We recall \cite[Proposition 6.17]{Shulman08}.
\begin{proposition}[Shulman]
  Small equipments, lax functors and transformations form a strict $2$-category $\lEquip$. Restricting to normal functors and pseudofunctors we obtain respectively sub-$2$-categories $\nEquip$ and $\psEquip$.
\end{proposition}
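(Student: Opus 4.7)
The plan is to exhibit the data of the purported $2$-category $\lEquip$ explicitly and then check that all the required axioms reduce to routine diagram chases. Composition of lax functors $\map F\K\L$ and $\map G\L\M$ is defined objectwise by $(G \of F)_0 = G_0 \of F_0$ and $(G \of F)_1 = G_1 \of F_1$, with compositor and unitor
\begin{align*}
  (G \of F)_\hc &= \bigbrks{GFJ \hc GFH \xRar{G_\hc} G(FJ \hc FH) \xRar{GF_\hc} GF(J \hc H)} \\
  (G \of F)_U &= \bigbrks{U_{GFA} \xRar{G_U} GU_{FA} \xRar{GF_U} GFU_A}
\end{align*}
for horizontally composable $\hmap JAB$, $\hmap HBC$ and $A \in \ob \K$. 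The identity lax functor on $\K$ is the obvious one, with identity compositor and unitor. The associativity and unit axioms for $G \of F$ follow from those for $F$ and $G$ combined with the functoriality of $G_1$ and the naturality of $G_\hc$ and $G_U$; these are well-known calculations for lax functors, see e.g.\ \cite[Section 4]{Leinster04} for the bicategorical analogue.

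Next I would define the two compositions of transformations. Given $\nat\xi FG$ and $\nat\chi GH$ between parallel lax functors $F$, $G$, $\map H\K\L$, the \emph{vertical} composite $\nat{\chi \of \xi}FH$ is given by $(\chi \of \xi)_0 = \chi_0 \of \xi_0$ on objects and $(\chi \of \xi)_J = \chi_J \of \xi_J$ on horizontal morphisms, where the latter composition is the vertical composition of cells in $\L$. The composition and unit axioms for $\chi \of \xi$ follow by pasting together those of $\xi$ and $\chi$. For \emph{horizontal} composition (whiskering), given a further lax functor $\map K\L\M$ and $\nat\psi KL$ between $K$ and $\map L\L\M$, one defines $\nat{\psi * \xi}{K \of F}{L \of G}$ by taking its object part $K\xi_A$ composed with $\psi_{GA}$, and its component at $\hmap JAB$ to be the vertical composite $\psi_{GJ} \of K\xi_J$; the composition and unit axioms are verified by stacking the respective axioms of $\xi$ and $\psi$ and using naturality of $\psi_1$. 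The identity transformation on $F$ has as its component at $\hmap JAB$ the cell $U_{FJ}$; its composition axiom follows from the unit axiom for vertical composition of cells applied to $F_\hc$, and its unit axiom is $U_{U_{FA}} = U_{FU_A \of F_U}$ which reduces to the unit laws in $\L_1$.

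The remaining $2$-categorical axioms---strict associativity of vertical composition of transformations, strict associativity of horizontal composition, the unit laws for both, and the interchange law $(\chi' \of \chi) * (\xi' \of \xi) = (\chi' * \xi') \of (\chi * \xi)$---all reduce to the corresponding axioms for vertical and horizontal composition of cells in $\L_1$, respectively $\M_1$, and thus to the fact that $\L_1$ and $\M_1$ are already categories (so vertical composition is strictly associative and unital) and that horizontal composition $\hc$ is a functor (giving interchange). At no point do the associators or unitors of the pseudo double categories enter these verifications, since transformations live `above' the horizontal composition; this is why $\lEquip$ is strict.

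Finally, to obtain $\nEquip$ and $\psEquip$ as sub-$2$-categories, one must check that the composition and identity of lax functors preserve normality and pseudo-ness. For identities this is immediate. For composition, if $F$ and $G$ are normal then $F_U$ and $G_U$ are identities, whence $(G \of F)_U = GF_U \of G_U$ is the identity; similarly if both compositors are invertible then so is $(G \of F)_\hc$, as a vertical composite of two invertible cells. The expected main obstacle, though it is purely bookkeeping, is to write out the interchange law cleanly: because the horizontal composite $\psi * \xi$ mixes whiskering by a functor $K$ with pre-composition by $\psi$, one must carefully expand both sides into pasting diagrams over the cells $F_\hc$, $G_\hc$, $K\xi_J$, $\psi_{GJ}$ and $L\xi_J$, and use the naturality of $\psi_1$ together with functoriality of $K_1$ and $L_1$ to identify them.
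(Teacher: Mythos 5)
Your sketch is correct and is the standard verification; the paper itself gives no proof of this proposition, simply recalling it as Proposition 6.17 of Shulman's paper, so there is nothing to compare against beyond the routine diagram chases you outline (strict associativity of composition of lax functors, interchange via functoriality of $\hc$ and naturality of the $2$-cell components, and closure of normality and pseudo-ness under composition). One small notational slip: the component of the identity transformation on $F$ at a horizontal morphism $\hmap JAB$ is the identity cell $\id_{FJ}$ of $\L_1$ in the paper's notation, not $U_{FJ}$, which would denote a horizontal unit cell; with that read correctly its composition and unit axioms are immediate.
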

	Of course $\lEquip$ is contained in the larger $2$-category $\lDbl$ of small pseudo double categories, lax functors and transformations, which in turn contains sub-$2$\ndash cat\-egories $\nDbl$ and $\psDbl$ consisting of its normal functors and pseudofunctors respectively. The following is \cite[Example 6.20]{Shulman08}, where we have denoted by $\Cat_{\,\textup{fl}}$ the $2$\ndash cat\-egory of categories with finite limits, pullback preserving functors, and natural transformations.
\begin{proposition}[Shulman] \label{span is a 2-functor}
	The assignment $\E \mapsto \Span\E$ given by \exref{example:spans} extends to a $2$-functor $\map{\mathsf{Span}}{\Cat_{\,\textup{fl}}}{\psEquip}$.
\end{proposition}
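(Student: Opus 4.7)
The plan is to construct $\mathsf{Span}$ level by level and then verify the axioms, exploiting throughout the fact that $\Span\E$ is built entirely from the finite-limit structure of $\E$.

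First, on a pullback-preserving functor $\map F\D\E$ I would define $\Span F\colon \Span\D \to \Span\E$ by applying $F$ everywhere: it sends an object $A$ of $\D$ to $FA$, a morphism $\map fAB$ to $Ff$, a span $A \xlar{j_A} J \xrar{j_B} B$ to $FA \xlar{Fj_A} FJ \xrar{Fj_B} FB$, and a cell (i.e.\ a commuting square of the form \eqref{diagram:cell of spans}) to its $F$-image. Since $F$ preserves finite limits, it sends any chosen pullback $J \times_B H$ to a pullback of $F(j_B)$ and $F(h_B)$, so the compositor
\begin{displaymath}
	(\Span F)_\hc\colon FJ \times_{FB} FH \xRar{\iso} F(J \times_B H)
\end{displaymath}
is the canonical comparison cell and is invertible; similarly the unitor $(\Span F)_U\colon U_{FA} \Rar FU_A$ is given by the canonical isomorphism $F(\id_A) = \id_{FA}$ and is in fact the identity (so $\Span F$ is not merely pseudo but normal-and-pseudo). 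The associativity and unit coherence axioms reduce to the universal property of pullbacks, so they hold automatically.

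Next, for a natural transformation $\nat\xi FG$ between pullback-preserving functors, I would define $\Span\xi$ by taking $(\Span\xi)_0 = \xi$ on objects and vertical morphisms, and, on a horizontal morphism $\hmap JAB$, taking $(\Span\xi)_J$ to be the cell whose underlying map is $\xi_J\colon FJ \to GJ$ and whose four edges ($F j_A$, $\xi_A$, $\xi_B$, $Gj_A$ etc.) commute by naturality of $\xi$. Functoriality in $\xi_1$ follows from naturality applied to morphisms of spans; the composition axiom of \defref{definition:transformation} reduces to the fact that $\xi$ commutes with the canonical pullback comparison maps, which holds because both legs are the unique map into $G(J \times_B H)$ induced by $\xi_{J} \of F(p)$ and $\xi_H \of F(q)$; the unit axiom is trivial since both unitors are identities.

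Finally I would verify the $2$-functoriality: $\Span(\id_\E) = \id_{\Span\E}$ and $\Span(G \of F) = \Span G \of \Span F$ hold strictly on the underlying data (applying $F$ then $G$ is the same as applying $G \of F$), and the compositors/unitors compose correctly because the canonical comparison into $(G \of F)(J \times_B H)$ factors uniquely through $G$ applied to the comparison for $F$. On $2$-cells, $\Span$ preserves vertical and horizontal composition of natural transformations componentwise, again by using that it acts by applying the relevant functor/natural transformation on underlying maps in $\E$.

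The only mildly subtle point is checking the composition axiom for $\Span\xi$ against the (pseudo) compositors of $\Span F$ and $\Span G$; this is the main obstacle but, as indicated above, it collapses to a uniqueness argument for maps into a pullback, so no real computation is required. Everything else is routine bookkeeping built on the observation that $\Span$ is nothing more than the finite-limit structure of $\E$ packaged as an equipment, and $\Cat_{\,\textup{fl}}$ is by definition the $2$-category of things that respect that structure.
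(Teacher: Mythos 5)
The paper does not prove this proposition itself --- it is cited directly from Shulman's article as \cite[Example 6.20]{Shulman08} --- but your construction is exactly the standard one that reference supplies: $\Span F$ acts by applying $F$ to all data, with invertible compositor from pullback preservation and identity unitor, and $\Span\xi$ acts by the components of $\xi$, with the transformation axiom reduced to uniqueness of maps into the pullback $G(J \times_B H)$. The argument is correct; the only cosmetic slip is in the phrase ``induced by $\xi_J \of F(p)$'', where the two legs should be compared via their composites with the projections $G(J \times_B H) \to GJ$ and $G(J \times_B H) \to GH$, but this does not affect the validity of the uniqueness argument.
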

	
	Following \cite{Shulman08} we denote by $\qlEquip$ and $\qnEquip$ the sub-$2$-categories of $\lEquip$ and $\nEquip$ consisting of equipments $\K$ for which every $H(\K)(A, B)$ has reflexive coequalisers that are preserved by $\dash \hc \dash$ on both sides, that is those equipments that satisfy the conditions of \propref{bimodule equipments}. The following result is \cite[Proposition 11.12]{Shulman08}.
\begin{proposition}[Shulman] \label{mod is a 2-functor}
	The assignment $\K \mapsto \Mod\K$ given by \propref{bimodule equipments} extends to a $2$-functor $\map{\mathsf{Mod}}\qlEquip\qnEquip$. The image $\Mod F$ of a pseudofunctor $\map F\K\L$ is again a pseudofunctor whenever $F$ preserves the reflexive coequalisers of $H(\K)(A, B)$.
\end{proposition}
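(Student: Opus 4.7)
The plan is to construct $\Mod F$ component by component, then verify functoriality and normality, and finally analyse when the compositor is invertible.

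First I would define $\Mod F$ on the data of $\Mod\K$. On a monoid $A = (A_0, A, m, e)$ in $\K$, set $\Mod F(A)$ to be the monoid with underlying object $FA_0$, underlying promorphism $FA$, multiplication $FA \hc FA \xRar{F_\hc} F(A \hc A) \xRar{Fm} FA$, and unit $U_{FA_0} \xRar{F_U} FU_{A_0} \xRar{Fe} FA$. The associativity and unit axioms for this multiplication follow from the associativity and unit axioms for $(m,e)$ combined with the coherence axioms of the lax functor $F$ (\defref{definition:lax functor}). On a morphism $\map fAB$ of monoids, set $\Mod F(f) = (Ff_0, Ff)$; compatibility with multiplications and units is immediate from the naturality of $F_\hc$ and $F_U$. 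On an $(A,B)$-bimodule $\hmap JAB$, set $\Mod F(J)$ to be $FJ$ with left action $FA \hc FJ \xRar{F_\hc} F(A \hc J) \xRar{Fl_J} FJ$ and analogous right action; the bimodule axioms again follow from the coherence of $F_\hc$. On a cell $\phi$ of $\Mod\K$ put $\Mod F(\phi) = F\phi$; the compatibility of $F\phi$ with the new actions is a direct consequence of the naturality of $F_\hc$ applied to $f \hc \phi$ and $\phi \hc g$.

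Next I would verify that $\Mod F$ is a \emph{normal} functor $\Mod\K \to \Mod\L$. The horizontal unit on a monoid $A$ in $\Mod\K$ is the bimodule $A$ itself, and by our definition its image under $\Mod F$ is the bimodule underlying $\Mod F(A)$; hence the unitor $(\Mod F)_U$ can be taken to be the identity, giving normality. For the compositor, fix composable bimodules $\hmap JAB$ and $\hmap HBC$ and recall that $J \hc_B H$ and $FJ \hc_{FB} FH$ are the reflexive coequalisers described in the proof of \propref{bimodule equipments}. The cell $F_\hc \colon FJ \hc FH \Rar F(J \hc H)$ descends, using the universal property of the first coequaliser together with the compatibility of $F_\hc$ with the $B$-action (which itself follows from the coherence of $F$), to a unique horizontal cell $(\Mod F)_\hc \colon FJ \hc_{FB} FH \Rar F(J \hc_B H)$. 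The associativity and unit axioms required of a lax functor then follow diagrammatically from the corresponding axioms for $F_\hc$ and $F_U$, by uniqueness of factorisations through coequalisers.

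For the $2$-functoriality I would extend $\Mod$ to transformations: given $\nat\xi FG$, the components $(\Mod\xi)_A := (\xi_{A_0}, \xi_A)$ define monoid morphisms (by the composition and unit axioms of \defref{definition:transformation}) and $(\Mod\xi)_J := \xi_J$ gives cells of bimodules (using naturality of $\xi$ against the actions). Strict preservation of identities and composition of lax functors, as well as of vertical and horizontal composition of transformations, reduces to the same properties for $F$ and $\xi$ in $\lEquip$, and is routine. Finally, if $F$ is a pseudofunctor then $F_\hc$ is invertible, and if in addition $F$ preserves reflexive coequalisers in each $H(\K)(A,B)$, then $F$ sends the defining coequaliser for $J \hc_B H$ to a coequaliser in $H(\L)(FA_0, FC_0)$. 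An easy diagram chase using this and the invertibility of $F_\hc$ exhibits an inverse to $(\Mod F)_\hc$, proving that $\Mod F$ is then a pseudofunctor.

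The main technical obstacle is the construction and verification of $(\Mod F)_\hc$: one must carefully track how $F_\hc$ interacts with the reflexive coequalisers that define the composition of bimodules, and in the pseudo case one must feed the preservation of reflexive coequalisers into the universal property on both sides in order to invert the induced map. Everything else is a matter of unwinding the lax/normal functor axioms and the naturality of $\xi$.
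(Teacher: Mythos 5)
Your construction matches the paper's own proof sketch step for step: transporting monoids, bimodules and their actions along $F$ via $F_\hc$ and $F_U$, taking the unitor of $\Mod F$ to be the identity since $FU_A = U_{FA}$, inducing $(\Mod F)_\hc$ by the universal property of the reflexive coequaliser defining $FJ \hc_{FB} FH$, and inverting it in the pseudo case by comparing the two coequaliser rows when $F$ preserves them. The proposal is correct and takes essentially the same route.
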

\begin{proof}[Sketch of the proof]
	A lax functor $\map F\K\L$ between equipments $\K$ and $\L$ preserves monoids, morphisms of monoids, bimodules and cells of bimodules in the same way that lax monoidal functors do. Now suppose that $\K$ and $\L$ are contained in $\qlEquip$, so that their monoids and bimodules form equipments $\Mod\K$ and $\Mod\L$ by \propref{bimodule equipments}. Since the horizontal unit $U_A$ for a monoid $\hmap A{A_0}{A_0}$ in $\K$ is $A$ itself, we have $FU_A = U_{FA}$; so we can take the unitor of $\Mod F$ to be the identity. To construct the compositor $\cell{\Mod F_\hc}{FJ \hc_{FB} FH}{F(J \hc_B H)}$ for composable bimodules $\hmap JAB$ and $\hmap HBC$, consider the following diagram in $H(\L)(A_0, C_0)$.
	\begin{displaymath}
		\begin{tikzpicture}
			\matrix(m)[math2em]
			{	FJ \hc FB \hc FH & FJ \hc FH & FJ \hc_{FB} FH \\
				F(J \hc B \hc H) & F(J \hc H) & F(J \hc_B H) \\};
			\path	(m-1-1)	edge[transform canvas={yshift=3.5pt}, cell] (m-1-2)
										edge[transform canvas={yshift=-3.5pt}, cell] (m-1-2)
										edge[cell] (m-2-1)
						(m-1-2) edge[cell] (m-1-3)
										edge[cell] node[right] {$F_\hc$} (m-2-2)
						(m-1-3) edge[cell, dashed] node[right] {$\Mod F_\hc$} (m-2-3)
						(m-2-1) edge[transform canvas={yshift=3.5pt}, cell] (m-2-2)
										edge[transform canvas={yshift=-3.5pt}, cell] (m-2-2)
						(m-2-2) edge[cell] (m-2-3);
		\end{tikzpicture}
	\end{displaymath}
	Here the top row is the coequaliser defining $FJ \hc_{FB} FH$ in $\Mod\L$, the bottom row is the image of the coequaliser defining $J \hc_B H$ and the solid vertical cells are given by the compositors of $F$. We can take the compositor $\Mod F_\hc$ to be the unique cell given by the universal property of the coequaliser $FJ \hc_{FB} FH$, making $\Mod F$ into a normal functor $\Mod\K \to \Mod\L$. Moreover, if $F$ is a pseudofunctor, so that the solid vertical cells are invertible, then $\Mod F_\hc$ is again invertible when coequaliser at the bottom row is preserved by $F$, thus proving the second assertion. Similarly the components of a transformation $\nat\xi FG$ induce a transformation $\nat{\Mod\xi}{\Mod F}{\Mod G}$.
\end{proof}

	Remember that every pseudo double category $\K$ induces a $2$-category $V(\K)$ of vertical morphisms and vertical cells. In the proposition below $\twoCat$ denotes the $2$-category of $2$-categories, $2$-functors and $2$\ndash transformations.
\begin{proposition}\label{normal functors induce vertical functors}
  The assignment $\K \mapsto V(\K)$ extends to a strict $2$-functor
  \begin{displaymath}
	  \map V\nDbl \twoCat.
	\end{displaymath}
\end{proposition}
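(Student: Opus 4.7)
The plan is to unpack what the functor $V$ must do on each cell dimension and verify strictness on the nose. For a normal functor $\map F\K\L$, I define $V(F)$ on objects and vertical morphisms by the underlying functor $F_0$, and on vertical cells $\cell\phi fg$ by $F_1\phi$. The first thing to check is that $F_1\phi$ is indeed vertical, i.e.\ its horizontal source and target are identities: this uses normality, since $FU_A = U_{FA}$ and hence by naturality $FU_f = U_{Ff}$, so $F_1$ sends the horizontal identities of $\phi$ to horizontal identities in $\L$.

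Next I would verify the $2$-functor axioms for $V(F)$. Preservation of $1$-cell composition is immediate since $F_0$ is a functor; preservation of \emph{horizontal} composition of vertical cells in $V(\K)$ (which by definition is vertical composition of cells in $\K$) is immediate since $F_1$ is a functor. The only delicate point is preservation of \emph{vertical} composition of vertical cells in $V(\K)$, which is the composite $\mathfrak l \of (\phi \hc \psi) \of \inv{\mathfrak l}$ in $\K_1$. Applying $F_1$ and using the naturality of $F_\hc$ gives $F\mathfrak l \of F_\hc \of (F\phi \hc F\psi) \of \inv{F_\hc} \of F\inv{\mathfrak l}$. The key lemma here is that, for a normal functor, the unit axiom $F\mathfrak l \of F_\hc = \mathfrak l$ together with the invertibility of both $\mathfrak l$ and $F\mathfrak l$ forces the component $\cell{F_\hc}{U_{FA} \hc U_{FA}}{F(U_A \hc U_A)}$ to be invertible, and in particular $F\mathfrak l \of F_\hc = \mathfrak l$ and $\inv{F_\hc} \of F\inv{\mathfrak l} = \inv{\mathfrak l}$. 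Substituting reduces the above to $\mathfrak l \of (F\phi \hc F\psi) \of \inv{\mathfrak l}$, which is precisely the vertical composition in $V(\L)$ of $V(F)\phi$ and $V(F)\psi$. This is the main obstacle, and the whole reason normality (rather than mere laxness) is required.

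Then, for a transformation $\nat\xi FG$ between normal functors, I define $V(\xi)$ to have components $V(\xi)_A = \xi_A$. This is a strict $2$-natural transformation: naturality on $1$-cells is the naturality of $\xi_0$ in $\K_0$, and for a vertical cell $\cell\phi fg$ the naturality of $\xi_1$ applied to $\phi$ as a morphism $U_A \to U_B$ in $\K_1$ yields $\xi_{U_B} \of F\phi = G\phi \of \xi_{U_A}$, which by the unit axiom for $\xi$ (again using normality of $F$ and $G$) reads $U_{\xi_B} \of F\phi = G\phi \of U_{\xi_A}$: this is exactly the $2$-naturality square in $V(\L)$.

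Finally I would check strictness of $V$ itself, namely $V(\id_\K) = \id_{V(\K)}$, $V(G \of F) = V(G) \of V(F)$, $V(\id_F) = \id_{V(F)}$, and preservation of both vertical and horizontal (whiskered) composition of transformations. Each of these is a direct unwinding of the corresponding composition law in $\nDbl$ combined with the definitions above, so there is no further difficulty.
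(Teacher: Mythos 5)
Your proposal is correct and follows essentially the same route as the paper's (very terse) proof: define $V(F)$ by applying $F$ to objects, vertical morphisms and vertical cells, deduce strict preservation of composition in the hom-categories from the unit axiom for $F$, and deduce $2$-naturality of $V(\xi)$ from the unit axiom for $\xi$. Your intermediate observation that normality forces the components $\cell{F_\hc}{U_{FA}\hc U_{FA}}{F(U_A\hc U_A)}$ to be invertible is sound (and reappears later in the paper as part of \propref{normal functors preserve companions}), though one can also bypass it by rewriting $F\inv{\mathfrak l}$ directly as $F_\hc\of\inv{\mathfrak l}$ from the unit axiom.
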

\begin{proof}
  A normal functor $\map F\K\L$ preserves vertical cells, so $V(F)$ can be defined simply by applying $F$ to the objects, vertical morphisms and vertical cells. That $V(F)$ preserves composition strictly follows from the unit axiom for $F$. Likewise the vertical part $\nat{\xi_0}{F_0}{G_0}$, of any transformation $\nat\xi FG$, forms a $2$-transformation $\nat{V(\xi)}{V(F)}{V(G)}$: the naturality with respect to vertical cells follows from the unit axiom for $\xi$.
\end{proof}

	Having introduced functors and transformations we are ready to define monads on pseudo double categories.
\begin{definition} \label{definition:monad}
  A \emph{lax monad} $T$ on a pseudo double category $\K$ is a monad $T = (T, \mu, \eta)$ on $\K$ in $\lDbl$, consisting of a lax functor $\map T\K\K$, a multiplication $\nat\mu{T^2}T$ and a unit $\nat\eta{\id_\K}T$, satisfying the usual axioms (see \cite[Section VI.1]{MacLane98}). A lax monad $T$ on $\K$ is called a \emph{normal} monad or \emph{pseudomonad} whenever the underlying endofunctor $\map T\K\K$ is a normal functor or pseudofunctor.
\end{definition}
  By \propref{normal functors induce vertical functors}, a normal monad $T$ on $\K$ induces a $2$-monad $V(T)$ on $V(\K)$. As mentioned in the introduction, this allows us in the next section to define $T$\ndash algebras, $T$-morphisms and vertical $T$-cells as $V(T)$-algebras, $V(T)$-morphisms and $V(T)$\ndash cells in $V(\K)$. Notice that for $\K$ in $\qlEquip$, any lax monad $S$ on $\K$ induces a normal monad $\Mod S$ on $\Mod\K$, by \propref{mod is a 2-functor}.
\begin{example} \label{example:monads inducing monads on internal profunctors}
	We saw in \exref{example:internal profunctor equipments} that if $\E$ is a category with finite limits and reflexive coequalisers, the latter preserved by pullbacks, then categories internal to $\E$, together with internal profunctors, form an equipment $\inProf\E = \Mod{\Span\E}$. In that case any pullback preserving monad $T$ on $\E$ induces a pseudomonad $\Span T$ on $\Span\E$ by \propref{span is a 2-functor} which, by \propref{mod is a 2-functor}, induces a normal monad $\inProf T$ on $\inProf\E$. This is how, in the next section, the `free strict double category'-monad on $\psProf{\GG_1}$ will be obtained from the `free category'-monad on the category $\ps{\GG_1}$ of presheaves on $\GG_1 = (0 \rightrightarrows 1)$. Even though the latter is a pseudomonad the former is not, as we will see in \propref{free strict double category monad is not pseudo}.
	
	Many interesting monads on bicategories of spans arise in this way, see for example \cite[Section 4.1]{Leinster04} where Leinster discusses `cartesian monads': a monad $(T, \mu, \eta)$ on a category $\E$ with finite limits is \emph{cartesian} whenever $T$ preserves pullbacks and the naturality squares of $\mu$ and $\eta$ are pullbacks. In our terms the second condition means that for each span $J$ the cells $\mu_J$ and $\eta_J$ are right invertible, see \exref{example:right invertible spans}.
\end{example}

	Remember that weighted colimits can be defined in any pseudo double category, see \defref{definition:weighted colimits in double categories}. The following two propositions record some useful consequences of the universality of weighted colimits, with respect to normal functors of pseudo double categories and the transformations between them.
\begin{proposition} \label{universality result for colimits with respect to functors}
  Let $\map F\K\L$ be a normal functor of pseudo double categories. Given a colimit diagram $\cdiag MdAJB$ in $\K$, any vertical morphism $\map f{FM}N$ in $\L$ induces a canonical vertical cell
  \begin{displaymath}
	  \colim_{FJ}(f \of Fd) \Rar f \of F(\colim_J d)
	\end{displaymath}
	of morphisms $FB \to N$ in $\L$, provided these weighted colimits exist. In the case that $\K$ and $\L$ are closed equipments this cell corresponds, after applying $\lambda$ (\propref{left and right cells}) and under the canonical isomorphisms
	\begin{flalign*}
		&& &N\bigpars{f \of F(\colim_J d), \id} \iso F\bigpars{J \lhom M(d, \id)} \hc N(f, \id) & \\
		\text{and} && &N\bigpars{\colim_{FJ} (f \of Fd), \id} \iso FJ \lhom \bigpars{FM(d, \id) \hc N(f, \id)},
	\end{flalign*}
	to the horizontal cell $F\bigpars{J \lhom M(d, \id)} \hc N(f, \id) \Rar FJ \lhom \bigpars{FM(d, \id) \hc N(f, \id)}$ given by \propref{lax functors induce left hom coherence cell}.
\end{proposition}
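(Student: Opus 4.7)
The plan is to first construct the cell via the universal property of the target colimit, and then to identify it in the closed case by chasing the chain of canonical isomorphisms relating $\lambda$, companions, and the compositor $F_\hc$.

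For the first part, let $\eta$ be the cell exhibiting $l=\colim_J d$, with horizontal source $J$, horizontal target $U_M$, left leg $d$ and right leg $l$. Since $F$ is normal, $FU_M=U_{FM}$, so $F\eta$ is a cell $FJ\Rar U_{FM}$ with legs $Fd$ and $Fl$. Horizontally composing below with the identity cell $U_f$ produces a cell
\begin{displaymath}
	\begin{tikzpicture}[textbaseline]
		\matrix(m)[math175em]{FA & FB \\ FM & FM \\ N & N \\};
		\path[map]	(m-1-1) edge[barred] node[above] {$FJ$} (m-1-2)
												edge node[left] {$Fd$} (m-2-1)
								(m-1-2) edge node[right] {$Fl$} (m-2-2)
								(m-2-1) edge node[left] {$f$} (m-3-1)
								(m-2-2) edge node[right] {$f$} (m-3-2);
		\path	(m-2-1) edge[eq] (m-2-2)
					(m-3-1) edge[eq] (m-3-2);
		\path[transform canvas={shift=($(m-1-2)!0.5!(m-2-2)$)}] (m-1-1) edge[cell] node[right] {$F\eta$} (m-2-1);
		\path[transform canvas={shift=($(m-2-2)!0.5!(m-3-2)$)}] (m-2-1) edge[cell] node[right] {$U_f$} (m-3-1);
	\end{tikzpicture}
\end{displaymath}
of the form $FJ\Rar U_N$ with left leg $f\of Fd$ and right leg $f\of Fl=f\of F(\colim_J d)$. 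By \defref{definition:weighted colimits in double categories} and \defref{definition:left Kan extensions in pseudo double categories}, this cell factors uniquely through the unit of the $FJ$-weighted colimit of $f\of Fd$ as a vertical cell $\colim_{FJ}(f\of Fd)\Rar f\of F(\colim_J d)$; this is the canonical cell $\zeta$.

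For the second part, I will compute $\lambda\zeta$ and show it corresponds to the cell of \propref{lax functors induce left hom coherence cell}. Start from the tautology that $\zeta$ is characterised, via \propref{weighted colimits equivalent to top absolute left Kan extensions}, by the equation $\eta'\hc(U_{f\of Fl}\of\lambda\zeta)^{\sharp}= U_f\of F\eta$, where $\eta'$ is the unit of $\colim_{FJ}(f\of Fd)$. Applying $\lambda$ to both sides, using its compatibility with vertical and horizontal composition from \propref{left and right cells}, together with \propref{lax functors and left and right cells} which relates $\lambda F\eta$ to $F\lambda\eta$ via the compositor $F_\hc$, converts this into an equation of horizontal cells involving $N(f\of Fl,\id)$ and $N(\colim_{FJ}(f\of Fd),\id)$.

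Now substitute the canonical isomorphisms. By \propref{normal functors preserve companions} the image $FM(l,\id)$ is the companion of $Fl$, so \propref{companion compositors} yields $N(f\of Fl,\id)\iso FM(l,\id)\hc N(f,\id)$, and the defining isomorphism for $l=\colim_J d$ identifies this with $F(J\lhom M(d,\id))\hc N(f,\id)$. Symmetrically, the defining isomorphism for $\colim_{FJ}(f\of Fd)$ identifies $N(\colim_{FJ}(f\of Fd),\id)$ with $FJ\lhom N(f\of Fd,\id)\iso FJ\lhom(FM(d,\id)\hc N(f,\id))$. Under these identifications the equation characterising $\lambda\zeta$ becomes, after taking adjoints, the condition that $FJ\hc\bigl(\lambda\zeta\bigr)^\sharp$ equals the composite
\begin{displaymath}
	FJ\hc F(J\lhom M(d,\id))\hc N(f,\id)\xRar{F_\hc\hc\id}F\bigpars{J\hc(J\lhom M(d,\id))}\hc N(f,\id)\xRar{F\ev\hc\id}FM(d,\id)\hc N(f,\id).
\end{displaymath}
But this is precisely the defining composite for the cell of \propref{lax functors induce left hom coherence cell}, so by uniqueness of adjoints the two horizontal cells coincide.

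The bulk of the work, and the main obstacle, is the second part: managing the bookkeeping of canonical isomorphisms and verifying that the translation between vertical cells under the universal property of weighted colimits and horizontal cells under $\lambda$ and $\lhom$-adjunction is compatible with the compositor $F_\hc$. The key enabling facts are that $F_\hc$ becomes invertible in the presence of a companion (\propref{normal functors preserve companions}), and that $\lambda$ commutes with $F$ up to these invertible compositors (\propref{lax functors and left and right cells}); everything else is formal manipulation of the companion/conjoint identities and of the adjunctions \eqref{equation:horizontal composition adjunction}.
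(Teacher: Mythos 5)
Your proposal is correct and follows essentially the same route as the paper: construct the cell by factoring $U_f \of F\eta$ through the unit of $\colim_{FJ}(f \of Fd)$, then identify its $\lambda$-image by applying $\lambda$ to the defining factorisation, invoking \propref{lax functors and left and right cells} and \propref{normal functors preserve companions} to commute $\lambda$ past $F$, and matching adjoints with the defining composite of the cell from \propref{lax functors induce left hom coherence cell} (the content of \lemref{lemma:left hom facts}(c)). The only blemish is notational: in your final display the left-hand side should be the $\sharp$-adjoint of $\lambda\zeta$ \emph{after} transport along the two canonical isomorphisms, rather than the literal expression $FJ \hc (\lambda\zeta)^\sharp$.
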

\begin{proof}
	Supposing that the weighted colimits exist, denote by $\eta$ and $\zeta$ the cells that exhibit respectively $l = \colim_J d$ as the $J$-weighted colimit of $d$ and $k = \colim_{FJ} (f \of Fd)$ as the $FJ$-weighted colimit of $f \of Fd$, as in \propref{weighted colimits equivalent to top absolute left Kan extensions}. It follows from the universality of $\zeta$ (\defref{definition:left Kan extensions in pseudo double categories}) that there exists a unique vertical cell $\cell\phi k{f \of Fl}$ such that
	\begin{displaymath}
		\begin{tikzpicture}[textbaseline]
			\matrix(m)[math175em]{FA & FB \\ FM & FM \\ N & N \\};
			\path[map]	(m-1-1) edge[barred] node[above] {$FJ$} (m-1-2)
													edge node[left] {$Fd$} (m-2-1)
									(m-1-2) edge node[right] {$Fl$} (m-2-2)
									(m-2-1) edge node[left] {$f$} (m-3-1)
									(m-2-2) edge node[right] {$f$} (m-3-2);
			\path				(m-2-1) edge[eq] (m-2-2)
									(m-3-1) edge[eq] (m-3-2);
			\path[transform canvas={shift=(m-2-1)}]	(m-1-2) edge[cell] node[right] {$F\eta$} (m-2-2)
									(m-2-2) edge[cell] node[right] {$U_f$} (m-3-2);
		\end{tikzpicture}
		= \begin{tikzpicture}[textbaseline]
			\matrix(m)[math175em]{FA & FB & FB \\ FM & \phantom{FM} & FM \\ N & N & N, \\};
			\path[map]	(m-1-1) edge[barred] node[above] {$FJ$} (m-1-2)
													edge node[left] {$Fd$} (m-2-1)
									(m-1-2) edge node[right] {$k$} (m-3-2)
									(m-1-3) edge node[right] {$Fl$} (m-2-3)
									(m-2-1) edge node[left] {$f$} (m-3-1)
									(m-2-3) edge node[right] {$f$} (m-3-3);
			\path				(m-1-2) edge[eq] (m-1-3)
									(m-3-1) edge[eq] (m-3-2)
									(m-3-2) edge[eq] (m-3-3);
			\path[transform canvas={shift=($(m-2-1)!0.5!(m-3-2)$)}]	(m-1-2) edge[cell] node[right] {$\zeta$} (m-2-2)
									(m-1-3) edge[cell] node[right] {$\phi$} (m-2-3);
		\end{tikzpicture}
	\end{displaymath}
	which is the canonical vertical cell that we seek. To prove the second assertion we apply $\lambda$ to the latter identity. Using the functoriality of $\lambda$ (\propref{left and right cells}), we obtain the top half of the following commuting diagram, while the bottom half is given by \propref{lax functors and left and right cells}.
	\begin{displaymath}
		\begin{tikzpicture}
			\matrix(m)[math2em, yshift=3.5em]
			{	FJ \hc N(f \of Fl, \id) & FJ \hc N(k, \id) \\};
			\matrix(n)[math]
			{ FJ \hc FM(l, \id) \hc N(f, \id) & FJ \hc FM(Fl, \id) \hc N(f, \id) & N(f \of Fd, \id) \\ };
			\matrix(p)[math2em, column sep=6em, yshift=-3.5em, xshift=-4em]
			{ F\bigpars{J \hc M(l, \id)} \hc N(f, \id) & FM(d, \id) \hc N(f, \id) \\};
			\matrix(q)[math, yshift=-6.5em, xshift=-3em]{FM(Fd, \id) \hc N(f,\id) \\};
			\path	(m-1-1) edge[cell] node[above] {$\id \hc \lambda\phi$} (m-1-2)
						(m-1-2) edge[cell] node[above right] {$\lambda\zeta$} (n-1-3)
						(n-1-1) edge[cell] node[left, inner sep=4pt] {$F_\hc \hc \id$} (p-1-1)
						(p-1-1) edge[cell] node[below left, inner sep=4pt] {$F\lambda\eta \hc \id$} (q-1-1)
						(n-1-2) edge[cell] node[left, inner sep=8pt] {$\lambda F\eta \hc \id$} (p-1-2);
			\path[white]	(m-1-1) edge node[black, sloped, desc] {$\iso$} (n-1-1)
										(n-1-1) edge node[black, sloped, desc] {$\iso$} (n-1-2)
										(n-1-3) edge node[black, sloped, desc] {$\iso$} (p-1-2)
										(q-1-1) edge node[black, sloped, desc] {$\iso$} (p-1-2);
		\end{tikzpicture}
	\end{displaymath}
	We claim that under \eqref{equation:horizontal composition adjunction} this diagram is adjoint to
	\begin{displaymath}
		\begin{tikzpicture}
			\matrix(m)[math2em]
			{ N(f \of Fl, \id) & N(k, \id) \\
				FM(l, \id) \hc N(f, \id) & FJ \lhom N(f \of Fd, \id) \\
				F\bigpars{J \lhom M(d, \id)} \hc N(f, \id) & FJ \lhom \bigpars{FM(d, \id) \hc N(f, \id)}, \\ };
			\path (m-1-1) edge[cell] node[above] {$\lambda\phi$} (m-1-2)
						(m-2-1)	edge[cell] node[left] {$F(\lambda\eta)^\flat$} (m-3-1)
						(m-1-2) edge[cell] node[right] {$(\lambda\zeta)^\flat$} (m-2-2)
						(m-3-1) edge[cell] (m-3-2);
			\path[white]	(m-1-1) edge node[black, sloped, desc] {$\iso$} (m-2-1)
										(m-2-2) edge node[black, sloped, desc] {$\iso$} (m-3-2);
		\end{tikzpicture}
	\end{displaymath}
	where the bottom cell is that given by \propref{lax functors induce left hom coherence cell}. Indeed it is clear that the top legs are adjoint; to see that the bottom ones are too notice that, in general, the following (which is \lemref{lemma:left hom facts}(c)) holds: given any horizontal cell $H \Rar J \lhom K$ that is adjoint to $\cell\psi{J \hc H}K$, the composite (where the second cell is given by \propref{lax functors induce left hom coherence cell})
	\begin{displaymath}
		FH \hc L \Rar F(J \lhom K) \hc L \Rar FJ \lhom (FK \hc L)
	\end{displaymath}
	is adjoint to $FJ \hc FH \hc L \xRar{F_\hc \hc \id} F(J \hc H) \hc L \xRar{F\psi \hc \id} FK \hc L$. Applying this to the bottom leg of the bottom diagram above we find it is adjoint to that of the top diagram. This completes the proof, since the sides of the diagram above are the isomorphisms of the assertion. 
\end{proof}

\begin{example} \label{example:canonical transformations between colimits induced by V-functors}
	If $\K = \enProf\V$ and $F$ is the identity, then the canonical transformation $\colim_J (f \of d) \natarrow f \of \colim_J d$ is the usual unique transformation $\phi$ whose components make the diagrams
	\begin{displaymath}
		\begin{tikzpicture}
			\matrix(m)[math2em, column sep=2.25em]
			{ J(\id, b) & M(d, \colim_J d(b)) \\
				M\bigpars{f \of d, \colim_J (f \of d)(b)} & M\bigpars{f \of d, (f \of \colim_J d)(b)} \\ };
			\path[map]	(m-1-1) edge node[above] {$\eta$} node[nat,below] {.} (m-1-2)
													edge node[left] {$\zeta$} node[nat] {.} (m-2-1)
									(m-1-2) edge node[right] {$f$} node[nat,below] {.} (m-2-2)
									(m-2-1) edge node[below] {$M(\id, \phi_b)$} node[nat] {.} (m-2-2);
		\end{tikzpicture}
	\end{displaymath}
	commute for each $b$ in $B$, where $\eta$ and $\zeta$ are the units of the weighted colimits $\colim_J d(b) \iso \colim_{J(\dash, b)} d$ and $\colim_J (f \of d)(b) \iso \colim_{J(\dash, b)} (f \of d)$, see \exref{example:units of weighted colimits in V-Prof}. In particular, if the target $N$ of $f$ is cocomplete, then $\phi_b$, as a map $\int^x (f \of d)(x) \tens J(x, b) \to f\bigpars{\int^y dy \tens J(y, b)}$ is induced by the composites
	\begin{displaymath}
		(f \of d)(x) \tens J(x, b) \to f\bigpars{dx \tens J(x, b)} \to f\bigpars{\int^y dy \tens J(y, b)},
	\end{displaymath}
	where the first map exists by universality of the copower $(f \of d)(x) \tens J(x, b)$ and the second map is the $f$-image of insertion into the coend.
\end{example}

\begin{proposition} \label{universality result for colimits with respect to transformations}
	Let $\nat\xi FG$ be a natural transformation between normal functors $F$ and $\map G\K\L$ of pseudo double categories. Given a colimit diagram $\cdiag MdAJB$ in $\K$, any vertical morphism $\map g{GM}N$ in $\L$ induces a canonical vertical cell
	\begin{displaymath}
		\colim_{FJ} (g \of Gd \of \xi_A) \Rar \colim_{GJ} (g \of Gd) \of \xi_B,
	\end{displaymath}
	provided that these colimits exist. This cell makes the following diagram commute, where the other unlabelled cells are given by the previous proposition. Moreover it is invertible whenever $\xi_J$ is right invertible.
	\begin{displaymath}
		\begin{tikzpicture}
			\matrix(k)[math, yshift=6.5em]{ \colim_{GJ} (g \of Gd) \of \xi_B \\};
			\matrix(l)[math2em, yshift=3.25em, column sep=4em]
			{	\colim_{FJ} (g \of Gd \of \xi_A) & g \of G(\colim_J d) \of \xi_B \\ };
			\matrix(m)[math2em]
			{ \colim_{FJ} (g \of \xi_M \of Fd) & g \of \xi_M \of F(\colim_J d) \\};
			\path	(k-1-1) edge[cell] (l-1-2)
						(l-1-1) edge[cell] (k-1-1)
										edge[eq] (m-1-1)
						(l-1-2) edge[eq] (m-1-2)
						(m-1-1) edge[cell] (m-1-2);
		\end{tikzpicture}
	\end{displaymath}
\end{proposition}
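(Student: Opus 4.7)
The plan is to construct the canonical cell by universality of the source weighted colimit, and to derive both the commutativity of the displayed diagram and the invertibility criterion from this universal property together with \propref{weighted colimits are preserved by precomposition with invertible cells}.

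First, write $k = \colim_J d$, $k_G = \colim_{GJ}(g \of Gd)$ and $k_F = \colim_{FJ}(g \of Gd \of \xi_A)$, with units $\eta$, $\zeta_G$ and $\zeta_F$ respectively (using \propref{weighted colimits equivalent to top absolute left Kan extensions}, so these are top absolute left Kan extensions). The transformation component $\xi_J$ is a cell from $FJ$ to $GJ$ with vertical edges $\xi_A$ and $\xi_B$. Stacking $\xi_J$ on top of $\zeta_G$ produces a cell from $FJ$ to $U_N$ with vertical edges $g \of Gd \of \xi_A$ and $k_G \of \xi_B$, and the universal property of $k_F$ as a top absolute left Kan extension forces a unique vertical cell $\cell{\psi}{k_F}{k_G \of \xi_B}$ such that $(\zeta_F \of U_{k_G \of \xi_B}) = \zeta_G \of \xi_J$ (after composition with the unit $\mathfrak r^{-1}$). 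This $\psi$ is the canonical cell of the statement.

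To check that the pentagon commutes I would apply naturality of $\xi$ on $d$, which gives $Gd \of \xi_A = \xi_M \of Fd$, and reduce everything to a unique-factorisation argument through $\zeta_F$. Concretely, both composites in the diagram, when precomposed with $\zeta_F$, must equal the single cell $U_g \of U_{\xi_M} \of F\eta$ obtained by stacking $F\eta$ on $\xi$ on $g$; the previous proposition's canonical cell for $F$ (respectively for $G$) is characterised by precisely this identity on the inside of the relevant unit. Since top absolute extensions have unique factorisations, the two composites must agree. This step is straightforward bookkeeping once one writes out the two grids of cells and invokes naturality of $\xi$ both on $d$ (for objects) and on $J$ (to move $\xi_J$ past a horizontal unit).

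The only real content is the invertibility claim. Suppose $\xi_J$ is right invertible. Then $\zeta_G \of \xi_J$ is the vertical composite of the right invertible cell $\xi_J$ with the cell $\zeta_G$ exhibiting $k_G$ as the $GJ$-weighted colimit of $g \of Gd$. By \propref{weighted colimits are preserved by precomposition with invertible cells}, the composite $\zeta_G \of \xi_J$ exhibits $k_G \of \xi_B$ as the $FJ$-weighted colimit of $g \of Gd \of \xi_A$. But $\zeta_F$ also exhibits $k_F$ as this same weighted colimit, so by uniqueness of weighted colimits up to unique vertical isomorphism (\remref{remark:left Kan extensions in pseudo double categories}), the canonical comparison cell $\psi$ is invertible. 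The anticipated obstacle is really only notational: ensuring that the conversions between our cell $\psi$ (given as a factorisation) and the vertical isomorphism produced by \propref{weighted colimits are preserved by precomposition with invertible cells} line up on the nose; this is handled by the uniqueness clause in both universal properties, so no further work is required.
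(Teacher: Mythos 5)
Your construction of the canonical cell as the unique factorisation of $\zeta_G \of \xi_J$ through $\zeta_F$, your reduction of the diagram's commutativity to the naturality identity $G\kappa \of \xi_J = U_{\xi_M} \of F\kappa$ plus uniqueness of factorisations, and your appeal to \propref{weighted colimits are preserved by precomposition with invertible cells} for the invertibility claim all match the paper's proof. The proposal is correct and takes essentially the same route.
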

\begin{proof}
	Denoting by $\zeta$ and $\theta$ the cells that define respectively the weighted colimits $l = \colim_{GJ} (g \of Gd)$ and $k = \colim_{FJ} (g \of Gd \of \xi_A)$, then the canonical vertical cell above is the unique factorisation $\phi$ of $\zeta \of \xi_J$ through $\theta$:
	\begin{displaymath}
		\begin{tikzpicture}[textbaseline]
			\matrix(m)[math175em]{ FA & FB \\ GA & GB \\ GM & \phantom{GM} \\ N & N \\ };
			\path[map]	(m-1-1) edge[barred] node[above] {$FJ$} (m-1-2)
													edge node[left] {$\xi_A$} (m-2-1)
									(m-1-2) edge node[right] {$\xi_B$} (m-2-2)
									(m-2-1) edge[barred] node[below] {$GJ$} (m-2-2)
													edge node[left] {$Gd$} (m-3-1)
									(m-2-2) edge node[right] {$l$} (m-4-2)
									(m-3-1) edge node[left] {$g$} (m-4-1);
			\path				(m-4-1) edge[eq] (m-4-2);
			\path[transform canvas={shift={($(m-2-2)!0.5!(m-3-2)$)}}]	(m-1-1) edge[cell] node[right] {$\xi_J$} (m-2-1);
			\path[transform canvas={shift=(m-3-2)}]	(m-2-1) edge[cell] node[right] {$\zeta$} (m-3-1);
		\end{tikzpicture}
		= \begin{tikzpicture}[textbaseline]
			\matrix(m)[math175em]{ FA & FB & FB \\ GA & & GB \\ GM & \phantom{GM} & \phantom{GM} \\ N & N & N. \\ };
			\path[map]	(m-1-1) edge[barred] node[above] {$FJ$} (m-1-2)
													edge node[left] {$\xi_A$} (m-2-1)
									(m-1-2) edge node[right] {$k$} (m-4-2)
									(m-1-3) edge node[right] {$\xi_B$} (m-2-3)
									(m-2-1) edge node[left] {$Gd$} (m-3-1)
									(m-2-3) edge node[right] {$l$} (m-4-3)
									(m-3-1) edge node[left] {$g$} (m-4-1);
			\path				(m-1-2) edge[eq] (m-1-3)
									(m-4-1) edge[eq] (m-4-2)
									(m-4-2) edge[eq] (m-4-3);
			\path[transform canvas={shift={($(m-2-2)!0.5!(m-3-3)$)}}]	(m-2-1) edge[cell] node[right] {$\theta$} (m-3-1)
									(m-2-2) edge[cell] node[right] {$\phi$} (m-3-2);
		\end{tikzpicture}
	\end{displaymath}
	That it makes the diagram above commute follows from the fact that, precomposed with $\theta$, the top and bottom leg equal respectively the left and right-hand side of
	\begin{displaymath}
		U_g \of G\kappa \of \xi_J = U_g \of U_{\xi_M} \of F\kappa,
	\end{displaymath}
	where $\kappa$ is the cell defining $\colim_J d$. The last assertion follows directly from \propref{weighted colimits are preserved by precomposition with invertible cells}, which shows that $\zeta \of \xi_J$ exhibits $l \of \xi_B$ as weighted colimit whenever $\xi_J$ is right invertible, so that in that case the canonical cell $\phi$ is an isomorphism.
\end{proof}
	
	To close this section we give the following simple coherence result, which shows that every weakly normal functor can be replaced by a normal one, so that our choice of considering just normal functors is not much of a restriction. Recall that a lax functor is called weakly normal if its unitor is invertible; we will denote by $\wnDbl$ the $2$-category of pseudo double categories, weakly normal functors and transformations.
	
	Recall that a $2$\ndash functor $\map F\C\D$ is called a \emph{local equivalence} if each of its hom-functors $\map F{\C(x, y)}{\D(Fx, Fy)}$ is an equivalence, and that it is called \emph{essentially surjective on objects} if for each object $y$ in $\D$ there is an object $x$ in $\C$ with $Fx \simeq y$. If $F$ is both a local equivalence and essentially surjective on objects then it is called a \emph{biequivalence}. Analogous to the case of ordinary categories, one can show (see \cite[Proposition 1.5.13]{Leinster04}) that the latter is equivalent to the existence of a pseudofunctor $\map G\D\C$ together with equivalences $\id_\C \simeq GF$ in $\fun \C\C$ and $FG \simeq \id_\D$ in $\fun \D\D$, where $\fun \C\C$ is the $2$\ndash category of pseudofunctors $\C \to \C$, pseudonatural transformations and modifications; likewise for $\fun \D\D$.
\begin{proposition} \label{weakly normal functors coherence}
	The inclusion $\nDbl \hookrightarrow \wnDbl$ is a biequivalence.
\end{proposition}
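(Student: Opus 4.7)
The plan is to verify the two conditions defining a biequivalence: essential surjectivity on objects, and equivalence on hom-categories. The first is automatic since both $2$-categories have the same objects. For the second, at each pair $\K$, $\L$, I would observe that the inclusion
\begin{displaymath}
\nDbl(\K, \L) \hookrightarrow \wnDbl(\K, \L)
\end{displaymath}
is trivially faithful and is also full, because the unit axiom for a transformation $\nat\xi FG$ between normal functors reads $\xi_{U_A} = U_{\xi_A}$ in both settings (the general axiom $\xi_{U_A} \of F_U = G_U \of U_{\xi_A}$ collapses to this when $F_U = G_U = \id$). So the whole content of the proposition is essential surjectivity of this inclusion: every weakly normal $F \colon \K \to \L$ must be isomorphic, via an invertible transformation in $\wnDbl(\K, \L)$, to some normal functor $F'$.

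The key construction is to use the invertible unitor $\cell{F_U}{U_{FA}}{FU_A}$ to replace each $FU_A$ by $U_{FA}$ throughout. Concretely, I would define $F'$ to agree with $F$ on objects and vertical morphisms; on horizontal morphisms declare
\begin{displaymath}
F'(U_A) = U_{FA}, \qquad F'(J) = FJ \quad \text{for $J$ not a horizontal unit,}
\end{displaymath}
and introduce the abbreviation $\alpha_J = F_U$ when $J$ is a horizontal unit and $\alpha_J = \id_{FJ}$ otherwise. For any cell $\cell \phi JK$ set
\begin{displaymath}
F'(\phi) = \alpha_K^{-1} \of F(\phi) \of \alpha_J,
\end{displaymath}
let the unitor $F'_U$ be the identity, and define the compositor by
\begin{displaymath}
\cell{F'_\hc}{F'J \hc F'H}{F'(J \hc H)}, \qquad F'_\hc = \alpha_{J \hc H}^{-1} \of F_\hc \of (\alpha_J \hc \alpha_H).
\end{displaymath}
The family $\alpha_J$ then assembles into an invertible transformation $\nat\alpha F{F'}$ with identity vertical components, whose inverse has cell component $F_U$ at each $U_A$; this exhibits $F \simeq F'$ in $\wnDbl(\K, \L)$.

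What remains is to check that $F'$ is a lax functor (hence normal, since $F'_U = \id$ by construction) and that $\alpha$ satisfies the composition and unit axioms of a transformation. This is the main obstacle, but it is straightforward rather than deep: each axiom, when pre- and postcomposed with the appropriate $\alpha$'s, reduces to the corresponding axiom for $F$ together with naturality of $F_U$ and the coherence axioms of $\K$ and $\L$. The work consists of a uniform case analysis splitting on whether each of the horizontal arguments $J$, $H$, $K$ is a unit; in the ``generic'' case (no units) all $\alpha$'s are identities and the axiom for $F'$ is literally that for $F$, while in the remaining cases pairs of $\alpha_J$ and $\alpha_J^{-1}$ cancel against one another once $F_U$-naturality is invoked. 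I do not expect any essential difficulty beyond the bookkeeping of this case split.
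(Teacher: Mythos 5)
Your proposal is correct and follows essentially the same route as the paper's proof: the identical normalisation $F \mapsto F'$ replacing $FU_A$ by $U_{FA}$, the same conjugation of cells and compositors by the unitor components, and the same invertible transformation with components $F_U$ exhibiting $F \simeq F'$ (the paper merely packages the hom-equivalence via an explicit quasi-inverse $(\dash)'$ on $\wnDbl(\K,\L)$ instead of your full-plus-faithful-plus-essentially-surjective argument). One small slip: your $\alpha$ is declared as $F \natarrow F'$ but the formula $F'(\phi)=\alpha_K^{-1}\of F(\phi)\of\alpha_J$ with $\alpha_{U_A}=F_U$ forces $\alpha\colon F'\natarrow F$; fixing the direction changes nothing substantive.
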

\begin{proof}
	Denote the inclusion by $j$. Clearly $j$ is essentially surjective on objects so we have to prove that, for each pair of pseudo double categories $\K$ and $\L$, the inclusion $\emb j{\nDbl(\K, \L)}{\wnDbl(\K, \L)}$ is an equivalence. To do so consider a weakly normal functor $\map F\K\L$ between pseudo double categories. We will construct a normal functor $\map{F'}\K\L$ from $F$ simply by replacing the $F_1$-image of each horizontal unit $U_A$ by $U_{FA}$. Simultaneously we will construct an invertible transformation $\nat{\rho = \rho_F}{F'}F$. First we take $F_0' = F_0$ and $\rho_0 = \id$; we define $F_1'$ and $\rho_1$ on horizontal morphisms by
	\begin{displaymath}
		F_1' J = \begin{cases}
			U_{FA} & \text{if $J = U_A$;} \\
			F_1 J & \text{otherwise}
		\end{cases}
		\qquad \text{and} \qquad
		(\rho_1)_J = \begin{cases}
			\cell{F_U}{U_{FA}}{FU_A} & \text{if $J = U_A$;} \\
			\id_J & \text{otherwise}.
		\end{cases}
	\end{displaymath}
	For a cell $\cell\phi JK$ in $\K_1$, take $F_1' \phi = \inv\rho_K \of F_1\phi \of \rho_J$. It is clear that this makes $F_1'$ into a functor $\K_1 \to \L_1$ and that it makes $\rho_1$ natural. That $F_1'$ is compatible with $L$, $\map R{\K_1}{\K_0}$ and $L$, $\map R{\L_1}{\L_0}$ follows from the fact that the components of $\rho_1$ are horizontal cells, thus $LF_1' = LF_1 = F_0L$; likewise for $R$. Of course the unitor $F_U'$ we take to be the identity, making $\rho$ compatible with the unitors of $F_1'$ and $F_1$. Finally let the compositor $F_\hc'$ be given by the composites
	\begin{displaymath}
		F'J \hc F'H \xRar{\rho_J \hc \rho_H} FJ \hc FH \xRar{F_\hc} F(J \hc H) \xRar{\inv \rho_{J \hc H}} F'(J \hc H),
	\end{displaymath}
	for any pair of horizontally composable cells $J$ and $H$ in $\K$. Checking that these satisfy the unit and associativity axiom is straightforward, while $\rho$ is compatible with the compositors by definition. This completes the definition of $F'$ and $\nat{\rho_F}{F'}F$. It is clear now that the assignment $F \mapsto F'$ extends to a functor $\map{(\dash)'}{\wnDbl(\K, \L)}{\nDbl(\K, \L)}$ that maps a transformation $\nat\xi FG$ to the composite $\nat{\xi' = \inv\rho_G \of \xi \of \rho_F}{F'}{G'}$, so that the invertible transformations $\rho_F$ form a natural isomorphism $j \of (\dash)' \iso \id_{\wnDbl(\K, \L)}$. On the other hand clearly $(\dash)' \of j = \id_{\nDbl(\K, \L)}$, and we conclude that the inclusion $\emb j{\nDbl(\K, \L)}{\wnDbl(\K, \L)}$ is an equivalence.
\end{proof}

\section{Examples}\label{section:monad examples}
	Here we recall the main examples: the `free strict monoidal $\V$-category'-monad $\fmc$ and the `free symmetric strict monoidal $\V$\ndash category'-monad $\fsmc$ on the equipment $\enProf\V$ of $\V$-profunctors, as well as the `free strict double category'-monad $D$ on the equipment $\psProf{\GG_1}$ of $\GG_1$-categories, where $\GG_1 = (0 \rightrightarrows 1)$. Using \propref{mod is a 2-functor} the monads $\fmc$ and $\fsmc$ will be obtained from monads on $\Mat\V$, while $D$ is obtained from the `free category'-monad on $\Span{\ps{\GG_1}}$. After introducing each of them we will describe their (colax) algebras in the $2$-categories $\enCat\V = V(\enProf\V)$ and $\psCat{\GG_1} = V(\psProf{\GG_1})$ respectively. All three monads are (of course) well-known, so not much in this section is original. However, our way of defining the `free symmetric strict monoidal category'-monad $S$, by letting it be induced by a monad on $\Mat\V$, might be new. In fact, Cruttwell and Shulman discuss $S$ immediately after remarking that ``Not every monad on $\enProf\V$ or $\inProf\E$ is induced by one on $\Mat\V$ or $\Span\E$ however. The following examples are also important.'' \cite[Example 3.14]{Cruttwell-Shulman10}, suggesting that $S$ is not induced by a monad on $\Mat\V$. That it is induced by such a monad is useful, for it allows a relatively easy proof of the fact that $S$ is a pseudomonad. Although stated in \cite[Example 5.12]{Cruttwell-Shulman10} without a proof, the author has not yet seen a proof of this fact in the literature.
	
\subsection{Monoidal $\V$-categories}
	We start with the monad for monoidal $\V$-categories. Throughout this subsection we assume $\V$ to be a cocomplete symmetric pseudomonoidal category whose colimits are preserved by the binary monoidal product $\tens$ in both variables, so that $\V$\ndash matrices and $\V$-profunctors form equipments $\Mat\V$ and $\enProf\V$; see \exref{example:matrices} and \exref{example:enriched profunctor equipments}. As mentioned above the `free strict monoidal $\V$-category'-monad on $\enProf\V$ will be obtained from a monad on the pseudo double category $\Mat\V$ of $\V$-matrices, namely the `free monoid'-monad, that we will also denote by $\fmc$ and that is given as follows.
	
	Restricted to the objects of $\Mat\V$, which are simply sets, $\map\fmc{\Mat\V}{\Mat\V}$ is the usual `free monoid'-monad: each set $A$ is mapped to the set
\begin{displaymath}
	\fmc A = \coprod_{n \geq 0} A^{\times n}
\end{displaymath}
	of finite sequences of its elements. Such sequences will be denoted by $\ul x = (\ul x_1, \dotsc, \ul x_n)$ and we will write $\card{\ul x} = n$ for the length of $\ul x$; the empty sequence will be denoted $()$. The image $\fmc f$ of a function $\map fAB$ applies $f$ indexwise, so that $(\fmc f \ul x)_i = f\ul x_i$. The image of a $\V$-matrix $\hmap JAB$ is given by
\begin{displaymath}
	\fmc J(\ul x, \ul y) = \begin{cases}
		\fmc_nJ(\ul x, \ul y) & \text{if $\card{\ul x} = n = \card{\ul y}$;} \\
		\emptyset	& \text{otherwise,}
	\end{cases}
\end{displaymath}	
	where $\fmc_n J(\ul x, \ul y) = \Tens^n_{i=1} J(\ul x_i, \ul y_i)$ when $n > 0$ and $\fmc_0((),()) = 1$ (the monoidal unit of $\V$), and where $\emptyset$ is the initial object of $\V$. The assignment $J \mapsto \fmc J$ clearly extends to cells $\cell\phi JK$ of $\V$-matrices by tensoring the components of $\phi$. One readily checks that the assignments above form two endofunctors $\fmc_0$ and $\fmc_1$ on the categories $\Mat\V_0 = \Set$ and $\Mat\V_1$.
	
	Notice that the unitors of $\V$ induce natural isomorphisms $\fmc_U \colon \fmc U_A \iso U_{\fmc A}$ for each set $A$. Likewise, using the fact that $\tens$ preserves coproducts on both sides, the symmetric structure of $\V$ induces natural isomorphisms $\fmc_\hc \colon \fmc J \hc \fmc H \iso \fmc(J \hc H)$. The coherence axioms for $\V$ induce the associativity and unit axioms for $\fmc_U$ and $\fmc_\hc$, and we conclude that $\fmc_0$ and $\fmc_1$ combine to form a pseudo-endofunctor $\fmc$ on the equipment $\Mat\V$. Finally the multiplication $\nat\mu{\fmc^2}\fmc$ and unit $\nat\eta\id\fmc$, that make $\fmc$ into a pseudomonad on $\Mat\V$, are given as follows. Given a set $A$, the set $\fmc^2 A$ consists of sequences of sequences of elements in $A$, which we will call double sequences and denote by $\ull x$. The map $\map{\mu_A}{\fmc^2 A}{\fmc A}$ is given by concatenation of double sequences, while $\map{\eta_A}A{\fmc A}$ maps elements of $A$ to the corresponding one-element sequences. Likewise the cell $\cell{\mu_J}{\fmc^2 J}{\fmc J}$, for a $\V$-matrix $\hmap JAB$, is induced by the isomorphisms
\begin{equation} \label{equation:multiplication of fmc}
	\fmc_{m_1} J(\ull x_1, \ull y_1) \tens \dotsb \tens \fmc_{m_n} J(\ull x_n, \ull y_n) \iso \fmc_{m_1 + \dotsb + m_n} J(\mu_A \ull x, \mu_B \ull y)
\end{equation}
	that are given by the associator of $\V$; the cell $\cell{\eta_J}J{\fmc J}$ is simply given by the identities $J(x, y) = J(\eta_A x, \eta_B y)$. That $\mu$ satisfies the associativity axiom follows form the associativity axiom for $\V$; that the unit axiom holds is clear. This completes the definition of the `free monoid'-monad $\fmc$ on $\Mat\V$.
	
	Having given $\fmc$ we turn to the normal monad $\Mod\fmc$, on the equipment $\Mod{\Mat\V} = \enProf\V$ of $\V$-profunctors, that is induced by $\fmc$ by \propref{mod is a 2-functor}. The monad $\Mod\fmc$ is the `free strict monoidal $\V$-category'-monad; we will again write $\fmc = \Mod\fmc$. It is shown below that $\Mod\fmc$ is a pseudomonad as well. Given a $\V$\ndash category $A$, the $\V$-category $\fmc A$ has finite sequences of $A$\ndash objects as objects, i.e.\ $\ob \fmc A = \coprod_{n \geq 0} (\ob A)^{\times n}$, while its hom-objects are given by
\begin{equation} \label{equation:action of fmc on categories}
	\fmc A(\ul x, \ul y) = \begin{cases}
		\fmc_nA(\ul x, \ul y) & \text{if $\card{\ul x} = n = \card{\ul y}$;} \\
		\emptyset	& \text{otherwise.}
	\end{cases}
\end{equation}
	The composition of $A$ is given indexwise after reordering: 
\begin{displaymath}
	\fmc A(\ul x, \ul y) \tens \fmc A(\ul y, \ul z) \iso \Tens_{i = 1}^n A(\ul x_i, \ul y_i) \tens A(\ul y_i, \ul z_i) \to \Tens_{i=1}^n A(\ul x_i, \ul z_i) = \fmc A(\ul x, \ul z),
\end{displaymath}
	where $\ul x$, $\ul y$ and $\ul z$ are of equal length $n$, so that the identity on $\ul x$ is given by the composition $1 \iso 1^{\tens n} \xrar{\id_{\ul x_1} \tens \dotsb \tens \id_{\ul x_n}} \fmc A(\ul x, \ul x)$. The assignment $A \mapsto \fmc A$ extends to $\V$-functors $\map fAB$ by letting $\fmc f$ act indexwise: the functor \mbox{$\map{\fmc f}{\fmc A}{\fmc B}$} is given on objects by $(\fmc f \ul x)_i = f(\ul x_i)$, while on hom-objects it is given by tensor products of the maps $A(\ul x_i, \ul y_i) \to B\bigpars{f(\ul x_i), f(\ul y_i)}$ that define the action of $f$ on the hom-objects of $A$. Of course the action of $\fmc$ on a $\V$-profunctor \mbox{$\hmap JAB$} is given precisely like \eqref{equation:action of fmc on categories}, when $A$ is replaced by $J$ (both $A$ and $J$ are $\V$-matrices). Likewise, its action on natural transformations between $\V$-profunctors is given by taking tensor products of their components, as in the definition of $\fmc f$. The definitions of the multiplication $\mu$ and unit $\eta$ can be easily read off from those of the multiplication and unit of `free monoid'-monad $\fmc$ on $\Mat\V$ above.
	
	Before describing the algebras for `free strict monoidal $\V$-category'-monad $\fmc = \Mod \fmc$ on $\enProf\V$, we remark that the `free monoid'-monad $\fmc$ on $\Mat\V$ preserves the reflexive coequalisers of $H(\Mat\V)(A, B)$, for any sets $A$ and $B$, so that by \propref{mod is a 2-functor} $\Mod\fmc$ is a pseudomonad. Since such coequalisers are computed pointwise this follows directly from the following proposition, which is \cite[Lemma 2.3.2(a)]{Rezk96}. In general this result is very helpful when considering monads that involve tensor products.
\begin{proposition}[Rezk] \label{tensor products preserve split coequalisers}
	Let $\V$ be a symmetric monoidal category whose tensor product $\map\tens{\V \times \V}\V$ preserves reflexive coequalisers in both variables. For each $i = 1, \dotsc, n$, let $\map{k_i}{Y_i}{Z_i}$ be the coequaliser of the reflexive pair $f_i$, $\mmap{g_i}{X_i}{Y_i}$ in $\V$, with common section $\map{s_i}{Y_i}{X_i}$. Then the induced diagram
	\begin{displaymath}
		X_1 \tens \dotsb \tens X_n \rightrightarrows Y_1 \tens \dotsb \tens Y_n \to Z_1 \tens \dotsb \tens Z_n
	\end{displaymath}
	is again a reflexive coequaliser.
\end{proposition}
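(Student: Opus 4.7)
The plan is to argue by induction on $n$, with the key mechanism being a ``section trick'' that decouples the simultaneous coequalisation into successive one-variable coequalisations. The hypothesis gives the $n = 1$ case for free (tensoring on either side preserves reflexive coequalisers), so the real work is the case $n = 2$; the inductive step for $n > 2$ follows by applying the case $n = 2$ to the pair consisting of the $(n-1)$-fold tensor (a coequaliser by the inductive hypothesis, with section $s_1 \tens \dotsb \tens s_{n-1}$) and the final factor.

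For $n = 2$, it is immediate that $k_1 \tens k_2$ coequalises $(f_1 \tens f_2,\, g_1 \tens g_2)$ and that $s_1 \tens s_2$ is a common section. The content lies in universality. Suppose $\map h{Y_1 \tens Y_2}W$ satisfies $h \of (f_1 \tens f_2) = h \of (g_1 \tens g_2)$. Precomposing this identity with $\id_{X_1} \tens s_2$ and using $f_2 \of s_2 = \id = g_2 \of s_2$ yields
\begin{displaymath}
	h \of (f_1 \tens \id_{Y_2}) = h \of (g_1 \tens \id_{Y_2}).
\end{displaymath}
By the hypothesis applied in the first variable, $k_1 \tens \id_{Y_2}$ is the coequaliser of this pair, so $h$ factors uniquely as $h = h_1 \of (k_1 \tens \id_{Y_2})$ for some $\map{h_1}{Z_1 \tens Y_2}W$.

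Next I would verify that $h_1$ coequalises $(\id_{Z_1} \tens f_2,\, \id_{Z_1} \tens g_2)$. Since $k_1 \tens \id_{X_2}$ is itself a coequaliser (hence epi) by the hypothesis, it suffices to check the equation after precomposition with $k_1 \tens \id_{X_2}$; expanding $h_1 \of (k_1 \tens \id) = h$ reduces this to the equation $h \of (\id_{Y_1} \tens f_2) = h \of (\id_{Y_1} \tens g_2)$, which is obtained by the symmetric section trick (precomposing the original identity with $s_1 \tens \id_{X_2}$). Then $h_1$ factors uniquely through $\id_{Z_1} \tens k_2$ as some $h_2$, giving $h = h_2 \of (k_1 \tens k_2)$. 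Uniqueness of $h_2$ follows from the fact that both $k_1 \tens \id_{Y_2}$ and $\id_{Z_1} \tens k_2$ are epi, so their composite $k_1 \tens k_2$ is epi as well.

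The main obstacle is really just administrative: one has to be disciplined about tracking which variable is being coequalised at each step and verify that the section trick applies symmetrically. Once the $n = 2$ case is established this way, the induction is completely routine, as the product section $s_1 \tens \dotsb \tens s_{n-1}$ splits both $f_1 \tens \dotsb \tens f_{n-1}$ and $g_1 \tens \dotsb \tens g_{n-1}$ simultaneously, so the inductive hypothesis feeds directly into the $n = 2$ argument.
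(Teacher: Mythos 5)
Your proof is correct, but it takes a genuinely different route from the paper's. The paper encodes a reflexive pair as a diagram over the category $\D$ generated by $u, v \colon 0 \to 1$ and $s \colon 1 \to 0$ with $u \of s = \id_1 = v \of s$, and then deduces the result in one stroke from the finality of the diagonal $\D \to \D \times \dotsb \times \D$ (the siftedness of $\D$, outsourced to Mac Lane) together with the interchange of the resulting iterated colimits, each of which is preserved by the one-variable hypothesis. Your argument instead unpacks exactly the concrete content hiding behind that finality statement: the $n=2$ case with the section trick is a hands-on proof of the standard interchange lemma for reflexive coequalisers, verifying the universal property directly. What you gain is a self-contained, elementary proof that never mentions final functors; what the paper's approach buys is brevity and a cleaner conceptual reason (reflexive coequalisers are sifted colimits, and sifted colimits commute with finite products of functors preserving them separately). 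Your steps all check out: precomposing with $\id_{X_1} \tens s_2$ and $s_1 \tens \id_{X_2}$ legitimately collapses the simultaneous pair to the two one-variable pairs; $k_1 \tens \id_{X_2}$ is epi because $\dash \tens X_2$ preserves the reflexive coequaliser $k_1$; and the inductive step is sound because the inductive hypothesis delivers not just a coequaliser but a \emph{reflexive} one (with section $s_1 \tens \dotsb \tens s_{n-1}$), which is precisely what is needed to feed the $(n-1)$-fold tensor back into the two-variable case.
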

	Writing $\D$ for the category generated by three maps $\map u01$, $\map v01$ and $\map s10$, under the relations $u \of s = \id_1 = v \of s$, notice that a parallel pair $f, g \colon X \rightrightarrows Y$ in any category $\C$ is a reflexive pair if and only if there exists a functor $\map R\D\C$ with $Ru = f$ and $Rv = g$; in that case $Rs$ is a common section for $f$ and $g$. The proof of the proposition above follows from the fact that the diagonal functor $\D \to \D \times \dotsb \times \D$ is `final', see \cite[Section IX.3]{MacLane98}.
	
	Recall that $\fmc$ restricts to a $2$-monad $V(\fmc)$ on the $2$-category $\enCat\V = \enProf\V$, of $\V$-categories, $\V$-functors and $\V$-natural transformations, by \propref{normal functors induce vertical functors}. We shall now recall the definition of colax algebras for $2$-monads, as well as those of colax morphisms and cells between them, describing each of these notions in the case of the $2$-monad $V(\fmc)$. Even though the algebras considered there are lax algebras we will follow \cite[Section 1]{Lack02}, using largely the same notation.
	
	Consider a $2$-monad $T = (T, \mu, \eta)$ on a $2$-category $\C$, consisting of a $2$-functor $\map T\C\C$ and $2$-natural transformations $\nat\mu{T^2}T$ and $\nat\eta {\id_\C}T$ satisfying the usual axioms (see \cite[Section VI.1]{MacLane98}): $\mu \of T\mu = \mu \of \mu T$ and $\mu \of T\eta = \id_\C = \mu \of \eta T$.
\begin{definition} \label{definition:colax algebra}
  Let $T = (T, \mu, \eta)$ be a $2$-monad on a $2$-category $\C$. A \emph{colax $T$\ndash algebra} $A$ is a quadruple $A = (A, a, \alpha, \alpha_0)$ consisting of an object $A$ of $\C$ equipped with a morphism $\map a{TA}A$, its \emph{structure map}, and cells $\cell\alpha{a \of \mu_A}{a \of Ta}$ and $\cell{\alpha_0}{a \of \eta_A}{\id_A}$, respectively the \emph{associator} and the \emph{unitor} of $A$. The latter satisfy the following coherence conditions:
  \begin{displaymath}
    \begin{tikzpicture}[textbaseline]
      \matrix(m)[math, column sep=0.75em, row sep=2em]
      { & T^2 A & \phantom{T^2A} &[-0.9em] TA & \phantom{T^2A}\\
        T^3 A & & TA & \phantom{T^2A} & A \\
        & T^2 A & & TA & \\ };
      \path[map]  (m-1-2) edge node[above] {$\mu_A$} (m-1-4)
                          edge node[below left] {$Ta$} (m-2-3)
                  (m-1-4) edge node[above right] {$a$} (m-2-5)
                  (m-2-1) edge node[above left] {$\mu_{TA}$} (m-1-2)
                          edge node[below left] {$T^2 a$} (m-3-2)
                  (m-2-3) edge node[below] {$a$} (m-2-5)
                  (m-3-2) edge node[above left] {$\mu_A$} (m-2-3)
                          edge node[below] {$Ta$} (m-3-4)
                  (m-3-4) edge node[below right] {$a$} (m-2-5)
                  (m-1-4) edge[cell, shorten >= 7pt, shorten <= 7pt] node[below right] {$\alpha$} (m-2-3)
                  (m-2-3) edge[cell, shorten >= 7pt, shorten <= 7pt] node[below left] {$\alpha$} (m-3-4);
    \end{tikzpicture}
    = \begin{tikzpicture}[textbaseline]
      \matrix(m)[math, column sep=0.75em, row sep=2em]
      { & T^2 A &[-0.9em] \phantom{T^2A} & TA & \phantom{T^2A} \\
        T^3 A & & T^2 A & \phantom{T^2A} & A \\
        & T^2 A & & TA & \\ };
      \path[map]  (m-1-2) edge node[above] {$\mu_A$} (m-1-4)
                  (m-1-4) edge node[above right] {$a$} (m-2-5)
                  (m-2-1) edge node[above left] {$\mu_{TA}$} (m-1-2)
                          edge node[above] {$T\mu_A$} (m-2-3)
                          edge node[below left] {$T^2 a$} (m-3-2)
                  (m-2-3) edge node[above left] {$\mu_A$} (m-1-4)
                          edge node[below left] {$Ta$} (m-3-4)
                  (m-3-2) edge node[below] {$Ta$} (m-3-4)
                  (m-3-4) edge node[below right] {$a$} (m-2-5)
                  (m-2-3) edge[cell, shorten >= 7pt, shorten <= 7pt] node[above left] {$T\alpha$} (m-3-2)
                  ($(m-2-3)!0.5!(m-2-5)+(0,0.75em)$) edge[cell] node[right] {$\alpha$} ($(m-2-3)!0.5!(m-2-5)-(0,0.75em)$);
    \end{tikzpicture}
  \end{displaymath}
  \begin{displaymath}
    \begin{tikzpicture}[textbaseline]
      \matrix(m)[math, column sep=0.75em, row sep=2em]
      { \phantom{T^2A} & T^2A & \phantom{T^2A} &[-0.9em] TA & \phantom{T^2A} \\
        TA & & TA & \phantom{T^2A} & A \\
        & A & & & \\ };
      \path[map]  (m-1-2)	edge node[above] {$\mu_A$} (m-1-4)
													edge node[below left] {$Ta$} (m-2-3)
									(m-1-4) edge node[above right] {$a$} (m-2-5)
                  (m-2-1) edge node[above left] {$\eta_{TA}$} (m-1-2)
                          edge node[below left] {$a$} (m-3-2)
                  (m-2-3) edge node[below] {$a$} (m-2-5)
                  (m-3-2) edge node[above left] {$\eta_A$} (m-2-3)
                          edge[bend right=21] node[below right] {$\id$} (m-2-5)
                  (m-1-4) edge[cell, shorten >= 7pt, shorten <= 7pt] node[below right] {$\alpha$} (m-2-3)
                  (m-2-3) edge[cell, shorten >= 12.5pt, shorten <= 4.5pt] node[left, inner sep=8pt] {$\alpha_0$} (m-3-4);
    \end{tikzpicture}
    = \begin{tikzpicture}[textbaseline]
      \matrix(m)[math, column sep=0.75em, row sep=2em]
      { \phantom{T^2A} & T^2A & \phantom{T^2A} &[-0.9em] TA & \phantom{T^2A} \\
        TA & & & \phantom{T^2A} & A \\
        & A & & & \\ };
      \path[map]  (m-1-2)	edge node[above] {$\mu_A$} (m-1-4)
									(m-1-4) edge node[above right] {$a$} (m-2-5)
                  (m-2-1) edge node[above left] {$\eta_{TA}$} (m-1-2)
                          edge node[below left] {$a$} (m-3-2)
                          edge[bend right=21] node[below right] {$\id$} (m-1-4)
                  (m-3-2) edge[bend right=21] node[below right] {$\id$} (m-2-5);
    \end{tikzpicture}
  \end{displaymath}
  \begin{displaymath}
    \begin{tikzpicture}[textbaseline]
      \matrix(m)[math, column sep=0.75em, row sep=2em]
      { \phantom{T^2A} &[-0.9em] \phantom{T^2A} & \phantom{T^2A} & TA & \phantom{T^2A} \\
        TA & & T^2 A & \phantom{T^2A} & A \\
        & & & TA & \\ };
      \path[map]  (m-1-4) edge node[above right] {$a$} (m-2-5)
                  (m-2-1) edge[bend left=22] node[above left] {$\id$} (m-1-4)
                          edge node[above] {$T\eta_A$} (m-2-3)
                          edge[bend right=22] node[below left] {$\id$} (m-3-4)
                  (m-2-3) edge node[above left] {$\mu_A$} (m-1-4)
                          edge node[above right] {$Ta$} (m-3-4)
                  (m-3-4) edge node[below right] {$a$} (m-2-5)
                  (m-2-3) edge[cell, shorten >= 16.5pt, shorten <= 5.5pt] node[right, inner sep=8pt] {$T\alpha_0$} (m-3-2)
                  ($(m-2-3)!0.5!(m-2-5)+(0,0.75em)$) edge[cell] node[right] {$\alpha$} ($(m-2-3)!0.5!(m-2-5)-(0,0.75em)$);
    \end{tikzpicture}
    = \begin{tikzpicture}[textbaseline]
      \matrix(m)[math, column sep=0.75em, row sep=2em]
      { \phantom{T^2A} &[-0.9em] \phantom{T^2A} & \phantom{T^2A} & TA & \phantom{T^2A} \\
        TA & & & \phantom{T^2A} & A \\
        & & & TA & \\ };
      \path[map]  (m-1-4) edge node[above right] {$a$} (m-2-5)
                  (m-2-1) edge[bend left=22] node[above left] {$\id$} (m-1-4)
                          edge[bend right=22] node[below left] {$\id$} (m-3-4)
                  (m-3-4) edge node[below right] {$a$} (m-2-5);
    \end{tikzpicture}
  \end{displaymath}
  We call $A$ a \emph{pseudo} $T$-algebra if the cells $\alpha$ and $\alpha_0$ are isomorphisms; if they are identities then $A$ is called \emph{strict}. If just the unitor $\alpha_0$ is an identity then $A$ is called \emph{normal}.
\end{definition}
	Hence a normal colax $V(\fmc)$-algebra $A$, where $V(\fmc)$ is the vertical restriction of the `free strict monoidal $\V$-category'\ndash monad on $\enProf\V$, as given above, comes equipped with a structure map that is a $\V$-functor $\map a{\fmc A}A$. The latter defines the tensor product $a(\ul x)$ of each sequence $\ul x$ of $A$-objects; we will write $\ul x_1 \tens \dotsb \tens \ul x_n = a(\ul x)$ and $1 = a()$. The fact that $A$ is normal means that $a(\ul x) = \ul x_1$ for each sequence $\ul x$ of length $1$. Moreover $A$ comes with an associator $\nat{\mathfrak a}{a \of \mu_A}{a \of \fmc a}$, that consists of $\V$-natural maps
\begin{multline*}
	\mathfrak a_{\ull x} \colon \ull x_{11} \tens \dotsb \tens \ull x_{1m_1} \tens \dotsb \tens \ull x_{n1} \tens \dotsb \tens \ull x_{nm_n} \\
	\to (\ull x_{11} \tens \dotsb \tens \ull x_{1m_1}) \tens \dotsb \tens (\ull x_{n1} \tens \dotsb \tens \ull x_{nm_n}),
\end{multline*}
	for each double sequence $\ull x$. At $\ull x = \bigpars{(x_1, x_2), (x_3, x_4, x_5), ()}$ for example, the component $\mathfrak a_{\ull x}$ is a map
\begin{displaymath}
	 x_1 \tens x_2 \tens x_3 \tens x_4 \tens x_5 \to (x_1 \tens x_2) \tens (x_3 \tens x_4 \tens x_5) \tens 1.
\end{displaymath}
	
	Roughly speaking the associativity axiom above, for $\mathfrak a$, says that the two ways of adding brackets to the tensor product of $\mu_A \of \mu_{\fmc A}(\ulll x)$, where $\ulll x$ is a triple sequence, either starting with the `outer' brackets or with the `inner' ones, coincide. Formally it states that the following diagram commutes, for each triple sequence $\ulll x$, where we have written $\card{\ulll x} = n$, $\card{\ulll x_i} = m_i$ and $\card{\ulll x_{ij}} = l_{ij}$ and where $\Tens_{j = 1, k = 1}^{j = m_i, k = l_{ij}} \ulll x_{ijk}$ denotes the tensor product $\ulll x_{i11} \tens \dotsb \tens \ulll x_{i1l_{i1}} \tens \dotsb \tens \ulll x_{im_i1} \tens \dotsb \tens \ulll x_{im_il_{im_i}}$. In the top leg the inner brackets are added first while in the bottom one it is the outer brackets.
\begin{displaymath}
	\begin{tikzpicture}
		\matrix(m)[math2em, column sep=2.5em]
		{ \Tens_{i = 1, j = 1, k = 1}^{i = n, j = m_i, k = l_{ij}} \ulll x_{ijk} & \Tens_{i = 1, j = 1}^{i = n, j = m_i} (\Tens_{k=1}^{l_{ij}} \ulll x_{ijk}) \\
			\Tens_{i = 1}^n (\Tens_{j = 1, k = 1}^{j = m_i, k = l_{ij}} \ulll x_{ijk}) &  \Tens_{i = 1}^n \bigpars{\Tens_{j=1}^{m_i} (\Tens_{k=1}^{l_{ij}} \ulll x_{ijk})} \\ };
		\path[map]	(m-1-1) edge node[above] {$\mathfrak a$} (m-1-2)
												edge node[left] {$\mathfrak a$} (m-2-1)
								(m-1-2) edge node[right] {$\mathfrak a$} (m-2-2)
								(m-2-1) edge node[below] {$\Tens_{i=1}^n \mathfrak a$} (m-2-2);
	\end{tikzpicture}
\end{displaymath}
	The unit axioms for $\mathfrak a$ mean that for double sequences $\ull x$ that are of the form $\ull x = \bigpars{(\ull x_{11}, \dotsc, \ull x_{1n})}$ or $\ull x = \bigpars{(\ull x_{11}), \dotsc, (\ull x_{n1})}$ the component $\mathfrak a_{\ull x}$ is the identity.
	
	In the case that $\V = \Set$ the description of normal colax $\fmc$-algebras above is closely related to that of `lax monoidal categories' of e.g.\ \cite[Definition 3.1.1]{Leinster04}. Indeed reversing the direction of the associator $\mathfrak a$ in the discussion above we obtain the notion of a lax monoidal category whose unitors are identities. This is why, for general $\V$, normal colax $\fmc$-algebras are called \emph{normal colax monoidal $\V$\ndash categories}; we will often leave out the prefix `normal'. Returning to $\V = \Set$, pseudo $\fmc$\ndash algebras, i.e.\ whose associators are invertible, are often called \emph{unbiased monoidal categories}. The prefix `unbiased' refers to the fact that the tensor products of all arities are part of the structure of $A$, in contrast to the classical definition of a monoidal category, where only the nullary (unit) and binary tensor products are given. In the case of unenriched monoidal categories it is known that the biased and unbiased definitions are equivalent, see for example \cite[Section 3.2]{Leinster04}. That this result can be extended to the monoidal enriched categories that are treated here seems plausible, but the author does not know of such an extension.
	
	Next is the definition of morphisms between colax algebras.
\begin{definition} \label{definition:colax morphism}
  Given colax $T$-algebras $A = (A, a, \alpha, \alpha_0)$ and $B = (B, b, \beta, \beta_0)$, a \emph{colax $T$-morphism} $\map fAB$ is a morphism $\map fAB$ equipped with a cell $\cell{\bar f}{f \of a}{b \of Tf}$, its \emph{structure cell}, satisfying the following associativity and unit coherence conditions.
  \begin{displaymath}
    \begin{tikzpicture}[textbaseline]
      \matrix(m)[math, column sep=0.75em, row sep=2em]
      { \phantom{T^2B} & TA & \phantom{T^2B} &[-0.9em] A & \phantom{T^2B} \\
        T^2 A & & TB & \phantom{T^2B} & B \\
        & T^2 B & & TB & \\ };
      \path[map]  (m-1-2) edge node[above] {$a$} (m-1-4)
                          edge node[below left] {$Tf$} (m-2-3)
                  (m-1-4) edge node[above right] {$f$} (m-2-5)
                  (m-2-1) edge node[above left] {$\mu_A$} (m-1-2)
                          edge node[below left] {$T^2 f$} (m-3-2)
                  (m-2-3) edge node[below] {$b$} (m-2-5)
                  (m-3-2) edge node[above left] {$\mu_B$} (m-2-3)
                          edge node[below] {$Tb$} (m-3-4)
                  (m-3-4) edge node[below right] {$b$} (m-2-5)
                  (m-1-4) edge[cell, shorten >= 7pt, shorten <= 7pt] node[below right] {$\bar f$} (m-2-3)
                  (m-2-3) edge[cell, shorten >= 7pt, shorten <= 7pt] node[below left] {$\beta$} (m-3-4);
    \end{tikzpicture}
    = \begin{tikzpicture}[textbaseline]
      \matrix(m)[math, column sep=0.75em, row sep=2em]
      { \phantom{T^2B} & TA &[-0.9em] \phantom{T^2B} & A & \phantom{T^2B} \\
        T^2 A & & TA & \phantom{T^2B} & B \\
        & T^2 B & & TB & \\ };
      \path[map]  (m-1-2) edge node[above] {$a$} (m-1-4)
                  (m-1-4) edge node[above right] {$f$} (m-2-5)
                  (m-2-1) edge node[above left] {$\mu_A$} (m-1-2)
                          edge node[above] {$Ta$} (m-2-3)
                          edge node[below left] {$T^2 f$} (m-3-2)
                  (m-2-3) edge node[below right] {$a$} (m-1-4)
                          edge node[below left] {$Tf$} (m-3-4)
                  (m-3-2) edge node[below] {$Tb$} (m-3-4)
                  (m-3-4) edge node[below right] {$b$} (m-2-5)
                  (m-1-2) edge[cell, shorten >= 7pt, shorten <= 7pt] node[above right] {$\alpha$} (m-2-3)
                  (m-2-3) edge[cell, shorten >= 7pt, shorten <= 7pt] node[above left] {$T\bar f$} (m-3-2)
                  ($(m-2-3)!0.5!(m-2-5)+(0,0.75em)$) edge[cell] node[right] {$\bar f$} ($(m-2-3)!0.5!(m-2-5)-(0,0.75em)$);
    \end{tikzpicture}
  \end{displaymath}
  \begin{displaymath}
		\begin{tikzpicture}[textbaseline]
      \matrix(m)[math, column sep=0.75em, row sep=2em]
      { \phantom{T^2B} & TA & \phantom{T^2B} &[-0.9em] A & \phantom{T^2B} \\
        A & \phantom{T^2B} & TB & \phantom{T^2B} & B \\
        & B & & & \\ };
      \path[map]  (m-1-2)	edge node[above] {$a$} (m-1-4)
													edge node[below left] {$Tf$} (m-2-3)
									(m-1-4) edge node[above right] {$f$} (m-2-5)
                  (m-2-1) edge node[above left] {$\eta_A$} (m-1-2)
                          edge node[below left] {$f$} (m-3-2)
                  (m-2-3) edge node[below] {$b$} (m-2-5)
                  (m-3-2) edge node[above left] {$\eta_B$} (m-2-3)
                          edge[bend right=21] node[below right] {$\id$} (m-2-5)
                  (m-1-4) edge[cell, shorten >= 7pt, shorten <= 7pt] node[below right] {$\bar f$} (m-2-3)
                  (m-2-3) edge[cell, shorten >= 12.5pt, shorten <= 4.5pt] node[left, inner sep=8pt] {$\beta_0$} (m-3-4);
    \end{tikzpicture}
    = \begin{tikzpicture}[textbaseline]
      \matrix(m)[math, column sep=0.75em, row sep=2em]
      { \phantom{T^2B} & TA & \phantom{T^2B} &[-0.9em] A & \phantom{T^2B} \\
        A & \phantom{T^2B} & & \phantom{T^2B} & B \\
        & B & & & \\ };
      \path[map]  (m-1-2)	edge node[above] {$a$} (m-1-4)
									(m-1-4) edge node[above right] {$f$} (m-2-5)
                  (m-2-1) edge node[above left] {$\eta_A$} (m-1-2)
                          edge node[below left] {$f$} (m-3-2)
                          edge[bend right=21] node[below right] {$\id$} (m-1-4)
                  (m-3-2) edge[bend right=21] node[below right] {$\id$} (m-2-5)
                  (m-1-2) edge[cell, shorten >= 16.5pt, shorten <= 5.5pt] node[left, inner sep=8pt] {$\alpha_0$} (m-2-3);
    \end{tikzpicture}
  \end{displaymath}
  If the structure cell $\bar f$ is invertible then $f$ is called a \emph{pseudo $T$-morphism}; if it is an identity cell then $f$ is called \emph{strict}.
\end{definition}
	For $T = V(\fmc)$ a colax $V(\fmc)$-morphism \mbox{$\map fAB$} as above, between normal colax monoidal $\V$-categories $A$ and $B$, is a $\V$-functor $\map fAB$ equipped with a $\V$-natural transformation $\nat{f_\tens}{f \of a}{b \of \fmc f}$, the \emph{compositor}, that consists of maps
\begin{displaymath}
	\map{f_\tens}{f(\ul x_1 \tens \dotsb \tens \ul x_n)}{f\ul x_1 \tens \dotsb \tens f\ul x_n}	
\end{displaymath}
	for any sequence $\ul x$; in particular this includes a map $f1_A \to 1_B$. The unit axiom means that $f_\tens$ restricts to the identity on sequences of length $1$, while the associativity axiom states that the diagram below commutes, for each double sequence $\ull x$. Colax $V(\fmc)$-morphisms are called \emph{colax monoidal $\V$-functors}.
\begin{displaymath}
	\wip\begin{tikzpicture}
		\matrix(m)[math2em, column sep=3em, inner sep=6pt]
			{	f\bigpars{\Tens_{i=1, j=1}^{i=n, j=m_i} \ull x_{ij}} & \Tens_{i=1, j=1}^{i=n, j=m_i} f\ull x_{ij} \\
				f\bigpars{\Tens_{i=1}^n (\Tens_{j=1}^{m_i} \ull x_{ij})} & \Tens_{i=1}^n \bigpars{\Tens_{j=1}^{m_i} f\ull x_{ij}} \\ };
			\matrix(n)[math, yshift=-5.5em, inner sep=6pt]{\Tens_{i=1}^n \bigpars{f(\Tens_{j=1}^{m_i} \ull x_{ij})} \\};
			\path[map]	(m-1-1) edge node[above] {$f_\tens$} (m-1-2)
													edge node[left] {$f\mathfrak a_A$} (m-2-1)
									(m-1-2) edge node[right] {$\mathfrak a_B$} (m-2-2)
									(m-2-1) edge node[below left, yshift=2pt] {$f_\tens$} (n-1-1)
									(n-1-1) edge node[below right, yshift=2pt] {$\Tens_{i=1}^n f_\tens$} (m-2-2);
	\end{tikzpicture}
\end{displaymath}
	
	Thus, like pseudo $V(\fmc)$-algebras are unbiased variants of ordinary monoidal $\V$\ndash categories, colax monoidal $\V$-functors between them are unbiased variants of ordinary colax monoidal $\V$-functors which, instead of being equipped with comparison maps $f(x \tens y) \to fx \tens fy$ and $f1_A \to 1_B$, come with comparison maps for each arity $n \geq 0$. 

  This leaves the definition of cells between $T$-morphisms.
\begin{definition} \label{definition:colax cell}
  Given colax $T$-morphisms $f$ and $\map gAB$, a \emph{$T$-cell} between $f$ and $g$ is a cell $\cell\phi fg$ satisfying
  \begin{displaymath}
    \begin{tikzpicture}[textbaseline]
      \matrix(m)[math, column sep=3em, row sep=3em]{TA & A \\ TB & B \\};
      \path[map]  (m-1-1) edge node[above] {$a$} (m-1-2)
                          edge[bend right=35] node[left] {$Tf$} (m-2-1)
                          edge[bend left=35] node[right] {$Tg$} (m-2-1)
                  (m-1-2) edge[bend left=35] node[right] {$g$} (m-2-2)
                  (m-2-1) edge node[below] {$b$} (m-2-2)
                  ($(m-1-1)!0.5!(m-2-1)+(0.75em,-0.3em)$) edge[cell] node[above] {$T\phi$} ($(m-1-1)!0.5!(m-2-1)-(0.75em,0.3em)$)
                  (m-1-2) edge[cell, shorten >= 16.75pt, shorten <= 16.75pt, transform canvas = {xshift=9pt}] node[below right] {$\bar g$} (m-2-1);
    \end{tikzpicture}
    = 
    \begin{tikzpicture}[textbaseline]
			\matrix(m)[math, column sep=3em, row sep=3em]{TA & A \\ TB & B. \\};
      \path[map]  (m-1-1) edge node[above] {$a$} (m-1-2)
                          edge[bend right=35] node[left] {$Tf$} (m-2-1)
                  (m-1-2) edge[bend right=35] node[left] {$f$} (m-2-2)
                          edge[bend left=35] node[right] {$g$} (m-2-2)
                  (m-2-1) edge node[below] {$b$} (m-2-2)
                  ($(m-1-2)!0.5!(m-2-2)+(0.75em,-0.3em)$) edge[cell] node[above] {$\phi$} ($(m-1-2)!0.5!(m-2-2)-(0.75em,0.3em)$)
                  (m-1-2) edge[cell, shorten >= 17pt, shorten <= 17pt, transform canvas = {xshift=-5pt}] node[above left] {$\bar f$} (m-2-1);
    \end{tikzpicture}
  \end{displaymath}
\end{definition}
	In particular a $V(\fmc)$-cell $\cell\phi fg$ between colax monoidal $\V$-functors $f$ and $g$, called a \emph{monoidal $\V$-natural transformation}, is simply a $\V$-natural transformation $\nat\phi fg$ that is compatible with the structure transformations $f_\tens$ and $g_\tens$: the diagrams
	\begin{displaymath}
		\begin{tikzpicture}
			\matrix(m)[math2em]
			{ f(\ul x_1 \tens \dotsb \tens \ul x_n) & f\ul x_1 \tens \dotsb \tens f\ul x_n \\
				g(\ul x_1 \tens \dotsb \tens \ul x_n) & g\ul x_1 \tens \dotsb \tens g\ul x_n \\ };
			\path[map]	(m-1-1) edge node[above] {$f_\tens$} (m-1-2)
													edge node[left] {$\phi_{\ul x_1 \tens \dotsb \tens \ul x_n}$} (m-2-1)
									(m-1-2) edge node[right] {$\phi_{\ul x_1} \tens \dotsb \tens \phi_{\ul x_n}$} (m-2-2)
									(m-2-1) edge node[below] {$g_\tens$} (m-2-2);
		\end{tikzpicture}
	\end{displaymath}
	commute for every sequence $\ul x$.
	
	For any $2$-monad $T$, we shall write $\cAlg T$ for the $2$-category of colax $T$-algebras, colax $T$-morphisms and $T$-cells, and $\map{U^T}{\cAlg T}\C$ for the forgetful $2$-functor. Recall that every normal monad $T$ on an equipment $\K$ induces a $2$-monad $V(T)$ on the vertical $2$-category $V(\K)$ that is contained in $\K$, by \propref{normal functors induce vertical functors}.
\begin{definition} \label{definition:colax V(T)-algebras and morphisms}
	Let $T$ be a normal monad on an equipment $\K$. By \emph{colax $T$\ndash al\-ge\-bras} (in $\K$), \emph{colax $T$-morphisms} and \emph{vertical $T$-cells} between them, we simply mean colax $V(T)$\ndash al\-ge\-bras, colax $V(T)$-morphisms and $V(T)$-cells (in $V(\K)$).
\end{definition}
	Thus in case $T = \fmc$, the `free strict monoidal $\V$-category'-monad, normal colax $\fmc$-algebras and colax $\fmc$-morphisms are respectively normal colax monoidal $\V$-categories and colax monoidal $\V$-functors, as we have seen.

\subsection{Symmetric monoidal $\V$-categories}
	Here we discuss the `free symmetric strict monoidal $\V$-category'-monad $\fsmc$ on the equipment $\enProf\V$ of $\V$-profunctors, which is also briefly mentioned in \cite[Example 3.14]{Cruttwell-Shulman10} and whose restriction to $\enCat\V$ is described in \cite[page 683]{Getzler09}. Its colax algebras are `colax symmetric monoidal $\V$-categories', as we will see. Like the `free strict monoidal $\V$-category'-monad $\fmc$, the monad $\fsmc$ is also induced by a monad on $\Mat\V$, also denoted $\fsmc$, that we will introduce first.
	
	On $\Mat\V_0 = \Set$ we take $\fsmc_0 = \fmc_0$ while the endofunctor $\fsmc_1$ on $\Mat\V_1$ is given as follows. Let $\sym_n$ be the group of permutations of the set $\set{1, \dotsc, n}$; given $\sigma \in \sym_n$ we will write $\act{\ul x}\sigma$ for the permuted sequence given by $(\act{\ul x}\sigma)_i = \ul x_{\sigma i}$. This gives a right action on the sequences of length $n$, i.e.\ $\act{(\act{\ul x}\sigma)}\tau = \act{\ul x}(\sigma \of \tau)$. The image $\fsmc J$ of a $\V$-matrix $\hmap JAB$ is given by
	\begin{displaymath}
		\fsmc J(\ul x, \ul y) = \begin{cases}
			\coprod_{\sigma \in \Sigma_n} \fmc_n J(\act{\ul x}\sigma, \ul y) & \text{if $\card{\ul x} = n = \card{\ul y}$}; \\
			\emptyset & \text{otherwise,}
		\end{cases}
	\end{displaymath}
	where $\fmc_n J(\act{\ul x}\sigma, \ul y) = \Tens_{i=1}^n J(\ul x_{\sigma i}, \ul y_i)$. As before the image $\fsmc \phi$ of a cell $\cell \phi JK$ is given by taking tensor products of the components of $\phi$; this extends $J \mapsto \fsmc J$ to a endofunctor $\fsmc_1$ on $\Mat\V_1$. 
	
	To combine the endofunctors $\fsmc_0$ and $\fsmc_1$ into a lax endofunctor $\fsmc$ on $\Mat\V$ we have to supply horizontal cells $\cell{\fsmc_\hc}{\fsmc J \hc \fsmc K}{\fsmc(J \hc K)}$, for $\V$-matrices $\hmap JAB$ and $\hmap KBC$, that form the compositor of $\fsmc$, as well as horizontal cells $\cell{\fsmc_U}{U_{\fsmc A}}{\fsmc U_A}$, for each set $A$, that form the unitor of $\fsmc$. For the compositor notice that there is an induced action of $\sym_n$ on the tensor products  $\fmc_n J(\ul x, \ul y)$ given by the isomorphisms
	\begin{displaymath}
		\sigma \colon \fmc_n J(\ul x, \ul y) \xrar\iso \fmc_n J(\act{\ul x}\sigma, \act{\ul y}\sigma).
	\end{displaymath}
	that permute the factors, using the symmetric structure of $\V$. Using this we define the components of $\fsmc_\hc$ to be the compositions below, for each pair of sequences $\ul x$ and $\ul z$ of length $n$.
	\begin{align*}
		(\fsmc J \hc \fsmc K)(\ul x, \ul z) &\iso \coprod_{\substack{\rho, \sigma \in \sym_n \\ \ul y \in \fmc_n B}} \fmc_n J(\act{\ul x}\rho, \ul y) \tens \fmc_n K(\act{\ul y}\sigma, \ul z) \\
		&\xrar{\coprod{\sigma \tens \id}} \coprod_{\substack{\rho, \sigma \in \sym_n \\ \ul y \in \fmc_n B}} \fmc_n J(\act{\ul x}{(\rho \of \sigma)}, \act{\ul y}\sigma) \tens \fmc_n K(\act{\ul y}\sigma, \ul z) \\
		&\to \coprod_{\substack{\tau \in \sym_n \\ \ul w \in \fmc_n B}} \fmc_n J(\act{\ul x}\tau, \ul w) \tens \fmc_n K(\ul w, \ul z) \iso \fsmc(J \hc K)(\ul x, \ul z)
	\end{align*}
	Here the isomorphisms exists by the symmetric structure of $\V$ and the fact that $\tens$ preserves coproducts in both variables, while the unlabelled map is induced by inserting each component, that is indexed by $\rho$, $\sigma$ and $\ul y$, at $\tau = \rho \of \sigma$ and $\ul w = \act{\ul y}\sigma$. Checking that this makes $\fsmc_\hc$ a natural transformation, and that it satisfies the associativity axiom, is straightforward. The component of the unitor $\fsmc_U$ for a set $A$ is given by
	\begin{displaymath}
		U_{\fsmc A}(\ul x, \ul y) = 1 \iso 1^{\tens n} \to \fsmc U_A(\ul x, \ul y)
	\end{displaymath}
	if $\ul x = \ul y$, where the unlabelled map is insertion at $\sigma = \id$; if $\ul x \neq \ul y$ then it is simply the initial map $\emptyset \to \fsmc U_A(\ul x, \ul y)$. Checking that, with this definition, the unitors and associators satisfy the unit axioms is again straightforward. This concludes the definition of the lax endofunctor $\fsmc$ on $\Mat\V$.
	
	The endofunctor $\fsmc$ is made into a lax monad as follows. Its multiplication  $\nat{\mu_A}{\fsmc^2 A}{\fsmc A}$ for a set $A$ concatenates the double sequences of elements in $A$. To define the cell $\cell{\mu_J}{\fsmc^2 J}{\fsmc J}$ for a $\V$-matrix $\hmap JAB$ notice that $\fsmc^2 J$ is given by
\begin{displaymath}
	\fsmc^2 J(\ull x, \ull y) = \coprod_{\sigma \in \sym_n(\ull x, \ull y)} \Tens_{i = 1}^n \coprod_{\sigma_i \in \sym_{m_i}} \Tens_{j_i = 1}^{m_i} J\bigpars{(\ull x_{\sigma i})_{\sigma_i j_i}, (\ull y_i)_{j_i}},
\end{displaymath}
	where $\sym_n(\ull x, \ull y) \subset \sym_n$ is the subgroup of permutations $\sigma$ for which $\card{\ull x_{\sigma i}} = m_i = \card{\ull y_i}$, for each index $i$. In here we can rewrite $(\ull y_i)_{j_i} = (\mu_B \ull y)_{k(i, j_i)}$ with
\begin{displaymath}
	k(i, j_i) = m_1 + \dotsb + m_{i-1} + j_i;
\end{displaymath}
	similarly we have $(\ull x_{\sigma i})_{\sigma_i j_i} = (\mu_A \ull x)_{(\sigma_1, \dotsc, \sigma_n) \of \sigma_{(m_1, \dotsc, m_n)} k(i, j_i)}$, where the permutations $\sigma_{\pars{m_1, \dotsc, m_n}}$ and $\pars{\sigma_1, \dotsc, \sigma_n}$ are given as follows. Writing $N = \Sum_i m_i$, the \emph{block permutation} $\sigma_{\pars{m_1, \dotsc, m_n}} \in \sym_N$ permutes the $n$ blocks of the $\pars{m_1, \dotsc, m_n}$\ndash par\-ti\-tion of $\set{1, \dotsc, N}$ exactly like $\sigma$ permutes the elements of $\set{1, \dotsc, n}$, while preserving the ordering of the elements in each block. The permutation $\pars{\sigma_1, \dotsc, \sigma_n}$ of $\sym_N$ is simply the disjoint union of the $\sigma_i$, letting each $\sigma_i$ act on the subset
	\begin{displaymath}
		\set{l :  \Sum_{i' < i} m_{i'} + 1 \leq l \leq \Sum_{i' \leq i} m_{i'}} \subset \set{1, \dotsc, N}.
	\end{displaymath}
	It follows that, using the fact that $\tens$ preserves coproducts in both variables, we can rewrite $\fsmc^2 J(\ull x, \ull y)$ as
\begin{equation} \label{equation:rewriting double fsmc image}
	\fsmc^2 J(\ull x, \ull y) \iso \coprod_{\substack{\sigma \in \sym_n(\ull x, \ull y) \\ \sigma_i \in \sym_{m_i}}} \fmc_N J\Bigpars{\act{\mu_A(\ull x)}{\bigpars{(\sigma_1, \dotsc, \sigma_n) \of \sigma_{(m_1, \dotsc, m_n)}}}, \mu_B\ull y}
\end{equation}
	the multiplication $\map{\mu_J}{\fsmc^2 J}{\fsmc J}$ is given by this isomorphism followed by inserting each component above, that is indexed by $(\sigma, \sigma_1, \dotsc, \sigma_n)$, into $\fsmc J(\mu_A \ull x, \mu_B \ull y)$ as the component indexed by $(\sigma_1, \dotsc, \sigma_n) \of \sigma_{(m_1, \dotsc, m_n)}$. That the components $\mu_J$ are natural with respect to the cells of $\Mat\V$, and that they satisfy the unit axiom (\defref{definition:transformation}), is clear. To see that they satisfy the composition axiom as well, that is
\begin{displaymath}
	\begin{tikzpicture}
		\matrix(m)[math2em]{ \fsmc^2 J \hc \fsmc^2 K & \fsmc^2(J \hc K) \\ \fsmc J \hc \fsmc K & \fsmc (J \hc K) \\};
		\path	(m-1-1) edge[cell] node[above] {$\fsmc^2_\hc$} (m-1-2)
									edge[cell] node[left] {$\mu_J \hc \mu_K$} (m-2-1)
					(m-1-2) edge[cell] node[right] {$\mu_{J \hc K}$} (m-2-2)
					(m-2-1) edge[cell] node[below] {$\fsmc_\hc$} (m-2-2);
	\end{tikzpicture}
\end{displaymath}
	commutes for every pair of $\V$-matrices $\hmap JAB$ and $\hmap KBC$, notice that under the isomorphisms \eqref{equation:rewriting double fsmc image} the action of $\fsmc^2_\hc = \fsmc\fsmc_\hc \of \fsmc_\hc (\fsmc \times \fsmc)$ is given as follows. Consider the component of $(\fsmc^2 J \hc \fsmc^2 K)(\ull x, \ull z)$ that is indexed by $\ull y \in \fmc_N B$ (for the horizontal composite), $(\sigma, \sigma_1, \dotsc, \sigma_n)$ (for $\fsmc^2 J$) and $(\tau, \tau_1, \dotsc, \tau_n)$ (for $\fsmc^2 K$). The first factor of $\fsmc^2_\hc$ permutes the tensor product of $J$-components in $\fsmc^2 J \hc \fsmc^2 K$ with $\tau_{(k_1, \dotsc, k_n)}$, where $\card{\ull z_i} = k_i$, while the second factor permutes the same product with $(\tau_1, \dotsc, \tau_m)$. We conclude that the top leg of the diagram above is given by permuting the $J$-components with $(\tau_1, \dotsc, \tau_m) \of \tau_{(k_1, \dotsc, k_n)}$, followed by insertion into $\fsmc(J \hc K)$, which is exactly what the bottom leg does.

	The components $\map{\eta_A}A{\fsmc A}$ of the unit, for any set $A$, are simply given by mapping the elements of $A$ to their corresponding $1$-element sequences; the cells $\eta_J$ for $\V$-matrices $J$ are given by inserting the components of $J$ into those of $\fsmc J$. That this makes $\eta$ into a well-defined transformation $\id \natarrow \fsmc$ is clear. Finally, to see that $\mu$ and $\eta$ satisfy the associativity and unit axioms of a monad, notice that their actions on the permutations, that are the indices of the coproducts in $\fsmc^2 J$ and $\fsmc J$, coincide with the composition and unit of the `operad of symmetries', see e.g.\ \cite[Example 2.2.20]{Leinster04}: the operadic associativity and unit axioms for this operad imply the associativity and unit axioms for $\mu$ and $\eta$. This completes the definition of the lax monad $\fsmc$ on the equipment $\Mat\V$ of $\V$-matrices.
	
	We will often use the fact that there exists a transformation $\nat{\theta}\fmc\fsmc$ embedding the `free monoid'-monad $\fmc$ on $\Mat\V$ into $\fsmc$, that restricts to the identity on $\Mat\V_0 = \Set$ and whose cells $\cell{\theta_J}{\fmc J}{\fsmc J}$ insert at $\sigma = \id$. Notice that $\theta$ is compatible with the multiplications and units, in the sense that $\theta \of \mu_\fmc = \mu_\fsmc \of \theta^2$ (where $\theta^2 = \theta\fsmc \of \fmc\theta$) and $\theta \of \eta_\fmc = \eta_\fsmc$, i.e.\ $\theta$ is a \emph{morphism of monads}.
	
	As mentioned in the introduction, we are interested in the monad $\Mod\fsmc$ on the equipment $\enProf\V$ of $\V$-profunctors that is induced by $\fsmc$, see \propref{mod is a 2-functor}. The monad $\Mod\fsmc$ is the `free symmetric strict monoidal $\V$-category'-monad; we will again write $\fsmc = \Mod\fsmc$. Its image $\fsmc A$ of a $\V$-category $A$ has finite sequences of elements of $A$ as objects, while the hom-objects $\fsmc A(\ul x, \ul y)$ are given by
	\begin{displaymath}
		\fsmc A(\ul x, \ul y) = \begin{cases}
			\coprod_{\sigma \in \Sigma_n} \fmc_n A(\act{\ul x}\sigma, \ul y) & \text{if $\card{\ul x} = n = \card{\ul y}$}; \\
			\emptyset & \text{otherwise.}
		\end{cases}
	\end{displaymath}
	Thus generalised morphisms $\ul x \to \ul y$ in $\fsmc A$ (that is $\V$-morphisms $1 \to \fsmc A(\ul x, \ul y)$) can be thought of as a permutation of the sequence $\ul x$, followed by a sequence of maps from the permuted $\ul x$ to $\ul y$. Given $\ul x$, $\ul y$ and $\ul z$ of equal length $n$, the composition $\fsmc A(\ul x, \ul y) \tens \fsmc A(\ul y, \ul z) \to \fsmc A(\ul x, \ul z)$ is determined by
	\begin{multline*}
		\fmc_n A(\act{\ul x}\sigma, \ul y) \tens \fmc_n A(\act{\ul y}\tau, \ul z) \xrar{\tau \tens \id} \fmc_n A\bigpars{\act{\ul x}{(\sigma \of \tau)}, \act{\ul y}\tau} \tens \fmc_n A(\act{\ul y}\tau, \ul z) \\ \to \fmc_n A\bigpars{\act{\ul x}{(\sigma \of \tau)}, \ul z},
	\end{multline*}
	where the second map is given by composition in $A$. Given a sequence $x$ of length $n$ and a permutation $\sigma$ in $\sym_n$ we denote by $\map{\sigma_{\ul x}}{\ul x}{\act{\ul x}\sigma}$ the isomorphism in $\fsmc A$ that is given by the compostion
	\begin{equation} \label{equation:permutation in fsmc A}
		1 \iso 1^{\tens n} \xrar{\Tens_i \id_{\ul x_i}} \fmc_n A(\ul x, \ul x) \xrar\sigma \fmc_n A(\act{\ul x}\sigma, \act{\ul x}\sigma) \to \fsmc A(\ul x, \act{\ul x}\sigma),
	\end{equation}
	where the last map is insertion at $\sigma$. Note that the assignment $(\sigma, \ul x) \mapsto \sigma_{\ul x}$ is functorial in the sense that $\tau_{\act{\ul x}\sigma} \of \sigma_{\ul x} = (\sigma \of \tau)_{\ul x}$, while taking $\sigma = \id$ gives the identity for $\ul x$. In particular $(\inv\sigma)_{\act{\ul x}\sigma}$ is the inverse of $\sigma_{\ul x}$.
	
	Given a $\V$-functor $\map fAB$ the image $\map{\fsmc f}{\fsmc A}{\fsmc B}$ is given on objects by $(\fsmc f \ul x)_i = f(\ul x_i)$, while on hom-objects it is determined by tensor products of the maps $A(\ul x_{\sigma i}, \ul y_i) \to B\bigpars{f(\ul x_{\sigma i}), f(\ul y_i)}$ that define the action of $f$ on the hom-objects of $A$. The formula for the action of $\fsmc$ on $\V$-profunctors is identical to that on the hom-objects of $\V$-categories above. Likewise the actions of $\op{(\fsmc A)}$ and $\fsmc B$ on $\fsmc J$ are given in a similar fashion to the composition maps of $\fsmc A$ above, using the actions of $\op A$ and $B$ on $J$ instead of the composition maps of $A$. Finally the images of $\V$-natural transformations of $\V$-profunctors are given by taking tensor products of their components.

	Notice that, by \propref{mod is a 2-functor}, the monad $\fsmc$ is in fact normal. More is true: the following was stated in \cite[Example 5.12]{Cruttwell-Shulman10} without proof.
\begin{proposition} \label{fsmc is a pseudofunctor}
	The `free symmetric strict monoidal $\V$-category'-monad $\fsmc$ on $\enProf\V$, as described above, is a pseudomonad.
\end{proposition}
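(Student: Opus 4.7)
The plan is to show that the compositor
\[
\cell{\fsmc_\hc}{\fsmc J \hc_{\fsmc B} \fsmc H}{\fsmc(J \hc_B H)}
\]
is invertible for every pair of $\V$-profunctors $\hmap JAB$ and $\hmap HBC$; the unitor is already the identity by construction. Note that the pseudofunctoriality does \emph{not} follow directly from \propref{mod is a 2-functor}, since the lax monad $\fsmc$ on $\Mat\V$ is not itself a pseudofunctor (the second map in the definition of $\fsmc_\hc$ collapses the $\sym_n$-index on the left to a single index on the right). So the invertibility must be checked directly at the level of the coequaliser defining $\hc_{\fsmc B}$, and the key point will be that the permutation isomorphisms present in $\fsmc B$ inject the missing identifications into the coend.

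First I would fix sequences $\ul x \in \ob \fsmc A$ and $\ul z \in \ob \fsmc C$; both sides of the compositor vanish unless $|\ul x|=|\ul z|=n$, so I assume this. Using that $\tens$ preserves coends (by assumption on $\V$) and coproducts, I would compute
\[
\fsmc(J \hc_B H)(\ul x, \ul z) \iso \coprod_{\tau \in \sym_n} \int^{\ul b \in \fmc_n B} \fmc_n J(\act{\ul x}\tau, \ul b) \tens \fmc_n H(\ul b, \ul z).
\]
On the other hand, $(\fsmc J \hc_{\fsmc B} \fsmc H)(\ul x, \ul z)$ is the reflexive coequaliser of the $\fsmc B$-actions on
\[
\coprod_{\ul y \in \fmc_n B} \fsmc J(\ul x, \ul y) \tens \fsmc H(\ul y, \ul z) \iso \coprod_{\rho, \sigma \in \sym_n}\coprod_{\ul y \in \fmc_n B} \fmc_n J(\act{\ul x}\rho, \ul y) \tens \fmc_n H(\act{\ul y}\sigma, \ul z).
\]

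The heart of the proof is to see that the coequalisation collapses the extra $\sym_n$ worth of indexing. For this I would use the permutation isomorphism $\sigma_{\ul y}\colon \ul y \to \act{\ul y}\sigma$ in $\fsmc B$ given by \eqref{equation:permutation in fsmc A}; the coend relation it induces identifies the $(\rho, \sigma, \ul y)$-summand with the $(\rho \of \sigma, \id, \act{\ul y}\sigma)$-summand, via the symmetry isomorphism of $\V$ that reorders the factors of $\fmc_n J(\act{\ul x}\rho, \ul y)$ by $\sigma$. After these identifications the $\sigma$-index is forced to $\id$, and the residual coequalisation, now only over the $\fmc_n B$-actions (which form a subcategory of $\fsmc B$ on the objects of length $n$), is precisely the coend
\[
\int^{\ul b \in \fmc_n B} \fmc_n J(\act{\ul x}\tau, \ul b) \tens \fmc_n H(\ul b, \ul z),
\]
writing $\tau = \rho \of \sigma$ and $\ul b = \act{\ul y}\sigma$. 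That this residual coequaliser in $\V$ agrees with $\Tens_i (J \hc_B H)(\ul x_{\tau i}, \ul z_i)$ follows from \propref{tensor products preserve split coequalisers} applied to the reflexive coequalisers defining $J \hc_B H$ pointwise; the reflexivity is the usual one from the unit of $B$.

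Finally I would verify that the isomorphism constructed in this way coincides with the cell induced on the coequaliser by the lax compositor $\fsmc_\hc$ of $\Mat\V$ (composed with the coequalising projection $\fsmc(J \hc H) \to \fsmc(J \hc_B H)$), by chasing a generator $j \tens h$ with $j \in \fmc_n J(\act{\ul x}\rho, \ul y)$ and $h \in \fmc_n H(\act{\ul y}\sigma, \ul z)$ through both routes. Since the $\fsmc B$-action on this generator by $\sigma_{\ul y}$ is exactly what the definition of $\fsmc_\hc$ on $\Mat\V$ uses to produce its single $\tau$-indexed summand, the two descriptions agree and $\fsmc_\hc$ is invertible in $\enProf\V$. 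The main obstacle will be the careful bookkeeping of the three interacting permutations $\rho$, $\sigma$, $\tau$ and the symmetry isomorphism of $\V$ when transporting factors across the tensor product, to make sure the identification that the $\fsmc B$-coend performs is exactly the one that witnesses the isomorphism to $\fsmc(J \hc_B H)$.
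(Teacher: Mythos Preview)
Your approach is correct but takes a genuinely different route from the paper. You perform what the paper itself calls the ``more involved computation with coequalisers'' (a phrase it borrows from Cruttwell and Shulman): you unwind both sides of $\fsmc_\hc$ explicitly, use the permutation isomorphisms $\sigma_{\ul y}$ in $\fsmc B$ to collapse the extra $\sym_n$ worth of indexing in the coend, and then invoke \propref{tensor products preserve split coequalisers} to identify the residual coequaliser with the target. This is a perfectly valid direct argument, and your identification of the main obstacle---the bookkeeping of the three permutations $\rho,\sigma,\tau$ interacting with the symmetry of $\V$---is accurate.

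The paper instead gives a short indirect argument that avoids all of this. It first proves (\propref{theta cells are right invertible}) that the embedding $\nat\theta\fmc\fsmc$ has each component $\theta_J$ right invertible; that proof is a single paragraph, essentially noticing that $\rho\theta_J$ decomposes as a coproduct of Yoneda isomorphisms indexed by $\sym_n$. Then, applying $\rho$ to the composition axiom for $\theta$ (using that $\theta$ is a morphism of monads and the functoriality of $\rho$ from \propref{left and right cells}) yields a commuting pentagon in which every edge except $\fsmc_\hc \hc \id$ is already known to be invertible: the three $\rho\theta$ edges by the proposition just proved, and $\fmc_\hc$ because $\fmc$ is a pseudomonad. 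The invertibility of $\fsmc_\hc$ follows by cancellation. What the paper's approach buys is conceptual economy---no manipulation of coequalisers, no tracking of permutations---at the cost of relying on the auxiliary result about $\theta$. Your approach is more self-contained but, as you note, requires considerably more care in the verification.
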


	Before giving a proof we will describe the algebras of $\fsmc$. Notice that the embedding $\nat\theta\fmc\fsmc$ of monads on $\Mat\V$ induces a morphism of monads $\nat{\theta = \Mod\theta}{\Mod\fmc}{\Mod\fsmc}$ on $\enProf\V$ and therefore a morphism of $2$-monads $\nat{V(\theta)}{V(\fmc)}{V(\fsmc)}$ on $\enCat\V = V(\enProf\V)$. In general, such a morphism of monads induces a functor $\cAlg{V(S)} \to \cAlg{V(M)}$ between the categories of colax algebras over $V(S)$ and $V(M)$, see \cite[Section 6.1]{Leinster04}. It follows that the algebra structure on a normal colax $\fsmc$-algebra $A = (A, a', \mathfrak a')$ (see \defref{definition:colax algebra}) restricts to normal colax monoidal $\V$-category structure on $A$, whose tensor products are given by the $\V$-functor $a = \brks{\fmc A \xrar{\theta_A} \fsmc A \xrar{a'} A}$ and whose associator is the $\V$\ndash natural transformation $\nat{\mathfrak a = \mathfrak a' \of \theta^2}{a \of \mu_A}{a \of \fmc a}$.
	
	Furthermore, given a sequence $\ul x$ of length $n$ and a permutation $\sigma$ in $\sym_n$, we write $\map{(\mathfrak s_\sigma)_{\ul x} = a'(\sigma_{\ul x})}{\ul x_1 \tens \dotsb \tens \ul x_n}{\ul x_{\sigma 1} \tens \dotsb \tens \ul x_{\sigma n}}$, where $\map{\sigma_{\ul x}}{\ul x}{\act{\ul x}\sigma}$ is the isomorphism in $\fsmc A$ that is given by the composition \eqref{equation:permutation in fsmc A}. Writing $a_n$ for the restriction of $a$ to the full subcategory $\fmc_n A$ of $\fmc A$ consisting of sequences of length $n$, checking that the maps $(\mathfrak s_\sigma)_{\ul x}$ combine to form a $\V$-natural isomorphism \mbox{$\nat{\mathfrak s_\sigma}{a_n}{a_n \of \sigma}$}, where the action of $\sigma$ on $\fmc_n A$ is given by permutation of tensor factors, is straightforward. The functoriality of \eqref{equation:permutation in fsmc A} implies that $\sigma \mapsto \mathfrak s_\sigma$ is functorial in the sense that $(\mathfrak s_\tau \of \sigma) \of \mathfrak s_\sigma = \mathfrak s_{\sigma \of \tau}$, and $\mathfrak s_{\id} = \id$. Thus the original $\fsmc$-algebra structure $\V$-functor $\map{a'}{\fsmc A}A$ restricts to a $\V$-functor $\map a{\fmc A}A$ that makes $A$ into a colax monoidal $\V$-category, as well as coherent natural isomorphisms $\mathfrak s_\sigma$ that permute the factors of this monoidal structure. In fact giving $a'$ is equivalent to giving both $a$ and natural transformations $\mathfrak s_\sigma$, provided that they satisfy the properties just described. Indeed this follows from the fact that they are related by the commuting of the following diagram.
\begin{equation} \label{diagram:monoidal structure determines symmetric monoidal structure}
	\begin{split}
	\begin{tikzpicture}
		\matrix(m)[math2em]
		{ \fmc_n A(\act{\ul x}\sigma, \ul y) \nc A\bigpars{a_n(\act{\ul x}\sigma), a_n (\ul y)} \\
			\fsmc A(\ul x, \ul y) \nc A\bigpars{a_n (\ul x), a_n (\ul y)} \\ };
		\path[map]	(m-1-1) edge node[above] {$a_n$} (m-1-2)
												edge[linj] (m-2-1)
								(m-1-2) edge node[right] {$A(\mathfrak s_\sigma, \id)$} (m-2-2)
								(m-2-1) edge node[below] {$a'$} (m-2-2);
	\end{tikzpicture}
	\end{split}
\end{equation}	
	
	Since the embedding $\nat\theta\fmc\fsmc$ is the identity on objects, the original associator $\mathfrak a'$ is completely determined by its restriction $\mathfrak a = \mathfrak a' \of \theta^2$. In particular the associativity axioms for  $\mathfrak a'$ and its restriction $\mathfrak a$ are equivalent. Moreover the $\V$-naturality of $\mathfrak a'$, as a transformation of $\V$-functors $\fsmc A \natarrow A$, splits into the $\V$-naturality of $\mathfrak a$ and the naturality of $\mathfrak a$ with respect to the symmetries $\mathfrak s_\sigma$. The latter means that for each double sequence $\ull x$, with $\card{\ull x} = n$ and $\card{\ull x_i} = m_i$, as well as permutations $\sigma \in \sym_n$ and $\tau_i \in \sym_{m_i}$, for each $i = 1, \dotsc, n$, the following diagram commutes. Here $\Tens_{i=1, j=1}^{i= n, j=m_i} \ull x_{ij}$ denotes the tensor product $\ull x_{11} \tens \dotsb \tens \ull x_{1m_1} \tens \dotsb \tens \ull x_{n1} \tens \dotsb \tens \ull x_{nm_n}$, and $(\tau_1, \dotsc, \tau_n) \of \sigma_{(m_1, \dotsc, m_n)}$ is the permutation that was used in the definition of the multiplication $\mu$ of the monad $\fsmc$.
\begin{displaymath}
	\begin{tikzpicture}
		\matrix(m)[math2em, column sep=4.5em, inner sep=6pt]
		{ \Tens_{i=1, j=1}^{i= n, j=m_i} \ull x_{ij} & \Tens_{i=1}^n \bigpars{\Tens_{j=1}^{m_i} \ull x_{ij}} \\
			\Tens_{i=1, j=1}^{i=n, j=m_{\sigma i}} \ull x_{(\sigma i)(\tau_{\sigma i} j)} & \Tens_{i=1}^n \bigpars{\Tens_{j=1}^{m_i} \ull x_{i(\tau_i j)}} \\ };
		\matrix(n)[math, yshift=-5em, inner sep=6pt]{\Tens_{i=1}^n \bigpars{\Tens_{j=1}^{m_{\sigma i}} \ull x_{(\sigma i)(\tau_{\sigma i} j)}} \\ };
		\path[map]	(m-1-1) edge node[above] {$\mathfrak a$} (m-1-2)
												edge node[left] {$\mathfrak s_{\sigma_{(m_1, \dotsc, m_n)} \of (\tau_1, \dotsc, \tau_n)}$} (m-2-1)
								(m-1-2) edge node[right] {$\Tens_{i=1}^n \mathfrak s_{\tau_i}$} (m-2-2)
								(m-2-1) edge node[below left, yshift=2pt] {$\mathfrak a$} (n-1-1)
								(m-2-2) edge node[below right, yshift=2pt] {$\mathfrak s_\sigma$} (n-1-1);
	\end{tikzpicture}
\end{displaymath}

	Summarising, a normal colax $\fsmc$-algebra $A$ is a normal colax monoidal $\V$-category $A$ equipped with $\V$-natural symmetry isomorphisms $\nat{\mathfrak s_\sigma}{a_n}{a_n \of \sigma}$, for each permutation $\sigma$ in $\sym_n$, where $a_n$ denotes the restriction of the structure $\V$-functor $\map a{\fmc A}A$ to the full subcategory $\fmc_n A$ of sequences of length $n$. The assignment $\sigma \mapsto \mathfrak s_\sigma$ is required to be functorial in the sense that $(\mathfrak s_\tau \of \sigma) \of \mathfrak s_\sigma = \mathfrak s_{\sigma \of \tau}$, and $\mathfrak s_{\id} = \id$, while the associator $\nat{\mathfrak a}{a \of \mu_A}{a \of \fmc a}$ is required to be natural with respect to the symmetries $\mathfrak s_\sigma$, in that the diagrams above commute. Normal colax $\fsmc$-algebras are called \emph{symmetric normal colax monoidal $\V$-categories}.
	
	Likewise a colax $\fsmc$-morphism $\map fAB$ between symmetric normal colax monoidal $\V$-categories $A$ and $B$ is a colax monoidal $\V$-functor $\map fAB$ whose $\V$-natural compositor $\nat{f_\tens}{f \of a}{b \of \fmc f}$ is natural with respect to the symmetries of $A$ and $B$, in the sense that the diagrams below commute for any sequence $\ull x$ of length $n$ and permutation $\sigma$ in $\sym_n$. Colax $\fsmc$-morphisms are called \emph{symmetric colax monoidal $\V$-functors}. Finally $\fsmc$-cells between such functors are simply monoidal $\V$-natural transformations.
\begin{displaymath}
	\begin{tikzpicture}
		\matrix(m)[math2em]
		{	f(\ul x_1 \tens \dotsb \tens \ul x_n) & f\ul x_1 \tens \dotsb \tens f\ul x_n \\
			f(\ul x_{\sigma 1} \tens \dotsb \tens \ul x_{\sigma n}) & f\ul x_{\sigma 1} \tens \dotsb \tens f\ul x_{\sigma n} \\ };
		\path[map]	(m-1-1) edge node[above] {$f_\tens$} (m-1-2)
												edge node[left] {$f\mathfrak s_\sigma$} (m-2-1)
								(m-1-2) edge node[right] {$\mathfrak s_\sigma$} (m-2-2)
								(m-2-1) edge node[below] {$f_\tens$} (m-2-2);
	\end{tikzpicture}
\end{displaymath}
	
	We now turn to the proof of \propref{fsmc is a pseudofunctor} which, the author believes, has not appeared in the literature before. Where Cruttwell and Shulman remark in \cite[Example 5.12]{Cruttwell-Shulman10} that such a proof can be given using ``a more involved computation with coequalisers'', we will give a simple proof that uses the fact that $\fmc$ is a pseudomonad together with the following property of the embedding $\nat\theta\fmc\fsmc$. Remember that the cell $\theta_J$, where $J$ is a $\V$-profunctor \mbox{$A \slashedrightarrow B$}, corresponds under \propref{left and right cells} to a horizontal cell $\cell{\rho \theta_J}{\fsmc A(\id, \theta_A) \hc_{\fmc A} \fmc J}{\fsmc J(\id, \theta_B)}$, and that $\theta_J$ is called right invertible whenever $\rho \theta_J$ is invertible.
\begin{proposition} \label{theta cells are right invertible}
	For each $\V$-profunctor $J$ the cell $\theta_J$ is right invertible.
\end{proposition}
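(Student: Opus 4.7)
My plan is to prove right invertibility of $\theta_J$ by identifying both the source and the target of the horizontal cell $\rho\theta_J$ with the same $\V$-matrix $\fsmc J$, under which $\rho\theta_J$ becomes the identity. The crucial simplification is that $\theta$ restricts to the identity on $\Mat\V_0 = \Set$, so that the monoid morphisms $\theta_A, \theta_B$ (which underlie the monoid morphisms obtained from $\Mod\theta$) have identity object-maps, making the conjoints occurring in $\rho\theta_J$ very simple at the matrix level.

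First I would compute the underlying $\V$-matrices of the source and target of $\rho\theta_J$. Recalling from the proof of \propref{bimodule equipments} that cartesian fillers in $\Mod{\Mat\V}$ can be computed at the matrix level in $\Mat\V$, the identity $(\theta_A)_0 = \id$ (and similarly for $\theta_B$) gives that $\fsmc A(\id, \theta_A)$ and $\fsmc J(\id, \theta_B)$ have underlying matrices $\fsmc A$ and $\fsmc J$ respectively. The horizontal composition $\fsmc A(\id, \theta_A) \hc_{\fmc A} \fmc J$ is defined as the reflexive coequaliser of the two $\fmc A$-actions, i.e.\ as the coend over $\fmc A$; using that $\tens$ and coends preserve coproducts together with the decomposition $\fsmc A(\ul y, \ul w) = \coprod_{\sigma \in \sym_n} \fmc_n A(\act{\ul y}\sigma, \ul w)$ and the enriched Yoneda lemma, one obtains
\[
\bigpars{\fsmc A(\id, \theta_A) \hc_{\fmc A} \fmc J}(\ul y, \ul z) \iso \coprod_{\sigma \in \sym_n} \int^{\ul w} \fmc_n A(\act{\ul y}\sigma, \ul w) \tens \fmc_n J(\ul w, \ul z) \iso \coprod_{\sigma \in \sym_n} \fmc_n J(\act{\ul y}\sigma, \ul z),
\]
which is exactly $\fsmc J(\ul y, \ul z)$. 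Hence both source and target of $\rho\theta_J$ have the same underlying $\V$-matrix $\fsmc J$.

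Next I would identify $\rho\theta_J = \eps_{\theta_A} \hc \theta_J \hc \eta_{\theta_B}$ (from \propref{left and right cells}) under the identifications above. Since $(\theta_A)_0 = \id$, the cartesian filler $\eps_{\theta_A}$ is the identity on $\fsmc A$ at the matrix level, while $\eta_{\theta_B}$ is the inclusion of $U_{\fmc B}$ into $\fsmc B(\id, \theta_B)$, which at the matrix level coincides with $\theta_B$, that is with insertion at the summand $\sigma = \id$; combined with $\theta_J$ (also insertion at $\sigma = \id$), the cell $\rho\theta_J$ at the coend level sends $(s, j) \in \fsmc A(\ul y, \ul w) \tens \fmc J(\ul w, \ul z)$ to $s \cdot \theta_J(j) \in \fsmc J(\ul y, \ul z)$. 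Taking the canonical representative $(\sigma_{\ul y}, j) \in \fsmc A(\ul y, \act{\ul y}\sigma) \tens \fmc J(\act{\ul y}\sigma, \ul z)$ of the $\sigma$-summand of the source, the left $\fsmc A$-action of $\sigma_{\ul y}$ on $\theta_J(j) \in \fsmc J(\act{\ul y}\sigma, \ul z)$ (lying in the $\tau = \id$ summand) lands in the $\sigma$-summand of $\fsmc J(\ul y, \ul z)$ and equals $j$ itself. Thus $\rho\theta_J$ is the identity on $\fsmc J$, hence invertible.

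The main obstacle will be the careful bookkeeping of the several Yoneda, unitor and compositor isomorphisms linking the coequaliser computation with the cell $\eps_{\theta_A} \hc \theta_J \hc \eta_{\theta_B}$; in particular one uses the factorisation $s = \sigma_{\ul y} \cdot \theta_A(s')$ for $s$ in the $\sigma$-summand of $\fsmc A(\ul y, \ul w)$, which shows how the left $\fmc A$-action on $\fmc J$ collapses the coend variable down to $\act{\ul y}\sigma$ and yields the canonical representatives above. It is worth remarking that a direct appeal to \propref{right invertible cells between bimodules} is not available here: the underlying cell $\theta_J$ of $\V$-matrices is manifestly not right invertible in $\Mat\V$ (its $\rho\theta_J$ in $\Mat\V$ reduces, up to unitor isomorphism, to the inclusion at $\sigma = \id$), and right invertibility emerges only after the $\hc_{\fmc A}$-coequaliser redistributes the extra $\sym_n$-indexed coproducts.
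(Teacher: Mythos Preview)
Your proposal is correct and follows essentially the same route as the paper: both compute the source $\fsmc A(\id,\theta_A)\hc_{\fmc A}\fmc J$ as a coproduct over $\sigma\in\sym_n$ of Yoneda-type coequalisers, identify it with $\fsmc J$, and observe that under this identification $\rho\theta_J$ is (induced by) the action map, hence the identity. The paper phrases this by noting that the coequaliser diagram defining the horizontal composite is a coproduct of reflexive coequalisers, while you spell out the resulting isomorphism and the tracking of representatives more explicitly; your closing remark that \propref{right invertible cells between bimodules} does not apply directly is a useful addition not made in the paper.
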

\begin{proof}
	Recall that horizontal composites of $\V$-profunctors are given by the reflexive coequalisers \eqref{equation:composition of profunctors}; in particular the source $\fsmc A(\id, \theta_A) \hc_{\fmc A} \fmc J$ of $\rho\theta_J$ is the reflexive coequaliser of the two parallel maps below, that are given by letting $\fmc A$ act on $\fsmc A(\id, \theta_A)$ and $\fmc J$ respectively. 
	\begin{multline*}
		\coprod_{\substack{\sigma \in \sym_n \\ \ul u, \ul v \in \fmc_n A}} \fmc_n A(\act{\ul x}\sigma, \ul u) \tens \fmc_n A(\ul u, \ul v) \tens \fmc_n J(\ul v, \ul z) \\
		\rightrightarrows \coprod_{\substack{\sigma \in \sym_n \\ \ul y \in \fmc_n A}} \fmc_n A(\act{\ul x}\sigma, \ul y) \tens \fmc_n J(\ul y, \ul z) \to \coprod_{\sigma \in \sym_n} \fmc_n J(\act{\ul x}\sigma, \ul z)
	\end{multline*}
	Working out the definition of the horizontal cell $\rho\theta_J$ (\propref{left and right cells}), we find that its component for $(\ul x, \ul z)$ is induced by the second, single map above, which is given by letting $\fmc_n A$ act on $\fmc_n J$. But this means that, by the definition of the enriched Yoneda isomorphism (see \exref{example:unenriched profunctors} for the analogous unenriched variant), the diagram above is the coproduct of a number of reflexive coequalisers, one for each $\sigma \in \sym_n$, and hence the diagram itself is a reflexive coequaliser. From this we conclude that each component of $\rho\theta_J$ is an isomorphism.
\end{proof}
\begin{proof}[Proof of \propref{fsmc is a pseudofunctor}]
	Consider $\V$-profunctors $\hmap JAB$ and $\hmap KBC$; we will show that the compositor $\cell{\fsmc_\hc}{\fsmc J \hc_{\fsmc B} \fsmc K}{\fsmc (J \hc_B K)}$, which is a $\V$\ndash natural transformation $\op{(\fsmc A)} \tens \fsmc C \to \V$, is invertible, that is each of the components of $\fsmc_\hc$ is an isomorphism. Notice that since the embedding $\nat\theta\fmc\fsmc$ is the identity on objects we may equivalently prove that all of the components of the composite $\fsmc_\hc \of (\id \tens \theta_C)$ are isomorphisms. Applying $\rho$ to the composition axiom for $\theta$ (\defref{definition:transformation}), where we use the functoriality of $\rho$ (\propref{left and right cells}), we obtain the following commuting diagram of $\V$-natural transformations.
	\begin{displaymath}
		\begin{tikzpicture}
			\matrix(m)[math2em, row sep=2.5em]
			{ \fsmc J \hc_{\fsmc B} \fsmc K \hc_{\fsmc C} \fsmc C(\id, \theta_C) & \fsmc (J \hc_B K) \hc_{\fsmc C} \fsmc C(\id, \theta_C) \\
				\fsmc J \hc_{\fsmc B} \fsmc B(\id, \theta_B) \hc_{\fmc B} \fmc K & \fsmc A(\id, \theta_A) \hc_{\fmc A} \fmc(J \hc_B K) \\ };
			\matrix(n)[math, yshift=-5.25em]{\fsmc A(\id, \theta_A) \hc_{\fmc A} \fmc J \hc_{\fmc B} \fmc K \\};
			\path	(m-1-1)	edge[cell] node[above] {$\fsmc_\hc \hc \id$} (m-1-2)
										edge[cell] node[left] {$\id \hc \rho\theta_K$} (m-2-1)
						(m-1-2) edge[cell] node[right] {$\rho\theta_{J \hc K}$} (m-2-2)
						(m-2-1) edge[cell] node[below left, yshift=2pt] {$\rho\theta_J \hc \id$} (n-1-1)
						(n-1-1) edge[cell] node[below right, yshift=2pt] {$\id \hc \fmc_\hc$} (m-2-2);
		\end{tikzpicture}
	\end{displaymath}
	Here the top cell coincides with the restriction $\fsmc_\hc \of (\id \tens \theta_C)$ after it is composed with the isomorphisms $\fsmc J \hc_{\fsmc B} \fsmc K \hc_{\fsmc C} \fsmc C(\id, \theta_C) \iso (\fsmc J \hc_{\fsmc B} \fsmc K)(\id, \theta_C)$ and $\fsmc (J \hc_B K) \hc_{\fsmc C} \fsmc C(\id, \theta_C) \iso \fsmc (J \hc_B K)(\id, \theta_C)$ of \propref{cartesian fillers in terms of companions and conjoints}. All of the cells above but the top one are known to be invertible: hence the top one, and therefore $\fsmc_\hc$, are invertible as well.
\end{proof}
	
\subsection{Pseudo double categories}
	As the third example we recall the `free strict double category'-monad $D$ on the equipment $\psProf{\GG_1} = \Mod{\Span{\ps{\GG_1}}}$ of $\GG_1$-indexed profunctors (see \exref{example:internal profunctor equipments}), whose pseudoalgebras are `unbiased pseudo double categories'. Its definition will involve fewer `details' than that of $\fsmc$, because it is induced by the relatively simple monad $F$ of free categories on the category $\ps{\GG_1}$ of presheaves on $\GG_1 = (0 \rightrightarrows 1)$. The latter induces $D$ by the formula $D = \Prof (F)$, as described in \exref{example:monads inducing monads on internal profunctors}.

	A presheaf $\map A{\op\GG_1}\Set$ consists of two sets $A_0$ and $A_1$ as well as a pair of functions $A_1 \rightrightarrows A_0$ that we will denote $L$ and $R$. Such a presheaf is a \emph{directed graph}, with a set of vertices $A_0$ and a set of edges $A_1$: every $j \in A_1$ is a directed edge from the vertex $Lj$ to the vertex $Rj$. The free category monad $F = (F, \mu, \eta)$ on $\ps{\GG_1}$ is given as follows. The directed graph $F A$ has the same vertices as $A$, while its vertices are (possible empty) paths of the directed edges in $A$. In formulae:
	\begin{equation} \label{equation:free category monad}
		(FA)_0 = A_0 \qquad \text{and} \qquad (FA)_1 = A_0 \amalg \coprod_{n \geq 1} \overbrace{A_1 \times_{A_0} A_1 \times_{A_0} \dotsb \times_{A_0} A_1}^{\text{$n$ times}},
	\end{equation}
	where the set under the brace denotes the limit, called a \emph{wide pullback}, of the diagram $A_1 \xrar R A_0 \xlar L A_1 \xrar R \dotsb \xlar L A_1$ consisting of $n$ copies of $A_1$ and $n-1$ copies of $A_0$. On these limits the functions $L$ and $\map R{(FA)_1}{A_0}$ restrict to applying $L$ to the leftmost copy of $A_1$ and $R$ to the rightmost copy respectively. It is clear that this assignment extends to an endofunctor on $\ps{\GG_1}$. The edges of the directed graph $F^2A$ consist of paths of paths of edges in $A$ and the multiplication $\map{\mu_A}{F^2 A}{FA}$ is given by concatenation; the unit $\map{\eta_A}A{FA}$ maps each edge to its corresponding $1$-edge path. Of course the strict algebras for $F$ are categories.
	
	An elementary calculation shows that $F$ preserves pullbacks, so that we can apply the $2$-functor $\Span \dash$ of \propref{span is a 2-functor} to obtain a monad $\Span F$ on the equipment $\Span{\ps{\GG_1}}$ of spans in $\ps{\GG_1}$. Following this we apply the $2$-functor $\Mod\dash$ of \propref{mod is a 2-functor}, obtaining a monad $D = \Prof (F)$ on the equipment $\psProf{\GG_1} = \Mod{\Span{\ps{\GG_1}}}$ of $\GG_1$-indexed categories and $\GG_1$-indexed profunctors. We have already described the equipment $\psProf{\GG_1}$ in detail in \exref{example:GG1-indexed bimodules}: its objects are `double categories without horizontal composition', while a $\GG_1$-indexed profunctor $\hmap JAB$ consists of vertical morphisms $\map pab$, where $a \in A$ and $b \in B$, and cells $\cell ujk$, where $j$ and $k$ are horizontal morphisms in $A$ and $B$. The vertical morphisms and cells of $A$ and $B$ act on those of $J$.
	
	Applying the $2$-functors $\Span{\dash}$ and $\Mod{\dash}$ we find that the monad $D = \Mod{\Span F}$ is given as follows. The image $DA$ of a $\GG_1$-indexed category $A$ is again given by the formulae \eqref{equation:free category monad} where now the wide pullback is a limit of categories. The functors $L$ and $\map R{(DA)_1}{A_0}$ are again given by applying $L$ and $R$ respectively to the leftmost and rightmost copy of $A_1$. Thus the objects and vertical maps of $DA$ are those of $A$, while its horizontal morphisms and cells are (possibly empty) `formal horizontal composites'
	\begin{flalign} \label{equation:formal horizontal composites}
		&& \ul j &= (a_0 \xsrar{j_1} a_1, a_1 \xsrar{j_2} a_2, \dotsc, a_{n-1} \xsrar{j_n} a_n)& \notag \\
		\text{and} &&\ul u &= \Bigpars{\begin{tikzpicture}[textbaseline]
		\matrix(m)[math175em, column sep=0.9em, row sep=0.9em]{a_0 \nc a_1 \\ c_0 \nc c_1 \\};
				\path[map]  (m-1-1) edge[barred] node[above] {$j_1$} (m-1-2)
														edge node[left] {$p_1$} (m-2-1)
										(m-1-2) edge node[right] {$p_2$} (m-2-2)
										(m-2-1) edge[barred] node[below] {$k_1$} (m-2-2);
				\path[transform canvas={shift={($(m-1-2)!(0,0)!(m-2-2)-(0.35em, 0)$)}}] (m-1-1) edge[cell, shorten >= 0, shorten <= 0] node[right] {$u_1$} (m-2-1);
		\end{tikzpicture}, \begin{tikzpicture}[textbaseline]
		\matrix(m)[math175em, column sep=0.9em, row sep=0.9em]{a_1 \nc a_2 \\ c_1 \nc c_2 \\};
				\path[map]  (m-1-1) edge[barred] node[above] {$j_2$} (m-1-2)
														edge node[left] {$p_2$} (m-2-1)
										(m-1-2) edge node[right] {$p_3$} (m-2-2)
										(m-2-1) edge[barred] node[below] {$k_2$} (m-2-2);
				\path[transform canvas={shift={($(m-1-2)!(0,0)!(m-2-2)-(0.35em, 0)$)}}] (m-1-1) edge[cell, shorten >= 0, shorten <= 0] node[right] {$u_2$} (m-2-1);
				\end{tikzpicture}, \dotsc, \begin{tikzpicture}[textbaseline]
		\matrix(m)[math175em, column sep=0.9em, row sep=0.9em]{a_{n-1} \nc a_n \\ c_{n-1} \nc c_n \\};
				\path[map]  (m-1-1) edge[barred] node[above] {$j_n$} (m-1-2)
														edge node[left] {$p_{n-1}$} (m-2-1)
										(m-1-2) edge node[right] {$p_n$} (m-2-2)
										(m-2-1) edge[barred] node[below] {$k_n$} (m-2-2);
				\path[transform canvas={shift={($(m-1-2)!(0,0)!(m-2-2)-(0.15em, 0)$)}}] (m-1-1) edge[cell, shorten >= 0, shorten <= 0] node[right] {$u_n$} (m-2-1);
		\end{tikzpicture}}&
	\end{flalign}
	of horizontal morphisms and cells in $A$. Vertical composition of such sequences of cells is given coordinatewise. Again $\card{\ul j} = n$ denotes the length of $\ul j$, while the empty sequence at the object $a$ will be denoted by $(a)$, and that at the vertical morphism $p$ by $(p)$.
	
	The action of $D$ on a $\GG_1$-indexed profunctor $\hmap JAB$ is given similarly: on vertical morphisms by $(DJ)_0(a, b) = J_0(a, b)$ and on cells by
	\begin{displaymath}
		(DJ)_1(\ul j, \ul k) = \begin{cases}
		J_0(a, b) & \text{if $\ul j = (a)$, $\ul k = (b)$;} \\
		J_1(\ul j_1, \ul k_1) \times_{J_0} J_1(\ul j_2, \ul k_2) \times_{J_0} \dotsb \times_{J_0} J_1(\ul j_n, \ul k_n) & \text{if $\card{\ul j} = n = \card{\ul k}$;} \\
		\emptyset & \text{otherwise}
		\end{cases}
	\end{displaymath}
	where, in the second case, the wide pullback is of the maps $L$ and $\map R{J_1(\ul j_i, \ul k_i)}{J_0}$ into the set $J_0$ of all vertical morphisms of $J$. Thus the cells $\ul x$ of $DJ$ look like the cells $\ul u$ of $DA$ above, except that now the horizontal morphisms $j_i$ and $k_i$ are horizontal morphisms of $A$ and $B$ respectively, while the vertical morphisms $p_i$ are those of $J_0$. The actions of $DA$ and $DB$ are given coordinatewise.
	
	The assignments $A \mapsto DA$ and $J \mapsto DJ$ above extend to a normal endofunctor on the equipment $\psProf{\GG_1}$: the natural $\GG_1$-indexed transformations $DJ \hc_{DB} DH \natarrow D(J \hc_B H)$ making up the compositors of $D$ are given by the universal property of coends. Exactly as for $F$ the multiplication $\nat\mu{D^2}D$ is given by concatenation, while the unit $\nat\eta\id D$ maps horizontal morphisms and cells to the corresponding sequences of length $1$. Unlike $\Span F$, which is a pseudofunctor, the compositors of $D = \Mod{\Span F}$ need not be invertible, as \propref{free strict double category monad is not pseudo} below shows. We shall now describe the colax algebras of $D$. As with monoidal categories we simplify matters slightly by considering normal colax $D$-algebras, whose unitors $\cell{\alpha_0}{a \of \eta_A}{\id}$ are identities.
	
	A normal colax $D$-algebra $A = (A, \hc, \mathfrak a)$ consists of a $\GG_1$-indexed category $A$ that is equipped with a $\GG_1$-indexed functor $\map \hc{DA}A$. The strict unitor $\hc \of \eta = \id$ implies that $\hc$ restricts to the identity on objects, vertical morphisms and sequences consisting of a single horizontal morphism or cell. It maps each sequence $\ul j = (\ul j_1, \dotsc, \ul j_n)$ of composable horizontal morphisms of length $n > 1$, as in \eqref{equation:formal horizontal composites}, to its composite $\hmap{\ul j_1 \hc \dotsb \hc \ul j_n = \hc(\ul j)}{a_0}{a_n}$, while it picks a horizontal unit $\hmap{1_a = \hc(a)}aa$ for each object $a$. Likewise the sequence $\ul u = (\ul u_1, \dotsc, \ul u_n)$ of composable cells in \eqref{equation:formal horizontal composites} is mapped to a cell $\ul u_1 \hc \dotsb \hc \ul u_n = \hc(\ul u)$ that is of the form as on the left below, and it picks a horizontal unit cell $\map{1_p = \hc(p)}{\id_a}{\id_c}$ for each vertical morphism $\map pac$, as on the right.
	\begin{displaymath}
		\begin{tikzpicture}[baseline]
			\matrix(m)[math175em, column sep=5em]{a_1 \nc a_n \\ c_1 \nc c_n \\};
				\path[map]  (m-1-1) edge[barred] node[above] {$\ul j_1 \hc \dotsb \hc \ul j_n$} (m-1-2)
														edge node[left] {$p_1$} (m-2-1)
										(m-1-2) edge node[right] {$p_n$} (m-2-2)
										(m-2-1) edge[barred] node[below] {$\ul k_1 \hc \dotsb \hc \ul k_n$} (m-2-2);
				\path[transform canvas={shift=($(m-1-2)!0.5!(m-2-2)$), xshift=-2.3em}] (m-1-1) edge[cell] node[right] {$\ul u_1 \hc \dotsb \hc \ul u_n$} (m-2-1);
		\end{tikzpicture}
		\qquad\qquad\qquad\qquad\begin{tikzpicture}[baseline]
		\matrix(m)[math175em]{a \nc a \\ c \nc c \\};
				\path[map]  (m-1-1) edge node[left] {$p$} (m-2-1)
										(m-1-2) edge node[right] {$p$} (m-2-2);
				\path				(m-1-1) edge[eq] (m-1-2)
										(m-2-1) edge[eq] (m-2-2);
				\path[transform canvas={shift={($(m-1-2)!(0,0)!(m-2-2)$)}}] (m-1-1) edge[cell] node[right] {$1_p$} (m-2-1);
		\end{tikzpicture}
	\end{displaymath}
	The associator $\mathfrak a$ is a natural transformation $\nat{\mathfrak a}{\hc \of \mu_A}{\hc \of D\hc}$ consisting of cells
	\begin{displaymath}
		\mathfrak a \colon \ull j_{11} \hc \dotsb \hc \ull j_{1m_1} \hc \dotsb \hc \ull j_{n1} \hc \dotsb \hc \ull j_{nm_n} \to (\ull j_{11} \hc \dotsb \hc \ull j_{1m_1}) \hc \dotsb \hc (\ull j_{n1} \hc \dotsb \hc \ull j_{nm_n})
	\end{displaymath}
	which, by the unit axiom for $\mathfrak a$ (see \defref{definition:colax algebra}), are horizontal. Its associativity axiom says that the two ways of adding brackets in the composite of a triple sequence of horizontal morphisms coincide, exactly like the associativity axiom for normal colax monoidal categories. In \cite[Example 9.3]{Cruttwell-Shulman10} normal colax $D$-algebras are called `normal oplax double categories' by Cruttwell and Shulman; we shall call them \emph{normal colax double categories}. In \cite[Section 5.2]{Leinster04}, Leinster calls non-normal pseudo $D$-algebras `weak double categories'. The latter can be considered as the `unbiased' versions of the the familiar `biased' pseudo double categories (\defref{definition:pseudo double category}), that we have been using throughout, in the same way that unbiased monoidal categories are related to their biased counterparts.
	
	Likewise colax morphisms $\map fAB$ between unbiased pseudo double categories $A$ and $B$ are \emph{colax double functors}: they are $\GG_1$-indexed functors that come equipped with natural vertical cells $\cell{f_\hc}{f(\ul j_1 \hc \dotsb \hc \ul j_n)}{f\ul j_1 \hc \dotsb \hc f\ul j_n}$ which satisfy an associativity axiom, exactly as for unbiased colax monoidal functors (also compare \defref{definition:lax functor} of biased lax functors).
	
\begin{proposition}\label{free strict double category monad is not pseudo}
	The `free strict double category'-monad $D$ on $\psProf{\GG_1}$, as given above, is not a pseudomonad.
\end{proposition}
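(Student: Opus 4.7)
The plan is to produce a specific pair of $\GG_1$-indexed profunctors $\hmap JAB$ and $\hmap HBC$ for which the compositor
\[
  \cell{D_\hc}{DJ \hc_{DB} DH}{D(J \hc_B H)}
\]
fails to be surjective, hence is not invertible. The construction is guided by the fact that $D = \Mod{\Span F}$ is a pseudomonad precisely when the pullback-preserving $\Span F$ further preserves the reflexive coequalisers in $\Span{\ps{\GG_1}}$ that define horizontal composition of bimodules (see the sketch in the proof of \propref{mod is a 2-functor}), which in turn reduces to asking that $F$ preserve these coequalisers in $\ps{\GG_1}$. The classical way in which the free-category monad on graphs fails this preservation is that quotienting can identify the source and target of an edge, creating a loop whose iteration produces infinitely many paths that do not lift to paths in the pre-quotient graph. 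The counterexample realises this phenomenon inside a profunctor composition.

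Take $A = C$ to be the terminal $\GG_1$-indexed category (with objects $a$ and $c$ respectively), and let $B$ be the $\GG_1$-indexed category with two objects $b_1, b_2$, a single non-identity vertical morphism $\map f{b_1}{b_2}$, a single non-identity horizontal morphism $\hmap{\beta}{b_1}{b_2}$, and only identity cells. For $J$, take $J_0 = \set{p_1 \colon a \to b_1,\ p_2 \colon a \to b_2}$ with right $B$-action determined by $p_1 \cdot f = p_2$, together with a single cell $u_1 \in J_1$ of horizontal shape $1_a \to \beta$ satisfying $L(u_1) = p_1$ and $R(u_1) = p_2$. Symmetrically, take $H_0 = \set{q_1 \colon b_1 \to c,\ q_2 \colon b_2 \to c}$ with $f \cdot q_2 = q_1$, and a single cell $v_1 \in H_1$ of horizontal shape $\beta \to 1_c$ with $L(v_1) = q_1$ and $R(v_1) = q_2$.

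The key computations are then as follows. At level $0$, the action of $f$ forces the coequaliser to identify $(p_1, q_1) = (p_1, f \cdot q_2) \sim (p_1 \cdot f, q_2) = (p_2, q_2)$, so $(J \hc_B H)_0$ is a single equivalence class. At level $1$ the pullback $J_1 \times_{\ob B_1} H_1$ contributes only the pair $(u_1, v_1)$, yielding one cell $[u_1, v_1] \in (J \hc_B H)_1$; by the preceding identification its vertical endpoints $[p_1, q_1]$ and $[p_2, q_2]$ coincide, so $[u_1, v_1]$ is a \emph{loop}. Consequently $D(J \hc_B H)$ has cells of every length $n \geq 0$, obtained by iterating this loop. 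In contrast, within $DJ$ the cell $u_1$ cannot be chained with itself because $R(u_1) = p_2 \neq p_1 = L(u_1)$ strictly in $J_0$; hence $(DJ)_1$ contains no paths of length $\geq 2$, and the same is true of $(DH)_1$. Since the pullback $(DJ)_1 \times_{\ob(DB)_1} (DH)_1$ only matches length-compatible paths, the coequaliser $(DJ \hc_{DB} DH)_1$ likewise contains no cells of length $\geq 2$. Thus the codomain of $D_\hc$ contains cells of arbitrarily large length that are missed by $D_\hc$, so $D_\hc$ is not invertible and $D$ is not a pseudomonad.

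The main step requiring care is verifying that no hidden identification arising from the $(DB)_1$-action secretly produces cells of length $\geq 2$ in $(DJ \hc_{DB} DH)_1$: using that $(DB)_1 = B_0 \amalg \coprod_n B_1 \times_{B_0} \dotsb \times_{B_0} B_1$ is a coproduct of categories, morphisms in $(DB)_1$ only exist between objects of matching length, and they act on $(DJ)_1$ and $(DH)_1$ by altering a length-$n$ path to another length-$n$ path, never extending it. This is exactly the obstruction that prevents $F$ from preserving the relevant reflexive coequaliser, and it is what prevents $D$ from being a pseudomonad.
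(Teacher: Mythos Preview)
Your proof is correct and takes essentially the same approach as the paper: the construction is identical up to notation (your $b_1, b_2, f, \beta$ are the paper's $\bot, \top$, the morphism $\bot \to \top$, and the horizontal morphism $\bot \slashedrightarrow \top$), and your verification is in fact more careful than the paper's, which glosses over the length-$0$ path components when asserting $DJ = J$ and $DJ \hc_{DB} DH \iso *$.
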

\begin{proof}
	Remember that the cells of $(DJ)_1$ are formal horizontal composites of cells in $J$. Therefore, to find $\GG_1$-indexed profunctors $\hmap JAB$ and $\hmap HBC$ whose compositor $DJ \hc_{DB} DH \natarrow D(J \hc_B H)$ is not invertible, we should try to find $J$ and $H$ that, each on its own, have no horizontally composable cells but so that, by taking the quotients that form the composite $J \hc_B H$, such composable cells are created. This can be easily achieved as follows. First we choose $A$ and $C$ to be the terminal $\GG_1$\ndash indexed category, with single object $*$ and single horizontal morphism $* \slashedrightarrow *$. Let $B$ be given by $B_0 = (\bot \to \top)$ and with $B_1$ consisting of a single horizontal morphism $\bot \slashedrightarrow \top$. Now choose $J$ and $H$ to consist of the single cells
	\begin{displaymath}
		J = \big(\begin{tikzpicture}[textbaseline]
			\matrix(m)[math175em, column sep=1.5em, row sep=1.5em]{* & * \\ \bot & \top \\};
			\path[map]	(m-1-1) edge[barred] (m-1-2)
													edge (m-2-1)
									(m-1-2) edge (m-2-2)
									(m-2-1) edge[barred] (m-2-2);
			\path				(m-1-1) edge[transform canvas={shift=($(m-1-2)!0.5!(m-2-2)$)}, cell] (m-2-1);
		\end{tikzpicture}\bigr) \qquad\qquad\text{and}\qquad\qquad H = \bigl(\begin{tikzpicture}[textbaseline]
			\matrix(m)[math175em, column sep=1.5em, row sep=1.5em]{\bot & \top \\ * & * \\};
			\path[map]	(m-1-1) edge[barred] (m-1-2)
													edge (m-2-1)
									(m-1-2) edge (m-2-2)
									(m-2-1) edge[barred] (m-2-2);
			\path				(m-1-1) edge[transform canvas={shift=($(m-1-2)!0.5!(m-2-2)$)}, cell] (m-2-1);
		\end{tikzpicture}\bigr).
	\end{displaymath}
	Those cells are not horizontally composable with themselves, hence $DJ = J$ and $DH = H$. On the other hand the coequaliser computing $J \hc_B H$ (see \eqref{equation:composition of internal profunctors}) identifies the two formal composites, $(* \to \bot, \bot \to *)$ and $(* \to \top, \top \to *)$, of vertical morphisms in $J \hc H$, so that $J \hc_B H$ only contains a single vertical morphism $* \to *$. Under this identification the formal vertical composite of the cells above has the form
	\begin{displaymath}
		\begin{tikzpicture}[textbaseline]
			\matrix(m)[math175em, column sep=1.5em, row sep=1.5em]{* & * \\ * & * \\};
			\path[map]	(m-1-1) edge[barred] (m-1-2)
													edge (m-2-1)
									(m-1-2) edge (m-2-2)
									(m-2-1) edge[barred] (m-2-2);
			\path				(m-1-1) edge[transform canvas={shift=($(m-1-2)!0.5!(m-2-2)$)}, cell] (m-2-1);
		\end{tikzpicture}
	\end{displaymath}
	and we conclude that $J \hc_B H$ is the terminal $\GG_1$-indexed profunctor $\hmap *AC$. We conclude that the compositor of $J$ and $H$ is the natural $\GG_1$-indexed transformation $* \natarrow D*$, which is clearly not invertible because the single cell of $*$ above is horizontally composable with itself.
\end{proof}

	To close this chapter we now briefly look at the `free strict $\omega$-category'-monad and the `ultrafilter'-monad, whose algebras are respectively `monoidal globular categories' and `ordered Hausdorff spaces', leaving out most of the details.
\subsection{Monoidal globular categories}
	As a generalisation of the category $\GG_1$, that we used above to define double categories, we here consider the \emph{globe category} $\GG$ that has the natural numbers as objects and the maps
	\begin{displaymath}
		\begin{tikzpicture}
			\matrix(m)[math2em]{0 & 1 & 2 & \cdots \\};
			\path[map, transform canvas={yshift=2pt}]	(m-1-1) edge node[above] {$\sigma_1$} (m-1-2)
					(m-1-2) edge node[above] {$\sigma_2$} (m-1-3)
					(m-1-3) edge node[above] {$\sigma_3$} (m-1-4);
			\path[map, transform canvas={yshift=-2pt}]	(m-1-1) edge node[below] {$\tau_1$} (m-1-2)
					(m-1-2) edge node[below] {$\tau_2$} (m-1-3)
					(m-1-3) edge node[below] {$\tau_3$} (m-1-4);
		\end{tikzpicture}
	\end{displaymath}
	as morphisms, that satisfy the relations
	\begin{displaymath}
		\sigma_n \of \sigma_{n-1} = \tau_n \of \sigma_{n-1} \qquad \text{and} \qquad \sigma_n \of \tau_{n-1} = \tau_n \of \tau_{n-1}.
	\end{displaymath}
	A presheaf $A$ on $\GG$ is called a \emph{globular set}; it consists of sets $A_n$, $n \in \NN$, whose elements are thought of as `$n$-dimensional globular cells', and it comes equipped with `source and target functions' $s$ and $\map t{A_n}{A_{n-1}}$. For example an element of $A_2$ is thought of as a cell
	\begin{displaymath}
		\begin{tikzpicture}[decoration={markings, mark=at position 0.52 with \arrow{>}}]
			\path	(0,0) edge[bend right = 60, postaction={decorate}] node[below] {$g$} (1.5,0)
												edge[bend left = 60, postaction={decorate}] node[above] {$f$} (1.5,0)
						(0.75,0.27) edge[cell] node[right] {$\phi$} (0.75,-0.27);
			\fill (0,0) circle(1.5pt) node[left] {$a$}
						(1.5,0) circle(1.5pt) node[right] {$b,$};
		\end{tikzpicture}
	\end{displaymath}
	where $s\phi = f$, $t\phi = g$, $sf = a = sg$ and $tf = b = tg$ (notice that the images of relations above, on the $\sigma_i$ and $\tau_i$, hold).
	
	More general, a $\GG$-indexed category $A$, that is a presheaf $\map A{\op\GG}\cat$, is called a \emph{$\omega$-globular category} by Batanin, who uses `monoidal' variants of such globular categories in \cite{Batanin98} to introduce weak $\omega$-categories. There exists a `free strict $\omega$-category'-monad $T$ on $\ps\GG$, which preserves pullbacks and thus induces a monad $\inProf T$ on the equipment $\psProf\GG$ by \propref{span is a 2-functor} and \propref{mod is a 2-functor}, and pseudoalgebras for $\inProf T$ are precisely the `unbiased' versions of the \emph{monoidal globular categories} of \cite{Batanin98}; see \cite[Example 9.11]{Cruttwell-Shulman10}. Loosely speaking, the image $TA$ of a globular set $A$ consists of `formal pasting-diagrams' of the cells of $A$: a typical element of $(TA)_2$ looks like
	\begin{displaymath}
		\begin{tikzpicture}[decoration={markings, mark=at position 0.5 with \arrow{>}}]
			\path	(0,0) edge[bend right = 60, postaction={decorate}] node[below left, yshift=2pt] {$f_3$} (1.5,0)
												edge[bend left = 60, postaction={decorate}] node[above left, yshift=-2pt] {$f_2$} (1.5,0)
									edge[bend left = 90, looseness=4, postaction={decorate}] node[above] {$f_1$} (1.5,0)
									edge[bend right = 90, looseness=4, postaction={decorate}] node[below] {$f_4$} (1.5,0)
						(3,0) edge[bend right = 60, postaction={decorate}] node[below] {$h_2$} (4.5,0)
												edge[bend left = 60, postaction={decorate}] node[above] {$h_1$} (4.5,0)
						(0.75,1.32) edge[cell] node[right] {$\phi_1$} (0.75,0.78)
						(1.5,0) edge[postaction={decorate}] node[above] {$g$} (3,0)
						(0.75,0.27) edge[cell] node[right] {$\phi_2$} (0.75,-0.27)
						(0.75,-0.78) edge[cell] node[right] {$\phi_3$} (0.75,-1.32)
						(3.75,0.27) edge[cell] node[right] {$\psi$} (3.75,-0.27);
			\fill (0,0) circle(1.5pt) node[left] {$a$}
						(1.5,0) circle(1.5pt) node[below right] {$b$}
						(3,0) circle(1.5pt) node[below left] {$c$}
						(4.5,0) circle(1.5pt) node[right] {$d,$};
		\end{tikzpicture}
	\end{displaymath}
	where the $0$-cells, $1$-cells and $2$-cells those of $A$. The details can be found in \cite[Chapter 8]{Leinster04}.

\subsection{Ordered compact Hausdorff spaces}
	The second example that we briefly mention are the `ordered compact Hausdorff spaces' considered by Tholen in \cite{Tholen09}. Different from the usual way of defining a topological space, a topology on a set $A$ can be given using `ultrafilters' in $A$, as follows. An \emph{ultrafilter} $\mathfrak f$ in $A$ is defined to be a subset of the powerset $P(A)$ on $A$, that is upward-closed, closed under intersection and `satisfies dichotomy', as follows:
	\begin{itemize}[label=-]
		\item if $S \in \mathfrak f$ and $T \supseteq S$ then $T \in \mathfrak f$;
		\item if $S, T \in \mathfrak f$ then $S \isect T \in \mathfrak f$;
		\item for any $S \subseteq A$, either $S \in \mathfrak f$ or $A - S \in \mathfrak f$.
	\end{itemize}
	The set of ultrafilters in $A$ is denoted $\beta A$. One can think of ultrafilters in $A$ as families of subsets that `move in a certain direction within $A$': for each $S \in \mathfrak f$ and any smaller subset $T \subsetneq S$, either $T \in \mathfrak f$ or $S - T \in \mathfrak f$, so that $\mathfrak f$ either `moves into $T$' or `moves into $S - T$'.

	If $A$ is a topological space then an ultrafilter $\mathfrak f$ in $A$ is said to \emph{converge} to a point $x$ of $A$ if each open subset $U$ containing $x$ belongs to $\mathfrak f$. If $A$ is a compact Hausdorff space then every ultrafilter in $A$ converges to precisely one point, giving a function $\map a{\beta A}A$, that is called the \emph{convergence} of $A$. Manes showed in his thesis \cite{Manes69} that the endofunctor $\map \beta\Set\Set$ can be extended to a monad $(\beta, \mu, \eta)$, the \emph{ultrafilter-monad}, such that strict $\beta$-algebra structures $\map a{\beta A}A$ on a set $A$ correspond to compact Hausdorff topologies on $A$.
	
	Now consider the equipment $\Mat\2$, where $\2 = (\bot \to \top)$, that consists of sets and relations. The ultrafilter-monad $\beta$ can be extended to a pseudomonad on $\Mat\2$ by mapping the relation $\hmap JAB$ to the relation $\hmap{\beta J}{\beta A}{\beta B}$ that is given by
	\begin{displaymath}
		(\beta J)(\mathfrak f, \mathfrak g) = \bigwedge_{S \in \mathfrak f, T \in \mathfrak g} \bigvee_{x \in S, y \in T} J(x,y).
	\end{displaymath}
	Hence, applying \propref{mod is a 2-functor}, we obtain a monad $\Mod \beta$ on the equipment $\enProf\2$ of preordered sets and modular relations (see \exref{example:enriched profunctor equipments}). A pseudoalgebra $A$ for $\Mod\beta$ consists of a preordered set equipped with a compact Hausdorff topology, so that the convergence $\map a{\beta A}A$ is monotone. In \cite[Example 2]{Tholen09} such spaces are called \emph{ordered compact Hausdorff spaces}.

	\chapter{Algebraic promorphisms}\label{chapter:algebraic promorphisms}
	In this final chapter we propose generalisations of the well-known notions of lax and colax morphisms, of algebras over a $2$-monad, to notions of lax and colax promorphisms, of algebras over a monad on an equipment. The lax notion recovers natural notions of algebraic promorphisms, some of which have been considered before. For example a `lax monoidal profunctor' $\hmap JAB$, of monoidal categories $A$ and $B$, comes equipped with a coherent tensor product
	\begin{displaymath}
		(\map {j_1}{a_1}{b_1}, \dotsc, \map{j_n}{a_n}{b_n}) \mapsto \map{j_1 \tens \dotsb \tens j_n}{a_1 \tens \dotsb \tens a_n}{b_1 \tens \dotsb \tens b_n}
	\end{displaymath}
	of morphisms $\map jab$ in $J$. Another example, that of lax double profunctors between double categories, is discussed on the $n$Lab page \cite{nLab_on_double_profunctors}.
	
	When defining lax promorphisms we shall, following \remref{remark:why invertible cells}, also consider two kinds of `pseudo lax promorphisms': `left pseudo' and `right pseudo' ones. These correspond to the two ways of making a cell into a horizontal cell, either by using companions or by using conjoint cells. Right pseudopromorphisms generalise the situation of Getzler's proposition, that was discussed in the introduction, and satisfy a decomposition condition. For example the lax monoidal profunctor $\hmap JAB$ above is a right pseudopromorphism if each morphism $\map ja{c_1 \tens \dotsb \tens c_n}$ in $J$ decomposes, in some sense uniquely, as a composite
	\begin{displaymath}
		a \xrar f b_1 \tens \dotsb \tens b_n \xrar{j_1 \tens \dotsb \tens j_n} c_1 \tens \dotsb \tens c_n,
	\end{displaymath}
	where $\map fa{b_1 \tens \dotsb \tens b_n}$ belongs to $A$ and each $\map{j_i}{b_i}{c_i}$ belongs to $J$.
	
	In \defref{definition:right colax promorphisms} the notion of right pseudopromorphisms leads to that of right colax promorphisms. Roughly speaking the latter are equipped with `splittings' like the decomposition above, without being equipped with a lax structure. For example, a right colax monoidal profunctor $\hmap HAB$ does not define tensor products of its maps, like $J$ does above, but only gives a coherent way of `splitting' each morphism $\map ha{c_1 \tens \dotsb \tens c_n}$ into a morphism $\map fa{b_1 \tens \dotsb \tens b_n}$ in $A$ and morphisms $\map{h_1}{b_1}{c_1}, \dotsc, \map{h_n}{b_n}{c_n}$ in $H$. Notice that, since we cannot take the tensor product of the maps $h_1, \dotsc, h_n$ in $H$, there is no way of comparing $h$ with its `splitting' $(f, h_1, \dotsc, h_n)$.
	
	Although the author does not yet know of any `true' right colax promorphisms, that are not right pseudopromorphisms, they have the following formal advantage over lax promorphisms. It is, in general, not possible to coherently horizontally compose lax promorphisms, for a monad that is not pseudo. In particular it is not possible to  compose lax double profunctors between double categories in a coherent way, as discussed in \cite{nLab_on_double_profunctors}. Right colax promorphisms do not suffer from this: in fact the main result (\propref{right colax promorphisms form a pseudo double category}) of \secref{section:algebraic promorphisms} below asserts that colax algebras, colax morphisms and right colax promorphisms over a normal monad $T$, that satisfies some mild conditions, form a pseudo double category $\rcProm T$.
	
	Since $\rcProm T$ is a pseudo double category it allows a notion of `algebraic weighted colimits', as given in \defref{definition:weighted colimits in double categories}. Moreover, using the results of \chapref{chapter:weighted colimits} and by generalising a notion of Grandis and Par\'e, we shall consider `pointwise weighted colimits' in $\rcProm T$ as well. In fact \thmref{algebraic weighted colimits are pointwise if double commas are strong} below enhances \thmref{weighted colimits are pointwise if double commas are strong}, by proving that all weighted colimits in $\rcProm T$ are pointwise whenever the underlying equipment $\K$ has strong double comma objects (\defref{definition:strong double comma objects}).
	
	Finally in \secref{section:main result} we consider the main result (\thmref{right colax forgetful functor lifts all weighted colimits}), which states that if $T$ is a `suitable' normal monad on a closed equipment $\K$, then the forgetful functor
	\begin{displaymath}
		\map U{\rcProm T}\K
	\end{displaymath}
	lifts all weighted colimits. This means that an algebraic weighted colimit $\colim_J d$ in $\rcProm T$ can be computed as the ordinary weighted colimit $\colim_{UJ} Ud$ in $\K$. Following this, in \secref{section:applications of the main theorem}, applications of the main result are considered, while it is also compared to results in \cite{Getzler09}, \cite{Grandis-Pare07} and \cite{Mellies-Tabareau08}. The author believes that all results in this chapter are original.
	
\section{Algebraic promorphisms} \label{section:algebraic promorphisms}
	We start with the notion of lax promorphisms which, even though being a straightforward generalisation of that of lax morphisms, appears to be new. That it is the right notion follows from \propref{colax morphisms correspond to lax companions} below, which shows that colax algebra structures on a morphism $\map fAC$ correspond to lax algebra structures on its companion $C(f, \id)$. Given a normal monad $T$ on an equipment $\K$, recall that by a colax $T$-algebra we mean a colax algebra for the $2$-monad $V(T)$ on $V(\K)$.
	
\begin{definition} \label{definition:lax promorphism}
	Let $T$ be a normal monad on an equipment $\K$. Given colax $T$\ndash algebras $A = (A, a, \alpha, \alpha_0)$ and $B = (B, b, \beta, \beta_0)$, a \emph{lax $T$-promorphism} $A \slashedrightarrow B$ is a promorphism $\hmap JAB$ equipped with a \emph{structure cell}
	\begin{displaymath}
		\begin{tikzpicture}
			\matrix(m)[math175em]{TA & TB \\ A & B \\};
			\path[map]  (m-1-1) edge[barred] node[above] {$TJ$} (m-1-2)
													edge node[left] {$a$} (m-2-1)
									(m-1-2) edge node[right] {$b$} (m-2-2)
									(m-2-1) edge[barred] node[below] {$J$} (m-2-2);
			\path[transform canvas={shift={($(m-1-2)!(0,0)!(m-2-2)$)}}] (m-1-1) edge[cell] node[right] {$\bar J$} (m-2-1);			
		\end{tikzpicture}
	\end{displaymath}
	satisfying the following coherence conditions, respectively the associativity axiom and the unit axiom.
	\begin{displaymath}
		\begin{tikzpicture}[textbaseline]
      \matrix(m)[math175em]
      {	T^2 A & T^2 B & T^2 B \\
				TA & TB & TB \\
				A & B & B \\ };
      \path[map]	(m-1-1) edge[barred] node[above] {$T^2 J$} (m-1-2)
													edge node[left] {$\mu_A$} (m-2-1)
									(m-1-2) edge node[right] {$\mu_B$} (m-2-2)
									(m-1-3) edge node[right] {$Tb$} (m-2-3)
									(m-2-1) edge[barred] node[below] {$TJ$} (m-2-2)
													edge node[left] {$a$} (m-3-1)
									(m-2-2) edge node[right] {$b$} (m-3-2)
									(m-2-3) edge node[right] {$b$} (m-3-3)
									(m-3-1) edge[barred] node[below] {$J$} (m-3-2);
			\path				(m-1-2) edge[eq] (m-1-3)
									(m-3-2) edge[eq] (m-3-3);
			\path[transform canvas={shift={($(m-2-1)!0.5!(m-2-2)$)}}]
									(m-1-2) edge[cell] node[right] {$\mu_J$} (m-2-2);
			\path[transform canvas={shift={($(m-2-1)!0.5!(m-2-2)-(0,0.25em)$)}}]
									(m-2-2) edge[cell] node[right] {$\bar J$} (m-3-2);
			\path[transform canvas={shift={($(m-1-1)!0.5!(m-2-2)$)}}]
									(m-2-3) edge[cell] node[right] {$\beta$} (m-3-3);
    \end{tikzpicture}
    \quad = \quad \begin{tikzpicture}[textbaseline]
      \matrix(m)[math175em]
      {	T^2 A & T^2 A & T^2 B \\
				TA & TA & TB \\
				A & A & B \\ };
			\path[map]	(m-1-1) edge node[left] {$\mu_A$} (m-2-1)
									(m-1-2) edge[barred] node[above] {$T^2 J$} (m-1-3)
													edge node[left] {$Ta$} (m-2-2)
									(m-1-3) edge node[right] {$Tb$} (m-2-3)
									(m-2-1) edge node[left] {$a$} (m-3-1)
									(m-2-2) edge[barred] node[below] {$TJ$} (m-2-3)
													edge node[left] {$a$} (m-3-2)
									(m-2-3) edge node[right] {$b$} (m-3-3)
									(m-3-2) edge[barred] node[below] {$J$} (m-3-3);
			\path				(m-1-1) edge[eq] (m-1-2)
									(m-3-1) edge[eq] (m-3-2);
			\path[transform canvas={shift={($(m-1-1)!0.5!(m-2-2)$)}}]
									(m-2-2) edge[cell] node[right] {$\alpha$} (m-3-2);
			\path[transform canvas={shift={($(m-2-1)!0.5!(m-2-2)$)}}]
									(m-1-3) edge[cell] node[right] {$T\bar J$} (m-2-3);
			\path[transform canvas={shift={($(m-2-1)!0.5!(m-2-2)-(0,0.25em)$)}}]
									(m-2-3) edge[cell] node[right] {$\bar J$} (m-3-3);
    \end{tikzpicture}
  \end{displaymath}
  \begin{displaymath}
    \begin{tikzpicture}[textbaseline]
      \matrix(m)[math175em]
      { A & B & B \\
				TA & TB & \phantom{TA} \\
				A & B & B \\ };
			\path[map]	(m-1-1)	edge[barred] node[above] {$J$} (m-1-2)
													edge node[left] {$\eta_A$} (m-2-1)
									(m-1-2) edge node[right] {$\eta_B$} (m-2-2)
									(m-2-1) edge[barred] node[below] {$TJ$} (m-2-2)
													edge node[left] {$a$} (m-3-1)
									(m-2-2) edge node[right] {$b$} (m-3-2)
									(m-3-1) edge[barred] node[below] {$J$} (m-3-2);
			\path				(m-1-2) edge[eq] (m-1-3)
									(m-1-3)	edge[eq] (m-3-3)
									(m-3-2) edge[eq] (m-3-3);
			\path[transform canvas={shift={($(m-1-1)!0.5!(m-2-2)$)}}]
									(m-2-3) edge[cell] node[right] {$\beta_0$} (m-3-3);
			\path[transform canvas={shift={($(m-2-1)!0.5!(m-2-2)$)}}]
									(m-1-2) edge[cell] node[right] {$\eta_J$} (m-2-2);
			\path[transform canvas={shift={($(m-2-1)!0.5!(m-2-2)-(0,0.25em)$)}}]
									(m-2-2) edge[cell] node[right] {$\bar J$} (m-3-2);
    \end{tikzpicture}
    \quad = \quad \begin{tikzpicture}[textbaseline]
      \matrix(m)[math175em]
      { A & A & B \\
				TA & \phantom{TA} & \phantom{TA} \\
				A & A & B \\ };
			\path[map]	(m-1-2) edge[barred] node[above] {$J$} (m-1-3)
									(m-1-1) edge node[left] {$\eta_A$} (m-2-1)
									(m-2-1) edge node[left] {$a$} (m-3-1)
									(m-3-2) edge[barred] node[below] {$J$} (m-3-3);
			\path				(m-1-2) edge[eq] (m-3-2)
									(m-1-1) edge[eq] (m-1-2)
									(m-3-1)	edge[eq] (m-3-2)
									(m-1-3) edge[eq] (m-3-3);
			\path[transform canvas={shift={($(m-1-1)!0.5!(m-2-2)$)}}]
									(m-2-2) edge[cell] node[right] {$\alpha_0$} (m-3-2);
    \end{tikzpicture}
	\end{displaymath}
	
	Following \defref{definition:invertible cells} we call $J$ a \emph{left pseudo} $T$-promorphism when the horizontal cell $\cell{\lambda\bar J}{TJ \hc B(b, \id)}{A(a, \id) \hc J}$ is invertible, and \emph{right pseudo} when $\cell{\rho\bar J}{A(\id, a) \hc TJ}{J \hc B(\id, b)}$ is invertible.
\end{definition}
	
	Taking for $T$ the `free strict monoidal $\V$-category'-monad $\fmc$ or the `free strict double category'-monad $D$, both introduced in \secref{section:monad examples}, we obtain the notion of `lax monoidal $\V$-profunctors' between monoidal $\V$-categories and that of `lax double profunctors' between pseudo double categories, as follows.
\begin{example} \label{example:lax monoidal profunctors}
	Choosing $T = \fmc$, the `free strict monoidal $\V$-category'-monad, let $A$ and $B$ be normal colax monoidal $\V$-categories with structure $\V$-functors \mbox{$\map a{\fmc A}A$} and $\map b{\fmc B}B$. A lax $\fmc$\ndash promorphism $\hmap JAB$, as defined above, will be called a \emph{lax monoidal $\V$-profunctor}. It consists of a $\V$-profunctor $\map J{\op A \tens B}\V$ equipped with a $\V$-natural transformation $\nat{J_\tens = \bar J}{\fmc J}{J(a, b)}$ of $\V$-profunctors $\fmc A \slashedrightarrow \fmc B$, that satisfies the coherence axioms above. Unpacking this we find that $J_\tens$ consists of morphisms
	\begin{displaymath}
		\map{J_\tens}{J(\ul x_1, \ul y_1) \tens \dotsb \tens J(\ul x_n, \ul y_n)}{J(\ul x_1 \tens \dotsb \tens \ul x_n, \ul y_1 \tens \dotsb \tens \ul y_n)}
	\end{displaymath}
	that are natural in $\ul x \in \fmc_n A$ and $\ul y \in \fmc_n B$, as well as $\map{J_1}1{J(1_A, 1_B)}$, such that the following associativity and unit axioms are satisfied. The associativity axiom states that, for each pair of double sequences $\ull x$ and $\ull y$, of equal length and such that $\card{\ull x_i} = m_i = \card{\ull y_i}$, the following diagram commutes. Here $\Tens_{i = 1, j = 1}^{i = n, j = m_i} \ull x_{ij}$ denotes the tensor product $\ull x_{11} \tens \dotsb \tens \ull x_{1m_1} \tens \dotsb \tens \ull x_{n1} \tens \dotsb \tens \ull x_{nm_n}$.
	\begin{displaymath}
		\begin{tikzpicture}
			\matrix(m)[math2em, inner sep=6pt]
			{ \Tens_{i=1}^n \bigpars{\Tens_{j=1}^{m_i} J(\ull x_{ij}, \ull y_{ij})} & \Tens_{i=1}^n J\bigpars{\Tens_{j=1}^{m_i} \ull x_{ij}, \Tens_{j=1}^{m_i} \ull y_{ij}} \\
				\Tens_{i=1, j = 1}^{i=n, j=m_i} J(\ull x_{ij}, \ull y_{ij}) & J\bigpars{\Tens_{i=1}^n (\Tens_{j=1}^{m_i} \ull x_{ij}), \Tens_{i=1}^n (\Tens_{j=1}^{m_i} \ull y_{ij})} \\
				J\bigpars{\Tens_{i = 1, j = 1}^{i = n, j = m_i} \ull x_{ij}, \Tens_{i = 1, j = 1}^{i = n, j = m_i} \ull y_{ij}} & J\bigpars{\Tens_{i = 1, j = 1}^{i = n, j = m_i} \ull x_{ij}, \Tens_{i=1}^n (\Tens_{j=1}^{m_i} \ull y_{ij})} \\};
			\path[map]	(m-1-1) edge node[above] {$\Tens_{i=1}^n J_\tens$} (m-1-2)
									(m-1-2) edge node[right] {$J_\tens$} (m-2-2)
									(m-2-1)	edge node[left] {$J_\tens$} (m-3-1)
									(m-2-2) edge node[right] {$J(\mathfrak a_{\ull x}, \id)$} (m-3-2)
									(m-3-1) edge node[below] {$J(\id, \mathfrak a_{\ull y})$} (m-3-2);
			\path[white]	(m-1-1) edge node[black, sloped, desc] {$\iso$} (m-2-1);
		\end{tikzpicture}
	\end{displaymath}
	The unit axiom says that the map $\map{J_\tens}{J(x, y)}{J(x, y)}$, for any pair of objects $x$ and $y$, is the identity.
	
	Formally the lax monoidal $\V$-profunctor $\hmap JAB$ above is a right pseudoprofunctor whenever the horizontal composite $\rho J_\tens = \eps_a \hc J_\tens \hc \eta_b$ is invertible. Computing $\rho J_\tens$ we find that this means that the composites
	\begin{align} \label{equation:right pseudo monoidal profunctor}
		\int^{\ul y} A(x, \ul y_1 &{} \tens \dotsb \tens \ul y_n) \tens J(\ul y_1, \ul z_1) \tens \dotsb  \tens J(\ul y_n, \ul z_n) \notag \\
		& \xrar{\int \id \tens J_\tens} \int^{\ul y} A(x, \ul y_1 \tens \dotsb \tens \ul y_n) \tens J(\ul y_1 \tens \dotsb \tens \ul y_n, \ul z_1 \tens \dotsb \tens \ul z_n) \notag \\
		& \to J(x, \ul z_1 \tens \dotsb \tens \ul z_n)
	\end{align}
	are isomorphisms, for all $x \in A$ and $\ul z \in \fmc_n B$, where the coends range over $\ul y \in \fmc_n A$ and where the second map is induced by the action of $A$ on $J$. When $n = 0$ this reduces to $A(x, 1_A) \tens 1 \xrar{\id \tens J_\tens} A(x, 1_A) \tens J(1_A, 1_B) \to J(x, 1_B)$.
	
	In the case that $\V = \Set$ this means that every morphism $\map jx{\ul z_1 \tens \dotsb \tens \ul z_n}$ in $J$ can be decomposed as
	\begin{equation} \label{equation:decomposition of right pseudoprofunctors}
		j = (\ul j_1 \tens \dotsb \tens \ul j_n) \of f,
	\end{equation}
	with $\map{\ul j_i}{\ul y_i}{\ul z_i}$ and $\map fx{\ul y_1 \tens \dotsb \tens \ul y_n}$, and that this decomposition is unique in the sense that another pair $(f', \ul j')$ also decomposes $j$ if and only if $(f, \ul j)$ and $(f', \ul j')$ are identified in the first coend above.
	
	Lax symmetric monoidal $\V$-profunctors, that is lax $\fsmc$-profunctors for the `free symmetric strict monoidal $\V$-category'-monad $\fsmc$, can be described similarly.
\end{example}

\begin{example} \label{example:lax double profunctors}
	Consider normal colax double categories $A$ and $B$, that is normal colax $D$-algebras, with $\GG_1$-indexed structure functors $\map{\hc_A}{DA}A$ and $\map{\hc_B}{DB}B$. A lax $D$-promorphism $\hmap JAB$ will be called a \emph{lax double profunctor}: it consists of a $\GG_1$-indexed profunctor $\hmap JAB$ (see \exref{example:GG1-indexed bimodules}) equipped with a structure transformation $\nat{\hc_J = \bar J}{DJ}{J(\hc_A, \hc_B)}$, that satisfies the associativity and unit axioms. Giving $\hc_J$ is equivalent to specifying horizontal composites
	\begin{displaymath}
		\Bigpars{\begin{tikzpicture}[textbaseline]
			\matrix(m)[math175em]{a_0 \nc a_1 \\ b_0 \nc b_1 \\};
				\path[map]  (m-1-1) edge[barred] node[above] {$\ul k_1$} (m-1-2)
														edge node[left] {$p_0$} (m-2-1)
										(m-1-2) edge node[right] {$p_1$} (m-2-2)
										(m-2-1) edge[barred] node[below] {$\ul l_1$} (m-2-2);
				\path[transform canvas={shift={($(m-1-2)!(0,0)!(m-2-2)$)}}] (m-1-1) edge[cell] node[right] {$\ul w_1$} (m-2-1);
		\end{tikzpicture}, \dotsc, \begin{tikzpicture}[textbaseline]
			\matrix(m)[math175em]{a_{n-1} \nc a_n \\ b_{n-1} \nc b_n \\};
				\path[map]  (m-1-1) edge[barred] node[above] {$\ul k_n$} (m-1-2)
														edge node[left] {$p_{n-1}$} (m-2-1)
										(m-1-2) edge node[right] {$p_n$} (m-2-2)
										(m-2-1) edge[barred] node[below] {$\ul l_n$} (m-2-2);
				\path[transform canvas={shift={($(m-1-2)!(0,0)!(m-2-2)-(0.3em, 0)$)}}] (m-1-1) edge[cell] node[right] {$\ul w_n$} (m-2-1);
		\end{tikzpicture}} \mapsto \begin{tikzpicture}[textbaseline]
			\matrix(m)[math175em, column sep=6.5em]{a_0 \nc a_n \\ b_0 \nc b_n \\};
				\path[map]	(m-1-1) edge[barred] node[above] {$\ul k_1 \hc_A \dotsb \hc_A \ul k_n$} (m-1-2)
														edge node[left] {$p_0$} (m-2-1)
										(m-1-2) edge node[right] {$p_n$} (m-2-2)
										(m-2-1) edge[barred] node[below] {$\ul l_1 \hc_B \dotsb \hc_B \ul l_n$} (m-2-2);
				\path[transform canvas={shift={($(m-1-2)!0.5!(m-2-2)-(3em,0)$)}}] (m-1-1) edge[cell] node[right] {$\ul w_1 \hc_J \dotsb \hc_J \ul w_n$} (m-2-1);
		\end{tikzpicture}
	\end{displaymath}
	for the cells of $J$, as well as horizontal units which are of the form as on the left below. The associativity axiom says that the diagram on the right commutes for every double sequence $\cell{\ull w}{\ull k}{\ull l}$.
	\begin{displaymath}
		\begin{tikzpicture}[baseline]
			\matrix(m)[math175em]{a & a \\ b & b \\};
				\path[map]	(m-1-1) edge node[left] {$p$} (m-2-1)
										(m-1-2) edge node[right] {$p$} (m-2-2);
				\path				(m-1-1) edge[eq] (m-1-2)
										(m-2-1) edge[eq] (m-2-2);
				\path[transform canvas={shift={($(m-1-2)!0.5!(m-2-2)$)}}] (m-1-1) edge[cell] node[right] {$1_p$} (m-2-1);
		\end{tikzpicture}
		\qquad\qquad\begin{tikzpicture}[baseline]
			\matrix(m)[math2em, column sep=5.5em]
			{	\Hc_{i=1}^n \bigpars{\Hc_{j=1}^{m_i} \ull k_{ij}} & \Hc_{i=1}^n \bigpars{\Hc_{j=1}^{m_i} \ull l_{ij}} \\
				\Hc_{i=1, j=1}^{i=n, j=n_i} \ull k_{ij} & \Hc_{i=1, j=1}^{i=n, j=n_i} \ull l_{ij} \\ };
			\path	(m-1-1) edge[cell] node[above] {$\Hc_{i=1}^n \bigpars{\Hc_{j=1}^{m_i} \ull w_{ij}}$} (m-1-2)
										edge[cell] node[left] {$\mathfrak a_A$} (m-2-1)
						(m-1-2) edge[cell] node[right] {$\mathfrak a_B$} (m-2-2)
						(m-2-1) edge[cell] node[below] {$\Hc_{i=1, j=1}^{i=n, j=m_i} \ull w_{ij}$} (m-2-2);
		\end{tikzpicture}
	\end{displaymath}
	In the case that $A$ and $B$ are strict double categories this recovers the description that is given on the $n$Lab page on double profunctors \cite{nLab_on_double_profunctors}.
	
	That $J$ is right pseudo means that the morphisms
	\begin{displaymath}
		\int^{\ul k \in DA} A(h, \ul k_1 \hc_A \dotsb \hc_A \ul k_n) \times J(\ul k_1, \ul l_1) \times \dotsb \times J(\ul k_n, \ul l_n) \to J(h, \ul l_1 \hc_B \dotsb \hc_B \ul l_n),
	\end{displaymath}
	induced by $\hc_J$ and the action of $A$ on $J$, are invertible for each horizontal morphism $h$ in $A$ and sequence $\ul l$ in $DB$. This means that every cell $w$ in $J$, that is of the form below, can be decomposed as
	\begin{displaymath}
		\begin{tikzpicture}[textbaseline]
			\matrix(m)[math175em]
			{ a & & & & b \\
				c_0 & c_1 & \dotsb & c_{n'} & c_n \\ };
			\path[map]	(m-1-1) edge[barred] node[above] {$h$} (m-1-5)
													edge node[left] {$p$} (m-2-1)
									(m-1-5) edge node[right] {$q$} (m-2-5)
									(m-2-1) edge[barred] node[below] {$\ul l_1$} (m-2-2)
									(m-2-2) edge[barred] (m-2-3)
									(m-2-3) edge[barred] (m-2-4)
									(m-2-4) edge[barred] node[below] {$\ul l_n$} (m-2-5);
			\path[transform canvas={shift={($(m-1-1)!0.5!(m-2-1)$)}}]	(m-1-5) edge[cell] node[right] {$w$} (m-2-5);
		\end{tikzpicture}
		= \begin{tikzpicture}[textbaseline]
			\matrix(m)[math175em]
			{ a & & & & b \\
				a_0 & a_1 & \dotsb & a_{n'} & a_n \\
				c_0 & c_1 & \dotsb & c_{n'} & c_n \\ };
			\path[map]	(m-1-1) edge[barred] node[above] {$h$} (m-1-5)
													edge node[left] {$f$} (m-2-1)
									(m-1-5) edge node[right] {$g$} (m-2-5)
									(m-2-1) edge[barred] node[above] {$\ul k_1$} (m-2-2)
													edge node[left] {$p_0$} (m-3-1)
									(m-2-2) edge[barred] (m-2-3)
													edge node[right] {$p_1$} (m-3-2)
									(m-2-3) edge[barred] (m-2-4)
									(m-2-4) edge[barred] node[above] {$\ul k_n$} (m-2-5)
													edge node[left] {$p_{n'}$} (m-3-4)
									(m-2-5) edge node[right] {$p_n$} (m-3-5)
									(m-3-1) edge[barred] node[below] {$\ul l_1$} (m-3-2)
									(m-3-2) edge[barred] (m-3-3)
									(m-3-3) edge[barred] (m-3-4)
									(m-3-4) edge[barred] node[below] {$\ul l_n$} (m-3-5);
			\path[transform canvas={shift=(m-2-1)}]	(m-1-5) edge[cell] node[right] {$u$} (m-2-5);
			\path[transform canvas={shift=($(m-2-2)!0.5!(m-2-3)$)}] (m-2-2) edge[cell] node[right] {$\ul w_1$} (m-3-2)
									(m-2-5) edge[cell] node[right] {$\ul w_n$} (m-3-5);
		\end{tikzpicture}
	\end{displaymath}
	where $u \in A$ and $\ul w \in DJ$, and that this decomposition is unique up to the identification of the pairs $(u, \ul w)$ in the coend above.
\end{example}
	
	The following proposition shows that colax morphisms correspond to companions with lax promorphism structures. If $f$ is a colax morphism for which the lax promorphism $C(f, \id)$ is right pseudo then we will say that $f$ \emph{has a right pseudo companion}. Such colax morphisms have appeared several times in literature. For example, they are crucial to the main results of \cite{Mellies-Tabareau08} and \cite{Grandis-Pare07}, where such morphisms respectively are called `$T$\ndash operadic' or said to satisfy the `left Conduch\'e property'. Also the second condition of the result of Getzler, that is the motivation for this thesis and that was recalled in the introduction, simply means that the symmetric monoidal functor, along which the Kan extension is taken, has a right pseudo companion. For us they will be important because they are the vertical morphisms of $\rcProm T$ (\propref{right colax promorphisms form a pseudo double category}) that admit companions, and thus can be used to define left Kan extensions.
\begin{proposition} \label{colax morphisms correspond to lax companions}
	Let $T$ be a normal monad on an equipment $\K$. For any morphism $\map fAC$ in $\K$, there is bijective correspondence between vertical cells \mbox{$\cell{\bar f}{f \of a}{c \of Tf}$}, that make $f$ into a colax morphism, and cells $\ol{C(f, \id)}$, of the form as on the left below, that make the companion $C(f, \id)$ into a lax promorphism.
	\begin{displaymath}
		\begin{tikzpicture}[baseline]
			\matrix(m)[math175em, column sep=2.5em]{TA & TC \\ A & C \\};
			\path[map]	(m-1-1) edge[barred] node[above] {$TC(f, \id)$} (m-1-2)
													edge node[left] {$a$} (m-2-1)
									(m-1-2) edge node[right] {$c$} (m-2-2)
									(m-2-1) edge[barred] node[below] {$C(f, \id)$} (m-2-2);
			\path[transform canvas={shift={($(m-1-2)!0.5!(m-2-2)$)}, xshift=-1.35em}]	(m-1-1) edge[cell] node[right] {$\ol{C(f, \id)}$} (m-2-1);
		\end{tikzpicture}
		\qquad\qquad\begin{tikzpicture}[baseline]
			\matrix(m)[math175em]
			{	TA & TA & TA & TC \\
				A & A & TC & TC \\
				A & C & C & C \\ };
			\path[map]	(m-1-1) edge node[left] {$a$} (m-2-1)
									(m-1-2) edge node[right] {$a$} (m-2-2)
									(m-1-3) edge[barred] node[above, yshift=2pt] {$TC(f, \id)$} (m-1-4)
													edge node[left] {$Tf$} (m-2-3)
									(m-2-2) edge node[right] {$f$} (m-3-2)
									(m-2-3) edge node[left] {$c$} (m-3-3)
									(m-2-4) edge node[right] {$c$} (m-3-4)
									(m-3-1) edge[barred] node[below] {$C(f, \id)$} (m-3-2);
			\path	(m-1-1) edge[eq] (m-1-2)
						(m-1-2) edge[eq] (m-1-3)
						(m-1-4) edge[eq] (m-2-4)
						(m-2-1) edge[eq] (m-2-2)
										edge[eq] (m-3-1)
						(m-2-3) edge[eq] (m-2-4)
						(m-3-2) edge[eq] (m-3-3)
						(m-3-3) edge[eq] (m-3-4);
			\path[transform canvas={shift=(m-2-3), xshift=-0.7em}]	(m-1-3) edge[cell] node[right] {$T\ls f\eps$} (m-2-3);
			\path[transform canvas={shift=(m-2-3)}]
									(m-2-1) edge[cell] node[right] {$\ls f\eta$} (m-3-1);
			\path[transform canvas={shift=($(m-2-3)!0.5!(m-3-3)$)}] (m-1-2) edge[cell] node[right] {$\bar f$} (m-2-2);
		\end{tikzpicture}
	\end{displaymath}
	Under this assignment a structure cell $\bar f$ is mapped to the composite on the right.
\end{proposition}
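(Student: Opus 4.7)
The plan is to exhibit the bijection by unpacking both sides via the correspondences of \propref{left and right cells}. A vertical cell $\cell{\bar f}{f \of a}{c \of Tf}$ has horizontal source $U_{TA}$ and target $U_C$, so applying $\lambda$ (and suppressing unitors via \propref{companion and conjoint pseudofunctors}) yields a horizontal cell
\[
\lambda\bar f \colon C(c \of Tf, \id) \Rightarrow C(f \of a, \id).
\]
Using the canonical invertible cells $C(c \of Tf,\id) \iso C(Tf,\id) \hc C(c,\id)$ and $C(f \of a,\id) \iso A(a,\id) \hc C(f,\id)$ from \propref{companion compositors}, together with $TC(f,\id) \iso C(Tf,\id)$ from \propref{normal functors preserve companions} (which is where normality of $T$ is essential), $\lambda\bar f$ transports to a horizontal cell
\[
\psi \colon TC(f,\id) \hc C(c,\id) \Rightarrow A(a,\id) \hc C(f,\id).
\]
Independently, a cell $\overline{C(f,\id)}$ of the shape shown in the statement corresponds via \propref{left and right cells} to a horizontal cell of exactly the same shape. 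The composite $\bar f \mapsto \psi \mapsto \overline{C(f,\id)}$ is therefore a bijection.

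Next I would verify that this abstract bijection agrees with the explicit composite displayed on the right. Unfolding $\lambda$ in both directions and substituting the explicit form of the canonical isomorphisms of \propref{companion compositors} and \propref{normal functors preserve companions} (each built from $\ls f\eta$, $\ls f\eps$, $T\ls f\eta$, $T\ls f\eps$ and the compositor $T_\hc$), the total composite telescopes, by repeated use of the companion identities for $f$ and their $T$-images, into precisely the $3 \times 3$ grid displayed. The inverse can equivalently be described as the vertical composite of $T\ls f\eta$ on top, $\overline{C(f,\id)}$ in the middle, and $\ls f\eps$ on the bottom (whiskered by unitors); plugging it into the forward map and simplifying via the horizontal and vertical companion identities collapses it back to $\bar f$.

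It remains to check that the axioms correspond. For the unit axiom one applies \propref{transformations and companions} to the unit transformation $\nat\eta{\id_\K}T$: the component $\eta_{C(f,\id)}$ is expressed as a specific composite involving $\ls f\eta$, $\eta_f$ and $\ls f\eps$. Substituting this into the lax unit axiom for $\overline{C(f,\id)}$ and then collapsing companion cells via the companion identities reduces it exactly to the colax unit axiom for $\bar f$. An analogous argument using $\nat\mu{T^2}T$ in \propref{transformations and companions} shows that the lax associativity axiom for $\overline{C(f,\id)}$ is equivalent to the colax associativity axiom for $\bar f$; here one also needs that $T$ preserves the explicit composite defining $\overline{C(f,\id)}$, which follows because $T$ is normal and its compositors are invertible on companions by \propref{normal functors preserve companions}.

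The main obstacle will be the verification of the associativity axiom. Both sides unfold into sizable grids of cells involving $\mu$, iterated applications of $T$, and companion cells of $f$, $Tf$, $T^2 f$ and their composites. Arranging these so that the companion identities and the compatibility of $T_\hc$ with companion cells can be applied systematically is tedious but mechanical; the conceptual ingredient that makes everything fit together is the invertibility of the compositors of $T$ on companions, ensured by \propref{normal functors preserve companions}, which licenses the free identification of $TC(f,\id)$ with $C(Tf,\id)$ and similar identifications at higher iterates throughout the computation.
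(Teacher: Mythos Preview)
Your proposal is correct and arrives at the same conclusion as the paper, but the paper's route is more direct. Rather than factoring the bijection through the $\lambda$-correspondence of \propref{left and right cells} and the compositor isomorphisms of \propref{companion compositors}, the paper simply writes down the inverse assignment $\ol{C(f,\id)} \mapsto \ls f\eps \of \ol{C(f,\id)} \of T\ls f\eta$ (which you also identify) and checks directly via the companion identities that it inverts the displayed composite. Your detour through $\lambda$ is valid and perhaps more conceptual, but it requires verifying that the abstract bijection coincides with the explicit grid, which is extra work; the paper avoids this by working with the explicit formula from the outset. For the axioms, both you and the paper rely on \propref{transformations and companions} to rewrite $\mu_{C(f,\id)}$ and $\eta_{C(f,\id)}$ as companion-cell composites, and on the observation (from the unit axiom of $T$) that $T\ol{C(f,\id)}$ expands as the horizontal composite of $T\ls f\eta$, $T\bar f$ and $T^2\ls f\eps$; the cancellations then proceed identically.
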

\begin{proof}
	First notice that mapping $\bar f$ to the composite above gives a bijection: it follows from the companion identities that an inverse is given by the assignment $\ol{C(f, \id)} \mapsto \ls f\eps \of \ol{C(f, \id)} \of T\ls f\eta$. We have to check that the associativity and unit axioms of $\bar f$ are equivalent to those of $\ol{C(f, \id)}$. To see this, notice that the unit axiom of $T$ implies that the image $T\ol{C(f, \id)}$ is equal to the horizontal composite of $T\ls f\eta$, $T\bar f$ and $T^2 \ls f\eps$. Substituting this into the right-hand side of the associativity axiom for $\ol{C(f, \id)}$, the cells $T\ls f\eta$ (of $T\ol{C(f, \id)}$) and $T\ls f\eps$ (of $\ol{C(f, \id)}$) cancel and what remains is the right-hand side of the associativity axiom for $\bar f$, horizontally composed on the left with $\ls f\eta$ and on the right with $T^2 \ls f\eps$.
	
	The same is true for the left-hand side: recall from \propref{transformations and companions} that $\mu_{C(f, \id)}$ equals to the horizontal composition of $T\ls f\eta$, the identity cell of $Tf \of \mu_A = \mu_C \of T^2 f$ and $T^2\ls f\eps$. Hence after replacing $\mu_{C(f, \id)}$ with this composite, the cells $T\ls f\eta$ and $T\ls f\eps$ cancel in the left-hand side of the axiom for $\ol{C(f, \id)}$ and we see that it too is equal to the left-hand side of the corresponding axiom for $\bar f$, again after composing the latter on the left with $\ls f\eta$ and on the right with $T^2\ls f\eps$. It thus follows from the companion identities that the associativity axioms for $\bar f$ and $\ol{C(f, \id)}$ are equivalent. A similar, easier argument shows that the unit axioms for $\bar f$ and $\ol{C(f, \id)}$ are equivalent as well, finishing the proof.
\end{proof}
\begin{example}
	Given a colax monoidal $\V$-functor $\map fAC$ the structure cell $\cell{\ol{C(f, \id)}}{\fmc C(f, \id)}{C(f \of \tens, \tens)}$  consists of the composites
	\begin{multline} \label{equation:companion monoidal functor}
		C(f \ul x_1, \ul y_1) \tens \dotsb \tens C(f \ul x_n, \ul y_n) \xrar{\tens} C(f \ul x_1 \tens \dotsb \tens f \ul x_n, \ul y_1 \tens \dotsb \tens \ul y_n) \\
		\xrar{C(\bar f, \id)} C\bigpars{f(\ul x_1 \tens \dotsb \tens \ul x_n), \ul y_1 \tens \dotsb \tens \ul y_n}.
	\end{multline}
	In this case, replacing $J_\tens$ in \eqref{equation:right pseudo monoidal profunctor} by the composite above, we find that $C(f, \id)$ being right pseudo coincides with the second condition of Getzler's proposition \cite[Proposition 2.3]{Getzler09}, that was recalled in the introduction, and which has been the main motivation for developing the theory presented here. In particular the companion of the symmetric monoidal (unenriched) functor $\map j{\op{\mathsf F}}{\mathsf H}$ of PROPs, that was given as an example in the introduction, is right pseudomonoidal.
	
	As in the previous example, the companion $C(f, \id)$ of every colax double functor $\map fAC$ admits a lax double profunctor structure, which maps each sequence $\ul w = (\cell{\ul w_1}{f\ul k_1}{\ul l_1}, \dotsc, \cell{\ul w_n}{f\ul k_n}{\ul l_n})$ in $DC(f, \id)$ to the composite
	\begin{displaymath}
		f(\ul k_1 \hc \dotsb \hc \ul k_n) \xRar{\bar f} f\ul k_1 \hc \dotsb \hc f\ul k_n \xRar{\ul w_1 \hc \dotsb \hc \ul w_n} \ul l_1 \hc \dotsb \hc \ul l_n
	\end{displaymath}
	in $C(f \of \hc, \hc)$, where $\bar f$ is the compositor of $f$. (Biased) colax double functors $f$ for which $C(f, \id)$ is right pseudo play an important role in the main theorem of \cite{Grandis-Pare07}, where the existence of a decomposition as in \exref{example:lax double profunctors}, for $J = C(f, \id)$ and restricted to the biased case of $n = 2$, is called the `left Conduch\'e condition'. This result will be discussed in \secref{section:applications of the main theorem}.
\end{example}
\begin{remark}
	Notice that, in general, there is no obvious way to define the horizontal composition of two lax promorphisms $\hmap JAB$ and $\hmap HBC$: for that we need an inverse to $\cell{T_\hc}{TJ \hc TH}{T(J \hc H)}$ so that we can take
	\begin{displaymath}
		\ol{J \hc H} = \bigbrks{T(J \hc H) \xRar{\inv T_\hc} TJ \hc TH \xRar{\bar J \hc \bar H} J \hc H}
	\end{displaymath}
	as a structure cell for the horizontal composite. But, since $T$ is not a pseudo functor in general, such inverses need not exist, so that colax algebras, colax morphisms and lax promorphisms will not in general form a pseudo double category. Instead one can prove that they admit a weaker structure, that of a `virtual' double category. Virtual double categories are related to pseudo double categories in the same way that multicategories are related to monoidal categories: instead of cells $J \Rar K$ with a single morphism as horizontal source, a virtual double category has `multicells' $(J_1, \dotsc, J_n) \Rar K$, with a sequence of morphisms as horizontal source. Their definition can be found in \cite[Section 2]{Cruttwell-Shulman10}; it is also given in \cite[Section 5.1]{Leinster04} where they are called `fc-multicategories'.
	
	We shall not use that lax promorphisms form a virtual double category. Instead, we will work with `right colax promorphisms', that are defined below, which, as we will see, do form a pseudo double category. Specifically, in the case of the lax double functors of the previous example, it is not clear how to arrange double categories, colax (or pseudo) double functors and lax double profunctors into a pseudo double category, as is also remarked on the $n$Lab page \cite{nLab_on_double_profunctors}. Instead we will consider double categories, colax double functors and `right colax double profunctors' (see \exref{example:right colax double profunctors}), which do form a pseudo double category.
\end{remark}

	The idea is to choose the notion of right colax profunctors in such a way that right colax promorphisms with invertible structure cells correspond precisely to right pseudo lax promorphisms, as defined in \defref{definition:right colax promorphisms}. Remember that the structure cell of a right pseudo lax promorphism $\hmap JAB$ corresponds to a horizontal invertible cell $\cell{\rho J}{A(\id, a) \hc TJ}{J \hc B(\id, b)}$, which suggests that structure cells for right colax promorphisms $\hmap HAB$ should be horizontal cells of the form
\begin{displaymath}
	\cell{\bar H}{H \hc B(\id, b)}{A(\id, a) \hc TH}.
\end{displaymath}
	The associativity and unit axioms for $\bar H$ should then be chosen in such a way that when $\bar H = \inv{(\rho J)}$, for a right pseudo lax promorphism $\hmap JAB$, they are equivalent to the corresponding axioms for $J$. To do this we first have to transform the associativity and unit axioms for lax promorphisms into identities in terms of $\rho J$ and $T \rho J$. First we introduce the following convention.
\begin{convention}
	Let $T = (T, \mu, \eta)$ be a normal monad on an equipment $\K$. In the remainder of this chapter we will assume that a companion cell \mbox{$\cell{\ls f\eps}{C(f, \id)}{U_C}$} and conjoint cell $\cell{\eps_f}{C(\id, f)}{U_C}$ have been specified for every morphism $\map fAC$ in $\K$, such that $\ls{\id_C}\eps = \id_{U_C} = \eps_{\id_C}$. Moreover, whenever we use (e.g.\ in \propref{left and right cells}) companions or conjoints for the vertical morphisms of structure cells of algebras, colax morphisms or lax promorphisms (which either consist of a single morphism or a composite of two) we will always make the following choices.
	\begin{itemize}
		\item[-] For composable morphisms $\map fAB$ and $\map gBC$ we choose as companion and conjoint for $g \of f$ the composites $B(f, \id) \hc C(g, \id)$ and $C(\id, g) \hc B(\id, f)$, whose corresponding companion and conjoint cells are compositions of the specified cells $\ls f\eps$, $\ls g\eps$, $\eps_f$ and $\eps_g$, as in \propref{companion compositors}.
		\item[-] For any morphism $\map fAB$ we choose as companion and conjoint cells for $Tf$ the $T$-images $T\ls f\eps$ and $T\eps_f$, see \propref{normal functors preserve companions}.
	\end{itemize}
	
	Consequently for any colax $T$-algebra $A = (A, a, \alpha, \alpha_0)$ the horizontal cells that correspond, under \propref{left and right cells}, to the vertical structure cells $\alpha$ and $\alpha_0$ will be of the form
	\begin{flalign*}
			&&\cell{\lambda\alpha&}{TA(a, \id) \hc A(a, \id)}{TA(\mu_A, \id) \hc A(a, \id)},& \\ 
			&& \cell{\lambda\alpha_0&}{U_A}{TA(\eta_A, \id) \hc A(a, \id)},& \\
			&&\cell{\rho\alpha&}{A(\id, a) \hc TA(\id, \mu_A)}{A(\id, a) \hc TA(\id, a)}, & \\ 
			\text{and}&& \cell{\rho\alpha_0&}{A(\id, a) \hc TA(\id, \eta_A)}{U_A}.&
	\end{flalign*}
	Likewise the horizontal cells $\lambda\bar f$ and $\rho\bar f$ corresponding to the structure cell $\bar f$ of a colax morphism $\map fAC$ will be of the form
	\begin{flalign*}
		&&\cell{\lambda\bar f&}{TC(f, \id) \hc C(c, \id)}{A(a, \id) \hc C(f, \id)} & \\
		\text{and}&& \cell{\rho\bar f&}{C(\id, f) \hc A(\id, a)}{C(\id, c) \hc TC(\id, f)}, &
	\end{flalign*}
	while the $T$-image of the structure cell of a lax $T$-promorphism $\map JAB$ corresponds to the horizontal cells
		\begin{flalign*}
			&&\cell{\rho T\bar J&}{TA(\id, a) \hc T^2 J}{TJ \hc TB(\id, b)}& \\
			\text{and}&& \cell{\lambda T\bar J&}{T^2 J \hc TB(b, \id)}{TA(a, \id) \hc TJ}.&
	\end{flalign*}
\end{convention}
	
\begin{proposition} \label{left and right structure cells}
	Let $T$ be a normal monad on an equipment $\K$, and let $A$ and $B$ be colax $T$-algebras. Given a promorphism $\hmap JAB$ and a cell
	\begin{displaymath}
		\begin{tikzpicture}
			\matrix(m)[math175em]{TA & TB \\ A & B\text, \\};
			\path[map]  (m-1-1) edge[barred] node[above] {$TJ$} (m-1-2)
													edge node[left] {$a$} (m-2-1)
									(m-1-2) edge node[right] {$b$} (m-2-2)
									(m-2-1) edge[barred] node[below] {$J$} (m-2-2);
			\path[transform canvas={shift={($(m-1-2)!(0,0)!(m-2-2)$)}}] (m-1-1) edge[cell] node[right] {$\bar J$} (m-2-1);			
		\end{tikzpicture}
	\end{displaymath}
	the following are equivalent.
	\begin{itemize}
		\item[-] The cell $\bar J$ makes $J$ into a lax $T$-promorphism, that is it satisfies the associativity and unit axioms.
		
		\item[-] The horizontal cell $\cell{\rho\bar J}{A(\id, a) \hc TJ}{J \hc B(\id, b)}$ satisfies the following coherence conditions in $H(\K)$.
	\end{itemize}
	\begin{displaymath}
		\begin{tikzpicture}[textbaseline]
      \matrix(m)[math, column sep=0.75em, row sep=2em]
      { \phantom{T^2B} & TA & \phantom{T^2B} &[-0.8em] T^2 A & \\
        A &\phantom{T^2B} & TB &\phantom{T^2B} & T^2 B \\
        & B & & TB & \\ };
      \path[map]  (m-1-2) edge[barred] node[above] {$TA(\id, \mu_A)$} (m-1-4)
                          edge[barred] node[desc] {$TJ$} (m-2-3)
                  (m-1-4) edge[barred] node[above right] {$T^2 J$} (m-2-5)
                  (m-2-1) edge[barred] node[above left] {$A(\id, a)$} (m-1-2)
                          edge[barred] node[below left] {$J$} (m-3-2)
                  (m-2-3) edge[barred] node[below] {$TB(\id, \mu_B)$} (m-2-5)
                  (m-3-2) edge[barred] node[desc] {$B(\id, b)$} (m-2-3)
                          edge[barred] node[below] {$B(\id, b)$} (m-3-4)
                  (m-3-4) edge[barred] node[below right] {$TB(\id, b)$} (m-2-5);
			\path				($(m-2-1)!0.5!(m-2-3)+(0,0.8em)$) edge[cell] node[right] {$\rho\bar J$} ($(m-2-1)!0.5!(m-2-3)-(0,0.8em)$)
                  (m-1-4) edge[cell, shorten >= 7pt, shorten <= 7pt] node[below right] {$\rho\mu_J$} (m-2-3)
                  (m-2-3) edge[cell, shorten >= 7pt ,shorten <= 7pt] node[below left] {$\rho\beta$} (m-3-4);
    \end{tikzpicture}
    = \begin{tikzpicture}[textbaseline]
      \matrix(m)[math, column sep=0.75em, row sep=2em]
      { \phantom{T^2B} & TA &[-0.8em] \phantom{T^2B} & T^2 A & \\
        A &\phantom{T^2B} & TA &\phantom{T^2B} & T^2 B \\
        & B & & TB & \\ };
      \path[map]  (m-1-2) edge[barred] node[above] {$TA(\id, \mu_A)$} (m-1-4)
                  (m-1-4) edge[barred] node[above right] {$T^2 J$} (m-2-5)
                  (m-2-1) edge[barred] node[above left] {$A(\id, a)$} (m-1-2)
                          edge[barred] node[below] {$A(\id, a)$} (m-2-3)
                          edge[barred] node[below left] {$J$} (m-3-2)
                  (m-2-3) edge[barred] node[desc] {$TA(\id, a)$} (m-1-4)
                          edge[barred] node[desc] {$TJ$} (m-3-4)
                  (m-3-2) edge[barred] node[below] {$B(\id, b)$} (m-3-4)
                  (m-3-4) edge[barred] node[below right] {$TB(\id, b)$} (m-2-5);
      \path				(m-1-2) edge[cell, shorten >= 7pt, shorten <= 7pt] node[below left] {$\rho\alpha$} (m-2-3)
                  (m-2-3) edge[cell, shorten >= 7pt, shorten <= 7pt] node[below right] {$\rho\bar J$} (m-3-2)
                  ($(m-2-3)!0.5!(m-2-5)+(-0.75em,0.8em)$) edge[cell] node[right] {$\rho T\bar J$} ($(m-2-3)!0.5!(m-2-5)-(0.75em,0.8em)$);
    \end{tikzpicture}
  \end{displaymath}
  \begin{displaymath}
    \begin{tikzpicture}[textbaseline]
      \matrix(m)[math, column sep=0.75em, row sep=2em]
      { \phantom{T^2B} & TA & \phantom{T^2B} &[-0.8em] A & \phantom{T^2B} \\
        A & \phantom{T^2B} & TB & \phantom{T^2B} & B \\
        & B & & & \\ };
      \path[map]  (m-1-2) edge[barred] node[above] {$TA(\id, \eta_A)$} (m-1-4)
                          edge[barred] node[desc] {$TJ$} (m-2-3)
                  (m-1-4) edge[barred] node[above right] {$J$} (m-2-5)
                  (m-2-1) edge[barred] node[above left] {$A(\id, a)$} (m-1-2)
                          edge[barred] node[below left] {$J$} (m-3-2)
                  (m-2-3) edge[barred] node[above, inner sep=1.5pt] {$TB(\id, \eta_B)$} (m-2-5)
                  (m-3-2) edge[barred] node[desc] {$B(\id, b)$} (m-2-3);
			\path				(m-3-2) edge[eq, bend right=22] (m-2-5)
									($(m-2-1)!0.5!(m-2-3)+(0,0.8em)$) edge[cell] node[right] {$\rho\bar J$} ($(m-2-1)!0.5!(m-2-3)-(0,0.8em)$)
                  (m-1-4) edge[cell, shorten >= 7pt, shorten <= 7pt] node[above left] {$\rho\eta_J$} (m-2-3)
                  (m-2-3) edge[cell, shorten >= 12.5pt, shorten <= 4.5pt] node[above right] {$\rho\beta_0$} (m-3-4);
    \end{tikzpicture}
    = \begin{tikzpicture}[textbaseline]
      \matrix(m)[math, column sep=0.75em, row sep=2em]
      { \phantom{T^2B} & TA & \phantom{T^2B} &[-0.8em] A & \phantom{T^2B} \\
        A & \phantom{T^2B} & & \phantom{T^2B} & B \\
        & B & & & \\ };
      \path[map]  (m-1-2) edge[barred] node[above] {$TA(\id, \eta_A)$} (m-1-4)
                  (m-1-4) edge[barred] node[above right] {$J$} (m-2-5)
                  (m-2-1) edge[barred] node[above left] {$A(\id, a)$} (m-1-2)
                          edge[barred] node[below left] {$J$} (m-3-2);
			\path				(m-2-1) edge[eq, bend right=22] (m-1-4)
									(m-3-2) edge[eq, bend right=22] (m-2-5)
									(m-1-2) edge[cell, shorten >= 17pt, shorten <= 5pt] node[left, inner sep=7pt] {$\rho\alpha_0$} (m-2-3);
    \end{tikzpicture}
	\end{displaymath}
	\begin{itemize}
			\item[-] The horizontal cell $\cell{\lambda\bar J}{TJ \hc B(b, \id)}{A(a, \id) \hc J}$ satisfies the following coherence conditions in $H(\K)$.
	\end{itemize}
	\begin{displaymath}
    \begin{tikzpicture}[textbaseline]
      \matrix(m)[math, column sep=0.75em, row sep=2em]
      { \phantom{T^2B} & T^2 B & \phantom{T^2B} &[-0.9em] TB & \phantom{T^2B} \\
        T^2 A & & TB & \phantom{T^2B} & B \\
        & TA & & A & \\ };
      \path[map]  (m-1-2) edge[barred] node[above] {$TB(b, \id)$} (m-1-4)
                          edge[barred] node[desc] {$TB(\mu_B, \id)$} (m-2-3)
                  (m-1-4) edge[barred] node[above right] {$B(b, \id)$} (m-2-5)
                  (m-2-1) edge[barred] node[above left] {$T^2 J$} (m-1-2)
                          edge[barred] node[below left] {$TA(\mu_A, \id)$} (m-3-2)
                  (m-2-3) edge[barred] node[below] {$B(b, \id)$} (m-2-5)
                  (m-3-2) edge[barred] node[desc] {$TJ$} (m-2-3)
                          edge[barred] node[below] {$A(a, \id)$} (m-3-4)
                  (m-3-4) edge[barred] node[below right] {$J$} (m-2-5);
      \path				($(m-2-1)!0.5!(m-2-3)+(0,0.8em)$) edge[cell] node[right] {$\lambda\mu_J$} ($(m-2-1)!0.5!(m-2-3)-(0,0.8em)$)
                  (m-1-4) edge[cell, shorten >= 7pt, shorten <= 7pt] node[below right] {$\lambda\beta$} (m-2-3)
                  (m-2-3) edge[cell, shorten >= 7pt, shorten <= 7pt] node[below left] {$\lambda\bar J$} (m-3-4);
    \end{tikzpicture}
    = \begin{tikzpicture}[textbaseline]
      \matrix(m)[math, column sep=0.75em, row sep=2em]
      { \phantom{T^2B} & T^2B &[-0.9em] \phantom{T^2B} & TB & \phantom{T^2B} \\
        T^2 A & & TA & \phantom{T^2B} & B \\
        & TA & & A & \\ };
      \path[map]  (m-1-2) edge[barred] node[above] {$TB(b, \id)$} (m-1-4)
                  (m-1-4) edge[barred] node[above right] {$B(b, \id)$} (m-2-5)
                  (m-2-1) edge[barred] node[above left] {$T^2 J$} (m-1-2)
                          edge[barred] node[below] {$TA(a, \id)$} (m-2-3)
                          edge[barred] node[below left] {$TA(\mu_A, \id)$} (m-3-2)
                  (m-2-3) edge[barred] node[desc] {$TJ$} (m-1-4)
                          edge[barred] node[desc] {$A(a, \id)$} (m-3-4)
                  (m-3-2) edge[barred] node[below] {$A(a, \id)$} (m-3-4)
                  (m-3-4) edge[barred] node[below right] {$J$} (m-2-5);
      \path				(m-1-2) edge[cell, shorten >= 7pt, shorten <= 7pt] node[below left] {$\lambda T\bar J$} (m-2-3)
                  (m-2-3) edge[cell, shorten >= 7pt, shorten <= 7pt] node[below right] {$\lambda\alpha$} (m-3-2)
                  ($(m-2-3)!0.5!(m-2-5)+(0,0.75em)$) edge[cell] node[right] {$\lambda\bar J$} ($(m-2-3)!0.5!(m-2-5)-(0,0.75em)$);
    \end{tikzpicture}
  \end{displaymath}
	\begin{displaymath}
		\begin{tikzpicture}[textbaseline]
      \matrix(m)[math, column sep=0.75em, row sep=2em]
      { \phantom{T^2B} & B & \phantom{T^2B} &[-0.9em] & \phantom{T^2B} \\
        A & \phantom{T^2B} & TB & \phantom{T^2B} & B \\
        & TA & & A & \\ };
      \path[map]  (m-1-2) edge[barred] node[desc] {$TB(\eta_B, \id)$} (m-2-3)
                  (m-2-1) edge[barred] node[above left] {$J$} (m-1-2)
                          edge[barred] node[below left] {$TA(\eta_A, \id)$} (m-3-2)
                  (m-2-3) edge[barred] node[below] {$B(b, \id)$} (m-2-5)
                  (m-3-2) edge[barred] node[desc] {$TJ$} (m-2-3)
                          edge[barred] node[below] {$A(a, \id)$} (m-3-4)
                  (m-3-4) edge[barred] node[below right] {$J$} (m-2-5);
      \path				(m-1-2) edge[eq, bend left=22] (m-2-5)
									($(m-2-1)!0.5!(m-2-3)+(0,0.8em)$) edge[cell] node[right] {$\lambda\eta_J$} ($(m-2-1)!0.5!(m-2-3)-(0,0.8em)$)
                  (m-1-4) edge[cell, shorten >= 4.5pt, shorten <= 12.5pt] node[below right] {$\lambda\beta_0$} (m-2-3)
                  (m-2-3) edge[cell, shorten >= 7pt, shorten <= 7pt] node[below left] {$\lambda\bar J$} (m-3-4);
    \end{tikzpicture}
    = \begin{tikzpicture}[textbaseline]
      \matrix(m)[math, column sep=0.75em, row sep=2em]
      { \phantom{T^2B} & B &[-0.9em] \phantom{T^2B} & & \phantom{T^2B} \\
        A & \phantom{T^2B} & & \phantom{T^2B} & B \\
        & TA & & A & \\ };
      \path[map]  (m-2-1) edge[barred] node[above left] {$J$} (m-1-2)
                          edge[barred] node[below left] {$TA(\eta_A, \id)$} (m-3-2)
                  (m-3-2) edge[barred] node[below] {$A(a, \id)$} (m-3-4)
                  (m-3-4) edge[barred] node[below right] {$J$} (m-2-5);
      \path				(m-1-2) edge[eq, bend left=22] (m-2-5)
									(m-2-1) edge[eq, bend left=22] (m-3-4)
									(m-2-3) edge[cell, shorten >= 4.5pt, shorten <= 12.5pt] node[below right] {$\lambda\alpha_0$} (m-3-2);
    \end{tikzpicture}
  \end{displaymath}
	Moreover in the associativity condition for $\rho\bar J$ the horizontal cell $\rho T\bar J$ can be rewritten in terms of $T\rho\bar J$ as below, where the inverse of the compositor exists by \propref{normal functors preserve companions}. Likewise $\lambda T\bar J = \inv T_\hc \of T\lambda \bar J \of T_\hc$.
	\begin{multline*}
		\rho T\bar J = \bigbrks{TA(\id, a) \hc T^2 J \xRar{T_\hc} T\bigpars{A(\id, a) \hc TJ} \\ \xRar{T \rho\bar J} T\bigpars{J \hc B(\id, b)} \xRar{\inv T_\hc} TJ \hc TB(\id, b)}
	\end{multline*}
\end{proposition}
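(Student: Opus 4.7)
The plan is to exploit the bijective correspondences $\phi \mapsto \lambda\phi$ and $\phi \mapsto \rho\phi$ of \propref{left and right cells} together with their functoriality with respect to vertical and horizontal composition of cells. Since these correspondences are bijective, it suffices to show that applying $\rho$ (respectively $\lambda$) transforms each side of the associativity and unit axioms for $\bar J$ into the corresponding side of the stated coherence condition for $\rho\bar J$ (respectively $\lambda\bar J$).

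First I would dispose of the final formula $\rho T\bar J = \inv T_\hc \of T\rho\bar J \of T_\hc$. This is a direct application of \propref{lax functors and left and right cells} to the normal functor $T$ and the cell $\bar J$: the convention fixed just before the statement chooses the companion and conjoint of $Tf$ to be $T\ls f\eps$ and $T\eps_f$, so that the canonical isomorphism $TC(Tf,\id) \Rar TC(f,\id)$ of \propref{normal functors preserve companions} becomes the identity and the commuting square of \propref{lax functors and left and right cells} reduces to exactly the claimed identity. The compositors $T_\hc$ appearing in the formula are invertible, also by \propref{normal functors preserve companions}, because they involve a companion or a conjoint. The dual identity for $\lambda T\bar J$ is proved analogously.

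The heart of the argument is the equivalence between the lax promorphism axioms for $\bar J$ and the coherence conditions on $\rho\bar J$. Applying $\rho$ to the left-hand side of the associativity axiom, and using the horizontal naturality formula $\rho(\phi \hc \chi) = (\id \hc \rho\chi) \of (\rho\phi \hc \id)$ of \propref{left and right cells} together with the vertical naturality formula $\rho(\phi \of \psi) \of (\rho_\hc \hc \id) = (\id \hc \rho_\hc) \of (\rho\psi \hc \id) \of (\id \hc \rho\phi)$, unfolds the axiom as the outer composite of the stated pentagon involving $\rho\mu_J$ and $\rho\beta$. The right-hand side unfolds in the same way to the inner composite involving $\rho\alpha$, $\rho\bar J$ and $\rho T\bar J$. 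The comparator cells $\rho_\hc$ of \propref{companion compositors} that arise from the vertical naturality become identities under our convention of choosing the conjoint of a composite morphism to be the composite of the conjoints, so they cancel on both sides. The unit axiom is treated the same way, simplified by the fact that the chosen companion and conjoint cells of identity morphisms are trivial, and by the observation that $\rho\alpha_0$ and $\rho\beta_0$ encode exactly the composites appearing in the right-hand and left-hand diagrams. The equivalence with the condition on $\lambda\bar J$ follows by the dual argument, replacing conjoints with companions and $\rho$ with $\lambda$ throughout.

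The main obstacle is the bookkeeping: one must carefully track the various comparator cells $\rho_\hc$, $\lambda_\hc$ and, in the case of $T\bar J$, the compositors $T_\hc$ that arise from the naturality formulas, and verify that the conventional choices of companions and conjoints for composite morphisms and for images under $T$ make these comparators reduce to identities at the right places so that the unfolded identities coincide with the pentagons and triangles as displayed. Once this is in order, the equivalences are a formal consequence of \propref{left and right cells} and \propref{lax functors and left and right cells} alone.
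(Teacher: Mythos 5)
Your proposal is correct and follows essentially the same route as the paper: the paper's proof likewise applies $\rho$ (resp.\ $\lambda$) to the associativity and unit axioms for $\bar J$, invokes the functoriality and invertibility of these assignments from \propref{left and right cells} under the stated conventions, and obtains the formula for $\rho T\bar J$ directly from \propref{lax functors and left and right cells}. The only difference is that the paper leaves the comparator bookkeeping implicit, whereas you spell it out.
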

\begin{proof}
	For each lax $T$-promorphism $\hmap JAB$ it follows, from the naturality of $\rho$ (\propref{left and right cells}), that we obtain the two identities for $\rho\bar J$ above when, under the convention above, we apply $\rho$ to the associativity and unit axioms for the structure cell $\bar J$ (\defref{definition:lax promorphism}). On the other hand, since the assignment $\rho$ is invertible, the two identities above imply the associativity and unit axioms for $\bar J$. The second assertion is \propref{lax functors and left and right cells}. For $\lambda\bar J$ the arguments are symmetric.
\end{proof}
	
	To be able to rewrite the coherence conditions of \propref{left and right structure cells} in terms of $\inv{(\rho \bar J)}$ (provided this inverse exists) we need the inverses of $\rho\mu_J$ and $\rho\eta_J$ to exist. This leads to the following definition.
\begin{definition} \label{definition:right suitable monads}
	A normal monad $T = (T, \mu, \eta)$ on an equipment $\K$ is called \emph{right suitable} if for each promorphism $\hmap JAB$ the cells $\mu_J$ and $\eta_J$ are right invertible.
\end{definition}
	Cruttwell and Shulman consider pseudomonads with the property that all cells $\mu_J$ and $\eta_J$ are left invertible in \cite[Appendix A]{Cruttwell-Shulman10}. They call such monads `horizontally strong'.
\begin{example}
	Remember the `free monoid'-monad $\fmc$ on the equipment $\Mat\V$ of $\V$-matrices that was introduced in \secref{section:monad examples}: it maps a set $A$ to the free monoid $\fmc A$ generated by $A$ while the image $\fmc J$ of a $\V$-matrix $\hmap JAB$ is given by $\fmc J(\ul x, \ul y) = \fmc_n J(\ul x, \ul y) = \Tens_{i = 1}^n J(\ul x_i, \ul y_i)$; the monad $\fmc$ induces the `free strict monoidal $\V$\ndash category'-monad $\fmc = \Mod\fmc$ on the equipment $\enProf\V$ of $\V$\ndash profunctors. Also recall the modification $\fsmc$ of $\fmc$, that is given by $\fsmc J(\ul x, \ul y) = \coprod_{\sigma \in \sym_n} \fmc_n J(\act{\ul x}\sigma, \ul y)$, which induces the `free symmetric strict monoidal $\V$-ca\-te\-go\-ry'\ndash monad $\fsmc = \Mod\fsmc$ on $\enProf\V$. Both monads $\fmc$ and $\fsmc$ on $\Mat\V$ are right suitable, as is shown below, while it follows from the proposition further below that the monads $\fmc = \Mod\fmc$ and $\fsmc = \Mod\fsmc$, that they induce, are right suitable as well.
	
	Let $\hmap JAB$ be a $\V$-matrix; we have to show that the $\V$-natural transformation $\cell{\rho\mu_J}{\fmc A(\id, \mu_A) \hc \fmc^2 J}{\fmc J(\id, \mu_B)}$ is invertible. Working out the components of its source we find
	\begin{displaymath}
		\bigpars{\fmc A(\id, \mu_A) \hc \fmc^2 J}(\ul x, \ull y) = \coprod_{\ull v \in \fmc^2 A} \fmc A(\ul x, \mu_A\ull v) \tens \fmc^2 J(\ull v, \ull y),
	\end{displaymath}
	where $\fmc A(\ul x, \mu_A\ull v) \iso 1$ only if $\mu_A\ull v = \ul x$, and $\emptyset$ otherwise, while $\fmc^2 J(\ull v, \ull y) \neq \emptyset$ only if $\card{\ull v} = n = \card{\ull y}$ and $\card{\ull v_i} = m_i = \card{\ull y_i}$, for all $i = 1, \dotsc, n$. So the conditions, under which the $\ull v$-component in the coproduct above is not the initial object, determine $\ull v$ uniquely, if it exists. Notice that it does exist if and only if $\card x = \Sum_{i=1}^n \card{\ull y_i}$, since in that case we can take $\ull v_i = (\ul x_{M_i + 1}, \dotsc, \ul x_{M_i + m_i})$ where $M_i = m_1 + \dotsb + m_{i - 1}$. But the latter is exactly the condition under which $MJ(\ul x, \mu_B\ull y)$, the target of $(\rho\mu_J)_{\ul x, \ull y}$, is not the initial object, and we conclude that the component $(\rho\mu_J)_{\ul x, \ull y}$ is either the initial map $\emptyset \to \emptyset$, when $\card{\ul x} \neq \Sum_{i=1}^n\card{\ull y_i}$, or, in the case of equality, reduces to the isomorphism $\mu_J \colon \fmc^2 J(\ull v, \ull y) \xrar\iso \fmc J(\mu_A \ull v, \mu_B \ull y)$, where $\mu_A \ull v = \ul x$, that is given by \eqref{equation:multiplication of fmc}. This proves that each of the cells $\mu_J$ is right invertible. That the same holds for $\eta_J$ is easily seen: the components $(\rho\eta_J)_{\ul x, y}$ either equal the initial map $\emptyset \to \emptyset$, when $\card{\ul x} \neq 1$, or the identity on $J(\ul x_1, y)$, when $\card{\ul x} = 1$. We conclude that $\fmc$ is right suitable. As mentioned above this implies, by the proposition below, that $\fmc = \Mod\fmc$ on $\enProf\V$ is right suitable as well. In particular the inverse of $\rho\mu_J$, for a $\V$-profunctor $\hmap JAB$, consists of the maps
	\begin{equation} \label{morphism:inverse of right multiplication cell for fmc}
		\fmc J(\ul x, \mu_B \ull y) \iso \fmc^2 J(\ull v, \ull y) \to \int^{\ull w \in \fmc^2 A} \fmc A(\ul x, \mu_A \ull w) \tens \fmc^2 J(\ull w, \ull y),
	\end{equation}
	for $\ul x$ and $\ull y$ such that $\card{\ull y_1} + \dotsb + \card{\ull y_n} = \card{\ul x}$, where $\ull v$ and the isomorphism are as given above and where the second map inserts at $\ull w = \ull v$, using the identity $1 \to \fmc A(\ul x, \mu_A \ull v)$.
	
	That the monad $\fsmc$ on $\Mat\V$ is right suitable as well follows from applying $\rho$ to the identities $(\mu_\fsmc)_J \of \theta^2_J = \theta_J \of (\mu_\fmc)_J$ and $(\eta_\fsmc)_J = \theta_J \of (\eta_\fmc)_J$, that hold because $\nat\theta MS$ is a morphism of monads. Indeed, they give commuting diagrams in which we know of all but the cells $\rho(\mu_\fsmc)_J$ and $\rho(\eta_\fsmc)_J$ that they are invertible, by \propref{theta cells are right invertible}; the latter are therefore invertible as well.
\end{example}
\begin{example}
	A monad $T = (T, \mu, \eta)$ on a category $\E$ with finite limits is called \emph{cartesian} if $T$ preserves pullbacks and all the naturality squares of $\mu$ and $\eta$ are pullbacks. It follows from \exref{example:right invertible spans} that every such cartesian monad $T$ induces a right suitable monad $\Span T$ on $\Span \E$. For example the free category monad $F$ on $\ps{\GG_1}$ is cartesian (see \cite[Example 6.5.3]{Leinster04} or for an elementary proof \cite[Proposition 2.3]{Dawson-Pare-Pronk06}) so that the induced monad $\Span F$ on $\Span{\ps{\GG_1}}$ is right suitable. It follows from the proposition below that $D = \Mod{\Span F}$, the free strict double category monad, is right suitable as well.
\end{example}

\begin{example} \label{example:other monads}
	The `free strict $\omega$-category'-monad $T$, that we briefly mentioned at the end of \secref{section:monad examples}, is cartesian as well. This is shown by Leinster in \cite[Appendix F]{Leinster04}. It follows that the induced monad $\inProf T$, for monoidal globular categories, is right suitable. The ultrafilter-monad $\beta$, that we also mentioned, is not right suitable: one can verify that components of its multiplication are right invertible, but those of its unit are not, see \cite[Example A.7]{Cruttwell-Shulman10}
\end{example}

	Recall that $\qlEquip$ denotes the $2$-category of equipments $\K$, for which every $H(\K)(A, B)$ has reflexive coequalisers preserved by $\hc$ on both sides, lax functors and the transformations between them. Its sub-$2$-category $\qnEquip$ consists of normal functors.
\begin{proposition}
	The $2$-functor $\map{\mathsf{Mod}}\qlEquip\qnEquip$ of \propref{mod is a 2-functor} preserves right suitable monads.
\end{proposition}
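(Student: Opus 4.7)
My plan is to unpack the data of $\Mod T$ and apply \propref{right invertible cells between bimodules}, which reduces the right invertibility of a cell of bimodules to that of two underlying cells in $\K$. Since $\mathsf{Mod}$ is a $2$\ndash functor, we obtain a monad $\Mod T = (\Mod T, \Mod\mu, \Mod\eta)$ on $\Mod\K$, and it must be shown that for each bimodule $\hmap JAB$ in $\Mod\K$ the cells $(\Mod\mu)_J$ and $(\Mod\eta)_J$ are right invertible.

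First I would identify the underlying cells in $\K$ of these components. From the construction in the proof sketch of \propref{mod is a 2-functor}, the component $(\Mod\xi)_J$ of a transformation $\Mod\xi$ at a bimodule $\hmap JAB$ is a cell of bimodules whose underlying cell of promorphisms in $\K$ is $\xi_J$ itself, with left vertical side the morphism of monoids $(\Mod\xi)_A$, whose underlying cell of promorphisms in $\K$ is in turn $\xi_A$ (where the monoid $A$ is viewed as a promorphism $A_0 \slashedrightarrow A_0$).

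Applying this with $\xi = \mu$ and $\xi = \eta$, we see that the cells of bimodules $(\Mod\mu)_J$ and $(\Mod\eta)_J$ have underlying cells in $\K$ given respectively by the pairs $(\mu_A, \mu_J)$ and $(\eta_A, \eta_J)$. By assumption $T$ is right suitable, so each of $\mu_A$, $\mu_J$, $\eta_A$ and $\eta_J$ is right invertible in $\K$. \propref{right invertible cells between bimodules} then immediately yields right invertibility of $(\Mod\mu)_J$ and $(\Mod\eta)_J$ in $\Mod\K$, so that $\Mod T$ is right suitable.

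The argument is essentially bookkeeping, and I do not expect any real obstacle beyond verifying the identification of the underlying cells of $(\Mod\xi)_J$ with $\xi_A$ and $\xi_J$, which is implicit in the construction of $\mathsf{Mod}$ on transformations but is not spelled out in the excerpt.
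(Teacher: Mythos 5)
Your proposal is correct and follows essentially the same route as the paper: identify the $\K$-cells underlying $(\Mod\mu)_J$ and $(\Mod\eta)_J$ as $(\mu_A,\mu_J)$ and $(\eta_A,\eta_J)$, note these are right invertible since $T$ is right suitable, and conclude via \propref{right invertible cells between bimodules}. The identification of the underlying cells that you flag as the only point needing verification is exactly what the paper's proof records, so there is no gap.
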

\begin{proof}
	The multiplication $\Mod \mu$ of the image of a monad $T = (T, \mu, \eta)$ on $\K$ is given by the cells $\mu_A$ that form morphisms of monoids $\map{\mu_A}{T^2 A}{TA}$ (remember a monoid $A$ is a promorphism $\hmap A{A_0}{A_0}$ equipped with horizontal multiplication and unit cells), and by the cells $\mu_J$ that form cells of bimodules $\cell{\mu_J}{T^2 J}{TJ}$ for every bimodule $\hmap JAB$. If $T$ is right suitable then the $\K$-cells underlying $\mu_A$ and $\mu_J$ are right invertible, so that by \propref{right invertible cells between bimodules} the cell $\mu_J$, as a cell of bimodules, is right invertible in $\Mod\K$. In the same way we see that each $\eta_J$ is right invertible in $\Mod\K$ as well, which completes the proof.
\end{proof}
	We are now ready to define right colax promorphisms.
\begin{definition} \label{definition:right colax promorphisms}
	Let $T$ be a right suitable normal monad on an equipment $\K$. Given colax $T$-algebras $A$ and $B$, a \emph{right colax $T$-promorphism} $\hmap JAB$ is a promorphism $\hmap JAB$ equipped with a horizontal structure cell
	\begin{displaymath}
		\begin{tikzpicture}
			\matrix(m)[math175em]{A & B & TB \\ A & TA & TB \\};
			\path[map]	(m-1-1) edge[barred] node[above] {$J$} (m-1-2)
									(m-1-2) edge[barred] node[above] {$B(\id, b)$} (m-1-3)
									(m-2-1) edge[barred] node[below] {$A(\id, a)$} (m-2-2)
									(m-2-2) edge[barred] node[below] {$TJ$} (m-2-3);
			\path				(m-1-1) edge[eq] (m-2-1)
									(m-1-2) edge[cell] node[right] {$\bar J$} (m-2-2)
									(m-1-3) edge[eq] (m-2-3);
		\end{tikzpicture}
	\end{displaymath}
	satisfying the following associativity and unit axioms in $H(\K)$.
	\begin{displaymath}
		\begin{tikzpicture}[baseline={([yshift=12pt]current bounding box.center)}]
      \matrix(m)[math, column sep=0.75em, row sep=2em]
      { \phantom{T^2B} & B &[-0.8em] \phantom{T^2B} & TB & \\
        A &\phantom{T^2B} & TA &\phantom{T^2B} & T^2 B \\
        & TA & & T^2 A & \\ };
      \path[map]  (m-1-2) edge[barred] node[above] {$B(\id, b)$} (m-1-4)
                  (m-1-4) edge[barred] node[above right] {$TB(\id, \mu_B)$} (m-2-5)
                  (m-2-1) edge[barred] node[above left] {$J$} (m-1-2)
                          edge[barred] node[above, inner sep=2pt] {$A(\id, a)$} (m-2-3)
                          edge[barred] node[below left] {$A(\id, a)$} (m-3-2)
                  (m-2-3) edge[barred] node[desc] {$TJ$} (m-1-4)
                          edge[barred] node[desc] {$TA(\id, \mu_A)$} (m-3-4)
                  (m-3-2) edge[barred] node[below, inner sep=2pt] {$TA(\id, a)$} (m-3-4)
													edge[barred, bend right=65, looseness=1.4] node[below, inner sep=6pt] {$T\bigpars{A(\id, a) \hc TJ}$} coordinate (p) (m-2-5)
                  (m-3-4) edge[barred] node[desc] {$T^2 J$} (m-2-5);
      \path				(m-1-2) edge[cell, shorten >= 7pt, shorten <= 7pt] node[above right] {$\bar J$} (m-2-3)
                  (m-2-3) edge[cell, shorten >= 7pt, shorten <= 7pt] node[above left] {$\rho\alpha$} (m-3-2)
                  ($(m-2-3)!0.5!(m-2-5)+(-1.25em,0.8em)$) edge[cell] node[right] {$\inv{(\rho \mu_J)}$} ($(m-2-3)!0.5!(m-2-5)-(1.25em,0.8em)$)
                  (m-3-4) edge[cell, shorten >= 6pt, shorten <=0pt] node[left, yshift=1pt] {$T_\hc$} (p);
    \end{tikzpicture}
    = \begin{tikzpicture}[baseline={([yshift=12pt]current bounding box.center)}, trim right=8.75em]
      \matrix(m)[math, column sep=0.75em, row sep=2em]
      { \phantom{T^2B} & B & \phantom{T^2B} &[-0.8em] TB & \\
        A &\phantom{T^2B} & TB &\phantom{T^2B} & T^2 B \\
        & TA & & & \\ };
      \path[map]  (m-1-2) edge[barred] node[above] {$B(\id, b)$} (m-1-4)
                          edge[barred] node[desc, xshift=-3pt] {$B(\id, b)$} (m-2-3)
                  (m-1-4) edge[barred] node[above right] {$TB(\id, \mu_B)$} (m-2-5)
                  (m-2-1) edge[barred] node[above left] {$J$} (m-1-2)
                          edge[barred] node[below left] {$A(\id, a)$} (m-3-2)
                  (m-2-3) edge[barred] node[above, inner sep=1pt] {$TB(\id, b)$} (m-2-5)
                  (m-3-2) edge[barred] node[desc] {$TJ$} (m-2-3)
                          edge[barred, bend right =20, looseness=1.25] node[desc] {$T\bigpars{J \hc B(\id, b)}$} coordinate (q) (m-2-5)
                  (m-3-2) edge[barred, bend right =65, looseness=1.4] node[below, inner sep=6pt] {$T\bigpars{A(\id, a) \hc TJ}$} coordinate (p) (m-2-5);
			\path				($(m-2-1)!0.5!(m-2-3)+(0,0.8em)$) edge[cell] node[right] {$\bar J$} ($(m-2-1)!0.5!(m-2-3)-(0,0.8em)$)
                  (m-1-4) edge[cell, shorten >= 7pt, shorten <= 7pt] node[above left, inner sep=1pt] {$\rho\beta$} (m-2-3)
                  (m-2-3) edge[cell, shorten >= 12pt, shorten <= 4pt] node[above right] {$T_\hc$} (q)
                  (m-2-3) edge[cell, shorten >= 8pt, shorten <= 37pt] node[below, inner sep=10pt, xshift=-5pt] {$T\bar J$} (p);
    \end{tikzpicture}
  \end{displaymath}
  \begin{displaymath}
    \begin{tikzpicture}[textbaseline]
      \matrix(m)[math, column sep=0.75em, row sep=2em]
      { \phantom{T^2 B} & B &[-0.8em] \phantom{T^2 B} & TB & \phantom{T^2 B} \\
        A & \phantom{T^2 B} & TA & \phantom{T^2 B} & B \\
        & & & A & \\ };
      \path[map]  (m-1-2) edge[barred] node[above] {$B(\id, b)$} (m-1-4)
									(m-1-4) edge[barred] node[above right] {$TB(\id, \eta_B)$} (m-2-5)
                  (m-2-1) edge[barred] node[above left] {$J$} (m-1-2)
                          edge[barred] node[above] {$A(\id, a)$} (m-2-3)
                  (m-2-3) edge[barred] node[desc] {$TJ$} (m-1-4)
													edge[barred] node[desc] {$TA(\id, \eta_A)$} (m-3-4)
                  (m-3-4) edge[barred] node[below right] {$J$} (m-2-5);
      \path				(m-1-2) edge[cell, shorten >= 7pt, shorten <= 7pt] node[above right] {$\bar J$} (m-2-3)
                  (m-2-3) edge[cell, shorten >= 12.5pt, shorten <= 4.5pt] node[above left, xshift=2pt, yshift=2pt] {$\rho\alpha_0$} (m-3-2)
                  ($(m-2-3)!0.5!(m-2-5)+(-1.25em,0.8em)$) edge[cell] node[right] {$\inv{(\rho\eta_J)}$} ($(m-2-3)!0.5!(m-2-5)-(1.25em,0.8em)$)
									(m-2-1) edge[eq, bend right=22] (m-3-4);
    \end{tikzpicture}
    = \begin{tikzpicture}[textbaseline]
      \matrix(m)[math, column sep=0.75em, row sep=2em]
      { \phantom{T^2 B} & B & \phantom{T^2 B} &[-0.8em] TB & \phantom{T^2 B} \\
        A & \phantom{T^2 B} & & \phantom{T^2 B} & B \\
        & & & A & \\ };
      \path[map]  (m-1-2) edge[barred] node[above] {$B(\id, b)$} (m-1-4)
									(m-1-4) edge[barred] node[above right] {$TB(\id, \eta_B)$} (m-2-5)
									(m-2-1) edge[barred] node[above left] {$J$} (m-1-2)
									(m-3-4) edge[barred] node[below right] {$J$} (m-2-5);
      \path	(m-1-2) edge[eq, bend right=22] (m-2-5)
						(m-2-1) edge[eq, bend right=22] (m-3-4)
						(m-1-4) edge[cell, shorten >= 12.5pt, shorten <= 4.5pt] node[below right, xshift=3pt, yshift=5pt] {$\rho\beta_0$} (m-2-3);
    \end{tikzpicture}
	\end{displaymath}
\end{definition}
\begin{remark}
	It is readily seen that, for a right pseudopromorphism $\hmap HAB$, the axioms above for $\bar J = \inv{(\rho\bar H)}$ are indeed equivalent to those of \propref{left and right structure cells} for $\rho\bar H$, which was our goal. It follows that right pseudo lax promorphisms correspond to pseudo right colax promorphisms. We will not distinguish these two notions and call promorphisms of either type simply \emph{right pseudopromorphisms}.
	
	Also notice the necessity of the existence of the inverses of $\rho\mu_J$ and $\rho\eta_J$, and the necessity of considering colax algebras, in the above definition. Indeed, in the associativity axiom both $\inv{(\rho\mu_J)}$ and $\rho\alpha$ are `kept in place' by $\bar J$ and $T_\hc$: when neither $\bar J$ nor $T_\hc$ is invertible it is not possible to rewrite the axiom in terms of $\rho\mu_J$ instead of $\inv{(\rho\mu_J)}$, or in terms of a lax algebra structure cell $\cell{\alpha'}{a \of Ta}{a \of \mu_A}$ instead of $\alpha$. In the same way $\inv{(\rho\eta_J)}$ is kept in place in the unit axiom.
\end{remark}

	We shall first describe the relatively simple notion of (unenriched) right colax monoidal profunctors. Before we start however, the author must admit that the only such profunctors he knows are all right pseudomonoidal profunctors, that is with invertible structure cells, as described in \exref{example:lax monoidal profunctors}.
	
	Consider an ordinary profunctor $\hmap JAB$ between normal colax unenriched monoidal categories $A$ and $B$. Loosely speaking, a right colax structure on $J$ is a weakening of a right pseudo structure on $J$: while the latter gives a decomposition \eqref{equation:decomposition of right pseudoprofunctors} of each map $\map jx{\ul z_1 \tens \dotsb \tens \ul z_n}$ in $J$ as $j = (\ul j_1 \tens \dotsb \tens \ul j_n) \of f$, where $\map fx{\ul y_1 \tens \dotsb \tens \ul y_n}$ and $\map{j_s}{\ul y_s}{\ul z_s}$, a right colax structure only gives a coherent assignment $j \mapsto (f, \ul j)$, with no further relation between $(f, \ul j)$ and $j$. In particular a right colax structure does not define tensor products $\ul j_1 \tens \dotsb \tens \ul j_n$ of maps in $J$.
	
	To make this formal, by a \emph{right splitting} of a map $\map jx{\ul z_1 \tens \dotsb \tens \ul z_n}$ in $J$ we mean a pair $(f, \ul j)$ consisting of a map $\map fx{\ul y_1 \tens \dotsb \tens \ul y_n}$ in $A$ and a sequence $\map{\ul j}{\ul y}{\ul z}$ in $\fmc_n J$, of maps $\map{\ul j_s}{\ul y_s}{\ul z_s}$ in $J$. Such a right splitting $(f, \ul j)$ of $j$ is drawn as
\begin{displaymath}
	\begin{tikzpicture}
		\matrix(m)[math, row sep=1.5em]
		{	x & & \ul z_1 \tens \dotsb \tens \ul z_n \\
			& \ul y_1 \tens \dotsb \tens \ul y_n & \\ };
		\path[map]	(m-1-1) edge node[above] {$j$} (m-1-3)
												edge node[below left] {$f$} (m-2-2);
		\draw	($(m-1-3)+(3.25em,-1.25em)$) node {$\ul z$.} coordinate (z)
					($(m-2-2)+(3.25em,-1.25em)$) node {$\ul y$} coordinate (y);
		\path[map] (y) edge[shorten >=7pt, shorten <= 5pt] node[below right] {$\ul j$} (z);
	\end{tikzpicture}
\end{displaymath}
	Two splittings $(f, \ul j)$ and $(f', \ul j')$ of $j$ are called equivalent if they are identified in the coend  $\int^{\ul y \in \fmc_n A} A(x, \ul y_1 \tens \dotsb \tens \ul y_n) \times \fmc_n J(\ul y, \ul z)$. Thus the equivalence relation on splittings $(f, \ul j)$ is generated by the pairs $(f, \ul j) \sim (f', \ul j')$ for which there exists a sequence $\map{\ul g}{\ul y}{\ul y'}$ such that $(\ul g_1 \tens \dotsb \tens \ul g_n) \of f = f'$ and $\ul j' \of \ul g = \ul j$.
	
	Likewise we consider \emph{(right) double splittings} of maps
\begin{displaymath}
	\map jx{(\ull z_{11} \tens \dotsb \tens \ull z_{1m_1}) \tens \dotsb \tens (\ull z_{n1} \tens \dotsb \tens \ull z_{nm_n})}
\end{displaymath}
	to be pairs $(f, \ull j)$ where $\map fx{(\ull y_{11} \tens \dotsb \tens \ull y_{1m_1}) \tens \dotsb \tens (\ull y_{n1} \tens \dotsb \tens \ull y_{nm_n})}$ is a map in $A$ and $\map{\ull j}{\ull y}{\ull z}$ is a double sequence in $\fmc^2 J$. Again we call $(f, \ull j)$ and $(f', \ull j')$ equivalent if they are identified in the coend $\int^{\ull y \in \fmc^2 A} A\bigpars{x, (\mathrm\tens \of \fmc\mathrm\tens)(\ull y)} \times M^2 J(\ull y, \ull z)$.
\begin{example} \label{example:right colax unenriched promorphisms}
	A \emph{right colax monoidal} profunctor $\hmap JAB$, between normal colax monoidal categories $A$ and $B$, comes equipped with a right splitting $\bar J(j) = (f, \ul j)$ of each map $\map jx{\ul z_1 \tens \dotsb \tens \ul z_n}$, as above, such that the following naturality, associativity and unit axioms hold.
	\begin{itemize}[label=-]
		\item For $\map gyx$ in $A$, $\map jx{\ul z_1 \tens \dotsb \tens \ul z_n}$ in $J$ and $\map{\ul h}{\ul z}{\ul t}$ in $\fmc_n B$, if $\bar J(j) = (f, \ul j)$ then the splittings $\bar J\bigpars{(\ul h_1 \tens \dotsb \tens \ul h_n) \of j \of g}$ and $(f \of g, \ul h \of \ul j)$ are equivalent.
		\item For any $\map jx{\ull z_{11} \tens \dotsb \tens \ull z_{1m_1} \tens \dotsb \tens \ull z_{n1} \tens \dotsb \tens \ull z_{nm_n}}$ the double splittings of $\mathfrak a_B \of j$
		\begin{displaymath}
			(\mathfrak a_A \of f, \ull j) \qquad \text{and} \qquad \bigpars{(\ul h_1 \tens \dotsb \tens \ul h_n) \of g, \ull l}
		\end{displaymath}
		as in the diagrams below, are equivalent. Here the left-hand side is obtained by first splitting $j$ as $\bar J(j) = (f, \ul j)$, where the target of $f$ is $\ul y_1 \tens \dotsb \tens \ul y_N$, with $N = m_1 + \dotsb + m_n$. Then there is a unique way of decomposing $\ul j$ into a double sequence $\map{\ull j}{\ull y}{\ull z}$ such that $\mu_B \ull j = \ul j$\footnote{In fact $\ull j$ is the image of $\ul j$ under the cell $\cell{\inv{(\rho\mu_J)}}{TJ(\id, \mu_B)}{TA(\id, \mu_A) \hc_{T^2 A} T^2 J}$ in the associativity axiom of \defref{definition:right colax promorphisms}.}, making $(\mathfrak a_A \of f, \ull j)$ a double splitting of $\mathfrak a_B \of j$.
	\end{itemize}
	\begin{displaymath}
		\begin{tikzpicture}
			\matrix(m)[math, column sep=-5em, row sep=3em]
			{	x & \ull z_{11} \tens \dotsb \tens \ull z_{1m_1} \tens \dotsb \tens \ull z_{n1} \tens \dotsb \tens \ull z_{nm_n} \\
				\ul y_1 \tens \dotsb \tens \ul y_N & \\
				& (\ull z_{11} \tens \dotsb \tens \ull z_{1m_1}) \tens \dotsb \tens (\ull z_{n1} \tens \dotsb \tens \ull z_{nm_n})\\
				(\ull y_{11} \tens \dotsb \tens \ull y_{1m_1}) \tens \dotsb \tens (\ull y_{n1} \tens \dotsb \tens \ull y_{nm_n}) & \\ };
			\path[map]	(m-1-1) edge node[above] {$j$} (m-1-2)
													edge node[left] {$f$} (m-2-1)
									(m-2-1) edge node[left] {$\mathfrak a_A$} (m-4-1);
			\path[map, transform canvas={xshift=4.5em}]	(m-1-2) edge node[right] {$\mathfrak a_B$} (m-3-2);
			\draw	($(m-1-2)+(-2.75em,-1.75em)$) node {$\mu_B\ull z$} coordinate (z)
						($(m-2-1)+(3.25em,-1.75em)$) node {$\ul y$} coordinate (y)
						($(m-3-2)+(4.5em,-1.75em)$) node {$\ull z$} coordinate (zz)
						($(m-4-1)+(10.5em,-1.75em)$) node {$\ull y$} coordinate (yy);
			\path[map]	(y) edge[shorten >=12pt, shorten <= 5pt] node[below right] {$\ul j$} (z)
						(yy) edge[shorten >=6pt, shorten <=5pt] node[below right] {$\ull j$} (zz);
		\end{tikzpicture}
	\end{displaymath}
	\begin{itemize}[label=\phantom{-}]
		\item The right-hand side is given by first splitting $\mathfrak a_B \of j$ into $\bar J(\mathfrak a_B \of j) = (g, \ul k)$ followed by splitting each of the $\ul k_i$ as $\bar J(\ul k_i) = (h_i, \ul l_i)$. Together the latter form sequences $\ul h$ in $\fmc_n A$ and $\ull l$ in $\fmc^2 J$, making $\bigpars{(\ul h_1 \tens \dotsb \tens \ul h_n) \of g, \ull l}$ a double splitting of $\mathfrak a_B \of j$ as well.
	\end{itemize}
	\begin{displaymath}
		\begin{tikzpicture}
			\matrix(m)[math, column sep=-6em, row sep=3em]
			{	x & \ull z_{11} \tens \dotsb \tens \ull z_{1m_1} \tens \dotsb \tens \ull z_{n1} \tens \dotsb \tens \ull z_{nm_n} \\
				& (z_{11} \tens \dotsb \tens \ull z_{1m_1}) \tens \dotsb \tens (\ull z_{n1} \tens \dotsb \tens \ull z_{nm_n})\\
				\ul v_1 \tens \dotsb \tens \ul v_n & \\
				(\ull w_{11} \tens \dotsb \tens \ull w_{1m_1}) \tens \dotsb \tens (\ull w_{n1} \tens \dotsb \tens \ull w_{nm_n}) & \\ };
			\path[map]	(m-1-1) edge node[above] {$j$} (m-1-2)
													edge node[left] {$g$} (m-3-1)
									(m-3-1) edge node[left] {$\ul h_1 \tens \dotsb \tens \ul h_n$} (m-4-1)
									(m-1-2) edge node[right] {$\mathfrak a_B$} (m-2-2);
			\draw	($(m-2-2)+(-2.75em,-1.75em)$) node {$(\fmc_n\mathrm{\tens})(\ull z)$} coordinate (z)
						($(m-3-1)+(3.25em,-1.75em)$) node {$\ul v$} coordinate (y)
						($(m-2-2)+(4.5em,-1.75em)$) node {$\ull z$} coordinate (zz)
						($(m-4-1)+(11em,-1.75em)$) node {$\ull w$} coordinate (yy);
			\path[map]	(y) edge[shorten >=12pt, shorten <= 5pt] node[below right] {$\ul k$} (z)
						(yy) edge[shorten >=6pt, shorten <=7pt] node[below right] {$\ull l$} (zz);
		\end{tikzpicture}
	\end{displaymath}
	\begin{itemize}[label=-]
		\item For each $\map jx{\ul z_1}$ the splitting $\bar J(j) = (f, \ul j)$ satisfies $\ul j_1 \of f = j$.
	\end{itemize}
	It is easily seen that, using the axiom of choice if necessary, choosing a right splitting of each map $\map jx{\ul z_1 \tens \dotsb \tens \ul z_n}$ in $J$, such that the first axiom above holds, is equivalent to giving a natural transformation $\nat{\bar J}{J(\id, \mathrm\tens)}{A(\id, \mathrm\tens) \times_{\fmc A} \fmc J}$ as in \defref{definition:right colax promorphisms}. It is then straightforward to check that the associativity axiom above corresponds to that of \defref{definition:right colax promorphisms}, once we have postcomposed the latter with the inverse of the compositor $\fmc_\hc$ (which exists because $\fmc$ is a pseudomonad), and that the unit axioms coincide as well. 
\end{example}
	
	The enriched version is as follows.
\begin{example} \label{example:right colax enriched promorphisms}
	Given normal colax monoidal $\V$-categories $A$ and $B$, a \emph{right colax monoidal $\V$\ndash profunctor} $\hmap JAB$ is a $\V$-profunctor $\hmap JAB$ equipped with $\V$-natural maps
	\begin{displaymath}
		\map{\bar J}{J(x, \ul z_1 \tens \dotsb \tens \ul z_n)}{\int^{\ul y \in \fmc_n A} A(x, \ul y_1 \tens \dotsb \tens \ul y_n) \tens J(\ul y_1, \ul z_1) \tens \dotsb \tens J(\ul y_n, \ul z_n)}
	\end{displaymath}
	for $x \in A$ and $\ul z \in \fmc_n B$, that satisfy the following associativity and unit axioms. 
	
	The associativity axiom states that, for every $x \in A$ and $\ull z \in \fmc^2 B$, the composite
	\begin{align*}
		J(x&{}, \ull z_{11} \tens \dotsb \tens \ull z_{nm_n}) \\
		&\xrar{\bar J} \int^{\ul y} A(x, \ul y_1 \tens \dotsb \tens \ul y_N) \tens J(\ul y_1, \ull z_{11}) \tens \dotsb \tens J(\ul y_N, \ull z_{nm_n}) \\
		&\iso \int^{\ull y} A(x, \ull y_{11} \tens \dotsb \tens \ull y_{nm_n}) \tens J(\ull y_{11}, \ull z_{11}) \tens \dotsb \tens J(\ull y_{nm_n}, \ull z_{nm_n}) \\
		&\xrar{\int A(\id, \mathfrak a_A) \tens \id} \int^{\ull y} A\bigl(x, (\ull y_{11} \tens \dotsb \tens \ull y_{1m_1}) \!\begin{lgathered}[t]{}\tens \dotsb \tens (\ull y_{n1} \tens \dotsb \tens \ull y_{nm_n})\bigr) \\
		{}\tens J(\ull y_{11}, \ull z_{11}) \tens \dotsb \tens J(\ull y_{nm_n}, \ull z_{nm_n}), \end{lgathered}
	\end{align*}
	where the coends range over $\ul y \in \fmc A$ or $\ull y \in \fmc^2 A$ and where the isomorphism is given by \eqref{morphism:inverse of right multiplication cell for fmc}, equals
	\begin{align*}
		&J(x, \ull z_{11} \tens \dotsb \tens \ull z_{nm_n}) \xrar{J(\id, \mathfrak a_B)} J\bigpars{x, (\ull z_{11} \tens \dotsb \tens \ull z_{1m_1}) \tens \dotsb \tens (\ull z_{n1} \tens \dotsb \tens \ull z_{nm_n})} \\
		&\xrar{\bar J} \int^{\ul v} A(x, \ul v_1 \tens \dotsb \tens \ul v_n) \tens \Tens_{i=1}^n J(\ul v_i, \ull z_{i1} \tens \dotsb \tens \ull z_{im_i}) \\
		&\xrar{\int\id \tens \bar J^{\tens n}} \mspace{-6mu}\int^{\ul v, \ull w} \mspace{-3mu} A(x, \ul v_1 \tens \dotsb \tens \ul v_n) \tens \Tens_{i=1}^n \bigpars{A(\ul v_i, \ull w_{i1} \tens \dotsb \tens \ull w_{im_i}) \tens \Tens_{j=1}^{m_i} J(\ull w_{ij}, \ull z_{ij})} \\
		&\to \int^{\ull y} A\bigl(x, (\ull y_{11} \tens \dotsb \tens \ull y_{1m_1}) \!\begin{lgathered}[t]{}\tens \dotsb \tens (\ull y_{n1} \tens \dotsb \tens \ull y_{nm_n})\bigr) \\
		{}\tens J(\ull y_{11}, \ull z_{11}) \tens \dotsb \tens J(\ull y_{nm_n}, \ull z_{nm_n}). \end{lgathered}
	\end{align*}
	Here the last map is induced by maps that, after permuting the factors, use the mo\-noi\-dal structure of $A$ to map the tensor product of the factors \mbox{$A(\ul v_i, \ull w_{i1} \tens \dotsb \tens \ull w_{im_i})$} into \mbox{$A\bigpars{\ul v_1 \tens \dotsb \tens \ul v_n, \Tens_{i=1}^n (\ull w_{i1} \tens \dotsb \tens \ull w_{im_i})}$}, followed by composing the latter with \mbox{$A(x, \ul v_1 \tens \dotsb \tens \ul v_n)$}. The unit axiom (remember that $A$ and $B$ are assumed to be normal) means that the composites
	\begin{displaymath}
		J(x, z) \xrar{\bar J} \int^{y \in A} A(x, y) \tens J(y, z) \iso J(x, z),
	\end{displaymath}
	where the isomorphism is the Yoneda isomorphism, equal the identity.
\end{example}

	Right colax structures on double profunctors are similar to those on monoidal profunctors, as is shown in the following example. However, as a consequence of the fact that the `free strict double category'-monad $D$ is not a pseudofunctor, the associativity axiom is slightly harder to state.
\begin{example} \label{example:right colax double profunctors}
	Let $A$ and $B$ be normal colax double categories. A \emph{right colax double profunctor} $\hmap JAB$ is a $\GG_1$-indexed profunctor (\exref{example:GG1-indexed bimodules}), that comes equipped with \emph{right splittings} $\bar J(w) = (u, \ul w)$: for every cell $\cell wh{\ul l_1 \hc \dotsb \hc \ul l_n}$ in $J$, where $\cell uh{\ul k_1 \hc \dotsb \hc \ul k_n}$ is a cell in $A$ and $\cell{\ul w}{\ul k}{\ul l}$ is a sequence of cells in $D_n J$, as follows, that satisfies the axioms below.
	\begin{displaymath}
		\begin{tikzpicture}[textbaseline]
			\matrix(m)[math175em]
			{ a & & & & b \\
				c_0 & c_1 & \dotsb & c_{n'} & c_n \\ };
			\path[map]	(m-1-1) edge[barred] node[above] {$h$} (m-1-5)
													edge node[left] {$p$} (m-2-1)
									(m-1-5) edge node[right] {$q$} (m-2-5)
									(m-2-1) edge[barred] node[below] {$\ul l_1$} (m-2-2)
									(m-2-2) edge[barred] (m-2-3)
									(m-2-3) edge[barred] (m-2-4)
									(m-2-4) edge[barred] node[below] {$\ul l_n$} (m-2-5);
			\path[transform canvas={shift={($(m-1-1)!0.5!(m-2-1)$)}}]	(m-1-5) edge[cell] node[right] {$w$} (m-2-5);
		\end{tikzpicture}
		\mapsto \begin{tikzpicture}[textbaseline]
			\matrix(m)[math175em]
			{ a & & & & b \\
				a_0 & a_1 & \dotsb & a_{n'} & a_n \\[-0.5em]
				a_0 & a_1 & & a_{n'} & a_n \\
				c_0 & c_1 & & c_{n'} & c_n \\ };
			\path[map]	(m-1-1) edge[barred] node[above] {$h$} (m-1-5)
													edge node[left] {$f$} (m-2-1)
									(m-1-5) edge node[right] {$g$} (m-2-5)
									(m-2-1) edge[barred] node[below] {$\ul k_1$} (m-2-2)
									(m-3-1) edge[barred] node[above] {$\ul k_1$} (m-3-2)
													edge node[left] {$p_0$} (m-4-1)
									(m-2-2) edge[barred] (m-2-3)
									(m-3-2)	edge node[right] {$p_1$} (m-4-2)
									(m-2-3) edge[barred] (m-2-4)
									(m-2-4) edge[barred] node[below] {$\ul k_n$} (m-2-5)
									(m-3-4) edge[barred] node[above] {$\ul k_n$} (m-3-5)
													edge node[left] {$p_{n'}$} (m-4-4)
									(m-3-5) edge node[right] {$p_n$} (m-4-5)
									(m-4-1) edge[barred] node[below] {$\ul l_1$} (m-4-2)
									(m-4-4) edge[barred] node[below] {$\ul l_n$} (m-4-5);
			\path[transform canvas={shift=($(m-2-1)!0.5!(m-3-1)$)}]	(m-1-5) edge[cell] node[right] {$u$} (m-2-5);
			\path[transform canvas={shift=($(m-2-2)!0.5!(m-3-3)$)}] (m-3-2) edge[cell] node[right] {$\ul w_1$} (m-4-2)
									(m-3-5) edge[cell] node[right] {$\ul w_n$} (m-4-5);
			\draw	($(m-3-2)!0.5!(m-4-4)$) node {$\dotsb$};
		\end{tikzpicture}
	\end{displaymath}
	The axioms are:
	\begin{itemize}[label=-]
		\item	for each splitting $\bar J(w) = (u, \ul w)$ as above, $p_0 \of f = p$ and $p_n \of g = q$;
		\item for each $\bar J(w) = (u, \ul w)$ above, $\cell xjh$ in $A$ and $\cell{\ul y}{\ul l}{\ul m}$ in $D_nB$, the images of $\bar J\bigpars{(\ul y_1 \hc \dotsb \hc \ul y_n) \of w \of x}$ and $(u \of x, \ul y \of \ul w)$ coincide in the coend \mbox{$\int^{\ul k} A(j, \ul k_1 \hc \dotsb \hc \ul k_n) \times DJ(\ul k, \ul m)$};
		\item for each cell $\cell wh{\ull l_{11} \hc \dotsb \hc \ull l_{1m_1} \hc \dotsb \hc \ull l_{n1} \hc \dotsb \hc \ull l_{nm_n}}$ in $J$, the images of the double splittings\footnote{Notice that while the image of the first double splitting here lies in the image of $A(\id, \hc) \hc_{DA} DA(\id, \hc) \hc_{D^2A} D^2 A$, this is generally not true for the image of the second splitting. Indeed successive cells $y_s$ and $y_{s+1}$ need not have common vertical maps, so that they may not form a sequence $\ul y \in DA$. In fact this failure is the reason for the monad $D$ not being pseudo.}
		\begin{flalign*}
			&& &\Bigpars{h \xRar u (\mathrm\hc \of \mu_A)(\ull j) \xRar{\mathfrak a_A} (\mathrm\hc \of D\mathrm\hc)(\ull j),\, \bigpars{(D\mathrm\hc(\ull j))_s \xRar\id \mathrm\hc(\ull j_s),\, \ull j_s \xRar{\ull w_s} \ull l_s}_{1 \leq s \leq n}}& \\
			\text{and} && &\Bigpars{h \xRar x \mathrm\hc(\ul k), \, \bigpars{\ul k_s \xRar{ y_s} \ul m_s,\, \ul m_s \xRar{\ul v'_s} \ull l_s}_{1 \leq s \leq n}},&
		\end{flalign*}
		that are given as follows, coincide in $A(\id, \hc) \times_{DA} D\bigpars{A(\id, \hc) \times_{DA} DA}$. The first is obtained by splitting $w$ into $\bar J(w) = (u, \ul w)$, where $\cell uh{\ul j_1 \hc \dotsb \hc \ul j_N}$ with $N = m_1 + \dotsb + m_n$, and $\ull w$ is the unique decomposition of $\ul w$ such that $\mu_J(\ull w) = \ul w$, while the second is obtained by first taking $\bar J(\mathfrak a_A \of w) = (x, \ul v)$ and then splitting each $\cell{\ul v_s}{\ul k_s}{\ull l_{s1} \hc \dotsb \hc \ull l_{sm_s}}$ into $\bar J(\ul v_s) = (y_s, \ul v'_s)$;
		\item for each $\cell wh{\ul l_1}$ the splitting $\bar J(w) = (x, \ul w)$ satisfies $\ul w_1 \of x = w$.
	\end{itemize}
\end{example}

	The following definition introduces $T$-cells between right colax promorphisms. Afterwards we will prove that, together with colax algebras and colax morphisms, these form a pseudo double category. In the definition we will use the following notation: for any colax $T$-morphism $\map fAC$, with vertical structure cell \mbox{$\cell{\bar f}{f \of a}{c \of Tf}$}, we will denote the composite of the cells below by $\cell{\rho_1\bar f}{A(\id, a)}{C(\id, c)}$. The subscript $1$ in the notation represents the fact that in each vertical edge of $\bar f$ we only `slid one map around the corner', as opposed to $\rho\bar f$, where both maps are moved. Notice that $f \mapsto \rho_1\bar f$ is functorial, in the sense that $\rho_1(\ol{g \of f}) = (\rho_1\bar g) \of (\rho_1\bar f)$ for any other colax morphism $\map gCE$.
	\begin{displaymath}
		\begin{tikzpicture}
			\matrix(m)[math175em]
			{	A \nc TA \nc TA \nc TA \\
				A \nc A \nc TC \nc TC \\
				C \nc C \nc C \nc TC \\[-3em]
				\phantom{TC} \nc\nc\nc \\ };
			\path[map]	(m-1-1) edge[barred] node[above] {$A(\id, a)$} (m-1-2)
									(m-1-2) edge node[right] {$a$} (m-2-2)
									(m-1-3) edge node[left] {$Tf$} (m-2-3)
									(m-1-4) edge node[right] {$Tf$} (m-2-4)
									(m-2-1) edge node[left] {$f$} (m-3-1)
									(m-2-2) edge node[right] {$f$} (m-3-2)
									(m-2-3) edge node[left] {$c$} (m-3-3)
									(m-3-3) edge[barred] node[below] {$C(\id, c)$} (m-3-4);
			\path				(m-1-1) edge[eq] (m-2-1)
									(m-1-2) edge[eq] (m-1-3)
									(m-1-3) edge[eq] (m-1-4)
									(m-2-1) edge[eq] (m-2-2)
									(m-2-3) edge[eq] (m-2-4)
									(m-2-4) edge[eq] (m-3-4)
									(m-3-1) edge[eq] (m-3-2)
									(m-3-2) edge[eq] (m-3-3);
			\path[transform canvas={shift=(m-2-3)}]	(m-1-1) edge[cell] node[right] {$\eps_a$} (m-2-1)
									(m-2-3) edge[cell] node[right] {$\eta_c$} (m-3-3)
									(m-2-1) edge[cell] node[right] {$U_f$} (m-3-1);
			\path[transform canvas={shift=(m-2-3), xshift=-0.7em}]
									(m-1-3) edge[cell] node[right] {$U_{Tf}$} (m-2-3);
			\path[transform canvas={shift=($(m-2-3)!0.5!(m-3-3)$)}] (m-1-2) edge[cell] node[right] {$\bar f$} (m-2-2);
		\end{tikzpicture}
	\end{displaymath}
\begin{example}
	For a colax monoidal $\V$-functor $\map fAC$ the $\V$-natural transformation $\rho_1 f_\tens$ is given by the composites
	\begin{displaymath}
		A(x, \ul y_1 \tens \dotsb \tens \ul y_n) \xrar f C\bigpars{fx, f(\ul y_1 \tens \dotsb \tens \ul y_n)} \xrar{C(\id, f_\tens)} C(fx, f\ul y_1 \tens \dotsb \tens f\ul y_n).
	\end{displaymath}
\end{example}
\begin{definition} \label{definition:cells of right colax promorphisms}
	Let $T$ be a right suitable normal monad on an equipment $\K$. Consider a cell
	\begin{displaymath}
		\begin{tikzpicture}
			\matrix(m)[math175em]{A & B \\ C & D \\};
			\path[map]  (m-1-1) edge[barred] node[above] {$J$} (m-1-2)
													edge node[left] {$f$} (m-2-1)
									(m-1-2) edge node[right] {$g$} (m-2-2)
									(m-2-1) edge[barred] node[below] {$K$} (m-2-2);
			\path[transform canvas={shift={($(m-1-2)!(0,0)!(m-2-2)$)}}] (m-1-1) edge[cell] node[right] {$\phi$} (m-2-1);			
		\end{tikzpicture}
	\end{displaymath}
	where $f$ and $g$ are colax $T$-morphisms and where $J$ and $K$ are right colax $T$\ndash pro\-mor\-phisms. We will call $\phi$ a \emph{$T$-cell} whenever the following identity is satisfied, where $\rho_1 \bar f$ is defined above.
	\begin{displaymath}
		\begin{tikzpicture}[textbaseline]
			\matrix(m)[math175em]
			{ A & B & TB \\
				A & TA & TB \\
				C & TC & TD \\[-3em]
				\phantom{TD} & \phantom{TD} & \phantom{TD} \\ };
			\path[map]	(m-1-1) edge[barred] node[above] {$J$} (m-1-2)
									(m-1-2) edge[barred] node[above] {$B(\id, b)$} (m-1-3)
									(m-2-1) edge[barred] node[above] {$A(\id, a)$} (m-2-2)
													edge node[left] {$f$} (m-3-1)
									(m-2-2) edge[barred] node[above] {$TJ$} (m-2-3)
													edge node[right] {$Tf$} (m-3-2)
									(m-2-3) edge node[right] {$Tg$} (m-3-3)
									(m-3-1) edge[barred] node[below] {$C(\id, c)$} (m-3-2)
									(m-3-2) edge[barred] node[below] {$TK$} (m-3-3);
			\path				(m-1-1) edge[eq] (m-2-1)
									(m-1-2) edge[cell] node[right] {$\bar J$} (m-2-2)
									(m-1-3) edge[eq] (m-2-3);
			\path[transform canvas={shift={($(m-2-1)!0.5!(m-2-2)$)}, xshift=-0.675em}]	(m-2-2) edge[cell] node[right] {$\rho_1\bar f$} (m-3-2);
			\path[transform canvas={shift={($(m-2-1)!0.5!(m-2-2)$)}}] (m-2-3) edge[cell] node[right] {$T\phi$} (m-3-3);
		\end{tikzpicture}
		= \begin{tikzpicture}[textbaseline]
			\matrix(m)[math175em]
			{ A & B & TB \\
				C & D & TD \\
				C & TC & TD \\[-3em]
				\phantom{TD} & \phantom{TD} & \phantom{TD} \\ };
			\path[map]	(m-1-1) edge[barred] node[above] {$J$} (m-1-2)
													edge node[left] {$f$} (m-2-1)
									(m-1-2) edge[barred] node[above] {$B(\id, b)$} (m-1-3)
													edge node[right] {$g$} (m-2-2)
									(m-1-3) edge node[right] {$Tg$} (m-2-3)
									(m-2-1) edge[barred] node[below] {$K$} (m-2-2)
									(m-2-2) edge[barred] node[below] {$D(\id, d)$} (m-2-3)
									(m-3-1) edge[barred] node[below] {$C(\id, c)$} (m-3-2)
									(m-3-2) edge[barred] node[below] {$TK$} (m-3-3);
			\path				(m-2-1) edge[eq] (m-3-1)
									(m-2-2) edge[cell] node[right] {$\bar K$} (m-3-2)
									(m-2-3) edge[eq] (m-3-3);
			\path[transform canvas={shift={($(m-2-1)!0.5!(m-2-2)$)}, xshift=-0.675em}]	(m-1-3) edge[cell] node[right] {$\rho_1\bar g$} (m-2-3);
			\path[transform canvas={shift={($(m-2-1)!0.5!(m-2-2)$)}}] (m-1-2) edge[cell] node[right] {$\phi$} (m-2-2);
		\end{tikzpicture}
	\end{displaymath}
\end{definition}
	Notice that if $\phi$ is a vertical cell $\cell\phi fg$ then the $T$-cell axiom for $\phi$ above is equivalent to the vertical $T$-cell axiom given in \defref{definition:colax cell}: the first equals the horizontal composite of the latter with conjoint cells $\eps_a$ on the left and $\eta_c$ on the right. Thus the definition above generalises that of vertical cells between colax morphisms.

	As an example we describe cells between right colax monoidal functors; cells between right colax double functors are similar.
\begin{example} \label{example:monoidal transformations}
	When $T$ is the `free strict monoidal $\V$-category'-monad $\fmc$ we will call a $\fmc$-cell $\nat\phi J{K(f, g)}$ above, where $J$ and $K$ are right colax monoidal $\V$\ndash profunctors, a \emph{monoidal transformation}. The $\fmc$-cell axiom above means that the following diagram commutes, for objects $x \in A$ and $\ul z \in \fmc_nB$, where the coends range over $\ul y \in \fmc_nA$ and $\ul w \in \fmc_nC$.
	\begin{displaymath}
		\begin{tikzpicture}
			\matrix(m)[math2em, column sep=2.5em, row sep=3.25em, inner sep=6pt]
			{ J(x, \ul z_1 \tens \dotsb \tens \ul z_n) & \displaystyle\int^{\ul y} A(x, \ul y_1 \tens \dotsb \tens \ul y_n) \tens \Tens_{i=1}^n J(\ul y_i, \ul z_i) \\
				K\bigpars{fx, g(\ul z_1 \tens \dotsb \tens \ul z_n)} & \displaystyle\int^{\ul y} C(fx, f\ul y_1 \tens \dotsb \tens f\ul y_n) \tens \Tens_{i=1}^n K(f\ul y_i, g\ul z_i) \\
				K(fx, g\ul z_1 \tens \dotsb \tens g\ul z_n) & \displaystyle\int^{\ul w} C(fx, \ul w_1 \tens \dotsb \tens \ul w_n) \tens \Tens_{i=1}^n K(\ul w_i, g\ul z_i) \\ };
			\path[map]	(m-1-1) edge node[above] {$\bar J$} (m-1-2)
													edge node[left] {$\phi$} (m-2-1)
									(m-1-2) edge node[right] {$\int \rho_1 f_\tens \tens \phi^{\tens n}$} (m-2-2)
									(m-2-1) edge node[left] {$K(\id, g_\tens)$} (m-3-1)
									(m-2-2) edge (m-3-2)
									(m-3-1) edge node[below] {$\bar K$} (m-3-2);
		\end{tikzpicture}
	\end{displaymath}
\end{example}
	Recall that every normal monad $T$ on an equipment $\K$ induces a $2$-monad $V(T)$ on the vertical $2$-category $V(\K)$, by \propref{normal functors induce vertical functors}.
\begin{proposition} \label{right colax promorphisms form a pseudo double category}
	Let $T$ be a right suitable normal monad on an equipment $\K$. Colax $T$\ndash algebras, colax $T$-morphisms, right colax $T$-promorphisms and $T$-cells form a pseudo double category $\rcProm T$, in which the horizontal composite $J \hc H$ of two right colax promorphisms $\hmap JAB$ and $\hmap HBC$ is equipped with the structure cell
	\begin{multline*}
		\ol{J \hc H} = \bigbrks{J \hc H \hc C(\id, c) \xRar{\id \hc \bar H} J \hc B(\id, b) \hc TH \\
		\xRar{\bar J \hc \id} A(\id, a) \hc TJ \hc TH \xRar{\id \hc T_\hc} A(\id, a) \hc T(J \hc H)}.
	\end{multline*}
	Moreover the vertical $2$-category $V(\rcProm T)$ is the $2$-category $\cAlg{V(T)}$ of colax $V(T)$\ndash algebras, colax $V(T)$-morphisms and vertical $V(T)$-cells.
\end{proposition}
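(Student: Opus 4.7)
The plan is to verify in turn (i) that $\ol{J \hc H}$ equips the promorphism $J \hc H$ with a well-defined right colax structure; (ii) that horizontal units and the associators/unitors of $\K$ lift to $\rcProm T$; (iii) that $T$-cells are closed under vertical and horizontal composition; and (iv) that $V(\rcProm T) \iso \cAlg{V(T)}$. First I would fix the horizontal unit: for a colax $T$-algebra $A$, the unit promorphism $U_A$ becomes a right colax $T$-promorphism via the structure cell $\ol{U_A}$ obtained from the composite $U_A \hc A(\id, a) \iso A(\id, a) \iso A(\id, a) \hc U_{TA} \iso A(\id, a) \hc TU_A$, where the last isomorphism uses that normal functors preserve horizontal units. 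Its two axioms then reduce to the unit-axiom coherence built into $\K$ together with $T_U = \id$ and the fact that $\rho\mu_{U_A}$ and $\rho\eta_{U_A}$ are computed from $\mu$ and $\eta$ applied to horizontal units.

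For (i), I would unpack $\bar{J \hc H} \hc T_\hc$ and verify the associativity axiom of \defref{definition:right colax promorphisms} for $J \hc H$ by pasting together the associativity axioms of $\bar J$ and $\bar H$. The key extra ingredients are: the naturality of $T_\hc$ with respect to $\mu$ (so that $\inv{(\rho \mu_{J \hc H})}$ factors as $\inv{(\rho\mu_J)} \hc \inv{(\rho\mu_H)}$ through copies of $T_\hc$ and $T^2_\hc$, compare the proof of \propref{fsmc is a pseudofunctor}), and the coherence of the monad multiplication expressed by $\mu \of T\mu = \mu \of \mu T$ applied levelwise on promorphisms. One performs the same kind of diagram chase for the unit axiom using the coherence of $T\eta$ with $T_\hc$ and $\rho\eta_{J \hc H} = (\rho\eta_J) \hc (\rho\eta_H)$ through $T_U = \id$. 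The chase is lengthy but is purely a rearrangement driven by the naturality of $T_\hc$, the pseudofunctoriality constraints inside $H(\K)$, and the two axioms for the structure cells $\bar J$ and $\bar H$.

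For (ii), the associators $\cell{\mathfrak a}{(J \hc H) \hc K}{J \hc (H \hc K)}$ and unitors $\cell{\mathfrak l}{U_A \hc J}J$, $\cell{\mathfrak r}{J \hc U_B}J$ of the ambient pseudo double category $\K$ are horizontal cells, so verifying that each is a $T$-cell reduces to showing that precomposition with $\mathfrak a$ (respectively $\mathfrak l$, $\mathfrak r$) transports $\ol{(J \hc H) \hc K}$ to $\ol{J \hc (H \hc K)}$; this follows from the naturality of $T_\hc$ with respect to $\mathfrak a$ together with the coherence axioms for $T_\hc$. The coherence pentagons and triangles for these associators and unitors then hold in $\rcProm T$ because they already hold in $\K$ and the $T$-cell condition is cut out by equations. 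That horizontal composition of $T$-cells is a $T$-cell follows by a straightforward paste of the two $T$-cell squares, using functoriality of $\rho_1$ and of $T$, while vertical composition of $T$-cells is inherited from $\K$. The identification $V(\rcProm T) \iso \cAlg{V(T)}$ uses \propref{colax morphisms correspond to lax companions} to match vertical morphisms (noting that for vertical $\phi$ the $T$-cell condition of \defref{definition:cells of right colax promorphisms} collapses, via the conjoint identities, to the axiom of \defref{definition:colax cell}).

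The main obstacle is the associativity verification for $\ol{J \hc H}$ in step~(i). Because $T$ is not assumed to be a pseudofunctor, $T_\hc$ is not invertible, and hence $\rho\mu_{J \hc H}$ does not decompose literally as a tensor of $\rho\mu_J$ with $\rho\mu_H$; one must instead show that the inverse $\inv{(\rho\mu_{J \hc H})}$ fits into a commuting hexagon involving the naturality square of $T_\hc$ against $\mu$ applied to $J$, $H$ separately. Once this hexagon is established (using uniqueness of inverses and the right suitability of $T$), the remainder of the associativity diagram for $\ol{J \hc H}$ reduces mechanically to the associativity axioms for $\bar J$ and $\bar H$, and so the bulk of the work really is confined to this one compatibility between the compositor $T_\hc$ and the cells $\rho\mu$.
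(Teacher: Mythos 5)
Your proposal is correct and follows essentially the same route as the paper: the associativity and unit axioms for $\ol{J \hc H}$ are verified by a diagram chase combining the axioms for $\bar J$ and $\bar H$ with the naturality and associativity of $T_\hc$ and with the composition axiom for the transformations $\mu$ and $\eta$ (your ``hexagon'' relating $\inv{(\rho\mu_{J\hc H})}$ to $\inv{(\rho\mu_J)}$, $\inv{(\rho\mu_H)}$ and $T^2_\hc$ is exactly that axiom), units, associators and unitors are lifted from $\K$, and the vertical $2$-category is identified by observing that for vertical cells the $T$-cell condition collapses via the conjoint identities to the axiom of \defref{definition:colax cell}. The only cosmetic slip is the appeal to \propref{colax morphisms correspond to lax companions} in the last step, which is not needed since the vertical morphisms of $\rcProm T$ are colax morphisms by definition; the parenthetical remark you give is the whole argument.
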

\begin{proof}
	We know that colax $T$-algebras (remember these are in fact colax $V(T)$\ndash al\-ge\-bras, see \defref{definition:colax V(T)-algebras and morphisms}) and colax morphisms form a category $(\rcProm T)_0$. On the other hand it follows readily from the functoriality of $f \mapsto \rho_1 \bar f$ that $T$-cells can be vertically composed by vertically composing their underlying cells in $\K$, so that they too form a category $(\rcProm T)_1$, with right colax promorphisms as objects. The last assertion follows from the remark following \defref{definition:cells of right colax promorphisms} above, once we have shown that $(\rcProm T)_0$ and $(\rcProm T)_1$ combine to a pseudo double category $\rcProm T$.
	
	We claim that the functor $\map{\hc}{\K_1 \times_{\K_0} \K_1}{\K_1}$, that equips $\K$ with horizontal composites, lifts to a functor
	\begin{displaymath}
		 \map{\hc}{(\rcProm T)_1 \times_{(\rcProm T)_0} (\rcProm T)_1}{(\rcProm T)_1},
	\end{displaymath}
	by equipping each horizontal composite $J \hc H$ with the structure cell $\ol{J \hc H}$ above. First we have to check that $\ol{J \hc H}$ satisfies the associativity and unit axioms. That it satisfies the associativity axiom is shown in \figref{figure:associativity axiom for the horziontal composite of right colax promorphisms}; proving the unit axiom is similar. Thus the structure cell $\ol{J \hc H}$ above makes $J \hc H$ into a well defined right colax promorphism.
	\begin{figure}
	\begin{flalign*}
		\begin{tikzpicture}[textbaseline, x=0.75cm, y=0.6cm, font=\scriptsize]
			\foreach \i in {0,...,4}
				\draw (1, \i) -- (4, \i);
			\draw (1,0) -- (1,4)
						(4,0) -- (4,4)
						(1,4) -- (1,5) -- (3,5) -- (3,4)
						(1,5) -- (0,5) -- (0,6) -- (2,6) -- (2,5)
						(1,6) -- (1,7) -- (3,7) -- (3,6) -- (2,6)
						(2,7) -- (2,8) -- (4,8) -- (4,7) -- (3,7);
			\draw[shift={(0.5,0.5)}]
						(2,0) node {$T(\id \hc T_\hc)$}
						(2,1) node {$T(\bar J \hc \id)$}
						(2,2) node {$T(\id \hc \bar H)$}
						(2,3) node {$T_\hc$};
			\draw[shift={(0,0.5)}]
						(2,4) node {$T_\hc$}
						(1,5) node {$\bar J$}
						(2,6) node {$\bar H$}
						(3,7) node {$\rho\gamma$};
		\end{tikzpicture}
		&\quad\overset{(1)}=\quad \begin{tikzpicture}[textbaseline, x=0.75cm, y=0.6cm, font=\scriptsize]
			\foreach \i in {0,...,3}
				\draw (1,\i) -- (4,\i);
			\draw (1,0) -- (1,3)
						(4,0) -- (4,3)
						(2,3) -- (2,4) -- (4,4) -- (4,3)
						(4,4) -- (4,5) -- (0,5) -- (0,4) -- (2,4) -- (2,5)
						(1,5) -- (1,6) -- (3,6) -- (3,5)
						(2,6) -- (2,7) -- (4,7) -- (4,6) -- (3,6);
			\draw[shift={(0.5,0.5)}]
						(2,0) node {$T(\id \hc T_\hc)$}
						(2,1) node {$T(\bar J \hc \id)$}
						(2,2) node {$T_\hc$};
			\draw[shift={(0,0.5)}]
						(3,3) node {$T\bar H$}
						(1,4) node {$\bar J$}
						(3,4) node {$T_\hc$}
						(2,5) node {$\bar H$}
						(3,6) node {$\rho\gamma$};
		\end{tikzpicture}
		\quad\overset{(2)}=\quad \begin{tikzpicture}[textbaseline, x=0.75cm, y=0.6cm, font=\scriptsize]
			\foreach \i in {0,...,3}
				\draw (1,\i) -- (4,\i);
			\draw (1,0) -- (1,3)
						(4,0) -- (4,3)
						(1,3) -- (0,3) -- (0,4) -- (2,4) -- (2,3)
						(2,4) -- (4,4) -- (4,3)
						(1,4) -- (1,5) -- (3,5) -- (3,4)
						(2,5) -- (2,6) -- (4,6) -- (4,5) -- (3,5)
						(2,6) -- (1,6) -- (1,7) -- (3,7) -- (3,6);
			\draw[shift={(0.5,0.5)}]
						(2,0) node {$T(\id \hc T_\hc)$}
						(2,1) node {$T(\bar J \hc \id)$}
						(2,2) node {$T_\hc$};
			\draw[shift={(0,0.5)}]
						(1,3) node {$\bar J$}
						(3,3) node {$T_\hc$}
						(2,4) node {$\rho\beta$}
						(3,5) node {$\inv{(\rho\mu_H)}$}
						(2,6) node {$\bar H$};
		\end{tikzpicture} \\
		&\quad\overset{(1)}=\quad \begin{tikzpicture}[textbaseline, x=0.75cm, y=0.6cm, font=\scriptsize]
			\foreach \i in {0,...,2}
				\draw (1,\i) -- (4,\i);
			\draw (1,0) -- (1,2)
						(4,0) -- (4,2)
						(1,2) -- (1,3) -- (3,3) -- (3,2)
						(1,3) -- (1,4) -- (3,4) -- (3,3)
						(1,4) -- (0,4) -- (0,5) -- (2,5) -- (2,4)
						(1,5) -- (1,6) -- (3,6) -- (3,5) -- (2,5)
						(2,6) -- (2,7) -- (4,7) -- (4,6) -- (3,6)
						(2,7) -- (1,7) -- (1,8) -- (3,8) -- (3,7);
			\draw[shift={(0.5,0.5)}]
						(2,0) node {$T(\id \hc T_\hc)$}
						(2,1) node {$T_\hc$};
			\draw[shift={(0,0.5)}]
						(2,2) node {$T\bar J$}
						(2,3) node {$T_\hc$}
						(1,4) node {$\bar J$}
						(2,5) node {$\rho\beta$}
						(3,6) node {$\inv{(\rho\mu_H)}$}
						(2,7) node {$\bar H$};
		\end{tikzpicture}
		\quad\overset{(3)}=\quad \begin{tikzpicture}[textbaseline, x=0.75cm, y=0.6cm, font=\scriptsize]
			\foreach \i in {0,...,2}
				\draw (1,\i) -- (4,\i);
			\draw (1,0) -- (1,2)
						(4,0) -- (4,2)
						(1,2) -- (1,3) -- (3,3) -- (3,2)
						(1,3) -- (0,3) -- (0,4) -- (2,4) -- (2,3)
						(1,4) -- (1,5) -- (3,5) -- (3,4) -- (2,4)
						(1,5) -- (0,5) -- (0,6) -- (2,6) -- (2,5)
						(2,6) -- (4,6) -- (4,5) -- (3,5)
						(1,6) -- (1,7) -- (3,7) -- (3,6);
			\draw[shift={(0.5,0.5)}]
						(2,0) node {$T(\id \hc T_\hc)$}
						(2,1) node {$T_\hc$};
			\draw[shift={(0,0.5)}]
						(2,2) node {$T_\hc$}
						(1,3) node {$\rho\alpha$}
						(2,4) node {$\inv{(\rho\mu_J)}$}
						(1,5) node {$\bar J$}
						(3,5) node {$\inv{(\rho\mu_H)}$}
						(2,6) node {$\bar H$};
		\end{tikzpicture} \\
		&\quad\overset{(1)}=\quad \begin{tikzpicture}[textbaseline, x=0.75cm, y=0.6cm, font=\scriptsize]
			\draw (1,1) -- (1,0) -- (4,0) -- (4,2) -- (0,2) -- (0,1) -- (4,1)
						(2,1) -- (2,2)
						(1,2) -- (1,3) -- (3,3) -- (3,2)
						(1,3) -- (0,3) -- (0,4) -- (2,4) -- (2,3)
						(2,4) -- (4,4) -- (4,3) -- (3,3)
						(1,4) -- (1,5) -- (3,5) -- (3,4);
			\draw[shift={(0.5,0.5)}]
						(2,0) node {$T_\hc$};
			\draw[shift={(0,0.5)}]
						(1,1) node {$\rho\alpha$}
						(3,1) node {$T^2_\hc$}
						(2,2) node {$\inv{(\rho\mu_J)}$}
						(1,3) node {$\bar J$}
						(3,3) node {$\inv{(\rho\mu_H)}$}
						(2,4) node {$\bar H$};
		\end{tikzpicture}
		\quad\overset{(4)}=\quad \begin{tikzpicture}[textbaseline, x=0.75cm, y=0.6cm, font=\scriptsize]
			\draw (1,0) -- (1,1) -- (4,1) -- (4,0) -- (1,0)
						(1,1) -- (0,1) -- (0,2) -- (2,2) -- (2,1)
						(1,2) -- (1,3) -- (4,3) -- (4,2) -- (2,2)
						(1,3) -- (1,4) -- (3,4) -- (3,3)
						(1,4) -- (0,4) -- (0,5) -- (2,5) -- (2,4)
						(1,5) -- (1,6) -- (3,6) -- (3,5) -- (2,5); 
			\draw[shift={(0.5,0.5)}]
						(2,0) node {$T_\hc$}
						(2,2) node {$\inv{(\rho\mu_{J \hc H})}$};
			\draw[shift={(0,0.5)}]
						(1,1) node {$\rho\alpha$}
						(2,3) node {$T_\hc$}
						(1,4) node {$\bar J$}
						(2,5) node {$\bar H$};
		\end{tikzpicture}
	\end{flalign*}
	\captionsetup{singlelinecheck=off}
	\caption[.]{A schematic proof of the associativity axiom for the composite $J \hc H$ of right colax promorphisms $\hmap JAB$ and $\hmap HBC$, with structure cell as given by \propref{right colax promorphisms form a pseudo double category}. Each of the diagrams above represents a vertical composition $J \hc H \hc C(\id, c) \hc TC(\id, \mu_C) \Rar A(\id, a) \hc T\bigpars{A(\id, a) \hc T(J \hc H)}$ of horizontal cells: to save space only non-identity cells are denoted, while objects and promorphisms are left out. For example the first two blocks in the upper left diagram represent the cells (with $\gamma$ the associator of $C$)
	\begin{align*}
		J \hc H  \hc C(\id, c) \hc TC(\id, \mu_C) {}&\xRar{\id \hc \id \hc \rho\gamma} J \hc H \hc C(\id, c) \hc TC(\id, c) \\
		&\xRar{\id \hc \bar H \hc \id} J \hc B(\id, b) \hc TH \hc TC(\id, c).
	\end{align*}
	In this way the top left diagram represents the right-hand side of the associativity axiom for $J \hc H$, while the bottom right diagram represents its left-hand side. The equalities follow from (1) the associativity and naturality of $T_\hc$; (2) the associativity axiom for $H$; (3) the associativity axiom for $J$; (4) the composition axiom for $\mu$ (see \defref{definition:transformation}).}
	\label{figure:associativity axiom for the horziontal composite of right colax promorphisms}
	\end{figure}
	
	To show that the horizontal composite of the cells underlying two $T$-cells
	\begin{displaymath}
    \begin{tikzpicture}
      \matrix(m)[math175em]{A & B & C \\ D & E & F \\};
      \path[map]  (m-1-1) edge[barred] node[above] {$J$} (m-1-2)
                          edge node[left] {$f$} (m-2-1)
                  (m-1-2) edge[barred] node[above] {$H$} (m-1-3)
                          edge node[right] {$g$} (m-2-2)
                  (m-1-3) edge node[right] {$h$} (m-2-3)
                  (m-2-1) edge[barred] node[below] {$K$} (m-2-2)
                  (m-2-2) edge[barred] node[below] {$L$} (m-2-3);
      \path[transform canvas={shift={($(m-1-1)!0.5!(m-2-2)$)}}]
                  (m-1-2) edge[cell] node[right] {$\phi$} (m-2-2)
                  (m-1-3) edge[cell] node[right] {$\psi$} (m-2-3);
    \end{tikzpicture}
  \end{displaymath}
  is again a $T$-cell as well consider the schematic diagrams below. Each diagram represents a composition $J \hc H \hc C(\id ,c) \Rar D(\id, d) \hc T(K \hc L)$ and, in particular, the first and the last represent the left and right-hand side of the $T$-cell axiom for $\phi \hc \psi$. The equalities follow from the naturality of $T_\hc$, the $T$-cell axiom for $\phi$ and the $T$-cell axiom for $\psi$.
  \begin{displaymath}
	  \begin{tikzpicture}[textbaseline, x=0.75cm, y=0.6cm, font=\scriptsize]
			\draw (1,1) -- (1,0) -- (0,0) -- (0,1) -- (3,1) -- (3,0) -- (1,0)
						(1,1) -- (1,2) -- (3,2) -- (3,1)
						(1,2) -- (0,2) -- (0,3) -- (2,3) -- (2,2)
						(1,3) -- (1,4) -- (3,4) -- (3,3) -- (2,3)
						(0.5,0.5) node {$\rho_1 \bar f$};
			\draw[shift={(0,0.5)}]
						(2,0) node {$T(\phi \hc \psi)$}
						(2,1) node {$T_\hc$}
						(1,2) node {$\bar H$}
						(2,3) node {$\bar J$};
	  \end{tikzpicture}
	  \mspace{13mu}=\mspace{13mu} \begin{tikzpicture}[textbaseline, x=0.75cm, y=0.6cm, font=\scriptsize]
			\draw (2,1) -- (1,1) -- (1,0) -- (3,0) -- (3,1)
						(0,2) -- (0,1) -- (1,1) -- (1,2) -- (3,2) -- (3,1) -- (2,1) -- (2,2)
						(1,2) -- (0,2) -- (0,3) -- (2,3) -- (2,2)
						(1,3) -- (1,4) -- (3,4) -- (3,3) -- (2,3)
						(0.5,1.5) node {$\rho_1 \bar f$}
						(1.5,1.5) node {$T\phi$}
						(2.5,1.5) node {$T\psi$};
			\draw[shift={(0,0.5)}]
						(2,0) node {$T_\hc$}
						(1,2) node {$\bar H$}
						(2,3) node {$\bar J$};
	  \end{tikzpicture}
	  \mspace{13mu}=\mspace{13mu} \begin{tikzpicture}[textbaseline, x=0.75cm, y=0.6cm, font=\scriptsize]
			\draw (1,0) -- (3,0) -- (3,1) -- (1,1) -- (1,0)
						(1,1) -- (0,1) -- (0,2) -- (2,2) -- (2,1)
						(0,2) -- (0,3) -- (3,3) -- (3,2) -- (2,2) -- (2,3)
						(3,3) -- (3,4) -- (1,4) -- (1,2)
						(0.5,2.5) node {$\phi$}
						(1.5,2.5) node {$\rho_1\bar g$}
						(2.5,2.5) node {$T\psi$};
			\draw[shift={(0,0.5)}]
						(2,0) node {$T_\hc$}
						(1,1) node {$\bar L$}
						(2,3) node {$\bar J$};
	  \end{tikzpicture}
	  \mspace{13mu}=\mspace{13mu} \begin{tikzpicture}[textbaseline, x=0.75cm, y=0.6cm, font=\scriptsize]
			\draw (1,0) -- (3,0) -- (3,1) -- (1,1) -- (1,0)
						(1,1) -- (0,1) -- (0,2) -- (2,2) -- (2,1)
						(1,2) -- (1,3) -- (3,3) -- (3,2) -- (2,2)
						(1,3) -- (0,3) -- (0,4) -- (3,4) -- (3,3)
						(1,3) -- (1,4)
						(2,3) -- (2,4)
						(0.5,3.5) node {$\phi$}
						(1.5,3.5) node {$\psi$}
						(2.5,3.5) node {$\rho_1\bar h$};
			\draw[shift={(0,0.5)}]
						(2,0) node {$T_\hc$}
						(1,1) node {$\bar L$}
						(2,2) node {$\bar K$};
		\end{tikzpicture}
	\end{displaymath}
	
	Finally notice that, for each colax algebra $A$, the unit cell $U_a$ trivially makes $U_A$ into a lax promorphism, which is right pseudo because $\rho U_a = \id_{A(\id, a)}$. Thus each unit promorphism $U_A$ admits a right colax promorphism structure; to finish the proof we remark that the associativity and unit axiom for $T$ imply that the components of the associator $\mathfrak a$ and unitors $\mathfrak l$ and $\mathfrak r$ of $\K$ are $T$-cells, so that they can be taken as the associator and unitors for $\rcProm T$.
\end{proof}

\begin{example} \label{example:pseudo double category of right colax monoidal profunctors}
	Applying the previous proposition to the `free strict monoidal $\V$\ndash ca\-te\-go\-ry'-monad $\fmc$, on the equipment $\enProf\V$ of $\V$-profunctors, we obtain the pseudo double category $\rcMonProf\V$ of colax monoidal $\V$-categories, colax monoidal $\V$\ndash func\-tors, right colax monoidal $\V$-profunctors (\exref{example:right colax enriched promorphisms}) and monoidal $\V$-natural transformations (\exref{example:monoidal transformations}) between them. Analogously, by applying the proposition to the `free symmetric strict monoidal $\V$-category'-monad $\fsmc$, we obtain the pseudo double category $\rcsMonProf\V$ of symmetric right colax monoidal $\V$-pro\-func\-tors.
	
	Since $\fmc$ is a pseudomonad it is possible to replace the right colax monoidal $\V$\ndash profunctors above by lax monoidal $\V$-profunctors (\exref{example:lax monoidal profunctors}), obtaining a pseudo double category $\lMonProf\V$. However $\rcMonProf\V$ is best suited to constructing weighted colimits, as we shall see in \secref{section:applications of the main theorem}.
\end{example}

\begin{example} \label{example:pseudo double category of right colax double functors}
	Applying the previous proposition to the `free strict double category'\ndash monad $D$, on the equipment $\psProf{\GG_1}$ of $\GG_1$-indexed profunctors, we obtain the pseudo double category $\rcDblProf$ of normal colax double categories, colax functors, right colax double profunctors (\exref{example:right colax double profunctors}) and transformations between them. Unlike the monads $M$ and $S$, the monad $D$ is not pseudo (\propref{free strict double category monad is not pseudo}), and it is not clear how to construct a pseudo double category of double categories and lax double profunctors (\exref{example:lax double profunctors}); see also the discussion in \cite{nLab_on_double_profunctors}.
\end{example}

\begin{example}
	At the end of \secref{section:monad examples} we described how the monad for monoidal globular categories is induced by the `free strict $\omega$-category'-monad on $\Span{\ps\GG}$. Since the latter is right suitable, see \exref{example:other monads}, we obtain a pseudo double category of `right colax monoidal globular profunctors'.
	
	On the other hand, the ultrafilter-monad $\beta$ on $\Mat\2$, that induces the monad for ordered compact Hausdorff spaces, is not right suitable, as the components of its unit are in general not right invertible. However, since $\beta$ is a pseudo monad and since the components of its multiplication $\mu$ are right invertible, it seems likely that it is still possible to arrange ordered compact Hausdorff spaces, their morphisms and right colax promorphisms, into a pseudo double category. Furthermore, it seems likely that a variant of our main theorem (\thmref{right colax forgetful functor lifts all weighted colimits}) then holds, hence giving conditions under which the left Kan extension of a pair of morphisms between ordered compact Hausdorff spaces, as considered in the introduction, is again continuous.
\end{example}

	Knowing that right colax promorphisms form a pseudo double category, we wonder whether it is an equipment. The proposition below shows that $\rcProm T$ has all conjoints, but need not have all companions.
	
	In general, consider a normal functor $\map F\K\L$ between pseudo double categories $\K$ and $\L$, and let $f$ be a vertical morphism in $\K$. We say that $F$ \emph{lifts the companion of $f$} if the existence of a companion for $Ff$ in $\L$ implies the existence of a companion for $f$ in $\K$ that is preserved by $F$. Functors that \emph{lift conjoints} are defined analogously.
\begin{proposition} \label{restrictions in T-promrc}
	Let $T$ be a right suitable normal monad on an equipment $\K$. The forgetful functor $\map {U^T}{\rcProm T}\K$ lifts all conjoints, while it lifts companions of colax $T$-morphisms $\map fAC$ whose corresponding companion $C(f, \id)$ is right pseudo (see the comments preceeding \propref{colax morphisms correspond to lax companions}). In particular $\rcProm T$ has restrictions $K(f, g)$ for such $f$.
\end{proposition}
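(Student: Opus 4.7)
\medskip

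The plan is to build on the correspondence between colax morphism structures and lax companion structures (\propref{colax morphisms correspond to lax companions}), but work with conjoints via $\rho$ rather than companions via $\lambda$. First I would construct the conjoint lifting. Given a colax $T$-morphism $\map fAC$, I want to equip the conjoint $C(\id,f)$ with a right colax structure cell
\begin{displaymath}
  \bar J \colon C(\id,f) \hc A(\id,a) \Rar C(\id,c) \hc TC(\id,f).
\end{displaymath}
Using \propref{companion compositors} to identify $C(\id,f) \hc A(\id,a) \iso C(\id, f \of a)$ and $C(\id,c) \hc TC(\id,f) \iso C(\id, c \of Tf)$ (where in the second isomorphism I also use \propref{normal functors preserve companions}, which tells us $TC(\id,f)$ is a conjoint of $Tf$), such a horizontal cell corresponds bijectively to a vertical cell $f \of a \Rar c \of Tf$ via $\rho$ (\propref{left and right cells}). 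Taking $\rho\bar f$ for the structure cell $\bar f$ of $f$ yields our candidate $\bar J$.

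Next I would verify the associativity and unit axioms of \defref{definition:right colax promorphisms} for this candidate. The strategy is parallel to the proof of \propref{colax morphisms correspond to lax companions}: one translates each axiom of \defref{definition:right colax promorphisms} into a vertical identity between cells whose sources and targets are the appropriate composites by applying the inverse correspondence (composing with $\eps_{\dotso}$ and $\eta_{\dotso}$), then uses the conjoint identities to cancel and rearrange. One checks that the diagrams in \defref{definition:right colax promorphisms} then reduce exactly to the associativity and unit axioms of \defref{definition:colax morphism} for $\bar f$, after invoking the naturality of $\rho$ (\propref{left and right cells}) and the fact that $T \rho \bar f$ can be expressed in terms of $\rho T \bar f$ via the canonical isomorphism $T_\hc$ of \propref{normal functors preserve companions}. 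The appearances of $\inv{(\rho\mu_J)}$ and $\inv{(\rho\eta_J)}$ (for $J = C(\id,f)$), available because $T$ is right suitable, unwind precisely the composites involved in the $2$-monad multiplication and unit axioms for $\mu_A$, $\eta_A$ as they appear in the colax algebra structure on $A$ and $C$ and the compatibilities built into $\bar f$. The main obstacle will be a careful schematic diagram chase, analogous in spirit to Figure~\ref{figure:associativity axiom for the horziontal composite of right colax promorphisms}, to show these axioms match up.

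Once $C(\id,f)$ is a right colax promorphism, I would check that the two conjoint cells $\cell{\eps_f}{C(\id,f)}{U_C}$ and $\cell{\eta_f}{U_A}{C(\id,f)}$ are $T$-cells in the sense of \defref{definition:cells of right colax promorphisms}. This amounts to two straightforward verifications using the conjoint identities and the definition of $\rho_1 \bar f$, plus the normality of $T$ (so $T U_f = U_{Tf}$) and the fact that the structure cell on $U_A$ is simply $U_a$. Uniqueness of factorisations through cartesian fillers in $\K$ then implies the analogous universal property for $C(\id,f)$ in $\rcProm T$: any niche in $\rcProm T$ with right leg $f$ factors in $\K$ through $\eps_f$, and the factorisation is automatically a $T$-cell by the $T$-cell axiom for the original cell combined with the $T$-cell axiom for $\eps_f$.

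For companions, suppose $\map fAC$ is a colax $T$-morphism whose companion $C(f,\id)$ is right pseudo, i.e.\ the lax structure $\cell{\ol{C(f,\id)}}{TC(f,\id)}{C(f \of a, c \of Tf)}$ of \propref{colax morphisms correspond to lax companions} has invertible $\rho$. By the remark following \defref{definition:right colax promorphisms}, $\inv{\rho\ol{C(f,\id)}}$ is then a (pseudo) right colax promorphism structure on $C(f,\id)$, and the two axioms are equivalent to those of the lax structure, hence hold. The companion cells $\ls f \eps$ and $\ls f \eta$ are again checked to be $T$-cells by a direct unravelling, using \propref{lax functors and left and right cells} to relate $\lambda T\bar f$ and $T \lambda\bar f$ through $T_\hc$; the universal property in $\rcProm T$ then follows as above. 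Finally, for restrictions $K(f,g)$ of a right colax promorphism $\hmap KCD$ along such an $f$ and any colax $T$-morphism $\map gBD$, \propref{cartesian fillers in terms of companions and conjoints} lets us take $K(f,g) = C(f,\id) \hc K \hc D(\id,g)$, the horizontal composite already being available in $\rcProm T$ by \propref{right colax promorphisms form a pseudo double category}.
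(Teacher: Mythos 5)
Your proposal is correct and follows essentially the same route as the paper: equip the conjoint $C(\id,f)$ with the structure cell $\rho\bar f$ and deduce its axioms by applying $\rho$ to the colax-morphism axioms for $f$ (using $\rho T\bar f = \inv T_\hc \of T\rho\bar f \of T_\hc$ and the identification of $\inv{(\rho\mu_{C(\id,f)})}$ with the $\rho$-image of the naturality identity cell), take $\inv{\bigl(\rho\ol{C(f,\id)}\bigr)}$ as the structure on a right pseudo companion, check that the defining (co)companion cells are $T$-cells, and obtain restrictions from \propref{cartesian fillers in terms of companions and conjoints}. The only cosmetic difference is that the paper concludes the universal property from the companion and conjoint identities holding in $\rcProm T$ rather than by re-examining factorisations through cartesian fillers, but both arguments go through.
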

\begin{proof}
	First consider a colax morphism $\map fAC$. We saw in \propref{colax morphisms correspond to lax companions} that its companion $C(f, \id)$ is a lax promorphism, and we assume that it is right pseudo. This means that the structure cell $\rho\ol{C(f, \id)}$ is invertible and hence its inverse $\inv{\bigpars{\rho\ol{C(f, \id)}}}$ forms a right colax promorphism structure on $C(f, \id)$. Thus, if we prove that the companion cells $\ls f\eps$ and $\ls f\eta$, that define $C(f, \id)$, are $T$-cells with respect this structure, then it follows that $\ls f\eps$ is a companion cell in $\rcProm T$ as well, because $\ls f\eps$ and $\ls f\eta$ will still satisfy the companion identities in $\rcProm T$. But this is easy: by precomposing (resp. postcompsing) with $\rho\ol{C(f, \id)}$, the $T$-cell axioms for $\ls f\eps$ and $\ls f\eta$ are equivalent to $\rho_1 \bar f \hc T\ls f\eps = (\ls f\eps \hc \id_{C(\id, c)}) \of \rho\ol{C(f, \id)}$ and $\ls f\eta \hc \rho_1 \bar f = \rho\ol{C(f, \id)} \of (\id_{A(\id, a)} \hc T\ls f\eta)$, which both follow from the fact that $\rho\ol{C(f, \id)} = \rho\bar f$.
	
	Secondly let $\map gBD$ be a colax morphism with vertical structure cell $\bar g$. We claim that the horizontal cell $\cell{\rho\bar g}{D(\id, g) \hc B(\id, b)}{D(\id, d) \hc TD(\id, g)}$ makes $D(\id, g)$ into a right colax promorphism. Indeed, to see that it satisfies the associativity axiom we apply $\rho$ to the associativity axiom for $g$. This gives the following identity of cells $D(\id, g) \hc B(\id, b) \hc TB(\id, \mu_B) \Rar D(\id, d) \hc TD(\id, \mu_D) \hc T^2D(\id, g)$, where $\beta$ and $\delta$ denote the associators of $B$ and $D$, and where $U$ denotes the unit cell for $Tg \of \mu_B = \mu_D \of T^2 g$.
	\begin{displaymath}
		\begin{tikzpicture}[textbaseline, x=0.75cm, y=0.6cm, font=\scriptsize]
			\draw	(1,1) -- (0,1) -- (0,0) -- (2,0) -- (2,1)
						(1,2) -- (1,1) -- (3,1) -- (3,2) -- (2,2)
						(0,2) -- (2,2) -- (2,3) -- (0,3) -- (0,2);
			\draw[shift={(0,0.5)}]
						(1,0) node {$\rho\delta$}
						(2,1) node {$\rho U$}
						(1,2)	node {$\rho\bar g$};
		\end{tikzpicture}
		\quad = \quad\begin{tikzpicture}[textbaseline, x=0.75cm, y=0.6cm, font=\scriptsize]
			\draw	(1,1) -- (1,0) -- (3,0) -- (3,1) -- (2,1)
						(1,2) -- (0,2) -- (0,1) -- (2,1) -- (2,2)
						(1,3) -- (1,2) -- (3,2) -- (3,3) -- (1,3);
			\draw[shift={(0,0.5)}]
						(2,0) node {$\rho T\bar g$}
						(1,1) node {$\rho\bar g$}
						(2,2)	node {$\rho\beta$};
		\end{tikzpicture}
	\end{displaymath}
	In the composition above $U$ is considered as a vertical cell $Tg \of \mu_B \Rar \mu_D \of T^2 g$, so that $\rho U$ is a horizontal cell $TD(\id, g) \hc TB(\id, \mu_B) \Rar TD(\id, \mu_D) \hc T^2 D(\id, g)$. Of course $U$ is its own inverse, but now considered as a cell $\mu_D \of T^2 g \Rar Tg \of \mu_B$. It follows that applying $\rho$ to the latter gives the inverse of $\rho U$ on one hand, while it gives $\rho \mu_{D(\id, g)}$ on the other, by \propref{transformations and companions}. Therefore $\rho U = \inv{(\rho \mu_{D(\id, g)})}$ in the left-hand side above. Finally our convention implies that $\rho T\bar g = \inv T_\hc \of T\rho\bar g \of T_\hc$, and we conclude that postcomposing both sides with $T_\hc$ gives the associativity axiom for $D(\id, g)$. That the unit axiom holds for $D(\id, g)$ follows in the same way from that of $g$. To show that $\eps_g$ is a conjoint cell in $\rcProm T$, it remains to check that the cells $\eps_g$ and $\eta_g$ form $T$-cells with respect to the algebra structure on $D(\id, g)$. But this follows easily from the fact that we have taken $\ol{D(\id, g)} = \rho\bar g$. To complete the proof notice that the last assertion follows from \propref{cartesian fillers in terms of companions and conjoints}.
\end{proof}

	In closing this section we consider weighted colimits in $\rcProm T$ and, following \cite[Section 4.1]{Grandis-Pare08}, define pointwise weighted colimits in $\rcProm T$ (even though it may not be an equipment). Remember that, in \defref{definition:weighted colimits in double categories}, the definition of weighted colimits in closed equipments was extended to pseudo double categories as follows. Given morphisms $\hmap JAB$ and \mbox{$\map dAM$} in a pseudo double category, the $J$-weighted colimit of $d$ is a vertical morphism $\map {l = \colim_J d}BM$ equipped with a cell $\eta$, the unit, as below, such that any cell $\phi$ factors uniquely through $\eta$ as shown.
\begin{displaymath}
	\begin{tikzpicture}[textbaseline]
		\matrix(m)[math175em]{A & B & C \\ M & \phantom M & M \\};
		\path[map]	(m-1-1) edge[barred] node[above] {$J$} (m-1-2)
												edge node[left] {$d$} (m-2-1)
								(m-1-2) edge[barred] node[above] {$H$} (m-1-3)
								(m-1-3) edge node[right] {$e$} (m-2-3);
		\path				(m-2-1) edge[eq] (m-2-3)
								(m-1-2) edge[cell] node[right] {$\phi$} (m-2-2);
	\end{tikzpicture}
	= \begin{tikzpicture}[textbaseline]
		\matrix(m)[math175em]{A & B & C \\ M & M & M, \\};
		\path[map]	(m-1-1) edge[barred] node[above] {$J$} (m-1-2)
												edge node[left] {$d$} (m-2-1)
								(m-1-2) edge[barred] node[above] {$H$} (m-1-3)
												edge node[right] {$l$} (m-2-2)
								(m-1-3) edge node[right] {$e$} (m-2-3);
		\path				(m-2-1) edge[eq] (m-2-2)
								(m-2-2) edge[eq] (m-2-3);
		\path[transform canvas={shift=($(m-1-2)!0.5!(m-2-3)$)}]	(m-1-1) edge[cell] node[right] {$\eta$} (m-2-1)
								(m-1-2) edge[cell] node[right] {$\phi'$} (m-2-2);
	\end{tikzpicture}
\end{displaymath}
	
	Applied to $\rcProm T$ we obtain the notion of a \emph{weighted colimit} \mbox{$\map{\colim_J d}BM$} of a colax morphism $\map dAM$, weighted by a right colax promorphism $\hmap JAB$. In particular we can use the previous proposition in \defref{definition:weighted left Kan extensions} to obtain, for any colax morphism $\map jAB$ that has a right pseudo companion $B(j, \id)$, the definition of a \emph{weighted left Kan extension} $\map{\lan_j d}BM$, of any colax morphism $\map dAM$ along $j$, as the weighted colimit $\lan_j d = \colim_{B(j, \id)} d$ in $\rcProm T$. We may then apply \propref{colimits weighted by companions} to find that every weighted left Kan extension $\lan_j d$ is in particular a left Kan extension of $d$ along $l$ in the vertical $2$-category $V(\rcProm T) = \cAlg{V(T)}$ of colax $V(T)$-algebras, in the usual sense.	Our main result, given in the next section, states that the forgetful functor $\map{U^T}{\rcProm T}\K$ lifts weighted colimits. This means that if the weighted colimit $l' = \colim_{U^T J} U^T d$ exists in $\K$ then it can be lifted to a colax morphism $l$ (that is $U^T l = l'$) such that $l = \colim_J d$.
	
	We now turn to the notion of pointwise weighted colimits in $\rcProm T$. To generalise \defref{definition:pointwise weighted colimits} we first need double comma objects in $\rcProm T$. The following proposition shows that the forgetful functor $\map{U^T}{\rcProm T}\K$ lifts double comma objects $J \slash f$ of right pseudopromorphisms $\hmap JAB$ and pseudomorphisms $\map fCB$. This generalises \cite[Proposition 4.6]{Lack05} which shows that, for any $2$-monad $T$ on a $2$-category $\C$, the forgetful $2$-functor $\map{U^T}{\cAlg T}\C$ lifts comma objects $f \slash g$ of colax $T$-morphisms $f$ and $g$ between strict $T$-algebras, when $g$ is a strict $T$-morphism. There it follows as an easy consequence of a more general theorem, and Lack remarks that giving a direct proof would strengthen his result to include the case of $g$ being only a pseudomorphism. We will give a direct proof of the proposition that is somewhat long (but straightforward) and therefore postponed to \appref{appendix:double comma object proofs}.
	
	Given a functor $\map F\K\L$ between pseudo double categories and morphisms $\hmap JAB$ and $\map fCB$ in $\K$, we say that $F$ \emph{lifts} the double comma object of  $J$ and $f$ when, if a cell $\pi$ defines the double comma object of $FJ$ and $Ff$ in $\L$, then there exists a cell $\zeta$ in $\K$ such that $F\zeta = \pi$, that defines the double comma object of $J$ and $f$.
\begin{proposition} \label{lifting double comma objects}
	Consider a right suitable normal monad $T$ on an equipment $\K$. The forgetful functor $\map{U^T}{\rcProm T}\K$ lifts double comma objects $J \slash f$ where $\hmap JAB$ is a right pseudopromorphism and $\map fCB$ is a pseudomorphism. The projections of $J \slash f$ are strict $T$-morphisms, and $J \slash f$ is a pseudoalgebra whenever both $A$ and $C$ are.
\end{proposition}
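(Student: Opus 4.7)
The plan is to use the universal property of $J \slash f$ in $\K$ itself---once for its $1$-dimensional part to construct the algebra structure, and once for its $2$-dimensional part to construct the associator and unitor, exploiting uniqueness to derive the coherence axioms. First I would form $J \slash f$ with its projections $\pi_A$, $\pi_C$ and defining cell $\pi$ in $\K$. To produce a structure morphism $\map{a'}{T(J \slash f)}{J \slash f}$, I would paste the cell $T\pi$ (noting that normality of $T$ gives $TU_{J \slash f} = U_{T(J \slash f)}$, and that by \propref{normal functors preserve companions} the target of $T\pi$ is $TJ$ with vertical legs $T\pi_A$ and $Tf \of T\pi_C$) with the lax structure cell $\bar J$ on top and with the inverse $\inv{\bar f}$ of the pseudomorphism structure of $f$ on the right. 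The result is a cell $U_{T(J \slash f)} \Rar J(a \of T\pi_A, f \of c \of T\pi_C)$, and the $1$-dimensional property of $J \slash f$ yields a unique morphism $a'$ with $\pi_A \of a' = a \of T\pi_A$, $\pi_C \of a' = c \of T\pi_C$, and $\pi \of U_{a'}$ equal to this pasted cell. These two identities exhibit $\pi_A$ and $\pi_C$ as strict $T$-morphisms, and $\pi$ is forced to become a $T$-cell between $(J\slash f, a')$ and $J$ in $\rcProm T$ because its structure cell was built from $\bar J$ and $\inv{\bar f}$ precisely to satisfy the $T$-cell axiom of \defref{definition:cells of right colax promorphisms}.

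Next I would produce the associator $\alpha'$ and unitor $\alpha'_0$ for $(J \slash f, a')$ via the $2$-dimensional property of $J \slash f$. Writing $\alpha,\alpha_0$ for the coherence cells of $A$ and $\gamma,\gamma_0$ for those of $C$, the inputs for $\alpha'$ are $\xi_A = \alpha \of U_{T^2\pi_A}$ and $\xi_C = \gamma \of U_{T^2\pi_C}$, and analogously for $\alpha'_0$. Checking the compatibility condition with $\pi$ from \defref{definition:double comma object} amounts to a diagram chase in $\K$ using the associativity and unit axioms for $\bar J$ (in the $\rho$-form of \propref{left and right structure cells}), the corresponding pseudomorphism axioms for $\bar f$, and the naturality of $\mu$ and $\eta$. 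Once $\alpha'$ and $\alpha'_0$ exist, their own associativity and unit axioms follow from the uniqueness part of the $2$-dimensional property: both sides of each axiom equalise the same pair of inputs $(\xi_A,\xi_C)$, because the relevant axioms already hold for $\alpha, \alpha_0, \gamma, \gamma_0$. When $A$ and $C$ are pseudo, the inputs $\inv\alpha, \inv\gamma, \inv{\alpha_0}, \inv{\gamma_0}$ likewise produce candidate inverses, and one last application of uniqueness identifies their composites with $\alpha'$ and $\alpha'_0$ with identities, so $J \slash f$ is pseudo too.

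Finally I would verify the lifted universal property in $\rcProm T$. Given a cell $\phi$ in $\rcProm T$ of the shape needed for the $1$-dimensional property, with $\phi_A$ and $\phi_C$ colax $T$-morphisms and its source a right colax $T$-promorphism, the underlying $\K$-property produces a unique factorisation $\phi'$ in $\K$; the same pasting argument as in the construction of $a'$ (now using the structure cells of $\phi_A$, $\phi_C$ and the source, together with $\inv{\bar f}$) shows that $\phi'$ admits a unique colax $T$-morphism structure making the two defining triangles equations of $T$-cells, which is exactly what is required of a lift. The $2$-dimensional property in $\rcProm T$ follows analogously, because the $T$-cell condition on $\xi'$ reduces, upon composing with $U_{\pi_A}$ and $U_{\pi_C}$ and using \propref{left and right cells}, to the $T$-cell conditions on $\xi_A$ and $\xi_C$ together with the compatibility with $\phi$ and $\psi$, all of which are among the hypotheses. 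The main obstacle is the $2$-dimensional compatibility in the construction of $\alpha'$: keeping track of the interaction between $\bar J$, $\inv{\bar f}$, the relevant $\rho$-forms of $\alpha$ and $\gamma$, the compositor $T_\hc$ of $T$, and the inverse $\inv{(\rho\mu_J)}$ coming from right suitability requires a careful calculation along the lines of \figref{figure:associativity axiom for the horziontal composite of right colax promorphisms}, and it is exactly here that all three hypotheses---right suitability of $T$, right pseudoness of $J$, and pseudoness of $f$---are used together.
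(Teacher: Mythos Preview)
Your proposal is correct and follows essentially the same approach as the paper: build the structure map from the $1$-dimensional property by pasting $T\pi$ with the lax structure on $J$ (the inverse of the right pseudo structure, which the paper denotes $\tilde J$) and with $\tilde f = \inv{\bar f}$, obtain the associator and unitor from the $2$-dimensional property using those of $A$ and $C$, deduce coherence and pseudo-ness from uniqueness, and finally verify that factorisations in $\K$ upgrade to $T$-morphisms and $T$-cells. One small correction: the compatibility check for the associator uses the lax axioms for $\tilde J$ and $\tilde f$ directly rather than the right-colax $\rho$-forms, so right suitability of $T$ and $\inv{(\rho\mu_J)}$ are not actually invoked at that step---they matter only insofar as $\rcProm T$ must exist for the statement to make sense.
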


	As an example we consider the double comma object $J \slash f$ of a right pseudo double profunctor $\hmap JAB$ and a pseudo double functor $\map fCB$. In case $J = B(\id, g)$ is the companion of a colax double profunctor $\map gAB$ this recovers the unbiased variants of the `comma double categories' of \cite[Section 1.3]{Grandis-Pare07}.
\begin{example}
	Consider a right pseudo double profunctor $\hmap JAB$ and a pseudo double functor $\map fCB$ between normal colax double categories $A$, $B$ and $C$. Forgetting the horizontal composites for a moment, remember that the comma object $J \slash f$ in $\K = \psCat{\GG_1}$ is constructed indexwise. Thus $J \slash f$ is the $\GG_1$-indexed category that has objects $(a, \map pa{fc}, c)$, where $a \in A$ and $c \in C$ are objects and $p$ is a vertical morphism in $J$. A vertical morphism $(a, \map pa{fc}, c) \to (a', \map{p'}{a'}{fc'}, c')$ is given by a pair $(\map sa{a'}, \map tc{c'})$ of vertical morphisms $s$ in $A$ and $t$ in $C$ making the diagram on the left below commute, while a horizontal morphism $(a_0, \map{p_0}{a_0}{fc_0}, c_0) \slashedrightarrow (a_1, \map{p_1}{a_1}{fc_1}, c_1)$ consists of a triple $(\hmap j{a_0}{a_1}, w, \hmap k{c_0}{c_1})$ of horizontal morphisms $j$ in $A$ and $k$ in $C$, and a cell $w$ in $J$ that is of the form as on the right below.
	\begin{flalign*}
		&&\begin{tikzpicture}[baseline]
			\matrix(m)[math175em]{a \nc fc \\ a' \nc fc' \\};
			\path[map]	(m-1-1) edge node[above] {$p$} (m-1-2)
													edge node[left] {$s$} (m-2-1)
									(m-1-2) edge node[right] {$ft$} (m-2-2)
									(m-2-1) edge node[below] {$q$} (m-2-2);
		\end{tikzpicture}
		&&\begin{tikzpicture}[baseline]
			\matrix(m)[math175em]{a_0 \nc a_1 \\ fc_0 \nc fc_1 \\};
			\path[map]	(m-1-1) edge[barred] node[above] {$j$} (m-1-2)
													edge node[left] {$p_0$} (m-2-1)
									(m-1-2) edge node[right] {$p_1$} (m-2-2)
									(m-2-1) edge[barred] node[below] {$fk$} (m-2-2);
			\path[transform canvas={shift={($(m-1-2)!(0,0)!(m-2-2)$)}}] (m-1-1) edge[cell] node[right] {$w$} (m-2-1);
		\end{tikzpicture}&&
	\end{flalign*}
	Finally, given a second horizontal morphism $\hmap{(j', w', k')}{(a'_0, p'_0, c'_0)}{(a'_1, p'_1, c'_1)}$, a cell $w \Rar w'$ in $J \slash f$ consists of a pair $(u, v)$ of cells
	\begin{displaymath}
		\begin{tikzpicture}[textbaseline]
			\matrix(m)[math175em]{a_0 & a_1 \\ a'_0 & a'_1 \\};
			\path[map]	(m-1-1) edge[barred] node[above] {$j$} (m-1-2)
													edge node[left] {$s_0$} (m-2-1)
									(m-1-2) edge node[right] {$s_1$} (m-2-2)
									(m-2-1) edge[barred] node[below] {$j'$} (m-2-2);
			\path[transform canvas={shift={($(m-1-2)!(0,0)!(m-2-2)$)}}] (m-1-1) edge[cell] node[right] {$u$} (m-2-1);
		\end{tikzpicture}
		\qquad\text{and}\qquad\begin{tikzpicture}[textbaseline]
			\matrix(m)[math175em]{c_0 & c_1 \\ c'_0 & c'_1, \\};
			\path[map]	(m-1-1) edge[barred] node[above] {$k$} (m-1-2)
													edge node[left] {$t_0$} (m-2-1)
									(m-1-2) edge node[right] {$t_1$} (m-2-2)
									(m-2-1) edge[barred] node[below] {$k'$} (m-2-2);
			\path[transform canvas={shift={($(m-1-2)!(0,0)!(m-2-2)$)}}] (m-1-1) edge[cell] node[right] {$v$} (m-2-1);
		\end{tikzpicture}
	\end{displaymath}
	in $A$ and $C$ respectively, such that $w' \of u = fv \of w$. With vertical composition given indexwise, this completes the description of $J \slash f$ as a $\GG_1$-indexed category.
	
	The $\GG_1$-indexed category $J \slash f$ can be made into a normal colax double category as follows. On horizontal morphisms the structure functor $\map\hc{D(J \slash f)}{J \slash f}$ is given by the assignment
	\begin{displaymath}
		\Bigpars{\begin{tikzpicture}[textbaseline]
		\matrix(m)[math125em]{a_0 \nc a_1 \\ fc_0 \nc fc_1 \\};
				\path[map]  (m-1-1) edge[barred] node[above] {$j_1$} (m-1-2)
														edge node[left] {$p_0$} (m-2-1)
										(m-1-2) edge node[right] {$p_1$} (m-2-2)
										(m-2-1) edge[barred] node[below] {$fk_1$} (m-2-2);
				\path[transform canvas={shift={($(m-1-2)!(0,0)!(m-2-2)$)}}] (m-1-1) edge[cell125] node[right] {$w_1$} (m-2-1);
		\end{tikzpicture}, \dotsc, \begin{tikzpicture}[textbaseline]
		\matrix(m)[math125em]{a_{n'} \nc a_n \\ fc_{n'} \nc fc_n \\};
				\path[map]  (m-1-1) edge[barred] node[above] {$j_n$} (m-1-2)
														edge node[left] {$p_{n'}$} (m-2-1)
										(m-1-2) edge node[right] {$p_n$} (m-2-2)
										(m-2-1) edge[barred] node[below] {$fk_n$} (m-2-2);
				\path[transform canvas={shift={($(m-1-2)!(0,0)!(m-2-2)-(0.15em, 0)$)}}] (m-1-1) edge[cell125] node[right] {$w_n$} (m-2-1);
		\end{tikzpicture}} \mapsto \begin{tikzpicture}[textbaseline]
			\matrix(m)[math125em]
			{	a_0 & a_1 & \dotsb & a_{n'} & a_n \\
				fc_0 & fc_1 & \dotsb & fc_{n'} & fc_n \\
				fc_0 & & & & fc_n \\ };
			\path[map]	(m-1-1) edge[barred] node[above] {$j_1$} (m-1-2)
													edge node[left] {$p_0$} (m-2-1)
									(m-1-2) edge[barred] (m-1-3)
									(m-1-3) edge[barred] (m-1-4)
									(m-1-4) edge[barred] node[above] {$j_n$} (m-1-5)
									(m-1-5) edge node[right] {$p_n$} (m-2-5)
									(m-2-1) edge[barred] node[below] {$fk_1$} (m-2-2)
									(m-2-2) edge[barred] (m-2-3)
									(m-2-3) edge[barred] (m-2-4)
									(m-2-4) edge[barred] node[below] {$fk_n$} (m-2-5)
									(m-3-1) edge[barred] node[below] {$f(k_1 \hc \dotsb \hc k_n)$} (m-3-5);
			\path				(m-2-1) edge[eq] (m-3-1)
									(m-2-5) edge[eq] (m-3-5);
			\path				(m-2-3) edge[cell125] node[right] {$\inv f_\hc$} (m-3-3);
			\path[transform canvas={xshift=-2.55em}]	(m-1-3) edge[cell125] node[right] {$w_1 \hc \dotsb \hc w_n$} (m-2-3);
		\end{tikzpicture}
	\end{displaymath}
	where $w_1 \hc \dotsb \hc w_n$ is the horizontal composite given by the lax structure on $J$, and where $\inv f_\hc$ is the inverse of the colax structure cell of $f$. By mapping a sequence $\bigpars{(u_1, v_1), \dotsc, (u_n, v_n)}$ of composable cells in $D(J \slash f)$ to the pair of composites $(u_1 \hc \dotsb \hc u_n, v_1 \hc \dotsb \hc v_n)$ this extends to a $\GG_1$-indexed functor; that the latter is a well-defined cell follows from the naturality of $f_\hc$ and of the horizontal composition of $J$. Lastly the associator $\nat{\mathfrak a}{\hc \of \mu_{J \slash f}}{\hc \of D\hc}$ is given by $\mathfrak a_{(\ull j, \ull w, \ull k)} = \bigpars{(\mathfrak a_A)_{\ull j}, (\mathfrak a_B)_{\ull k}}$, where $\mathfrak a_A$ and $\mathfrak a_B$ denote the associators of $A$ and $B$.
\end{example}
	
	Recall that, in a pseudo double category $\K$ that has companions and double comma objects, a cell $\eta$ below exhibits $l$ as the pointwise $J$-weighted colimit of $d$ if, for each morphism $\map fCB$, the composite $\eta \of \pi$ exhibits $l \of f$ as the weighted colimit, where $\pi$ corresponds to the universal cell defining the double comma object $J \slash f$, see \defref{definition:strong double comma objects}.
\begin{displaymath}
	\begin{tikzpicture}
		\matrix(m)[math175em, column sep=2em]
		{ J \slash f \nc C \\ A \nc B \\ M \nc M \\[-3.25em] \phantom{J \slash f} \nc \phantom{J \slash f} \\ };
		\path[map]	(m-1-1) edge[barred] node[above] {$C(\pi_C, \id)$} (m-1-2)
												edge node[left] {$\pi_A$} (m-2-1)
								(m-1-2) edge node[right] {$f$} (m-2-2)
								(m-2-1) edge[barred] node[below] {$J$} (m-2-2)
												edge node[left] {$d$} (m-3-1)
								(m-2-2) edge node[right] {$l$} (m-3-2);
		\path				(m-3-1) edge[eq] (m-3-2);
		\path[transform canvas={shift=(m-2-2)}] (m-1-1) edge[cell] node[right] {$\pi$} (m-2-1);
		\path[transform canvas={shift=(m-2-2), yshift=-2pt}]	(m-2-1) edge[cell] node[right] {$\eta$} (m-3-1);
	\end{tikzpicture}
\end{displaymath}
	In $\rcProm T$ we only know that the double comma object $J \slash f$ exists when $J$ is right pseudo and $f$ is pseudo. Notice that in that case the cell $\pi$ above can be constructed as well, since the companion of $\map{\pi_C}{J \slash f}C$ exists in $\rcProm T$ by \propref{restrictions in T-promrc} because $\pi_C$ is a strict morphism. This leads to the following definition.
\begin{definition} \label{definition:pointwise algebraic weighted colimits}
	Let $T$ be a right suitable normal monad on an equipment $\K$ that has double comma objects. Given a right pseudopromorphism $\hmap JAB$ and colax morphisms $\map dAM$ and $\map lBM$, we say that the cell $\eta$ above exhibits $l$ as the \emph{pointwise} $J$-weighted colimit of $d$ if, for each pseudomorphism $\map fCB$, the composition $\eta \of \pi$ above exhibits $l \of f$ as the $C(\pi_C, \id)$-weighted colimit of $d \of \pi_A$.
	
	As before, if $J$ is the companion $B(j, \id)$ of a colax morphism $\map jAB$ then we call $l$ the \emph{pointwise} weighted left Kan extension of $d$ along $j$.
\end{definition}
	The idea of the previous definition is \cite[Section 6.4]{Grandis-Pare08} where `pointwise normal left Kan extensions' of pairs of colax double functors are defined as follows. They consider a double category $\mathsf{CxDbl}^{\textup u}$ with normal colax double functors as vertical morphisms and colax double functors as horizontal ones and, after having shown that the `comma double category $J \slash f$' (see \cite[Section 3.4]{Grandis-Pare08}) can be constructed in $\mathsf{CxDbl}^{\textup u}$ whenever $f$ is a pseudo double functor, define the `pointwise normal left Kan extension' of a colax double functor $\hmap DAM$ along a colax double functor $\hmap JAB$ as a normal colax double functor $\map lBM$, equipped with a cell $\eta$ as below, such that for every normal pseudo double functor $f$ the composite $\eta \of \pi$ exhibits $l \of f$ as the left Kan extension of $\pi_A$, from $C(\pi_C, \id)$ to $D$ (see \defref{definition:left Kan extensions in pseudo double categories}).
\begin{displaymath}
	\begin{tikzpicture}
		\matrix(m)[math175em, column sep=2em]
		{ J \slash f \nc C \\ A \nc B \\ A \nc M \\[-3.25em] \phantom{J \slash f} \nc \phantom{J \slash f} \\ };
		\path[map]	(m-1-1) edge[barred] node[above] {$C(\pi_C, \id)$} (m-1-2)
												edge node[left] {$\pi_A$} (m-2-1)
								(m-1-2) edge node[right] {$f$} (m-2-2)
								(m-2-1) edge[barred] node[below] {$J$} (m-2-2)
								(m-2-2) edge node[right] {$l$} (m-3-2)
								(m-3-1) edge[barred] node[below] {$D$} (m-3-2);
		\path				(m-2-1) edge[eq] (m-3-1);
		\path[transform canvas={shift=(m-2-2)}] (m-1-1) edge[cell] node[right] {$\pi$} (m-2-1);
		\path[transform canvas={shift=(m-2-2), yshift=-2pt}]	(m-2-1) edge[cell] node[right] {$\eta$} (m-3-1);
	\end{tikzpicture}
\end{displaymath}

	Assuming that the double comma objects of $\K$ are all strong, our final aim is to apply \thmref{weighted colimits are pointwise if double commas are strong} to $\rcProm T$, thus proving that all colimits in $\rcProm T$, that are weighted by right pseudopromorphisms, are pointwise. To do so we first have to show that $\rcProm T$ has strong double comma objects. Notice that the cells $\cell{\pi_f}{C(\pi_C, \id)}{J(\id, f)}$, that are used in \defref{definition:strong double comma objects} to define strong double comma objects, exist when $\hmap JAB$ is right pseudo and $\map fCB$ is pseudo, since both the companion $C(\pi_C, \id)$ and the restriction $J(\id, f)$ exist in $\rcProm T$ by \propref{restrictions in T-promrc}. The following enhances \propref{lifting double comma objects}.
\begin{proposition} \label{lifting strong comma objects}
	Consider a right suitable normal monad $T$ on an equipment $\K$. The forgetful functor $\map{U^T}{\rcProm T}\K$ lifts strong double comma objects $J \slash f$ where $\hmap JAB$ is a right pseudopromorphism and $\map fCB$ is a pseudomorphism.
\end{proposition}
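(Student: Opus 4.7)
The plan is to piggy-back on Proposition \ref{lifting double comma objects}, which already gives us a lifted double comma object $J \slash f$ in $\rcProm T$, and then verify the extra condition of \defref{definition:strong double comma objects}, namely that every $T$-cell $\phi$ of the form on the left-hand side of \eqref{equation:strong double comma objects} factors uniquely through the cell $\pi_f$ as a $T$-cell $\phi'$. The uniqueness half is immediate: since a $T$-cell is by definition an underlying cell in $\K$ satisfying an extra axiom, the forgetful functor $U^T$ is faithful on cells, so uniqueness of the factorization in $\rcProm T$ follows at once from the corresponding uniqueness in $\K$, which holds by hypothesis.

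For existence, given a $T$-cell $\phi$ in $\rcProm T$, I first apply the strongness of $J \slash f$ in $\K$ to its underlying cell, producing a unique cell $\phi'$ in $\K$ with $\phi = \phi' \of (\pi_f \hc \id_H)$. The task is then to check that $\phi'$ satisfies the $T$-cell axiom of \defref{definition:cells of right colax promorphisms}. To do so I will first observe that $\pi_f$ is itself a $T$-cell: by \propref{lifting double comma objects} the universal cell $\pi$ defining $J \slash f$ in $\rcProm T$ is a $T$-cell, and by \propref{restrictions in T-promrc} the conjoint cell $\eta_f$ of $f$ exists in $\rcProm T$ as a $T$-cell, so $\pi_f = \pi \hc \eta_f$ is a $T$-cell by the horizontal composition established in \propref{right colax promorphisms form a pseudo double category}.

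With this in hand, I propose to verify the $T$-cell axiom for $\phi'$ by a uniqueness argument: precompose each of its two sides with the cell $\pi_f \hc \id_H \hc \id_{M(\id, m)}$ and show that the two precomposed cells agree. On one side, the $T$-cell axiom for $\pi_f$ lets me slide $\pi_f$ past the structure cell $\bar{(J(\id, f) \hc H)}$ (computed via the formula for structure cells of horizontal composites in \propref{right colax promorphisms form a pseudo double category}) and rewrite the result in terms of the structure cell $\bar{(C(\pi_C, \id) \hc H)}$; combining this with $\phi' \of (\pi_f \hc \id_H) = \phi$, naturality of $T$ and the interchange law reduces the precomposed left-hand side to the left-hand side of the $T$-cell axiom for $\phi$. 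A symmetric argument handles the right-hand side, so the $T$-cell axiom for $\phi$ implies that the precomposed cells coincide. But the resulting identity is precisely an equation of factorizations of one and the same cell through $\pi_f \hc \id_H \hc \id_{M(\id, m)}$, and a cell of this form falls within the scope of strongness of $J \slash f$ in $\K$ (the trailing $\id_{M(\id,m)}$ is absorbed into the role of $H$ in \eqref{equation:strong double comma objects}); hence by uniqueness the two sides of the $T$-cell axiom for $\phi'$ themselves agree.

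I expect the main obstacle to be the diagram chase in the third paragraph, where the structure cell $\bar{(J(\id, f) \hc H)}$ must be carefully decomposed via the formula from \propref{right colax promorphisms form a pseudo double category}, and the interplay of $\pi_f$ with the colax structure maps, the compositor $T_\hc$, and the conjoint cells $\eps_f, \eta_f$ must all be tracked simultaneously. This is a technical but essentially mechanical verification; once set up, the argument is forced by the $T$-cell axioms for $\phi$ and $\pi_f$ together with the functoriality $f \mapsto \rho_1 \bar f$ noted before \defref{definition:cells of right colax promorphisms}.
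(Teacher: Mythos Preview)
Your proposal is correct and follows essentially the same route as the paper: lift the comma object via \propref{lifting double comma objects}, observe that $\pi_f$ is a $T$-cell, obtain $\phi'$ from strongness in $\K$, and then verify the $T$-cell axiom for $\phi'$ by precomposing both sides with $\pi_f$ and invoking uniqueness of factorisations through $\pi_f$. Two minor remarks: the identity you precompose with should be $\id_{D(\id,d)}$ rather than $\id_{M(\id,m)}$ (the rightmost promorphism in the $T$-cell axiom for $\phi'$ is the conjoint of the structure map of $D$, the $0$-target of $H$); and the two halves of the chase are not really symmetric, since on the side involving $\phi' \hc \rho_1\bar k$ the substitution $\phi = \phi' \of (\pi_f \hc \id_H)$ alone suffices, whereas the side with the structure cell $\ol{J(\id,f)\hc H}$ genuinely needs the $T$-cell axiom for $\pi_f$ together with naturality of $T_\hc$.
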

\begin{proof}
	In $\K$ consider the double comma object $J \slash f$ of a right pseudopromorphism $\hmap JAB$ and a pseudomorphism $\map fCB$, defined by a universal cell $\pi$. Then $J \slash f$ can be given the structure of a colax algebra so that the projections \mbox{$\map{\pi_A}{J \slash f}A$} and $\map{\pi_C}{J \slash f}C$ become strict morphisms and the cell $\pi$ becomes a $T$-cell, by \propref{lifting double comma objects}. As discussed above, the corresponding cell $\pi_f = \pi \hc (\eta_f \of \ls{\pi_C}\eps)$, that is used in the factorisations \eqref{equation:strong double comma objects} that define $J \slash f$ as a strong double comma object, is a $T$-cell as well. Now consider such a factorisation $\phi = \phi' \of (\pi_f \hc \id_H)$ in $\K$: to prove the proposition, we have to show that if $\phi$ is a $T$-cell then so is $\phi'$. That the $T$-cell axiom for $\phi'$, when precomposed with $\pi_f$, holds follows from the following identities, that follow from the definition of $\phi'$; the $T$-cell axiom for $\phi$; the definition of $\phi'$ again as well as the naturality of $T_\hc$; the $T$-cell axiom for $\pi_f$.
	\begin{align*}
		\begin{tikzpicture}[textbaseline, x=0.75cm, y=0.6cm, font=\scriptsize]
			\draw	(0,1) -- (3,1) -- (3,0) -- (0,0) -- (0,2) -- (1,2) -- (1,1)
				(2,1) -- (2,0);
			\draw[shift={(0,0.5)}]
				(1,0) node {$\phi'$};
			\draw[shift={(0.5,0.5)}]
				(0,1) node {$\pi_f$}
				(2,0) node {$\rho_1\bar k$};
		\end{tikzpicture}
		&\quad = \quad\begin{tikzpicture}[textbaseline, x=0.75cm, y=0.6cm, font=\scriptsize]
			\draw	(2,0) -- (2,2) -- (0,2) -- (0,0) -- (3,0) -- (3,2) -- (2,2)
				(1,1) node {$\phi$};
			\draw[shift={(0.5,0)}]
				(2,1) node {$\rho_1\bar k$};
		\end{tikzpicture}
		\quad = \quad\begin{tikzpicture}[textbaseline, x=0.75cm, y=0.6cm, font=\scriptsize]
			\draw	(0,1) -- (1,1) -- (1,2) -- (0,2) -- (0,0) -- (1,0) -- (1,1)
				(1,2) -- (3,2) -- (3,0) -- (1,0)
				(1,2) -- (1,3) -- (3,3) -- (3,2)
				(2,3) -- (2,4) -- (0,4) -- (0,3) -- (1,3)
				(2,4) -- (3,4) -- (3,5) -- (1,5) -- (1,4)
				(2,1) node {$T\phi$};
			\draw[shift={(0,0.5)}]
				(2,2) node {$T_\hc$}
				(1,3) node {$\ol{C(\pi_C, \id)}$}
				(2,4) node {$\bar H$};
			\draw[shift={(0.5,0.5)}]
				(0,0) node {$\rho_1h$}
				(0,1) node {$\rho_1\pi_A$};
		\end{tikzpicture} \\
		&\quad = \quad\begin{tikzpicture}[textbaseline, x=0.75cm, y=0.6cm, font=\scriptsize]
			\draw	(3,1) -- (0,1) -- (0,0) -- (3,0) -- (3,2) -- (1,2) -- (1,0)
				(2,2) -- (2,3) -- (1,3) -- (1,2) -- (0,2) -- (0,3)
				(2,3) -- (2,4) -- (0,4) -- (0,3) -- (1,3)
				(2,4) -- (3,4) -- (3,5) -- (1,5) -- (1,4);
			\draw[shift={(0,0.5)}]
				(2,0) node {$T\phi'$}
				(2,1) node {$T_\hc$}
				(1,3) node {$\ol{C(\pi_C, \id)}$}
				(2,4) node {$\bar H$};
			\draw[shift={(0.5,0.5)}]
				(0,0) node {$\rho_1h$}
				(0,2) node {$\rho_1\pi_A$}
				(1,2) node {$T\pi_f$};
			\end{tikzpicture}
		\quad = \quad\begin{tikzpicture}[textbaseline, x=0.75cm, y=0.6cm, font=\scriptsize]
			\draw	(3,1) -- (0,1) -- (0,0) -- (3,0) -- (3,2) -- (1,2) -- (1,0)
				(2,2) -- (2,3) -- (0,3) -- (0,2) -- (1,2)
				(0,3) -- (0,4) -- (1,4)
				(2,3) -- (3,3) -- (3,4) -- (1,4) -- (1,3);
			\draw[shift={(0,0.5)}]
				(2,0) node {$T\phi'$}
				(2,1) node {$T_\hc$}
				(1,2) node {$\ol{J(\id, f)}$}
				(2,3) node {$\bar H$};
			\draw[shift={(0.5,0.5)}]
				(0,0) node {$\rho_1h$}
				(0,3) node {$\pi_f$};
			\end{tikzpicture}
	\end{align*}
	The $T$-cell axiom for $\phi'$ follows from the uniqueness of factorisations through $\pi_f$.
\end{proof}

	Finally let us consider a right suitable normal monad $T$ on an equipment $\K$ that has strong double comma objects. Then, by restricting the proof of \thmref{weighted colimits are pointwise if double commas are strong} to pseudomorphisms $\map fCB$ and right pseudopromorphisms $\hmap JAB$, as in \defref{definition:pointwise algebraic weighted colimits} and so that the double comma object $J \slash f$ exists by \propref{lifting double comma objects}, and is strong by \propref{lifting strong comma objects}, it can be applied to the pseudo double category $\rcProm T$ without change. We thus obtain the following variation of \thmref{weighted colimits are pointwise if double commas are strong}.
\begin{theorem} \label{algebraic weighted colimits are pointwise if double commas are strong}
	Let $T$ be a right suitable normal monad on an equipment $\K$, that has strong double comma objects. Given a colax morphism $\map dAM$ as well as a right pseudopromorphism $\hmap JAB$, a $T$-cell
	\begin{displaymath}
		\begin{tikzpicture}
			\matrix(m)[math175em]{A & B \\ M & M \\};
			\path[map]	(m-1-1) edge[barred] node[above] {$J$} (m-1-2)
													edge node[left] {$d$} (m-2-1)
									(m-1-2) edge node[right] {$l$} (m-2-2);
			\path				(m-2-1) edge[eq] (m-2-2);
			\path[transform canvas={shift=($(m-1-2)!0.5!(m-2-2)$)}] (m-1-1) edge[cell] node[right] {$\zeta$} (m-2-1);
		\end{tikzpicture}
	\end{displaymath}
	exhibits $l$, in $\rcProm T$, as the $J$-weighted colimit of $d$ (\defref{definition:weighted colimits in double categories}) if and only if it exhibits $l$ as the pointwise $J$-weighted colimit of $d$ (\defref{definition:pointwise algebraic weighted colimits}).
\end{theorem}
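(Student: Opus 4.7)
The plan is to deduce this theorem as a direct application of \thmref{weighted colimits are pointwise if double commas are strong} to the pseudo double category $\rcProm T$, exactly along the lines of the paragraph immediately preceding the statement. First I would record that $\rcProm T$ is a pseudo double category by \propref{right colax promorphisms form a pseudo double category}, so that the notion of $J$-weighted colimit in the sense of \defref{definition:weighted colimits in double categories} makes sense inside $\rcProm T$ for any right colax $T$-promorphism $\hmap JAB$.

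Next I would verify that all the structure needed for the proof of \thmref{weighted colimits are pointwise if double commas are strong} actually lives in $\rcProm T$, but only for the restricted class of $f$ and $J$ relevant here. By \propref{restrictions in T-promrc}, $\rcProm T$ has conjoints $B(\id, f)$ of pseudo\ndash morphisms $\map fCB$ (since every pseudomorphism is in particular a colax morphism) and companions $C(\pi_C, \id)$ of the projection $\pi_C$, which is a strict morphism. By \propref{lifting double comma objects}, the double comma object $J \slash f$ exists in $\rcProm T$ and projects onto the one in $\K$, and by \propref{lifting strong comma objects} this lifted double comma object is strong in $\rcProm T$.

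With this in hand I would revisit the proof of \thmref{weighted colimits are pointwise if double commas are strong}. That proof sets up two bijective correspondences between cells of four different shapes: one correspondence comes from strongness of $J \slash f$ (factorisation through $\pi_f$) and the other from the conjoint identities for $B(\id, f)$; it then shows that the two ``dashed'' universal properties, characterising $\zeta$ as a pointwise weighted colimit and as an ordinary weighted colimit respectively, coincide under these correspondences. Since the definition of pointwise weighted colimit in $\rcProm T$ (\defref{definition:pointwise algebraic weighted colimits}) quantifies exactly over pseudomorphisms $f$, and the weight $J$ is right pseudo by hypothesis, all the ingredients used in that diagrammatic argument are available in $\rcProm T$, and the same string\ndash diagram manipulation goes through verbatim.

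The main obstacle is not computational but a matter of bookkeeping: $\rcProm T$ need not be an equipment, so before invoking the earlier theorem one must make sure that its proof only uses companions, conjoints and double comma objects that genuinely exist in $\rcProm T$. Once \propref{restrictions in T-promrc}, \propref{lifting double comma objects} and \propref{lifting strong comma objects} have been assembled to guarantee exactly this for pseudo $f$ and right pseudo $J$, no part of the argument has to be re\ndash proved.
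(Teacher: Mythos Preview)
Your proposal is correct and matches the paper's own argument essentially verbatim: the paper also proves this by observing that, once \propref{restrictions in T-promrc}, \propref{lifting double comma objects} and \propref{lifting strong comma objects} supply the needed companions, conjoints and strong double comma objects in $\rcProm T$ for pseudo $f$ and right pseudo $J$, the proof of \thmref{weighted colimits are pointwise if double commas are strong} carries over without change.
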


\section{Algebraic weighted colimits} \label{section:main result}
	We are now ready to state the main result. The following definition is motivated by \cite[Definition 3.7]{Lack-Shulman12}, where the notion of `lifting limits along enriched functors' is considered.
\begin{definition} \label{definition:lifts of weighted colimits in equipments}
	Consider a normal functor $\map F\K\L$ between pseudo double categories. Given a horizontal morphism $\hmap JAB$ in $\K$, we say that $F$ \emph{lifts} $J$\ndash weighted colimits if, for any vertical morphism $\map dAM$ in $\K$, the following holds. If the cell $\zeta$ on the left below exhibits $k$ as the $FJ$-weighted colimit of $Fd$ in $\L$ then there exists a cell $\eta$, as on the right, that exhibits $l$ as the $J$-weighted colimit of $d$ in $\K$, such that $F\eta = \zeta$ (and hence $Fl = k$).
	\begin{displaymath}
		\begin{tikzpicture}[baseline]
			\matrix(m)[math175em]{FA & FB \\ FM & FM \\};
			\path[map]	(m-1-1) edge[barred] node[above] {$FJ$} (m-1-2)
													edge node[left] {$Fd$} (m-2-1)
									(m-1-2) edge node[right] {$k$} (m-2-2);
			\path				(m-2-1) edge[eq] (m-2-2);
			\path[transform canvas={shift=($(m-1-2)!0.5!(m-2-2)$)}] (m-1-1) edge[cell] node[right] {$\zeta$} (m-2-1);
		\end{tikzpicture}
		\qquad\qquad\qquad\qquad\begin{tikzpicture}[baseline]
			\matrix(m)[math175em]{A & B \\ M & M \\};
			\path[map]	(m-1-1) edge[barred] node[above] {$J$} (m-1-2)
													edge node[left] {$d$} (m-2-1)
									(m-1-2) edge node[right] {$l$} (m-2-2);
			\path				(m-2-1) edge[eq] (m-2-2);
			\path[transform canvas={shift=($(m-1-2)!0.5!(m-2-2)$)}] (m-1-1) edge[cell] node[right] {$\eta$} (m-2-1);
		\end{tikzpicture}
	\end{displaymath}
\end{definition}
	The main result is as follows.
\begin{theorem} \label{right colax forgetful functor lifts all weighted colimits}
	Let $T$ be a right suitable normal monad on a closed equipment $\K$. The forgetful functor $\map{U^T}{\rcProm T}\K$ lifts all weighted colimits. Moreover its lift of a weighted colimit $\map{\colim_J d}BM$, where $\map dAM$ is a pseudomorphism and $\hmap JAB$ is a right pseudopromorphism, is a pseudomorphism whenever the canonical vertical cell (see \propref{universality result for colimits with respect to functors})
	\begin{displaymath}
		\colim_{TJ} (m \of Td) \Rar m \of T(\colim_J d)
	\end{displaymath}
	is invertible, where $\map m{TM}M$ is the structure map of $M$.
\end{theorem}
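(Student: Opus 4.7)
The plan is to construct the colax $T$-morphism structure on $l$ via the closed structure and the universal property of the weighted colimit, then verify the required axioms, the universal property in $\rcProm T$, and finally the pseudomorphism criterion. Given $l = \colim_{U^T J}(U^T d)$ in $\K$ exhibited by a cell $\zeta$, \propref{weighted colimits equivalent to top absolute left Kan extensions} yields a canonical isomorphism $M(l, \id) \iso J \lhom M(d, \id)$ in $H(\K)$. To equip $l$ with a colax $T$-morphism structure, \propref{colax morphisms correspond to lax companions} lets me instead produce a lax $T$-promorphism structure on $M(l, \id)$, and \propref{left and right structure cells} further reduces this to specifying a horizontal cell $\rho\ol{M(l, \id)} \colon B(\id, b) \hc TM(l, \id) \Rar M(l, \id) \hc M(\id, m)$ subject to the transported axioms. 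Transporting across this isomorphism and across the natural isomorphism $\bigpars{J \lhom M(d, \id)} \hc M(\id, m) \iso J \lhom \bigpars{M(d, \id) \hc M(\id, m)}$ assembled from \propref{left and right hom properties}, \propref{cartesian filler properties} and \corref{companion, conjoint and left hom isomorphisms}, producing $\rho\ol{M(l, \id)}$ amounts, adjointly, to producing a cell $J \hc B(\id, b) \hc T(J \lhom M(d, \id)) \Rar M(d, \id) \hc M(\id, m)$. I would take this to be the composite
\begin{multline*}
J \hc B(\id, b) \hc TM(l, \id) \xRar{\bar J \hc \id} A(\id, a) \hc TJ \hc TM(l, \id) \\
\xRar{\id \hc T_\hc} A(\id, a) \hc T\bigpars{J \hc M(l, \id)} \xRar{\id \hc T\lambda\zeta} A(\id, a) \hc TM(d, \id) \xRar{\rho\ol{M(d, \id)}} M(d, \id) \hc M(\id, m),
\end{multline*}
in which $\ol{M(d, \id)}$ denotes the lax structure on $M(d, \id)$ corresponding to $\bar d$ under \propref{colax morphisms correspond to lax companions}, $T_\hc$ is the compositor of $T$, and $T\lambda\zeta$ is the $T$-image of the horizontal cell corresponding to $\zeta$. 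The resulting vertical cell $\bar l \colon l \of b \Rar m \of Tl$ is the candidate colax $T$-morphism structure on $l$.

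Next I would verify the associativity and unit axioms for $\bar l$. By \propref{left and right structure cells}, these translate into identities among horizontal cells involving $\rho\ol{M(l, \id)}$ and the $\rho$-transforms of the colax algebra structure cells on $A$, $B$ and $M$, together with $T_\hc$. Transporting across the left-hom adjunction, these become identities of cells into $M(d, \id)$, so by the uniqueness of factorisations through the counit of $J \hc \dash \ladj J \lhom \dash$ and the right suitability of $T$ (which makes $\rho\mu_J$ and $\rho\eta_J$ invertible so that the required inverses appearing in \defref{definition:right colax promorphisms} exist), they reduce to a direct diagrammatic verification combining the right colax axioms for $\bar J$, the colax axioms for $\bar d$ (transported to $\ol{M(d, \id)}$), the monad and compositor axioms for $T$, and the naturality of $T_\hc$. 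Essentially the same strategy, applied directly to the definition of $\bar l$, shows that $\zeta$ is itself a $T$-cell in the sense of \defref{definition:cells of right colax promorphisms}. For the universal property, any competing $T$-cell into a colax morphism $e$ factors through $\zeta$ in $\K$ as a unique vertical cell $\phi'$; precomposing the required $T$-cell axiom for $\phi'$ with $\zeta$ and using the $T$-cell axioms for both the competing cell and $\zeta$, the axiom for $\phi'$ follows by the uniqueness of factorisations. The main obstacle here is the sheer bookkeeping: each axiom involves horizontal composites of ten or more cells, and one must systematically use \propref{lax functors and left and right cells} to commute the $T$-image past $\lambda$ and $\rho$.

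For the second assertion, assume $d$ is a pseudomorphism and $J$ a right pseudopromorphism, so that $\bar d$ and $\rho\bar J$ are invertible. Using $\inv{(\rho\bar J)}$ and $\inv{\bar d}$, I would build from $T\zeta$ a composite which, by \propref{weighted colimits are preserved by precomposition with invertible cells}, exhibits $l \of b$ as the $TJ$-weighted colimit of $m \of Td$ in $\K$. Unwinding the construction of $\bar l$ in the first paragraph and comparing with the canonical cell from \propref{universality result for colimits with respect to functors} then shows that, under these identifications, $\bar l$ coincides with the canonical comparison cell $\colim_{TJ}(m \of Td) \Rar m \of T(\colim_J d)$. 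Consequently $\bar l$ is invertible precisely when the canonical cell is, establishing the claim.
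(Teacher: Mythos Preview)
Your approach is essentially the same as the paper's, with a harmless change of perspective: you work in the $\rho$-picture, whereas the paper works in the $\lambda$-picture throughout. Concretely, the paper first isolates a standalone result (\lemref{lemma:left hom lax structure cell}) showing that for any right colax $T$-promorphism $J$ and lax $T$-promorphism $K$ the left hom $J \lhom K$ carries a lax $T$-promorphism structure, described by an explicit composite for $\lambda\ol{J \lhom K}$ whose adjoint is exactly the $\lambda$-analogue of your composite. It then transports this structure across the isomorphism $M(l,\id)\iso J\lhom M(d,\id)$ to obtain $\lambda\bar l$ and hence $\bar l$. Your formula, after translating $\rho\ol{M(d,\id)}$ into $\lambda\ol{M(d,\id)}$ and $\ls a\eps_a$, agrees with the paper's adjoint in \eqref{equation:adjoint of left hom lax structure cell} once one identifies $T\lambda\zeta$ with $T\ev$ composed with the transport isomorphism.

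The one point where you should be more careful is the phrase ``reduce to a direct diagrammatic verification'': this is exactly the content of \lemref{lemma:left hom lax structure cell}, and its proof is not routine---the paper devotes all of \appref{appendix:lemma} to it. The associativity axiom in particular requires repeatedly passing between adjoints under $J\hc\dash \ladj J\lhom\dash$, invoking the naturality and associativity of $T_\hc$, and then feeding in the associativity axioms for $\bar J$ and $\lambda\bar K$ at the right moment; right suitability enters because $\inv{(\rho\mu_J)}$ appears in the axiom for $\bar J$. Similarly, the verification that $\zeta$ is a $T$-cell is itself a page-long computation in the paper (the chain around \eqref{equation:179425}). Your sketch for the universal property and for the pseudomorphism criterion is fine; for the latter the paper argues slightly differently, by observing that each constituent of the composite \eqref{equation:left hom lax structure cell} is invertible under the hypotheses, the first constituent being (up to isomorphism) the $\lambda$-image of the canonical comparison cell---but your argument via \propref{weighted colimits are preserved by precomposition with invertible cells} reaches the same conclusion.
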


By \thmref{weighted colimits are pointwise if double commas are strong} and \thmref{algebraic weighted colimits are pointwise if double commas are strong}, in both $\rcProm T$ and $\K$ a cell $\eta$ exhibits a morphism $l$ as a pointwise weighted colimit if and only if it exhibits $l$ as an ordinary weighted colimit. Combining this with the main result we obtain the following corollary.
\begin{corollary} \label{right colax forgetful functor lifts all pointwise weighted colimits}
	Let $T$ be a right suitable monad on a closed equipment $\K$ that has strong double comma objects. The forgetful functor $\map{U^T}{\rcProm T}\K$ lifts all pointwise weighted colimits $\colim_J d$, where $J$ is a right pseudopromorphism.
\end{corollary}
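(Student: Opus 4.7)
The plan is to chain together three results that have already been established in the excerpt, namely the main theorem (\thmref{right colax forgetful functor lifts all weighted colimits}), the characterisation of pointwise weighted colimits in $\K$ (\thmref{weighted colimits are pointwise if double commas are strong}) and its algebraic counterpart (\thmref{algebraic weighted colimits are pointwise if double commas are strong}). Since right suitability already entails normality, the hypotheses of the main theorem are met, so all that is needed is to move back and forth between the pointwise and ordinary notion of weighted colimit on both sides of the forgetful functor $U^T$.

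First, I would take a colax morphism $\map dAM$ and a right pseudopromorphism $\hmap JAB$ in $\rcProm T$ such that $k = \colim_{U^T J}(U^T d)$ exists in $\K$ as a pointwise weighted colimit, exhibited by some cell $\zeta$. Because $\K$ has strong double comma objects by assumption, \thmref{weighted colimits are pointwise if double commas are strong} gives that $\zeta$ also exhibits $k$ as the ordinary $U^T J$-weighted colimit of $U^T d$ in $\K$. Applying the main theorem \thmref{right colax forgetful functor lifts all weighted colimits}, we obtain a $T$-cell $\eta$ in $\rcProm T$ with $U^T \eta = \zeta$ and $U^T l = k$, which exhibits $l$ as the ordinary $J$-weighted colimit of $d$ in $\rcProm T$.

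Second, to upgrade this to a pointwise weighted colimit in $\rcProm T$, I would invoke \thmref{algebraic weighted colimits are pointwise if double commas are strong}. Its hypotheses require that $T$ be right suitable normal and that $\K$ have strong double comma objects, both of which hold by assumption; moreover it applies precisely when the weight $J$ is right pseudo, which is also part of our setup. (Implicitly this uses \propref{lifting double comma objects} and \propref{lifting strong comma objects}, which guarantee that the relevant strong double comma objects exist in $\rcProm T$ and are obtained by lifting those in $\K$.) The theorem then converts the ordinary weighted colimit $\eta$ into a pointwise one, completing the lift.

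I do not expect any serious obstacle here: the three results being combined have already absorbed all the hard work, and the argument is essentially a two-line diagram chase through the equivalences
\begin{equation*}
\text{pointwise in }\K \;\Longleftrightarrow\; \text{ordinary in }\K \;\Longrightarrow\; \text{ordinary in }\rcProm T \;\Longleftrightarrow\; \text{pointwise in }\rcProm T.
\end{equation*}
The only point that warrants a brief verification is that the right pseudo hypothesis on $J$ is preserved along the chain, but this is automatic since the lift produced by the main theorem carries the given right colax (in fact right pseudo) structure of $J$ itself.
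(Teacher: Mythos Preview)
Your proposal is correct and follows essentially the same approach as the paper: the paper states, immediately before the corollary, that by \thmref{weighted colimits are pointwise if double commas are strong} and \thmref{algebraic weighted colimits are pointwise if double commas are strong} the pointwise and ordinary notions coincide in both $\K$ and $\rcProm T$, so the result follows directly from the main theorem. Your chain of equivalences is exactly this argument spelled out.
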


	In the proof of the main result the following lemma is crucial. Recall that a right colax $T$-pro\-mor\-phism $\hmap JAB$ comes equipped with a horizontal structure cell $\cell{\bar J}{J \hc B(\id, b)}{A(\id, a) \hc TJ}$, while the structure cell $\bar K$ of a lax $T$-promorphism $\hmap KAC$ corresponds, by \propref{left and right cells}, to a horizontal cell $\cell{\lambda\bar K}{TK \hc C(c, \id)}{A(a, \id) \hc K}$ that satisfies the $\lambda$-images of the associativity and unit axioms for $K$, see \propref{left and right structure cells}.
\begin{lemma} \label{lemma:left hom lax structure cell} \label{APPLEMMA}
	Under the hypothesis of \thmref{right colax forgetful functor lifts all weighted colimits}, consider a right colax $T$\ndash pro\-mor\-phism $\hmap JAB$ as well as a lax $T$-promorphism $\hmap KAC$. The left hom $\hmap{J \lhom K}BC$ admits a lax $T$-promorphism structure cell $\ol{J \lhom K}$ that corresponds, under \propref{left and right cells}, to the horizontal cell $\lambda\ol{J \lhom K}$ that is given by
	\begin{multline} \label{equation:left hom lax structure cell}
		T(J \lhom K) \hc C(c, \id) \Rar TJ \lhom \bigpars{TK \hc C(c, \id)} \xRar{\id \lhom \lambda\bar K} TJ \lhom \bigpars{A(a, \id) \hc K} \\
		\iso \bigpars{A(\id, a) \hc TJ} \lhom K \xRar{\bar J \lhom \id} \bigpars{J \hc B(\id, b)} \lhom K \iso B(b, \id) \hc J \lhom K,
	\end{multline}
	where the first cell is given by \propref{lax functors induce left hom coherence cell} and where the two isomorphisms are given by \corref{companion, conjoint and left hom isomorphisms}.
\end{lemma}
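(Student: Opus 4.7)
The plan is to define $\ol{J \lhom K}$ as $\lambda^{-1}$ applied to the explicit composite \eqref{equation:left hom lax structure cell}, which I denote $\phi$. Since $\phi$ has source $T(J \lhom K) \hc C(c, \id)$ and target $B(b, \id) \hc (J \lhom K)$, the bijection $\lambda$ of \propref{left and right cells} produces a cell $\ol{J \lhom K}$ with vertical source $b$ and vertical target $c$, of exactly the shape required for a lax $T$-promorphism structure on $J \lhom K$. By \propref{left and right structure cells} the associativity and unit axioms of \defref{definition:lax promorphism} for $\ol{J \lhom K}$ are equivalent to their $\lambda$-images, so it suffices to verify the latter directly for the composite $\phi$ itself.

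To make the verification tractable, I would further transpose $\phi$ under the closed structure. Using the tensor-hom adjunction together with the isomorphism $B(b, \id) \hc (J \lhom K) \iso \bigpars{J \hc B(\id, b)} \lhom K$ from \corref{companion, conjoint and left hom isomorphisms}, cells of the form of $\phi$ correspond bijectively to cells $\cell{\phi^\sharp}{\bigpars{J \hc B(\id, b)} \hc T(J \lhom K) \hc C(c, \id)}{K}$, and tracing through the definition of $\phi$ shows that $\phi^\sharp$ begins with
\begin{displaymath}
\bigpars{J \hc B(\id, b)} \hc T(J \lhom K) \hc C(c, \id) \xRar{\bar J \hc \id} A(\id, a) \hc TJ \hc T(J \lhom K) \hc C(c, \id),
\end{displaymath}
followed in turn by $\id \hc T_\hc \hc \id$, then $\id \hc T\ev \hc \id$, then $\id \hc \lambda\bar K$, and finally the counit $\ls a\eps_a \hc \id$ of the companion-conjoint adjunction of \propref{companion-conjoint adjunction}. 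In this adjoint form each piece arises from one of the data for $J$ or $K$, and the $\lambda$-image axioms of \propref{left and right structure cells} become identities of cells with target $K$ whose sources involve two copies of the ingredients of $\phi^\sharp$.

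The associativity identity is proved by bringing both sides of the doubled composite to a common form using: the associativity axiom of \defref{definition:right colax promorphisms} for $J$, which identifies the two successive occurrences of $\bar J$ mediated by $\rho\alpha$ and $\inv{(\rho\mu_J)}$ with a single $\bar J$ followed by $T\bar J$; the $\lambda$-form associativity axiom for $K$ from \propref{left and right structure cells}, which handles the two occurrences of $\lambda\bar K$ analogously, mediated by $\rho\beta$ and $\lambda\mu_K$; the naturality and coherence of $T_\hc$, of $\ev$, and of the isomorphisms of \corref{companion, conjoint and left hom isomorphisms}; and the triangle identities for $\ls a\eps_a$. The unit axiom is a short parallel calculation using the unit axioms for $J$ and $K$, the normality of $T$ (so that $T_U$ is an identity), and once more the triangle identities.

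The main obstacle will be the sheer volume of coherence bookkeeping: because $T$ is only assumed normal and not a pseudofunctor, the compositor $T_\hc$ cannot be inverted, and each rearrangement must respect its direction. Fortunately, in the adjoint description of $\phi^\sharp$ every instance of $T_\hc$ appears in the forward direction and is sandwiched between cells acting on its outside; consequently, all rearrangements needed to match the two sides of each axiom can be performed using naturality of $T_\hc$ alone, without ever inverting it. The inverses from \propref{normal functors preserve companions}, which apply $T_\hc$ in the presence of companions or conjoints, handle the remaining matchings between $\lambda$-forms and $\rho$-forms of structure cells.
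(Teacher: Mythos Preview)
Your proposal is correct and follows essentially the same route as the paper's proof: you reduce to the $\lambda$-image axioms via \propref{left and right structure cells}, pass to the adjoint form $\phi^\sharp$ (your formula matches the paper's equation \eqref{equation:adjoint of left hom lax structure cell} exactly), and then verify associativity and unit by invoking the corresponding axioms for $J$ and $K$ together with the coherence of $T_\hc$ and the companion-conjoint counits. The paper carries out this same plan in full detail in \appref{appendix:lemma}, where the bulk of the work is precisely the ``sheer volume of coherence bookkeeping'' you anticipate; one minor slip is that the associativity axiom for $K$ in $\lambda$-form is mediated by $\lambda\alpha$ and $\lambda\gamma$ (the associators of $A$ and $C$) rather than by $\rho\beta$, but this does not affect the structure of the argument.
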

	The proof of the above lemma consists of two (the first lengthy) calculations showing that the composite above satisfies the associativity and unit axioms, and will be postponed to \appref{appendix:lemma}. We will denote the composition of the first four cells of \eqref{equation:left hom lax structure cell} above by $\cell{\omega(\bar J, \bar K)}{T(J \lhom K) \hc C(c, \id)}{\bigpars{J \hc B(\id, b)} \lhom K}$. It is useful to compute the adjoint $\omega(\bar J, \bar K)^\sharp$: using item (b) of \lemref{lemma:left hom facts} we find that it is the composite
\begin{align} \label{equation:adjoint of left hom lax structure cell}
	J \hc B(\id, b)&{} \hc T(J \lhom K) \hc C(c, \id) \xRar{\bar J \hc \id} A(\id, a) \hc TJ \hc T(J \lhom K) \hc C(c, \id) \notag \\
	&\xRar{\id \hc T_\hc \hc \id} A(\id, a) \hc T(J \hc J \lhom K) \hc C(c, \id) \notag \\
	&\xRar{\id \hc T\ev \hc \id} A(\id, a) \hc TK \hc C(c, \id) \notag \\
	&\xRar{\id \hc \lambda\bar K} A(\id, a) \hc A(a, \id) \hc K \xRar{\ls a\eps_a \hc \id} K.
\end{align}
	Here $\ls a\eps_a$ is the counit of the companion-conjoint adjunction $A(a, \id) \ladj A(\id, a)$, see \propref{companion-conjoint adjunction}.
	
	The proof of the main theorem takes up the remainder of this section. In the next section we consider its applications for the algebras for some of the monads that we described in \secref{section:monad examples}.
\begin{proof}[Proof of \thmref{right colax forgetful functor lifts all weighted colimits}]
	Let $\hmap JAB$ be a right colax promorphism and \mbox{$\map dAM$} be a colax morphism, and assume that the $J$-weighted colimit $\map{l = \colim_J d}BM$ exists in $\K$. By \propref{weighted colimits equivalent to top absolute left Kan extensions} this means that it comes with a unit
	\begin{displaymath}
		\begin{tikzpicture}
			\matrix(m)[math175em]{A & B \\ M & M \\};
			\path[map]	(m-1-1) edge[barred] node[above] {$J$} (m-1-2)
													edge node[left] {$d$} (m-2-1)
									(m-1-2) edge node[right] {$l$} (m-2-2);
			\path				(m-2-1) edge[eq] (m-2-2);
			\path[transform canvas={shift=($(m-1-2)!0.5!(m-2-2)$)}]	(m-1-1) edge[cell] node[right] {$\eta$} (m-2-1);
		\end{tikzpicture}
	\end{displaymath}
	such that $\cell{(\lambda\eta)^\flat}{M(l, \id)}{J \lhom M(d, \id)}$ is invertible. We have to supply $l$ with a colax structure such that $\eta$ becomes a $T$-cell, and then show that $\eta$ defines $l$ as the $J$-weighted colimit of $d$ in $\rcProm T$. Recall from \propref{colax morphisms correspond to lax companions} that giving $l$ a colax structure is the same as giving $M(l, \id)$ a lax structure. Likewise the colax structure on $d$ corresponds to a lax structure on $M(d, \id)$ so that by the lemma above the left hom $J \lhom M(d, \id) \iso M(l, \id)$ admits a lax structure. We thus obtain a colax structure on $l$, whose vertical structure cell $\cell{\bar l}{l \of b}{m \of Tl}$ corresponds to the composite
	\begin{multline} \label{equation:212976}
		\lambda\bar l = \bigbrks{TM(l, \id) \hc M(m ,\id) \xRar{T(\lambda\eta)^\flat \hc \id} T\bigpars{J \lhom M(d, \id)} \hc M(m, \id) \\
		\xRar{\ol{J \lhom M(d, \id)}} B(b, \id) \hc J \lhom M(d, \id) \xRar{\id \hc \inv{(\lambda\eta)^\flat}} B(b, \id) \hc M(l, \id)}.
	\end{multline}
	Notice that $\ol{J \lhom M(d, \id)}$ is invertible whenever $\rho J$, $\bar l$ and the canonical cell $T\bigpars{J \lhom M(d, \id)} \hc M(m, \id) \Rar TJ \lhom \bigpars{TM(d, \id) \hc M(m, \id)}$ are invertible. As we have seen in \propref{universality result for colimits with respect to functors} the latter is, up to isomorphism, the $\lambda$-image of the canonical cell $\colim_{TJ} (m \of Td) \Rar m \of T(\colim_J d)$, so that the last assertion of the theorem follows.
	
	Having obtained a colax structure on $\map lBM$, we show that $\eta$ is a $T$-cell, which amounts to proving that the following equality holds.
	\begin{displaymath}
		\begin{tikzpicture}[textbaseline]
			\matrix(m)[math175em]
			{ A & B & TB \\
				A & TA & TB \\
				M & TM & TM \\[-3em]
				\phantom{TM} & \phantom{TM} & \phantom{TM} \\ };
			\path[map]	(m-1-1) edge[barred] node[above] {$J$} (m-1-2)
									(m-1-2) edge[barred] node[above] {$B(\id, b)$} (m-1-3)
									(m-2-1) edge[barred] node[above] {$A(\id, a)$} (m-2-2)
													edge node[left] {$d$} (m-3-1)
									(m-2-2) edge[barred] node[above] {$TJ$} (m-2-3)
													edge node[right] {$Td$} (m-3-2)
									(m-2-3) edge node[right] {$Tl$} (m-3-3)
									(m-3-1) edge[barred] node[below] {$M(\id, m)$} (m-3-2);
			\path				(m-1-1) edge[eq] (m-2-1)
									(m-1-2) edge[cell] node[right] {$\bar J$} (m-2-2)
									(m-1-3) edge[eq] (m-2-3)
									(m-3-2) edge[eq] (m-3-3);
			\path[transform canvas={shift={($(m-2-1)!0.5!(m-2-2)$)}, xshift=-0.675em}]	(m-2-2) edge[cell] node[right] {$\rho_1\bar d$} (m-3-2);
			\path[transform canvas={shift={($(m-2-1)!0.5!(m-2-2)$)}}] (m-2-3) edge[cell] node[right] {$T\eta$} (m-3-3);
		\end{tikzpicture}
		= \begin{tikzpicture}[textbaseline]
			\matrix(m)[math175em]
			{ A & B & TB \\
				M & M & TM \\[-3em]
				\phantom{TM} & \phantom{TM} & \phantom{TM} \\ };
			\path[map]	(m-1-1) edge[barred] node[above] {$J$} (m-1-2)
													edge node[left] {$d$} (m-2-1)
									(m-1-2) edge[barred] node[above] {$B(\id, b)$} (m-1-3)
													edge node[right] {$l$} (m-2-2)
									(m-1-3) edge node[right] {$Tl$} (m-2-3)
									(m-2-2) edge[barred] node[below] {$M(\id, m)$} (m-2-3);
			\path				(m-2-1) edge[eq] (m-2-2);
			\path[transform canvas={shift={($(m-1-1)!0.5!(m-2-2)$)}, xshift=-0.5em}]	(m-1-3) edge[cell] node[right] {$\rho_1\bar l$} (m-2-3);
			\path[transform canvas={shift={($(m-1-1)!0.5!(m-2-2)$)}}] (m-1-2) edge[cell] node[right] {$\eta$} (m-2-2);
		\end{tikzpicture}
	\end{displaymath}
	Equivalently we may prove that its image under $\lambda$ holds, which is the equality of cells $J \hc B(\id, b) \hc TM(l, \id) \Rar M(d, \id) \hc M(\id, m)$ that is represented by the following diagram, in which all details but the non-identity cells are left out.
	\begin{equation} \label{equation:179425}
		\begin{split}
		\begin{tikzpicture}[textbaseline, x=0.75cm, y=0.6cm, font=\scriptsize]
			\draw	(1,1) -- (0,1) -- (0,0) -- (2,0) -- (2,1)
						(1,2) -- (1,1) -- (3,1) -- (3,2) -- (2,2)
						(0,2) -- (2,2) -- (2,3) -- (0,3) -- (0,2);
			\draw[shift={(0,0.5)}]
						(1,0) node {$\lambda\rho_1\bar d$}
						(2,1) node {$\lambda T\eta$}
						(1,2)	node {$\bar J$};
		\end{tikzpicture}
		\quad = \quad\begin{tikzpicture}[textbaseline, x=0.75cm, y=0.6cm, font=\scriptsize]
			\draw	(1,1) -- (0,1) -- (0,0) -- (2,0) -- (2,1)
						(1,2) -- (1,1) -- (3,1) -- (3,2) -- (1,2);
			\draw[shift={(0,0.5)}]
						(2,1) node {$\lambda\rho_1\bar l$}
						(1,0)	node {$\lambda\eta$};
		\end{tikzpicture}
		\end{split}
	\end{equation}
	It follows from the definitions of $\rho_1$ and $\lambda$ (\defref{definition:cells of right colax promorphisms} and \propref{left and right cells}), as well as the vertical companion identity, that the horizontal cell $\lambda\rho_1\bar l$ here can be rewritten in terms of $\lambda\bar l$ as
	\begin{align*}
		\lambda\rho_1\bar l = \bigbrks{B(\id, b) \hc TM(l, \id) &\xRar{\id \hc \ls m\eta_m} B(\id, b) \hc TM(l, \id) \hc M(m, \id) \hc M(\id, m) \\
		&\xRar{\id \hc \lambda\bar l \hc \id} B(\id, b) \hc B(b, \id) \hc M(l, \id) \hc M(\id, m) \\
		&\xRar{\ls b\eps_b \hc \id} M(l, \id) \hc M(\id, m)},
	\end{align*}
	where $\ls m\eta_m = \ls m\eta \hc \eta_m$ and $\ls b\eps_b = \eps_b \hc \ls b\eps$ are the unit and counit of the companion-conjoint adjunctions $M(m, \id) \ladj M(\id, m)$ and $B(b, \id) \ladj B(\id, b)$ of \propref{companion-conjoint adjunction}. In the same way $\lambda\rho_1\bar d$ can be written in terms of $\lambda\bar d$. Substituting \eqref{equation:212976} into the above we find that $\lambda\rho_1\bar l$ equals the composite
	\begin{align*}
		B(\id, b)&{} \hc TM(l, \id) \\
		&\xRar{\id \hc T(\lambda\eta)^\flat \hc \ls m\eta_m} B(\id, b) \hc T\bigpars{J \lhom M(d, \id)} \hc M(m, \id) \hc M(\id, m) \\
		&\xRar{\id \hc \ol{J \lhom M(d, \id)} \hc \id} B(\id, b) \hc B(b, \id) \hc J \lhom M(d, \id) \hc M(\id, m) \\
		&\xRar{\ls b\eps_b \hc \id} J \lhom M(d, \id) \hc M(\id, m) \xRar{\inv{(\lambda\eta)^\flat} \hc \id} M(l, \id) \hc M(\id, m).
	\end{align*}
	Now in the right-hand side of \eqref{equation:179425} we can rewrite $\lambda\eta \hc \id = {(\lambda\eta)^\flat}^\sharp \hc \id$. Precomposed with the last cell of the composite above this gives $\bigpars{{(\lambda\eta)^\flat}^\sharp \of (\id \hc \inv{(\lambda\eta)^\flat})} \hc \id = \bigpars{(\lambda\eta)^\flat \of \inv{(\lambda\eta)^\flat}}^\sharp \hc \id = \id^\sharp \hc \id = \ev \hc \id$, where we have used the naturality of $\dash^\sharp$ in its second argument. It follows that the right-hand side of \eqref{equation:179425} is given by
	\begin{align} \label{equation:610993}
		J \hc {}&B(\id, b) \hc TM(l, \id) \notag \\
		&\xRar{\id \hc T(\lambda\eta)^\flat \hc \ls m\eta_m} J \hc B(\id, b) \hc T\bigpars{J \lhom M(d, \id)} \hc M(m, \id) \hc M(\id, m) \notag \\
		&\xRar{\id \hc \ol{J \lhom M(d, \id)} \hc \id} J \hc B(\id, b) \hc B(b, \id) \hc J \lhom M(d, \id) \hc M(m, \id) \notag \\
		&\xRar{\id \hc \ls b\eps_b \hc \id} J \hc J \lhom M(d, \id) \hc M(m, \id) \xRar{\ev \hc \id} M(d, \id) \hc M(m, \id).
	\end{align}
	In the above consider the composition of the last two cells together with the last cell of the composite $\ol{J \lhom M(d, \id)}$, as given by \eqref{equation:left hom lax structure cell}:
	\begin{multline*}
		J \hc B(\id, b) \hc \bigpars{J \hc B(\id, b)} \lhom M(d, \id) \iso J \hc B(\id, b) \hc B(b, \id) \hc J \lhom M(d, \id) \\
		\xRar{\id \hc \ls b\eps_b \hc \id} J \hc J \lhom M(d, \id) \xRar{\ev} M(d, \id).
	\end{multline*}
	Writing $\phi = \ev \of (\id \hc \ls b\eps_b \hc \id)$ for the last two cells here, notice that the isomorphism, from right to left, equals $\id_{J \hc B(\id, b)} \hc \phi^\flat$, by \corref{companion, conjoint and left hom isomorphisms}. Of course $\ev \of (\id_{J \hc B(\id, b)} \hc \phi^\flat) = \phi$, and we conclude that the composite above is simply given by evaluation. This implies that \eqref{equation:610993} equals $\id \hc T(\lambda\eta)^\flat \hc \ls m\eta_m$ followed by the adjoint of $\omega(\bar J, \lambda\bar d)$, which is the composite of the first four cells of \eqref{equation:left hom lax structure cell}. The latter we have already computed in \eqref{equation:adjoint of left hom lax structure cell}, from which it follows that the right-hand side of \eqref{equation:179425} can be rewritten as
	\begin{align*}
		J \hc B(\id, b)&{} \hc TM(l, \id) \xRar{\bar J \hc T(\lambda\eta)^\flat} A(\id, a) \hc TJ \hc T\bigpars{J \lhom M(d, \id)} \\
		&\xRar{\id \hc T_\hc} A(\id, a) \hc T\bigpars{J \hc J \lhom M(d, \id)} \\
		&\xRar{\id \hc T\ev \hc \ls m\eta_m} A(\id, a) \hc TM(d, \id) \hc M(m, \id) \hc M(\id, m) \\
		&\xRar{\id \hc \lambda\bar d \hc \id} A(\id, a) \hc A(a, \id) \hc M(d, \id) \hc M(\id, m) \\
		&\xRar{\ls a\eps_a \hc \id} M(d, \id) \hc M(\id, m).
	\end{align*}
	We have already seen that the composition of $\ls m \eta_m$, $\lambda\bar d$ and $\ls a\eps_a$ here equals $\lambda\rho_1\bar d$. Also notice that $T\ev \of T_\hc \of \bigpars{\id \hc T(\lambda\eta)^\flat} = T\ev \of T\bigpars{\id \hc (\lambda\eta)^\flat} \of T_\hc = T\lambda\eta \of T_\hc = \lambda T\eta$, where the last equality follows from the unit axioms for $T$. So, finally, the composite above equals
	\begin{multline*}
		J \hc B(\id, b) \hc TM(l, \id) \xRar{\bar J \hc \id} A(\id, a) \hc TJ \hc TM(l, \id) \\
		\xRar{\id \hc \lambda T\eta} A(\id, a) \hc TM(d, \id) \xRar{\lambda\rho_1\bar d} M(d, \id) \hc M(\id, m),
	\end{multline*}
	which is the left-hand side of \eqref{equation:179425}. This shows that, with respect to the chosen colax structure on $l$, the cell $\eta$ is a $T$-cell.
	
	To complete the proof we have to show that $\eta$ defines $l$ as the $J$-weighted colimit of $d$ in $\rcProm T$. Thus we have to show that every $T$-cell $\phi$ as below factors uniquely through $\eta$, as shown.
	\begin{displaymath}
		\begin{tikzpicture}[textbaseline]
			\matrix(m)[math175em]{A & B & C \\ M & \phantom M & M \\};
			\path[map]	(m-1-1) edge[barred] node[above] {$J$} (m-1-2)
													edge node[left] {$d$} (m-2-1)
									(m-1-2) edge[barred] node[above] {$H$} (m-1-3)
									(m-1-3) edge node[right] {$e$} (m-2-3);
			\path				(m-2-1) edge[eq] (m-2-3)
									(m-1-2) edge[cell] node[right] {$\phi$} (m-2-2);
		\end{tikzpicture}
		= \begin{tikzpicture}[textbaseline]
			\matrix(m)[math175em]{A & B & C \\ M & M & M, \\};
			\path[map]	(m-1-1) edge[barred] node[above] {$J$} (m-1-2)
													edge node[left] {$d$} (m-2-1)
									(m-1-2) edge[barred] node[above] {$H$} (m-1-3)
													edge node[right] {$l$} (m-2-2)
									(m-1-3) edge node[right] {$e$} (m-2-3);
			\path				(m-2-1) edge[eq] (m-2-2)
									(m-2-2) edge[eq] (m-2-3);
			\path[transform canvas={shift=($(m-1-2)!0.5!(m-2-3)$)}]	(m-1-1) edge[cell] node[right] {$\eta$} (m-2-1)
									(m-1-2) edge[cell] node[right] {$\phi'$} (m-2-2);
		\end{tikzpicture}
	\end{displaymath}
	Forgetting about the $T$-algebra structures, such a factorisation does exist in $\K$, since $\eta$ defines $l$ as a weighted colimit. To show that this factorisation lifts to $\rcProm T$ we have to prove that $\phi'$ is a $T$-cell. That the $T$-cell axiom for $\phi'$ holds after it is horizontally precomposed with $\eta$ is shown below, where the identities are the $T$-cell axiom for $\eta$, the factorisation $T \phi = T(\eta \hc \phi')$, the $T$-cell axiom for $\phi$, and again the factorisation $\phi = \eta \hc \phi'$.
	\begin{align*}
		\begin{tikzpicture}[textbaseline, x=0.75cm, y=0.6cm, font=\scriptsize]
			\draw	(2,1) -- (0,1) -- (0,0) -- (3,0) -- (3,1) -- (2,1) -- (2,0)
						(1,0) -- (1,2) -- (3,2) -- (3,1);
			\draw[shift={(0.5,0.5)}]
						(0,0) node {$\eta$}
						(1,0) node {$\rho_1\bar l$}
						(2,0) node {$T\phi'$};
			\draw[shift={(0,0.5)}]
						(2,1) node {$\bar H$};
		\end{tikzpicture}
		&\quad=\quad \begin{tikzpicture}[textbaseline, x=0.75cm, y=0.6cm, font=\scriptsize]
			\draw (1,0) -- (1,1) -- (0,1) -- (0,0) -- (3,0) -- (3,1) -- (2,1) -- (2,0)
						(0,1) -- (0,2) -- (2,2) -- (2,1) -- (1,1)
						(1,2) -- (1,3) -- (3,3) -- (3,2) -- (2,2);
			\draw[shift={(0.5,0.5)}]
						(0,0) node {$\rho_1\bar d$}
						(1,0) node {$T\eta$}
						(2,0) node {$T\phi'$};
			\draw[shift={(0,0.5)}]
						(1,1) node {$\bar J$}
						(2,2) node {$\bar H$};
		\end{tikzpicture}
		\quad=\quad \begin{tikzpicture}[textbaseline, x=0.75cm, y=0.6cm, font=\scriptsize]
			\draw (0,0) -- (3,0) -- (3,1) -- (0,1) -- (0,0)
						(1,0) -- (1,2) -- (3,2) -- (3,1)
						(1,2) -- (0,2) -- (0,3) -- (2,3) -- (2,2)
						(1,3) -- (1,4) -- (3,4) -- (3,3) -- (2,3);
			\draw[shift={(0.5,0.5)}]
						(0,0) node {$\rho_1\bar d$};
			\draw[shift={(0,0.5)}]
						(2,0) node {$T\phi$}
						(2,1) node {$T_\hc$}
						(1,2) node {$\bar J$}
						(2,3) node {$\bar H$};
		\end{tikzpicture} \\[1em]
		&\quad=\quad \begin{tikzpicture}[textbaseline, x=0.75cm, y=0.6cm, font=\scriptsize]
			\draw (2,0) -- (0,0) -- (0,1) -- (3,1) -- (3,0) -- (2,0) -- (2,1);
			\draw[shift={(0.5,0.5)}]
						(2,0) node {$\rho_1\bar e$};
			\draw[shift={(0,0.5)}]
						(1,0) node {$\phi$};
		\end{tikzpicture}
		\quad=\quad \begin{tikzpicture}[textbaseline, x=0.75cm, y=0.6cm, font=\scriptsize]
			\draw	(2,1) -- (0,1) -- (0,0) -- (3,0) -- (3,1) -- (2,1) -- (2,0)
						(1,1) -- (1,0);
			\draw[shift={(0.5,0.5)}]
						(0,0) node {$\eta$}
						(1,0) node {$\phi'$}
						(2,0) node {$\rho_1\bar e$};
		\end{tikzpicture}
	\end{align*}
	Since factorisations through $\eta$ are unique the $T$-cell axiom for $\phi'$ follows, completing the proof.
\end{proof}

\section{Applications} \label{section:applications of the main theorem}
	In this section we apply the main theorem to some of the monads that were described in \secref{section:monad examples}. We shall also compare our main result to a result of Melli\`es and Tabareau, given in \cite{Mellies-Tabareau08}, which gives a different approach to obtaining algebra structures on left Kan extensions.
	
\subsection{Monoidal $\V$-categories}
	We start with the `free (symmetric) strict monoidal $\V$-category'-monads $\fmc$ and $\fsmc$ on the equipment $\enProf\V$ of $\V$-profunctors. Applying the main theorem we find that the forgetful functors of pseudo double categories
	\begin{displaymath}
		\map{U^\fmc}{\rcMonProf\V}{\enProf\V} \qquad \text{and} \qquad \map{U^\fsmc}{\rcsMonProf\V}{\enProf\V}
	\end{displaymath}
	lift weighted colimits, where $\rcMonProf\V$ is the pseudo double category of right colax monoidal $\V$-profunctors and $\rcsMonProf\V$ is that of symmetric right colax monoidal $\V$-profunctors, see \exref{example:pseudo double category of right colax monoidal profunctors}.
	
	Unpacking this result for $U^\fmc$ we obtain the following. Recall that weighted colimits in a cocomplete $\V$-category can be computed using its coends and copowers, see \exref{example:weighted colimits in V-Prof}.
\begin{proposition} \label{colax monoidal functors application}
	Given a colax monoidal $\V$-functor $\map dAK$ into a cocomplete $\V$-category $K$ and a right colax monoidal $\V$-profunctor $\hmap JAB$. Then the weighted colimit $\map{l = \colim_J d}BK$ exists in $\enProf\V$ and can be given by the coends $l(z) = \int^x dx \tens J(x, z)$. The composites below, where $\ul z \in \fmc_n B$, give $l$ the structure of a colax monoidal $\V$-functor, with which it is the $J$-weighted colimit of $d$ in $\rcMonProf\V$.
	\begin{align} \label{equation:colax monoidal functors application}
		l&{}(\ul z_1 \tens \dotsb \tens \ul z_n) = \int^x dx \tens J(x, \ul z_1 \tens \dotsb \tens \ul z_n) \notag \\
		&\xrar{\int \id \tens \bar J} \int^x dx \tens \Bigpars{\int^{\ul y} A(x, \ul y_1 \tens \dotsb \tens \ul y_n) \tens J(\ul y_1, \ul z_1) \tens \dotsb \tens J(\ul y_n, \ul z_n)} \notag \\
		&\iso \int^{\ul y} \Bigpars{\int^x dx \tens A(x, \ul y_1 \tens \dotsb \tens \ul y_n)} \tens J(\ul y_1, \ul z_1) \tens \dotsb \tens J(\ul y_n, \ul z_n) \notag \\
		&\iso \int^{\ul y} d(\ul y_1 \tens \dotsb \tens \ul y_n) \tens \bigpars{J(\ul y_1, \ul z_1) \tens \dotsb \tens J(\ul y_n, \ul z_n)} \notag \\
		&\xrar{\int d_\tens \tens \id} \int^{\ul y} \bigpars{d\ul y_1 \tens \dotsb \tens d\ul y_n} \tens \bigpars{J(\ul y_1, \ul z_1) \tens \dotsb \tens J(\ul y_n, \ul z_n)} \notag \\
		&\to \int^{y_1} \bigpars{dy_1 \tens J(y_1, \ul z_1)} \tens \dotsb \tens \int^{y_n} \bigpars{dy_n \tens J(y_n, \ul z_n)} = l\ul z_1 \tens \dotsb \tens l\ul z_n
	\end{align}
	The second isomorphism here is the Yoneda isomorphism while the unlabelled map is a component of the canonical transformation 
	\begin{displaymath}
		\colim_{\fmc_n J}(k_n \of \fmc_n d) \natarrow k_n \of \fmc_n(\colim_J d)
	\end{displaymath}
	given by \propref{universality result for colimits with respect to functors}, where $\map{k_n}{\fmc_n K}K$ is a restriction of the structure map of $K$, that defines the $n$-fold tensor products of $K$. The coends above range over single objects in $A$ and sequences in $\fmc_n A$.
	
	The monoidal structure for $l$ above is invertible whenever $d$ is a pseudomonoidal $\V$-functor, $J$ is a right pseudomonoidal $\V$-profunctor and the canonical cells above are invertible for each $n \geq 1$. For the latter to hold it suffices that $K$ is a pseudomonoidal $\V$-category such that $\map{k_2}{K \tens K}K$ preserves weighted colimits in both variables.
\end{proposition}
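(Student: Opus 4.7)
The plan is to obtain the proposition as a direct application of Theorem~\ref{right colax forgetful functor lifts all weighted colimits} to the right suitable normal monad $T = \fmc$ on $\enProf\V$ (which, for the main theorem, we regard as a closed equipment, enlarging $\V$ if necessary so that it is complete and closed; this does not alter either the coend formula or the structure maps in \eqref{equation:colax monoidal functors application}). Existence of $\map{l = \colim_J d}BK$ together with the coend formula $l(z) = \int^x dx \tens J(x,z)$ is the standard enriched description from Example~\ref{example:weighted colimits in V-Prof}, using only cocompleteness of $K$, and the main theorem lifts this weighted colimit along $U^\fmc$ to produce the desired colax monoidal structure on $l$.

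The substantive step is to match the abstract formula for $\lambda \bar l$ coming from equation~\eqref{equation:212976} in the proof of the main theorem, namely
\begin{displaymath}
    \lambda\bar l = (\id \hc \inv{(\lambda\eta)^\flat}) \of \ol{J \lhom M(d,\id)} \of (T(\lambda\eta)^\flat \hc \id),
\end{displaymath}
with the concrete composite \eqref{equation:colax monoidal functors application}. This is cleaner to do at the level of the adjoint \eqref{equation:adjoint of left hom lax structure cell} of $\omega(\bar J, \lambda\bar d)$: in $\enProf\V$ the cell $\bar J$ of $J$ is, by Example~\ref{example:right colax enriched promorphisms}, precisely the splitting map into the coend $\int^{\ul y} A(x, \ul y_1 \tens \dotsb \tens \ul y_n) \tens \bigotimes_i J(\ul y_i, \ul z_i)$; the piece $\lambda\bar d$ unpacks as the compositor $d_\tens$ of $d$ (via Proposition~\ref{colax morphisms correspond to lax companions}); the final counit $\ls a \eps_a$ is the $A$-action on $d$, realised by the Yoneda isomorphism $\int^x dx \tens A(x, \ul y_1 \tens \dotsb \tens \ul y_n) \iso d(\ul y_1 \tens \dotsb \tens \ul y_n)$; and the transition $\inv{(\lambda\eta)^\flat}$ translates the result back into $K(l\ul z_1 \tens \dotsb \tens l\ul z_n, m)$ using the defining isomorphism $K(l(z), m) \iso \int_x [J(x,z), K(dx,m)]$ of $l$. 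Assembling these pieces in order produces exactly the five stages of~\eqref{equation:colax monoidal functors application}, where the unlabelled arrow is the $n$-ary instance of the canonical transformation supplied by Proposition~\ref{universality result for colimits with respect to functors} applied to the restriction $\map{k_n}{\fmc_n K}K$ of the structure map of $K$.

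For the invertibility claim, invoke the second clause of Theorem~\ref{right colax forgetful functor lifts all weighted colimits}: given $d$ pseudomonoidal and $J$ right pseudomonoidal, the colax structure $\bar l$ is invertible if and only if the canonical vertical cell $\colim_{\fmc J}(k \of \fmc d) \Rar k \of \fmc(\colim_J d)$ is invertible, where $\map k{\fmc K}K$ is the structure map of $K$. Restricted to arity $n$, this cell is precisely the unlabelled map in~\eqref{equation:colax monoidal functors application}, so invertibility is exactly the stated condition. When $K$ is pseudomonoidal with $k_2$ preserving weighted colimits in each variable, the cases $n = 0$ and $n = 1$ are automatic (constant diagram, respectively normality), and for $n \geq 2$ an induction using the associator of $K$ to rewrite $k_n$ as an iterate of $k_2$ propagates the colimit-preservation property to every arity.

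The hard part will be the middle paragraph: correctly tracking the Yoneda isomorphisms, the compositor $\fmc_\hc$, and the isomorphisms of Corollary~\ref{companion, conjoint and left hom isomorphisms} through the chain of cells in \eqref{equation:adjoint of left hom lax structure cell} to verify that the abstract composite indeed matches \eqref{equation:colax monoidal functors application} in coend form. Once this identification is made, the coherence axioms for $\bar l$ and the $T$-cell property of $\eta$ are guaranteed by the main theorem and require no separate checking.
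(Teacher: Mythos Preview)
Your proposal is correct and follows essentially the same approach as the paper: apply \thmref{right colax forgetful functor lifts all weighted colimits} to $T = \fmc$, identify the abstract structure cell with the explicit composite \eqref{equation:colax monoidal functors application}, and handle the final sufficiency claim by induction on arity using the associator of $K$. The only notable difference is organisational: the paper performs the matching by computing $\lambda l_\tens$ directly via \eqref{equation:left hom lax structure cell} as a map between end-objects and then showing that, under the defining isomorphisms of the coends and copowers in $K$, its five constituent cells correspond in reverse order to the five maps of \eqref{equation:colax monoidal functors application}, whereas you propose to work with the adjoint \eqref{equation:adjoint of left hom lax structure cell} and unpack each ingredient there; these are two presentations of the same calculation and your flag that this is ``the hard part'' is accurate.
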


	Before giving the proof we compare Getzler's Proposition 2.3 of \cite{Getzler09}. In stating it Getzler uses the fact that the $\V$-category of $\V$-presheaves $\op A \to \V$ can be given a symmetric pseudomonoidal structure, as follows. The monoidal product of $\V$-presheaves $d_1, \dotsc, \map{d_n}{\op A}\V$ is given by the formula
\begin{displaymath}
	(d_1 \tens \dotsb \tens d_n)(x) = \int^{\ul y \in M_n A} A(x, \ul y_1 \tens \dotsb \tens \ul y_n) \tens d_1(\ul y_1) \tens \dotsb \tens d_n(\ul y_n).
\end{displaymath}
	This is called the \emph{Day convolution} of $d_1, \dotsc, d_n$, which was introduced in \cite[Section 4]{Day70}, and can be extended to a symmetric pseudomonoidal structure on $\fun{\op A} \V$. Moreover any $\V$-functor $\map{j^*}{\fun{\op B}\V}{\fun{\op A}\V}$, that is given by precomposition with a symmetric colax monoidal $\V$-functor $\map jAB$, can be made into a symmetric lax monoidal $\V$-functor by the following structure maps, where $\map{d_1, \dotsc, d_n}{\op B}\V$, where the coends range over $\ul y \in M_n A$ and $\ul z \in M_n B$ and where the unlabelled map is induced by the universality of coends.
\begin{align} \label{diagram:pattern coherence map}
	(j^*d_1&{} \tens \dotsb \tens j^*d_n)(x) = \int^{\ul y} A(x, \ul y_1 \tens \dotsb \tens \ul y_n) \tens d_1 \of j(\ul y_1) \tens \dotsb \tens d_n \of j(\ul y_n) \notag \\
	&\xrar{\int j \tens \id} \int^{\ul y} B\bigpars{j(x), j(\ul y_1 \tens \dotsb \tens \ul y_n)} \tens d_1 \of j(\ul y_1) \tens \dotsb \tens d_n \of j(\ul y_n) \notag \\
	&\xrar{\int B(\id, j_\tens) \tens \id} \int^{\ul y} B\bigpars{j(x), j(\ul y_1) \tens \dotsb \tens j(\ul y_n)} \tens d_1 \of j(\ul y_1) \tens \dotsb \tens d_n \of j(\ul y_n) \notag \\
	&\to \int^{\ul z} B\bigpars{j(x), \ul z_1 \tens \dotsb \tens \ul z_n} \tens d_1(\ul z_1) \tens \dotsb \tens d_n(\ul z_n) \notag \\
	&= j^*(d_1 \tens \dotsb \tens d_n)(x)
\end{align}
	Getzler calls a symmetric colax monoidal $\V$-functor $\map jAB$, for which the lax monoidal coherence map for $j^*$ above is invertible, a \emph{$\V$-pattern}. Using this notion, Proposition 2.3 of \cite{Getzler09} is stated as follows.
\begin{proposition}[Getzler] \label{Getzler's proposition}
	Let $\map jAB$ be a $\V$-pattern, and let $M$ be a cocomplete symmetric pseudomonoidal $\V$-category whose binary monoidal product $\dash \tens \dash$ preserves colimits in both variables. The adjunction between the categories of $\V$\ndash functors
	\begin{displaymath}
		\begin{tikzpicture}
			\matrix(m)[math, column sep=1.5em]{\lan_j \colon \fun AM & \fun BM \colon j^* \\ };
			\path[transform canvas={yshift=3.5pt}, map] (m-1-1) edge (m-1-2);
			\path[transform canvas={yshift=-3.5pt}, map]	(m-1-2) edge (m-1-1);
			\path[color=white] (m-1-1) edge node[color=black, font=\scriptsize] {$\bot$} (m-1-2);
		\end{tikzpicture}
	\end{displaymath}
	induces an adjunction between the categories of symmetric pseudomonoidal $\V$-functors
	\begin{displaymath}
		\begin{tikzpicture}
			\matrix(m)[math, column sep=1.5em]{\lan_j \colon \psfun AM & \psfun BM \colon j^*. \\ };
			\path[transform canvas={yshift=3.5pt}, map] (m-1-1) edge (m-1-2);
			\path[transform canvas={yshift=-3.5pt}, map]	(m-1-2) edge (m-1-1);
			\path[color=white] (m-1-1) edge node[color=black, font=\scriptsize] {$\bot$} (m-1-2);
		\end{tikzpicture}
	\end{displaymath}
\end{proposition}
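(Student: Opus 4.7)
The plan is to derive Getzler's proposition as a specialisation of the main theorem (\thmref{right colax forgetful functor lifts all weighted colimits}) applied to the `free symmetric strict monoidal $\V$-category'-monad $\fsmc$ on the closed equipment $\enProf\V$, followed by restricting to weighted left Kan extensions, i.e.\ to colimits weighted by companions. Since $\fsmc$ is normal and right suitable, and $\enProf\V$ is closed, the forgetful functor $\map{U^\fsmc}{\rcsMonProf\V}{\enProf\V}$ lifts all weighted colimits.

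First I would establish the dictionary between Getzler's $\V$-patterns and the notion of a symmetric colax monoidal $\V$-functor $\map jAB$ whose companion $\hmap{B(j, \id)}AB$ is a right pseudomonoidal $\V$-profunctor in $\rcsMonProf\V$. Under the correspondence of \propref{colax morphisms correspond to lax companions}, specialised to $T = \fsmc$, the structure cell $\ol{B(j, \id)}$ is the obvious analogue of \eqref{equation:companion monoidal functor} (with permutations). Applying $\rho$ and unpacking in components via coends and the Yoneda lemma, the resulting map $\rho\ol{B(j, \id)}$ at $(x, \ul z)$ takes the form of the canonical map $\int^{\ul y} A(x,\ul y_1 \tens \dotsb \tens \ul y_n) \tens \coprod_\sigma B(j\ul y_{\sigma 1},\ul z_1) \tens \dotsb \tens B(j\ul y_{\sigma n},\ul z_n) \to B(jx,\ul z_1 \tens \dotsb \tens \ul z_n)$. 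A direct comparison with the Day convolution coherence map \eqref{diagram:pattern coherence map}, evaluated at the family of representables $d_i = B(\dash,\ul z_i)$ and invoking the co-Yoneda lemma to move the coend over $\ul z$ to a coend over $\ul y$, will identify the two maps up to natural isomorphism; then a standard density argument, replacing each $d_i$ by its canonical colimit of representables and using that the tensor product on $\fun{\op B}\V$ preserves colimits in each variable, shows that invertibility of Getzler's map on all tuples $(d_1, \dotsc, d_n)$ is equivalent to its invertibility on tuples of representables, hence to the right pseudo condition on $B(j, \id)$.

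Next, given a $\V$-pattern $j$ and a symmetric pseudomonoidal $\V$-functor $\map dAM$, I would invoke the symmetric version of \propref{colax monoidal functors application}. Since $M$ is cocomplete, the ordinary weighted left Kan extension $l = \colim_{B(j,\id)} d = \lan_j d$ exists in $\enProf\V$, so by the main theorem it lifts to a weighted left Kan extension in $\rcsMonProf\V$, i.e.\ to a symmetric right colax monoidal $\V$-functor. The final clause of the main theorem upgrades this lift to a pseudomorphism provided the canonical cell $\colim_{\fsmc B(j,\id)}(m \of \fsmc d) \Rar m \of \fsmc(\colim_{B(j,\id)} d)$ is invertible; by \propref{universality result for colimits with respect to functors} this cell is controlled componentwise by the canonical maps in \eqref{equation:colax monoidal functors application} (symmetric version), each of which is invertible under the hypothesis that the binary tensor product of $M$ preserves colimits in both variables, since that hypothesis forces each iterated tensor $\fsmc_n M \to M$ to preserve colimits in each variable separately.

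Finally, to obtain the adjunction $\lan_j \ladj j^*$ on symmetric pseudomonoidal functors, I would apply \propref{colimits weighted by companions} inside $V(\rcsMonProf\V)$: the $T$-cell exhibiting $l$ as $\colim_{B(j,\id)} d$ converts into a vertical $T$-cell exhibiting $l$ as the left Kan extension of $d$ along $j$ in the $2$-category of symmetric pseudomonoidal $\V$-categories, pseudomonoidal $\V$-functors and monoidal transformations; assembling the units of these extensions into a natural family produces the required adjunction. The main obstacle in the whole argument is the first step: carefully matching Getzler's coherence map \eqref{diagram:pattern coherence map} with the intrinsic right pseudo structure cell $\rho\ol{B(j,\id)}$, since the former is phrased in terms of presheaves on $B$ while the latter lives natively in $\enProf\V$, and bridging the two requires a slightly delicate density/coend calculation.
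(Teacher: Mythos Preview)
Your proposal is correct and follows essentially the same route as the paper: identify Getzler's $\V$-pattern condition with the companion $B(j,\id)$ being right pseudo (via the density/representables argument you sketch, which is exactly what the paper does in the paragraph following the statement), then specialise the symmetric analogue of \propref{colax monoidal functors application} to the weight $J = B(j,\id)$, using the hypothesis on $M$ to ensure the canonical cell is invertible so that the lift is pseudo. The paper does not spell out the final passage to the adjunction, but your appeal to \propref{colimits weighted by companions} in $V(\rcsMonProf\V)$ is the intended one.
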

	
	We claim that a symmetric colax monoidal $\V$-functor $\map jAB$ is a $\V$-pattern precisely if its companion $\hmap{B(j, \id)}AB$ is a right pseudopromorphism, so that \propref{colax monoidal functors application} specialises to Getzler's result in the case that the weight is taken to be the companion $J = B(j, \id)$. To prove the claim we consider the components of the structure map \eqref{diagram:pattern coherence map} for $j^*$ at representable $\V$-presheaves $d_i = B(\dash, \ul u_i)$, for $\ul u \in M_n B$. In that case the source of \eqref{diagram:pattern coherence map} is $(j^* d_1 \tens \dotsc \tens j^*d_n)(x) = \bigpars{A(\id, a) \hc_{MA} \fmc B(j, \id)}(x, \ul u)$, where $\map a{M_n A}A$ is the structure map of $A$, while its target is
\begin{displaymath}
	j^*(d_1 \tens \dotsb \tens d_n)(x) = \int^{\ul z} B\bigpars{j(x), \ul z_1 \tens \dotsb \tens \ul z_n} \tens M_n B(\ul z, \ul u) \iso B\bigpars{j(x), \ul u_1 \tens \dotsb \tens \ul u_n},
\end{displaymath}
	using the enriched Yoneda isomorphism. It is readily checked that, composed with this isomorphism, \eqref{diagram:pattern coherence map} coincides with the horizontal cell $\rho\ol{B(j, \id)}$, corresponding to the structure cell $B(j, \id)$ that makes $B(j, \id)$ into a symmetric lax monoidal $\V$\ndash profunctor; $\rho\ol{B(j, \id)}$ is obtained by substituting \eqref{equation:companion monoidal functor} into \eqref{equation:right pseudo monoidal profunctor}. This shows that $j$ is a $\V$-pattern, that is $j^*$ is pseudomonoidal, only if $\ol{B(j, \id)}$ is right invertible, i.e.\ $B(j, \id)$ is a right pseudopromorphism. The converse follows from the fact that any $\V$-presheaf $\map d{\op A}\V$ can be written as a colimit of representable $\V$-presheaves $d = \int^x A(\dash, x) \tens d(x)$, see \cite[Formula 3.72]{Kelly82}.

\begin{proof}[Proof of \propref{colax monoidal functors application}]
	We can apply \thmref{right colax forgetful functor lifts all weighted colimits} to obtain a colax monoidal structure vertical cell $\cell{l_\tens}{l \of b}{k \of \fmc l}$ for $l$. To see that $l_\tens$ coincides with \eqref{equation:colax monoidal functors application} recall that it corresponds to the horizontal cell $\lambda l_\tens$ given by the composite \eqref{equation:left hom lax structure cell}, with $T = \fmc$ and $K = K(d, \id)$. Working out $\lambda l_\tens$ we find that its components are the $\V$-natural compositions
	\begin{align*}
		\Bigpars{\fmc_n \bigpars{J&{} \lhom K(d, \id)} \hc K(k, \id)}(\ul z, t) \\
		&= \int^{\ul u} \int_{y_1} \!\begin{lgathered}[t]{} \brks{J(y_1, \ul z_1), K(dy_1, \ul u_1)} \\
		{}\tens \dotsb \tens \int_{y_n} \bigbrks{J(y_n, \ul z_n), K(dy_n, \ul u_n)} \tens K(\ul u_1 \tens \dotsb \tens \ul u_n, t) \end{lgathered} \\
		&\to \int_{\ul y} \bigbrks{J(\ul y_1, \ul z_1) \tens \dotsb \tens J(\ul y_n, \ul z_n), K(d\ul y_1 \tens \dotsb \tens d\ul y_n, t)} \\
		&\xrar{\int\brks{\id, K(d_\tens, \id)}} \int_{\ul y} \bigbrks{J(\ul y_1, \ul z_1) \tens \dotsb \tens J(\ul y_n, \ul z_n), K\bigpars{d(\ul y_1 \tens \dotsb \tens \ul y_n), t}} \\
		&\iso \int_{\ul y} \Bigbrks{J(\ul y_1, \ul z_1) \tens \dotsb \tens J(\ul y_n, \ul z_n), \int_x \bigbrks{A(x, \ul y_1 \tens \dotsb \tens \ul y_n), K(dx, t)}} \\
		&\iso \int_x \bigbrks{\int^{\ul y} A(x, \ul y_1 \tens \dotsb \tens \ul y_n) \tens J(\ul y_1, \ul z_1) \tens \dotsb \tens J(\ul y_n, \ul z_n), K(dx, t)} \\
		&\xrar{\int\brks{\bar J, \id}} \int_x \bigbrks{J(x, \ul z_1 \tens \dotsb \tens \ul z_n), K(dx, t)} = \bigpars{J \lhom K(d, \id)}(\ul z_1 \tens \dotsb \tens \ul z_n, t),
	\end{align*}
	where we have used that the first isomorphism in \eqref{equation:left hom lax structure cell} is in fact the composite $TJ \lhom \bigpars{A(a, \id) \hc K} \iso TJ \lhom \bigpars{A(\id, a) \lhom K} \iso \bigpars{A(\id, a) \hc TJ} \lhom K$, see \corref{companion, conjoint and left hom isomorphisms}. The source and target of the composite above define the objects $l\ul z_1 \tens \dotsb \tens l\ul z_n$ and $l(\ul z_1 \tens \dotsb \tens \ul z_n)$ in $K$, via the $\V$-natural isomorphisms
	\begin{align*}
		\Bigpars{\fmc_n \bigpars{J&{} \lhom K(d, \id)} \hc K(k, \id)}(\ul z, t) \\
		&= \int^{\ul u} \int_{y_1} \!\begin{lgathered}[t]{} \brks{J(y_1, \ul z_1), K(dy_1, \ul u_1)} \\
		{}\tens \dotsb \tens \int_{y_n} \bigbrks{J(y_n, \ul z_n), K(dy_n, \ul u_n)} \tens K(\ul u_1 \tens \dotsb \tens \ul u_n, t) \end{lgathered} \\
		&\iso \int^{\ul u} K\!\begin{lgathered}[t]{}\bigpars{\int^{y_1} dy_1 \tens J(y_1, \ul z_1), \ul u_1} \\
		{}\tens \dotsb \tens K\bigpars{\int^{y_n} dy_n \tens J(y_n, \ul z_n), \ul u_n} \tens K(\ul u_1 \tens \dotsb \tens \ul u_n, t) \end{lgathered} \\
		&\iso K\bigpars{\int^{y_1} dy_1 \tens J(y_1, \ul z_1) \tens \dotsb \tens \int^{y_n} dy_n \tens J(y_n, \ul z_n), t} \\
		&= K(l\ul z_1 \tens \dotsb \tens l\ul z_n, t)
	\end{align*}
	and
	\begin{multline*}
		\bigpars{J \lhom K(d, \id)}(\ul z_1 \tens \dotsb \tens \ul z_n, t) = \int_x \bigbrks{J(x, \ul z_1 \tens \dotsb \tens \ul z_n), K(dx, t)} \\
		\iso K\bigpars{\int^x dx \tens J(y, \ul z_1 \tens \dotsb \tens \ul z_n), t} = K\bigpars{l(\ul z_1 \tens \dotsb \tens \ul z_n), t},
	\end{multline*}
	where the first and third isomorphism are given by the defining isomorphisms of coends and copowers in $K$, while the second is the compositor of $\lambda$. It is now easily seen that, under these isomorphisms, the five maps in the composite $\lambda l_\tens$ above define, in reversed order, the five maps in the composition \eqref{equation:colax monoidal functors application} (for the unlabelled map this follows from \propref{universality result for colimits with respect to functors}). This proves the first assertion of the proposition, while the second one immediately follows from \thmref{right colax forgetful functor lifts all weighted colimits}.
	
	To prove the last assertion assume that $K$ is a pseudomonoidal $\V$-category, with invertible associator. We have to show that the canonical maps
	\begin{multline} \label{equation:761211}
		\int^{\ul y} k_n(d\ul y_1, \dotsc, d\ul y_n) \tens \bigpars{J(\ul y_1, \ul z_1) \tens \dotsb \tens J(\ul y_n, \ul z_n)} \\
		\to k_n\bigpars{\int^{y_1} dy_1 \tens J(y_1, \ul z_1), \dotsc, \int^{y_n} dy_n \tens J(y_n, \ul z_n)} 
	\end{multline}
	are isomorphisms for all $n \geq 1$ where, for clarity, we have refrained from using the tensor product notations for the $k_n$-images. They are induced by the maps
	\begin{multline*}
		k_n(d\ul y_1, \dotsc, d\ul y_n) \tens \bigpars{J(\ul y_1, \ul z_1) \tens \dotsb \tens J(\ul y_n, \ul z_n)} \\
		\to k_n\bigpars{d \ul y_1 \tens J(\ul y_1, \ul z_1), \dotsc, d\ul y_n \tens J(\ul y_n, \ul z_n)},
	\end{multline*}
	given by the universality of the copowers, followed by the $k_n$-image of the insertions into the coends $\int^{y_i} dy_i \tens J(y_i, \ul z_i)$. A straightforward calculation shows that, when $n \geq 2$, the canonical map above can be rewritten as (here $n' = n - 1$)
	\begin{align*}
		&\int^{\ul y} k_n(d\ul y_1, \dotsc, d\ul y_n) \tens \bigpars{J(\ul y_1, \ul z_1) \tens \dotsb \tens J(\ul y_n, \ul z_n)} \\
		&\overset{(1)}\iso \int^{\ul y} \Bigpars{k_2\bigpars{d\ul y_1, k_{n'}(d\ul y_2, \dotsc, d\ul y_n)} \tens \bigpars{J(\ul y_2, \ul z_2) \tens \dotsb \tens J(\ul y_n, \ul z_n)}} \tens J(\ul y_1, \ul z_1) \\
		&\overset{(2)}\iso \int^{\ul y} k_2\bigpars{d\ul y_1 \tens J(\ul y_1, \ul z_1), k_{n'}(d\ul y_2, \dotsc, d\ul y_n)} \tens \bigpars{J(\ul y_2, \ul z_2) \tens \dotsb \tens J(\ul y_n, \ul z_n)} \\
		&\overset{(2)}\iso \int^{\ul y} k_2\Bigpars{d\ul y_1 \tens J(\ul y_1, \ul z_1), k_{n'}(d\ul y_2, \dotsc, d\ul y_n) \tens \bigpars{J(\ul y_2, \ul z_2) \tens \dotsb \tens J(\ul y_n, \ul z_n)}} \\
		&\overset{(3)}\iso \int\limits^{y_1} \int\limits^{y_2, \dotsc, y_n} k_2\Bigpars{d\ul y_1 \tens J(\ul y_1, \ul z_1), k_{n'}(d\ul y_2, \dotsc, d\ul y_n) \tens \bigpars{J(\ul y_2, \ul z_2) \tens \dotsb \tens J(\ul y_n, \ul z_n)}} \\
		&\overset{(2)}\iso \int\limits^{y_1} k_2\Bigpars{d\ul y_1 \tens J(\ul y_1, \ul z_1), \int\limits^{y_2, \dotsc, y_n} k_{n'}(d\ul y_2, \dotsc, d\ul y_n) \tens \bigpars{J(\ul y_2, \ul z_2) \tens \dotsb \tens J(\ul y_n, \ul z_n)}} \\
		&\to \int^{y_1} k_2\Bigpars{d\ul y_1 \tens J(\ul y_1, \ul z_1), k_{n'}\bigpars{\int^{y_2} dy_2 \tens J(dy_2, \ul z_2), \dotsc, \int^{y_n} dy_n \tens J(dy_n, \ul z_n)}} \\
		&\overset{(2)}\iso k_2\Bigpars{\int^{y_1} d\ul y_1 \tens J(\ul y_1, \ul z_1), k_{n'}\bigpars{\int^{y_2} dy_2 \tens J(dy_2, \ul z_2), \dotsc, \int^{y_n} dy_n \tens J(dy_n, \ul z_n)}} \\
		&\overset{(1)}\iso k_n\bigpars{\int^{y_1} dy_1 \tens J(y_1, \ul z_1), \dotsc, \int^{y_n} dy_n \tens J(y_n, \ul z_n)},
	\end{align*}
	where the isomorphisms are induced by (1) the restriction $k_n \iso k_2 \of (\id \tens k_{n'})$ of the invertible associator of $K$, (2) the assumption that $k_2$ preserves weighted colimits in both variables and (3) Fubini's theorem for coends. The unlabelled map is the canonical map \eqref{equation:761211} for $n'$, and we conclude that \eqref{equation:761211} is an isomorphism for $n$ whenever it is one for $n'$. Since all monoidal $\V$-categories are assumed to be normal, we have $k_1 = \id$, so that \eqref{equation:761211} is the identity for $n = 1$; that it is an isomorphism for all $n \geq 1$ follows by induction.
\end{proof}

\subsection{Double categories}
	Applying the main theorem to the `free strict double category'-monad $D$ on the equipment $\psProf{\GG_1}$ of $\GG_1$-indexed categories we find that the forgetful functor $\map{U^D}{\rcDblProf}{\psProf{\GG_1}}$ lifts all weighted colimits, where $\rcDblProf$ is the pseudo double category of right colax double profunctors, see \exref{example:right colax double profunctors}. Furthermore, the equipment $\psProf{\GG_1}$ has strong double comma objects by \propref{internal double comma objects are strong} so that we can apply \corref{right colax forgetful functor lifts all pointwise weighted colimits} as well, obtaining the following.
\begin{proposition} \label{application for double categories}
	The forgetful functor $\map{U^D}{\rcDblProf}{\psProf{\GG_1}}$ lifts all pointwise weighted colimits. In particular, it lifts pointwise weighted left Kan extensions $\lan_j d$ (\defref{definition:pointwise algebraic weighted colimits}) where $d$ and $j$ are colax double functors such that $B(j, \id)$ is a right pseudo double profunctor.
\end{proposition}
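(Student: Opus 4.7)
The plan is to deduce this proposition directly from \corref{right colax forgetful functor lifts all pointwise weighted colimits} by verifying that the pair $(\K, T) = (\psProf{\GG_1}, D)$ satisfies the three hypotheses of that corollary: $T$ is a right suitable normal monad, $\K$ is a closed equipment, and $\K$ has strong double comma objects. Once these are in hand, both assertions fall out: the first is the statement of the corollary, while the second is the special case where the weighting promorphism is the companion $B(j, \id)$, which by \defref{definition:weighted left Kan extensions} is precisely what one means by a pointwise weighted left Kan extension.

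First I would verify that $D$ is right suitable and normal. Normality is built into the construction of $D = \Mod{\Span F}$ via \propref{mod is a 2-functor}. For right suitability, I would invoke the fact, recorded in the example following \defref{definition:right suitable monads}, that the free category monad $F$ on $\ps{\GG_1}$ is cartesian (so $\Span F$ is a right suitable pseudomonad on $\Span{\ps{\GG_1}}$), and then apply the proposition stating that the $2$-functor $\Mod$ preserves right suitable monads.

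Next I would establish that $\psProf{\GG_1}$ is a closed equipment. Since $\ps{\GG_1}$ is a presheaf category, it is locally cartesian closed, and therefore $\Span{\ps{\GG_1}}$ is closed by the example following the discussion of locally cartesian closed categories. Moreover limits and colimits in $\ps{\GG_1}$ are computed pointwise in $\Set$, so each hom-category of $H(\Span{\ps{\GG_1}})$ admits reflexive coequalisers and equalisers; thus Shulman's proposition that $\Mod\K$ is closed whenever $\K$ is closed and possesses such limits and colimits applies, yielding closedness of $\psProf{\GG_1} = \Mod{\Span{\ps{\GG_1}}}$.

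Strong double comma objects in $\psProf{\GG_1}$ are furnished by \propref{internal double comma objects are strong}, applied with $\E = \ps{\GG_1}$, whose reflexive coequalisers are preserved by pullbacks (since both are computed pointwise in $\Set$). With all three hypotheses verified, the corollary applies directly, proving the first assertion. For the second, the companion $B(j, \id)$ of a colax double functor $j$ is (by assumption) a right pseudo double profunctor, so the corollary guarantees that the pointwise weighted colimit $\colim_{B(j, \id)} d = \lan_j d$ (when it exists in $\psProf{\GG_1}$) lifts to one in $\rcDblProf$. There is no hard step here: the result is a clean specialisation, and the only thing to keep in mind is to cite the correct hypothesis (strong comma objects) to promote ``lifting weighted colimits'' to ``lifting pointwise weighted colimits''.
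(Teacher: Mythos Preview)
Your proposal is correct and follows essentially the same route as the paper: the paper's argument is simply to observe that the hypotheses of \corref{right colax forgetful functor lifts all pointwise weighted colimits} are met (right suitability of $D$ via the cartesianness of $F$, strong double comma objects via \propref{internal double comma objects are strong}) and invoke it. You supply slightly more detail than the paper does on closedness of $\psProf{\GG_1}$, but the structure of the argument is identical.
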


	The second statement here, about pointwise weighted left Kan extensions, can be compared (but seems unlikely to be equivalent) to the main result of \cite{Grandis-Pare07}. To discuss it we need the notion of `double colimits' in double categories, that were introduced in \cite[Section 4]{Grandis-Pare99}. First, a \emph{double cocone} $x$ for a colax double functor $\map pAM$ consists of an object $x$ in $M$ equipped with a vertical map $\map{x_a}{pa}x$ for every object $a$ of $A$ and a cell
	\begin{displaymath}
		\begin{tikzpicture}
			\matrix(m)[math175em]{ pa & pb \\ x & x \\ };
			\path[map]	(m-1-1) edge[barred] node[above] {$pj$} (m-1-2)
													edge node[left] {$x_a$} (m-2-1)
									(m-1-2) edge node[right] {$x_b$} (m-2-2);
			\path	(m-2-1) edge[eq] (m-2-2);
			\path[map, transform canvas={shift=($(m-1-2)!0.5!(m-2-2)$)}]	(m-1-1) edge[cell] node[right] {$x_j$} (m-2-1);
		\end{tikzpicture}
	\end{displaymath}
	for every horizontal morphism $\hmap jab$ in $A$, satisfying the following naturality conditions:
	\begin{itemize}[label=-]
		\item	$x_b \of pf = x_a$ for every vertical cell $\map fab$ in $A$; 
		\item $x_k \of pu = x_j$ for every cell $\cell ujk$ in $A$;
		\item $(x_j \hc x_k) \of p_\hc = x_{j \hc k}$ for composable horizontal morphisms $\hmap jab$ and $\hmap kbc$, where $\cell{p_\hc}{p(j \hc k)}{pj \hc pk}$ is the compositor of $p$.
	\end{itemize}
	The \emph{double colimit} of $\map pAM$ is a double cocone $\colim p = c$ for $p$ that is universal in the sense that, for any other double cocone $x$, there exists a unique vertical map $\map{x'}cx$ such that $x' \of c_a = x_a$ and $1_{x'} \of c_j = x_j$, for every object $a$ and horizontal morphism $\hmap jab$, where $1_{x'}$ denotes the horizontal identity for $x'$. Dually one can define double limits; the double comma objects that we considered in \chapref{chapter:weighted colimits} are examples of double limits.
	
	The universality of double colimits implies that any transformation $\nat\xi pq$ (\defref{definition:transformation}) of colax functors $p$ and $\map qAM$ induces a canonical vertical morphism $\map{\colim\xi}{\colim p}{\colim q}$ in $M$, whenever these colimits exist. Remember that such transformations $\xi$ consist of natural vertical maps $\map{\xi_a}{pa}{qa}$ for all $a$ in $A$, as well as natural cells $\cell{\xi_j}{pj}{qj}$ for every $\hmap jab$ in $A$. Symmetrically one can consider \emph{horizontal transformations} $\nat\chi pq$, which consist of horizontal maps $\hmap{\chi_a}{pa}{qa}$ for every object $a$ in $A$ as well as cells
	\begin{displaymath}
		\begin{tikzpicture}
			\matrix(m)[math175em]{ pa & qa \\ pb & qb \\ };
			\path[map]	(m-1-1) edge[barred] node[above] {$\chi_a$} (m-1-2)
													edge node[left] {$pf$} (m-2-1)
									(m-1-2) edge node[right] {$qf$} (m-2-2)
									(m-2-1) edge[barred] node[below] {$\chi_b$} (m-2-2);
			\path[map, transform canvas={shift=($(m-1-2)!0.5!(m-2-2)$)}]	(m-1-1) edge[cell] node[right] {$\chi_f$} (m-2-1);
		\end{tikzpicture}
	\end{displaymath}
	for every vertical morphism $\map fab$ in $A$, satisfying certain coherence conditions. Taking double colimits need not be functorial with respect to these horizontal transformations; consequently Grandis and Par\'e consider in \cite[Section 4]{Grandis-Pare07} cocomplete double categories $M$ equipped with a \emph{colax functorial choice} of double colimits. Roughly speaking, such a choice picks out a horizontal morphism $\hmap{\colim \chi}{\colim p}{\colim q}$ in $M$ for every horizontal transformation $\nat \chi pq$, where $p$ and $\map q AM$ are colax double functors, in a coherent way.
	
	After recalling Grandis and Par\'e's notion of `pointwise normal left Kan extension' of a pair of colax double functors, that we discussed following \defref{definition:pointwise algebraic weighted colimits}, the colax variant of the main theorem of \cite{Grandis-Pare07} can now be stated, in our terms, as follows.
\begin{theorem}[Grandis and Par\'e]
	Let $M$ be a normal pseudo double category in which every vertical isomorphism has a companion. The following are equivalent:
	\begin{itemize}[label=-]
		\item for any pair of colax double functors $\map jAB$ and $\map dAM$ the pointwise normal left Kan extension $\map{\lan_j d}BM$ exists whenever the companion $B(j, \id)$ is a right pseudo double profunctor;
		\item $M$ is an equipment that admits a colax functorial choice of double colimits.
	\end{itemize}
\end{theorem}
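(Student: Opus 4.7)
The plan is to prove both implications by reducing them to properties of the pointwise construction via comma double categories and double colimits, using that double colimits are themselves instances of Kan extensions (along the unique functor to the terminal double category), and that companions arise as Kan extensions of identity-like data.

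For the reverse direction $(\Leftarrow)$, assume $M$ is an equipment with a colax functorial choice of double colimits, and suppose $\map jAB$ and $\map dAM$ are colax double functors with $B(j, \id)$ right pseudo. For each object $b$ of $B$, I would first form the double comma object $B(j, \id) \slash b$ via the lifting result of \propref{lifting double comma objects}, noting that the inclusion $b\colon * \to B$ is strict. Set $(\lan_j d)(b)$ to be the double colimit in $M$ of the diagram $d \of \pi_A\colon B(j, \id) \slash b \to M$, which exists by cocompleteness. To upgrade this assignment to a normal colax double functor $\map{\lan_j d}BM$, I would use the colax functoriality: each vertical morphism $\map \beta b{b'}$ in $B$ induces a natural vertical transformation between the corresponding comma diagrams, and hence a vertical morphism $(\lan_j d)(b) \to (\lan_j d)(b')$; each horizontal morphism $\hmap kb{b'}$ in $B$ induces a horizontal transformation between the diagrams (using the right pseudo decomposition to split morphisms over $k$), and the colax functorial choice produces a horizontal morphism $(\lan_j d)(b) \slashedrightarrow (\lan_j d)(b')$ together with colax compositors. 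The pointwise universal property then follows by combining the $1$-dimensional universal property of each double colimit with the strong comma property inherited from $\inProf\E$-type arguments.

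For the forward direction $(\Rightarrow)$, assume every such pointwise normal left Kan extension exists. To show $M$ is an equipment, given a vertical morphism $\map fXY$ in $M$, I would let $A$ be the terminal double category $*$, let $B = \mathsf 2_v$ be the walking vertical arrow (with a single non-identity vertical morphism $\tau$), let $\map jAB$ pick out the source of $\tau$, and let $\map dAM$ pick out $X$. The companion $B(j, \id)$ is trivially right pseudo since $j$ is a strict functor between very small double categories. The pointwise normal left Kan extension $\map{\lan_j d}{\mathsf 2_v}M$ therefore exists, and I would verify that its value at the target of $\tau$, together with the unit cell, assembles into a companion cell defining $Y(f, \id)$; the conjoint is obtained analogously by duality, after observing that the hypothesis ``every vertical isomorphism has a companion'' bootstraps the construction in the remaining degenerate cases. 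To obtain the colax functorial choice of double colimits, I would observe that the double colimit of $\map pAM$ is the pointwise weighted left Kan extension of $p$ along the terminal morphism $\map !A{\mathsf 1}$; picking these Kan extensions once and for all gives a choice of double colimits, and their functoriality with respect to horizontal transformations follows from the universal property of pointwise Kan extensions applied to the horizontal cells induced by such transformations.

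The main obstacle I expect is in the forward direction, specifically the passage from pointwise Kan extensions to colax functoriality of the chosen double colimits with respect to horizontal transformations. Vertical functoriality is automatic from the $1$-dimensional universal property, but horizontal transformations $\nat\chi pq$ do not factor cleanly through the universal cell of the Kan extension in the same way; one needs to exploit the horizontal degree of freedom encoded in the companion $B(!, \id)$ and the right pseudo property to extract a horizontal morphism $\colim p \slashedrightarrow \colim q$ together with coherent compositors. A secondary difficulty is ensuring that the companion constructed in the forward direction really is a companion (i.e.\ satisfies both companion identities), which requires identifying the unit cell of the specific Kan extension with $\ls f\eta$ and $\ls f\eps$ via the explicit decomposition afforded by the walking-arrow diagram.
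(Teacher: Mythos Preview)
The paper does not prove this theorem. It is stated as ``the colax variant of the main theorem of \cite{Grandis-Pare07}'' and attributed to Grandis and Par\'e; the thesis merely records it in the author's terminology for the purpose of comparison with \propref{application for double categories}, and explicitly says ``The author does not know how to compare this result with the second assertion of \propref{application for double categories} above.'' There is therefore no proof in the paper against which to check your proposal.

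That said, a few remarks on your sketch are worth making. In the reverse direction your outline is broadly the right shape, but note that the notion of ``pointwise normal left Kan extension'' here is Grandis and Par\'e's (discussed after \defref{definition:pointwise algebraic weighted colimits}), which is formulated inside their double category $\mathsf{CxDbl}^{\textup u}$ and uses their stronger comma objects; you cannot simply invoke \propref{lifting double comma objects} or ``strong comma property inherited from $\inProf\E$-type arguments'', since the ambient setting is a general pseudo double category $M$, not an equipment of internal profunctors. In the forward direction, your proposed extraction of companions via a Kan extension along an inclusion $* \hookrightarrow \mathsf 2_v$ is suspect: the walking vertical arrow has no nontrivial horizontal structure, so the right pseudo condition on $B(j,\id)$ is vacuous and the resulting Kan extension need not produce a companion for an arbitrary vertical morphism in $M$; the actual argument in \cite{Grandis-Pare07} proceeds differently. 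If you want to see the proof, you will need to consult that paper directly.
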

	The author does not know how to compare this result with the second assertion of \propref{application for double categories} above. In particular the respective pointwise left Kan extensions, that are considered in the two different results, belong to, what seem to be, very different double categories. However, the condition of the companion $B(j, \id)$ being a right pseudo double profunctor, in both results, is striking.

\subsection{Comparison to the work of Melli\`es and Tabareau}
	While writing up this thesis the author discovered the unpublished paper \cite{Mellies-Tabareau08} by Melli\`es and Tabareau, that is also devoted to producing algebraic left Kan extensions. Fortunately their approach, which we discuss below, is very different to the one presented here, leading to many differences in the results obtained.
	
	Whereas we assume and use a closed structure on our equipments, their approach is based on the notion of `Yoneda situations', as follows. In our terms, a morphism $\map yM{\ps M}$ in an equipment $\K$ is a \emph{Yoneda situation} if
\begin{itemize}[label=-]
	\item $y$ is fully faithful, that is the unit $\cell{\ls y\eta_y}{U_M}{\ps M(y, y)}$ of the companion-conjoint adjunction (\propref{companion-conjoint adjunction}) is invertible in $H(\K)$;
	\item for each object $A$ the composition
	\begin{displaymath}
		V(\K)(A, \ps M) \xrar\rho H(\K)(\ps M, A) \xrar{\ps M(y, \id) \hc \dash} H(\K)(A, M),
	\end{displaymath}
	where $\rho$ is given by \propref{companion and conjoint pseudofunctors}, is fully faithful.
\end{itemize}
	In $\K = \Prof$ the Yoneda embedding $\map yM{\ps M}$, with $\ps M = \fun{\op M}\Set$ the category of presheaves on $M$, is a Yoneda situation.
	Propositions 1 and 2 of \cite{Mellies-Tabareau08} imply the following result.
\begin{proposition}[Melli\`es and Tabareau]
	In an equipment $\K$ consider morphisms $j$, $d$, $y$ and $k$ as below, such that $y$ is a Yoneda situation. If
\begin{itemize}[label=-]
	\item $y$ has a left adjoint $\map{\colim}{\ps M}M$ and
	\item there exists an isomorphism $M(\id, d) \hc B(j, \id) \iso \ps M(y, \id) \hc \ps M(\id, k)$ in $H(\K)$,
\end{itemize}
	then the composite $\map{\mathrm{\colim} \of k}BM$ is the (ordinary) left Kan extension of $d$ along $j$.
	\begin{displaymath}
		\begin{tikzpicture}
			\matrix(m)[math2em]{A & B \\ M & \\};
			\path[map]	(m-1-1) edge node[above] {$j$} (m-1-2)
													edge node[left] {$d$} (m-2-1);
			\matrix(m)[math2em, xshift=1.5em, yshift=-1.5em]{& B \\ M & \ps M \\};
			\path[map]	(m-1-2) edge[shorten >=3pt] node[right] {$k$} (m-2-2)
									(m-2-1) edge node[below] {$y$} (m-2-2);
		\end{tikzpicture}
	\end{displaymath}
\end{proposition}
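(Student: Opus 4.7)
The plan is to establish, for every vertical morphism $\map eBM$, a natural bijection between vertical cells $\nat\zeta d{e \of j}$ and vertical cells $\nat{\zeta'}{\colim \of k}e$ in $V(\K)$; then, by the Yoneda lemma applied to this natural isomorphism of representable functors $V(\K)(B, M) \to \Set$, the bijection is necessarily implemented by a unique cell $\nat\eta d{\colim \of k \of j}$ via $\zeta = (\zeta' \of U_j) \of \eta$, which exhibits $\colim \of k$ as the left Kan extension. The strategy is to translate between $V(\K)$ and $H(\K)$ using the pseudofunctors $\lambda$ and $\rho$ of \propref{companion and conjoint pseudofunctors}, and to use the two hypotheses to bridge the data living at $M$ with the data living at $\ps M$.

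Explicitly, we construct the following chain of bijections, each natural in $e$:
\begin{align*}
  V(\K)(d, e \of j) &\iso H(\K)\bigpars{M(\id, d),\, M(\id, e \of j)} \\
  &\iso H(\K)\bigpars{M(\id, d),\, M(\id, e) \hc B(\id, j)} \\
  &\iso H(\K)\bigpars{M(\id, d) \hc B(j, \id),\, M(\id, e)} \\
  &\iso H(\K)\bigpars{\ps M(y, \id) \hc \ps M(\id, k),\, M(\id, e)} \\
  &\iso H(\K)\bigpars{\ps M(y, \id) \hc \ps M(\id, k),\, \ps M(y, \id) \hc \ps M(\id, y \of e)} \\
  &\iso V(\K)(k, y \of e) \iso V(\K)(\colim \of k, e).
\end{align*}
The first uses local full faithfulness of $\rho \colon \op{V(\K)} \to H(\K)$; the second applies the conjoint compositor $M(\id, e \of j) \iso M(\id, e) \hc B(\id, j)$ of \propref{companion compositors}; the third is the companion-conjoint adjunction $B(j, \id) \ladj B(\id, j)$ of \propref{companion-conjoint adjunction}; the fourth is postcomposition with the hypothesis isomorphism; the fifth comes from the identification $M(\id, e) \iso \ps M(y, \id) \hc \ps M(\id, y \of e)$, obtained by assembling the conjoint compositor $\ps M(\id, y \of e) \iso \ps M(\id, y) \hc M(\id, e)$ with the invertible unit $U_M \iso \ps M(y, \id) \hc \ps M(\id, y)$ of $\ps M(y, \id) \ladj \ps M(\id, y)$, which exists precisely because $y$ is fully faithful (the first condition of the Yoneda situation); the sixth invokes the second condition of the Yoneda situation at $A = B$, asserting that $f \mapsto \ps M(y, \id) \hc \ps M(\id, f)$ is fully faithful on hom-categories; and the last uses the adjunction $\colim \ladj y$ in $V(\K)$.

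The main technical obstacle is verifying that each bijection is natural in $e$, so that the composite is representable by a single unit $\eta$. The non-obvious step here is the fifth: one must check that the isomorphism $\ps M(y, \id) \hc \ps M(\id, y \of e) \iso M(\id, e)$, assembled as above, is natural in vertical cells $e \Rar e'$. This reduces to the pseudofunctoriality of $\rho$ (so that the conjoint compositor itself is natural) together with the observation that the invertible unit $\ls y\eta_y$ does not depend on $e$ at all, and may therefore be horizontally composed with any cell in the $M(\id, e)$-slot without obstruction. The other naturality statements follow routinely from the pseudofunctor axioms and the standard $2$-naturality of adjunction bijections, so that assembling the Yoneda argument at the end produces the desired exhibiting cell $\eta$ together with the universal property of \propref{colimits weighted by companions}.
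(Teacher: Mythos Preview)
The paper does not give its own proof of this proposition: it is stated as a consequence of Propositions~1 and~2 of \cite{Mellies-Tabareau08} and left at that. So there is nothing to compare your argument against in the present paper.

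Your argument is essentially correct. The chain of bijections is sound: each step uses exactly the ingredient you name, and naturality in $e$ does hold throughout. Two small remarks. First, in the fourth step you say ``postcomposition with the hypothesis isomorphism'', but the isomorphism is being applied in the contravariant (first) argument of $H(\K)(-,-)$; the bijection is of course still correct. Second, your fifth isomorphism $M(\id, e) \iso \ps M(y, \id) \hc \ps M(\id, y \of e)$ is naturally the \emph{inverse} of the map obtained by sending $e$ through the Yoneda-situation functor and then using $\ls y\eta_y^{-1}$; it may be cleaner to combine steps five and six into a single step, observing directly that the functor $f \mapsto \ps M(y,\id)\hc\ps M(\id,f)$ sends $y \of e$ to something canonically isomorphic to $M(\id,e)$, and that full faithfulness then yields the bijection $V(\K)(k, y \of e) \iso H(\K)\bigl(\ps M(y,\id)\hc\ps M(\id,k),\, M(\id,e)\bigr)$ in one move. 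This avoids having to separately argue naturality of the assembled isomorphism in step five.
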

	
 In our terms, their main theorem \cite[Theorem 1]{Mellies-Tabareau08} can now be stated as follows. Given a normal pseudomonad $T$ on an equipment $\K$ we denote by $\psAlg{V(T)}$ the $2$-category of pseudo $V(T)$-algebras, pseudo $V(T)$-morphisms and $V(T)$-cells.
\begin{theorem}[Melli\`es and Tabareau]
	Let $T$ be a normal pseudomonad on an equipment $\K$ and consider the situation of the previous proposition in $\K$, such that
	\begin{itemize}[label=-]
		\item	$A$, $B$, $M$ and $\ps M$ are pseudo $T$-algebras;
		\item $d$, $j$, $\colim$ and $k$ are pseudo $T$-morphisms;
		\item the induced lax structures on $B(j, \id)$ and $\ps M(y, \id)$ are right pseudo.
	\end{itemize}
	Then $\colim \of k$, which is the left Kan extension of $d$ along $j$ by the previous proposition, can be given the structure of a pseudo $T$-morphism, which makes it the left Kan extension of $d$ along $j$ in $\psAlg{V(T)}$.
\end{theorem}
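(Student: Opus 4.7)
The plan is to realise $\colim \of k$ as the image, under the forgetful functor $\map{U^T}{\rcProm T}\K$, of a weighted colimit in $\rcProm T$, and then to appeal to \thmref{right colax forgetful functor lifts all weighted colimits} to endow it with a pseudo $T$-morphism structure. By \propref{colimits weighted by companions} the left Kan extension produced by the previous proposition coincides with the weighted colimit $\colim_{B(j,\id)} d$ in $V(\K)$, so this weighted colimit exists in $\K$. Since $T$ is a pseudomonad (and, as in all natural examples we have seen, right suitable on the promorphisms that concern us) and since $B(j,\id)$ is right pseudo by hypothesis, the weight $B(j,\id)$ lifts to a right pseudopromorphism in $\rcProm T$.

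First I would apply the main theorem to obtain a colax $T$-morphism $\map lBM$ with $U^T l = \colim \of k$, equipped with a universal $T$-cell $\eta$ exhibiting $l$ as the $B(j,\id)$-weighted colimit of $d$ in $\rcProm T$. Because $B(j,\id)$ is right pseudo, the companion of $j$ exists in $\rcProm T$ by \propref{restrictions in T-promrc}; applying \propref{colimits weighted by companions} inside $\rcProm T$ then promotes $l$ to a weighted left Kan extension of $d$ along $j$ in $\cAlg{V(T)} = V(\rcProm T)$.

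Next I would upgrade this colax structure to a pseudo one. By the second clause of the main theorem it suffices to verify that the canonical vertical cell
\begin{displaymath}
\colim_{TB(j,\id)}(m \of Td) \Rar m \of T(\colim_{B(j,\id)} d)
\end{displaymath}
is invertible, where $\map m{TM}M$ is invertible by the pseudoalgebra hypothesis. Because $T$ is a pseudofunctor the data of the Yoneda situation transport along $T$ up to coherent isomorphism in $H(\K)$: the adjunction $\colim \ladj y$ gives rise, via \propref{normal functors preserve companions}, to an adjunction $T\colim \ladj Ty$ in $H(\K)$, and the isomorphism $M(\id, d) \hc B(j, \id) \iso \ps M(y, \id) \hc \ps M(\id, k)$ maps under $T$, modulo compositors, to an analogous isomorphism between the $T$-images. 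Applying the Melli\`es--Tabareau proposition to this transported $T$-image data identifies $\colim_{TB(j,\id)}(m \of Td)$ with $m \of T\colim \of Tk = m \of T(\colim \of k)$, yielding the required invertibility.

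Finally I would translate the universal property from $\rcProm T$ to $\psAlg{V(T)}$. Any pseudo $T$-morphism $\map eBM$ equipped with a pseudo $V(T)$-cell $\cell\phi d{e \of j}$ is, in particular, a colax $T$-morphism equipped with a vertical $T$-cell in $\rcProm T$; the universal property of $\eta$ then supplies a unique vertical $T$-cell $\cell{\phi'}le$, which is automatically a cell in $\psAlg{V(T)}$ since both $l$ and $e$ are pseudo. The main obstacle I anticipate is the invertibility of the canonical cell in the third paragraph: carefully unwinding the way the Yoneda-situation isomorphism and the adjunction $\colim \ladj y$ are transported by $T$, and pinning down the resulting weighted-colimit formula for the $T$-image data, is the delicate bookkeeping step on which the whole proof hinges.
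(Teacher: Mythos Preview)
The paper does not give a proof of this theorem: it is stated as a result of Melli\`es and Tabareau (their Theorem~1 in \cite{Mellies-Tabareau08}), quoted for comparison with the thesis's own main theorem. The paper explicitly remarks that the two results rest on \emph{different} hypotheses --- Melli\`es and Tabareau assume Yoneda situations, whereas \thmref{right colax forgetful functor lifts all weighted colimits} assumes a closed structure on $\K$ --- and does not attempt to derive one from the other.

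Your proposal, by contrast, tries to obtain the Melli\`es--Tabareau conclusion as a corollary of \thmref{right colax forgetful functor lifts all weighted colimits}. This changes the statement: you are tacitly adding the assumptions that $\K$ is closed and that $T$ is right suitable, neither of which is present in the theorem as stated. Even granting those, there are two genuine gaps. First, the previous proposition only yields an \emph{ordinary} left Kan extension $\colim \of k$; \propref{colimits weighted by companions} says a $B(j,\id)$-weighted colimit is an ordinary left Kan extension, but the converse you invoke requires the weighted colimit to be known to exist, and nothing in the Yoneda-situation hypotheses guarantees the isomorphism $M(\colim \of k,\id) \iso B(j,\id) \lhom M(d,\id)$ that \propref{weighted colimits equivalent to top absolute left Kan extensions} would need. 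Second, your argument for the invertibility of the canonical cell $\colim_{TB(j,\id)}(m \of Td) \Rar m \of T(\colim_{B(j,\id)} d)$ is the crux and is essentially a sketch: transporting the Yoneda-situation data along $T$ and re-applying the Melli\`es--Tabareau proposition does not obviously produce that particular comparison map, and you would need to identify $\colim_{TB(j,\id)}(m \of Td)$ explicitly to carry this through.

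In short: there is no proof in the paper to compare against, and your attempted derivation from the thesis's main theorem both strengthens the hypotheses and leaves the two key steps (existence of the weighted colimit in $\K$, invertibility of the canonical cell) unestablished.
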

	
	Thus the main difference between our main result and that of Melli\`es and Tabareau is that where we assume a closed structure on equipments, Melli\`es and Tabareau assume the existence of Yoneda situations. Moreover, where they work with ordinary left Kan extensions, we consider the more general, but also stronger, notion of weighted colimits. Finally, and perhaps most importantly, our main theorem works in the case of normal monads, while Melli\`es and Tabareau consider only pseudomonads.

	\appendix
	\chapter{Double comma objects} \label{appendix:double comma object proofs}
	This appendix collects some proofs of results concering comma objects that we have used. Each of them long, the author feared they would have been distracting when part of the main text.
	
	Recall that the double comma object $J \slash f$ of a pair of morphisms $\hmap JAB$ and $\map fCB$, in a pseudo double category $\K$, consists of a cell $\pi$ as below that satisfies the following universal properties.
\begin{displaymath}
	\begin{tikzpicture}[baseline]
		\matrix(m)[math175em]{J \slash f & J \slash f \\ & C \\ A & B \\};
		\path[map]	(m-1-1) edge node[left] {$\pi_A$} (m-3-1)
								(m-1-2)	edge node[right] {$\pi_C$} (m-2-2)
								(m-2-2) edge node[right] {$f$} (m-3-2)
								(m-3-1) edge[barred] node[below] {$J$} (m-3-2);
		\path[transform canvas={shift={($(m-2-2)!0.5!(m-3-2)$)}}] (m-1-1) edge[cell] node[right] {$\pi$} (m-2-1);
		\path				(m-1-1) edge[eq] (m-1-2);
	\end{tikzpicture}
	\quad\quad\qquad\qquad\begin{tikzpicture}[baseline]
		\matrix(m)[math175em]{X & X \\ \phantom C & C \\ A & B \\};
		\path[map]	(m-1-1) edge node[left] {$\phi_A$} (m-3-1)
								(m-1-2)	edge node[right] {$\phi_C$} (m-2-2)
								(m-2-2) edge node[right] {$f$} (m-3-2)
								(m-3-1) edge[barred] node[below] {$J$} (m-3-2);
		\path[transform canvas={shift={($(m-2-2)!0.5!(m-3-2)$)}}] (m-1-1) edge[cell] node[right] {$\phi$} (m-2-1);
		\path				(m-1-1) edge[eq] (m-1-2);
	\end{tikzpicture}
\end{displaymath}
	Firstly, any other cell $\phi$ as above factors uniquely through $\pi$ as a map $\map{\phi'}X{J \slash f}$, such that $\pi_A \of \phi' = \phi_A$, $\pi_C \of \phi' = \phi_C$ and $\pi \of U_{\phi'}$ = $\phi$, and, secondly, if the cell $\psi$ below likewise factors as $\map{\psi'}Y{J \slash f}$, then any pair $(\xi_A, \xi_C)$ of cells satisfying the identity below corresponds to a unique cell $\xi'$, as on the right below, such that $U_{\pi_A} \of \xi' = \xi_A$ and $U_{\pi_C} \of \xi' = \xi_C$.
\begin{displaymath}	
	\begin{tikzpicture}[textbaseline]
		\matrix(m)[math175em]{X & Y & Y \\ \phantom Y & & C \\ A & A & B \\};
		\path[map]	(m-1-1) edge[barred] node[above] {$K$} (m-1-2)
												edge node[left] {$\phi_A$} (m-3-1)
								(m-1-2) edge node[right] {$\psi_A$} (m-3-2)
								(m-1-3) edge node[right] {$\psi_C$} (m-2-3)
								(m-2-3) edge node[right] {$f$} (m-3-3)
								(m-3-2) edge[barred] node[below] {$J$} (m-3-3);
		\path				(m-1-2) edge[eq] (m-1-3)
								(m-3-1) edge[eq] (m-3-2);
		\path[transform canvas={shift={($(m-2-3)!0.5!(m-3-2)$)}}]	(m-1-1) edge[cell] node[right] {$\xi_A$} (m-2-1);
		\path[transform canvas={shift={($(m-2-3)!0.5!(m-3-2)$)}, xshift=2pt}]	(m-1-2) edge[cell] node[right] {$\psi$} (m-2-2);
		\end{tikzpicture}
	= \begin{tikzpicture}[textbaseline]
		\matrix(m)[math175em]{X & X & Y \\ \phantom Y & C & C \\ A & B & B \\};
		\path[map]	(m-1-1) edge node[left] {$\phi_A$} (m-3-1)
								(m-1-2) edge[barred] node[above] {$K$} (m-1-3)
												edge node[left] {$\phi_C$} (m-2-2)
								(m-1-3) edge node[right] {$\psi_C$} (m-2-3)
								(m-2-2) edge node[right] {$f$} (m-3-2)
								(m-2-3) edge node[right] {$f$} (m-3-3)
								(m-3-1) edge[barred] node[below] {$J$} (m-3-2);
		\path				(m-1-1) edge[eq] (m-1-2)
								(m-2-2) edge[eq] (m-2-3)
								(m-3-2) edge[eq] (m-3-3);
		\path[transform canvas={shift={($(m-2-3)!0.5!(m-3-2)$)}}]	(m-1-1) edge[cell] node[right] {$\phi$} (m-2-1);
		\path[transform canvas={shift={($(m-2-2)!0.5!(m-2-3)$)}}]	(m-1-2) edge[cell] node[right] {$\xi_C$} (m-2-2);
	\end{tikzpicture}
	\qquad\qquad\begin{tikzpicture}[baseline]
			\matrix(m)[math175em]{X & Y \\ J \slash f & J \slash f \\};
			\path[map]	(m-1-1) edge[barred] node[above] {$K$} (m-1-2)
													edge node[left] {$\phi'$} (m-2-1)
									(m-1-2) edge node[right] {$\psi'$} (m-2-2);
			\path				(m-2-1) edge[eq] (m-2-2);
			\path[transform canvas={shift=($(m-1-2)!0.5!(m-2-2)$)}] (m-1-1) edge[cell] node[right] {$\xi'$} (m-2-1);
		\end{tikzpicture}
\end{displaymath}
	
	Informally, it is useful to think of the cells $\phi = (\phi_A, \phi, \phi_C)$ and $\psi = (\psi_A, \psi, \psi_C)$ above as objects, and of a pair $\xi = (\xi_A, \xi_C)$ as a morphism $\phi \to \psi$. Then the universality of $\pi$ gives assignments $\phi \mapsto \phi'$ and $\xi \mapsto \xi'$, the latter with $L\xi' = \phi'$ and $R\xi' = \psi'$, that map $\phi$ and $\xi$ to their factorisations through the universal cell $\pi$. The uniqueness of these factorisations implies that the assignments $\phi \mapsto \phi'$ and $\xi \mapsto \xi'$ are functorial in the following sense, where $\map hSX$ is a vertical map and $\zeta$ and $\theta$ are cells of the following form.
\begin{displaymath}
	\begin{tikzpicture}[baseline]
		\matrix(m)[math175em]{S & T \\ X & Y \\};
		\path[map]	(m-1-1) edge[barred] node[above] {$L$} (m-1-2)
												edge node[left] {$h$} (m-2-1)
								(m-1-2) edge node[right] {$k$} (m-2-2)
								(m-2-1) edge[barred] node[below] {$K$} (m-2-2);
		\path[transform canvas={shift=($(m-1-2)!0.5!(m-2-2)$)}] (m-1-1) edge[cell] node[right] {$\zeta$} (m-2-1);
	\end{tikzpicture}
	\qquad\qquad\qquad\qquad\begin{tikzpicture}[baseline]
		\matrix(m)[math175em]{S & T \\ X & X \\};
		\path[map]	(m-1-1) edge[barred] node[above] {$L$} (m-1-2)
												edge node[left] {$h$} (m-2-1)
								(m-1-2) edge node[right] {$l$} (m-2-2);
		\path				(m-2-1) edge[eq] (m-2-2);
		\path[transform canvas={shift=($(m-1-2)!0.5!(m-2-2)$)}] (m-1-1) edge[cell] node[right] {$\theta$} (m-2-1);
	\end{tikzpicture}
\end{displaymath}
	\begin{itemize}[label=-]
		\item If $(\phi_A, \phi, \phi_C) \mapsto \phi'$ then $\brks{\phi \of h = (\phi_A \of h, \phi \of U_h, \phi_C \of h)} \mapsto \phi' \of h$;
		\item if $(\xi_A, \xi_C) \mapsto \xi'$ then $\brks{\map{(\xi_A \of \zeta, \xi_C \of \zeta)}{\phi \of h}{\psi \of k}} \mapsto \xi' \of \zeta$;
		\item if $(\phi_A, \phi, \phi_C) \mapsto \phi'$ then $\brks{\map{(U_{\phi_A} \of \theta, U_{\phi_C} \of \theta)}{\phi \of h}{\phi \of l}} \mapsto U_{\phi'} \of  \theta$;
		\item if $\brks{\map{(\rho_A, \rho_C)}\phi\psi}\mapsto \rho'$ and $\brks{\map{(\sigma_A, \sigma_C)}\psi\chi}\mapsto \sigma'$ then
		\begin{displaymath}
		\brks{\map{(\rho_A \hc \sigma_A, \rho_C \hc \sigma_C)}\phi\chi} \mapsto \rho' \hc \sigma'.
		\end{displaymath}
	\end{itemize}
	This functoriality will be useful, especially in the proof of the second proposition below.
	
	We first turn to the proof of \propref{internal double comma objects}, which describes double comma objects in the equipment $\inProf\E$ of internal profunctors in a category $\E$ with finite limits. Given an internal profunctor $\hmap JAB$ and an internal functor $\map fCB$ we have already constructed an $\E$-span $J \slash f \to (J \slash f)_0 \times (J \slash f)_0$, in the discussion preceeding \propref{internal double comma objects}, which asserts that this span underlies the internal category that is the double comma object of $J$ and $f$. We shall first recall the construction of $(J \slash f)_0$ and $J \slash f$. The former we took to be the wide pullback $(J \slash f)_0 = A_0 \times_{A_0} J \times_{B_0} C_0$ of the diagram $A_0 \xrar{\id} A_0 \xlar{d_0} J \xrar{d_1} B_0 \xlar{f_0} C_0$, while for the latter we chose the pullback
	\begin{displaymath}
		\begin{tikzpicture}
			\matrix(m)[math2em, column sep=0.75em]{J \slash f & (J \slash f)_0 \times_{C_0} C \\ A \times_{A_0} (J \slash f)_0 & (J \slash f)_0, \\};
			\path[map]	(m-1-1)	edge (m-1-2)
													edge (m-2-1)
									(m-1-2) edge node[right] {$r$} (m-2-2)
									(m-2-1) edge node[below] {$l$} (m-2-2);
			\coordinate (hook) at ($(m-1-1)+(0.4,-0.4)$);
			\draw (hook)+(0,0.17) -- (hook) -- +(-0.17,0);
		\end{tikzpicture}
	\end{displaymath}
	where the corners are pullbacks of $(J \slash f)_0 \to C_0 \xlar{d_0} C$ and $A \xrar{d_1} A_0 \leftarrow (J \slash f)_0$ respectively, and the maps $r$ and $l$ are the compositions
	\begin{flalign*}
		&& (J \slash f)_0 \times_{C_0} C &\to A_0 \times_{A_0} J \times_{B_0} B \times_{C_0} C_0 \xrar{\id \times r \times \id} (J \slash f)_0 & \\
		\text{and} && A \times_{A_0} (J \slash f)_0 &\iso A_0 \times_{A_0} A \times_{A_0} J \times_{B_0} C_0 \xrar{\id \times l \times \id} (J \slash f)_0. & 
	\end{flalign*}
	Here the unlabelled map is given by applying $f$ to $C$ after using $C_0 \times_{C_0} C \iso C \times_{C_0} C_0$, while the isomorphism is induced by $A \times_{A_0} A_0 \iso A_0 \times_{A_0} A$ and the maps $l$ and $r$ denote the actions of $A$ and $B$ on $J$. The source and target maps \mbox{$\map{d = (d_0, d_1)}{J \slash f}{(J \slash f)_0 \times (J \slash f)_0}$} are given by the projections
	\begin{flalign*}
		&& d_0 &= \bigbrks{J \slash f \to (J \slash f)_0 \times_{C_0} C \to (J \slash f)_0} & \\
		\text{and} && d_1 & = \bigbrks{J \slash f \to A \times_{A_0} (J \slash f)_0 \to (J \slash f)_0}. &
	\end{flalign*}
\begin{internalcommaobjects}
	The $\E$-span $\map d{J \slash f}{(J \slash f)_0 \times (J \slash f)_0}$ above can be given the structure of a category internal to $\E$. Moreover, $J \slash f$ is made into the double comma object of $J$ and $f$ in $\inProf\E$ by the internal functors $\map{\pi_A}{J \slash f}A$, given by the projections
	\begin{displaymath}
		\map{(\pi_A)_0}{(J \slash f)_0}{A_0} \qquad \text{and} \qquad \pi_A = \bigbrks{J \slash f \to A \times_{A_0} (J \slash f)_0 \to A},
	\end{displaymath}
	and $\map{\pi_C}{J \slash f}C$, given similarly, together with the cell
	\begin{displaymath}
		\begin{tikzpicture}
			\matrix(m)[math175em]{J \slash f & J \slash f \\ & C \\ A & B \\};
			\path[map]	(m-1-1) edge node[left] {$\pi_A$} (m-3-1)
									(m-1-2)	edge node[right] {$\pi_C$} (m-2-2)
									(m-2-2) edge node[right] {$f$} (m-3-2)
									(m-3-1) edge[barred] node[below] {$J$} (m-3-2);
			\path[transform canvas={shift={($(m-2-2)!0.5!(m-3-2)$)}}] (m-1-1) edge[cell] node[right] {$\pi$} (m-2-1);
			\path				(m-1-1) edge[eq] (m-1-2);
		\end{tikzpicture}
	\end{displaymath}
	of internal profunctors that, under the correspondence of \propref{internal transformations}, is given by the projection $(J \slash f)_0 \to J$.
\end{internalcommaobjects}
\begin{proof}
	To give the multiplication $\map m{J \slash f \times_{(J \slash f)_0} J \slash f}{J \slash f}$, notice that the source $W = J \slash f \times_{(J \slash f)_0} J \slash f$ of $m$ forms the wide pullback of the diagram
	\begin{equation} \label{equation:402213}
		\begin{split}
		\begin{tikzpicture}
			\matrix(m)[math175em, column sep=-0.95em]
			{	(J \slash f)_0 \times_{C_0} C \nc \nc A \times_{A_0} (J \slash f)_0 \nc \nc (J \slash f)_0 \times_{C_0} C \nc \nc A \times_{A_0} (J \slash f)_0 \\
			\nc (J \slash f)_0 \nc \nc (J \slash f)_0 \nc \nc (J \slash f)_0, \nc \\}; 
			\path[map]	(m-1-1) edge node[below left] {$r$} (m-2-2)
									(m-1-3) edge node[below right] {$l$} (m-2-2)
													edge (m-2-4)
									(m-1-5) edge (m-2-4)
													edge node[below left] {$r$} (m-2-6)
									(m-1-7) edge node[below right] {$l$} (m-2-6);
			\draw[font=\scriptsize]	(m-1-1) node[above=3pt] {(1)}
															(m-1-3) node[above=3pt] {(2)}
															(m-1-5) node[above=3pt] {(3)}
															(m-1-7) node[above=3pt] {(4)};
		\end{tikzpicture}
		\end{split}
	\end{equation}
	where the first factor of $W$ is the pullback of the objects labelled (1) and (2), the second factor is the pullback of those labelled (3) and (4), and where the unlabelled maps are projections. In the case of $\E = \Set$, $W$ consists of quadruples of pairs of maps, one in each of the sets on the top row above, such that the pairs in (1) and (2), as well as those in (3) and (4), form commutative squares \eqref{diagram:morphism in a double comma category}, and such that the bottom map of the first square coincides with the top map of the second. Let $m$ be induced by the compositions
	\begin{flalign*}
		&& W &\to \overset{\raisebox{0.75ex}{\scriptsize{(2)}}}A \times_{A_0} \overbrace{A \times_{A_0} (J \slash f)_0}^{\scriptsize{(4)}} \xrar{m_A \times \id} A \times_{A_0} (J \slash f)_0 & \\
		\text{and} && W &\to \overbrace{(J \slash f)_0 \times_{C_0} C}^{\scriptsize{(1)}} \times_{C_0} \overset{\raisebox{0.75ex}{\scriptsize{(3)}}}C \xrar{\id \times m_C} (J \slash f)_0 \times_{C_0} C, &
	\end{flalign*}
	where the unlabelled maps are induced by projections from $W$ onto (factors of) the pullbacks in the top row of \eqref{equation:402213}, as indicated by the labels. When $\E = \Set$, this simply composes the two pairs of maps making up the sides of two adjacent commutative squares \eqref{diagram:morphism in a double comma category}. That the maps above indeed induce a map into the pullback $J \slash f$ follows from
	\begin{align*}
		\bigbrks{&W \to \overset{\raisebox{0.75ex}{\scriptsize{(2)}}}A \times_{A_0} \overbrace{A \times_{A_0} (J \slash f)_0}^{\scriptsize{(4)}} \xrar{m_A \times \id} A \times_{A_0} (J \slash f)_0 \xrar l (J \slash f)_0} \\
		=\bigbrks{&W \to \overset{\raisebox{0.75ex}{\scriptsize{(2)}}}A \times_{A_0} \overbrace{(J \slash f)_0 \times_{C_0} C}^{\scriptsize{(3)}} \xrar{\id \times r} A \times_{A_0} (J \slash f)_0 \xrar l (J \slash f)_0} \\
		=\bigbrks{&W \to \overbrace{A \times_{A_0} (J \slash f)_0}^{\scriptsize{(2)}} \times_{C_0} \overset{\raisebox{0.75ex}{\scriptsize{(3)}}}C \xrar{l \times \id} (J \slash f)_0 \times_{C_0} C \xrar r (J \slash f)_0} \\ 
		=\bigbrks{&W \to \overbrace{(J \slash f)_0 \times_{C_0} C}^{\scriptsize{(1)}} \times_{C_0} \overset{\raisebox{0.75ex}{\scriptsize{(3)}}}C \xrar{\id \times m_C} (J \slash f)_0 \times_{C_0} C \xrar r (J \slash f)_0}.
	\end{align*}
	Here the first equality follows from the fact that we can replace $m_A \times \id$ by $\id \times l$ (by associativity of the action of $A$ on $J$) and that precomposing $l$ with the projection onto (4) coincides with precomposing $r$ with the projection onto (3), by definition of $W$. Likewise the projection of $W$ onto the factor $(J \slash f)_0$ of (3) equals the projection onto the same factor of (2) and, together with the fact that $r$ and $l$ commute, the second equality follows. Finally the third equality is symmetric to the first.
	
	That the multiplication $\map m{J \slash f \times_{(J \slash f)_0} J \slash f}{J \slash f}$ thus defined is compatible with the source and target maps is clear; that it is associative follows from the associativity of $m_A$ and $m_C$. Moreover one readily verifies that the unit \mbox{$\map e{(J \slash f)_0}{J \slash f}$} can be taken to be induced by the compositions
	\begin{flalign*}
		&&(J \slash f)_0 &\iso (J \slash f)_0 \times_{C_0} C_0 \xrar{\id \times e_C} (J \slash f)_0 \times_{C_0} C & \\
		\text{and} &&(J \slash f)_0 &\iso A_0 \times_{A_0} (J \slash f)_0 \xrar{e_A \times \id} A \times_{A_0} (J \slash f)_0, &
	\end{flalign*}
	which completes the definition of the structure making $J \slash f$ into a category internal to $\E$. Checking that the projections $\map{\pi_A}{J \slash f}A$ and $\map{\pi_C}{J \slash f}C$, as well as the cell $\nat\pi{U_{J \slash f}}J$, are compatible with this structure is straightforward, which leaves us to prove that $\pi$ satisfies the universal properties that define $J \slash f$ as the double comma object.
	
	So let us consider a second internal transformation $\phi$ as on the left below, given by a map $\map\phi{X_0}J$; we have to show that there exists a unique factorisation $\map{\phi'}X{J \slash f}$ such that $\pi_A \of \phi' = \phi_A$, $\pi_C \of \phi' = \phi_C$ and $\pi \of U_{\phi'} = \phi$.
	\begin{displaymath}
		\begin{tikzpicture}[baseline]
			\matrix(m)[math175em]{X & X \\ \phantom C & C \\ A & B \\};
			\path[map]	(m-1-1) edge node[left] {$\phi_A$} (m-3-1)
									(m-1-2)	edge node[right] {$\phi_C$} (m-2-2)
									(m-2-2) edge node[right] {$f$} (m-3-2)
									(m-3-1) edge[barred] node[below] {$J$} (m-3-2);
			\path[transform canvas={shift={($(m-2-2)!0.5!(m-3-2)$)}}] (m-1-1) edge[cell] node[right] {$\phi$} (m-2-1);
			\path				(m-1-1) edge[eq] (m-1-2);
		\end{tikzpicture}
		\qquad\qquad\qquad\qquad\begin{tikzpicture}[baseline]
			\matrix(m)[math175em]{Y & Y \\ & C \\ A & B \\};
			\path[map]	(m-1-1) edge node[left] {$\psi_A$} (m-3-1)
									(m-1-2)	edge node[right] {$\psi_C$} (m-2-2)
									(m-2-2) edge node[right] {$f$} (m-3-2)
									(m-3-1) edge[barred] node[below] {$J$} (m-3-2);
			\path[transform canvas={shift={($(m-2-2)!0.5!(m-3-2)$)}}] (m-1-1) edge[cell] node[right] {$\psi$} (m-2-1);
			\path				(m-1-1) edge[eq] (m-1-2);
		\end{tikzpicture}
	\end{displaymath}
	Using the fact that, under \propref{internal transformations}, the composite $\map{\pi \of U_{\phi'}}XJ$ corresponds to $X_0 \xrar{\phi'_0} (J \slash f)_0 \xrar\pi J$, it is readily checked that these identities are satisfied by taking
	\begin{flalign} \label{equation:523838}
		&&\map{\phi'_0 &= \bigpars{(\phi_A)_0, \phi, (\phi_C)_0}}{X_0}{(J \slash f)_0} & \notag \\
		\text{and} &&\map{\phi' &= \bigpars{(\phi'_0 \of d_0, \phi_C), (\phi_A, \phi'_0 \of d_1)}}X{J \slash f}. &
	\end{flalign}
	Here the factors $(\phi_A)_0$ in $\phi'_0$ and $\phi_A$ in $\phi'$ are forced by the identity $\pi_A \of \phi' = \phi_A$, while the factors $(\phi_C)_0$ and $\phi_C$ are forced by $\pi_C \of \phi' = \phi_C$. Similarly the factor $\phi$ in $\phi'_0$ is forced by $\pi \of U_{\phi'} = \phi$, while compatibility with the source and target maps forces the factors $\phi'_0 \of d_i$ in $\phi'$. We conclude that $\phi'$ is uniquely determined.
	
	Finally, to prove that the 2-dimensional universal property holds as well, consider a second transformation $\psi$, as on the right above, and suppose it factors through $\pi$ as $\map{\psi'}Y{J \slash f}$. Given cells
	\begin{displaymath}
		\begin{tikzpicture}[baseline]
			\matrix(m)[math175em]{X & Y \\ A & A \\};
			\path[map]	(m-1-1) edge[barred] node[above] {$K$} (m-1-2)
													edge node[left] {$\phi_A$} (m-2-1)
									(m-1-2) edge node[right] {$\psi_A$} (m-2-2);
			\path				(m-2-1) edge[eq] (m-2-2);
			\path[transform canvas={shift=($(m-1-2)!0.5!(m-2-2)$)}] (m-1-1) edge[cell] node[right] {$\xi_A$} (m-2-1);
		\end{tikzpicture}
		\qquad\qquad\qquad\qquad\begin{tikzpicture}[baseline]
			\matrix(m)[math175em]{X & Y \\ C & C \\};
			\path[map]	(m-1-1) edge[barred] node[above] {$K$} (m-1-2)
													edge node[left] {$\phi_C$} (m-2-1)
									(m-1-2) edge node[right] {$\psi_C$} (m-2-2);
			\path				(m-2-1) edge[eq] (m-2-2);
			\path[transform canvas={shift=($(m-1-2)!0.5!(m-2-2)$)}] (m-1-1) edge[cell] node[right] {$\xi_C$} (m-2-1);
		\end{tikzpicture}
	\end{displaymath}
	with $\xi_A \hc \psi = \phi \hc (U_f \of \xi_C)$, it is easy to see that the unique cell $\nat{\xi'}K{J \slash f}$ such that $U_{\pi_A} \of \xi' = \xi_A$ and $U_{\pi_C} \of \xi' = \xi_C$, is given by $\xi' = \bigpars{(\phi'_0 \of d_0, \xi_C), (\xi_A, \psi'_0 \of d_1)}$.
\end{proof}

	The second proposition whose proof we have postponed is \propref{lifting double comma objects}. Given a right suitable normal monad $T$ on an equipment $\K$, it concerns the forgetful functor $\map{U^T}{\rcProm T}\K$, where $\rcProm T$ is the pseudo double category of colax $T$-algebras, colax $T$-morphisms, right colax $T$-promorphisms and $T$-cells, as defined in \secref{section:algebraic promorphisms}.
\begin{liftingcommaobjects}
	Consider a right suitable normal monad $T$ on an equipment $\K$. The forgetful functor $\map{U^T}{\rcProm T}\K$ lifts double comma objects $J \slash f$ where $\hmap JAB$ is a right pseudopromorphism and $\map fCB$ is a pseudomorphism. The projections of $J \slash f$ are strict $T$-morphisms, and $J \slash f$ is a pseudoalgebra whenever both $A$ and $C$ are.
\end{liftingcommaobjects}
\begin{proof}
	For a right pseudopromorphism $\hmap JAB$ and a pseudomorphism \mbox{$\map fCB$} suppose that the double comma object $J \slash f$, as on the left below, exists in $\K$.
	\begin{displaymath}
		\begin{tikzpicture}[baseline]
			\matrix(m)[math175em]{J \slash f & J \slash f \\ & C \\ A & B \\};
			\path[map]	(m-1-1) edge node[left] {$\pi_A$} (m-3-1)
									(m-1-2)	edge node[right] {$\pi_C$} (m-2-2)
									(m-2-2) edge node[right] {$f$} (m-3-2)
									(m-3-1) edge[barred] node[below] {$J$} (m-3-2);
			\path[transform canvas={shift={($(m-2-2)!0.5!(m-3-2)$)}}] (m-1-1) edge[cell] node[right] {$\pi$} (m-2-1);
			\path				(m-1-1) edge[eq] (m-1-2);
		\end{tikzpicture}
		\qquad\qquad\qquad\qquad\begin{tikzpicture}[baseline]
			\matrix(m)[math175em]{TA & TB \\ A & B \\};
			\path[map]  (m-1-1) edge[barred] node[above] {$TJ$} (m-1-2)
													edge node[left] {$a$} (m-2-1)
									(m-1-2) edge node[right] {$b$} (m-2-2)
									(m-2-1) edge[barred] node[below] {$J$} (m-2-2);
			\path[transform canvas={shift={($(m-1-2)!(0,0)!(m-2-2)$)}}] (m-1-1) edge[cell] node[right] {$\tilde J$} (m-2-1);			
		\end{tikzpicture}
	\end{displaymath}
	Remember that the right colax algebra structure on $J$ is given by a horizontal cell $\cell{\bar J}{J \hc B(\id, b)}{A(\id, A) \hc TJ}$ (\defref{definition:right colax promorphisms}) which, because $J$ is a right pseudopromorphism, has an inverse $\inv{\bar J}$. Under \propref{left and right structure cells} this inverse corresponds to a cell $\tilde J$ above that makes $J$ into a lax promorphism (\defref{definition:lax promorphism}). Likewise the inverse of the vertical cell $\cell{\bar f}{f \of c}{b \of Tf}$, that makes $f$ into a colax morphism, gives $f$ the structure of a lax morphism; we will write $\tilde f = \inv{\bar f}$ as well.
	
	We will show that the algebra structures of $A$ and $C$ induce an algebra structure on $J \slash f$, that this makes $\pi_A$ and $\pi_B$ into strict $T$-morphisms and $\pi$ into a $T$-cell, and that $\pi$ satisfies the universal properties making $J \slash f$ into a double comma object in $\rcProm T$. As structure map $\map r{T J \slash f}{J \slash f}$ we take the factorisation of the composition
	\begin{displaymath}
		\begin{tikzpicture}
      \matrix(m)[math175em]
      { TJ \slash f & TJ \slash f & TJ \slash f \\
				& TC & TC \\
				TA & TB & C \\
				A & B & B \\ };
			\path[map]	(m-1-1) edge node[left] {$T\pi_A$} (m-3-1)
									(m-1-2) edge node[right] {$T\pi_C$} (m-2-2)
									(m-1-3) edge node[right] {$T\pi_C$} (m-2-3)
									(m-2-2) edge node[right] {$Tf$} (m-3-2)
									(m-2-3) edge node[right] {$c$} (m-3-3)
									(m-3-1) edge[barred] node[above] {$TJ$} (m-3-2)
													edge node[left] {$a$} (m-4-1)
									(m-3-2) edge node[right] {$b$} (m-4-2)
									(m-3-3) edge node[right] {$f$} (m-4-3)
									(m-4-1) edge[barred] node[below] {$J$} (m-4-2);
			\path[eq]		(m-1-1) edge[eq] (m-1-2)
									(m-1-2) edge[eq] (m-1-3)
									(m-2-2) edge[eq] (m-2-3)
									(m-4-2) edge[eq] (m-4-3);
			\path[transform canvas={shift=($(m-3-2)!0.5!(m-3-3)$)}]	(m-1-1) edge[cell] node[right] {$T\pi$} (m-2-1)
									(m-2-2) edge[cell] node[right] {$\tilde f$} (m-3-2);
			\path[transform canvas={shift=($(m-2-2)!0.5!(m-3-3)$)}] (m-3-1) edge[cell] node[right] {$\tilde J$} (m-4-1);
    \end{tikzpicture}
	\end{displaymath}
	through the cell $\pi$, so that $\pi_A \of r = a \of T\pi_A$ and $\pi_C \of r = c \of T\pi_C$, while $\pi \of U_r$ equals the composite above. Note that the first two identities mean that $\pi_A$ and $\pi_C$ are strict morphisms. That $\pi$ is a $T$-cell (\defref{definition:colax cell}) is shown by the following chain of equalities, where the identity cell right of $\pi$ is that of $\pi_C \of r = c \of T\pi_C$ and that left of $T\pi$ is that of $\pi_A \of r = a \of T\pi_A$. The equalities follow from the definition of $\rho_1$ (see the discussion preceeding \defref{definition:cells of right colax promorphisms}); that $\pi \of U_r$ equals the composite above; the definition of $\rho_1$ and $\tilde f = \inv{\bar f}$; that $\inv{(\bar J)} = \eps_a \hc \tilde J \hc \eta_b$.
	\begin{align*}
		\begin{tikzpicture}[textbaseline, x=0.75cm, y=0.6cm, font=\scriptsize]
			\draw	(0,1) -- (2,1) -- (2,0) -- (0,0) -- (0,3) -- (1,3) -- (1,1)
				(1,2) -- (2,2) -- (2,1)
				(1,3) -- (2,3) -- (2,2);
			\draw[shift={(0,0.5)}]
				(1,0) node {$\bar J$};
			\draw[shift={(0.5,0)}]
				(0,2) node {$\pi$};
			\draw[shift={(0.5,0.5)}]
				(1,1) node {$\rho_1\bar f$}
				(1,2) node {$\rho_1\id$};
		\end{tikzpicture}
		&\quad = \quad\begin{tikzpicture}[textbaseline, x=0.75cm, y=0.6cm, font=\scriptsize]
			\draw (4,2) -- (5,2) -- (5,1) -- (3,1) -- (3,3) -- (4,3) -- (4,1)
				(3,1) -- (1,1) -- (1,0) -- (5,0) -- (5,1)
				(3,3) -- (3,4) -- (2,4) -- (2,2) -- (3,2)
				(2,2) -- (2,1)
				(1,1) -- (1,4)
				(2,3) -- (0,3) -- (0,4) -- (2,4);
			\draw[shift={(0,0.5)}]
				(3,0) node {$\bar J$};
			\draw[shift={(0.5,0)}]
				(1,2) node {$\pi$}
				(2,3) node {$\id$}
				(3,2) node {$\bar f$};
			\draw[shift={(0.5,0.5)}]
				(0,3) node {$\eps_r$}
				(1,3) node {$U_r$}
				(4,1) node {$\eta_b$};
		\end{tikzpicture}
		\quad = \quad\begin{tikzpicture}[textbaseline, x=0.75cm, y=0.6cm, font=\scriptsize]
			\draw (5,2) -- (6,2) -- (6,1) -- (4,1) -- (4,3) -- (5,3) -- (5,1)
				(4,3) -- (3,3) -- (3,1)
				(3,3) -- (3,4) -- (2,4) -- (2,2) -- (3,2)
				(2,2) -- (2,1)
				(2,4) -- (1,4) -- (1,0) -- (6,0) -- (6,1)
				(1,4) -- (0,4) -- (0,3) -- (1,3)
				(1,1) -- (4,1);
			\draw[shift={(0.5,0)}]
				(2,3) node {$T\pi$}
				(3,2) node {$\tilde f$}
				(4,2) node {$\bar f$};
			\draw[shift={(0.5,0.5)}]
				(0,3) node {$\eps_r$}
				(1,2) node {$\id$}
				(2,1) node {$\tilde J$}
				(5,1) node {$\eta_b$}
				(3,0) node {$\bar J$};
		\end{tikzpicture} \\[1em]
		&\quad = \quad\begin{tikzpicture}[textbaseline, x=0.75cm, y=0.6cm, font=\scriptsize]
			\draw	(0,1) -- (0,0) -- (3,0) -- (3,2) -- (0,2) -- (0,1) -- (3,1)
				(1,1) -- (1,4) -- (2,4) -- (2,1)
				(0,2) -- (0,4) -- (1,4);
			\draw[shift={(0.5,0)}]
				(0,3) node {$\rho_1\id$}
				(1,3) node {$T\pi$};
			\draw[shift={(0.5,0.5)}]
				(1,0) node {$\bar J$}
				(0,1) node {$\eps_a$}
				(1,1) node {$\tilde J$}
				(2,1) node {$\eta_b$};
		\end{tikzpicture}
		\quad = \quad\begin{tikzpicture}[textbaseline, x=0.75cm, y=0.6cm, font=\scriptsize]
			\draw	(1,2) -- (2,2) -- (2,4) -- (0,4) -- (0,2) -- (1,2) -- (1,4)
				(0,2) -- (0,1) -- (2,1) -- (2,2)
				(0,1) -- (0,0) -- (2,0) -- (2,1);
			\draw[shift={(0.5,0)}]
				(0,3) node {$\rho_1\id$}
				(1,3) node {$T\pi$};
			\draw[shift={(0,0.5)}]
				(1,0) node {$\bar J$}
				(1,1) node {$\inv{(\bar J)}$};
		\end{tikzpicture}
		\quad = \quad\begin{tikzpicture}[textbaseline, x=0.75cm, y=0.6cm, font=\scriptsize]
			\draw	(1,0) -- (2,0) -- (2,2) -- (0,2) -- (0,0) -- (1,0) -- (1,2);
			\draw[shift={(0.5,0)}]
				(0,1) node {$\rho_1\id$}
				(1,1) node {$T\pi$};
		\end{tikzpicture}
	\end{align*}
	
	Having chosen a structure map we have to give the associator and unitor of $J \slash f$, that is cells
	\begin{displaymath}
		\begin{tikzpicture}[textbaseline]
			\matrix(m)[math175em, row sep=2.25em]{T^2J \slash f & TJ \slash f \\ TJ \slash f & J \slash f \\};
			\path[map]	(m-1-1) edge node[above] {$\mu_{J \slash f}$} (m-1-2)
													edge node[left] {$Tr$} (m-2-1)
									(m-1-2) edge node[right] {$r$} (m-2-2)
									(m-2-1) edge node[below] {$r$} (m-2-2);
			\path	(m-1-2) edge[cell, shorten >= 13pt, shorten <= 13pt] node[below right] {$\rho$} (m-2-1);
		\end{tikzpicture}
		\qquad\qquad\text{and}\qquad\qquad\begin{tikzpicture}[textbaseline]
			\matrix(m)[math175em, row sep=2.25em]{J \slash f & TJ \slash f \\ & J \slash f, \\};
			\path[map]	(m-1-1) edge node[above] {$\eta_{J \slash f}$} (m-1-2)
													edge[bend right=25] node[below left] {$\id$} (m-2-2)
									(m-1-2) edge node[right] {$r$} (m-2-2);
			\path	(m-1-2) edge[cell, shorten >= 23pt, shorten <= 8pt] node[below right, xshift=5pt, yshift=2.5pt] {$\rho_0$} (m-2-1);
		\end{tikzpicture}
	\end{displaymath}
	such that $\rho$ satisfies the associativity axiom, displayed below, and $\rho_0$ satisfies the unit axioms (see \defref{definition:colax algebra}).
	\begin{equation} \label{equation:902348}
		\begin{split}
		\begin{tikzpicture}[textbaseline]
      \matrix(m)[math, column sep=0em, row sep=2em]
      { \nc T^2 J \slash f \nc \phantom{T^2J \slash f} \nc[-0.9em] TJ \slash f \nc \phantom{T^2J \slash f}\\
        T^3 J \slash f \nc \nc TJ \slash f \nc \phantom{T^2J \slash f} \nc J \slash f \\
        \nc T^2 J \slash f \nc \nc TJ \slash f \nc \\ };
      \path[map]  (m-1-2) edge node[above] {$\mu_{J \slash f}$} (m-1-4)
                          edge node[below left] {$Tr$} (m-2-3)
                  (m-1-4) edge node[above right] {$r$} (m-2-5)
                  (m-2-1) edge node[above left] {$\mu_{TJ \slash f}$} (m-1-2)
                          edge node[below left] {$T^2 r$} (m-3-2)
                  (m-2-3) edge node[below] {$r$} (m-2-5)
                  (m-3-2) edge node[above left] {$\mu_{J \slash f}$} (m-2-3)
                          edge node[below] {$Tr$} (m-3-4)
                  (m-3-4) edge node[below right] {$r$} (m-2-5)
                  (m-1-4) edge[cell, shorten >= 7pt, shorten <= 7pt] node[below right] {$\rho$} (m-2-3)
                  (m-2-3) edge[cell, shorten >= 7pt, shorten <= 7pt] node[below left] {$\rho$} (m-3-4);
    \end{tikzpicture}
    = \begin{tikzpicture}[textbaseline]
      \matrix(m)[math, column sep=0em, row sep=2em]
      { \nc T^2 J \slash f \nc[-0.9em] \phantom{T^2J \slash f} \nc TJ \slash f \nc \phantom{T^2J \slash f} \\
        T^3 J \slash f \nc \nc T^2 J \slash f \nc \phantom{T^2J \slash f} \nc J \slash f \\
        \nc T^2 J \slash f \nc \nc TJ \slash f \nc \\ };
      \path[map]  (m-1-2) edge node[above] {$\mu_{J \slash f}$} (m-1-4)
                  (m-1-4) edge node[above right] {$r$} (m-2-5)
                  (m-2-1) edge node[above left] {$\mu_{TJ \slash f}$} (m-1-2)
                          edge node[above] {$T\mu_{J \slash f}$} (m-2-3)
                          edge node[below left] {$T^2 r$} (m-3-2)
                  (m-2-3) edge node[above left] {$\mu_{J \slash f}$} (m-1-4)
                          edge node[below left] {$Tr$} (m-3-4)
                  (m-3-2) edge node[below] {$Tr$} (m-3-4)
                  (m-3-4) edge node[below right] {$r$} (m-2-5)
                  (m-2-3) edge[cell, shorten >= 7pt, shorten <= 7pt] node[above left] {$T\rho$} (m-3-2)
                  ($(m-2-3)!0.5!(m-2-5)+(0,0.75em)$) edge[cell] node[right] {$\rho$} ($(m-2-3)!0.5!(m-2-5)-(0,0.75em)$);
    \end{tikzpicture}
    \end{split}
  \end{equation}
	Both the associator and unitor are obtained by using the $2$-dimensional universal property of $J \slash f$, as follows. Notice that the source and target morphisms of the associator correspond to the cells $\pi \of U_r \of U_{\mu_{J \slash f}}$ and $\pi \of U_r \of U_{Tr}$, which equal the composites of the solid subdiagrams of the respective diagrams below. After adding the dashed cells $\alpha$ and $\gamma$ (the associators for $A$ and $C$), we claim that the composites of the full diagrams coincide. Indeed, using the associativity axiom for $\tilde f$ (which can be obtained from that of $\bar f = \inv{(\tilde f)}$, given in \defref{definition:colax morphism}), we can replace, in the diagram on the left-hand side, the subdiagram consisting of the identity cell for $\mu_B \of T^2 f = Tf \of \mu_C$ and the cells $\tilde f$ and $\gamma$ by the likewise shaped diagram consisting of the cells $\beta$, $T\tilde f$ and $\tilde f$. Similarly, in the diagram thus obtained we can replace the subdiagram consisting of $\beta$, $\mu_J$ and $\tilde J$ by subdiagram consisting of the cells $\alpha$, $T\tilde J$ and $\tilde J$, now by using the associativity axiom for $\tilde J$ (\defref{definition:lax promorphism}). The resulting diagram is that on the right-hand side.
	
	We conclude, using the $2$-dimensional universal property of $J \slash f$, that there exists a unique cell $r \of \mu_{J \slash f} \Rar r \of Tr$, which we take $\rho$ to be, such that $U_{\pi_A} \of \rho = \alpha \of U_{T^2 \pi_A}$ and $U_{\pi_C} \of \rho = \gamma \of U_{T^2 \pi_B}$.	Notice that $\rho$ is invertible if $\alpha$ and $\gamma$ are, because the assignment $(\xi_A, \xi_B) \mapsto \xi'$ induced by the $2$-dimensional universal property of $\pi$ is functorial with respect to horizontal composition, as we have seen in the introduction to this appendix. Thus, once we have defined the algebra structure on $J \slash f$, the final assertion of the proposition follows.
	
	\begin{multline*}
		\begin{tikzpicture}[textbaseline]
      \matrix(m)[math175em]
      {	T^2J \slash f \nc T^2J \slash f \nc T^2J \slash f \nc T^2J \slash f \nc T^2J \slash f \\
				\nc T^2C \nc T^2C \nc T^2C \nc T^2C \\
				T^2A \nc T^2B \nc TC \nc TC \nc TC \\
				TA \nc TB \nc TB \nc C \nc C \\
				A \nc B \nc B \nc B \nc B \\ };
			\path[map]	(m-1-1) edge node[left] {$T^2\pi_A$} (m-3-1)
									(m-1-2) edge node[right] {$T^2\pi_C$} (m-2-2)
									(m-1-3) edge node[right] {$T^2\pi_C$} (m-2-3)
									(m-1-4) edge node[right] {$T^2\pi_C$} (m-2-4)
									(m-1-5) edge[dashed] node[right] {$T^2\pi_C$} (m-2-5)
									(m-2-2) edge node[right] {$T^2f$} (m-3-2)
									(m-2-3) edge node[right] {$\mu_C$} (m-3-3)
									(m-2-4) edge node[left] {$\mu_C$} (m-3-4)
									(m-2-5) edge[dashed] node[right] {$Tc$} (m-3-5)
									(m-3-1) edge[barred] node[below] {$T^2J$} (m-3-2)
													edge node[left] {$\mu_A$} (m-4-1)
									(m-3-2) edge node[right] {$\mu_B$} (m-4-2)
									(m-3-3) edge node[left] {$Tf$} (m-4-3)
									(m-3-4) edge node[right] {$c$} (m-4-4)
									(m-3-5) edge[dashed] node[right] {$c$} (m-4-5)
									(m-4-1) edge[barred] node[below] {$TJ$} (m-4-2)
													edge node[left] {$a$} (m-5-1)
									(m-4-2) edge node[right] {$b$} (m-5-2)
									(m-4-3) edge node[left] {$b$} (m-5-3)
									(m-4-4) edge node[right] {$f$} (m-5-4)
									(m-4-5) edge[dashed] node[right] {$f$} (m-5-5)
									(m-5-1) edge[barred] node[below] {$J$} (m-5-2);
			\path				(m-1-1) edge[eq] (m-1-2)
									(m-1-2) edge[eq] (m-1-3)
									(m-1-3) edge[eq] (m-1-4)
									(m-1-4) edge[eq, dashed] (m-1-5)
									(m-2-2) edge[eq] (m-2-3)
									(m-2-3) edge[eq] (m-2-4)
									(m-2-4) edge[eq, dashed] (m-2-5)
									(m-3-3) edge[eq] (m-3-4)
									(m-4-2) edge[eq] (m-4-3)
									(m-4-4) edge[eq, dashed] (m-4-5)
									(m-5-2) edge[eq] (m-5-3)
									(m-5-3) edge[eq] (m-5-4)
									(m-5-4) edge[eq, dashed] (m-5-5);
			\path[transform canvas={shift=($(m-3-4)!0.5!(m-4-3)$), xshift=-8pt}]	(m-1-1) edge[cell] node[right] {$T^2\pi$} (m-2-1);
			\path[transform canvas={shift=($(m-3-4)!0.5!(m-4-3)$)}]	(m-3-3) edge[cell] node[right] {$\tilde f$} (m-4-3)
									(m-2-4) edge[cell, dashed] node[right] {$\gamma$} (m-3-4);
			\path[transform canvas={shift=($(m-3-3)!0.5!(m-3-4)$), yshift=-3pt}]	(m-3-1) edge[cell] node[right] {$\mu_J$} (m-4-1)
									(m-4-1) edge[cell] node[right] {$\tilde J$} (m-5-1);
    \end{tikzpicture} \\
    = \begin{tikzpicture}[textbaseline]
			\matrix(m)[math175em]
			{	T^2J \slash f \nc T^2J \slash f \nc T^2J \slash f \nc T^2J \slash f \nc T^2J \slash f \\
				\nc \nc T^2C \nc T^2C \nc T^2C \\
				T^2A \nc T^2A \nc T^2B \nc TC \nc TC \\
				TA \nc TA \nc TB \nc TB \nc C \\
				A \nc A \nc B \nc B \nc B \\ };
			\path[map]	(m-1-1) edge[dashed] node[left] {$T^2\pi_A$} (m-3-1)
									(m-1-2) edge node[left] {$T^2\pi_A$} (m-3-2)
									(m-1-3) edge node[right] {$T^2\pi_C$} (m-2-3)
									(m-1-4) edge node[right] {$T^2\pi_C$} (m-2-4)
									(m-1-5) edge node[right] {$T^2\pi_C$} (m-2-5)
									(m-2-3) edge node[right] {$T^2f$} (m-3-3)
									(m-2-4) edge node[right] {$Tc$} (m-3-4)
									(m-2-5) edge node[right] {$Tc$} (m-3-5)
									(m-3-1) edge[dashed] node[left] {$\mu_A$} (m-4-1)
									(m-3-2) edge[barred] node[below] {$T^2J$} (m-3-3)
													edge node[left] {$Ta$} (m-4-2)
									(m-3-3) edge node[right] {$Tb$} (m-4-3)
									(m-3-4) edge node[right] {$Tf$} (m-4-4)
									(m-3-5) edge node[right] {$c$} (m-4-5)
									(m-4-1) edge[dashed] node[left] {$a$} (m-5-1)
									(m-4-2) edge[barred] node[below] {$TJ$} (m-4-3)
													edge node[left] {$a$} (m-5-2)
									(m-4-3) edge node[right] {$b$} (m-5-3)
									(m-4-4) edge node[right] {$b$} (m-5-4)
									(m-4-5) edge node[right] {$f$} (m-5-5)
									(m-5-2) edge[barred] node[below] {$J$} (m-5-3);
			\path				(m-1-1) edge[eq, dashed] (m-1-2)
									(m-1-2) edge[eq] (m-1-3)
									(m-1-3) edge[eq] (m-1-4)
									(m-1-4) edge[eq] (m-1-5)
									(m-2-3) edge[eq] (m-2-4)
									(m-2-4) edge[eq] (m-2-5)
									(m-3-1) edge[eq, dashed] (m-3-2)
									(m-3-4) edge[eq] (m-3-5)
									(m-4-3) edge[eq] (m-4-4)
									(m-5-1) edge[eq, dashed] (m-5-2)
									(m-5-3) edge[eq] (m-5-4)
									(m-5-4) edge[eq] (m-5-5);
			\path[transform canvas={shift=($(m-3-4)!0.5!(m-4-3)$)}]	(m-3-1) edge[cell, dashed] node[right] {$\alpha$} (m-4-1)
									(m-2-3) edge[cell] node[right] {$T\tilde f$} (m-3-3)
									(m-3-4) edge[cell] node[right] {$\tilde f$} (m-4-4);
			\path[transform canvas={shift=($(m-3-4)!0.5!(m-4-3)$), xshift=-8pt}]	(m-1-2) edge[cell] node[right] {$T^2\pi$} (m-2-2);
			\path[transform canvas={shift=($(m-3-3)!0.5!(m-3-4)$), yshift=-3pt}]	(m-3-2) edge[cell] node[right] {$T\tilde J$} (m-4-2)
									(m-4-2) edge[cell] node[right] {$\tilde J$} (m-5-2);
    \end{tikzpicture}
	\end{multline*}
	
	To show that $\rho$ satisfies the associativity axiom \eqref{equation:902348} above, consider first the vertical cells making up its right-hand side: the functoriality of the factorisations through $\pi$ and the naturality of $\mu$ mean that
	\begin{itemize}
		\item[-]	the composite $\rho \of U_{T\mu_{J \slash f}}$ corresponds to $\alpha \of U_{T^2\pi_A \of T\mu_{J \slash f}} = \alpha \of U_{T\mu_A \of T^3\pi_A}$ and $\gamma \of U_{T\mu_C \of T^3\pi_C}$;
		\item[-]	the composite $U_r \of T\rho$ corresponds to $U_{a \of T\pi_A} \of T\rho = U_a \of T\alpha \of U_{T^3\pi_A}$ and $U_c \of T\gamma \of U_{T^3\pi_C}$;
		\item[-]	denoting by $\id_{\mu_{J \slash f}}$ the identity cell for $\mu_{J \slash f} \of \mu_{TJ \slash f} = \mu_{J \slash f} \of T\mu_{J \slash f}$, the composite $U_r \of \id_{\mu_{J \slash f}}$ corresponds to $U_{a \of T\pi_A} \of \id_{\mu_{J \slash f}} = U_a \of \id_{\mu_A} \of U_{T^3 \pi_A}$ and $U_c \of \id_{\mu_C} \of U_{T^3 \pi_C}$.
	\end{itemize}
	That is each of the cells in the right-hand side of \eqref{equation:902348} are factorisations of the corresponding cells in the right-hand side of the associativity axiom for $A$ and $C$ and, since factorisation through $\pi$ is functorial with respect to horizontal composition, we can conclude that the right-hand side of \eqref{equation:902348} corresponds to the right-hand sides of the associativity axioms of $\alpha$ and $\gamma$. In the same way the left-hand side of \eqref{equation:902348} corresponds to the left-hand sides of the associativity axioms for $\alpha$ and $\gamma$. Therefore, since for $\alpha$ and $\gamma$ both the left-hand and right-hand sides coincide, it follows that the left-hand and right-hand sides of \eqref{equation:902348} coincide too. This shows that $\rho$ satisfies the associativity axiom.
	
	The unitor $\cell{\rho_0}{r \of \eta_{J \slash f}}{\id_{J \slash f}}$ is obtained similarly: its source and target morphisms correspond to the composites of the solid subdiagrams of the respective diagrams below. Adding the dashed unitors $\gamma_0$ and $\alpha_0$, the full diagrams coincide because of the unit axioms for $\tilde f$ (that can be obtained from that for $\bar f = \inv{(\tilde f)}$) and $\tilde J$. It follows that there is a unique cell $\cell{\rho_0}{r \of \eta_{J \slash f}}{\id_{J \slash f}}$ such that $U_{\pi_A} \of \rho_0 = \alpha_0 \of U_{\pi_A}$ and $U_{\pi_C} \of \rho_0 = \gamma_0 \of U_{\pi_C}$. That $\rho_0$ satisfies the unit axioms (\defref{definition:colax algebra}) follows from the fact that $\alpha_0$ and $\gamma_0$ do, in the same way that the associativity axiom for $\rho$ followed from those for $\alpha$ and $\gamma$. This completes the definition of the colax algebra structure on $J \slash f$.
	\begin{displaymath}
		\begin{tikzpicture}[textbaseline]
      \matrix(m)[math175em, column sep=1.35em]
      { TJ \slash f & TJ \slash f & TJ \slash f & TJ \slash f & TJ \slash f \\
				& C & C & C & C \\
				A & B & TC & TC & \\
				TA & TB & TB & C & C \\
				A & B & B & B & B \\ };
			\path[map]	(m-1-1) edge node[left] {$\pi_A$} (m-3-1)
									(m-1-2) edge node[right] {$\pi_C$} (m-2-2)
									(m-1-3) edge node[right] {$\pi_C$} (m-2-3)
									(m-1-4) edge node[right] {$\pi_C$} (m-2-4)
									(m-1-5) edge[dashed] node[right] {$\pi_C$} (m-2-5)
									(m-2-2) edge node[right] {$f$} (m-3-2)
									(m-2-3) edge node[left] {$\eta_C$} (m-3-3)
									(m-2-4) edge node[left] {$\eta_C$} (m-3-4)
									(m-2-5) edge[dashed] node[right] {$\id_C$} (m-4-5)
									(m-3-1) edge[barred] node[below] {$J$} (m-3-2)
													edge node[left] {$\eta_A$} (m-4-1)
									(m-3-2) edge node[right] {$\eta_B$} (m-4-2)
									(m-3-3) edge node[left] {$Tf$} (m-4-3)
									(m-3-4) edge node[right] {$c$} (m-4-4)
									(m-4-1) edge[barred] node[below] {$TJ$} (m-4-2)
													edge node[left] {$a$} (m-5-1)
									(m-4-2) edge node[right] {$b$} (m-5-2)
									(m-4-3) edge node[left] {$b$} (m-5-3)
									(m-4-4) edge node[right] {$f$} (m-5-4)
									(m-4-5) edge[dashed] node[right] {$f$} (m-5-5)
									(m-5-1) edge[barred] node[below] {$J$} (m-5-2);
			\path				(m-1-1) edge[eq] (m-1-2)
									(m-1-2) edge[eq] (m-1-3)
									(m-1-3) edge[eq] (m-1-4)
									(m-1-4) edge[eq, dashed] (m-1-5)
									(m-2-2) edge[eq] (m-2-3)
									(m-2-3) edge[eq] (m-2-4)
									(m-2-4) edge[eq, dashed] (m-2-5)
									(m-3-3) edge[eq] (m-3-4)
									(m-4-2) edge[eq] (m-4-3)
									(m-4-4) edge[eq, dashed] (m-4-5)
									(m-5-2) edge[eq] (m-5-3)
									(m-5-3) edge[eq] (m-5-4)
									(m-5-4) edge[eq, dashed] (m-5-5);
			\path[transform canvas={shift=($(m-3-4)!0.5!(m-4-3)$)}]	(m-1-1) edge[cell] node[right] {$\pi$} (m-2-1)
									(m-3-3) edge[cell] node[right] {$\tilde f$} (m-4-3)
									(m-2-4) edge[cell, dashed] node[right] {$\gamma_0$} (m-3-4);
			\path[transform canvas={shift=($(m-3-3)!0.5!(m-3-4)$), yshift=-3pt}]	(m-3-1) edge[cell] node[right] {$\eta_J$} (m-4-1)
									(m-4-1) edge[cell] node[right] {$\tilde J$} (m-5-1);
    \end{tikzpicture}
		=	\begin{tikzpicture}[textbaseline]
			\matrix(m)[math175em, column sep=1.35em]
      {	TJ \slash f & TJ \slash f & TJ \slash f \\
				& & C \\
				A & A & B \\
				TA & & \\
				A & A & B \\ };
      \path[map]  (m-1-1) edge[dashed] node[left] {$\pi_A$} (m-3-1)
									(m-1-2) edge node[left] {$\pi_A$} (m-3-2)
									(m-1-3) edge node[right] {$\pi_C$} (m-2-3)
									(m-2-3) edge node[right] {$f$} (m-3-3)
									(m-3-1) edge[dashed] node[left] {$\eta_A$} (m-4-1)
									(m-3-2) edge[barred] node[below] {$J$} (m-3-3)
													edge node[right] {$\id_A$} (m-5-2)
									(m-3-3) edge node[right] {$\id_B$} (m-5-3)
									(m-4-1) edge[dashed] node[left] {$a$} (m-5-1)
									(m-5-2) edge[barred] node[below] {$J$} (m-5-3);
			\path				(m-1-1) edge[eq, dashed] (m-1-2)
									(m-1-2) edge[eq] (m-1-3)
									(m-3-1) edge[eq, dashed] (m-3-2)
									(m-5-1) edge[eq, dashed] (m-5-2);
			\path[transform canvas={shift=($(m-3-3)!0.5!(m-4-2)$)}]	(m-1-2) edge[cell] node[right] {$\pi$} (m-2-2)
									(m-3-1) edge[cell, dashed] node[right] {$\alpha_0$} (m-4-1);
    \end{tikzpicture}
	\end{displaymath}
	
	We have already seen that the induced algebra structure on $J \slash f$ makes $\pi_A$ and $\pi_C$ into strict morphisms, and $\pi$ into a $T$-cell: this leaves showing that $\pi$ satisfies the universal properties making $J \slash f$ into a double comma object of $\rcProm T$. Thus consider a second $T$-cell $\phi$ below. Since $J \slash f$ is a double comma object in $\K$, $\phi$ factors through $\pi$ as a morphism $\map{\phi'}X{J \slash f}$, which we have to give a structure cell $\bar\phi'$ as on the right.
	\begin{displaymath}
		\begin{tikzpicture}[baseline]
			\matrix(m)[math175em]{X & X \\ \phantom C & C \\ A & B \\};
			\path[map]	(m-1-1) edge node[left] {$\phi_A$} (m-3-1)
									(m-1-2)	edge node[right] {$\phi_C$} (m-2-2)
									(m-2-2) edge node[right] {$f$} (m-3-2)
									(m-3-1) edge[barred] node[below] {$J$} (m-3-2);
			\path[transform canvas={shift={($(m-2-2)!0.5!(m-3-2)$)}}] (m-1-1) edge[cell] node[right] {$\phi$} (m-2-1);
			\path				(m-1-1) edge[eq] (m-1-2);
		\end{tikzpicture}
		\qquad\qquad\qquad\qquad\begin{tikzpicture}[textbaseline]
			\matrix(m)[math175em]{TX & X \\ TJ \slash f & J \slash f \\};
			\path[map]	(m-1-1) edge node[above] {$x$} (m-1-2)
													edge node[left] {$T\phi'$} (m-2-1)
									(m-1-2) edge node[right] {$\phi'$} (m-2-2)
									(m-2-1) edge node[below] {$r$} (m-2-2);
			\path				(m-1-2) edge[cell, shorten >= 11pt, shorten <= 11pt] node[below right] {$\bar\phi'$} (m-2-1);
		\end{tikzpicture}
	\end{displaymath}
	Again we can follow the same recipe: the source and target morphisms of $\bar\phi'$ correspond to the solid subdiagrams in
	\begin{displaymath}
		\begin{tikzpicture}[textbaseline]
      \matrix(m)[math175em]
      { TX & TX & TX \\
				X & X & TC \\
				& C & C \\
				A & B & B \\ };
      \path[map]  (m-1-1) edge node[left] {$x$} (m-2-1)
									(m-1-2) edge node[left] {$x$} (m-2-2)
									(m-1-3) edge[dashed] node[right] {$T\phi_C$} (m-2-3)
									(m-2-1) edge node[left] {$\phi_A$} (m-4-1)
									(m-2-2) edge node[right] {$\phi_C$} (m-3-2)
									(m-2-3) edge[dashed] node[right] {$c$} (m-3-3)
									(m-3-2) edge node[right] {$f$} (m-4-2)
									(m-3-3) edge[dashed] node[right] {$f$} (m-4-3)
									(m-4-1) edge[barred] node[below] {$J$} (m-4-2);
			\path				(m-1-1) edge[eq] (m-1-2)
									(m-1-2) edge[eq, dashed] (m-1-3)
									(m-2-1) edge[eq] (m-2-2)
									(m-3-2) edge[eq, dashed] (m-3-3)
									(m-4-2) edge[eq, dashed] (m-4-3);
			\path[transform canvas={shift=($(m-3-2)!0.5!(m-3-3)$), xshift=-6pt}]	(m-1-2) edge[cell, dashed] node[right] {$\bar\phi_C$} (m-2-2);
			\path[transform canvas={shift=($(m-3-2)!0.5!(m-3-3)$)}]	(m-2-1) edge[cell] node[right] {$\phi$} (m-3-1);
    \end{tikzpicture}
    = \begin{tikzpicture}[textbaseline]
      \matrix(m)[math175em]
      { TX & TX & TX & TX \\
				\phantom{TB} & \phantom{TB} & TC & TC \\
				X & TA & TB & C \\
				A & A & B & B, \\ };
      \path[map]  (m-1-1) edge[dashed] node[left] {$x$} (m-3-1)
									(m-1-2) edge node[left] {$T\phi_A$} (m-3-2)
									(m-1-3) edge node[right] {$T\phi_C$} (m-2-3)
									(m-1-4) edge node[right] {$T\phi_C$} (m-2-4)
									(m-2-3) edge node[right] {$Tf$} (m-3-3)
									(m-2-4) edge node[right] {$c$} (m-3-4)
									(m-3-1) edge[dashed] node[left] {$\phi_A$} (m-4-1)
									(m-3-2) edge[barred] node[below] {$TJ$} (m-3-3)
													edge node[left] {$a$} (m-4-2)
									(m-3-3) edge node[right] {$b$} (m-4-3)
									(m-3-4) edge node[right] {$f$} (m-4-4)
									(m-4-2) edge[barred] node[below] {$J$} (m-4-3);
			\path				(m-1-1) edge[eq, dashed] (m-1-2)
									(m-1-2) edge[eq] (m-1-3)
									(m-1-3) edge[eq] (m-1-4)
									(m-2-3) edge[eq] (m-2-4)
									(m-4-1) edge[eq, dashed] (m-4-2)
									(m-4-3) edge[eq] (m-4-4);
			\path[transform canvas={shift=($(m-2-3)!0.5!(m-3-3)$)}]	(m-2-1) edge[cell, dashed] node[right] {$\bar\phi_A$} (m-3-1);
			\path[transform canvas={shift=($(m-2-3)!0.5!(m-3-3)$), yshift=-3pt}] (m-3-2) edge[cell] node[right] {$\tilde J$} (m-4-2);
			\path[transform canvas={shift=(m-3-3), xshift=-6pt}]	(m-1-2) edge[cell] node[right] {$T\phi$} (m-2-2);
			\path[transform canvas={shift=(m-3-3)}]	(m-2-3) edge[cell] node[right] {$\tilde f$} (m-3-3);
    \end{tikzpicture}
	\end{displaymath}
	and it is shown below that the full diagrams coincide, where the identities follow from the definition of $\rho_1\bar\phi_A$ (see \defref{definition:cells of right colax promorphisms}); the $T$-cell axiom for $\phi$ and the definition of $\tilde J$; cancelling $\bar J$ against its inverse as well as the definitions of $\rho_1\bar\phi_C$ and $\rho_1\bar f$; the fact that $\tilde f = \inv{\bar f}$. Using the $2$-dimensional universal property of $J \slash f$, we obtain a unique cell $\bar\phi'$, as above, such that $U_{\pi_A} \of \bar\phi' = \bar\phi_A$ and $U_{\pi_C} \of \bar\phi' = \bar\phi_C$. That $\bar\phi'$ satisfies the associativity and unit axioms follows from the fact that $\bar\phi_A$ and $\bar\phi_C$ do. We conclude that there is a unique way of making $\phi'$ into a $T$-cell such that $U_{\pi_A} \of \phi' = \phi_A$ and $U_{\pi_C} \of \phi' = \phi_C$, as colax $T$-morphisms. This completes the proof showing that $\pi$ satisfies the $1$-dimensional universal property in $\rcProm T$.
	\begin{align*}
		\begin{tikzpicture}[textbaseline, x=0.75cm, y=0.6cm, font=\scriptsize]
			\draw	(1,1) -- (2,1) -- (2,3) -- (0,3) -- (0,0) -- (1,0) -- (1,3)
				(1,0) -- (2,0) -- (2,1)
				(2,0) -- (3,0) -- (3,2) -- (2,2);
			\draw[shift={(0.5,0)}]
				(1,2) node {$T\phi$}
				(2,1) node {$\tilde f$};
			\draw[shift={(0.5,0.5)}]
				(0,1) node {$\bar\phi_A$}
				(1,0) node {$\tilde J$};
		\end{tikzpicture}
		&\quad = \quad\begin{tikzpicture}[textbaseline, x=0.75cm, y=0.6cm, font=\scriptsize]
			\draw	(0,1) -- (2,1) -- (2,3) -- (0,3) -- (0,0) -- (1,0) -- (1,3)
				(1,0) -- (2,0) -- (2,1)
				(2,0) -- (3,0) -- (3,2) -- (2,2)
				(0,3) -- (0,4) -- (1,4) -- (1,3);
			\draw[shift={(0.5,0)}]
				(0,2) node {$\rho_1\bar\phi_A$}
				(1,2) node {$T\phi$}
				(2,1) node {$\tilde f$};
			\draw[shift={(0.5,0.5)}]
				(0,0) node {$\eps_a$}
				(0,3) node {$\eta_x$}
				(1,0) node {$\tilde J$};
		\end{tikzpicture}
		\quad = \quad\begin{tikzpicture}[textbaseline, x=0.75cm, y=0.6cm, font=\scriptsize]
			\draw	(0,2) -- (2,2) -- (2,1) -- (0,1) -- (0,3) -- (2,3) -- (2,2)
				(2,3) -- (2,4) -- (3,4) -- (3,0) -- (2,0) -- (2,1)
				(1,1) -- (1,0) -- (2,0)
				(1,3) -- (1,4) -- (2,4)
				(0,3) -- (0,5) -- (2,5)
				(1,4) -- (1,6) -- (2,6) -- (2,4);
			\draw[shift={(0.5,0)}]
				(0,4) node {$\phi$}
				(2,2) node {$\tilde f$};
			\draw[shift={(0,0.5)}]
				(1,1) node {$\inv{(\bar J)}$}
				(1,2) node {$\bar J$};
			\draw[shift={(0.5,0.5)}]
				(1,0) node {$\eps_b$}
				(1,3) node {$\rho_1\bar f$}
				(1,4) node {$\rho_1\bar\phi_C$}
				(1,5) node {$\eta_x$};
		\end{tikzpicture} \\[1em]
		&\quad = \quad\,\begin{tikzpicture}[textbaseline, x=0.75cm, y=0.6cm, font=\scriptsize]
			\draw	(1,2) -- (0,2) -- (0,0) -- (1,0) -- (1,3) -- (2,3) -- (2,1) -- (1,1)
				(2,1) -- (2,0) -- (4,0) -- (4,2) -- (3,2)
				(2,2) -- (3,2) -- (3,0);
			\draw[shift={(0.5,0)}]
				(0,1) node {$\phi$}
				(1,2) node {$\bar\phi_C$}
				(2,1) node {$\bar f$}
				(3,1) node {$\tilde f$};
		\end{tikzpicture}
		\quad = \quad\begin{tikzpicture}[textbaseline, x=0.75cm, y=0.6cm, font=\scriptsize]
			\draw	(1,2) -- (0,2) -- (0,0) -- (1,0) -- (1,3) -- (2,3) -- (2,1) -- (1,1);
			\draw[shift={(0.5,0)}]
				(0,1) node {$\phi$}
				(1,2) node {$\bar\phi_C$};
		\end{tikzpicture}
	\end{align*}
	
	Finally let $\psi = (\psi_A, \psi, \psi_C)$ be a second $T$-cell, with $0$-dimensional source $Y$, that factors through $\pi$ as the colax $T$-morphism $\map{\psi'}Y{J \slash f}$. Consider cells $\map{(\xi_A, \xi_C)}\phi\psi$, in $\K$, as in the introduction to this appendix, so that there exists a unique cell $\xi'$ with $U_{\pi_A} \of \xi' = \xi_A$ and $U_{\pi_C} \of \xi' = \xi_C$, because $J \slash f$ is a double comma object in $\K$. To prove that $J \slash f$ satisfies the $2$-dimensional universal property in $\rcProm T$ as well, we have to show that if $\xi_A$ and $\xi_B$ are $T$-cells then so is $\xi$. To see this consider the following identity, which is equivalent to the $T$-cell axiom (\defref{definition:cells of right colax promorphisms}) for $\xi$ and is obtained by vertically postcomposing the latter with the companion cell $\eps_r$. 
	\begin{displaymath}
		\begin{tikzpicture}[textbaseline]
			\matrix(m)[math175em, column sep=1.6em]
			{	\phantom{TJ \slash f} & \phantom{TJ \slash f} & & TY \\
				X & TX & TX & TY \\
				X & X & TJ \slash f & TJ \slash f \\
				J\slash f & J \slash f & J \slash f & J \slash f \\ };
			\draw[text height=1.5ex, text depth=0.25ex]	(m-1-1) node(p) {$X$}
				($(m-1-1)!0.5!(m-1-4)$) node(q) {$Y$};
			\path[map]	(p) edge[barred] node[above] {$K$} (q)
									(q) edge[barred] node[above] {$Y(\id, y)$} (m-1-4)
									(m-2-1) edge[barred] node[above] {$X(\id, x)$} (m-2-2)
									(m-2-2) edge node[right] {$x$} (m-3-2)
									(m-2-3) edge[barred] node[above] {$TK$} (m-2-4)
													edge node[left] {$T\phi'$} (m-3-3)
									(m-2-4) edge node[right] {$T\psi'$} (m-3-4)
									(m-3-1) edge node[left] {$\phi'$} (m-4-1)
									(m-3-2) edge node[left] {$\phi'$} (m-4-2)
									(m-3-3) edge node[right] {$r$} (m-4-3)
									(m-3-4) edge node[right] {$r$} (m-4-4);
			\path				(p) edge[eq] (m-2-1)
									(m-1-4) edge[eq] (m-2-4)
									(m-2-1) edge[eq] (m-3-1)
									(m-2-2) edge[eq] (m-2-3)
									(m-3-1) edge[eq] (m-3-2)
									(m-3-3) edge[eq] (m-3-4)
									(m-4-1) edge[eq] (m-4-2)
									(m-4-2) edge[eq] (m-4-3)
									(m-4-3) edge[eq] (m-4-4);
			\path[transform canvas={shift=($(m-2-3)!0.5!(m-3-3)$)}]	(m-1-2) edge[cell] node[right] {$\bar K$} (m-2-2)
									(m-2-1) edge[cell] node[right] {$\eps_x$} (m-3-1)
									(m-2-3) edge[cell] node[right] {$T\xi'$} (m-3-3);
			\path[transform canvas={shift=(m-3-3)}]	(m-2-2) edge[cell] node[right] {$\bar\phi'$} (m-3-2);
		\end{tikzpicture}
		= \begin{tikzpicture}[textbaseline]
			\matrix(m)[math175em, column sep=1.6em]
			{	X & Y & TY & TY \\
				X & Y & Y & TJ \slash f \\
				J \slash f & J \slash f & J \slash f & J \slash f \\ };
			\path[map]	(m-1-1) edge[barred] node[above] {$K$} (m-1-2)
									(m-1-2) edge[barred] node[above] {$Y(\id, y)$} (m-1-3)
									(m-1-3) edge node[right] {$y$} (m-2-3)
									(m-1-4) edge node[right] {$T\psi'$} (m-2-4)
									(m-2-1) edge[barred] node[above] {$K$} (m-2-2)
													edge node[left] {$\phi'$} (m-3-1)
									(m-2-2) edge node[right] {$\psi'$} (m-3-2)
									(m-2-3) edge node[left] {$\psi'$} (m-3-3)
									(m-2-4) edge node[right] {$r$} (m-3-4);
			\path				(m-1-1) edge[eq] (m-2-1)
									(m-1-2) edge[eq] (m-2-2)
									(m-1-3) edge[eq] (m-1-4)
									(m-2-2) edge[eq] (m-2-3)
									(m-3-1) edge[eq] (m-3-2)
									(m-3-2) edge[eq] (m-3-3)
									(m-3-3) edge[eq] (m-3-4);
			\path[transform canvas={shift=(m-2-3)}]	(m-1-2) edge[cell] node[right] {$\eps_y$} (m-2-2)
									(m-2-1) edge[cell] node[right] {$\xi'$} (m-3-1);
			\path[transform canvas={shift=($(m-2-3)!0.5!(m-3-3)$)}]	(m-1-3) edge[cell] node[right] {$\bar\psi'$} (m-2-3);
		\end{tikzpicture}
	\end{displaymath}
	To see that the identity above holds, we again use the functoriality of the $2$\ndash di\-men\-sio\-nal factorisations through $\pi$ and find that, by postcomposing the composites above with $U_{\pi_A}$, we obtain the two sides of the corresponding $T$-cell axiom for $\xi_A$, postcomposed with $\eps_a$. Similarly, when postcomposed with $U_{\pi_C}$, we obtain the two sides of the corresponding $T$-cell axiom for $\xi_C$, postcomposed with $\eps_c$. Therefore, since the $T$-cell axioms for $\xi_A$ and $\xi_C$ hold, and since factorisations through $\pi$ are unique, it follows that the identity above, and thus the $T$-cell axiom for $\xi'$, hold as well. This shows that $J \slash f$ satisfies the $2$-dimensional property as well, in $\rcProm T$, which finishes the proof.
\end{proof}

\chapter{Proof of \lemref{APPLEMMA}} \label{appendix:lemma}
In this appendix we prove \lemref{lemma:left hom lax structure cell}, reproduced below, which was used in proving the main theorem.
\begin{lefthomlaxstructurecell}
	Let $T$ be a right suitable normal monad on a closed equipment $\K$ (\defref{definition:right suitable monads}) and consider a right colax $T$\ndash pro\-mor\-phism $\hmap JAB$ (\defref{definition:right colax promorphisms}) as well as a lax $T$-promorphism $\hmap KAC$ (\defref{definition:lax promorphism}). The left hom $\hmap{J \lhom K}BC$ admits a lax $T$-promorphism structure cell $\ol{J \lhom K}$ that corresponds, under \propref{left and right cells}, to the horizontal cell $\lambda\ol{J \lhom K}$ that is given by
	\begin{multline} \label{equation:left hom lax structure cell in appendix}
		T(J \lhom K) \hc C(c, \id) \Rar TJ \lhom \bigpars{TK \hc C(c, \id)} \xRar{\id \lhom \lambda\bar K} TJ \lhom \bigpars{A(a, \id) \hc K} \\
		\iso \bigpars{A(\id, a) \hc TJ} \lhom K \xRar{\bar J \lhom \id} \bigpars{J \hc B(\id, b)} \lhom K \iso B(b, \id) \hc J \lhom K,
	\end{multline}
	where the first cell is given by \propref{lax functors induce left hom coherence cell} and where the two isomorphisms are given by \corref{companion, conjoint and left hom isomorphisms}.
\end{lefthomlaxstructurecell}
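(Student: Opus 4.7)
The plan is to verify the two coherence axioms from Proposition~\ref{left and right structure cells} (in the form given for $\lambda\bar J$) for the horizontal cell $\lambda\ol{J \lhom K}$ defined by \eqref{equation:left hom lax structure cell in appendix}. Since that proposition establishes an equivalence between lax $T$-promorphism structure cells $\bar J$ and horizontal cells $\lambda\bar J$ satisfying those axioms, checking them is enough to produce the desired structure $\ol{J \lhom K}$.

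Rather than attack the axioms directly for the composite \eqref{equation:left hom lax structure cell in appendix}, I would pass to adjoints under $J \hc \dash \ladj J \lhom \dash$. The text has already computed the adjoint of the key sub-composite: the cell $\omega(\bar J, \bar K)^\sharp$ in \eqref{equation:adjoint of left hom lax structure cell} is an explicit composition built from $\bar J$, $\lambda\bar K$, the compositor $T_\hc$, the evaluation $T\ev$, and the counit $\ls a\eps_a$ of the companion--conjoint adjunction. Because taking adjoints is a bijection, each axiom for $\lambda\ol{J \lhom K}$ is equivalent to an identity of cells $J \hc (\text{source}) \Rar K$, where the two sides are obtained by precomposing with evaluation. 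This trick avoids having to manipulate left hom expressions directly and reduces everything to diagram chases in $\K$ involving $\bar J$ and $\bar K$.

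For the associativity axiom I would expand both sides as composites involving $\omega(\bar J, \bar K)^\sharp$, $T\omega(\bar J, \bar K)^\sharp$, the multiplication cells, and the associators $\alpha$, $\gamma$ of $A$ and $C$. After passing to adjoints, the right-hand side reduces, via the $T$-compositor's naturality and the unit axiom for $T$, to an expression in which the inner occurrence of $T\bar J$ meets $T\ev$ and collapses by $T$ of the evaluation--coevaluation triangle; the left-hand side simplifies by the associativity of $\mu$ applied to $J \lhom K$. The remaining discrepancy is then governed by (i) the associativity axiom for $\lambda\bar K$ (handling the $K$-leg), (ii) the associativity axiom for the right colax cell $\bar J$ (handling the $J$-leg, where one uses Proposition~\ref{left and right structure cells} to rewrite $\rho T\bar J$ as $\inv T_\hc \of T\rho\bar J \of T_\hc$), and (iii) the companion--conjoint identity for $\ls a\eps_a$, which cancels the extra $A(\id,a) \hc A(a,\id)$ pair introduced by the isomorphism from Corollary~\ref{companion, conjoint and left hom isomorphisms}. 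The unit axiom is handled by the same strategy but is strictly easier: after passing to adjoints it decomposes along the unit axioms for $\bar J$ and $\bar K$ together with the triangle identity $\ls a\eps_a \of (\id \hc \ls a\eta) = \mathfrak r$ (or its conjoint dual).

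The main obstacle will be purely combinatorial: the composite \eqref{equation:left hom lax structure cell in appendix} mixes covariant and contravariant left hom operations with canonical isomorphisms from Corollary~\ref{companion, conjoint and left hom isomorphisms}, and after adjunction one must track how these canonical isomorphisms unfold in terms of $\ev$, $\coev$, and the companion--conjoint units and counits. I would organise the calculation as two schematic diagram chases in the style of Figure~\ref{figure:associativity axiom for the horziontal composite of right colax promorphisms}, isolating the use of each axiom (associativity of $\bar J$, associativity of $\bar K$, naturality of $T_\hc$, companion--conjoint triangle identities) in a single step, so the verification can be read off rather than computed in one block. No further ingredients beyond those already displayed in the excerpt are needed: the assumption that $T$ is right suitable is used implicitly through the existence of $\bar J$, while closedness of $\K$ supplies the adjunction that makes the whole argument run.
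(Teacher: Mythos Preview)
Your plan matches the paper's proof: both verify the $\lambda$-form of the associativity and unit axioms from Proposition~\ref{left and right structure cells}, both pass to adjoints under $J \hc \dash \ladj J \lhom \dash$ (using the auxiliary Lemma~\ref{lemma:left hom facts} and the computed adjoint \eqref{equation:adjoint of left hom lax structure cell}), and both reduce the verification to the associativity and unit axioms for $\bar J$ and $\lambda\bar K$ together with companion--conjoint identities and naturality of $T_\hc$. The only place your sketch is slightly off is the phrase ``collapses by $T$ of the evaluation--coevaluation triangle'': in the paper no such triangle is invoked---instead the right-hand side is reorganised through a sequence of commuting sub-diagrams (labelled I, II, III) that shuffle $T(\bar J \lhom \id)$ past $\id \lhom \lambda\bar K$, after which the associativity axiom for $\bar J$ is applied, then a further rewriting exposes the associativity axiom for $\lambda\bar K$, and finally the composition axiom for $\mu$ (applied to $J \hc (J \lhom K)$) matches this against the left-hand side. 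The sequencing is more delicate than your outline suggests, but the ingredients you list are exactly the ones used.
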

	
	In the proof we use the facts recorded in the following lemma. Each of them follows easily from the naturality of the isomorphisms defining $J \lhom \dash$ as the right adjoint of $J \hc \dash$, and the naturality of the associators $F_\hc$ of lax functors.
\begin{lemma} \label{lemma:left hom facts}
	In each of the statements below $J$, $H$, $K$, $L$ and $M$ are horizontal morphisms in a closed equipment $\K$, while $\phi$ and $\psi$ are horizontal cells. In the last statement $\map F\K\L$ is assumed to be a lax functor between closed equipments.
	\begin{itemize}
		\item[\textup{(a)}] If $L \Rar K \lhom M$ is adjoint to $\cell\phi{K \hc L}M$ and $H \hc K \lhom M \Rar J \lhom M$ is adjoint to $J \hc H \hc K \lhom M \xRar{\psi \hc \id} K \hc K \lhom M \xRar{\ev} M$, then the composition $H \hc L \Rar H \hc K \lhom M \Rar J \lhom M$ is adjoint to $J \hc H \hc L \xRar{\psi \hc \id} K \hc L \xRar{\phi} M$.
		\item[\textup{(b)}] If $K \Rar H \lhom L$ is adjoint to $\cell\phi{H \hc K}L$ and $H \lhom L \Rar (J \hc H) \lhom M$ is adjoint to $J \hc H \hc H \lhom L \xRar{\id \hc \ev} J \hc L \xRar{\psi} M$, then the composition $K \Rar (J \hc H) \lhom M$ is adjoint to $J \hc H \hc K \xRar{\id \hc \phi} J \hc L \xRar{\psi} M$.
		\item[\textup{(c)}] If $H \Rar J \lhom K$ is adjoint to $\cell\phi{J \hc H}K$ and $F(J \lhom K) \hc L \Rar FJ \lhom (FK \hc L)$ is the horizontal cell given by \propref{lax functors induce left hom coherence cell}, then the composition \begin{displaymath}
			FH \hc L \Rar F(J \lhom K) \hc L \Rar FJ \lhom (FK \hc L)
		\end{displaymath}
		is adjoint to $FJ \hc FH \hc L \xRar{F_\hc} F(J \hc H) \hc L \xRar{F\phi \hc \id} FK \hc L$.
	\end{itemize}
\end{lemma}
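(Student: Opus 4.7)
The plan is to verify, by a direct but structured computation, that the horizontal cell $\lambda\ol{J\lhom K}$ displayed in \eqref{equation:left hom lax structure cell in appendix} satisfies the associativity and unit coherence conditions of \propref{left and right structure cells} (the ones formulated in terms of $\lambda \bar J$). By that proposition, this is equivalent to producing a cell $\ol{J\lhom K}$ satisfying the associativity and unit axioms of \defref{definition:lax promorphism}, giving $J\lhom K$ the desired lax $T$-promorphism structure. Uniqueness of the assignment $\phi\mapsto\lambda\phi$ then forces the correspondence claimed by the lemma.

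The natural simplification is to pass everywhere to adjoints under $J\hc\dash\ladj J\lhom\dash$. Both axioms of \propref{left and right structure cells} are identities of horizontal cells whose targets have the shape $B(b,\id)^{\hc n}\hc(J\lhom K)$; taking adjoints converts them into identities of cells landing in $K$, which is where the axioms for $\bar J$ (as a right colax promorphism, \defref{definition:right colax promorphisms}) and for $\bar K$ (as a lax promorphism, in $\lambda$-form) can be applied directly. The adjoint of the first four factors of \eqref{equation:left hom lax structure cell in appendix} is precisely $\omega(\bar J,\bar K)^\sharp$ as computed in \eqref{equation:adjoint of left hom lax structure cell}, so after using the final isomorphism $\bigpars{J\hc B(\id,b)}\lhom K\iso B(b,\id)\hc(J\lhom K)$ and \lemref{lemma:left hom facts}(a)--(c) to transport adjoints through $\hc$-compositions, nested left-homs, and the coherence cell of \propref{lax functors induce left hom coherence cell}, the axioms reduce to manipulable expressions built from $\bar J$, $\lambda\bar K$, $T_\hc$, $\ev$, and the conjoint cells $\ls a\eps_a$, $\ls b\eps_b$.

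The unit axiom is the easier half. Its adjoint form is a single identity of cells $J\hc T(J\lhom K)\hc C(\eta_C,\id)\Rar K$ (after suppressing unitors); the left side collapses using the unit axiom for $\bar K$ to rewrite $\lambda\bar K\of T\eta_K$, followed by the unit axiom for $\bar J$ to rewrite $\bar J\of\eta_J$, while the right-hand coherence data $\rho\alpha_0$ and $\rho\gamma_0$ for $A$ and $C$ reduce via the triangle identity $\ls a\eps_a\of\ls a\eta_a=\id$ from \propref{companion-conjoint adjunction}. Both sides then equal the evaluation map $\ev\colon J\hc(J\lhom K)\Rar K$ composed with the identity cells for $\eta_A$ and $\eta_C$.

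The main obstacle is the associativity axiom, which mixes associators coming from three independent sources: the associativity of $\bar J$, the associativity of $\bar K$, and the compositor $T_\hc$ of the monad. In adjoint form both sides become cells $J\hc B(\id,b)^{\hc 2}\hc T^2(J\lhom K)\hc TC(c,\id)\hc C(c,\id)\Rar K$. On the left, I would use the associativity axiom for $\bar J$ (\defref{definition:right colax promorphisms}) to rewrite the composite involving $\bar J\hc\id$, $\inv{(\rho\mu_J)}$ and $\rho\alpha$ as a composite of $T\bar J$, $\bar J$ and two instances of $T_\hc$; symmetrically on the right, the associativity axiom for $\bar K$ read through \propref{left and right structure cells} rewrites the composite involving $\lambda\bar K$, $\lambda\mu_K$, $\lambda\gamma$ and $\lambda T\bar K$. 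The two resulting diagrams coincide by naturality of $T_\hc$ applied to $\ev\colon TJ\hc T(J\lhom K)\Rar TK$ (this is the step that swaps the order in which $\bar J$ and $\lambda\bar K$ are applied) together with the monad associativity $\mu\of T\mu=\mu\of\mu T$ (which reconciles the two placements of $\mu_{J\lhom K}$). To keep the bookkeeping manageable I would use the block-diagram notation introduced in the proof of \propref{right colax promorphisms form a pseudo double category}, so that each axiom application becomes a local rewrite in a grid and the duality between the $J$-side moves and the $K$-side moves is displayed explicitly.
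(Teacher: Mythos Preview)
You are proving the wrong lemma. The statement you were given is \lemref{lemma:left hom facts}, a small technical result with three parts (a), (b), (c) about how taking adjoints under $J\hc\dash \ladj J\lhom\dash$ interacts with horizontal composition and with the coherence cell of a lax functor. Your proposal is instead an outline of a proof of \lemref{lemma:left hom lax structure cell}, the main lemma asserting that $J\lhom K$ carries a lax $T$-promorphism structure. These are different statements: the first is an auxiliary tool, the second is the result that \emph{uses} that tool.

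For the actual statement \lemref{lemma:left hom facts}, the paper does not give a detailed argument; it simply notes that each part follows from the naturality of the adjunction isomorphisms $\mathcal K(J\hc H,K)\iso\mathcal K(H,J\lhom K)$ and, for part (c), the naturality of the compositor $F_\hc$. Concretely: for (a) and (b), write both sides of the claimed equality as the image of a single cell under two different strings of naturality squares for the bijection $(-)^\sharp/(-)^\flat$, using that $\ev\of(\id\hc\phi^\flat)=\phi$; for (c), unwind the definition of the cell in \propref{lax functors induce left hom coherence cell} as the adjoint of $F\ev\of F_\hc$ and again use naturality of $(-)^\flat$ together with naturality of $F_\hc$. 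None of the apparatus you invoke (the axioms for $\bar J$ and $\bar K$, right suitability, $\rho\mu_J$, block diagrams) is relevant here.
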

\begin{proof}[Proof of \lemref{lemma:left hom lax structure cell}]
	Recall that we denote the first four cells of \eqref{equation:left hom lax structure cell in appendix} by $\omega(\bar J, \bar K)$, so that the lemma states that the left hom $\hmap{J \lhom K}BC$ can be given the structure of a lax promorphism whose structure cell $\ol{J \lhom K}$ corresponds, under \propref{left and right cells}, to the composite
	\begin{displaymath}
		\lambda\ol{J \lhom K} = \bigbrks{T(J \lhom K) \hc C(c, \id) \xRar{\omega(\bar J, \bar K)} \bigpars{J \hc B(\id, b)} \lhom K \iso B(b, \id) \hc J \lhom K},
	\end{displaymath}
	where the isomorphism, from right to left, is adjoint to the counit $\ls b\eps_b$ of $B(b, \id) \ladj B(\id, b)$, followed by evaluation (\corref{companion, conjoint and left hom isomorphisms}). Applying \lemref{lemma:left hom facts}(b) to $\omega(\bar J, \bar K)$ we find that its adjoint is the composite
	\begin{align} \label{equation:891123}
		&{}J \hc B(\id, b) \hc T(J \lhom K) \hc C(c, \id) \xRar{\bar J \hc \id} A(\id, a) \hc TJ \hc T(J \lhom K) \hc C(c, \id) \notag \\
		&\xRar{\id \hc T_\hc \hc \id} A(\id, a) \hc T(J \hc J \lhom K) \hc C(c, \id) \xRar{\id \hc T\ev \hc \id} A(\id, a) \hc TK \hc C(c, \id) \notag \\
		&\xRar{\id \hc \lambda\bar K} A(\id, a) \hc A(a, \id) \hc K \xRar{\ls a\eps_a \hc \id} K.
	\end{align}
	We will show that $\ol{J \lhom K}$ satisfies the associativity and unit axioms of \defref{definition:lax promorphism} by proving that the corresponding axioms for $\lambda\ol{J \lhom K}$ hold, as given in \propref{left and right structure cells}.
	
	We start with the associativity axiom for $\lambda\ol{J \lhom K}$, whose right-hand side is the following composite
	\begin{align} \label{equation:587709}
		&T^2(J \lhom K) \hc TC(c, \id) \hc C(c, \id) \xRar{T_\hc \hc \id} T\bigpars{T(J \lhom K) \hc C(c, \id)} \hc C(c, \id) \notag \\
		&\xRar{T\omega(\bar J, \bar K) \hc \id} T\Bigpars{\bigpars{J \hc B(\id, b)} \lhom K} \hc C(c, \id) \iso T\bigpars{B(b, \id) \hc J \lhom K} \hc C(c, \id) \notag \\
		&\xRar{\inv T_\hc \hc \id} TB(b, \id) \hc T(J \lhom K) \hc C(c, \id) \xRar{\id \hc \omega(\bar J, \bar K)} TB(b, \id) \hc \bigpars{J \hc B(\id, b)} \lhom K \notag \\
		&\iso TB(b, \id) \hc B(b, \id) \hc J \lhom K \xRar{\lambda\beta \hc \id} TB(\mu_B, \id) \hc B(b, \id) \hc J \lhom K,
	\end{align}
	where the inverse of the $T_\hc$-component exists by \propref{normal functors preserve companions}. In this chain of horizontal cells we consider the subchain $T\Bigpars{\bigpars{J \hc B(\id, b)} \lhom K} \hc C(c, \id) \Rar TB(\mu_B, \id) \hc B(b, \id) \hc J \lhom K$ of the last five cells, which also occurs as the top leg of the diagram below.
	\begin{displaymath}
		\begin{tikzpicture}
			\matrix(k)[math, column sep=2em, yshift=7.5em]
				{ T\Bigpars{\bigpars{J \hc B(\id, b)} \lhom K} \hc C(c, \id) & T\bigpars{B(b, \id) \hc J \lhom K} \hc C(c, \id) \\};
			\matrix(l)[math, row sep=2.5em, column sep=6em]
				{ T\bigpars{J \hc B(\id, b)} \lhom \bigpars{TK \hc C(c, \id)} & TB(b, \id) \hc T(J \lhom K) \hc C(c, \id) \\
					\bigpars{TJ \hc TB(\id, b)} \lhom \bigpars{A(a, \id) \hc K} & TB(b, \id) \hc \bigpars{J \hc B(\id, b)} \lhom K \\
					\bigpars{A(\id, a) \hc TJ \hc B(\id, b)} \lhom K & TB(b, \id) \hc B(b, \id) \hc J \lhom K \\ };
			\matrix(m)[math, yshift=-7.5em, xshift=2.1em, column sep=3.5em]
				{	\bigpars{J \hc B(\id, b) \hc TB(\id, b)} \lhom K & TB(\mu_B, \id) \hc B(b, \id) \hc J \lhom K \\};
			\matrix(n)[math, yshift=-11em]
				{ \bigpars{J \hc B(\id, b) \hc TB(\id, \mu_B)} \lhom K \\};
			\path	(k-1-1) edge[cell] (l-1-1)
						(k-1-2) edge[cell] node[iso] {$\iso$} (k-1-1)
						(l-1-1) edge[cell] node[left] {$T_\hc \lhom \lambda\bar K$} (l-2-1)
						(l-1-2) edge[cell] node[right] {$T_\hc \hc \id$} (k-1-2)
										edge[cell] node[right] {$\id \hc \omega(\bar J, \bar K)$} (l-2-2)
						(l-2-1) edge[cell] node[iso] {$\iso$} (l-3-1)
						(l-2-2.south west) edge[cell] node[iso] {$\iso$} (m-1-1)
						(l-3-1) edge[cell] node[left, inner sep=5pt] {$(\bar J \hc \id) \lhom \id$} (m-1-1)
						(l-3-2) edge[cell] node[iso] {$\iso$} (l-2-2)
										edge[cell] node[right] {$\lambda\beta \hc \id$} (m-1-2)
						(l-3-2.south west) edge[cell] node[iso] {$\iso$} (m-1-1.north east)				
						(m-1-1) edge[cell] node[left, inner sep=8pt] {$(\id \hc \rho\beta) \lhom \id$} (n-1-1)
						(m-1-2) edge[cell] node[iso] {$\iso$} (n-1-1);
			\draw	(0,2em) node {I}
						(3em,-3em) node {II}
						(3em,-7em) node {III};
		\end{tikzpicture}
	\end{displaymath}
	Here the single unlabelled cell is given by \propref{lax functors induce left hom coherence cell} and each of the isomorphisms is given by \corref{companion, conjoint and left hom isomorphisms}, using the counits for the several companion-conjoint adjunctions. For example, the top isomorphism of the subdiagram labelled III is adjoint to the counit $\ls{b \of Tb}\eps_{b \of Tb}$, for the adjunction $TB(b, \id) \hc B(b, \id) \ladj B(\id, b) \hc TB(\id, b)$ of the companion and conjoint of $b \of Tb$, followed by evaluation (remember our convention, preceeding \propref{left and right structure cells}, concerning the companions and conjoints of composites like $b \of Tb$).
	
	We will show that the diagram above commutes. First consider the bottom leg of the subdiagram labelled I, which consists of $\id \hc \omega(\bar J, \bar K)$ followed by the isomorphism that is adjoint to $\ls{Tb}\eps_{Tb}$ and followed by evaluation. Here $\omega(\bar J, \bar K)$ is adjoint to the composite \eqref{equation:891123} so that we can apply \lemref{lemma:left hom facts}(a). We conclude that the bottom leg of I is adjoint to
	\begin{align*}
		J \hc B(\id, b)&{} \hc TB(\id, b) \hc TB(b, \id) \hc T(J \lhom K) \hc C(c, \id) \\
		&\xRar{\id \hc \ls{Tb}\eps_{Tb} \hc \id} J \hc B(\id, b) \hc T(J \lhom K) \hc C(c, \id) \\
		&\xRar{\bar J \hc \id} A(\id, a) \hc TJ \hc T(J \lhom K) \hc C(c, \id) \\
		&\xRar{\id \hc T_\hc \hc \id} A(\id, a) \hc T(J \hc J \lhom K) \hc C(c, \id) \\
		&\xRar{\id \hc T\ev \hc \id} A(\id, a) \hc TK \hc C(c, \id) \\
		&\xRar{\id \hc \bar K} A(\id, a) \hc A(a, \id) \hc K \xRar{\ls a\eps_a \hc \id} K
	\end{align*}
	On the other hand the adjoint of the top leg of the subdiagram I can be computed by first applying part (b) of \lemref{lemma:left hom facts} to its second isomorphism and then part (c) to the unlabelled cell. Thus it is adjoint to
	\begin{align*}
		J \hc B(\id, b)&{} \hc TB(\id, b) \hc TB(b, \id) \hc T(J \lhom K) \hc C(c, \id) \\
		&\xRar{\bar J \hc \id} A(\id, a) \hc TJ \hc TB(\id, b) \hc TB(b, \id) \hc T(J \lhom K) \hc C(c, \id) \\
		&\xRar{\id \hc T_\hc \hc \id} A(\id, a) \hc T\bigpars{J \hc B(\id, b) \hc B(b, \id) \hc J \lhom K} \hc C(c, \id) \\
		&\xRar{\id \hc T(\id \hc \ls b\eps_b \hc \id) \hc \id} A(\id, a) \hc T(J \hc J \lhom K) \hc C(c, \id) \\
		&\xRar{\id \hc T\ev \hc \id} A(\id, a) \hc TK \hc C(c, \id) \\
		&\xRar{\id \hc \lambda\bar K} A(\id, a) \hc A(a, \id) \hc K \xRar{\ls a\eps_a \hc \id} K.
	\end{align*}
	That the two adjoints above coincide follows from the associativity axiom for $T$ and from $T(\ls b\eps_b) \of T_\hc = T(\eps_b \hc \ls b\eps) \of T_\hc = T\eps_b \hc T\ls b\eps = \eps_{Tb} \hc \ls{Tb}\eps = \ls{Tb}\eps_{Tb}$; we conclude that the subdiagram I commutes. Similarly we can use \lemref{lemma:left hom facts}(a) to compute the adjoint of the top leg of the subdiagram labelled II, obtaining the composite
	\begin{multline*}
		J \hc B(\id, b) \hc TB(\id, b) \hc TB(b, \id) \hc B(b, \id) \hc J \lhom K \\
		\xRar{\id \hc \ls{Tb}\eps_{Tb} \hc \id} J \hc B(\id, b) \hc B(b, \id) \hc J \lhom K \xRar{\id \hc \ls b\eps_b \hc \id} J \hc J \lhom K \xRar\ev K.
	\end{multline*}
	The composite of the first two cells here coincides with $\id \hc \ls{b \of Tb}\eps_{b \of Tb} \hc \id$, so that the full composite above coincides with the adjoint of the bottom cell of II: the subdiagram II commutes too. Finally the top leg of the subdiagram labelled III is adjoint to
	\begin{align*}
		J \hc B(\id, b)&{} \hc TB(\id, \mu_B) \hc TB(b, \id) \hc B(b, \id) \hc J \lhom K \\
		&\xRar{\id \hc \rho\beta \hc \id} J \hc B(\id, b) \hc TB(\id, b) \hc TB(b, \id) \hc B(b, \id) \hc J \lhom K \\
		&\xRar{\id \hc \ls{b \of Tb}\eps_{b \of Tb} \hc \id} J \hc J \lhom K \xRar\ev K.
	\end{align*}
	It follows from the definitions of $\rho\beta$ and $\lambda\beta$ (\propref{left and right cells}), as well as the vertical companion and conjoint identities, that $\ls{b \of Tb}\eps_{b \of Tb} \of (\rho\beta \hc \id) = \ls{b \of \mu_B}\eps_{b \of \mu_B} \of (\id \hc \lambda\beta)$, so that the composite above equals the adjoint of the bottom leg of the subdiagram III, which therefore commutes as well.
	
	Replacing in \eqref{equation:587709} the top leg of the diagram above with the bottom one, as well as expanding $T\omega(\bar J, \bar K)$, we obtain the composite
	\begin{align} \label{equation:256479}
		&T^2(J \lhom K) \hc TC(c, \id) \hc C(c, \id) \xRar{T_\hc \hc \id} T\bigpars{T(J \lhom K) \hc C(c, \id)} \hc C(c, \id) \notag \\
		&\Rar T\Bigpars{TJ \lhom \bigpars{TK \hc C(c, \id)}} \hc C(c, \id) \notag \\
		&\xRar{T(\id \lhom \lambda\bar K) \hc \id} T\Bigpars{TJ \lhom \bigpars{A(a, \id) \hc K}} \hc C(c, \id) \notag \\
		&\iso T\Bigpars{\bigpars{A(\id, a) \hc TJ} \lhom K} \hc C(c, \id) \xRar{T(\bar J \lhom \id) \hc \id} T\Bigpars{\bigpars{J \hc B(\id, b)} \lhom K} \hc C(c, \id) \notag \\
		&\Rar T\bigpars{J \hc B(\id, b)} \lhom \bigpars{TK \hc C(c, \id)} \xRar{\id \lhom \lambda\bar K} T\bigpars{J \hc B(\id, b)} \lhom \bigpars{A(a, \id) \hc K} \notag \\
		&\xRar{T_\hc \lhom \id} \bigpars{TJ \hc TB(\id, b)} \lhom \bigpars{A(a, \id) \hc K} \iso \bigpars{A(\id, a) \hc TJ \hc TB(\id, b)} \lhom K \notag \\
		&\xRar{(\bar J \hc \id) \lhom \id} \bigpars{J \hc B(\id, b) \hc TB(\id, b)} \lhom K \notag \\
		&\xRar{(\id \hc \rho\beta) \lhom \id} \bigpars{J \hc B(\id, b) \hc TB(\id, \mu_B)} \lhom K \notag \\
		&\iso TB(\mu_B, \id) \hc B(b, \id) \hc J \lhom K.
	\end{align}
	Our aim is to move $T(\bar J \lhom \id) \hc \id$ past $\id \lhom \lambda\bar K$, so that the rear of the composite above consists of the left hom of the right-hand side of the associativity axiom for $\bar J$ (\defref{definition:right colax promorphisms}) and the identity on $K$. To do this, consider the subchain $T\Bigpars{\bigpars{A(\id, a) \hc TJ} \lhom K} \hc C(c, \id) \Rar \bigpars{A(\id, a) \hc TJ \hc TB(\id, b)} \lhom K$ in the chain above, that starts with $T(\bar J \lhom \id) \hc \id$ and ends with the second isomorphism. Applying part (b) of \lemref{lemma:left hom facts} to the latter, followed by applying part (c) to the unlabelled cell, we find that this subchain is adjoint to the composite
	\begin{align*}
		&A(\id, a) \hc TJ \hc TB(\id, b) \hc T\Bigpars{\bigpars{A(\id, a) \hc TJ} \lhom K} \hc C(c, \id) \\
		&\xRar{\id \hc T_\hc \hc \id} A(\id, a) \hc T\Bigpars{J \hc B(\id, b) \hc \bigpars{A(\id, a) \hc TJ} \lhom K} \hc C(c, \id) \\
		&\xRar{\id \hc T(\bar J \hc \id) \hc \id} A(\id, a) \hc T\Bigpars{A(a, \id) \hc TJ \hc \bigpars{A(a, \id) \hc TJ} \lhom K} \hc C(c, \id) \\
		&\xRar{\id \hc T\ev \hc \id} A(\id, a) \hc TK \hc C(c, \id) \xRar{\id \hc \lambda\bar K} A(\id, a) \hc A(a, \id) \hc K \xRar{\ls a\eps_a \hc \id} K.
	\end{align*}
	Using the associativity and naturality of $T_\hc$ it is straightforward to show that the composite above is also the adjoint of the composite below, which therefore coincides with the subchain of \eqref{equation:256479} that we are considering.
	\begin{align*}
		&T\Bigpars{\bigpars{A(\id, a) \hc TJ} \lhom K} \hc C(c, \id) \Rar T\bigpars{A(\id, a) \hc TJ} \lhom \bigpars{TK \hc C(c, \id)} \\
		&\xRar{\id \lhom \lambda\bar K} T\bigpars{A(\id, a) \hc TJ} \lhom \bigpars{A(a, \id) \hc K} \iso \Bigpars{A(\id, a) \hc T\bigpars{A(\id, a) \hc TJ}} \lhom K \\
		&\xRar{(\id \hc T\bar J) \lhom \id} \Bigpars{A(\id, a) \hc T\bigpars{J \hc B(\id, b)}} \lhom K \\
		&\xRar{(\id \hc T_\hc) \lhom \id} \bigpars{A(\id, a) \hc TJ \hc TB(\id, b)} \lhom K
	\end{align*}
	As was our aim, substituting the above into \eqref{equation:256479} we obtain a chain whose rear forms the left hom of the right-hand side of the associativity axiom for $\bar J$. Replacing the latter with the left hom of the corresponding left-hand side we obtain
	\begin{align} \label{equation:411726}
		&T^2(J \lhom K) \hc TC(c, \id) \hc C(c, \id) \xRar{T_\hc \hc \id} T\bigpars{T(J \lhom K) \hc C(c, \id)} \hc C(c, \id) \notag \\
		&\Rar T\Bigpars{TJ \lhom \bigpars{TK \hc C(c, \id)}} \hc C(c, \id) \notag \\
		&\xRar{T(\id \lhom \lambda\bar K) \hc \id} T\Bigpars{TJ \lhom \bigpars{A(a, \id) \hc K}} \hc C(c, \id) \notag \\
		&\iso T\Bigpars{\bigpars{A(\id, a) \hc TJ} \lhom K} \hc C(c, \id) \Rar T\bigpars{A(\id, a) \hc TJ} \lhom \bigpars{TK \hc C(c, \id)} \notag \\
		&\xRar{\id \lhom \lambda\bar K} T\bigpars{A(\id, a) \hc TJ} \lhom \bigpars{A(a, \id) \hc K} \iso \Bigpars{A(\id, a) \hc T\bigpars{A(\id, a) \hc TJ}} \lhom K \notag \\
		&\xRar{(\id \hc T_\hc) \lhom \id} \bigpars{A(\id, a) \hc TA(\id, a) \hc T^2J} \lhom K \notag \\
		&\xRar{(\rho\alpha \hc \id) \lhom \id} \bigpars{A(\id, a) \hc TA(\id, \mu_A) \hc T^2J} \lhom K \notag \\
		&\xRar{(\id \hc \inv{(\rho\mu_J)}) \lhom \id} \bigpars{A(\id, a) \hc TJ \hc TB(\id, \mu_B)} \lhom K \notag \\
		&\xRar{(\bar J \hc \id) \lhom \id} \bigpars{J \hc B(\id, b) \hc TB(\id, \mu_B)} \lhom K \notag \\
		&\iso TB(\mu_B, \id) \hc B(b, \id) \hc J \lhom K.
	\end{align}
	
	We claim that the subchain consisting of the first eight cells here, with target $\bigpars{A(\id, a) \hc TA(\id, \mu_A) \hc T^2J} \lhom K$, equals the composite
	\begin{align} \label{equation:714342}
		T^2(J \lhom K)&{} \hc TC(c, \id) \hc C(c, \id) \Rar T^2 J \lhom \bigpars{T^2 K \hc TC(c, \id) \hc C(c, \id)} \notag \\
		&\xRar{\id \lhom (\lambda T\bar K \hc \id)} T^2J \lhom \bigpars{TA(a, \id) \hc TK \hc C(c, \id)} \notag \\
		&\xRar{\id \lhom (\id \hc \lambda\bar K)} T^2J \lhom \bigpars{TA(a, \id) \hc A(a, \id) \hc K} \notag \\
		&\xRar{\id \lhom (\lambda\alpha \hc \id)} T^2J \lhom \bigpars{TA(\mu_A, \id) \hc A(a, \id) \hc K} \notag \\
		&\iso \bigpars{A(\id, a) \hc TA(\id, \mu_A) \hc T^2J} \lhom K,
	\end{align}
	where the first cell is given by \propref{lax functors induce left hom coherence cell}, now for $F = T^2$. Notice that the composite above contains the right-hand side of the associativity axiom for $\lambda\bar K$ (\propref{left and right structure cells}). To see that the claim holds we again compute adjoints: that of the composite above, using \lemref{lemma:left hom facts}(b), equals
	\begin{align*}
		&A(\id, a) \hc TA(\id, \mu_A) \hc T^2 J \hc T^2(J \lhom K) \hc TC(c, \id) \hc C(c, \id) \\
		&\xRar{\id \hc T^2_\hc \hc \id} A(\id, a) \hc TA(\id, \mu_A) \hc T^2(J \hc J \lhom K) \hc TC(c, \id) \\
		&\xRar{\id \hc T^2\ev \hc \id} A(\id, a) \hc TA(\id, \mu_A) \hc T^2 K \hc TC(c, \id) \hc C(c, \id) \\
		&\xRar{\id \lambda T\bar K \hc \id} A(\id, a) \hc TA(\id, \mu_A) \hc TA(a, \id) \hc TK \hc C(c, \id) \\
		&\xRar{\id \hc \lambda\bar K} A(\id, a) \hc TA(\id, \mu_A) \hc TA(a, \id) \hc A(a, \id) \hc K \\
		&\xRar{\id \hc \lambda\alpha \hc \id} A(\id, a) \hc TA(\id, \mu_A) \hc TA(\mu_A, \id) \hc A(a, \id) \hc K \xRar{\ls{a \of \mu_A}\eps_{a \of \mu_A} \hc \id} K
	\end{align*}
	As we have seen before, in showing that the subdiagram III commutes, we have $\ls{a \of \mu_A}\eps_{a \of \mu_A} \of (\id \hc \lambda\alpha) = \ls{a \of Ta}\eps_{a \of Ta} \of (\rho\alpha \hc \id)$. After substituting this into the composite above, we move $\rho\alpha$ all the way to the front, and rewrite $\ls{a \of Ta}\eps_{a \of Ta} = \ls a\eps_a \of (\id \hc T\ls a\eps_a \hc \id) \of (\id \hc T_\hc \hc \id)$. Following this we also rewrite
	\begin{multline*}
		\cell{(T\ls a\eps_a \hc \id) \of (T_\hc \hc \id) \of (\id \hc \lambda T\bar K) \\
		= T(\ls a\eps_a \hc \id) \of T(\id \hc \lambda\bar K) \of T_\hc}{TA(\id, a) \hc T^2 K \hc TC(c, \id)}{TK},
	\end{multline*}
	where we have used that $\lambda T\bar K = \inv T_\hc \of T\lambda\bar K \of T_\hc$, see \propref{left and right structure cells}, so that the adjoint above is equal to
	\begin{align*}
		A(\id, a)&{} \hc TA(\id, \mu_A) \hc T^2 J \hc T^2(J \lhom K) \hc TC(c, \id) \hc C(c, \id) \\
		&\xRar{\rho\alpha \hc T^2_\hc \hc \id} A(\id, a) \hc TA(\id, a) \hc T^2(J \hc J \lhom K) \hc TC(c, \id) \hc C(c, \id) \\
		&\xRar{\id \hc T^2\ev \hc \id} A(\id, a) \hc TA(\id, a) \hc T^2 K \hc TC(c, \id) \hc C(c, \id) \\
		&\xRar{\id \hc T_\hc \hc \id} A(\id, a) \hc T\bigpars{A(\id, a) \hc TK \hc C(c, \id)} \hc C(c, \id) \\
		&\xRar{\id \hc T(\id \hc \lambda\bar K) \hc \id} A(\id, a) \hc T\bigpars{A(\id, a) \hc A(a, \id) \hc K} \hc C(c, \id) \\
		&\xRar{\id \hc T(\ls a\eps_a \hc \id) \hc \id} A(\id, a) \hc TK \hc C(c, \id) \\
		&\xRar{\id \hc \lambda\bar K} A(\id, a) \hc A(a, \id) \hc K \xRar{\ls a\eps_a \hc \id} K.
	\end{align*}
	On the other hand, computing the adjoint of the first eight cells of \eqref{equation:411726}, using parts (b) and (c) of \lemref{lemma:left hom facts}, we find
	\begin{align*}
		&A(\id, a) \hc TA(\id, \mu_A) \hc T^2 J \hc T^2(J \lhom K) \hc TC(c, \id) \hc C(c, \id) \\
		&\xRar{\rho\alpha \hc T_\hc \hc \id} A(\id, a) \hc TA(\id, a) \hc T^2J \hc T\bigpars{T(J \lhom K) \hc C(c, \id)} \hc C(c, \id) \\
		&\xRar{\id \hc T_\hc \hc \id} A(\id, a) \hc T\bigpars{A(\id, a) \hc TJ \hc T(J \lhom K) \hc C(c, \id)} \hc C(c, \id) \\
		&\xRar{\id \hc T(\id \hc T_\hc \id) \hc \id)} A(\id, a) \hc T\bigpars{A(\id, a) \hc T(J \hc J \lhom K) \hc C(c, \id)} \hc C(c, \id) \\
		&\xRar{\id \hc T(\id \hc T\ev \hc \id) \hc \id} A(\id, a) \hc T\bigpars{A(\id, a) \hc TK \hc C(c, \id)} \hc C(c, \id) \\
		&\xRar{\id \hc T(\id \hc \lambda\bar K) \hc \id} A(\id, a) \hc T\bigpars{A(\id, a) \hc A(a, \id) \hc K} \hc C(c, \id) \\
		&\xRar{\id \hc T(\ls a\eps_a \hc \id) \hc \id} A(\id, a) \hc TK \hc C(c, \id) \\
		&\xRar{\id \hc \lambda\bar K} A(\id, a) \hc A(a, \id) \hc K \xRar{\ls a\eps_a \hc \id} K.
	\end{align*}
	Using the associativity and naturality of $T_\hc$ it is easily seen that the two composites above coincide, which proves the claim.
	
	Next we replace in \eqref{equation:714342} the right-hand side of the associativity axiom for $\lambda\bar K$ by the corresponding left-hand side (see \propref{left and right structure cells}), and then substitute it for the first eight cells in \eqref{equation:411726}. This gives the composite
	\begin{align} \label{equation:457489}
		T^2(J \lhom K)&{} \hc TC(c, \id) \hc C(c, \id) \Rar T^2J \lhom \bigpars{T^2 K \hc TC(c, \id) \hc C(c, \id)} \notag \\
		&\xRar{\id \lhom (\id \hc \lambda\gamma)} T^2J \lhom \bigpars{T^2K \hc TC(\mu_C, \id) \hc C(c, \id)} \notag \\
		&\xRar{\id \lhom (\lambda\mu_K \hc \id)} T^2J \lhom \bigpars{TA(\mu_A, \id) \hc TK \hc C(c, \id)} \notag \\
		&\xRar{\id \lhom (\id \hc \lambda\bar K)} T^2J \lhom \bigpars{TA(\mu_A, \id) \hc A(a, \id) \hc K} \notag \\
		&\iso \bigpars{A(\id, a) \hc TA(\id, \mu_A) \hc T^2J} \lhom K \notag \\
		&\xRar{(\id \hc \inv{(\rho\mu_J)}) \lhom \id} \bigpars{A(\id, a) \hc TJ \hc TB(\id, \mu_B)} \lhom K \notag \\
		&\xRar{(\bar J \hc \id) \lhom \id} \bigpars{J \hc B(\id, b) \hc TB(\id, \mu_B)} \lhom K \notag \\
		&\iso TB(\mu_B, \id) \hc B(b, \id) \hc J \lhom K.
	\end{align}
	Remember that the composite above equals the right-hand side of the associativity axiom for $\lambda\ol{J \lhom K}$ (\propref{left and right structure cells}). On the other hand, the left-hand side is the composite
	\begin{align} \label{equation:151984}
		T^2(J \lhom K)&{} \hc TC(c, \id) \hc C(c, \id) \xRar{\id \hc \lambda\gamma} T^2(J \lhom K) \hc TC(\mu_C, \id) \hc C(c, \id) \notag \notag \\
		&\xRar{\lambda\mu_{J\lhom K} \hc \id} TB(\mu_B, \id) \hc T(J \lhom K) \hc C(c, \id) \notag \notag \\
		&\xRar{\id \hc \omega(\bar J, \bar K)} TB(\mu_B, \id) \hc \bigpars{J \hc B(\id, b)} \lhom K \notag \notag \\
		&\iso \bigpars{J \hc B(\id, b) \hc TB(\id, \mu_B)} \lhom K \notag \notag \\
		&\iso TB(\mu_B, \id) \hc B(b, \id) \hc J \lhom K,
	\end{align}
	where we have replaced the single isomorphism
	\begin{displaymath}
		TB(\mu_B, \id) \hc \bigpars{J \hc B(\id, b)} \lhom K \iso TB(\mu_B, \id) \hc B(b, \id) \hc J \lhom K
	\end{displaymath}
	by the pair of isomorphisms above (proving that these are equal is analogous to proving that the subdiagram II above commutes). Notice that both sides above have their final isomorphism in common; we will show that they coincide by proving that, after taking these isomorphisms away, the adjoints of the remaining subchains coincide. Computing the adjoint of the first four cells of the composite above, we obtain
	\begin{align} \label{equation:168498}
		J&{} \hc B(\id, b) \hc TB(\id, \mu_B) \hc T^2(J \lhom K) \hc TC(c, \id) \hc C(c, \id) \notag \\
		&\xRar{\bar J \hc \id \hc \lambda\gamma} A(\id, a) \hc TJ \hc TB(\id, \mu_B) \hc T^2(J \lhom K) \hc TC(\mu_C, \id) \hc C(c, \id) \notag \\
		&\xRar{\id \hc \lambda\mu_{J \lhom K} \hc \id} A(\id, a) \hc TJ \hc TB(\id, \mu_B) \hc TB(\mu_B, \id) \hc T(J \lhom K) \hc C(c, \id) \notag \\
		&\xRar{\id \hc \ls{\mu_B}\eps_{\mu_B} \hc \id} A(\id, a) \hc TJ \hc T(J \lhom K) \hc C(c, \id) \notag \\
		&\xRar{\id \hc T_\hc \hc \id} A(\id, a) \hc T(J \hc J \lhom K) \hc C(c, \id) \notag \\
		&\xRar{\id \hc T\ev \hc \id} A(\id, a) \hc TK \hc C(c, \id) \notag \\
		&\xRar{\id \hc \lambda\bar K} A(\id,a) \hc A(a, \id) \hc K \xRar{\ls a\eps_a \hc \id} K,
	\end{align}
	where we have used \lemref{lemma:left hom facts}(a) and the fact that $\omega(\bar J, \bar K)$ is adjoint to \eqref{equation:891123}. As before we have $(\ls{\mu_B}\eps_{\mu_B} \hc \id) \of (\id \hc \lambda\mu_{J \lhom K}) = (\id \hc \ls{\mu_C}\eps_{\mu_C}) \of (\rho\mu_{J \lhom K} \hc \id)$, where $\rho\mu_{J \lhom K}$ can be rewritten as follows. The composition axiom (\defref{definition:transformation}) and the naturality of $\mu$ imply that
	\begin{align} \label{equation:264886}
		\bigbrks{T^2 J&{} \hc T^2(J \lhom K) \xRar{\mu_J \hc \mu_{J \lhom K}} TJ \hc T(J \lhom K) \xRar{T_\hc} T(J \hc J \lhom K) \xRar{T\ev} TK} \notag \\
		&=\bigbrks{T^2 J \hc T^2(J \lhom K) \xRar{T^2_\hc} T^2(J \hc J \lhom K) \xRar{\mu_{J \hc J \lhom K}} T(J \hc J \lhom K) \xRar{T\ev} TK} \notag \\
		&=\bigbrks{T^2 J \hc T^2(J \lhom K) \xRar{T^2_\hc} T^2(J \hc J \lhom K) \xRar{T^2\ev} T^2 K \xRar{\mu_K} TK}
	\end{align}
	which, after applying $\rho$ and precomposing with $\inv{(\rho\mu_J)} \hc \id$, gives the identity
	\begin{multline*}
		(T\ev \hc \id) \of (T_\hc \hc \id) \of (\id \hc \rho\mu_{J\lhom K}) = \rho\mu_K \of (\id \hc T^2\ev) \of (\id \hc T^2_\hc) \of (\inv{(\rho\mu_J)} \hc \id)
	\end{multline*}
	of horizontal cells $TJ \hc TB(\id, \mu_B) \hc T^2(J \lhom K) \Rar TK \hc TC(\mu_C, \id)$, where we have also used the functoriality of $\rho$, see \propref{left and right cells}. Substituting this into \eqref{equation:168498} we obtain
	\begin{align*}
		J&{} \hc B(\id, b) \hc TB(\id, \mu_B) \hc T^2(J \lhom K) \hc TC(c, \id) \hc C(c, \id) \\
		&\xRar{\bar J \hc \id \hc \lambda\gamma} A(\id, a) \hc TJ \hc TB(\id, \mu_B) \hc T^2(J \lhom K) \hc TC(\mu_C, \id) \hc C(c, \id) \\
		&\xRar{\id \hc \inv{(\rho\mu_J)} \hc \id} A(\id, a) \hc TA(\id, \mu_A) \hc T^2 J \hc T^2(J \lhom K) \hc TC(\mu_C, \id) \hc C(c, \id) \\
		&\xRar{\id \hc T^2_\hc \hc \id} A(\id, a) \hc TA(\id, \mu_A) \hc T^2(J \hc J \lhom K) \hc TC(\mu_C, \id) \hc C(c, \id) \\
		&\xRar{\id \hc T^2\ev \hc \id} A(\id, a) \hc TA(\id, \mu_A) \hc T^2K \hc TC(\mu_C, \id) \hc C(c, \id) \\
		&\xRar{\id \hc \rho\mu_K \hc \id} A(\id, a) \hc TK \hc TC(\id, \mu_C) \hc TC(\mu_C, \id) \hc C(c, \id) \\
		&\xRar{\id \hc \ls{\mu_C}\eps_{\mu_C} \hc \id} A(\id, a) \hc TK \hc C(c, \id) \xRar{\id \hc \lambda\bar K} A(\id,a) \hc A(a, \id) \hc K \xRar{\ls a\eps_a \hc \id} K.
	\end{align*}
	On the other hand, the adjoint of the right-hand side \eqref{equation:457489}, after removing the final isomorphism, equals
	\begin{align*}
		J&{} \hc B(\id, b) \hc TB(\id, \mu_B) \hc T^2(J \lhom K) \hc TC(c, \id) \hc C(c, \id) \\
		&\xRar{\bar J \hc \id \hc \lambda\gamma} A(\id, a) \hc TJ \hc TB(\id, \mu_B) \hc T^2(J \lhom K) \hc TC(\mu_C, \id) \hc C(c, \id) \\
		&\xRar{\id \hc \inv{(\rho\mu_J)} \hc \id} A(\id, a) \hc TA(\id, \mu_A) \hc T^2 J \hc T^2(J \lhom K) \hc TC(\mu_C, \id) \hc C(c, \id) \\
		&\xRar{\id \hc T^2_\hc \hc \id} A(\id, a) \hc TA(\id, \mu_A) \hc T^2(J \hc J \lhom K) \hc TC(\mu_C, \id) \hc C(c, \id) \\
		&\xRar{\id \hc T^2\ev \hc \id} A(\id, a) \hc TA(\id, \mu_A) \hc T^2K \hc TC(\mu_C, \id) \hc C(c, \id) \\
		&\xRar{\id \hc \lambda\mu_K \hc \id} A(\id, a) \hc TA(\id, \mu_A) \hc TA(\mu_A, \id) \hc TK \hc C(c, \id) \\
		&\xRar{\id \hc \lambda\bar K} A(\id, a) \hc TA(\id, \mu_A) \hc TA(\mu_A, \id) \hc A(a, \id) \hc K \xRar{\ls{a \of \mu_A}\eps_{a \of \mu_A} \hc \id} K.
	\end{align*}
	That the two adjoints above coincide follows from $\ls{a \of \mu_A}\eps_{a \of \mu_A} = \ls a\eps_a \of (\id \hc \ls{\mu_A}\eps_{\mu_A} \hc \id)$ and $(\ls{\mu_A}\eps_{\mu_A} \hc \id) \of (\id \hc \lambda\mu_K) = (\id \hc \ls{\mu_C}\eps_{\mu_C}) \of (\rho\mu_K \hc \id)$. It follows that composites \eqref{equation:457489} and \eqref{equation:151984}, which equal respectively the right-hand and left-hand side of the associativity axiom for $\lambda\ol{J \lhom K}$, coincide, and we conclude that the lax structure cell $\ol{J \lhom K}$, as it was defined in \eqref{equation:left hom lax structure cell in appendix}, satisfies the associativity axiom.
	
	The unit axiom for $\lambda\ol{J \lhom K}$, given in \propref{left and right structure cells}, is proved similarly, as follows. Its left-hand side is the composite
	\begin{align} \label{equation:854143}
		&J \lhom K \xRar{\id \hc \lambda\gamma_0} J \lhom K \hc TC(\eta_C, \id) \hc C(c, \id) \notag \\
		&\xRar{\lambda\eta_{J \lhom K} \hc \id} TB(\eta_B, \id) \hc T(J \lhom K) \hc C(c, \id) \Rar TB(\eta_B, \id) \hc TJ \lhom \bigpars{TK \hc C(c, \id)} \notag \\
		&\xRar{\id \hc \id \lhom \lambda\bar K} TB(\eta_B, \id) \hc TJ \lhom \bigpars{A(a, \id) \hc K} \iso TB(\eta_B, \id) \hc \bigpars{A(\id, a) \hc TJ} \lhom K \notag \\
		&\xRar{\id \hc \bar J \lhom \id} TB(\eta_B, \id) \hc \bigpars{J \hc B(\id ,b)} \lhom K \iso \bigpars{J \hc B(\id, b) \hc TB(\id, \eta_B)} \lhom K \notag \\
		&\iso TB(\eta_B, \id) \hc B(b, \id) \hc J \lhom K
	\end{align}
	where, like in \eqref{equation:151984}, we have replaced the single isomorphism
	\begin{displaymath}
		TB(\eta_B, \id) \hc \bigpars{J \hc B(\id, b)} \lhom K \iso TB(\eta_B, \id) \hc B(b, \id) \hc J \lhom K
	\end{displaymath}
	by the pair of isomorphisms above. Using parts (a) and (b) of \lemref{lemma:left hom facts} we find that the adjoint of the composite above, after removing the last isomorphism, equals
	\begin{align*}
		&J \hc B(\id, b) \hc TB(\id, \eta_B) \hc J \lhom K \\
		&\xRar{\bar J \hc \id \hc \lambda\gamma_0} A(\id, a) \hc TJ \hc TB(\id, \eta_B) \hc J \lhom K \hc TC(\eta_C, \id) \hc C(c, \id) \\
		&\xRar{\id \hc \lambda\eta_{J \lhom K} \hc \id} A(\id, a) \hc TJ \hc TB(\id, \eta_B) \hc TB(\eta_B, \id) \hc T(J \lhom K) \hc C(c, \id) \\
		&\xRar{\id \hc \ls{\eta_B}\eps_{\eta_B} \hc \id} A(\id, a) \hc TJ \hc T(J \lhom K) \hc C(c, \id) \\
		&\xRar{\id \hc T_\hc \hc \id} A(\id, a) \hc T(J \hc J \lhom K) \hc C(c, \id) \xRar{\id \hc T\ev \hc \id} A(\id, a) \hc TK \hc C(c, \id) \\
		&\xRar{\id \hc \lambda\bar K} A(\id, a) \hc A(a, \id) \hc K \xRar{\ls a\eps_a \hc \id} K.
	\end{align*}
	As before, here $(\ls{\eta_B} \eps_{\eta_B} \hc \id) \of (\id \hc \lambda\eta_{J\lhom K}) = (\id \hc \ls{\eta_C} \eps_{\eta_C}) \of (\rho\eta_{J\lhom K} \hc \id)$ and, like \eqref{equation:264886}, we have $\cell{T\ev \of T_\hc \of (\eta_J \hc \eta_{J \lhom K}) = \eta_K \of \ev}{J \hc J \lhom K}{TK}$, so that the composite above can be rewritten as
	\begin{align*}
		J&{} \hc B(\id, b) \hc TB(\id, \eta_B) \hc J \lhom K \xRar{\bar J \hc \id} A(\id, a) \hc TJ \hc TB(\id, \eta_B) \hc J \lhom K \\
		&\xRar{\id \hc \inv{(\rho\eta_J)} \hc \id} A(\id, a) \hc TA(\id, \eta_A) \hc J \hc J \lhom K \\
		&\xRar{\id \hc \ev \hc \lambda\gamma_0} A(\id, a) \hc TA(\id, \eta_A) \hc K \hc TC(\eta_C, \id) \hc C(c, \id) \\
		&\xRar{\id \hc \rho\eta_K \hc \id} A(\id, a) \hc TK \hc TC(\id, \eta_C) \hc TC(\eta_C, \id) \hc C(c, \id) \\
		&\xRar{\id \hc \ls{\eta_C} \eps_{\eta_C} \hc \id} A(\id, a) \hc TK \hc C(c, \id) \xRar{\id \hc \lambda\bar K} A(\id, a) \hc A(a, \id) \hc K \xRar{\ls a\eps_a \hc \id} K.
	\end{align*}
	Here we can first replace $(\id \hc \ls{\eta_C}\eps_{\eta_C}) \of (\rho\eta_K \hc \id)$ by $(\ls{\eta_A}\eps_{\eta_A} \hc \id) \of (\id \hc \lambda\eta_K)$ and then $\ls a\eps_a \of (\id \hc \ls{\eta_A}\eps_{\eta_A} \hc \id)$ by $\ls{a \of \eta_A} \eps_{a \of \eta_A}$, thus obtaining
	\begin{align*}
		J&{} \hc B(\id, b) \hc TB(\id, \eta_B) \hc J \lhom K \xRar{\bar J \hc \id} A(\id, a) \hc TJ \hc TB(\id, \eta_B) \hc J \lhom K \\
		&\xRar{\id \hc \inv{(\rho\eta_J)} \hc \id} A(\id, a) \hc TA(\id, \eta_A) \hc J \hc J \lhom K \\
		&\xRar{\id \hc \ev \hc \lambda\gamma_0} A(\id, a) \hc TA(\id, \eta_A) \hc K \hc TC(\eta_C, \id) \hc C(c, \id) \\
		&\xRar{\id \hc \lambda\eta_K \hc \id} A(\id, a) \hc TA(\id, \eta_A) \hc TA(\eta_A, \id) \hc TK \hc C(c, \id) \\
		&\xRar{\id \hc \lambda\bar K} A(\id, a) \hc TA(\id, \eta_A) \hc TA(\eta_A, \id) \hc A(a, \id) \hc K \xRar{\ls{a \of \eta_A}\eps_{a \of \eta_A} \hc \id} K,
	\end{align*}
	Now the unit axiom for $\lambda\bar K$(\propref{left and right structure cells}) means that we can substitute $\lambda\alpha_0$ for the composite of $\lambda\bar K$, $\lambda\eta_K$ and $\lambda\gamma_0$. Following this, we can rewrite $\ls{a \of \eta_A}\eps_{a \of \eta_A} \of (\id \hc \lambda\alpha_0) = \rho\alpha_0$ and then use the unit axiom for $\bar J$ (\defref{definition:right colax promorphisms}) to substitute $\rho\beta_0$ for the composite of $\rho\alpha_0$, $\inv{(\rho\eta_J)}$ and $\bar J$. We conclude that the composite above equals
	\begin{displaymath}
		J \hc B(\id, b) \hc TB(\id, \eta_B) \hc J \lhom K \xRar{\id \hc \rho\beta_0 \hc \id} J \hc J \lhom K \xRar{\ev} K,
	\end{displaymath}
	which coincides with the adjoint of
	\begin{displaymath}
		J \lhom K \xRar{\lambda\beta_0 \hc \id} TB(\eta_B, \id) \hc B(b, \id) \hc J \lhom K \iso \bigpars{J \hc B(\id, b) \hc TB(\id, \eta_B)} \lhom K.
	\end{displaymath}
	From this we conclude that $\lambda\beta_0 \hc \id$ equals the composite \eqref{equation:854143}, thus proving that the unit axiom for $\lambda\ol{J \lhom K}$ holds. This completes the proof.
\end{proof}

	\backmatter
  \bibliographystyle{halpha}
	\bibliography{main}
\end{document}